\newtheorem{theorem}{Theorem}[section]
\newtheorem{definition}{Definition}[section]
\newtheorem{lemma}{Lemma}[section]
\newtheorem{remark}{Remark}[section]
\newtheorem{proposition}{Proposition}[section]
\newtheorem{corollary}{Corollary}[section]
\numberwithin{equation}{section}
\numberwithin{figure}{section}
\newtheoremstyle{case}{}{}{}{}{}{:}{ }{}
\theoremstyle{case}
\makeatletter \@addtoreset{equation}{section} \makeatother
\def\tilde{\widetilde}
\renewcommand{\u}{{\boldsymbol{U}}}
\newcommand{\ve}{\varepsilon}
\newcommand{\x}{{\boldsymbol{x}}}
\newcommand{\y}{{\boldsymbol{y}}}
\newcommand{\ov}{\overline}
\newcommand{\ul}{\underline}
\newcommand{\R}{{\mathbb R}}
\newcommand{\dif}{{\mathrm d}}
\newcommand{\supp}{{\mathrm {supp}}}
\newcommand{\dd}{{\mathrm {div}}}
\newcommand{\D}{{\mathrm D}}
\newcommand{\snorm}[1]{\ensuremath{\lvert #1\rvert}}
\newcommand{\Bignorm}[1]{\ensuremath{\Big\lvert #1\Big\rvert}}
\newcommand{\norm}[1]{\ensuremath{\left\lvert #1\right\rvert}}
\newcommand{\sbnorm}[1]{\ensuremath{\lVert #1\rVert}}
\newcommand{\Bigbnorm}[1]{\ensuremath{\Big\lVert #1\Big\rVert}}
\newcommand{\bnorm}[1]{\ensuremath{\left\lVert #1\right\rVert}}
\DeclareMathOperator*{\esssup}{ess\,sup}
\begin{document}
\title[Compressible Navier-Stokes Equations]{Global Spherically Symmetric Solutions of the	Multidimensional Full Compressible
	Navier-Stokes Equations with  Large Data}

\author{Gui-Qiang G. Chen}
\address{Gui-Qiang G. Chen:\, Mathematical Institute,
	University of Oxford, Oxford,  OX2 6GG, UK.}
\email{\tt chengq@maths.ox.ac.uk}

\author{Yucong Huang}
\address{Yucong Huang:\, Mathematical Institute,
	University of Oxford, Oxford,  OX2 6GG, UK.}
\email{\tt  yucong.huang@maths.ox.ac.uk}

\author{Shengguo Zhu}
\address{Shengguo Zhu:\, School of Mathematical Sciences, CMA-Shanghai,   and MOE-LSC, Shanghai Jiao Tong University, Shanghai, 200240, China.}
\email{\tt zhushengguo@sjtu.edu.cn}

\keywords{Compressible Navier-Stokes equations, multidimension, spherical symmetry,
large data, non-isentropic flow,  global weak solutions, global-in-time existence}

\subjclass[2010]{35Q30, 35A01,  35D30, 76N10.}
\date{\today}

\begin{abstract} We establish the global-in-time existence of
solutions of the Cauchy problem for
the full Navier-Stokes equations for compressible heat-conducting flow in
multidimensions with initial data that are large, discontinuous, spherically symmetric, and away from the vacuum.
The solutions obtained here are of global finite total relative-energy including the origin, while
cavitation may occur
as balls centred at the origin of symmetry for which the interfaces between
the fluid and the vacuum must be upper semi-continuous in space-time in the Eulerian coordinates. On any
region strictly away from the possible vacuum, 
the velocity and specific internal energy are H\"older continuous, and the density has a uniform upper bound.
To achieve these, our main strategy is to regard the Cauchy problem as the limit of a series of carefully
designed initial-boundary value problems that are formulated in finite annular regions. For such
approximation problems, we can derive uniform {\it a-priori} estimates  that are independent
of both the inner and outer radii of the annuli
considered in the spherically symmetric Lagrangian coordinates. The entropy inequality is recovered
after taking the limit of the outer radius to infinity by using Mazur's lemma and the convexity of
the entropy function, which is required for the limit of the inner radius tending to zero.
Then the global weak solutions of the original problem are attained via careful compactness arguments
applied to the approximate solutions in the Eulerian coordinates.
\end{abstract}
\maketitle

{\hypersetup{linkcolor=blue}
	\tableofcontents
}
\section{Introduction}\label{sec:intro}

The motion of a compressible viscous, heat-conductive, and Newtonian polytropic fluid occupying a spatial domain $\Omega \subset \mathbb{R}^n$ for $n\ge 2$
is governed by the following full compressible Navier-Stokes system (\textbf{CNS}) in the Eulerian coordinates:
\begin{equation}\label{eqs:FNS}
\begin{cases}
\partial_t \rho + \dd(\rho \u)=0,\\
\partial_t (\rho \u ) + \dd ( \rho \u \otimes \u ) +\nabla P = \dd\,\mathbb{S},\\
\partial_t(\rho E)+\dd ( \rho E\u  + P\u )  = \dd (\u\mathbb{S})  +\text{div}(\kappa\nabla  e).
\end{cases}
\end{equation}
Here and throughout the paper, $\rho\geq 0$ denotes the mass density, $\u=(U^1, \dotsc, U^n )^\top$ the fluid velocity,
$P$ the pressure of the fluid, $E=e+\frac{1}{2}\snorm{\u}^2$ the specific total energy,
$e$ the specific internal energy, $\x=(x^1,\dotsc, x^n)^\top\in \Omega$ the Eulerian spatial coordinates,
and $t\geq 0$ the time coordinate. The equations of state for polytropic fluids are
\begin{equation}\label{2}
    P=R\rho\theta=(\gamma-1)\rho e,\hspace{7mm} e=c_v\theta,\hspace{7mm}c_v=\frac{R}{\gamma-1},
\end{equation}
where $R$ is the gas constant,  $\theta$ the absolute temperature,  $c_v$  the specific heat at constant volume,
and  $\gamma>1$ the adiabatic exponent. The viscosity stress tensor $\mathbb{S}$ is given by
\begin{equation}\label{3}
\mathbb{S}=2\mu D(\u)+\lambda\,\dd\u\,\mathbb{I}_n,
\end{equation}
where $D(\u)=\frac{\nabla \u+(\nabla \u)^\top}{2}$ is the deformation tensor, $\mathbb{I}_n$ the $n\times n$ identity matrix,
$\mu$ the shear viscosity coefficient, and $\lambda+\frac{2}{n}\mu$ the bulk viscosity coefficient.
The constant, $\kappa$, in the energy equation is defined by $\kappa\vcentcolon= \frac{\kappa_{Q}}{c_{V}}$,
where $\kappa_{Q}\ge0$ is the coefficient of thermal conductivity. Furthermore, $(\mu,\lambda,\kappa)$ satisfies the following  physical condition:
\begin{equation}\label{eqs:viscoef}
\mu > 0, \qquad \lambda+\dfrac{2}{n}\mu\ge 0, \qquad  \kappa>0.
\end{equation}

In this paper,
we are concerned with global spherically symmetric
solutions of the form:
\begin{equation*}
(\rho,\u,e)(\x,t)
=(\rho(\snorm{\x},t),u(\snorm{\x},t)\dfrac{\x}{\snorm{\x}},
 e(\snorm{\x},t))
\end{equation*}
of system \eqref{eqs:FNS}--\eqref{eqs:viscoef} in domain $\Omega=\R^n$ with Cauchy data:
\begin{equation}\label{eqs:CauchyInit}
  (\rho,\u,e)(\x,0)=(\rho_0,\u_0,e_0)(\x)= ( \rho_0(\snorm{\x}),u_0(\norm{\x})\dfrac{\x}{\snorm{\x}},e_0(\snorm{\x})) \qquad\, \text{for $\x\in \R^n$.}
\end{equation}
Therefore, for the spherically symmetric case with the radial coordinate variable $r=|\x|$, problem \eqref{eqs:FNS}--\eqref{eqs:CauchyInit} can be reformulated as:
\begin{equation}\label{eqs:SFNS}
\begin{cases}
\partial_t\rho + \partial_r(\rho u) + m\dfrac{\rho u}{r} =0,\\[2pt]
\rho \partial_t u +\rho u \partial_r u +  \partial_r P(\rho,e) = (2\mu+\lambda) \partial_r\big(\partial_ru+m\dfrac{u}{r}\big),\\[2pt]
\rho \partial_t e +\rho u \partial_r e +P(\rho,e)\big(\partial_r u+m\dfrac{u}{r}\big)
= 2\mu \big(\snorm{\partial_ru}^2 + m \dfrac{u^2}{r^2}\big) + \lambda \big(\partial_ru+m\dfrac{u}{r}\big)^2
 + \kappa \big(\partial_r^2e+ m\dfrac{\partial_re}{r}\big)
\end{cases}
\end{equation}
in $\Omega_T \vcentcolon=[0,\infty)\times[0,T]$ with $m\equiv n-1$, and
\begin{align}
& u(0,t)=0, \qquad \partial_r e(0,t)=0  && \text{for} \ \ t\in[0,T], \label{BC}\\
&(\rho,u,e)(r,0)= (\rho_0,u_0,e_0)(r) && \text{for} \ \ r\in [0,\infty). \label{IC}
\end{align}
The boundary conditions \eqref{BC} is derived from the continuity of $\x\mapsto (\u,\nabla e)(\x,t)$ at $\snorm{\x}=0$.

There is a large literature regarding the full \textbf{CNS}. For the general three-dimensional (3-D) flow, in the absence
of vacuum ({\it i.e.}, $\inf_{\x}\rho_0(\x)>0$), the local well-posedness of classical solutions of the Cauchy problem
for \eqref{eqs:FNS} follows from the standard symmetric hyperbolic-parabolic structure satisfying the well-known Kawashima's
condition ({\it cf}. \cites{I1, I2, KA, Nash, serrin,Tani}), which was extended to the global one
by Matsumura-Nishida \cite{MN} for the initial data close to a non-vacuum equilibrium in some Sobolev space $H^s(\mathbb{R}^3)$ for $s>\frac{5}{2}$.
For both isentropic and non-isentropic flow,
we also refer the reader to
Danchin \cites{danchin,danchin2} for  strong solutions
with small data in Besov spaces of $\R^n$ for $n\ge 2$,
and Hoff \cites{H4,H3} for weak solutions with small and discontinuous data in some Sobolev spaces of $\R^n$ for $n=2$ or $3$.
Considering the global well-posedness of classical or weak solutions with arbitrarily large initial data,
a subtle issue is whether, in the absence of smallness conditions, cavitation may occur in solutions of \eqref{eqs:FNS}
for barotropic and/or full \text{CNS},
which has no clear answer yet so far, except the one-dimensional (1-D) case.
Specifically,  for the 1-D flow,
the unique global  classical solution was obtained by Kazhikhov-Shelukhin
\cite{KS} for large initial data in a bounded interval with $\inf_{\x} \rho_0(\x)>0$, and
this theory was then extended to the unbounded domains in Kawashima-Nishida in \cite{KN}.
The key estimate developed in \cites{KS,KN} is the pointwise upper and lower bounds of the density,
which have led to many notable results for the problems concerning 1-D flow ({\it e.g.},  \cites{H1,CHT}).
The global existence and stability of 1-D weak solutions  with large and  discontinuous initial data can be found in
Chen-Hoff-Trivisa \cite{CHT}, Jiang-Zlotnik \cite{JZlotnik}, Zlotnik-Amosov \cites{ZA1,ZA2}, and the references cited therein.
Furthermore, it was proved in Hoff-Smoller \cite{HS} that the weak solutions of the 1-D \textbf{CNS} do not develop
vacuum regions, as long as no
vacuum states are present initially.
While the multidimensional (M-D) case
is much more complicated: up to now, very few solid progress is known on the problems
of well-posedness or vacuum formation with large data.

It is worth pointing out that the approaches used in the references mentioned above do not work directly
when initial vacuum appears ({\it i.e.}, $\inf_\x {\rho_0(\x)}=0$), which occurs when some physical requirements are imposed,
such as finite total initial mass and energy in the whole space.
One of the main issues in the presence of vacuum is the degeneracy of the time evolution operator,
which makes it difficult to understand the behavior of the velocity field near the vacuum.
In the terms of structures, system \eqref{eqs:FNS} is a hyperbolic-parabolic coupled system in the fluids region,
but degenerates to a hyperbolic-elliptic one near the vacuum region.
By imposing initially some compatibility conditions,  Cho-Kim in \cite{CK} firstly established the local well-posedness
of 3-D strong solutions with vacuum in terms of $(\rho, u,\theta)$ in some homogeneous Sobolev spaces,
which has been extended recently to be global ones with small energy but large oscillations by Huang-Li \cite{HL}, Wen-Zhu \cite{WZ},
and the references cited therein.

The first global existence of M-D weak solutions of finite energy for the  isentropic \textbf{CNS}
with generic data and vacuum was established in Lions \cite{L2},
especially for $\gamma \ge\frac{9}{5}$ when $n=3$;
see also Feireisl-Novotn\'{y}-Petzeltov\'{a} \cite{fu1} for $\gamma>\frac{3}{2}$ when $n=3$.
Moreover, this theory has been partially extended to the non-isentropic compressible flow with
$(\mu,\,\lambda,\,\kappa)$ depending on the temperature by Feireisl in
\cites{fu2,fu3}, where the thermal energy equation holds as an inequality in the sense of distributions.
However, the uniqueness problem of these weak solutions is widely open due to their fairly low regularity.
Also see
\cites{L1,LX1,BJ,VY,BVY} and the references cited therein.

For the M-D spherically symmetric flow, some favourable regularity properties may be expected, since the equations exhibit largely 1-D behaviour when the flow is away from both the origin and the far-field. However, compared with the 1-D case,  in order  to establish the existences  of M-D spherically symmetric  classical or weak solutions with  arbitrarily large initial data of \eqref{eqs:SFNS}, some essential difficulties are encountered including the coordinate singularity at the origin, indicated by the singular factor $\frac{1}{r}$ in system \eqref{eqs:SFNS} and infinite initial total mass/energy from the far field;
also see \cites{JZ,JZ1,JZ2,H2,HJ,MRS}.

To circumvent these issues, it is customary to consider instead the initial-boundary value problem posed in some annular domains
\begin{equation*}
	\{ (\x,t)\in \R^n \times [0,\infty)\,\vcentcolon\,a \le \snorm{\x} \le b \}
\end{equation*}
with proper boundary conditions, where $a>0$ and $b>0$ are both finite constants. The global well-posedenss of 3-D
strong solutions with cylindrical symmetry and strictly positive initial density to the full \textbf{CNS}
has been given by Frid-Shelukhin \cite{FS} in an infinite cylinder of finite radius parallel to and centered
along the $x^3$--axis in $\mathbb{R}^3$. In the domain  between a static solid core and a free boundary
connected to a surrounding vacuum state, Yashima-Benabidallah in
\cite{YB} first established the global existence of spherically symmetric weak solutions with large and discontinuous initial data
to the full \textbf{CNS} with boundary data prescribed so that the uniform boundedness of the total energy of solutions is direct to achieve,
and later Chen-Kratka in \cite{CGQK} constructed the global large solutions in a more regular space with free normal stress on
the free boundary which is a natural physical situation for the fluids with the surrounding vacuum state.
Under the assumption that $0<\inf_\x\rho_0(\x)< \sup_\x\rho_0(\x) < \infty$, based on an elaborate truncation procedure on the initial data,
Jiang in
\cite{J} proved the global existence of spherically symmetric smooth solutions for (large) initial
data in the domain exterior to a ball
\begin{equation*}
	\{ (\x,t)\in \R^n \times [0,\infty)\,\vcentcolon\, r=\snorm{\x}\ge a  \} \qquad\,\,\,\mbox{in $\mathbb{R}^n$ for $n=2$ or $3$}
\end{equation*}
 with the following boundary condition and the far-field behavior:
\begin{equation*}
\begin{cases}
 u(a,t)=0, \ \ \partial_r e(a,t)=0 \qquad\,\,\,\,  \text{for $t\in[0,T]$},\\[2pt]
\lim\limits_{r\to\infty} ( \rho, u , e , \partial_r e )(r,t) = (1,0,1,0).
\end{cases}
\end{equation*}
A further natural question is whether the uniform estimates in $a>0$ could be obtained so that
the region containing the origin can be included as $a\searrow 0$. In the absence of the vacuum,
Hoff \cite{H2} first constructed the global spherically symmetric weak solutions of the Cauchy problem
for the isothermal flow with large data and strictly positive initial density via the limit
process as $a\searrow 0$ in $\mathbb{R}^n$ for $n \geq 2$;
see also Haspot \cite{Has} for the case of isentropic flow with degenerate viscosity.
For the initial data with finite total initial mass and energy in the whole space,
Jiang-Zhang \cites{JZ,JZ1,JZ2} proved the global existence of spherically symmetric weak solutions of the Cauchy problem
for the isentropic flow in $\mathbb{R}^n$ for $n = 2, 3$ when $\gamma>1$.
Also see Xin-Yuan \cite{XY} for some analysis on the vacuum formation problems of these weak solutions.
Later, when $\inf_\x \rho_0(\x)>0$,
Hoff-Jenssen in \cite{HJ} proved the global existence of spherically symmetric weak solutions of the initial-boundary
value problems of the non-isentropic flow in a finite ball in $\R^3$, which also works for the cylindrically symmetric case.
However, the carefully designed energy estimates and pointwise estimates applied in \cite{HJ} depends strictly on the boundedness
of the domain under the consideration, which seems difficult to adapt the arguments to the corresponding Cauchy problem.
In fact, this Cauchy problem is highly nontrivial due to the strong coupling between the velocity and temperature,
the coordinate singularity at the origin, and infinite initial total mass and energy from the far field.
To the best of our knowledge, in the existing literature,
there have been no such results in this direction for the non-isentropic  compressible flow with infinite energy.
The main aim of this paper is to establish the global-in-time existence of weak solutions of the Cauchy problem
for the full Navier-Stokes equations for M-D compressible heat-conducting flow
in $\mathbb{R}^n$ for $n\ge 2$,
with initial data that are large, discontinuous, spherically symmetric, and away from the vacuum, and to analyze the underlying properties of these weak solutions.

To achieve these, our main strategy is to regard the Cauchy problem as the limit of a series of initial-boundary value problems (IBVPs) that are formulated in finite annular regions and, as the key argument, to establish a set of {\it a-priori} estimates that are uniform with respect to both the inner and outer radii of the annuli considered in the spherically symmetric Lagrangian coordinates. One of the first obstacles in deriving such estimates is the possible degeneracy or singularity of the temporal derivative operators $\rho \partial_t u$ and $\rho \partial_t e$, which are influenced by the behavior of the density. This observation motivates the estimate of pointwise upper and lower bounds of the density; if it can be achieved, then the dissipation structure of the momentum and energy equations can be utilized to gain further information on the solutions.
Such
density bounds
are derived by
exploiting the underlying structure of
the continuity and momentum equations, and
the regional parameters in the derivation must be given careful consideration in order to obtain the density bounds
uniformly with respect to the size of domain.
For this, we face two major difficulties: The first is to ensure that such bounds are independent of
the inner radius $a\in(0,1)$ of the annular region.
To deal with this, we use the crucial observation that the density bounds
can actually be expressed explicitly in terms of the coordinate variables
so that, using this explicit formula, the dependence on $a\in(0,1)$ can be circumvented
by restricting to the spatial regions bounded away from the inner boundary.
The second difficulty is to show that the density bounds
are also uniform over the
outer radius of the annular region, which is resolved by taking the advantages of the specific form
of the entropy inequality for the solutions in the Lagrangian coordinates.
This is
the main motivation for our proof to start by constructing approximate solutions under the Lagrangian formulation of system $\eqref{eqs:SFNS}$,
rather than their Eulerian form. Once the uniform pointwise bounds of the density are established,
it can be used to derive further estimates stemming from the parabolic structure of the momentum and energy equations.
However, suitable spatial cut-off functions have to be introduced in their derivations,
due to the fact that the density bounds described above are restricted on the domain bounded away from the inner boundary.
This leads to certain problematic integrals near the boundary that cannot be estimated with the standard parabolic argument;
we resolve this issue by incorporating the dissipation terms in the entropy inequality.
By overcoming all these difficulties, we obtain the desired domain-independent {\it a-priori} estimates,
which then allow us to construct a global weak solution of the Cauchy problem via the compactness argument.

We now outline the organization of the rest of this paper.
In \S \ref{sec:main}, we first introduce the definition of weak solutions of the Cauchy problem \eqref{eqs:FNS}--\eqref{eqs:CauchyInit}
and then state our main results.
In \S \ref{sec:reform}, the main strategy and the overall scheme of the proof are described.
The Cauchy problem considered can be regarded as the limit of a series of initial-boundary value problems
that are formulated in finite annular regions.
For such approximation problems, in \S \ref{sec:rhobd}, we derive some {\it a-priori} estimates that are independent of
both the inner and outer radii of the annuli considered in the spherically symmetric Lagrangian coordinates.
In \S \ref{sec:WSCEx}, we establish the existence of weak solutions of some exterior problems in the Eulerian coordinates.
Finally, in \S \ref{sec:ato0}, the weak solutions of the original problem are attained via careful compactness arguments
applied to this set of approximate solutions in the Eulerian coordinates.

\section{Global Solutions and Main Theorem}\label{sec:main}
In this section, we present the main theorem of this paper.
For this purpose, we first introduce the following  definition
of global weak solutions of the Cauchy problem  \eqref{eqs:FNS}--\eqref{eqs:CauchyInit}.

\begin{definition}[Weak Solutions]\label{def:WFAV}
The vector function $(\rho,\u,e)(\x,t)$ is called to be a weak solution of
the Cauchy problem \eqref{eqs:FNS}--\eqref{eqs:CauchyInit} in $\R^n \times [0,T]$ if it satisfies the following{\rm :}
\begin{enumerate}[label=(\roman*),ref=(\roman*),font={\normalfont\rmfamily}]
\item There exists a upper semi-continuous function $\ul{r}(t) \vcentcolon [0,T] \to [0,\infty)$ and a constant $C_0=C_0(\rho_0,\u_0,e_0,\gamma,\mu,\lambda,\kappa,n)>0$ such that
$$
\lim\limits_{t\searrow 0}\ul{r}(t)=0, \qquad  0 \le \ul{r}(t)\le C_0 \ \ \text{for all $\, t\in[0,T]$}.
$$
With this, define
$\mathcal{F}\vcentcolon
=\{(\x,t)\in \R^n\times[0,T]\,\vcentcolon\, \snorm{\x}>\ul{r}(t)\,\,  \mbox{for $t\in [0,T]$} \}$ as the fluid region.

\smallskip
\item $\rho\in L_{\textnormal{loc}}^{\infty}(\mathcal{F})$, $\rho(\x,t)=0$
for $(\x,t)\in (\R^n \times [0,T])\backslash \mathcal{F}$ almost everywhere,
$(\u,e)(\x,t)$ is locally H\"older continuous in $\mathcal{F}$, and $\mathring{\mathcal{F}}\vcentcolon=\mathcal{F}\cap\{0<t<T\}$ is an open set.

\smallskip
\item For each $\Phi \in C^1\big([0,T]; C_{\rm c}^1 (\R^n) \big)$,
\begin{align*}
\int_{\R^n} \rho(\x,t) \Phi(\x,t)\, \dif \x - \int_{\R^n}\rho_0(\x) \Phi(\x,0)\,\dif \x
=\int_{0}^{t}\int_{\R^n} (\rho\partial_t\Phi+\rho\u\cdot\nabla\Phi)
\, \dif\x \dif s.
\end{align*}
		
\item $\nabla \u\in L^2_{\textnormal{loc}}(\mathcal{F})$.
Moreover, for any $\Psi\in C^{1}\big( [0,T] ; C^1_{\rm c}(\R^n) \big)$ with
$\supp(\Psi) \Subset \mathcal{F}$ $($i.e., $\supp(\Phi)$ is a compact subset of $\mathcal{F}${\rm )},
\begin{align*}
&\int_{\R^n} (\rho U^i) (\x,t) \Psi(\x,t)\,\dif \x
 - \int_{\R^n} (\rho_0 U_0^i) (\x) \Psi(\x,0)\,\dif\x \\
&\begin{aligned}
=& \int_{0}^{t}\int_{\R^n} \big( \rho U^{i} \partial_t \Psi + \rho U^{i} (\u \cdot \nabla) \Psi
  + (\gamma-1)\rho e \partial_{x^i} \Psi \big)
  \,\dif\x \dif s \\
&- \int_{0}^{t}\int_{\R^n}\big( \mu \nabla \Psi \cdot \nabla U^{i}
  + (\mu+\lambda) \partial_{x^i} \Psi\,\dd\,\u  \big)
  \,\dif \x \dif s \qquad \, \text{ for each $i=1,\dotsc,n$.}
\end{aligned}
\end{align*}
		
\item $\nabla e\in L^2_{\textnormal{loc}}(\mathcal{F})$.
Moreover, for any $\Phi\in C^{1}\big( [0,T] ; C^1_{\rm c}(\R^n) \big)$ with
$\supp(\Phi)  \Subset\mathcal{F}$,
\begin{align*}
&\int_{\R^n} (\rho E)(\x,t) \Phi (\x,t)\,\dif\x - \int_{\R^n} (\rho_0 E_0)(\x) \Phi(\x,0)\,\dif \x \\
&\begin{aligned}
=& \int_{0}^{t}\int_{\R^n}  \big(\rho E \partial_t \Phi
  +(\rho E + P ) (\u \cdot \nabla) \Phi \big)
  \,\dif \x \dif s \\
&-\int_{0}^{t} \int_{\R^n} \Big( \kappa \nabla e + \dfrac{\mu}{2} \nabla \snorm{\u}^2
 + \lambda \u \dd \u + \mu (\nabla \u )\cdot \u \Big) \cdot \nabla \Phi
 \,\dif\x \dif s.
\end{aligned}
\end{align*}
\end{enumerate}
\end{definition}

The main theorem provided below states the global-in-time existence and properties of weak solutions
in $\mathbb{R}^n$ for $n\ge 2$.

\begin{theorem}[Main Theorem]\label{thm:WSAV}
Let $(\rho_0,\u_0,e_0)(\x)=(\rho_0(r),u_0(r)\frac{\x}{r},e_0(r)), r\vcentcolon=|\x|$,
be spherically symmetric initial data defined in $\boldsymbol{x}\in \mathbb{R}^n$ satisfying
\begin{equation}\label{eqs:init}
\begin{split}
&e_0(r)\ge C_*^{-1}, \quad  C_*^{-1} \le \rho_0(r) \le C_* \qquad\, \text{ for all $r\in[0,\infty)$,}\\
&\int_{0}^{\infty}\Big(\rho_0\big(\dfrac{1}{2}\snorm{u_0}^2+ \psi(e_0)\big) + (\gamma-1)G(\rho_0)
+\snorm{(\rho_0-1, u_0^2, e_0-1)}^2\Big)(r)\,r^m \dif r \le C_*,
\end{split}
\end{equation}
for some given constant $C_*>0$, where $G(\zeta)=1-\zeta+\zeta\log \zeta$ and $\psi(\zeta) = \zeta-1-\log \zeta$.
Then, for each $T>0$, there exists a global spherically symmetric weak solution $(\rho,\u,e)(\x,t)$ of the
Cauchy problem \eqref{eqs:FNS}--\eqref{eqs:CauchyInit} in the sense of {\rm Definition \ref{def:WFAV}}
such that the following statements hold\textnormal{:}
\begin{enumerate}[label=(\roman*),ref=(\roman*),font={\normalfont\rmfamily}]
\item\label{item:WSAV1} There exists a continuous increasing function $g\vcentcolon [0,\infty) \to [0,\infty)$ with $g(y)\searrow 0$
as $y\searrow 0$ such that, for each bounded measurable set $E \subset \R^n$,
\begin{align*}
\qquad \esssup\limits_{t\in[0,T]}\int_{\R^n}\big(\frac{1}{2}\rho\snorm{\u}^2 + G(\rho)
\big)(\x,t)\,\dif \x \le C(T),
\quad \esssup\limits_{t\in[0,T]}\int_{E}(\rho e)(\x,t)\,\dif \x
\le C(T)\big(1 + g(|E|)\big),
\end{align*}
where $|E|$ denotes the Lebesgue measure of $E$ and $C(T)=C(T,C_*,\gamma,\mu,\lambda,\kappa,n)>1$ is a constant depending on $T>0$.
Moreover, for $\eta\in (0,1)$,
\begin{align*}
&\int_{0}^{T} \sup_{\snorm{\x}\ge\ul{r}(t)+\eta}\dfrac{\snorm{\u}}{\sqrt{e}}(\x,t)\,\dif t
\le C(T)\big(\eta^{\frac{2-n}{2}} + \eta^{2-n} \big) && \text{if } \ n=2,\, 3,\\
&\int_{0}^{T} \sup_{\snorm{\x}\ge\ul{r}(t)+\eta} \log \big( \max\big\{ 1, e(\x,t)\big\} \big)\, \dif t
\le C(T)\big(1+\sqrt{|\log \eta|}\big) && \text{if } \ n=2,\\
&\int_{0}^{T} \sup_{\snorm{\x}\ge\ul{r}(t)+\eta} \log \big( \max \big\{ 1, e^{\pm 1}(\x,t) \big\} \big)\,\dif t
\le C(T) \eta^{2-n} && \text{if } \ n=3.
\end{align*}
		
\smallskip		
\item\label{item:WSAV2} There exists a continuous function $(y,t)\mapsto \tilde{r}(y,t) \vcentcolon [0,\infty)\times[0,T] \to [0,\infty)$
such that $y\mapsto \tilde{r}(y,t)$ is strictly increasing, and $\ul{r}(t) = \lim_{y\searrow 0} \tilde{r}(y,t)$ for $t\in[0,T]$. Moreover,
there exists a constant $C_0=C_0(C_*,\gamma,\mu,\lambda,\kappa,n)>0$ such that
\begin{equation*}
\begin{dcases*}
\int_{\{\ul{r}(t) < \snorm{\x} < \tilde{r}(y,t) \}} \rho(\x,t)\,\dif \x = y  & for a.e. $(y,t)\in[0,\infty)\times[0,T]$,\\
\tilde{r}(y,t) = \tilde{r}_0(y) + \int_{0}^{t} u(\tilde{r}(y,s),s)\,\dif s & for a.e. $(y,t)\in (0,\infty)\times[0,T]$,\\
n y \psi^{-1}_{-}( \dfrac{C_0}{y}) \le \tilde{r}^{\,n}(y,t) \le C_0(1 +y) & for all $(y,t)\in(0,\infty)\times[0,T]$,
\end{dcases*}
\end{equation*}
where $u(\snorm{\x},t)\vcentcolon= \u(\x,t)\cdot\frac{\x}{\snorm{\x}}$, $\psi^{-1}_{-}$ is
the left inverse of $\psi(\zeta)=\zeta-1-\log \zeta$, and the function $y\mapsto\tilde{r}_0(y) \vcentcolon [0,\infty)\to [0,\infty)$ is implicitly defined as
\begin{equation*}
y= \int_{0}^{\tilde{r}_0(y)} \rho_0(r)\, r^m \dif r \qquad\text{for each $y\in[0,\infty)$.}
\end{equation*}

\item\label{item:WSAV3}
For $\ve\in(0,1]$, there exists a constant $C(\ve)=C(\ve,T,C_*,\gamma,\mu,\lambda,\kappa,n)>0$ such that
\begin{equation*}
\begin{cases*}
\snorm{\tilde{r}(y_1,t)-\tilde{r}(y_2,t)} \le C(\ve) \snorm{y_1-y_2} &\quad for all $(y_1,y_2,t)\in [\ve,\infty)^2\times[0,T]$,  \\
\norm{\tilde{r}(y,t_1)-\tilde{r}(y,t_2)} \le C(\ve) \snorm{t_1^{\frac{3}{4}}-t_2^{\frac{3}{4}}} &\quad for all $(y,t_1,t_2)\in [\ve,\infty)\times[0,T]^2$.
\end{cases*}
\end{equation*}
Moreover, let $\sigma(t)\vcentcolon=\min\{1,t\}$
and $B^{\rm c}(r)\vcentcolon=\{\x\in\R^n\vcentcolon \snorm{\x} \ge r\}$. Then
\begin{equation*}
C^{-1}(\ve)
\le \rho(\x,t) \le C(\ve), \,\,\, \snorm{\u(\x,t)} \le C(\ve)\sigma^{-\frac{1}{4}}(t),
\,\,\, 0<e(\x,t) \le  C(\ve)\sigma^{-\frac{1}{2}}(t)
\end{equation*}
for a.e. $(\x,t)$ such that $t\in[0,T]$ and $\x \in B^{\rm c}(\tilde{r}(\ve,t))$, and
\begin{align*}
&\sup_{ 0\le t \le  T} \int_{B^c(\tilde{r}(\ve,t))} \norm{(\rho-1,\snorm{\u}^2,e-1,\sqrt{\sigma}\nabla \u,\sigma \nabla e)}^2(\x,t)\, \dif \x\\
&\,\, +\int_{0}^{T}\int_{B^c(\tilde{r}(\ve,t))} \norm{(\nabla \u, (\u \cdot \nabla) \u, \nabla e, \sqrt{\sigma}\partial_t \u, \sigma \partial_t e)}^2(\x,t)\,\dif \x \dif t \le C(\ve).
\end{align*}
In addition, for all $t\in(0,T]$ and $\x,\y \in  B^{\rm c}(\tilde{r}(\ve,t))$,
\begin{equation*}
\sigma^{\frac{1}{2}}(t)\snorm{\u(\x,t)-\u(\y,t)} + \sigma(t)\snorm{e(\x,t)-e(\y,t)} \le C(\ve) \snorm{\x-\y}^{\frac{1}{2}},
\end{equation*}
and, for all $0<t_1<t_2\le T$ and $\snorm{\x} \ge \sup_{t_1\le t \le t_2}\tilde{r}(\ve,t)$,
\begin{equation*}
\sigma^{\frac{1}{2}}(t_1)\snorm{\u(\x,t_1)-\u(\x,t_2)} + \sigma(t_1)\snorm{e(\x,t_1)-e(\x,t_2)} \le C(\ve) \snorm{t_2-t_1}^{\frac{1}{4}}.
\end{equation*}
Furthermore, defining
$\rho^{(\ve)} (\x,t) \vcentcolon = \rho(\x,t) \chi_\ve(\x,t)$ with
\begin{equation*}
\chi_\ve(\x,t) \vcentcolon = \begin{dcases*}
1 & if $t\in[0,T]$ and $\x \in B^{c}(\tilde{r}(\ve,t))$,\\ 
0 & otherwise,
\end{dcases*}
\end{equation*}
then, for all $L\in\mathbb{N}$, $\rho^{(\ve)}(\x,t)\in C^0\big([0,T];H^{-1}(B_L)\big)$ and
\begin{equation*}
\big\|\rho^{(\ve)}(\cdot, t_1) - \rho^{(\ve)}(\cdot,t_2)\big\|_{ H^{-1}(B_L)}
\le C(\ve)\norm{t_1-t_2} \qquad \text{ for all $0\le t_1 \le t_2 \le T$,}
\end{equation*}
where $C(\ve)$ is independent of $L\in\mathbb{N}$ as before
and $B_L\vcentcolon=\{ \x\in\R^n \vcentcolon \snorm{\x}<L \}$.
\end{enumerate}
\end{theorem}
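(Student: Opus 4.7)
The plan is to construct the weak solution as a double limit of carefully designed approximations. First, I would fix parameters $0<a<1<b<\infty$ and pose an initial-boundary value problem on the annulus $\{a\le|\x|\le b\}$ with no-slip and insulated boundary conditions on both the inner and outer spheres, and with initial data obtained by truncating and smoothing $(\rho_0,u_0,e_0)$. Transforming to Lagrangian mass coordinates $(y,t)$, the continuity, momentum, and internal-energy equations become a $1$-D parabolic-like system in which the spatial variable is the mass $y=\int_0^r\rho_0 s^m\,\dif s$ and the Eulerian radius $r=\tilde r(y,t)$ is recovered from the flow map $\partial_t \tilde r=u$. On this bounded Lagrangian interval, the local well-posedness of smooth positive-density solutions is standard; my task is to produce a-priori estimates that are uniform in both $a$ and $b$, so that the two limits $b\nearrow\infty$ and $a\searrow 0$ can be performed.

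The backbone of the uniform estimates will be the relative-energy identity that controls $\int\big(\tfrac12\rho|\u|^2+(\gamma-1)G(\rho)+\rho\psi(e)\big)\,r^m\,\dif r$ together with the viscous/thermal dissipation, using the finiteness assumption in \eqref{eqs:init}. Next comes the pivotal pointwise density bound: combining the mass and momentum equations in Lagrangian form yields an effective viscous flux whose potential can be integrated explicitly, giving an \emph{explicit formula} for $\rho$ in terms of $(y,t)$-integrals of the pressure and kinetic quantities. Using that formula together with the entropy inequality (which in Lagrangian form provides the crucial lower bound on $\theta$), I can derive two-sided bounds $C(\ve)^{-1}\le \rho(y,t)\le C(\ve)$ for $y\ge\ve$, uniformly in $a,b$. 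The subtle point is that the explicit formula's dependence on $a$ enters only through a boundary integral supported near $y=0$, which is why one must restrict to $y\ge\ve$; uniformity in $b$ comes from the global entropy dissipation. With the density bounds in hand, the parabolic regularity of the momentum and energy equations, combined with spatial cut-offs supported in $\{y\ge\ve\}$, delivers the $L^\infty_tL^2_y$ and $L^2_{t,y}$ estimates for $(\nabla\u,\nabla e,\sqrt\sigma\partial_t\u,\sigma\partial_t e)$ together with $L^\infty$ bounds on $\u$ and $e$ with the $\sigma(t)^{-1/4}$, $\sigma(t)^{-1/2}$ singular weights in $t$ stated in item~\ref{item:WSAV3}. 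The near-boundary integrals that the cut-off leaves over the strip $\{y<\ve\}$ are not amenable to standard parabolic bounds, but they are controlled by the dissipation terms appearing in the entropy inequality.

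Having the uniform bounds, I would pass to the limit $b\to\infty$ first. The energy and regularity bounds give weak and weak-$\ast$ compactness for $(\rho,\u,e)$, and the Lagrangian density bounds on compact $y$-intervals yield strong convergence of $\tilde r$ and of $(\rho,\u,e)$ in appropriate function spaces on $\{y\ge\ve\}$. The delicate point is the entropy inequality: the entropy is a nonlinear convex function of $(\rho,e)$, and weak convergence is not enough to pass to the limit. Here I would invoke Mazur's lemma to extract strong-convergence of convex combinations and use the lower semicontinuity of the entropy functional to keep the inequality in the limit. The resulting exterior Lagrangian solution on $[a,\infty)_y$ inherits all the uniform bounds, in particular the density pointwise bounds on $\{y\ge\ve\}$.

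The final limit $a\searrow 0$ is the hardest step. Here the Lagrangian interval opens up to $[0,\infty)_y$, but the density bounds are available only on $\{y\ge\ve\}$ and degenerate as $\ve\to 0$, so near $y=0$ one only controls the entropy and relative energy. I would use a diagonal argument in $\ve$: on each strip $\{y\ge\ve\}$ the solutions are equicontinuous in the Hölder norms of item~\ref{item:WSAV3}, which produces the function $\tilde r(y,t)$ continuous with the stated Lipschitz-in-$y$ and $|t_1^{3/4}-t_2^{3/4}|$ moduli. The vacuum interface $\ul r(t)=\lim_{y\searrow 0}\tilde r(y,t)$ is upper semicontinuous because $\tilde r$ is continuous and increasing in $y$; the bound $ny\psi^{-1}_-(C_0/y)\le\tilde r^n(y,t)\le C_0(1+y)$ follows by integrating the relative-energy bound and using Jensen's inequality together with $\rho\psi(e)$-control. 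Converting back to Eulerian coordinates via $\x=\tilde r(y,t)\x/|\x|$, defining $\rho=0$ on $\R^n\times[0,T]\setminus\mathcal F$, and verifying the weak formulations of Definition~\ref{def:WFAV} by testing against $\Psi,\Phi$ whose support lies in $\{|\x|>\ul r(t)+\eta\}$ completes the proof of items \ref{item:WSAV1}--\ref{item:WSAV3}; the logarithmic and algebraic bounds on $\sup|\u|/\sqrt e$ and $\log e$ in \ref{item:WSAV1} are obtained by applying Morrey/Gagliardo-Nirenberg type inequalities to $\u$ and $e$ on the strip $\{|\x|\ge\ul r(t)+\eta\}$ using the energy and dissipation controls with the sharp $n=2,3$ dependencies. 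I expect the principal obstacle to be ensuring that the density lower bound $\rho\ge C(\ve)^{-1}$ and the Hölder moduli are genuinely independent of $a$; this is ultimately resolved by combining the explicit Lagrangian expression for $\rho$ with the entropy lower bound on $\theta$, which is the step where the convexity-plus-Mazur argument is indispensable.
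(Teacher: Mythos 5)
Your proposal follows essentially the same overall architecture as the paper: mollified/truncated initial data, annular IBVPs, Lagrangian reformulation, entropy estimates leading to explicit pointwise density bounds, spatial cut-offs away from the inner boundary with near-boundary terms absorbed by entropy dissipation, Mazur's lemma for the convex functionals, and a diagonal limit in $\ve$ for $a\searrow 0$. Two points deserve scrutiny.

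First, you propose to take the $b\to\infty$ limit while remaining in Lagrangian coordinates, producing an ``exterior Lagrangian solution on $[a,\infty)_y$.'' This glosses over a genuine obstruction that the paper treats as central: in the Lagrangian momentum and energy equations, the second-order spatial operators (e.g.\ $\tilde r^m\D_x\bigl(\D_x(\tilde r^m\tilde u)/\tilde v\bigr)$) are nonlinear compositions of the unknowns, so weak/weak-$\ast$ convergence of $(\tilde v,\tilde u,\tilde e)$ is insufficient to pass to the limit in the weak form, and no compensating strong convergence of $\D_x\tilde u$ or $\tilde v^{-1}$ is available from the uniform estimates. The paper circumvents this by transporting the approximate Lagrangian solutions back to the Eulerian domain before taking $k\to\infty$, precisely because the viscous and heat-conduction operators are linear in $(r,t)$. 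Your sketch only mentions the Eulerian conversion at the very end, after both limits, and so does not address how you would close the weak forms at the intermediate $b\to\infty$ stage. Either you must perform this limit in Eulerian coordinates as the paper does, or you must supply additional strong-convergence arguments in Lagrangian coordinates; as written, the step has a gap.

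Second, you misattribute the role of Mazur's lemma. You state that it is ``indispensable'' for the $a$-uniform density lower bound $\rho\ge C(\ve)^{-1}$. In fact the density bounds come entirely from the explicit Lagrangian representation formula for $v$ together with Jensen-type applications of the entropy bound $\int\psi(v)\,\dif x\le C_0$, traced carefully with respect to $a$ by restricting to $x\ge\ve$; no weak-convergence argument enters there. Mazur's lemma is invoked at a completely different point: after the $b\to\infty$ limit, one only has $\rho_k\rightharpoonup\rho$ weakly while $G$ is nonlinear convex, so the closedness of the convex set $\{\rho:\int G(\rho)\le C(T)\}$ in the strong topology must be transferred to the weak topology in order to retain $\esssup_t\int G(\rho)\le C(T)$ for the limit; this is the precise place where convexity plus Mazur is needed, and that entropy bound is then used in the subsequent $a\searrow 0$ argument. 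This is a mischaracterization rather than a fatal flaw, but it suggests the mechanism by which the entropy inequality survives the limit has not been fully thought through.
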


\begin{remark}\label{rem:moment}
The domain for which the momentum equations hold weakly can be extended to the entire space-time domain.
However, the viscosity term may be present in the weak form as a non-standard limit distribution
of approximate solutions.
More specifically, let $(\rho,\u,e)(\x,t)$ be a weak solution with initial data $(\rho_0,\u_0,e_0)(\x)$
obtained in {\rm Theorem} \textnormal{\ref{thm:WSAV}}.
Let $\xi \vcentcolon [0,\infty)\to [0,1]$ be a fixed increasing $C^1$--function with
$\xi \equiv0$ on $[0,1]$ and $\xi\equiv1$ on $[2,\infty)$,
and set $\xi^R(r)\vcentcolon= \xi(\frac{r}{R})$.
Then there exists a sequence of vector valued functions
$\{\u_{\alpha}(\x,t)\}_{\alpha\in\mathbb{N}}$ such that
$\u_{\alpha}^{i}\in L^2_{\textnormal{loc}}(\R^n\times[0,T])$
for each $\alpha\in\mathbb{N}$ and $i=1,\dotsc,n$.
Moreover, for each $i=1,\dotsc, n$ and each $\Psi\in C^2\big([0,T];C^2_{\rm c}(\R^n)\big)$
with $\Psi\vcentcolon \R^n\times[0,T] \to \mathbb{R}$, defining
the distribution $\mathbb{M}(\cdot,t) \in \big(C^2\big([0,T];C^2_{\rm c}(\R^n)\big)\big)^{\ast}$ by
\begin{align*}
\mathbb{M}^i(\Psi,t) := \lim\limits_{R\searrow 0} \lim\limits_{\alpha\to\infty} \int_{0}^{t}
\int_{\{\snorm{\x}\ge \alpha^{-1}\}} \big\{(\mu + \lambda) (\u_{\alpha} \cdot \nabla)
\partial_i \Psi^R + \mu \boldsymbol{U}_{\alpha}^i\Delta \Psi^R \big\}\,\dif \x \dif s,
\end{align*}
with $\Psi^R(\x,t)\vcentcolon= \xi^{R}(\snorm{\x})\Psi(\x,t)$,
then
\begin{equation*}
\begin{aligned}
&\int_{\R^n} (\rho U^i)(\x,t) \Psi(\x,t)\,\dif \x - \int_{\R^n} (\rho_0 U_0^i)(\x) \Psi(\x,0)\,\dif \x \\
&= \int_{0}^{t}\int_{\R^n} \big(\rho U^{i} \partial_t \Psi
 + \rho U^{i} (\u\cdot \nabla) \Psi + (\gamma-1)\rho e \partial_{\x^i} \Psi\big)(\x,s)\,\dif \x \dif s
  + \mathbb{M}^i(\Psi,t) \quad\mbox{for each $i=1,\dotsc,n$}.
\end{aligned}
\end{equation*}
 In fact, $\u_{\alpha}(\x,t)$ is the velocity of solution to the exterior sphere problem, which satisfies the auxiliary boundary condition\textnormal{:}
\begin{equation*}
\u_{\alpha}(\x,t) =0, \ \ \nabla e_{\alpha}(\x,t) =0
\qquad \text{for $\snorm{\x}=\alpha^{-1}$ and $t\in [0,T]$.}
\end{equation*}
The existence of such solution $(\rho_{\alpha},\u_{\alpha},e_{\alpha})$ is stated
in {\rm Theorem} \textnormal{\ref{thm:WSCEx}} in {\rm \S}\textnormal{\ref{subsec:eSFNS}}.

The derivation of the above result is obtained by using the uniform entropy estimates on
the approximate solutions $(\rho_\alpha,\u_\alpha,e_{\alpha})_{\alpha\in\mathbb{N}}$
\textnormal{(}\textit{cf}. {\rm Theorem} \textnormal{\ref{thm:WSCEx}\ref{item:WSCEx1};} also see \cite{HJ}\textnormal{)}.
An important followup
is
whether the following equality hold{\rm :}
\begin{equation*}
\mathbb{M}^i(\Psi,t) \overset{?}{=} \int_{0}^{t}
\int_{\R^n} \big\{(\mu + \lambda) \dd \u
\partial_i \Psi + \mu \nabla \u^i \nabla \Psi \big\}\,\dif \x \dif s,
\end{equation*}
which is beyond the scope of this paper and will further be addressed in a separate
paper.
\end{remark}

\section{Main Strategy and Reformulation}\label{sec:reform}
Throughout this paper, we use the following notation for five types of generic constants
that are all larger than $1$ and may be different at different occurrence (without loss of generality)
to indicate their specific
dependence on various parameters:
\begin{equation*}
\begin{aligned}
& C\equiv C(\gamma,\mu,\lambda,\kappa,n), \quad C_0\equiv C_0(C,C_*),
 \quad C(T)\equiv C(T,C_0),\\
& C(a)\equiv C(a,T,C_0), \quad C(\ve)\equiv C(\ve,T,C_0) \,
\qquad \text{ for each $a\in(0,1)$ and $\ve\in(0,1]$},
\end{aligned}
\end{equation*}
where $C_*=C_*(\rho_0,u_0,e_0)>1$ depends only on the initial data as
in \eqref{eqs:init} of Theorem \ref{thm:WSAV}.
We also denote $\beta\vcentcolon=2\mu + \lambda$.

\subsection{Main strategy}\label{subsec:MS}
Our approach for proving Theorem \ref{thm:WSAV} is to split the original problem \eqref{eqs:SFNS}  into the following  two parts:

\smallskip
\begin{enumerate}[label=(\Roman*), ref=(\Roman*)]
\item\label{item:1} For any fixed $a\in(0,1)$ and $T>0$,
we prove the global-in-time existence of a weak solution $(\rho_a,u_a,e_a)(r,t)$
of the initial-boundary value problem (IBVP) of system \eqref{eqs:SFNS}
in the exterior domain $(r,t)\in[a,\infty)\times[0,T]$ with the following initial-boundary conditions:
\begin{equation*}
\begin{cases*}
(\rho_a,u_a,e_a)(r,0) = (\rho_a^0,u_a^0,e_a^0)(r) &\,\, for $r\in[a,\infty)$,\\
u(a,t)=\partial_r e(a,t)=0 &\,\,  for $t\in[0,T]$,
\end{cases*}
\end{equation*}
where $(\rho_a^0,u_a^0,e_a^0)$ is the approximate initial data constructed from
the initial data $(\rho_0,u_0,e_0)$ given in Theorem \ref{thm:WSAV}.
The detailed construction can be found in \S \ref{subsec:mollify}, and $(\rho_a^0,u_a^0,e_a^0)$ is shown to satisfy the
initial condition \eqref{eqs:init} with a constant $C_0>0$ independent of $a\in(0,1)$ in Propositions \ref{prop:mollify}.
Notice that
the corresponding solutions $\{(\rho_a,u_a,e_a)(r,t)\}_{a\in(0,1)}$ satisfy certain high-order
estimates uniformly in $a\in(0,1)$ (see Theorem \ref{thm:WSCEx} in \S \ref{subsec:eSFNS}).

\smallskip
\item We employ the uniform  estimates, independent of $a$ obtained in \ref{item:1},
and make careful compactness arguments to prove that there is a subsequence $a_j \searrow 0$ as $j\to\infty$ such that
the corresponding limit triple $(\rho,u,e)$ is a global weak solution described
in Theorem \ref{thm:WSAV}. \label{item:2}
\end{enumerate}

\smallskip
Part \ref{item:1} of our approach will be achieved via the following six steps:

\begin{enumerate}[label=(I.\arabic*),ref=(I.\arabic*),font={\normalfont\rmfamily}]
\item\label{item:i} We modify the original initial data $(\rho_0,u_{0},e_0)$ into $(\rho_a^0,u_a^{0},e_{a}^0)(r)$ in the Eulerian coordinates, as constructed in \S \ref{subsec:mollify}.

\item\label{item:ii} The solution, $(\rho_a,u_a,e_a)$, in \ref{item:1} is obtained as the limit of solutions of the approximate problems
posed in bounded annular domains. For this purpose, we apply a truncation procedure on the mollified initial data $(\rho_a^0,u_a^0,e_a^0)$
in the Eulerian coordinates near the far field region ({\it i.e.}, $r\to \infty$), which is parameterized by integer $k\in\mathbb{N}$.

\item\label{item:iii} We transform the IBVP in the exterior domain $(r,t)\in[a,\infty)\times[0,T]$
stated in \ref{item:1} into a set of IBVPs shown in \eqref{eqs:LFNS-k}--\eqref{eqs:LFNS-kb} in the Lagrangian coordinates in the spatial interval $[0,k]$. Indeed, the main point of
the above reduction and modification of the initial data is to ensure that, for each $T>0$, there exists a unique global-in-time smooth solution of the approximate IBVP \eqref{eqs:LFNS-k}--\eqref{eqs:LFNS-kb} in
$(x,t)\in[0,k]\times[0,T]$.

\item\label{item:iv} We derive the entropy inequalities on these solutions. Consequently, we obtain the upper and lower bounds of the density.
These density bounds are independent of the approximation parameter $k\in \mathbb{N}$;
in addition, they are independent of $a\in(0,1)$ on the regions away from the origin in the Lagrangian coordinates.

\item\label{item:v} Based on the bounds of the density obtained in \ref{item:iv}, we seek for a set of high-order estimates
on the regions away from the origin, $x=0$, in the Lagrangian coordinates,
which is independent of the approximate parameter $a\in(0,1)$.
In fact, these estimates can be obtained by introducing a set of $C^1$ cut-off functions $g_\ve(x)$,
for each $\ve\in(0,1)$, away from the origin (see \eqref{eqs:g}--\eqref{eqs:grbd}).
However, the inclusion of these cut-off functions leads to the difficulties in estimating certain
integral terms near the boundary: $x=\ve$, which is described in the proof of Lemma \ref{lemma:eL2}.
In order to overcome these difficulties, we make full use of the dissipation terms in the entropy estimates from Step \ref{item:iv}, which can be found in Lemma \ref{lemma:euLinf}.
	
\item\label{item:vi} Finally, we denote $(\tilde{v}_{a,k},\tilde{u}_{a,k},\tilde{e}_{a,k})(x,s)$
(with the specific volume $\tilde{v}_{a,k} := (\tilde{\rho}_{a,k})^{-1}$) as the approximate solution of problem \eqref{eqs:LFNS-k}--\eqref{eqs:LFNS-kb}
in the Lagrangian domain $(x,s)\in[0,k]\times[0,T]$, and $$
\tilde{r}_{a,k}(x,s)=\Big(a^n + n \int_{0}^{x}\tilde{v}_{a,k}(y,s)\,\dif y\Big)^{\frac{1}{n}}
$$
as the corresponding particle path function.
For each $(a,k)\in(0,1)\times\mathbb{N}$, we formulate the coordinate transformation $(r,t)=\mathcal{T}_{a,k}(x,s)=(\mathcal{T}_{a,k}^{(1)},\mathcal{T}_{a,k}^{(2)})(x,s)$ as
\begin{equation*}
\mathcal{T}_{a,k}^{(1)}(x,s)=\tilde{r}_{a,k}(x,s), \quad
\mathcal{T}_{a,k}^{(2)}(x,s)=s \, \qquad \text{ for $(x,s)\in [0,k]\times[0,T]$.}
\end{equation*}
Owing to the density bounds from Step \ref{item:iv}, the inverse function $\mathcal{T}_{a,k}^{-1}$ exists.
By these coordinate transformations, we define
\begin{equation*}
(\overline{v}_{a,k},\overline{u}_{a,k},\overline{e}_{a,k})(r,t) \vcentcolon=( \tilde{v}_{a,k},\tilde{u}_{a,k},\tilde{e}_{a,k}) ( \mathcal{T}_{a,k}^{-1}(r,t)) \
\qquad \text{for $(r,t)\in R_{a,k}$},
\end{equation*}
where $R_{a,k} \vcentcolon= \mathcal{T}_{a,k}\big([0,k]\times [0,T] \big) = \big\{ (r,t)\,\vcentcolon\, t\in[0,T], r\in [a,\tilde{r}_{a,k}(k,t)] \big\}$.
We then extend these functions into the entire exterior Eulerian domain $(r,t)\in[a,\infty)\times[0,T]$ by introducing the following test functions:
Let $\xi \vcentcolon \R \to [0,1]$ be a function with $\xi\in C^{\infty}$ such that $\xi(\zeta)=1$ if $\zeta\le 0$, $\xi(\zeta)=0$
if $\zeta\ge 1$, and $\snorm{\xi^{\prime}(\zeta)} \le 2$ for all $\zeta\in\R$.
With this, we define the following cut-off functions:
\begin{equation*}
\varphi_{a,k}(r,t) = \xi( \dfrac{2r-\tilde{r}_{a,k}(k,t)}{\tilde{r}_{a,k}(k,t)}).
\end{equation*}
It is shown in \S \ref{subsec:Eulext} that $\supp(\varphi_{a,k})\subseteq R_{a,k}$
so that the approximate solutions in the Eulerian coordinates can be extended
to domain $[a,\infty)\times [0,T]$:
\begin{equation}\label{eqs:extfunc}
\qquad\,\, (\rho_{a,k},u_{a,k},e_{a,k})(r,t)=(1,0,1)+\begin{dcases}
\varphi_{a,k}({\overline{v}}^{-1}_{a,k} - 1,
\overline{u}_{a,k},
\overline{e}_{a,k}- 1)(r,t)\quad\, \text{for $(r,t)\in R_{a,k}$,}  \\
(0,0,0)\qquad\qquad\,\,\quad\text{for $(r,t)\in [a,\infty)\times[0,T]\backslash R_{a,k}$,}
\end{dcases}
\end{equation}

With
the uniform {\it a-priori} estimates derived in Steps \ref{item:iv}--\ref{item:v},
there exits
a subsequence $(\rho_{a,k_j},u_{a,k_j},e_{a,k_j})_{j\in\mathbb{N}}$ such that
its limit function $(\rho_a,u_a,e_a)$ is a weak solution of the exterior problem 
stated in \ref{item:1}. Furthermore, it can also be shown that such solutions continue to enjoy
the uniform regularity estimates in $a\in(0,1)$ attained in Steps \ref{item:iv}--\ref{item:v}.
The detailed analysis of this procedure can be found in \S \ref{sec:WSCEx}.
\end{enumerate}

\medskip
Part \ref{item:2} of our approach can be achieved via the following four steps:

\begin{enumerate}[label=(II.\arabic*),ref=(II.\arabic*),font={\normalfont\rmfamily}]
\item\label{item:two1} For given approximate initial data $(\rho_a^0,u_a^0,e_a^0)$ in Part \ref{item:1}, denote
\begin{equation*}
\big\{ (\rho_a, u_a, e_a)(r,t)\vcentcolon [a,\infty) \times [0,T] \to [0,\infty)\times\R\times[0,\infty) \big\}_{a\in(0,1)}
\end{equation*}
as the global weak solutions of the exterior problem
obtained in Part \ref{item:1}. Based on the entropy estimate, we prove the uniform integrability of the approximate
solutions $(\rho_a,u_a,e_a)_{a\in(0,1)}$ with respect to the approximate parameter $a\in(0,1)$ in \S \ref{subsec:meas},
which plays key roles in proving that the limit vector function of these approximate solutions
are indeed a weak solution of the original problem (see Lemmas \ref{lemma:alimRhoWF}--\ref{lemma:eWF}).

\item\label{item:two2} The particle path functions $\tilde{r}_a(x,t)\vcentcolon[0,\infty]\times[0,T]\to [a,\infty)$
associated with the approximate density $\rho_a$ for each $a\in(0,1)$ are defined to be the functions satisfying
\begin{equation*}
x = \int_{a}^{\tilde{r}_{a}(x,t)} \rho_{a}(r,t)\,r^m \dif r \qquad\,\, \text{for {\it a.e.} $(x,t)\in[0,\infty)\times[0,T]$,}
\end{equation*}
which can be found in Lemma \ref{lemma:rkcvg} in \S \ref{subsec:klimpath}.
Then, by the estimates obtained in Step \ref{item:two1} and compactness arguments,
we show in \S \ref{subsec:alimpath} that there exists a function $\tilde{r}(x,t)\vcentcolon [0,\infty)\times[0,T]\to [0,\infty)$ that
is the limit function of sequence $\{\tilde{r}_a(x,t)\}_{a\in(0,1)}$ when $a \searrow 0$.
Moreover, we show that, for $t\in[0,T]$, $\lim_{x\searrow0}\tilde{r}(x,t)$ does not necessarily equal to zero,
but a function of $t$ instead in general: $\ul{r}(t)\vcentcolon [0,T]\to [0,\infty)$.
In fact, $\tilde{r}(x,t)$ is the particle path of our original problem \eqref{eqs:SFNS},
and $\ul{r}(t)$ is the vacuum interface in \eqref{eqs:SFNS} that separates the fluid region
and the possible vacuum, which can be verified later.
Furthermore, it is shown in  Lemma \ref{lemma:alimpath}\ref{item:apath3} that,
for each $x>0$, there exists $\delta(x)>0$  depending only on $x>0$
such that
\begin{equation}\label{eqs:rlower}
\inf_{t\in[0,T]}\tilde{r}(x,t)\ge \delta(x)>0.
\end{equation}
This
is crucial for the next step where the limit, $a\searrow0$, is taken in $\{(\rho_a,u_a,e_a)\}_{a\in(0,1)}$.

\smallskip
\item\label{item:two3} Next, for each $\ve\in(0,1]$, we fix a domain away from the origin by
\begin{equation}\label{eqs:Fve}
\mathcal{F}_\ve\vcentcolon=\big\{(r,t)\in[0,\infty)\times[0,T]\,\vcentcolon\,r\in[\tilde{r}(\ve,t),\infty)\big\},
\end{equation}
where $\tilde{r}(x,t)$ is obtained in Step \ref{item:two2}.
Moreover, we set
$$
\mathcal{F}\vcentcolon=\{(r,t)\in[0,\infty)\times[0,T]\,\vcentcolon\, r> \ul{r}(t)\}.
$$
By the high-order estimates derived in Step \ref{item:v}, a limit function $(\rho,u,e)(r,t)\vcentcolon \mathcal{F}\to [0,\infty)\times\R \times[0,\infty)$ is constructed in \S \ref{subsec:alim}
as follows: By an application of the Arzel\'a-Ascoli theorem, we first obtain $(\rho_\ve,u_\ve,e_\ve)(r,t)$ defined in domain $\mathcal{F}_\ve$ for each $\ve\in(0,1]$,
then construct the vector function $(\rho,u,e)$ by extending $(\rho_\ve,u_\ve,e_\ve)$ with
\begin{equation*}
(\rho_\ve,u_\ve,e_\ve)(r,t) \vcentcolon
= \begin{dcases*}
(0,0,0) & if $(r,t)\in \mathcal{F}\backslash \mathcal{F}_\ve$,\\
(\rho_\ve,u_\ve,e_\ve)(r,t) & if $(r,t)\in\mathcal{F}_\ve$,
\end{dcases*}
\end{equation*}
and finally glue $\big\{(\rho_\ve,u_\ve,e_\ve)(r,t)\big\}_{\ve\in(0,1]}$ together
over parameter $\ve\in(0,1]$ by the telescoping sum:
\begin{equation*}
f = f_{\ve_1} + \sum_{i=1}^{\infty} (f_{\ve_{i+1}} - f_{\ve_i}),
\end{equation*}
for $f= \rho, u, e$, respectively, and $\ve_i\equiv \frac{1}{i}$.
The detail of this procedure is given in Propositions \ref{prop:aaExt}--\ref{prop:scHm1} in Appendix \ref{subsec:AAV}.
In addition, it is also shown in Lemma \ref{lemma:alimrho} that
\begin{equation*}
x = \int_{\ul{r}(t)}^{\tilde{r}(x,t)} \rho(r,t)\,r^m \dif r
\qquad  \text{for {\it a.e.} $(x,t)\in[0,\infty)\times[0,T]$,}
\end{equation*}
which justifies that $\ul{r}(t)$ obtained in Step \ref{item:two2} is the vacuum interface curve of our original problem \eqref{eqs:SFNS}.

\item\label{item:two4} Finally, by the estimates obtained in Step \ref{item:two1} and the convergences in Steps  \ref{item:two2}--\ref{item:two3},
we prove the weak forms of solutions listed in Theorem \ref{thm:WSAV}.
The weak forms of solutions in the Eulerian coordinates $(\x,t)\in \R^n\times[0,T]$ are shown in \S \ref{subsec:alimWF}.
\end{enumerate}

\begin{remark}
Before proceeding, we first give the following remarks{\rm :}
    \begin{enumerate}[label=(\alph*), font=\normalfont]
\item In Steps {\normalfont \ref{item:i}--\ref{item:ii}}, it is important to show that the
modified initial data sequence satisfies the entropy estimates uniformly with respect
to {\normalfont $(a,k)\in(0,1)\times\mathbb{N}$} and strongly
converges to the original initial data {\normalfont$(\rho_0,u_{0},e_0)(r)$}
in proper functional spaces. These results will be needed later in the compactness
arguments in Step {\normalfont \ref{item:vi}}, which is detailed in {\rm \S \ref{sec:WSCEx}}.

\item The main purpose of the reformulation from the Eulerian to the Lagrangian coordinates
in Step {\normalfont\ref{item:iii}} is to obtain the crucial upper and lower bounds of
the density $($see statement {\normalfont\ref{item:prio1}} of {\rm Theorem \ref{thm:priori}}
in  {\rm \S \ref{sec:rhobd})}, which are independent of $k\in\mathbb{N}$.
Note that these bounds of the density are an improvement on the result of Jiang \cite{J}.
		
\item As mentioned in Step {\normalfont \ref{item:v}}, the introduction of 
{\normalfont$C^1$} cut-off functions away from the origin leads to several hurdles in the estimates,
and these issues have also been encountered in Hoff-Jenssen \cite{HJ}.
In this paper, we resolve these issues by making use of the dissipation terms that are present
in the entropy estimate. This is detailed in {\rm Lemma \ref{lemma:euLinf}}
in {\rm \S \ref{subsec:L2total}}.

\item The purpose of the coordinate transformation back into the Eulerian domain,
introduced in Step {\normalfont\ref{item:vi}} is due to the following consideration{\rm :}
As {\normalfont $k\to\infty$}, a difficulty arises due to the nonlinear second-order spatial
differential operators in the Lagrangian forms of the momentum and internal energy equations.
Since these operators are linear in the Eulerian coordinates, this difficulty is resolved by
converting the approximate solutions, originally obtained in the Lagrangian coordinates,
into the functions
in the Eulerian domain. This allows us to take limit {\normalfont $k\to\infty$} and
obtain the desired weak forms in the Eulerian coordinates.

\item In Step {\normalfont\ref{item:vi}}, after taking limit {\normalfont $k\to\infty$},
it requires to show that the limit solution still satisfies the entropy inequality,
since it will be required in Step {\normalfont\ref{item:two1}}.
This is achieved by an application of Mazur's lemma $(${\rm Lemma \ref{lemma:Ent}}$)$
in {\rm \S \ref{subsec:klimEnt}} and {\rm Proposition \ref{prop:mazur}}.
		
\item In order to apply the compactness arguments from the high-order estimates obtained in {\normalfont\ref{item:v}},
it is crucial that $\mathcal{F}_{\ve}$ defined in \eqref{eqs:Fve} is a set strictly bounded away from the
origin, $r=0$, for each $\ve>0$.
We obtain this by assertion \eqref{eqs:rlower} in Step {\normalfont\ref{item:two2}}.
\end{enumerate}
\end{remark}

The rest of this section is organized as follows: In \S \ref{subsec:eSFNS}, we show our main existence
results for
weak solutions with large data of the exterior problem mentioned in Part \ref{item:1}.  In \S \ref{subsec:mollify}-- \S \ref{subsec:ffaprox},
we describe the detailed procedures for the modification and truncation of the initial data listed in Steps \ref{item:ii}--\ref{item:iii} above.
Finally, in \S \ref{subsec:lageqs}, we reformulate the exterior problem in the Eulerian coordinates into the approximate Lagrangian problems
in domain $(x,t)\in[0,k]\times[0,T]$ as described in Step \ref{item:iv},
and establish the corresponding global-in-time well-posedness of the unique strong solution with regular initial data.

\subsection{Mollification of the initial data in the Eulerian domain}\label{subsec:mollify}
Let $(\rho_0,u_0,e_0)(r)$, $r\in[0,\infty)$, be the initial data in Theorem \ref{thm:WSAV}.
First, we extend the initial data to
$\mathbb{R}$
by
\begin{equation}
(\hat{\rho}_0,\hat{u}_0,\hat{e}_0)(\zeta) \vcentcolon=\begin{dcases*}
(\rho_0(\zeta),u_0(\zeta),e_0(\zeta)) & if $\zeta\in[0,\infty)$,\\[1mm]
(\rho_0(-\zeta),-u_0(-\zeta),e_0(-\zeta)) & if $\zeta\in(-\infty,0)$.
\end{dcases*}
\end{equation}
Then, for each $a\in(0,1)$, let $j_{a}\in C_{\rm c}^{\infty}([-1,1])$ be
the standard 1-D mollifiers such that $\supp(j_a)\subseteq [-\frac{a}{2},\frac{a}{2}]$. We define
$$
(\hat{\rho}_{a}^0,\hat{u}_{a}^0,\hat{e}_{a}^0) \vcentcolon= ( \hat{\rho}_0 \ast j_{a}, \hat{u}_0\ast j_{a}, \hat{e}_0\ast j_{a}).
$$
It follows that, for some $C_*>0$, independent of $a\in (0,1)$,
\begin{equation}\label{prftemp0}
\begin{split}
&(\hat{\rho}_{a}^0,\hat{u}_{a}^0,\hat{e}_{a}^0 )\in C^{\infty}(\R),\\
&\hat{e}_{a}^0(\zeta)\ge C_*^{-1}, \,\,\,\,
  C_*^{-1}\le \hat{\rho}_{a}^0(\zeta) \le C_* \qquad\
   \text{for {\it a.e.} $\zeta\in\R$,}\\
&(\hat{\rho}_{a}^0,\hat{u}_{a}^0,\hat{e}_{a}^0)(\zeta) \to (\rho_0, u_0, e_0)(r)
\qquad\quad\ \text{as $a\searrow 0$ for {\it a.e.} $\zeta\in(0,\infty)$.}
\end{split}
\end{equation}

Next, let $\chi\vcentcolon[0,\infty)\to [0,1]$ be such that
$\chi\in C^{\infty}$, $\chi(\zeta)=0$ if $\zeta\le1$, and $\chi(\zeta)=1$ if $\zeta\ge 2$.
Set the cut-off functions:
$$
\chi_a(r)=\chi(\frac{r}{a}) \qquad\mbox{for each $a\in(0,1)$}.
$$
It follows that $\chi_a(r)=0$ for $r\le a$ and $\chi_a(r)=1$ for $r\ge 2a$.
With this, we define the approximate exterior initial data:
\begin{equation}\label{eqs:amolint}
(\rho_a^0, u_a^0, e_a^0)(r)
\vcentcolon= \big( (\hat{\rho}_a^0-1)\chi_a +1,\, \hat{u}_a^0\chi_a,\,
(\hat{e}_a^0-1)\chi_a+1 \big)(r) \qquad \text{for all $r\in (0,\infty)$.}
\end{equation}

\begin{proposition}\label{prop:mollify}
The approximate exterior initial data $(\rho_a^0, u_a^0, e_a^0)(r)$ satisfy
\begin{enumerate}[label=(\roman*), ref=(\roman*), font={\textnormal}]
\item\label{item:mollify1} $(\rho_{a}^0, u_{a}^0, e_{a}^0)\in C^{\infty}([0,\infty))$
and $u_{a}^0(a)= \partial_r e_{a}^0(a)=0$.

\item\label{item:mollify2} As $a\searrow 0$,
\begin{equation}\label{temp:mollify1}
\begin{split}
&\sbnorm{(\rho_{a}^0-1,\snorm{ u_{a}^0}^2, e_{a}^0-1) - (\rho_0-1,\snorm{u_0}^2, e_0-1) }_{L^2([0,\infty),r^m \dif r)} \to 0,\\
&(\rho_{a}^0, u_{a}^0, e_{a}^0)(r) \to ( \rho_0, u_0, e_0 )(r) \qquad\ \text{for a.e. $r\in[0,\infty)$.}
\end{split}
\end{equation}
\item\label{item:mollify3} There exists some constant $C_0>0$ independent of $a\in (0,1)$ such that
\begin{equation}\label{temp:mollify2}
\begin{split}
&\,\, e_a^0(r)\ge C_0^{-1}, \ \ \, C_0^{-1}\le \rho_{a}^0(r) \le C_0 \qquad\
\text{ for all $(a,r)\in(0,1)\times [0,\infty)$,}\\
&\sup\limits_{a\in(0,1)}\int_{a}^{\infty}\Big(\rho_{a}^0\big(\frac{1}{2}\snorm{u_{a}^0}^2 + \psi(e_{a}^0)\big)
   +G(\rho_a^0)  + \snorm{(\rho_a^0-1, |u_a^0|^2, e_a^0-1)}^2 \Big)(r)\,r^m \dif r \le C_0.
\end{split}
\end{equation}
\end{enumerate}
\end{proposition}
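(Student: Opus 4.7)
\textbf{Proof Plan for Proposition \ref{prop:mollify}.}

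\emph{Step 1: Smoothness and boundary conditions (part \ref{item:mollify1}).}
Since $j_a \in C_{\rm c}^\infty$, standard mollifier theory gives $(\hat\rho_a^0,\hat u_a^0,\hat e_a^0)\in C^\infty(\mathbb R)$; multiplication by the smooth cutoff $\chi_a$ and addition of constants preserves smoothness. For the boundary conditions, observe that $\chi(\zeta)\equiv 0$ on $[0,1]$ forces $\chi_a(a)=0$ and, by smoothness of $\chi$, also $\chi'(1)=0$, hence $\chi_a'(a)=a^{-1}\chi'(1)=0$. Consequently $u_a^0(a)=\hat u_a^0(a)\chi_a(a)=0$ and $\partial_r e_a^0(a)=\partial_r\hat e_a^0(a)\chi_a(a)+(\hat e_a^0(a)-1)\chi_a'(a)=0$.

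\emph{Step 2: Uniform pointwise bounds (first line of \ref{item:mollify3}).}
Writing $\rho_a^0 = \chi_a\,\hat\rho_a^0+(1-\chi_a)\cdot 1$ as a convex combination (since $\chi_a\in[0,1]$), the bounds $C_*^{-1}\le\hat\rho_a^0\le C_*$ in \eqref{prftemp0} give $\min(1,C_*^{-1})\le\rho_a^0\le\max(1,C_*)$, and similarly for $e_a^0$ using $\hat e_a^0\ge C_*^{-1}$. This yields the stated pointwise bounds with a constant $C_0$ depending only on $C_*$.

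\emph{Step 3: Uniform entropy-type integral bounds (second line of \ref{item:mollify3}).}
The crucial tool is the convexity of $G$ and $\psi$ together with Jensen's inequality. Since $G(1)=\psi(1)=0$, the convex-combination representation above gives
\begin{equation*}
G(\rho_a^0)\le\chi_a G(\hat\rho_a^0\ast j_a)\le \chi_a\bigl(G(\hat\rho_0)\ast j_a\bigr),\qquad \psi(e_a^0)\le\chi_a\bigl(\psi(\hat e_0)\ast j_a\bigr),
\end{equation*}
and analogously $|u_a^0|^2\le|\hat u_0|^2\ast j_a$, $|u_a^0|^4\le|\hat u_0|^4\ast j_a$, $|\rho_a^0-1|^2\le |\hat\rho_0-1|^2\ast j_a$, and the same for $e_a^0-1$. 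Using the pointwise bound $\rho_a^0\le C_0$ from Step~2 handles the density factor in $\rho_a^0\psi(e_a^0)$. Splitting $\int_a^\infty = \int_a^{2a}+\int_{2a}^\infty$, the weighted $L^1$ norm of a mollified function is controlled by that of the original: for $f\ge 0$ and $r\ge a$, a change of variables and the estimate $r^m\le C(r-s)^m$ for $|s|\le a/2$ yield
\begin{equation*}
\int_a^\infty (f\ast j_a)(r)\, r^m\,\dif r \;\le\; C \int_0^\infty f(r')\,(r')^m\,\dif r'.
\end{equation*}
Applying this to $G(\hat\rho_0), \psi(\hat e_0),|\hat u_0|^2,|\hat u_0|^4,|\hat\rho_0-1|^2,|\hat e_0-1|^2$ and combining with the reflection symmetry in the extension of $(\rho_0,u_0,e_0)$, one bounds every term by the corresponding original initial quantity times a constant. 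Finally, $\int_0^\infty\psi(e_0)r^m\dif r\le C_* \int_0^\infty\rho_0\psi(e_0)r^m\dif r\le C_*^{\,2}$ using $\rho_0\ge C_*^{-1}$, and likewise $\int_0^\infty|u_0|^2 r^m\dif r\le C_*\int_0^\infty\rho_0|u_0|^2 r^m\dif r$. This delivers the stated bound uniformly in $a\in(0,1)$.

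\emph{Step 4: Strong convergence (part \ref{item:mollify2}).}
The pointwise almost-everywhere convergence is immediate: for any fixed $r>0$, $\chi_a(r)\to 1$ as $a\searrow 0$, and standard mollifier theory gives $\hat f_a^0(r)\to f(r)$ for a.e.\ $r>0$. For the weighted $L^2$ convergence, split again at $r=2a$. On $[2a,\infty)$, $(\rho_a^0-1,u_a^0,e_a^0-1) = (\hat\rho_0-1,\hat u_0,\hat e_0-1)\ast j_a$, and the weighted mollifier bound of Step~3 together with the classical density of smooth functions in $L^2(r^m\,\dif r)$ gives convergence in $L^2([2a,\infty),r^m\dif r)$; the quadratic term $|u_a^0|^2$ is handled by first obtaining $L^4$ convergence of $u_a^0$ to $u_0$ (here the hypothesis $\int |u_0|^4 r^m \dif r\le C_*$ is essential) and then using $\bigl||u_a^0|^2-|u_0|^2\bigr|\le |u_a^0-u_0|(|u_a^0|+|u_0|)$. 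On the shrinking interval $[0,2a]$, absolute continuity of the integral and the uniform bounds of Step~3 force the contribution to vanish.

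\emph{Main obstacle.} The delicate point is the bound on $\int \rho_a^0\psi(e_a^0)\,r^m\,\dif r$ and on $\int |u_a^0|^4 r^m\dif r$, where mollification interacts with a nonlinearity and one cannot argue by simple convolution duality. Our resolution is the convex-combination structure of $(\rho_a^0,e_a^0)$ (which allows us to pull $\chi_a$ out by Jensen via $G(1)=\psi(1)=0$), together with Jensen's inequality applied to the convolution itself, reducing everything to unweighted convolution estimates against the original initial data.
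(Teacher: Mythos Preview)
Your proposal is correct and follows essentially the same approach as the paper's proof: convexity of $G,\psi$ with Jensen's inequality (both through the cutoff $\chi_a$ via $G(1)=\psi(1)=0$ and through the mollifier $j_a$) for the entropy bounds, the weighted convolution estimate $r^m\le 2^m(r-s)^m$ on $\{r\ge a,\ |s|\le a/2\}$ to control $\int_a^\infty(f\ast j_a)\,r^m\dif r$, and Minkowski plus density for the $L^4$ convergence of $u_a^0$ feeding into $\||u_a^0|^2-|u_0|^2\|_{L^2}$. Your observation that $\chi_a'(a)=0$ (needed for $\partial_r e_a^0(a)=0$) is a detail the paper leaves implicit in ``follows from construction.''
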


\begin{proof}
Statements \ref{item:mollify1} and
$C_*^{-1}\le \rho_{a}^0(r) \le C_*$
for all $r\in[0,\infty)$ follow immediately from construction \eqref{eqs:amolint},
while the almost everywhere convergence $\eqref{temp:mollify1}_2$ follows from \eqref{prftemp0}.

Next, by the Minkowski
inequality and \eqref{eqs:amolint}, we have
\begin{align*}
\sbnorm{u_a^0}_{L^4([0,\infty),r^m \dif r)}
\le \int_{-\frac{a}{2}}^{\frac{a}{2}} \Big( \int_{0}^{\infty} \snorm{ \chi_a(r) \hat{u}_0(r-\zeta) j_a(\zeta) }^4
  \,r^m \dif r \Big)^{\frac{1}{4}} \dif \zeta
  \le  \int_{-\frac{a}{2}}^{\frac{a}{2}} j_a(\zeta) \Big(\int_{a}^{\infty}
 \snorm{u_0(r-\zeta)}^4\,r^m \dif r\Big)^{\frac{1}{4}} \dif \zeta.
\end{align*}
Let $\xi\equiv r- \zeta$.
Then $0 \le  1 + \frac{\zeta}{\xi} \le 2$ for all $(\zeta,\xi)\in [-\frac{a}{2},\frac{a}{2}]\times [\frac{a}{2},\infty)$.
It follows that
\begin{align*}
\sbnorm{u_a^0}_{L^4([0,\infty),r^m \dif r)}
\le \int_{-\frac{a}{2}}^{\frac{a}{2}} j_a(\zeta)
 \Big( \int_{a-\zeta}^{\infty} \snorm{u_0(\xi)}^4 \xi^m ( 1+ \frac{\zeta}{\xi})^m \dif \xi \Big)^{\frac{1}{4}}
   \dif \zeta
\le 2^{\frac{m}{4}}C_* \int_{-\frac{a}{2}}^{\frac{a}{2}} j_a(\zeta) \dif \zeta = 2^{\frac{m}{4}}C_*.
\end{align*}
Applying the same argument to $(\rho_a^0-1,e_a^0-1)$ yields the third term of $\eqref{temp:mollify2}_2$.
	
Next, by the H\"older inequality,
\begin{align*}
\sbnorm{\snorm{u_a^0}^2-\snorm{u_0}^2}_{L^2([0,\infty),r^m\dif r)}\le (1 + 2^{\frac{m}{4}}) C_*
\sbnorm{u_a^0-u_0}_{L^4([0,\infty),r^m \dif r)}.
\end{align*}
From this, we now show $\sbnorm{u_a^0-u_0}_{L^4([0,\infty),r^m \dif r)}\to 0$ as $a\searrow 0$.
By construction \eqref{eqs:amolint},
\begin{align*}
\sbnorm{u_a^0-u_0}_{L^4([0,\infty),r^m \dif r)}
&\le \sbnorm{\chi_a(\hat{u}_a^0-u_0)}_{L^4([0,\infty),r^m \dif r)}
+ \Big( \int_{0}^{2a} (1-\chi_a)^4 \snorm{u_0}^4(r)\,r^m \dif r \Big)^{\frac{1}{4}}.
\end{align*}
Using the fact that $\sbnorm{u_0}_{L^4([0,\infty),r^m \dif r)}\le C_*$, $\chi_a \to 1$ as $a\searrow 0$ almost everywhere,
$0\le \chi_a\le 1$, and the dominated convergence theorem, we have
\begin{align*}
\lim\limits_{a\searrow 0}\int_{0}^{2a}
(1-\chi_a)^4 \snorm{u_0}^4(r)\,r^m \dif r =0.
\end{align*}
Thus,
we conclude that
\begin{align*}
\lim\limits_{a\searrow 0}\sbnorm{\snorm{u_a^0}^2-\snorm{u_0}^2}_{L^2([0,\infty),r^m\dif r)}
\le ( 1 + 2^{\frac{m}{4}} ) C_*
\lim\limits_{a\searrow 0}\sbnorm{\chi_a(\hat{u}_a^0-u_0)}_{L^4([0,\infty),r^m \dif r)}.
\end{align*}
Therefore, it is left to prove the limit:
$\sbnorm{\chi_a(\hat{u}_a^0-u_0)}_{L^4([0,\infty),r^m \dif r)} \to 0$ as $a\searrow 0$.
To show this, it follows from the Minkowski
inequality that
\begin{align*}
\sbnorm{\chi_a(\hat{u}_a^0-u_0)}_{L^4([0,\infty),r^m \dif r)}
&\le \int_{-\frac{a}{2}}^{\frac{a}{2}} j_a(\zeta)
\Big( \int_{a}^{\infty} \snorm{u_0(r-\zeta)-u_0(r)}^4\, r^m \dif r \Big)^{\frac{1}{4}} \dif \zeta\\
&\le  2^{\frac{m}{4}}\int_{-\frac{a}{2}}^{\frac{a}{2}} j_a(\zeta)
\Big( \int_{a-\zeta}^{\infty} \snorm{u_0(\xi)-u_0(\xi+\zeta)}^4\,\xi^m \dif \xi \Big)^{\frac{1}{4}}\dif\zeta,
\end{align*}
where we have used that $0 \le  1 + \frac{\zeta}{\xi}\le 2$
if $\zeta\in[-\frac{a}{2},\frac{a}{2}]$ and $\xi\ge a-\zeta$.
Now, by the separability of
space
$L^4( [0,\infty),r^m \dif r )$, for each $\delta>0$, there exists $h_\delta\in C_{\rm c}^{\infty}([0,\infty))$ such that
$\sbnorm{h_\delta-\hat{u}_0}_{L^4([0,\infty),r^m \dif r)}\le 3^{-\frac{m}{4}} \frac{\delta}{4}$. 
By the triangle inequality,
it follows that
\begin{align*}
2^{-\frac{m}{4}}\sbnorm{\chi_a(\hat{u}_a^0-u_0)}_{L^4([0,\infty),r^m \dif r)}
&\le \int_{-\frac{a}{2}}^{\frac{a}{2}}j_a(\zeta) \Big( \int_{\frac{a}{2}}^{\infty} \snorm{u_0(\xi)-h_\delta(\xi)}^4
 \,\xi^m \dif \xi \Big)^{\frac{1}{4}}  \dif \zeta\\
&\quad + \int_{-\frac{a}{2}}^{\frac{a}{2}}j_a(\zeta) \Big( \int_{\frac{a}{2}}^{\infty}
\snorm{h_\delta(\xi)-h_\delta(\xi+\zeta)}^4\,\xi^m \dif \xi \Big)^{\frac{1}{4}} \dif \zeta \\
&\quad + \int_{-\frac{a}{2}}^{\frac{a}{2}} j_a(\zeta) \Big( \int_{a-\zeta}^{\infty} \snorm{h_\delta(\xi+\zeta)-u_0(\xi+\zeta)}^4\,\xi^m \dif \xi \Big)^{\frac{1}{4}} \dif \zeta\\
&=\vcentcolon \sum_{i=1}^3 I_i.
\end{align*}
First, $I_1$ and $I_3$ are estimated as:
\begin{equation*}
\begin{aligned}
I_1\vcentcolon
=& \int_{-\frac{a}{2}}^{\frac{a}{2}} j_a(\zeta) \Big( \int_{\frac{a}{2}}^{\infty}
  \snorm{u_0(\xi)-h_\delta(\xi)}^4\,\xi^m \dif \xi \Big)^{\frac{1}{4}} \dif \zeta
  \le \dfrac{\delta}{4} \int_{-\frac{a}{2}}^{\frac{a}{2}} j_a(\zeta)\,\dif \zeta = \dfrac{\delta}{4},\\
I_3
\le&\, 3^{\frac{m}{4}} \int_{-\frac{a}{2}}^{\frac{a}{2}} j_a(\zeta)
\Big( \int_{a}^{\infty} \snorm{h_\delta(r)-u_0(r)}^4\,r^m  \dif r \Big)^{\frac{1}{4}} \dif \zeta
\le \dfrac{\delta}{4} \int_{-\frac{a}{2}}^{\frac{a}{2}} j_a(\zeta)\,\dif \zeta = \dfrac{\delta}{4},
\end{aligned}
\end{equation*}
where we have used the fact that $\frac{1}{2} \le 1 - \frac{\zeta}{r} \le \frac{3}{2}$
for all $(\zeta,r)\in [-\frac{a}{2},\frac{a}{2}]\times [a,\infty)$.
Since $h_\delta$ is compactly supported for each $\delta>0$,
there exists $a_{\delta}\in(0,1)$ such that
\begin{align*}
I_2 \vcentcolon= \int_{-\frac{a}{2}}^{\frac{a}{2}} j_a(\zeta)
\Big( \int_{\frac{a}{2}}^{\infty} \snorm{h_\delta(\xi)-h_\delta(\xi+\zeta)}^4\,\xi^m \dif \xi \Big)^{\frac{1}{4}} \dif \zeta
\le \dfrac{\delta}{2}\int_{-\frac{a}{2}}^{\frac{a}{2}} j_a\,\dif \zeta
= \dfrac{\delta}{2}
\qquad \text{ if $a\in(0,a_{\delta})$.}
\end{align*}
Thus, for given $\delta>0$, there exists $a_\delta\in(0,1)$ such that $2^{-\frac{m}{4}}\sbnorm{\chi_a(\hat{u}_a^0-u_0)}_{L^4([0,\infty),r^m \dif r)} \le\delta$ for all $a\in(0,a_\delta)$.
This implies that
$\lim_{a\searrow 0}\sbnorm{\snorm{u_a^0}^2-\snorm{u_0}^2}_{L^2([0,\infty),r^m\dif r)}=0$.
The proof for $\rho_a^0$ and $e_a^0$ follows from the same argument.

Next, by the Jensen inequality and the fact
that $G(\zeta), \psi(\zeta)$, and $\snorm{\zeta}^2$ are convex functions of $\zeta$,
it follows that
$G(\rho_{a}^0)\le G(\hat{\rho}_0) \ast j_{a}$, $\psi(e_{a}^0)\le \psi (\hat{e}_0) \ast j_{a}$,
and $\snorm{u_{a}^0} \le \snorm{\hat{u}_0}^2\ast j_{a}$ for $r\in(0,\infty)$.
Using Fubini-Tonelli's theorem, it follows that
\begin{align*}
\int_{a}^{\infty} G(\rho_a^0)(r)\,r^m \dif r
= \int_{-\frac{a}{2}}^{\frac{a}{2}} \Big(\int_{a-\zeta}^{\infty} G(\rho_0)(\xi) (\zeta+\xi)^m \dif \xi\Big)
  j_{a}(\zeta)\, \dif \zeta
\le 2^m \int_{-\frac{a}{2}}^{\frac{a}{2}} C_* j_{a}(\zeta)\, \dif \zeta = 2^m C_*,
\end{align*}
where we have used the initial condition $\eqref{eqs:init}_2$. In a similar way, we have
\begin{align*}
\int_{a}^{\infty} \rho_{a}^0 \psi(e_{a}^0) (r) r^m \dif r
\le 2^m C_* \int_{-\frac{a}{2}}^{\frac{a}{2}}
\Big(\int_{0}^{\infty} \rho_0\psi(e_0) (\xi) \xi^m  \dif \xi\Big) j_a(\zeta) \,\dif \zeta \le 2^m C_*^3.
\end{align*}
Similarly, we also obtain
$$
\sbnorm{\rho_a^0 \snorm{u_a^0}^2}_{L^1([0,\infty),r^m\dif r)}
\le 2^m C_*^3.
$$
Setting $C_0= 2^m C_*^3$,
we complete the proof.
\end{proof}

\subsection{The approximation problem in the exterior Eulerian domain}\label{subsec:eSFNS}
To resolve the coordinate singularity at the origin, which poses a significant difficulty
for the existence of solutions, we first consider the approximation problem
in the exterior domain: $\Omega_{a,T}\vcentcolon=[a,\infty)\times[0,T]$ for fixed $a\in(0,1)$,
and find a solution $(\rho_a,u_a,e_a)$ of the exterior problem for system \eqref{eqs:SFNS} over
domain $\Omega_{a,T}$ with the following initial-boundary conditions:
\begin{equation}\label{eqs:eSFNS}
\begin{cases}
u_a(a,t)=0, \quad \partial_r e_a(a,t)=0 \qquad & \text{for $t\in[0,T]$},\\[1mm]
(\rho_a,u_a,e_a)(r,0)=(\rho_a^0,u_a^0,e_a^0)(r) \qquad & \text{for $r\in[a,\infty)$}.
\end{cases}
\end{equation}

Now the boundary conditions in \eqref{eqs:eSFNS}
can be interpreted physically as the presence of an insulating ball at the origin,
with no slip-boundary condition for the velocity field and gradient field of the internal energy. Moreover,
the initial data
are given by $(\rho_a^0,u_a^{0},e_{a}^0)(r)$ constructed in \S \ref{subsec:mollify} above.

To prove Theorem \ref{thm:WSAV}, the existence of weak solutions of
the exterior problem \eqref{eqs:SFNS} and \eqref{eqs:eSFNS} for each $a\in(0,1)$ is first obtained
so that some compactness arguments
can be applied when $a\searrow 0$.

\begin{definition}[Spherically Symmetric Weak Solutions in the Exterior Domain]\label{def:SWeak}
Let
\begin{align*}
\mathcal{D}^a\vcentcolon
=&\big\{ \phi\in C^{\infty}\big([a,\infty)\times[0,T]\big)\,\vcentcolon\, \exists N>a\, \ \text{such that} \ \phi(r,t)=0 \
  \text{for $r\ge N$}\big\},\\[1mm]
\mathcal{D}_0^a \vcentcolon
=&\big\{ \phi\in \mathcal{D}^a\,\vcentcolon\, \phi(a,t)=0 \ \text{ for all $t\in[0,T]$} \big\}.
\end{align*}
Then $(\rho_a,u_a,e_a)(r,t)$ is a weak solution of problem \eqref{eqs:SFNS} and \eqref{eqs:eSFNS}, 
provided that
\begin{enumerate}[label=(\roman*),ref=(\roman*),font={\normalfont\rmfamily}]
\item\label{item:SWeak1} For each $\phi\in \mathcal{D}^a$,
\begin{align*}
&\int_{a}^{\infty} \rho_a(r,t) \phi(r,t)\,r^m \dif r
  - \int_{a}^{\infty} \rho_a^0(r) \phi(r,0)\,r^m \dif r
=\int_{0}^{t}\int_{a}^{\infty} \big(\rho_a \partial_t \phi + \rho_a u_a \partial_r \phi\big)\,r^m \dif r \dif s.
\end{align*}

\item\label{item:SWeak2} For each $\phi\in \mathcal{D}^a_0$,
\begin{align*}
&\int_{a}^{\infty} (\rho_a u_a)\phi(r,t)\,r^m \dif r
- \int_{a}^{\infty} (\rho_a^0 u_a^0)(r) \phi(r,0)\,r^m \dif r
- \int_{0}^{t}\int_{a}^{\infty} \rho_a u_a\big(\partial_t \phi + u_a \partial_r \phi\big)\,r^m \dif r \dif s \\
&= \int_{0}^{t}\int_{a}^{\infty} \Big( P_a - \beta \big( \partial_r u_a + m \dfrac{u_a}{r}\big)\Big) \big( \partial_r \phi + m\dfrac{\phi}{r}\big)\,r^m \dif r \dif s,
\end{align*}
where  $\beta\vcentcolon= 2\mu + \lambda$.

\smallskip
\item\label{item:SWeak3} For each $\phi\in \mathcal{D}^a$, 
\begin{align*}
&\int_{a}^{\infty} (\rho_a E_a)(r,t) \phi (r,t)\,r^m \dif r
 - \int_{a}^{\infty} (\rho_a^0 E_a^0)(r) \phi(r,0)\,r^m \dif r\\
&= \int_{0}^{t}\int_{a}^{\infty}  \big\{ \rho_a E_a \partial_t \phi +(\rho_a E_a + P_a ) u_a \partial_r \phi \big\}
\,r^m  \dif r \dif s\\
&\quad -\int_{0}^{t} \int_{a}^{\infty} \Big\{  2\mu u_a \partial_r u_a +\lambda u_a \big( \partial_r u_a + m \dfrac{u_a}{r} \big)
 + \kappa  \partial_r e_a \Big\} \partial_r\phi
 \,r^m \dif r \dif s,
\end{align*}
where $P_a\vcentcolon= (\gamma-1) \rho_a e_a$, $E_a\vcentcolon=\frac{1}{2}\snorm{u_a}^2 + e_a$, and $E_a^0\vcentcolon= \frac{1}{2}\snorm{u_a^0}^2 + e_a^0$.
\end{enumerate}
\end{definition}

From now on, we denote $\sigma=\sigma(t)\vcentcolon= \min\{1,t\}$.
Moreover, for any function $y(t)\vcentcolon [0,T] \to [0,\infty)$, we denote
\begin{equation*}
\begin{split}
\mathcal{E}[\rho,u,e;y](T)
\vcentcolon=&\sup_{ t\in [0,T]} \int_{y(t)}^{\infty}
 \snorm{( \rho-1, u^2 , e-1, \sqrt{\sigma} \partial_r u,\sigma \partial_r e)}^2(r,t)\,r^m \dif r\\
&+ \int_{0}^{T}\int_{y(t)}^{\infty}
\snorm{(\partial_r u, \partial_r e, u \partial_r u,\sqrt{\sigma} \partial_t u, \sigma\partial_t e)}^2
\,r^m\dif r \dif t.
\end{split}
\end{equation*}

\begin{theorem}\label{thm:WSCEx}
For fixed $a\in(0,1)$, let $(\rho_a^0,u_a^0,e_a^0)(r)$ be the initial data constructed in \textnormal{Proposition \ref{prop:mollify}}. Then, for each $T>0$, there exists a weak solution
$(\rho_a,u_a,e_a)(r,t)$ of the exterior problem \eqref{eqs:SFNS} and \eqref{eqs:eSFNS}.
Moreover, the weak solution $(\rho_a,u_a,e_a)(r,t)$ satisfies the following properties{\rm :}
\begin{enumerate}[label=(\roman*),ref=(\roman*),font={\normalfont\rmfamily}]
\item\label{item:WSCEx1} The entropy inequality holds\textnormal{:}
\begin{equation*}
\begin{aligned}
\quad \ &\esssup\limits_{t\in[0,T]}\int_{a}^{\infty}
\Big(\rho_a\big(\frac{1}{2}\snorm{u_a}^2+ \psi(e_a)\big)+ (\gamma-1)G(\rho_a) \Big)(r,t)\,r^m \dif r
+ \kappa\int_{0}^{T}\int_{a}^{\infty}\dfrac{\snorm{\partial_r e_a}^2}{ e_a^2}\,r^m \dif r\dif t \\
&\,\,+ \int_{0}^{T}\int_{a}^{\infty} \Big\{\big(\dfrac{2\mu}{n}+\lambda \big)\dfrac{\snorm{\partial_r u_a+m \frac{u_a}{r}}^2}{e_a} + \dfrac{2m\mu}{n} \dfrac{\snorm{\partial_r u_a-\frac{u_a}{r}}^2}{e_a}\Big\}\,r^m\dif r\dif t \le C(T).
\end{aligned}
\end{equation*}

\smallskip
\item\label{item:WSCEx2}
$\mathcal{E}[\rho_a,u_a,e_a;a](T)\le C(a)$ and, for a.e. $(r,t)\in [a,\infty)\times[0,T]$,
\begin{equation*}
\qquad\quad \ C^{-1}(a) \le \rho_a(r,t) \le C(a),
\ \snorm{u_a(r,t)} \le C(a)\sigma^{-\frac{1}{4}}(t), \ C^{-1}(a) \le e_a(r,t) \le C(a)\sigma^{-\frac{1}{2}}(t).
\end{equation*}

\item\label{item:WSCEx3} There exists a continuous function $(x,t)\mapsto \tilde{r}_a(x,t) \vcentcolon [0,\infty)\times[0,T] \to [a,\infty)$
such that $x\mapsto \tilde{r}_a(x,t)$ is strictly increasing for each $t\in[0,T]$ and
\begin{equation*}
\begin{dcases*}
\int_{a}^{\tilde{r}_a(x,t)} \rho_a(r,t)\,r^m \dif r =x  &\quad for a.e. $(x,t)\in[0,\infty)\times[0,T]$,\\
\tilde{r}_a(x,t) = \tilde{r}_a^0(x) + \int_{0}^{t} u_a(\tilde{r}_a(x,s),s)\,\dif s &\quad for all $(x,t)\in[0,\infty)\times[0,T]$,\\
n x \psi^{-1}_{-}(\dfrac{C_0}{x})
\le ({\tilde{r}}_a)^{n}(x,t) \le C_0( 1 +x ) &\quad for all $(x,t)\in[0,\infty)\times[0,T]$,
\end{dcases*}
\end{equation*}
where the function $x\mapsto\tilde{r}_a^0(x) \vcentcolon [0,\infty)\to [a,\infty)$ is implicitly defined as
\begin{equation*}
x= \int_{a}^{\tilde{r}_a^0(x)} \rho_a^0(r)\,r^m \dif r \qquad
\text{for each $x\in[0,\infty)$.}
\end{equation*}

\smallskip		
\item\label{item:WSCEx4} For each $\ve\in(0,1]$,
$\mathcal{E}[\rho_a,u_a,e_a;\tilde{r}_a(\ve,\cdot)](T) \le C(\ve)$ and
\begin{equation*}
\begin{dcases}
\snorm{\tilde{r}_a(x_1,t)-\tilde{r}_a(x_2,t)} \le C(\ve) \snorm{x_1-x_2} &\quad \text{for $(x_1,x_2,t)\in [\ve,\infty)^2\times[0,T]$},  \\
\snorm{\tilde{r}_a(x,t_1)-\tilde{r}_a(x,t_2)} \le C(\ve) \snorm{t_1^{\frac{3}{4}}-t_2^{\frac{3}{4}}}\,\, &\quad \text{for $(x,t_1,t_2)\in [\ve,\infty)\times[0,T]^2$}.
\end{dcases}
\end{equation*}
Furthermore, for a.e. $(r,t)$ such that $t\in[0,T]$ and $r\in [\tilde{r}_a(\ve,t),\infty)${\rm :}
\begin{equation*}
C^{-1}(\ve) \le \rho_a(r,t) \le C(\ve),\quad \snorm{u_a(r,t)} \le C(\ve)\sigma^{-\frac{1}{4}}(t),
\quad  e_a(r,t) \le C(\ve)\sigma^{-\frac{1}{2}}(t).
\end{equation*}
In addition,
for all $r_1,\,r_2\in [\tilde{r}_a(\ve,t),\infty)$ and $t\in[0,T]$,
\begin{equation*}
\sigma^{\frac{1}{2}}(t)\snorm{u_a(r_1,t)-u_a(r_2,t)} + \sigma(t)\snorm{e_a(r_1,t)-e_a(r_2,t)} \le C(\ve)\snorm{r_1-r_2}^{\frac{1}{2}},
\end{equation*}
and, for all $(r,t_1,t_2)$ such that $0<t_1<t_2\le T$
and $r\ge \sup_{t_1\le t \le t_2}\tilde{r}_a(\ve,t)$, 
\begin{equation*}
\sigma^{\frac{1}{2}}(t_1)\snorm{u_a(r,t_1)-u_a(r,t_2)} + \sigma(t_1) \snorm{e_a(r,t_1)-e_a(r,t_2)}\le C(\ve)\snorm{t_2-t_1}^{\frac{1}{4}}.
\end{equation*}
		
\item\label{item:WSCEx5} For each connected interval $I\subseteq [0,\infty)$, denote $\tilde{H}_0^1(I,r^m\dif r)$ as the closure of
\begin{align*}
\quad\,\, \, \mathcal{D}_0(I)\vcentcolon =\big\{ \phi\in C^{\infty}(I)\vcentcolon\,\exists N>0
\ \text{such that $[0,N]\subset I$ and $\phi(r)=0$ for $r\in I\cap[N,\infty)$}\big\}
\end{align*}
via the $H^1(I,r^m\dif r)$--norm, and denote its dual space as $\tilde{H}^{-1}( I, r^m \dif r)$.
For each $\ve>0$, define $\rho_a^{(\ve)}(r,t) \vcentcolon = \rho_a(r,t) \chi_{\ve}^a(r,t)$
with the indicator function $\chi_{\ve}^a(r,t)${\rm :}
\begin{equation*}
\chi_{\ve}^a(r,t) \vcentcolon = \begin{dcases*}
1 & if $t\in[0,T]$ and $r\in [\tilde{r}_a(\ve,t),\infty)$,\\
0 & otherwise.
\end{dcases*}
\end{equation*}
Then, for all
$L\in\mathbb{N}$, $\rho_{a}^{(\ve)}\in C^{0}\big([0,T];\tilde{H}^{-1}([0,L],r^m \dif r) \big)$ and
\begin{equation*}
\sbnorm{\rho_a^{(\ve)}(\cdot,t_1)-\rho_a^{(\ve)}(\cdot,t_2)}_{\tilde{H}^{-1}([0,L],r^m\dif r)} \le C(\ve)\snorm{t_1-t_2} \qquad\, \text{for all $t_1, t_2 \in [0,T]$.}
\end{equation*}

\smallskip
\item\label{item:WSCEx6} For each $\eta\in (a,1)$, the following estimates hold\textnormal{:}
\begin{equation*}
\qquad \ \begin{dcases*}
\int_{0}^{T} \sup_{r\ge\eta}\dfrac{\snorm{u_a}}{\sqrt{e_a}}(r,t)\,\dif t
  \le C(T)\big(\eta^{\frac{2-n}{2}}+ \eta^{2-n}\big) & if $n=2$, $3$,\\
\int_{0}^{T} \sup_{r\ge \eta} \log \big( \max\big\{ 1, e_a(r,t)\big\} \big)\, \dif t
  \le C(T)\big( 1 + \sqrt{|\log\eta|}
  \big) & if $n=2$,\\
\int_{0}^{T} \sup_{r\ge\eta} \log \big( \max \big\{ 1, e_a^{\pm 1}(r,t) \big\} \big)\,\dif t
  \le C(T) \eta^{2-n} & if $n=3$.
\end{dcases*}
\end{equation*}
\end{enumerate}
\end{theorem}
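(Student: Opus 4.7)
The plan is to obtain $(\rho_a,u_a,e_a)$ as the limit, as $k\to\infty$, of a sequence of smooth solutions to initial-boundary value problems posed on bounded annuli $[a,b_k]$ with $b_k\nearrow\infty$, following the roadmap described in Steps \ref{item:ii}--\ref{item:vi} of \S \ref{subsec:MS}. First I would truncate the mollified initial data $(\rho_a^0,u_a^0,e_a^0)$ near the far field, obtaining data $(\rho_{a,k}^0,u_{a,k}^0,e_{a,k}^0)$ that agree with $(1,0,1)$ for $r\ge b_k$ and satisfy the bounds of Proposition \ref{prop:mollify} with a constant $C_0$ independent of $k$. I would then reformulate the IBVP on $[a,b_k]$ in the spherically symmetric Lagrangian coordinates $x\in[0,k]$ (where $k$ is the total mass in $[a,b_k]$); in these coordinates the system becomes a quasilinear parabolic system for the specific volume $\tilde v_{a,k}=\tilde\rho_{a,k}^{-1}$, velocity $\tilde u_{a,k}$, and internal energy $\tilde e_{a,k}$ on a fixed interval, and classical theory (\emph{cf}.\ Kazhikhov-Shelukhin \cite{KS}, Kawashima-Nishida \cite{KN}) yields a unique global smooth solution since the initial data in each finite annulus is smooth and bounded away from vacuum.

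Next I would derive the key \emph{a priori} estimates from \S \ref{sec:rhobd}, which must be done in a carefully ordered sequence. The entropy estimate \ref{item:WSCEx1} is obtained first by multiplying the momentum equation by $u_a$ and the internal energy equation by $\psi'(e_a)=1-e_a^{-1}$ and integrating; the crucial point is that the boundary terms at $r=a$ vanish because of the no-slip and insulated conditions, and the far-field terms are handled using that $(\rho_{a,k}^0,u_{a,k}^0,e_{a,k}^0)=(1,0,1)$ outside a compact set. Combining this with the structure of the continuity and momentum equations in Lagrangian form yields pointwise upper and lower bounds on $\tilde v_{a,k}$, hence on $\tilde\rho_{a,k}$; the explicit integral representation allows these bounds to be made independent of $k$, while restricting to $x\ge\varepsilon$ gives bounds independent of $a$ as well. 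With the density bounds in hand, standard parabolic arguments applied to the momentum and energy equations, combined with cut-off functions $g_\varepsilon(x)$ near $x=0$, yield the higher-order estimates in \ref{item:WSCEx2} and \ref{item:WSCEx4}; the problematic boundary integrals near $x=\varepsilon$ generated by differentiating the cut-off are absorbed using the dissipation terms from the entropy inequality, as sketched in the outline.

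Once these uniform estimates are established, I would pass to the limit $k\to\infty$. Transforming back to the Eulerian coordinates via $\mathcal T_{a,k}$ and extending by the cut-off formula \eqref{eqs:extfunc} produces $(\rho_{a,k},u_{a,k},e_{a,k})(r,t)$ defined on the whole exterior domain $[a,\infty)\times[0,T]$. Using the regularity estimates from \ref{item:WSCEx4} (which give H\"older equicontinuity of $u_{a,k}$ and $e_{a,k}$ away from the origin), together with the Arzel\`a-Ascoli theorem and a standard diagonal argument over an exhaustion by compact sets, I extract a subsequence $k_j\to\infty$ along which $(u_{a,k_j},e_{a,k_j})$ converges locally uniformly and $\rho_{a,k_j}$ converges weakly-$\ast$ in $L^\infty_{\rm loc}$ (with strong convergence in $C^0([0,T];\tilde H^{-1})$ via \ref{item:WSCEx5} and the Aubin-Lions lemma, giving the strong compactness needed to pass to the limit in the nonlinear products $\rho_a u_a$, $\rho_a u_a^2$, $\rho_a e_a$). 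The weak forms in Definition \ref{def:SWeak} then follow by passing to the limit in the corresponding identities for the approximate solutions. Properties \ref{item:WSCEx3} on the particle paths follow from the continuity equation and the density bounds, and \ref{item:WSCEx6} is obtained by combining the Sobolev-type estimate $\sup_{r\ge\eta}|f|^2\lesssim\eta^{2-n}\int|f\,\partial_r f|\,r^m dr$ with the entropy and energy estimates.

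The principal obstacle, which I expect to occupy the bulk of the technical work, is the recovery of the entropy inequality \ref{item:WSCEx1} after passing to the limit $k\to\infty$. The entropy dissipation terms are convex quadratic functionals of $(\partial_r u_a, \partial_r e_a, u_a/r)$, but weak convergence does not in general preserve such functionals. I would resolve this by invoking Mazur's lemma together with the convexity of the entropy functional (as alluded to in the abstract and in Step \ref{item:vi}): passing to convex combinations of the $(\rho_{a,k_j},u_{a,k_j},e_{a,k_j})$ that converge strongly in the appropriate Sobolev norms, applying the entropy inequality to each convex combination, and using lower semicontinuity to pass to the limit. This argument will be carried out in detail in \S \ref{subsec:klimEnt}.
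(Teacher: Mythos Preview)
Your proposal is correct and follows essentially the same route as the paper: truncation of the initial data in $k$, reformulation in Lagrangian coordinates on $[0,k]$, classical global existence on the bounded annulus, the entropy estimate followed by the pointwise density bounds and cut-off higher-order estimates, transformation back to Eulerian coordinates with the extension \eqref{eqs:extfunc}, and finally the limit $k\to\infty$ via Arzel\`a--Ascoli/diagonal arguments for $(u,e)$ and $\tilde H^{-1}$ compactness for $\rho$.

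One small simplification relative to your outline: the recovery of the entropy inequality after $k\to\infty$ is less delicate than you anticipate. For the dissipation terms the paper does \emph{not} invoke Mazur's lemma; since $e_k\to e$ and $u_k\to u$ locally uniformly (hence $e_k^{-1}\to e^{-1}$ strongly) while $\partial_r u_k,\partial_r e_k$ converge weakly in $L^2$, one has weak $L^2$ convergence of $e_k^{-1}\partial_r e_k$, $e_k^{-1/2}(\partial_r u_k\pm m u_k/r)$, and ordinary weak lower semicontinuity of the $L^2$ norm suffices. Mazur's lemma (via Proposition~\ref{prop:mazur}) is used only for the term $\int G(\rho)\,r^m\dif r$, where one has merely weak convergence of $\rho_k$ and must exploit the convexity of $G$.
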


\begin{remark}
For the smooth initial data $(\rho_a^0,u_a^0,e_a^0)$ constructed
in {\rm \S \ref{subsec:mollify}}, it can be shown that there exists a unique global-in-time classical
solution of problem \eqref{eqs:SFNS} and \eqref{eqs:eSFNS} \textnormal{(}see \cite{J}\textnormal{)}. In this paper, since the higher regularity estimates for establishing  classical solutions will not be needed
for the proof of the main theorem, we omit them for simplicity.
\end{remark}

\subsection{Truncation of the initial data in the exterior Eulerian domain}\label{subsec:ffaprox}
Since $(\rho_a,u_a,e_a)$ in Theorem \ref{thm:WSCEx} is obtained as the limit of solutions
of the approximate problems posed in bounded annular domains, we need to construct the corresponding approximate initial data, which is the main purpose of this subsection.
In particular, we construct $(\rho_{a,k}^0,u_{a,k}^0,e_{a,k}^0)$ for $k\in\mathbb{N}$ by truncating $(\rho_a^{0},u_a^0,e_a^0)$ with suitable cut-off functions $\varphi_{a,k}^0$.

We first consider the function:
$r \mapsto\, \int_{a}^{r} \rho_a^0(\zeta)\,\zeta^m \dif \zeta$,
which is a strictly increasing continuous function since $C_0^{-1} \le \rho_a^0(r) \le C_0$.
Hence, the inverse function $\tilde{r}_a^0(x)\vcentcolon [0,\infty)\to [a,\infty)$ exists so that
\begin{equation}\label{eqs:intPath}
	x = \int_{a}^{\tilde{r}_a^0(x)} \rho_a^0(r)\,r^m \dif r\, \qquad \text{for each $x\in[0,\infty)$.}
\end{equation}
Moreover, due to the regularity of $\rho_a^0$, it follows from the inverse function theorem
that $\tilde{r}_a^0(x) \vcentcolon [0,\infty)\to [a,\infty)$ is a smooth, bijective, strictly increasing map.

From now on, we use $\D_x$ to represent the derivative with respect to $x\in[0,\infty)$.

\begin{proposition}\label{prop:r0diff}
For all $x\in [0,\infty)$,
\begin{equation}\label{3,2a}
\begin{aligned}
&\D_x \tilde{r}_a^0(x) = \dfrac{1}{(\tilde{r}_a^0)^m(x)\rho_a^0(\tilde{r}_a^0)(x)},\qquad
\ \tilde{r}_a^0(x) = \Big(a^n + n \int_{0}^{x} \dfrac{1}{\rho_a^0(\tilde{r}_a^0)(y)} \dif y\Big)^{\frac{1}{n}}.
\end{aligned}
\end{equation}
\end{proposition}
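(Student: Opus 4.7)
The plan is a direct application of the inverse function theorem to the implicit definition \eqref{eqs:intPath}, followed by a change-of-variables trick exploiting the identity $m=n-1$.

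First, I would observe that the map $r\mapsto F(r):=\int_{a}^{r}\rho_a^0(\zeta)\zeta^m\,\dif\zeta$ is $C^\infty$ on $[a,\infty)$ by the smoothness of $\rho_a^0$ established in Proposition~\ref{prop:mollify}\ref{item:mollify1}, and its derivative $F'(r)=\rho_a^0(r)\,r^m$ is strictly positive on $(a,\infty)$ by the uniform lower bound $\rho_a^0\ge C_0^{-1}$ from Proposition~\ref{prop:mollify}\ref{item:mollify3}. Therefore $F$ is a $C^\infty$ diffeomorphism from $[a,\infty)$ onto $[0,\infty)$, so $\tilde{r}_a^0=F^{-1}$ is a smooth strictly increasing bijection, as already noted in the text. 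Differentiating \eqref{eqs:intPath} in $x$ via the chain rule then yields
\begin{equation*}
1=\rho_a^0(\tilde r_a^0(x))\,(\tilde r_a^0(x))^m\,\D_x\tilde r_a^0(x),
\end{equation*}
which is the first identity in \eqref{3,2a}. At $x=0$ the identity should be understood as a boundary limit since $\tilde r_a^0(0)=a$ may coincide with the boundary of the support; continuity of $\D_x\tilde r_a^0$ extends it to the endpoint.

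For the second identity, since $m=n-1$, I would write $(\tilde r_a^0)^m\D_x\tilde r_a^0=\tfrac{1}{n}\D_x\bigl((\tilde r_a^0)^n\bigr)$, so the first identity can be recast as
\begin{equation*}
\D_x\!\bigl((\tilde r_a^0(x))^n\bigr)=\dfrac{n}{\rho_a^0(\tilde r_a^0(x))}.
\end{equation*}
Integrating from $0$ to $x$ and using the initial value $\tilde r_a^0(0)=a$ (obtained from \eqref{eqs:intPath} at $x=0$) gives
\begin{equation*}
(\tilde r_a^0(x))^n=a^n+n\int_0^x\dfrac{1}{\rho_a^0(\tilde r_a^0(y))}\,\dif y,
\end{equation*}
and taking the $n$-th root produces the second identity in \eqref{3,2a}.

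There is essentially no obstacle here: the whole statement is a bookkeeping consequence of the inverse function theorem combined with the algebraic simplification $m+1=n$. The only minor point worth flagging is the behavior at $x=0$, which is handled by continuity since $\rho_a^0$ is bounded and bounded below uniformly by Proposition~\ref{prop:mollify}\ref{item:mollify3}, so both sides of each identity are continuous at $x=0$.
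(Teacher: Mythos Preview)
Your proof is correct and follows essentially the same approach as the paper: both obtain the first identity from the inverse function theorem applied to $F(r)=\int_a^r\rho_a^0(\zeta)\zeta^m\,\dif\zeta$, then rewrite $(\tilde r_a^0)^m\D_x\tilde r_a^0=\tfrac{1}{n}\D_x(\tilde r_a^0)^n$ and integrate using $\tilde r_a^0(0)=a$. The only cosmetic difference is that the paper unpacks the inverse function derivative formula by hand via explicit difference-quotient limits, whereas you simply invoke the inverse function theorem and chain rule; the content is identical.
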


\begin{proof}
Since $\rho_a^0(r)\in C^{\infty}$ and $C_0^{-1}\le \rho_a^0(r)\le C_0$ by construction \eqref{eqs:amolint}, it follows that
\begin{align*}
\lim\limits_{r\to \tilde{r}_a^0(x)}\dfrac{1}{r-\tilde{r}_a^0(x)}\int_{\tilde{r}_a^0(x)}^{r}
\rho_a^0(\zeta)\,\zeta^m \dif \zeta
=(\tilde{r}_a^0)^m(x)\rho_a^0(\tilde{r}_a^0(x))
\qquad \text{for each $x\in[0,\infty)$.}
\end{align*}
Since
$y\mapsto \tilde{r}_a^0(y)$ is continuous, then, for
all $x\in[0,\infty)$,
\begin{align*}
\lim\limits_{y\to x}\dfrac{1}{\tilde{r}_a^0(y)-\tilde{r}_a^0(x)}\int_{\tilde{r}_a^0(x)}^{\tilde{r}_a^0(y)} \rho_a^0(\zeta)\,\zeta^m \dif \zeta
=(\tilde{r}_a^0)^m(x) \rho_a^0(\tilde{r}_a^0)(x).
\end{align*}
Notice that, by \eqref{eqs:intPath}, 
\begin{align*}
\dfrac{\tilde{r}_a^0(y)-\tilde{r}_a^0(x)}{y-x}
\Big(\dfrac{1}{\tilde{r}_a^0(y)-\tilde{r}_a^0(x)}
\int_{\tilde{r}_a^0(x)}^{\tilde{r}_a^0(y)}\rho_a^0(r)\,r^m \dif r\Big)
= \dfrac{1}{y-x} \int_{\tilde{r}_a^0(x)}^{\tilde{r}_a^0(y)}\rho_a^0(r)\,r^m \dif r = 1.
\end{align*}
Then, using the fact that
$0 < C_0^{-1} a^m \le  \rho_0(\tilde{r}_0)(y)\tilde{r}_0^{\,m}(y)
\le  C_0 \tilde{r}_0^{\,m}(y) < \infty$ for all $y\in[0,\infty)$, it follows that, for all $x\in[0,\infty)$,
\begin{align*}
\lim\limits_{y\to\ x} \dfrac{\tilde{r}_a^0(y)-\tilde{r}_a^0(x)}{y-x}
= \lim\limits_{y\to x} \Big( \dfrac{1}{\tilde{r}_a^0(y)
-\tilde{r}_a^0(x)} \int_{\tilde{r}_a^0(x)}^{\tilde{r}_a^0(y)}\rho_a^0(r)r^m \dif r\Big)^{-1}
= \dfrac{1}{(\tilde{r}_a^0)^m(x)\rho_a^0(\tilde{r}_a^0)(x)}.
\end{align*}
Thus, the derivative of $\tilde{r}_a^0(x)$ exists and $\eqref{3,2a}_1$ holds.

Next, it follows by the chain rule that
\begin{align*}
\dfrac{1}{n} \D_x ( \tilde{r}_a^0)^n(x)
=  (\tilde{r}_a^0)^m(x) \D_x \tilde{r}_a^0(x) = \frac{1}{\rho_a^0(\tilde{r}_a^0)(x)} \qquad \text{for $x\in[0,\infty)$.}
\end{align*}
Integrating in  $y\in[0,x]$ and using $\tilde{r}_a^0(0)=a$, we obtain $\eqref{3,2a}_2$.
The proof is complete.
\end{proof}

Now, let $\chi \vcentcolon \R \to [0,1]$ be such that $\chi\in C^{\infty}$, $\chi(\zeta)=1$ if $\zeta\le 0$, $\chi(\zeta)=0$ if $\zeta\ge 1$, and $\snorm{\chi^{\prime}(\zeta)}\le 2$ for all $\zeta\in\R$.
With this, the cut-off functions $\varphi_{a,k}^0$ are defined as
\begin{equation*}
\varphi_{a,k}^0(r) = \chi\big(\dfrac{2r-\tilde{r}_a^{0}(k)}{\tilde{r}_a^{0}(k)}\big) \qquad\,\, \text{for each $(a,k)\in(0,1)\times\mathbb{N}$.}
\end{equation*}
It follows that $\varphi_{a,k}^0(r)\in C^{\infty}([a,\infty))$ and
\begin{equation*}
\varphi_{a,k}^0(r)=1 \ \ \text{for $r\in[a,\frac{\tilde{r}_a^0(k)}{2}]$,} \qquad\,\,\,  \varphi_{a,k}^0(r)=0 \ \ \text{for $r\in[\tilde{r}_a^0(k),\infty)$.}
\end{equation*}
Using this, we define the truncated initial data:
\begin{equation}\label{def:appInitr1}
(\rho_{a,k}^0,u_{a,k}^0,e_{a,k}^0)(r)
\vcentcolon = ((\rho_a^0-1)\varphi_{a,k}^0+1,\, u_{a}^0\varphi_{a,k}^0, \,(e_a^0-1)\varphi_{a,k}^0+1)(r)
\quad \text{ for $r\in [a,\infty)$.}
\end{equation}
With above construction, it can be verified that
\begin{equation}\label{eqs:akInitProp}
\begin{split}
&( \rho_{a,k}^0, u_{a,k}^0, e_{a,k}^0 ) \in C^{\infty}([a,\infty)),
 \qquad\ u_{a,k}^0(r) = \partial_r e_{a,k}^0(r) = 0  \,\,\,\, \text{for $r\in [a, \tilde{r}_a^0(k)]$},\\[1mm]
&(\rho_{a,k}^0, u_{a,k}^0, e_{a,k}^0)(r) =
\begin{dcases*}
(\rho_a^0, u_a^0, e_a^0)(r) \quad & for all $r \in [a,\frac{\tilde{r}_a^0(k)}{2})$,\\
(1,0,1) & for all $r \in [\tilde{r}_a^0(k),\infty)$.
\end{dcases*}
\end{split}
\end{equation}

\begin{proposition}\label{prop:kapprox}
For $G(z)=1-z+z\log z$ and $\psi(z)=z-1-\log z$,
then
following hold:
\begin{equation*}
\begin{split}
& C_0^{-1} \le  \rho_{a,k}^0(r) \le C_0, \quad   e_{a,k}^0(r)\ge C_0^{-1}\qquad \ \text{ for all $(r,k)\in[a,\infty)\times\mathbb{N}$ and $a\in(0,1)$,}\\
&\sup_{\substack{a\in(0,1)\\ k\in\mathbb{N}}} \int_{a}^{\infty}\!\!
\Big\{\rho_{a,k}^0\big(\frac{1}{2}\snorm{u_{a,k}^0}^2 +\psi (e_{a,k}^0)\big) + G(\rho_{a,k}^0) + \big|(\rho_{a,k}^0-1, |u_{a,k}^0|^2, e_{a,k}^0-1)\big|^2\Big\}\,r^m \dif r \le C_0.
\end{split}
\end{equation*}
\end{proposition}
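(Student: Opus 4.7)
The plan is to exploit two structural features of construction \eqref{def:appInitr1}. First, at each $r\in[a,\infty)$, every component of $(\rho_{a,k}^0, u_{a,k}^0, e_{a,k}^0)$ is a convex combination, with weights $\varphi_{a,k}^0(r)$ and $1-\varphi_{a,k}^0(r)$, of the mollified datum $(\rho_a^0, u_a^0, e_a^0)(r)$ and the far-field state $(1,0,1)$. Second, the truncation is localized to the annulus $r\in[\tilde{r}_a^0(k)/2,\tilde{r}_a^0(k)]$, outside of which $(\rho_{a,k}^0, u_{a,k}^0, e_{a,k}^0)$ equals either the mollified datum or $(1,0,1)$ exactly. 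In particular, the whole integrand vanishes for $r\ge\tilde{r}_a^0(k)$, so all the integrals involved are finite for each fixed $k\in\mathbb{N}$.

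The pointwise bounds follow immediately: writing $\rho_{a,k}^0 = \varphi_{a,k}^0\rho_a^0 + (1-\varphi_{a,k}^0)\cdot 1$ with $\varphi_{a,k}^0\in[0,1]$ and invoking $C_0^{-1}\le\rho_a^0\le C_0$ from Proposition \ref{prop:mollify}\ref{item:mollify3} yields $C_0^{-1}\le\rho_{a,k}^0\le C_0$. The same convex-combination argument applied with $e_a^0\ge C_0^{-1}$ gives $e_{a,k}^0\ge C_0^{-1}$, and both bounds are uniform in $(a,k)$.

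For the uniform integral estimate, the crucial observation is that both $G$ and $\psi$ are nonnegative convex functions with $G(1)=\psi(1)=0$. Applying Jensen's inequality to the convex combinations then gives
\begin{equation*}
G(\rho_{a,k}^0) \le \varphi_{a,k}^0\, G(\rho_a^0) \le G(\rho_a^0), \qquad \psi(e_{a,k}^0) \le \varphi_{a,k}^0\, \psi(e_a^0) \le \psi(e_a^0).
\end{equation*}
Combining these with the upper bound $\rho_{a,k}^0\le C_0$, the trivial inequality $|u_{a,k}^0|^2=(\varphi_{a,k}^0)^2|u_a^0|^2\le|u_a^0|^2$, and the obvious pointwise bounds $|\rho_{a,k}^0-1|\le|\rho_a^0-1|$ and $|e_{a,k}^0-1|\le|e_a^0-1|$, I can dominate each term in the integrand by a constant multiple of the corresponding term appearing in Proposition \ref{prop:mollify}\ref{item:mollify3}. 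The only minor mismatch—that Proposition \ref{prop:mollify} controls $\rho_a^0\psi(e_a^0)$ rather than $\psi(e_a^0)$ directly—is absorbed by using $\rho_a^0\ge C_0^{-1}$. Integrating and invoking Proposition \ref{prop:mollify}\ref{item:mollify3} then delivers the desired bound with a constant independent of both $a\in(0,1)$ and $k\in\mathbb{N}$.

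The main conceptual point, rather than an obstacle, is the recognition that truncating toward the equilibrium state $(1,0,1)$ cannot increase any of the relative-entropy functionals, precisely because $\zeta=1$ is the unique minimizer of both $G$ and $\psi$; the sign of this monotonicity is what makes all uniformity claims essentially automatic. Once this is noted, the remainder of the proof is routine bookkeeping and I anticipate no serious technical difficulties.
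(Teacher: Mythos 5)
Your proposal is correct and follows the same route as the paper's own proof: the pointwise bounds come from the observation that $(\rho_{a,k}^0,u_{a,k}^0,e_{a,k}^0)$ is a convex combination (with weight $\varphi_{a,k}^0\in[0,1]$) of the mollified datum and the equilibrium state $(1,0,1)$, and the integral bound follows by exploiting the convexity of $G$ and $\psi$ together with $G(1)=\psi(1)=0$ to reduce everything to Proposition~\ref{prop:mollify}. The paper's proof is slightly terser — it does not explicitly record the elementary identities $\rho_{a,k}^0-1=\varphi_{a,k}^0(\rho_a^0-1)$, etc., for the $L^2$-type terms, and it lumps the $C_0$ and $C_0^2$ factors from $\rho_{a,k}^0\le C_0\le C_0^2\rho_a^0$ into the generic constant without comment — but the substance is identical, including the key step of bounding $\rho_{a,k}^0\psi(e_{a,k}^0)$ by a constant multiple of $\rho_a^0\psi(e_a^0)$.
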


\begin{proof}
By Proposition \ref{prop:mollify}, we see that
\begin{align*}
\int_{a}^{\infty} \Big\{ \snorm{\rho_{a,k}^0-1}^2 + \snorm{u_{a,k}^0}^4
  +\snorm{e_{a,k}^0-1}^2 \Big\}\,r^m \dif r
\le \int_{a}^{\infty} \Big\{ \snorm{\rho_a^0-1}^2 + \snorm{u_a^0}^4 +\snorm{e_a^0-1}^2 \Big\}\,r^m\dif r\le C_0,
\end{align*}
and
$C_0^{-1}\le \rho_{a,k}^0(r) = \rho_a^0(r) \varphi_{a,k}^0(r) + 1-\varphi_{a,k}^0(r) \le C_0$
for all $r\in[a,\infty)$.
By the same argument, we can show that $e_{a,k}^0(r)\ge C_0^{-1}$ for all $r\in[a,\infty)$.
	
Since $G$ and $\psi$ are convex, $G(1)=\psi(1)=0$, and $0\le \varphi_{a,k}^0\le 1$,
it follows that
\begin{align*}
G(\rho_{a,k}^0) =&\, G(\rho_a^0 \varphi_{a,k}^0 + 1-\varphi_{a,k}^0)\le\varphi_{a,k}^0 G(\rho_a^0)+(1-\varphi_{a,k}^0)G(1)=\varphi_{a,k}^0 G(\rho_a^0),\\[1mm]
\psi(e_{a,k}^0) =&\, \psi( e_a^0\varphi_{a,k}^0 + 1-\varphi_{a,k}^0 ) \le \varphi_{a,k}^0 \psi(e_a^0) + (1-\varphi_{a,k}^0) \psi(1) = \varphi_{a,k}^0 \psi(e_a^0).
\end{align*}
Then, using Proposition \ref{prop:mollify}, we have
\begin{equation*}
\int_{a}^{\infty} \Big\{ \rho_{a,k}^0\big(\frac{1}{2}\snorm{u_{a,k}^0}^2+\psi(e_{a,k}^0)\big)+ G(\rho_{a,k}^0)\Big\}\,r^m \dif r
\le C_0\int_{a}^{\infty}\Big\{\rho_a^0\big(\dfrac{1}{2}\snorm{u_a^0}^2+ \psi(e_a^0)\big) +G(\rho_a^0)\Big\}\,r^m\dif r \le 	C_0.
\end{equation*}
This completes the proof.
\end{proof}

\subsection{Reformulation in the bounded Lagrangian domain}\label{subsec:lageqs}
For each $(a,k)\in(0,1)\times\mathbb{N}$, we transform $(\rho_{a,k}^0,u_{a,k}^0,e_{a,k}^0)(r)$ into the Lagrangian domain as
\begin{equation}\label{def:appInit}
(\tilde{v}_{a,k}^0,\tilde{u}_{a,k}^0,\tilde{e}_{a,k}^0)(x)\vcentcolon= \big((\rho_{a,k}^0)^{-1},u_{a,k}^0,e_{a,k}^0\big)(\tilde{r}_a^0(x))\qquad\, \text{for $x\in [0,\infty)$.}
\end{equation}
Since $\tilde{r}_a^0(x)\in C^{\infty}([0,\infty))$ due to $\rho_{a}^0 \in C^{\infty}([a,\infty))$, Proposition \ref{prop:r0diff} and \eqref{eqs:akInitProp} imply that
\begin{equation*}
(\tilde{v}_{a,k}^0,\tilde{u}_{a,k}^0,\tilde{e}_{a,k}^0)\in C^{\infty}([0,\infty)),
\, \,\, \tilde{u}_{a,k}^0 (x)\big\vert_{x=0,\,k} = \D_x \tilde{e}_{a,k}^0 (x)\big\vert_{x=0,\,k} = 0
\qquad \text{ for $(a,k) \in (0,1)\times\mathbb{N}$.}
\end{equation*}

\begin{proposition}\label{prop:kapproxL}
The following uniform estimate holds{\rm :}
\begin{equation*}
\sup_{a\in(0,1)}\sup_{k\in\mathbb{N}}\int_{0}^{k} \Big(\dfrac{1}{2}\snorm{\tilde{u}_{a,k}^0}^2
+ \psi(\tilde{e}_{a,k}^0) +  \psi(\tilde{v}_{a,k}^0)
+ \snorm{(\tilde{v}_{a,k}^0-1, |\tilde{u}_{a,k}^0|^2, \tilde{e}_{a,k}^0-1)}^2\Big)(x)\,\dif x \le C_0.
\end{equation*}
\end{proposition}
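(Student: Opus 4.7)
The plan is to pull the Lagrangian integral back to the Eulerian variable $r$ via the substitution $r = \tilde{r}_a^0(x)$, and then to bound each summand using the Eulerian estimates already supplied by Propositions \ref{prop:mollify} and \ref{prop:kapprox}. By Proposition \ref{prop:r0diff}, this substitution satisfies $\D_x \tilde{r}_a^0 = ((\tilde{r}_a^0)^m \rho_a^0(\tilde{r}_a^0))^{-1}$, so that $\dif x = \rho_a^0(r)\, r^m \dif r$; since $\tilde{r}_a^0(0) = a$ and $\tilde{r}_a^0(k) < \infty$, the interval $x \in [0, k]$ corresponds to $r \in [a, \tilde{r}_a^0(k)]$, and for any nonnegative measurable $f$,
\begin{equation*}
\int_0^k f(\tilde{r}_a^0(x))\,\dif x = \int_a^{\tilde{r}_a^0(k)} f(r) \rho_a^0(r)\,r^m \dif r \le \int_a^\infty f(r) \rho_a^0(r)\,r^m \dif r.
\end{equation*}

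Applying this to $\tfrac{1}{2}\snorm{\tilde{u}_{a,k}^0}^2$ together with $\snorm{u_{a,k}^0} = \snorm{u_a^0\,\varphi_{a,k}^0} \le \snorm{u_a^0}$ reduces the corresponding Lagrangian integral to $\int_a^\infty \rho_a^0 \snorm{u_a^0}^2 r^m \dif r \le 2 C_0$ via Proposition \ref{prop:mollify}. For $\psi(\tilde{e}_{a,k}^0)$, the convexity-based pointwise bound already used in Proposition \ref{prop:kapprox} gives $\psi(e_{a,k}^0) \le \varphi_{a,k}^0 \psi(e_a^0)$, whence the integral is controlled by $\int_a^\infty \rho_a^0 \psi(e_a^0)\,r^m \dif r \le C_0$. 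The three $L^2$-type summands $\snorm{\tilde{u}_{a,k}^0}^4$, $\snorm{\tilde{v}_{a,k}^0-1}^2$, $\snorm{\tilde{e}_{a,k}^0-1}^2$ become, after the change of variables and the estimate $\rho_a^0 \le C_0$, weighted $L^2(r^m \dif r)$ integrals of the corresponding truncated data, all of which are supplied by Proposition \ref{prop:kapprox}; for $\snorm{\tilde{v}_{a,k}^0-1}^2$ the bound $\rho_{a,k}^0 \ge C_0^{-1}$ absorbs the denominator $|1-\rho_{a,k}^0|^2/(\rho_{a,k}^0)^2$.

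The only summand requiring extra care is $\psi(\tilde{v}_{a,k}^0) = \psi((\rho_{a,k}^0)^{-1})$, and this is the main obstacle: the measure in the coordinate change is $\rho_a^0\,r^m \dif r$, whereas the integrand depends on the different density $\rho_{a,k}^0$. I resolve this by the algebraic identity $\zeta\,\psi(1/\zeta) = G(\zeta)$, which is immediate from the definitions $\psi(\zeta)=\zeta-1-\log \zeta$ and $G(\zeta)=1-\zeta+\zeta\log\zeta$. This rewrites
\begin{equation*}
\psi\big((\rho_{a,k}^0)^{-1}\big) = \dfrac{G(\rho_{a,k}^0)}{\rho_{a,k}^0} \le C_0\, G(\rho_{a,k}^0),
\end{equation*}
using $\rho_{a,k}^0 \ge C_0^{-1}$ from Proposition \ref{prop:kapprox}. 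Combined with $\rho_a^0 \le C_0$, this yields
\begin{equation*}
\int_0^k \psi(\tilde{v}_{a,k}^0)\,\dif x \le C_0^2 \int_a^\infty G(\rho_{a,k}^0)\,r^m \dif r \le C_0^3,
\end{equation*}
by another application of Proposition \ref{prop:kapprox}. Summing the six contributions and relabeling constants produces the claimed bound, uniform in both $a \in (0,1)$ and $k \in \mathbb{N}$.
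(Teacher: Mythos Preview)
Your proof is correct and follows essentially the same approach as the paper: both pull back to the Eulerian variable via $r=\tilde r_a^0(x)$ with $\dif x=\rho_a^0(r)\,r^m\dif r$, both invoke the identity $\zeta\,\psi(1/\zeta)=G(\zeta)$ to convert $\psi(\tilde v_{a,k}^0)$ into $G(\rho_{a,k}^0)$, and both close using the uniform Eulerian bounds from Propositions~\ref{prop:mollify} and~\ref{prop:kapprox}. The only cosmetic difference is that the paper packages the three entropy-type terms together and handles the mismatch between the Jacobian density $\rho_a^0$ and the integrand density $\rho_{a,k}^0$ via the single ratio $\rho_a^0/\rho_{a,k}^0\le C_0^2$, whereas you treat the terms one at a time and use the separate bounds $\rho_a^0\le C_0$, $\rho_{a,k}^0\ge C_0^{-1}$; the effect is identical.
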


\begin{proof}
First, it follows from Proposition \ref{prop:r0diff} that the map: $x\mapsto \tilde{r}_a^0(x)$ satisfies
\begin{equation*}
0<C_0^{-1}\, (\tilde{r}_a^0)^{-m}(x)\le \snorm{\D_x \tilde{r}_a^0(x)}
= (\tilde{r}_a^0)^{-m}(x)\, (\rho_a^0)^{-1}(\tilde{r}_a^0(x)) \le C_0 a^{-m}<\infty \qquad
\text{for $x\in[0,\infty)$,}
\end{equation*}
so that the Jacobian of transformation $r=\tilde{r}_a^0(x)$ is bounded.
Then, for any function $f(r)\vcentcolon [a,\infty)\to \mathbb{R}$ such that $f\in L^1_{\text{loc}}$, the following coordinate transformation is satisfied:
\begin{align*}
\int_{a}^{\tilde{r}_a^0(k)} f(r) \rho_a^0(r)\,r^m \dif r
= \int_{0}^{k} f(\tilde{r}_a^0)(x)\,\dif x.
\end{align*}
Using this,  \eqref{eqs:akInitProp}, and Propositions \ref{prop:mollify}--\ref{prop:kapprox},
it follows that, for each $(a,k)\in(0,1)\times\mathbb{N}$,
\begin{align*}
\int_{0}^{k} \snorm{\tilde{v}_{a,k}^0-1}^2 (x)\,\dif x
= \int_{0}^{k} \norm{1-\rho_{a,k}^0\big(\tilde{r}_a^0(x)\big)}^2 \norm{\rho_{a,k}^0\big(\tilde{r}_a^0(x)\big)}^{-2}\dif x
\le C_0\int_{a}^{\infty}\norm{\rho_a^0(r)-1}^2 r^m \dif r \le C_0.
\end{align*}
By the same argument,
$\sbnorm{(\snorm{\tilde{u}_{a,k}^0}^2,\tilde{e}_{a,k}^0-1)}_{L^2([a,\infty),r^m\dif r)}\le C_0$.
	
Next, by Propositions \ref{prop:mollify}--\ref{prop:kapprox}
and the identity: $z\psi(z^{-1})= 1-z+z\log z = G(z)$, we have
\begin{align*}
&\int_{0}^{k} \Big(\frac{1}{2}\snorm{\tilde{u}_{a,k}^0}^2+\psi(\tilde{e}_{a,k}^0)+ (\gamma-1)\psi(\tilde{v}_{a,k}^0)\Big)(x)\,\dif x\\
&= \int_{a}^{\infty} \dfrac{\rho_a^0}{\rho_{a,k}^0}
\Big( \rho_{a,k}^0\big(\frac{1}{2}\snorm{u_{a,k}^0}^2+ \psi (e_{a,k}^0)\big)
 + (\gamma-1) G(\rho_{a,k}^0) \Big)(r)\,r^m \dif r \le C_0
\end{align*}
for all $(a,k)\in(0,1)\times\mathbb{N}$. This completes the proof.
\end{proof}

The vector function $(\tilde{v}_{a,k},\tilde{u}_{a,k},\tilde{e}_{a,k})$ is called a solution of the $(a,k)$-approximate IBVP in the Lagrangian coordinates,
provided that  $(\tilde{v}_{a,k},\tilde{u}_{a,k},\tilde{e}_{a,k})$ satisfies the equations:
\begin{equation}\label{eqs:LFNS-k}
\begin{dcases}
\D_t \tilde{v}_{a,k} - \D_x(\tilde{r}_{a,k}^m \tilde{u}_{a,k}) =0,\\
\D_t \tilde{u}_{a,k} + \tilde{r}_{a,k}^m \D_x p(\tilde{v}_{a,k},\tilde{e}_{a,k}) = \beta \tilde{r}_{a,k}^m \D_x \Big(\dfrac{\D_x (\tilde{r}_{a,k}^m \tilde{u}_{a,k})}{\tilde{v}_{a,k}} \Big),\\
\D_t \tilde{e}_{a,k} +p(\tilde{v}_{a,k},\tilde{e}_{a,k})\D_x(\tilde{r}_{a,k}^m \tilde{u}_{a,k}) = \mathcal{G}_{a,k}, \\
\tilde{r}_{a,k}(x,t) = \Big(a^n + n\displaystyle\int_{0}^x \tilde{v}_{a,k}(y,t)\dif y\Big)^{\frac{1}{n}}
\end{dcases}
\end{equation}
in $[0,k]\times[0,T]$ with $p(v,e)\vcentcolon=\frac{(\gamma-1)e}{v}$, $\beta\vcentcolon=2\mu + \lambda$, and
\begin{equation*}
    \mathcal{G}_{a,k} \vcentcolon= \beta \dfrac{\snorm{\D_x(\tilde{r}_{a,k}^m \tilde{u}_{a,k})}^2}{\tilde{v}_{a,k}} - 2m\mu \D_x (\tilde{r}_{a,k}^{m-1}\tilde{u}_{a,k}^2) + \kappa \D_x\Big( \dfrac{r_{a,k}^{2m} \D_x \tilde{e}_{a,k}}{\tilde{v}_{a,k}} \Big),
\end{equation*}
and the initial-boundary conditions:
\begin{equation}\label{eqs:LFNS-kb}
\begin{dcases}	
\tilde{u}_{a,k}(x,t)\big\vert_{x=0,\,k}=\D_x \tilde{e}_{a,k}(x,t)\big\vert_{x=0,\,k}=0 & \text{for $t\in[0,T]$},\\
(\tilde{v}_{a,k},\tilde{u}_{a,k},\tilde{e}_{a,k})(x,0)
= (\tilde{v}_{a,k}^0,\tilde{u}_{a,k}^0,\tilde{e}_{a,k}^0)(x)\qquad
&\text{for $x\in [0,k]$},
\end{dcases}
\end{equation}

\begin{remark}
The equations in \eqref{eqs:LFNS-k} are formally derived
from system \eqref{eqs:SFNS} and \eqref{eqs:eSFNS} as follows{\rm :} for any smooth solution $(\rho,u,e)$ of \eqref{eqs:SFNS} and \eqref{eqs:eSFNS}, the transformation
from the Eulerian coordinates $(r,t)$ to the Lagrangian coordinates $(x,t)$ is defined as
\begin{equation}
x(r,t)\vcentcolon = \int_{a}^{r} \rho(s,t)\,s^m \dif s, \quad t(r,t)\vcentcolon = t
\qquad \text{ for $(r,t)\in [a,\infty)\times[0,T]$}.
\end{equation}
The continuity equation $\eqref{eqs:SFNS}_1$ implies that
$\partial_t x(r,t) = -\rho u r^m$.
In addition, by the implicit function theorem,
on a neighbourhood where $\rho>0$, there is a function $r(x,t)$ such that
\begin{equation}\label{eqs:impl}
\int_a^{r(x,t)}\rho(\zeta,t)\,\zeta^m \dif \zeta = x.
\end{equation}
Denote $(v,u,e)(x,t):=(\rho^{-1}(r(x,t),t),u(r(x,t),t),e(r(x,t),t))$
and $(\D_t, \D_x)$ as the derivative with respect to the Lagrangian coordinates.
Then it follows by taking derivative $\D_t$ on $\eqref{eqs:impl}$ and the Leibniz theorem
that $\D_t r(x,t) = u(x,t)$. These relations can be summarized as
\begin{equation}\label{eqs:jacob}
\dfrac{\partial (x,t)}{\partial (r,t)} =
\begin{pmatrix}
			\rho r^m & -\rho u r^m \\[1mm]
			0        & 1
		\end{pmatrix}, \qquad
		\dfrac{\partial (r,t)}{\partial (x,t)} =
		\begin{pmatrix}
			v r^{-m} &  u  \\[1mm]
			0        & 1
\end{pmatrix}.
\end{equation}
Furthermore, from the continuity equation $\eqref{eqs:SFNS}_1$,
\begin{equation}\label{eqs:kinetic}
		r(x,t) = r_0(x) + \int_{0}^{t} u(x,\tau)\,\dif\tau,
		\qquad
		r^n(x,t) = a^n + n\int_{0}^x v(y,t)\,\dif y,
\end{equation}		
where $\displaystyle r_0(x) \vcentcolon=\big( a^n + n\int_{0}^{x} \rho_0^{-1}(r(y))\,\dif y\big)^{\frac{1}{n}}$. Equations $\eqref{eqs:LFNS-k}_1$--$\eqref{eqs:LFNS-k}_3$ are then obtained by translating \eqref{eqs:SFNS} using the coordinate transformation relations \eqref{eqs:jacob}.
\end{remark}

By virtue of the well-known global existence theorem for problem \eqref{eqs:LFNS-k}--\eqref{eqs:LFNS-kb}, the initial data constructed in \eqref{def:appInit} implies the following theorem (see \cites{J,YB,YB2}):

\begin{theorem}\label{thm:localt}
Given initial data $(\tilde{v}_{a,k}^0,\tilde{u}_{a,k}^0,\tilde{e}_{a,k}^0)(x)$ for $x\in[0,k]$,
then there exists a unique classical solution
$(v_{a,k},u_{a,k},e_{a,k})(x,t)$ of problem \eqref{eqs:LFNS-k}--\eqref{eqs:LFNS-kb} in domain $(x,t)\in[0,k]\times[0,\infty)$
satisfying
\begin{equation*}
v_{a,k}(x,t)\in C^{1+\alpha,1+\frac{\alpha}{2}}\big([0,k]\times[0,\infty)\big),
\quad (u_{a,k},e_{a,k})(x,t)\in C^{2+\alpha,1+\frac{\alpha}{2}}\big([0,k]\times[0,\infty)\big)
\end{equation*}
for some $\alpha\in(0,1)$.
\end{theorem}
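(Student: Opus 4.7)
The plan is to treat \eqref{eqs:LFNS-k}--\eqref{eqs:LFNS-kb} as a quasilinear parabolic system of Kazhikhov--Shelukhin type in the bounded Lagrangian slab $[0,k]\times[0,T]$, and to follow the standard three-stage recipe: local existence by a contraction mapping argument in a parabolic H\"older class; $v$-- and $e$--positivity plus uniform bounds as $t$ increases on any finite interval; and continuation of the local solution to all of $[0,\infty)$. Note that the initial data \eqref{def:appInit} are smooth on $[0,k]$, bounded in $C^\infty$, satisfy $\tilde v_{a,k}^0\ge C_0^{-1}$ and $\tilde e_{a,k}^0\ge C_0^{-1}$, and are compatible with the boundary conditions in \eqref{eqs:LFNS-kb} at $t=0$, so the usual Schauder machinery applies without obstruction from the corners $\{0,k\}\times\{0\}$.

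For local existence on a short interval $[0,\tau]$, I would linearize around $(\tilde v_{a,k}^0,\tilde u_{a,k}^0,\tilde e_{a,k}^0)$. Given a candidate $(\bar u, \bar e)\in C^{2+\alpha,1+\alpha/2}([0,k]\times[0,\tau])$ in a small closed ball around the initial data with $\bar e \ge \tfrac{1}{2}C_0^{-1}$ and $\bar u$ vanishing on the lateral boundary, solve the continuity equation $\D_t v = \D_x(\bar r^m \bar u)$ with $\bar r^n = a^n + n\int_0^x v\,\dif y$ by an ODE/fixed-point argument in $x$ to recover $v\in C^{1+\alpha,(1+\alpha)/2}$ satisfying $v\ge\tfrac{1}{2}C_0^{-1}$ for $\tau$ small. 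Plugging $(v,\bar r)$ as coefficients into the momentum and internal energy equations yields linear uniformly parabolic equations for the updated $(u,e)$ with $u|_{x=0,k}=0$ and $\D_x e|_{x=0,k}=0$; Schauder theory produces $C^{2+\alpha,1+\alpha/2}$ regularity, and for $\tau=\tau(k,a,C_0)$ small enough the resulting map is a contraction, hence admits a unique fixed point, which is a classical solution of \eqref{eqs:LFNS-k}--\eqref{eqs:LFNS-kb} on $[0,\tau]$.

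For the extension to arbitrary $T>0$, I would derive a priori estimates uniform on $[0,T]$ under the \emph{a priori} assumption that a classical solution exists: (i) the entropy identity obtained by multiplying $\eqref{eqs:LFNS-k}_2$ by $\tilde u_{a,k}$ and $\eqref{eqs:LFNS-k}_3$ by $\tilde e_{a,k}^{-1}$ and integrating, which gives basic energy control and the dissipation terms; (ii) pointwise upper and lower bounds for $\tilde v_{a,k}$ via the Kazhikhov--Shelukhin representation of $v$ in terms of the effective viscous flux, using the bounded Lagrangian domain and the bound $a\le \tilde r_{a,k}\le (a^n+nk\,\|\tilde v_{a,k}\|_\infty)^{1/n}$; (iii) a strictly positive pointwise lower bound for $\tilde e_{a,k}$ by a minimum principle applied to the energy equation written in non-divergence form, after absorbing the sign-indefinite term $-2m\mu \D_x(\tilde r^{m-1}\tilde u^2)$ using the genuinely non-negative dissipation $\beta|\D_x(\tilde r^m\tilde u)|^2/\tilde v$; (iv) successive higher-order energy and H\"older estimates obtained by differentiating the system in $x$ and $t$ and invoking interior/boundary Schauder estimates, with all constants depending only on $(a,k,T,C_0)$. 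Standard open-closed continuation then promotes the local solution to a global one on $[0,k]\times[0,\infty)$, and uniqueness follows from a Gronwall estimate on the difference of two classical solutions using the same pointwise bounds on $\tilde v_{a,k},\tilde e_{a,k}$.

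The main obstacle I anticipate is the a priori positive lower bound for $\tilde e_{a,k}$, because the source $\mathcal G_{a,k}$ contains the indefinite term $-2m\mu\D_x(\tilde r_{a,k}^{m-1}\tilde u_{a,k}^2)$; the remedy is to rewrite the internal energy equation so that the genuinely dissipative contribution $\beta |\D_x(\tilde r_{a,k}^m\tilde u_{a,k})|^2/\tilde v_{a,k}$ dominates the bad term via an elementary algebraic inequality, after which a comparison principle against a decaying exponential in $t$ produces the desired lower bound. A secondary, milder obstacle is that the $\tilde r_{a,k}$-coupling makes the parabolic coefficients depend nonlocally on $\tilde v_{a,k}$, but on the bounded domain $[0,k]$ this nonlocality is easily absorbed by the $L^\infty$ control on $\tilde v_{a,k}$ and the lower bound $\tilde r_{a,k}\ge a>0$. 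Once these pointwise bounds are in hand, the higher-order estimates and continuation are routine, which is precisely the approach carried out in \cites{J,YB,YB2}.
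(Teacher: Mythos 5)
The paper does not actually supply a proof of this theorem: immediately before the statement it writes ``By virtue of the well-known global existence theorem for problem \eqref{eqs:LFNS-k}--\eqref{eqs:LFNS-kb}, the initial data constructed in \eqref{def:appInit} implies the following theorem (see \cites{J,YB,YB2}),'' so Theorem~\ref{thm:localt} is invoked as a black box from Jiang~\cite{J} and Yashima--Benabidallah~\cites{YB,YB2}. Your sketch is therefore not being measured against a competing in-paper argument; rather, it is a reconstruction of the argument carried out in those references, and you acknowledge this explicitly at the end of your proposal.

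As a reconstruction, the plan is sound and tracks the standard Kazhikhov--Shelukhin route: local contraction in parabolic H\"older classes using the smoothness and compatibility of $(\tilde v_{a,k}^0,\tilde u_{a,k}^0,\tilde e_{a,k}^0)$, then the entropy identity, then pointwise bounds on $\tilde v_{a,k}$ from the effective viscous flux (exploiting $a \le \tilde r_{a,k} \le (a^n+nk\|\tilde v_{a,k}\|_\infty)^{1/n}$), then a positive lower bound for $\tilde e_{a,k}$, then higher-order Schauder estimates and continuation. Your identification of the single delicate point---the indefinite term $-2m\mu\,\D_x(\tilde r^{m-1}\tilde u^2)$ in $\mathcal G_{a,k}$---is correct. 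Two small refinements are worth flagging. First, the precise algebraic identity that absorbs the indefinite term is not that $\beta|\D_x(\tilde r^m\tilde u)|^2/\tilde v$ ``dominates'' it; rather, their combination factors as a sum of two squares with coefficients $\frac{2\mu}{n}+\lambda\ge0$ and $2m\mu\ge0$:
\begin{equation*}
-\beta\frac{|\D_x(r^m u)|^2}{e^2 v} + \frac{2\mu m}{e^2}\,\D_x(r^{m-1}u^2)
= -\Bigl(\frac{2\mu}{n}+\lambda\Bigr)\frac{|\D_x(r^m u)|^2}{e^2 v}
- 2m\mu\,\frac{v}{e^2}\Bigl(\frac{\D_x(r^m u)}{\sqrt{n}\,v}-\sqrt{n}\,\frac{u}{r}\Bigr)^2 \le 0,
\end{equation*}
which is exactly the mechanism used later in the paper's own a~priori estimates (Theorem~\ref{thm:priori}\ref{item:prio0} and Lemma~\ref{lemma:ebd}). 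Second, for the lower bound of $e$ the paper does not use a pointwise comparison principle but rather the $L^j$-iteration
\begin{equation*}
\frac{\dif}{\dif t}\int_0^k e^{-j}\,\dif x
\le C\int_0^k e^{1-j}v^{-1}\,\dif x,
\end{equation*}
then Gr\"onwall and $j\to\infty$; your comparison-principle idea would also work, but if you want your sketch to line up exactly with the technique used in \cite{J} and in the present paper's Section~4, the Moser-type $L^j$ route is the one to cite. With those two clarifications, your outline is correct, self-consistent, and matches the strategy of the cited references.
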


By the above theorem, there exists a unique global classical solution of \eqref{eqs:LFNS-k}--\eqref{eqs:LFNS-kb} for each fixed $(a,k)\in(0,1)\times\mathbb{N}$. This allows us to derive the {\it a-priori} estimates in \S 4 below.

\section{The A-Priori Estimates}\label{sec:rhobd}

This section is devoted to the derivation of the {\it a-priori} estimates for solutions
of the approximation problem \eqref{eqs:LFNS-k}--\eqref{eqs:LFNS-kb}.
For simplicity, we suppress the approximation parameters $(a,k)\in(0,1)\times\mathbb{N}$
and denote
$(v,u,e,r)\equiv (\tilde{v}_{a,k},\tilde{u}_{a,k},\tilde{e}_{a,k},\tilde{r}_{a,k})$  as the solution obtained from Theorem \ref{thm:localt} in $[0,k]\times[0,T]$ for each $T>0$.
First, we state the main theorem of this section.

\begin{theorem}\label{thm:priori}
The following estimates are satisfied{\rm :}
\begin{enumerate}[label=(\roman*), ref=(\roman*),font={\normalfont\rmfamily}]
\item\label{item:prio0}
The entropy estimate holds with $\psi(\zeta):=\zeta-1-\log\zeta${\rm :}
\begin{align}\label{eqs:entropy}
&\sup\limits_{t\in[0,T]}\int_{0}^{k}
\Big(\dfrac{1}{2}\snorm{u}^2 + \psi(e)+(\gamma-1)\psi(v)\Big)(x,t)\,\dif x
  + \kappa\int_{0}^{T}\int_{0}^{k} \dfrac{r^{2m}\snorm{\D_x e}^2}{v e^2}\,\dif x\dif t\nonumber\\
&+ \int_{0}^{T}\int_{0}^{k} \Big\{\Big(\lambda + \dfrac{2\mu}{n}\Big)\dfrac{\snorm{\D_x (r^m u)}^2}{ve}+2m\mu \dfrac{v}{e}\Big(\dfrac{\D_x\big(r^m u\big)}{v\sqrt{n}}-\sqrt{n}\dfrac{u}{r}\Big)^2\Big\}\,\dif x \dif t
\le C_0.
\end{align}

\item\label{item:prio1}
For each given constant $\ve\ge0$,
\begin{equation}\label{eqs:vulb}
\ul{v}(\ve,t,a) \le v(x,t) \le \ov{v}(\ve,t,a)  \qquad
\text{for all $(x,t) \in [\ve,\infty)\times[0,T)$},
\end{equation}
with $\ov{v}=\ov{v}(z,t,a)$ and $\ul{v}=\ul{v}(z,t,a)$ explicitly given by
\begin{equation}\label{vbarname}
\begin{dcases*}
\ov{v}=C_0 (1+t)^2 f(z,a) \exp\big\{ C_0 t f(z,a) + C_0 (1+t) h(z,a) f(z,a)  \exp\{C_0 t f(z,a)\} \big\},\\
\ul{v}=C_0 (1+t)^{-1}f^{-1}(z,a)\exp\{ -t C_0 f^2(z,a) \}\Gamma(z,t,a),
\end{dcases*}
\end{equation}
where $h(z,a)$, $f(z,a)$, and $\Gamma(z,t,a)$ are defined by
\begin{equation}\label{fhF}
\begin{aligned}
& h(z,a)\vcentcolon=\big(a^n+nz\psi_{-}^{-1}(\frac{C_0}{z})\big)^{-\frac{2m}{n}},
\quad  f(z,a)\vcentcolon= \exp\big\{C_0h(z,a)\big\}, \\
& \Gamma(z,t,a)\vcentcolon = \exp\big\{-\dfrac{mC_0t}{\beta} h^{\frac{n}{2m}}(z,a)\big\},
\end{aligned}
\end{equation}
with $\psi_{-}^{-1}(y)\vcentcolon [0,\infty)\to (0,1]$ as
the left
inverse of
$\psi$.
In particular, for each $\ve>0$,
\begin{equation}\label{4.34.3}
\begin{dcases*}
C^{-1}(T) \le v(x,t) \le C(T) \quad &  for all $(x,t) \in [1,k]\times[0,T)$,\\
C^{-1}(\ve) \le v(x,t) \le C(\ve) & for all $(x,t) \in [\ve,k]\times[0,T]$,\\
C^{-1}(a) \le v(x,t) \le C(a) & for all $(x,t) \in [0,k]\times[0,T]$.
\end{dcases*}
\end{equation}

\item\label{item:prio2}
For each $a\in (0, 1),\, \mathcal{L}_0 [v,u,e](T)\le C(a)$ and, for $(x,t)\in [0,k]\times[0,T]$,
\begin{equation*}
\qquad \ \ C^{-1}(a) \le v(x,t) \le C(a),\, \,\, \snorm{u(x,t)}\le C(a)\sigma^{-\frac{1}{4}}(t),
\,\,\, C^{-1}(a) \le e(x,t) \le C(a)\sigma^{-\frac{1}{2}}(t);
\end{equation*}
and, for each $\ve\in(0,1]$, $\mathcal{L}_y [v,u,e](T)\le C(\ve)$ and
\begin{equation*}
\qquad C^{-1}(\ve) \le v(x,t) \le C(\ve), \,\,\,
\sigma(t)^{\frac{1}{4}}\snorm{u(x,t)}+\sigma^{\frac{1}{2}}(t) e(x,t) \le C(\ve)
\quad\mbox{for $(x,t)\in[\ve,k]\times[0,T]$},
\end{equation*}
where $\sigma(t)\vcentcolon= \min\{1,t\}$ and, for $y\in[0,1)$,
$\mathcal{L}_{y}[v,u,e](T)$ is defined by
\begin{equation*}
\begin{split}
\mathcal{L}_{y} [v,u,e](T)\vcentcolon
=&\sup_{ t\in [0,T]} \int_{y}^{k} \norm{\big( v-1, u^2 , e-1, \sqrt{\sigma}r^m \D_x u,\sigma r^m \D_x e\big)}^2(x,t)\,\dif x\\
&+ \int_{0}^{T}\int_{y}^{k} \norm{\big(r^m\D_x u, r^m \D_x e, r^m u \D_x u,\sqrt{\sigma} \D_t u, \sigma \D_t e\big)}^2
\,\dif x \dif t.
\end{split}
\end{equation*}
\end{enumerate}
\end{theorem}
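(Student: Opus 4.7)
The proof of Theorem \ref{thm:priori} proceeds in three stages corresponding to its three statements, each building on the previous; throughout, I work with the smooth solution $(v,u,e,r)$ of \eqref{eqs:LFNS-k}--\eqref{eqs:LFNS-kb} supplied by Theorem \ref{thm:localt}, and exploit the cancellations specific to the spherically symmetric Lagrangian formulation.

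For statement \ref{item:prio0}, I will multiply $\eqref{eqs:LFNS-k}_2$ by $u$, multiply $\eqref{eqs:LFNS-k}_3$ by $1-e^{-1}$, and combine them with $(\gamma-1)$ times the identity $\D_t\psi(v)=(1-v^{-1})\D_x(r^m u)$ obtained from the continuity equation together with $\psi'(v)=1-v^{-1}$. Since $p=(\gamma-1)e/v$, the pressure contributions collapse to a total $x$-derivative which vanishes upon integration thanks to $u|_{x=0,k}=\D_x e|_{x=0,k}=0$. Integration by parts converts the viscous and thermal terms into the dissipation integrals on the left of \eqref{eqs:entropy}; their precise form is the standard algebraic decomposition of $2\mu|D(u)|^2+\lambda(\dd u)^2$ into $(\lambda+\tfrac{2\mu}{n})(\dd u)^2$ plus the trace-free remainder $\tfrac{2m\mu}{n}(\partial_r u - u/r)^2$, rewritten in Lagrangian variables via $\D_x(r^m u)/v=\dd u$. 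The initial bound comes from Proposition \ref{prop:kapproxL}.

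For statement \ref{item:prio1}, the core of the argument, I will introduce the effective viscous flux $F:=\beta\D_x(r^m u)/v-p=\beta\D_t\log v-p$, so that $\eqref{eqs:LFNS-k}_2$ becomes $\D_t u=r^m\D_x F$. Integrating in $x$ and using the kinematic identity $\D_t r^n=n r^m u$ to dispose of boundary fluxes produces a representation
\begin{align*}
\beta\log v(x,t)=\beta\log v_0(x)+\int_0^t p(x,s)\,\dif s+\Phi(x,t),
\end{align*}
where $\Phi$ is a double integral of $u\D_t u$ and $u^2$ weighted by powers of $r$. The entropy estimate \eqref{eqs:entropy} bounds $\int_0^k\psi(v)\,\dif x\le C_0$, and Jensen's inequality together with the strict convexity of $\psi$ yields $z\,\psi_-^{-1}(C_0/z)\le\int_0^z v\,\dif y$; translated through $\eqref{eqs:LFNS-k}_4$, this gives the geometric lower bound $r^n(x,t)\ge a^n+nx\,\psi_-^{-1}(C_0/x)$, which is precisely the origin of the weight $h(z,a)$ in \eqref{fhF}. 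A Gronwall argument on the exponentiated representation, combined with $L^1_tL^\infty_x$-type control on $p$ extracted from the dissipation of (i), then produces the explicit formulas $\ov v$ and $\ul v$ in \eqref{vbarname}, and the hierarchy \eqref{4.34.3} follows by evaluating $h(z,a)$ at $z=1$, $z=\ve$, and $z=0$ respectively. The main obstacle here is arranging the integrations so that every $x$-dependent factor in the Gronwall constant is tracked explicitly, keeping the bounds uniform in both $a\in(0,1)$ and $k\in\mathbb N$ whenever $x$ is bounded away from zero.

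For statement \ref{item:prio2}, once the pointwise bounds of $v$ from (ii) are in hand, the momentum and internal energy equations are uniformly parabolic on $[\ve,k]$. I will close weighted energy estimates by multiplying $\eqref{eqs:LFNS-k}_2$ by $\sigma g_\ve\D_t u$ (with $g_\ve$ a smooth cut-off supported in $[\ve,k]$) and integrating, then multiplying $\eqref{eqs:LFNS-k}_3$ by $\sigma^2 g_\ve\D_t e$, the time weights $\sigma^j$ being the standard device for absorbing possibly large initial gradients; the pointwise bounds on $u$ and $e$ will follow from Sobolev embedding $H^1\hookrightarrow L^\infty$ in one space variable. The principal obstacle is the appearance of boundary-layer terms of the form $g_\ve'\cdot(\text{dissipation})$ produced by derivatives falling on the cut-off near $x=\ve$, which cannot be absorbed by a naive parabolic argument; these will be controlled by reinvesting the dissipation terms $|\D_x(r^m u)|^2/(ve)$ and $r^{2m}|\D_x e|^2/(ve^2)$ already furnished by the entropy inequality of (i), as anticipated in the strategy outline of \S\ref{subsec:MS}.
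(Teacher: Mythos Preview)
Your outline for parts \ref{item:prio0} and \ref{item:prio2} matches the paper's approach closely: the entropy identity is derived exactly as you describe, and the higher-order estimates in \ref{item:prio2} do proceed via $\sigma$-weighted testing against $g_\ve\D_t u$ and $g_\ve\D_t e$, with the boundary-layer remainders near $x=\ve$ absorbed by the dissipation terms of \eqref{eqs:entropy} (this is the content of Lemmas~\ref{lemma:euLinf}--\ref{lemma:euH1}).

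For part \ref{item:prio1}, however, your sketch omits the device that makes the argument uniform in $k$. The paper does \emph{not} integrate $\D_t u=r^m\D_x F$ to a fixed endpoint and use boundary conditions; instead it first proves (Lemma~\ref{lemma:select}) that in every unit Lagrangian interval $[i-1,i]$ and at every time $t$ there exist \emph{anchor points} $A_i(t),B_i(t)$ with $C_0^{-1}\le v(A_i(t),t),\,e(B_i(t),t)\le C_0$, by Jensen's inequality applied to $\int_{i-1}^i\psi(v)\,\dif x\le C_0$. One then integrates $r^{-m}\D_t u=\D_x F$ over $[x,A_i(t)]$, which yields the representation \eqref{temp5} with the reference value $v(A_i(t),t)$ already controlled. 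Your phrase ``using the kinematic identity $\D_t r^n=n r^m u$ to dispose of boundary fluxes'' does not substitute for this: without an anchor, the value of $F$ (equivalently of $v$ and $p$) at whatever reference point you integrate to is uncontrolled uniformly in $k$, and the representation for $\log v$ is not closed. Likewise, your ``$L^1_tL^\infty_x$ control on $p$'' is obtained in the paper precisely through the anchor $B_i(t)$ combined with the $\sqrt{e}$-oscillation estimate \eqref{temp:esqrt} driven by the thermal dissipation $\int r^{2m}|\D_x e|^2/(ve^2)$; this step and the subsequent Gronwall bootstrap (first on $Y(t)$ via averaging over a unit cell, then on $V(x,t):=\sup_{[x,i+1]}v$) are the substantive content of the proof and should be made explicit.
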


\smallskip
\subsection{Entropy estimate}\label{subsec:entEst}
We now start the proof of Theorem \ref{thm:priori} with the derivation of entropy estimate stated in \ref{item:prio0}, which is motivated from the second law of thermodynamics. It encapsulates the dissipative effect of viscosity and thermal diffusion.

\begin{proof}[Proof of {\rm \ref{item:prio0}} in {\rm Theorem \ref{thm:priori}}]
	It follows from  $\eqref{eqs:LFNS-k}_1 $--$ \eqref{eqs:LFNS-k}_3$ and direct calculations that
\begin{align}\label{temp2}
&\dfrac{\dif}{\dif t}\int_{0}^{k}
\Big(\frac{1}{2}\snorm{u}^2+ \psi(e)+(\gamma-1)\psi(v)\Big)\,\dif x\nonumber\\
&= - \beta \int_{0}^{k} \dfrac{\snorm{\D_x(r^m u)}^2}{ev}\dif x
  + 2m\mu\int_{0}^{k} \dfrac{1}{e} \D_x(r^{m-1}u^2)\,\dif x
   - \kappa \int_{0}^{k} \dfrac{r^{2m}\snorm{\D_x e}^2}{v e^2}\dif x.
\end{align}
According to $\eqref{eqs:LFNS-k}_1$ and $\eqref{eqs:LFNS-k}_4$,
we can obtain
\begin{equation*}
2m\mu  \dfrac{1}{e} \D_x(r^{m-1}u^2) - \beta\dfrac{\snorm{\D_x(r^m u)}^2}{ev}
=  -\Big(\dfrac{2\mu}{n}+\lambda\Big)\dfrac{\snorm{\D_x (r^m u)}^2}{ve} - 2m\mu \dfrac{v}{e}\Big(\dfrac{\D_x(r^m u)}{v\sqrt{n}}-\dfrac{u\sqrt{n}}{r}\Big)^2,
\end{equation*}
which, along with \eqref{temp2}, yields \eqref{eqs:entropy}.
\end{proof}

\smallskip
\subsection{Upper and lower bounds of the density}\label{subsec:rhobd}
Now we aim to prove \eqref{eqs:vulb}--\eqref{4.34.3}.
By \eqref{eqs:entropy}, we first obtain some upper and lower bounds of the particle path function $r(x,t)$.
\begin{lemma}
\label{lemma:rbd} Let $\psi_{-}^{-1}(\cdot)$
be the left branch inverse of $\psi$. Then
\begin{equation}\label{eqs:rbd}
	a^n + nx\psi^{-1}_{-}(\dfrac{C_0}{x}) \le  r^n(x,t) \le  C_0(1+x)
	\qquad \text{ for $(x,t)\in [0,k]\times[0,\infty)$}.
	\end{equation}
\end{lemma}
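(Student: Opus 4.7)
\textbf{Proof proposal for Lemma \ref{lemma:rbd}.}

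The plan is to exploit the kinematic identity $r^n(x,t) = a^n + n\int_0^x v(y,t)\,dy$ from $\eqref{eqs:LFNS-k}_4$, and obtain matching upper and lower bounds on $\int_0^x v(y,t)\,dy$ from the entropy estimate \eqref{eqs:entropy}, in particular from the uniform bound $\int_0^k \psi(v)\,dx \le C_0$ with $\psi(\zeta)=\zeta-1-\log\zeta$.

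For the upper bound, the key elementary inequality is $\zeta \le \psi(\zeta)+1$ for all $\zeta>0$, which I would verify by splitting into the cases $\zeta \ge 1$ (where $\log\zeta \ge 0$) and $\zeta<1$ (where $\zeta<1$ and $\psi\ge 0$). Integrating this pointwise inequality in $y\in[0,x]$ and applying the entropy bound yields $\int_0^x v\,dy \le x + C_0$, so that
\begin{equation*}
r^n(x,t) \le a^n + n(x+C_0) \le C_0(1+x)
\end{equation*}
after absorbing $a^n\le 1$ and $n$ into $C_0$.

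For the lower bound, I would apply Jensen's inequality to the convex function $\psi$: setting $\bar{v}(x,t):=\frac{1}{x}\int_0^x v(y,t)\,dy$, we get
\begin{equation*}
\psi(\bar{v}(x,t)) \le \frac{1}{x}\int_0^x \psi(v(y,t))\,dy \le \frac{C_0}{x}.
\end{equation*}
Now $\psi$ attains its minimum $0$ at $\zeta=1$ and is strictly decreasing on $(0,1]$, with left inverse $\psi_{-}^{-1}\colon[0,\infty)\to(0,1]$. If $\bar{v}(x,t)\le 1$, then applying $\psi_{-}^{-1}$ (which reverses inequalities on this branch) gives $\bar{v}(x,t)\ge \psi_{-}^{-1}(C_0/x)$; if $\bar{v}(x,t)>1$, the same inequality holds trivially since $\psi_{-}^{-1}\le 1$. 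Multiplying through by $x$ and adding $a^n$ yields the desired lower bound.

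Neither direction should present a serious obstacle: the main point is simply to recognize $v\le \psi(v)+1$ for the upper bound and to apply Jensen to the convex function $\psi$ for the lower bound, with a careful case split on whether the spatial average $\bar{v}$ lies on the left or right branch of $\psi$. The argument is independent of $a\in(0,1)$ and $k\in\mathbb{N}$, which is precisely the uniformity needed for the subsequent passage to the limit.
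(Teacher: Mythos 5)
Your treatment of the lower bound is correct and essentially identical to the paper's: Jensen applied to the convex $\psi$ gives $\psi(\bar v)\le C_0/x$, and the case split on whether $\bar v(x,t)\le 1$ (apply the order-reversing $\psi_{-}^{-1}$) or $\bar v(x,t)>1$ (trivial since $\psi_{-}^{-1}\le 1$) is exactly what the paper does.

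The upper bound, however, has a genuine error. The claimed elementary inequality $\zeta \le \psi(\zeta)+1$ is \emph{false} for $\zeta>1$: since $\psi(\zeta)+1=\zeta-\log\zeta$, the claim is equivalent to $\log\zeta\le 0$, which fails on $(1,\infty)$ (e.g.\ $\zeta=e$ gives $\psi(e)+1=e-1<e$). Your own verification gets the logic backwards in the $\zeta\ge 1$ case: the fact that $\log\zeta\ge 0$ there gives $\zeta-\log\zeta\le\zeta$, which is the opposite direction of what you need. The approach is nevertheless salvageable: since $\log\zeta\le\zeta/2$ for all $\zeta>0$ (the maximum of $\log\zeta-\zeta/2$ occurs at $\zeta=2$ with value $\log 2-1<0$), one has the correct linear bound $\zeta\le 2\psi(\zeta)+2$. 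This yields $\int_0^x v\,dy\le 2C_0+2x$ and hence $r^n\le a^n+2n(C_0+x)\le C_0'(1+x)$ after renaming the constant. With this repair your route is actually somewhat more elementary than the paper's, which instead bounds $\frac{r^n-a^n}{nx}$ by $\psi_{+}^{-1}(C_0/x)$ via the right-branch inverse and then invokes the monotonicity of $x\mapsto x\,\psi_{+}^{-1}(C_0/x)$ (Proposition~\ref{prop:omega}) to close the case $x>1$.
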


\begin{proof}
Since $\psi(\zeta)\vcentcolon= \zeta-\log \zeta-1$ is strictly convex in $(0,\infty)$ and $\psi(1)=0$,
then there exists the left inverse function $\psi_{-}^{-1}\vcentcolon [0,\infty)\to (0,1]$
and the right inverse function $\psi_{+}^{-1}\vcentcolon [0,\infty)\to [1,\infty)$.
In particular, it is direct to see that $y\mapsto \psi_{-}^{-1}(y)$ is strictly decreasing
 ($\psi_{-}^{-1}(0)=1$ and $\psi_{-}^{-1}(y)\to 0$ as $y\to \infty$),
 while $y\mapsto \psi_{+}^{-1}(y)$ is strictly increasing ($\psi_{+}^{-1}(0)=1$ and $\psi_{+}^{-1}(y)\to \infty$ as $y\to \infty$).

Now, from relation \eqref{eqs:kinetic}, Theorem \ref{thm:priori}\ref{item:prio0}, and the Jensen inequality,
it follows that, for all $(x,t)\in [0,k]\times[0,\infty)$,
\begin{equation}\label{temp3}
\psi(\dfrac{r^n(x,t)-a^n}{nx})
= \psi(\dfrac{1}{x}\int_{0}^x v(y,t)\,\dif y)
\le \dfrac{1}{x}\int_{0}^x \psi(v)(y,t)\,\dif y \le \dfrac{C_0}{x}.
\end{equation}
	
Next, the proof is divided into two cases: $0<\frac{r^n-a^n}{nx}\le 1$ and $\frac{r^n-a^n}{nx}\ge 1$.
	
\smallskip	
Case 1: $0<\frac{r^n-a^n}{nx}\le 1$. Then  $\frac{r^n-a^n}{nx}=\psi_{-}^{-1}(\psi(\frac{r^n-a^n}{nx}))$.
Since $\psi_{-}^{-1}(\cdot)$ is monotone decreasing, it follows from \eqref{temp3} that
\begin{equation*}
\dfrac{r^n-a^n}{nx}=\psi_{-}^{-1}(\psi(\dfrac{r^n-a^n}{nx}))\ge \psi_{-}^{-1}(\dfrac{C_0}{x}).
\end{equation*}
On the other hand, since $\frac{r^n-a^n}{nx}\le 1$, it follows from $a<1$ that
\begin{equation*}
r^n(x,t)\le a^n + nx
\le n (1+x).
\end{equation*}
	
\smallskip
Case 2: $\frac{r^n-a^n}{nx}\ge 1$. Then it follows from $\psi_{-}^{-1}(y)\le 1$ for all $y\in[0,\infty)$ that
\begin{equation*}
r^n(x,t)\ge a^n+nx\ge a^n+nx\psi_{-}^{-1}(\dfrac{C_0}{x}).
\end{equation*}
Moreover, $\frac{r^n-a^n}{nx}=\psi_{+}^{-1}(\psi(\frac{r^n-a^n}{nx}))$. Since $\psi_{+}^{-1}(\cdot)$ is
monotone increasing, it follows from \eqref{temp3} that
\begin{equation*}
\dfrac{r^n-a^n}{nx}=\psi_{+}^{-1}(\psi(\dfrac{r^n-a^n}{nx}))\le\psi_{+}^{-1}(\dfrac{C_0}{x}).
\end{equation*}
It is verified in Proposition \ref{prop:omega}
that $x\mapsto x \psi_{+}^{-1}(\frac{C_0}{x})$ is monotone increasing.
If $x\le 1$, then $r^n(x,t)\le a^n+n x \psi_{+}^{-1}(\frac{C_0}{x}) \le 1 + n \psi_{+}^{-1}(C_0) \le C_0 + C_0 x$.
If $x> 1$, since $y\mapsto \psi_{+}^{-1}(y)$ is monotone increasing,
then $r^n(x,t)\le a^n+n x \psi_{+}^{-1}(\frac{C_0}{x}) \le 1 + n x \psi_{+}^{-1}(C_0) \le C_0 + C_0 x$.
This concludes the proof.
\end{proof}

\begin{lemma}\label{lemma:select} Fix $t\in[0,T]$. For each $i=1, \dotsc, k$,
there exist
$A_i(t)$, $B_i(t) \in(i-1,i)$ so that
\begin{align*}
C_0^{-1}\le v(A_i(t),t),\, e(B_i(t),t)\le C_0.
\end{align*}
\end{lemma}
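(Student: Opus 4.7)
The plan is to extract the existence of the selection points $A_i(t), B_i(t)$ directly from the entropy estimate \eqref{eqs:entropy} established in Theorem \ref{thm:priori}\ref{item:prio0}, using only a pigeonhole/mean value argument on each unit subinterval $(i-1,i)$ together with the coercivity of the convex function $\psi$.

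Concretely, from \eqref{eqs:entropy} I would first fix any $t\in[0,T]$ and restrict the spatial integral to the unit interval $(i-1,i)\subseteq[0,k]$, which gives
\begin{equation*}
\int_{i-1}^{i} \psi(e)(x,t)\,\dif x \le C_0,
\qquad
\int_{i-1}^{i} \psi(v)(x,t)\,\dif x \le \frac{C_0}{\gamma-1}.
\end{equation*}
The integrand $\psi(v)(\cdot,t)$ is continuous on $[i-1,i]$ by the regularity provided by Theorem \ref{thm:localt}, so the integral mean value theorem produces a point $A_i(t)\in(i-1,i)$ with $\psi(v(A_i(t),t))\le C_0/(\gamma-1)$, and similarly a point $B_i(t)\in(i-1,i)$ with $\psi(e(B_i(t),t))\le C_0$. (If one prefers to avoid continuity, a standard pigeonhole argument — the set where $\psi(v)$ exceeds twice its average has measure at most $\tfrac12$, so a point where the bound holds exists in $(i-1,i)$ — works equally well.)

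The final step is to convert the $\psi$-bound into two-sided bounds on $v(A_i(t),t)$ and $e(B_i(t),t)$. Since $\psi(\zeta)=\zeta-1-\log\zeta$ is strictly convex on $(0,\infty)$ with unique minimum $\psi(1)=0$, and $\psi(\zeta)\to+\infty$ as $\zeta\to 0^+$ and as $\zeta\to+\infty$, the sublevel set $\{\zeta>0\vcentcolon \psi(\zeta)\le M\}$ is a bounded interval $[\psi_-^{-1}(M),\psi_+^{-1}(M)]$ away from $0$ and $\infty$ for every $M\ge 0$. Applying this with $M=C_0/(\gamma-1)$ and $M=C_0$ respectively (and enlarging the constant $C_0$ in the statement if necessary, a harmless change given the convention at the start of \S \ref{sec:reform}) yields
\begin{equation*}
C_0^{-1}\le v(A_i(t),t)\le C_0, \qquad C_0^{-1}\le e(B_i(t),t)\le C_0,
\end{equation*}
which is precisely the claim.

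There is no real obstacle here: the only delicate issue is the quantitative dependence of the bounds on the initial data, but this is already built into the generic constant convention (the same $C_0$ stands for any bound depending only on $(\gamma,\mu,\lambda,\kappa,n,C_\ast)$), so upgrading $C_0/(\gamma-1)$ to $C_0$ through $\psi_\pm^{-1}$ is automatic. The lemma is thus an immediate corollary of the entropy inequality combined with the coercivity of $\psi$, and serves as the starting point for propagating pointwise density bounds along $x$ via the momentum equation in the subsequent lemmas.
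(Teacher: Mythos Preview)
Your proposal is correct and essentially the same as the paper's proof: both rest on the entropy estimate \eqref{eqs:entropy}, the coercivity of $\psi$, and the mean value theorem on each unit interval. The only cosmetic difference is that the paper first applies Jensen's inequality to bound $\psi$ of the mean of $v$ (respectively $e$) and then invokes the mean value theorem on $v$ itself, whereas you apply the mean value theorem directly to the continuous function $\psi(v(\cdot,t))$; either order yields the same two-sided bounds via $\psi_\pm^{-1}$.
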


\begin{proof}
We give our proof only for $A_i(t)$, since the proof for $B_i(t)$ is the same.
Fix an integer $i=1,\dotsc,k$. Since $\psi(\cdot)$
is convex, it follows from Jensen's inequality and Theorem \ref{thm:priori}\ref{item:prio0} that
\begin{align*}
\psi( \int_{i-1}^{i} v(x,t)\,\dif x)\le \int_{i-1}^{i}\psi(v(x,t))\,\dif x \le\dfrac{C_0}{\gamma-1}.
\end{align*}
As in Lemma \ref{lemma:rbd}, denote $z\mapsto\psi_{\pm}^{-1}(z)$ to be the left and
right branch of the convex function $\psi(\zeta)$. Since $\psi(1)=0$, we have
\begin{align*}
0<\psi_{-}^{-1}(\dfrac{C_0}{\gamma-1})
\le \int_{i-1}^{i} v(x,t)\,\dif x\le\psi_{+}^{-1}( \dfrac{C_0}{\gamma-1})<\infty.
\end{align*}
By the mean value theorem, there exists a point $A_i(t)\in(i-1,i)$ such that
\begin{align*}
\psi_{-}^{-1}(\dfrac{C_0}{\gamma-1})
\le v(A_i(t), t) \le \psi_{+}^{-1}(\dfrac{C_0}{\gamma-1}).
\end{align*}
Since $e(x,t)$ satisfies the same estimate that differs only by constant $\gamma-1$ in Theorem \ref{thm:priori}\ref{item:prio0},
by the same argument, there also exists $B_i(t)\in(i-1,i)$ such that $C_0^{-1}\le e(B_i(t),t)\le C_0$.
\end{proof}

Using Lemmas \ref{lemma:rbd}--\ref{lemma:select} and Theorem \ref{thm:priori}\ref{item:prio0},
we are ready to prove Theorem \ref{thm:priori}\ref{item:prio1}.

\begin{proof}[Proof of Theorem \textnormal{\ref{thm:priori}\ref{item:prio1}}]
Fix
$i=1,\dotsc,k-2$. Since we focus on obtaining an explicit bound for $v(x,t)$ near the origin $x=0$,
the case for $i\ge k-1$ is not considered in the proof (but can be handled by repeating the same argument).
We divide the proof into six steps.

\medskip	
1. Applying Lemma \ref{lemma:select}, it follows that, for each $(x,t)\in[i-1,i+1]\times[0,T)$,
there exist $A_i(t)\in (x, x+3)$ and $ B_i(t)\in (i,i+1)$ such that
\begin{equation}\label{eqs:crb}
C_0^{-1}\le v(A_i(t),t),\, e(B_i(t),t) \le C_0.
\end{equation}
Now, from $\eqref{eqs:LFNS-k}_1$--$\eqref{eqs:LFNS-k}_2$,
\begin{equation}\label{temp4}
\D_t u + r^m \D_x p = \beta r^m \D_x \D_t \log v.
\end{equation}
Dividing both sides of \eqref{temp4} by $\beta r^{m}$ and integrating over $[x,A_i(t)]\times[0,t]$ for some $(x,t)\in(i-1,i+1]\times[0,T)$, we have
{
\begin{equation*}
\log \dfrac{v(A_i(t),t)v_0(x)}{v_0(A_i(t))v(x,t)}
=\dfrac{1}{\beta}\int_{x}^{A_i(t)}\dfrac{u}{r^m}\bigg\vert_0^t\dif y
+\dfrac{m}{\beta}\int_{x}^{A_i(t)}\int_{0}^{t}\dfrac{u^2}{r^n}\,\dif s\dif y+\dfrac{1}{\beta}\int_{0}^{t}\big(p(A_i(t),s)-p(x,s)\big)\,\dif s,
\end{equation*}
}
where $p(x,s)\vcentcolon = p(v(x,s),e(x,s))$. Taking the exponential on both sides yields
\begin{equation}\label{temp5}
\dfrac{E(x,t)}{D(x,t)}Y(t)\dfrac{v_0(A_i(t))v(x,t)}{v(A_i(t),t)v_0(x)}
= \exp\Big\{\dfrac{1}{\beta}\int_{0}^{t}p(x,s)\,\dif s\Big\},
\end{equation}
where
{\small
\begin{equation*}
\begin{split}
&E(x,t)\vcentcolon = \exp\Big\{\dfrac{m}{\beta}\int_{0}^{t}\int_{x}^{A_i(t)}\dfrac{u^2}{r^n}\,\dif y \dif s\Big\},
\quad Y(t)\vcentcolon = \exp\Big\{\dfrac{1}{\beta}\int_{0}^{t}p(A_i(t),s)\,\dif s\Big\}, \\
&D(x,t)\vcentcolon = \exp\Big\{\dfrac{1}{\beta}\int_{x}^{A_i(t)} \Big(\dfrac{u_0(y)}{r_0^m(y)}
-\dfrac{u(y,t)}{r^m(y,t)}\Big)\,\dif y \Big\}.
\end{split}
\end{equation*}
}
Multiplying $\eqref{temp5}$ with $\beta^{-1}p(x,t)$ and integrating in time, we have
\begin{equation*}
1 + \dfrac{1}{\beta}\int_{0}^{t} \dfrac{E(x,s)}{D(x,s)}Y(s)p(x,s)
\dfrac{v_0(A(x,s))v(x,s)}{v(A(x,s),s)v_0(x)}\,\dif s
=\exp\Big\{\dfrac{1}{\beta}\int_{0}^{t}p(x,s)\,\dif s\Big\}.
\end{equation*}
Substituting this back into \eqref{temp5}, it follows that
\begin{equation}\label{eqs:repre}
\dfrac{E(x,t)}{D(x,t)}Y(t)\dfrac{v_0(A_i(t))v(x,t)}{v(A_i(t),t)v_0(x)}
=1 + \dfrac{1}{\beta}\int_{0}^{t} \dfrac{E(x,s)}{D(x,s)}Y(s)p(x,s)
\dfrac{v_0(A(x,s))v(x,s)}{v(A(x,s),s)v_0(x)}\,\dif s.
\end{equation}
	
\smallskip	
2. First, for $D(x,t)$, it follows from  Lemma \ref{lemma:rbd}--\ref{lemma:select}
and Theorem \ref{thm:priori}\ref{item:prio0} that
\begin{align*}
\Bignorm{\int_{x}^{A_i(t)}\dfrac{u}{r^m}(y,t)\,\dif y }
&\le \Big(\int_{x}^{A_i(t)}r^{-2m}\,\dif y\Big)^{\frac{1}{2}}
\Big(\int_{x}^{A_i(t)}\snorm{u}^2\,\dif y\Big)^{\frac{1}{2}}\\
&\le \Big(\int_{x}^{A_i(t)}\big(a^n+ny\psi_{-}^{-1}(\frac{C_0}{y})\big)^{-\frac{2m}{n}} \dif y\Big)^{\frac{1}{2}}C_0^{\frac{1}{2}}\\
&\le C_0 + C_0 \big(a^n + n x \psi_{-}^{-1}(\frac{C_0}{x})\big)^{-\frac{2m}{n}}.
\end{align*}
Then, using the definition of $f(x,a)$ in \eqref{fhF} and Lemma \ref{lemma:select}, we have
\begin{equation}\label{eqs:D}
(e^{C_0} f(x,a))^{-1} \le D(x,t) \le e^{C_0} f(x,a).
\end{equation}
	
\smallskip	
3. Now we estimate the terms: $Y(t)$ and $E(x,t)$.
In view of the definition of $\Gamma(z,t,a)$ in \eqref{fhF}, Theorem \ref{thm:priori}\ref{item:prio0},
and Lemma \ref{lemma:rbd}, we see that, for $t\ge s \ge 0$,
\begin{equation}\label{eqs:E}
1 \le E(x,t) \le \Gamma^{-1}(x,t,a),\,\,\,\,\,
\Gamma(x,t-s,a)\le\dfrac{E(x,s)}{E(x,t)}
=\exp\Big\{\dfrac{m}{\beta}\int_{s}^{t}\int_{i-1}^{i}\dfrac{u^2}{r^n}\,\dif y \dif \tau\Big\} \le 1.
\end{equation}
Combining \eqref{eqs:crb} and \eqref{eqs:D}--\eqref{eqs:E} with \eqref{temp5} yields that, for all $(x,t)\in(i-1,i+1]\times[0,T)$,
\begin{equation}\label{eqs:gron1}
v(x,t) Y(t)
\le  C_0 f^2(x,a)\Big\{1 + \int_{0}^{t}  e(x,s)Y(s)\,\dif s \Big\}.
\end{equation}
Since the above argument is true for any $(x,t)\in(i-1,i+1]\times[0,T)$, for a fixed $x\in(i-1,i]$,
we can integrate both sides of the above equation in $y\in[x,x+1]\subset(i-1,i+1]$. It then follows from the entropy inequality \eqref{eqs:entropy} and Jensen's inequality that
\begin{equation*}
Y(t) \le  C_0 f^2(x,a) \Big\{ 1 + \int_{0}^{t} Y(s)\,\dif s \Big\} \qquad \text{for all $t\in[0,T)$,}
\end{equation*}
where we have used that $x\mapsto f(x,a)$ is monotone decreasing. Thus, by Gr\"onwall's inequality,
\begin{equation}\label{eqs:Y}
1\le Y(t)
= C_0(1+t)f^3(x,a)\exp\big\{C_0 t f^2(x,a) \big\} \qquad \text{for each $t\in(0,T]$.}
\end{equation}
	
4. Since $B_i(t)\in(i,i+1)$ from \eqref{eqs:crb}, then $x<i<B_i(t)$ for all $t\in[0,T)$.
Using the Cauchy-Schwartz inequality, Lemma \ref{lemma:rbd}, estimate \eqref{eqs:entropy}, and Jensen's inequality, it follows that, for each $y\in[x,i+1]\subseteq (i-1,i+1]$ and $s\in[0,T)$,
\begin{align}\label{temp:esqrt}
&\Bignorm{\sqrt{e(B_i(s),s)}- \sqrt{e(y,s)}}
  = \dfrac{1}{2}\Bignorm{\int_{y}^{B_i(s)} \dfrac{\D_x e}{\sqrt{e}}(z,s)\,\dif z }\nonumber\\
&\le \dfrac{1}{2} \Big(\int_{0}^{k} \dfrac{r^{2m}\snorm{\D_x e}^2}{v e^2}(z,s)\,\dif z\Big)^{\frac{1}{2}}\Big(\sup\limits_{z\in[x,i+1]}\frac{v}{r^{2m}}(z,s) \int_{i-1}^{i+1}e(z,s)\,\dif z \Big)^{\frac{1}{2}}
  \nonumber\\
&\le C_0\Big(a^{n}+nx\psi_{-}^{-1}(\dfrac{C_0}{x}) \Big)^{-\frac{m}{n}} \big(Q(s)V(x,s)\big)^{\frac{1}{2}},
\end{align}
where
\begin{equation}\label{temp:VJ}
V(x,s)\vcentcolon= \sup\limits_{z\in[x,i+1]}v(z,s),
\qquad Q(s) \vcentcolon= \int_{0}^{k}\dfrac{r^{2m}\snorm{\D_x e}^2}{v e^2}(z,s)\,\dif z,
\end{equation}
and $Q(t)\in L^1(0,T)$ due to the entropy estimate \eqref{eqs:entropy}. According to the definition of $h(z,a)$ in \eqref{fhF}, the construction in \eqref{eqs:crb} implies that, for all $y\in[x,i+1]\subseteq(i-1,i+1]$,
\begin{equation}\label{eqs:eineq}
e(y,t) \le C_0 +  C_0 h(x,a) Q(s)V(x,s)\qquad  \text{for all $(y,s)\in[x,i+1]\times[0,T)$}.
\end{equation}
Substituting \eqref{eqs:eineq} and \eqref{eqs:Y} into \eqref{eqs:gron1}, it implies that, for all $y\in [x,i+1]\subseteq (i-1,i+1]$,
\begin{align*}
v(y,t)&\le v(y,t)Y(t) \\
&\le  C_0(1+t)f^5(x,a)\exp\big\{C_0 t f^2(x,a)\big\}\Big\{1+t+\int_{0}^{t}h(x,a)Q(s)V(x,s)\dif s\Big\},
\end{align*}
where we used that $x\mapsto f(x,a)$ is decreasing. 
Taking the supremum over $y\in[x , i+1]$, we have 
\begin{align}\label{eqs:gronv}
V(x,t) \le C_0(1+t)f^5(x,a)\exp\big\{C_0 t f^2(x,a)\big\}\Big\{1+t+\int_{0}^{t}h(x,a)Q(s)V(x,s)\,\dif s\Big\}.
\end{align}
Applying Gr\"onwall's inequality to \eqref{eqs:gronv}, we obtain that, for all $(x,t)\in[i-1,i]\times[0,T)$,
\begin{align*}
v(x,t) \le V(x,t) \le C_0(1+t)^2 f^{5}(x,a)\exp\Big\{C_0t f^2(x,a) + C_0(1+t)(f^5h)(x,a)\exp\{C_0 t f^2(x,a)\} \Big\}.
\end{align*}
	
In addition, for the lower bound of $v(x,t)$, it follows from \eqref{temp5} that
\begin{equation*}
v(x,t) = \dfrac{D(x,t)}{Y(t)E(x,t)} \dfrac{v_0(x)v(A_i(t),t)}{v_0(A_i(t))} \exp\Big\{\int_{0}^{t} p(x,s)\,\dif s \Big\}
\ge C_0^{-1}\dfrac{D(x,t)}{Y(t)E(x,t)} .
\end{equation*}
By \eqref{eqs:crb}, \eqref{eqs:D}--\eqref{eqs:E}, and estimates \eqref{eqs:Y}, we find that, for all $(x,t)\in [i-1,i]\times [0,T]$,
\begin{align*}
v(x,t)\ge C_0 f^{-1} Y^{-1} E^{-1} \ge C_0 (1+t)^{-1} f^{-4}(x,a)\exp\{-C_0 t f^2(x,a)\} \Gamma(x,t,a).
\end{align*}
Since this is true for each $i=1,\dotsc, k-2$, it follows that, for all $(x,t)\in[0,k-2]\times[0,T]$,
\begin{equation*}
\begin{dcases}
v(x,t)\le C_0 (1+t)^2 f^{5}(x,a) \exp\Big\{C_0 t f^2(x,a) + C_0 (1+t) (f^5 h)(x,a)  \exp\{C_0 t f^2(x,a)\} \Big\} ,\\
v(x,t) \ge C_0 f^{-4}\exp\{-C_0t f^2\} \Gamma(x,t,a).
\end{dcases}
\end{equation*}
	
\smallskip	
5. From definition \eqref{fhF}, it follows that, for all $x\in[1,k-2]$,
\begin{align*}
f(x,a)\le\exp\big\{C_0(n\psi_{-}^{-1}(C_0))^{-\frac{2m}{n}}\big\},
\,\,\, h(x,a) \le ( n\psi_{-}^{-1}(C_0))^{-\frac{2m}{n}},
\,\,\, \Gamma(x,t,a)\ge\exp\big\{-\dfrac{mC_0t}{n\beta \psi_{-}^{-1}(C_0)} \big\},
\end{align*}
so that $C^{-1}(T)\le v(x,t)\le C(T)$ for all $(x,t)\in[1,k-2]\times[0,T]$. Since $x\mapsto x \psi_{-}^{-1}(\frac{C_0}{x})$ is positive and monotone increasing, then, for all $x\in[\ve,k-2]$,
\begin{align*}
f(x,a)\le \exp\big\{C_0(n\ve\psi_{-}^{-1}(\frac{C_0}{\ve}))^{-\frac{m}{n}}\big\},\,\,\,
h(x,a)\le (n\ve\psi_{-}^{-1}(\frac{C_0}{\ve}))^{-\frac{m}{n}},\,\,\,
\Gamma(x,t,a) \ge \exp\big\{-\dfrac{mC_0t}{n\beta\ve\psi_{-}^{-1}(\frac{C_0}{\ve})} \big\},
\end{align*}
which yield that, for each $\ve\in(0,1]$, $C^{-1}(\ve)\le v(x,t) \le C(\ve)$ for all $(x,t)\in[\ve,k-2]\times[0,T]$.
	
\smallskip
6. Finally, we remark that, for the case when $x\in(k-2,k]$, we can choose $k\ge 3$ and $A_k(t), B_k(t)\in(k-1,k)$ by Lemma \ref{lemma:select}, and
then repeat the same argument presented above. From this, we conclude that $C^{-1}(T)\le v(x,t)\le C(T)$ for all $(x,t)\in(k-2,k]\times[0,T)$.
\end{proof}

\subsection{Exterior \texorpdfstring{$L^{\infty}\big(0,T;L^2\big)$}{LTL2}--estimates}\label{subsec:L2total}
To obtain the uniform estimates in $(a,k)\in(0,1)\times\mathbb{N}$, the derivations are restricted to domain $[\ve,k]\times[0,T]$.
Thus, given fixed $\ve>0$, we introduce the spatial cut-off function $g_\ve(x)\in C^1$:
{\small
\begin{equation}\label{eqs:g}
g_{\ve}(x)\vcentcolon=
\begin{dcases*}
0 & if $x\le \dfrac{\ve}{2}$,\\
\dfrac{8}{\ve^2}\big(x-\frac{\ve}{2}\big)^2 & if $\dfrac{\ve}{2} \le x\le \dfrac{3\ve}{4}$,\\
1-\dfrac{8}{\ve^2}(x-\ve)^2 & if $\dfrac{3\ve}{4} \le x\le \ve$,\\
1 & if $x\ge \ve$.
\end{dcases*}
\end{equation}
}
It can directly be verified that $g_\ve(x)$ satisfies the following properties:
\begin{equation}\label{eqs:gprop}
\supp (g_{\ve}^{\prime}) = [\frac{\ve}{2},\ve], \quad\,
0\le g_\ve(x) \le 1, \quad g_\ve^{\prime}(x)\ge 0, \quad
\snorm{g_{\ve}^{\prime}(x)}^2\le \dfrac{32}{\ve^2} g_{\ve}(x) \qquad\,\mbox{for all $x\in[0,\infty)$}.
\end{equation}

Moreover, it is crucial that the estimates are independent of the approximation parameters $(a,k)\in(0,1)\times\mathbb{N}$. In order to achieve this,
the upper and lower bounds of $r(x,t)$ must be given careful consideration. Recall that, from Lemma \ref{lemma:rbd}, $r(x,t)$ satisfies
\begin{equation*}
n x\psi_{-}^{-1}(\frac{C_0}{x}) \le r^n(x,t) \le C_0(1 + x)
\qquad \text{for $(x,t)\in[0,k]\times[0,T]$}.
\end{equation*}
It follows that, for each $\ve\in(0,1]$,
\begin{equation}\label{eqs:grbd}
\sup_{x\in[\frac{\ve}{2},\ve]}r(x,t)\le C(\ve), \qquad \,\, \sup_{x\in[\ve,k]}r^{-1}(x,t) \le C(\ve).
\end{equation}

Using $g_\ve(x)$ and Theorem \ref{thm:priori}\ref{item:prio0}--\ref{item:prio1},
our aim is to derive the $L^{\infty}(0,T;L^2)$--estimates listed in  Theorem \ref{thm:priori}\ref{item:prio2}.
In order to do this, we first prove the following lemma, which is necessary for resolving the problematic boundary integrals for $x\in[\frac{\ve}{2},\ve]$.

\begin{lemma}[Exterior $L^{1}(0,T;L^{\infty})$--Estimates of $e$ and $u^2$]\label{lemma:euLinf}
For any fixed $\ve>0$,
\begin{equation}\label{temp:euLinf}
\int_{0}^{T} \sup_{y\in [\ve,k]} \big(u^2+e\big)(y,s)\, \dif s \le C(\ve).
\end{equation}
\end{lemma}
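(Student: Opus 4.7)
The plan is to bound $\sup_{y\in[\ve,k]}(u^2+e)(y,s)$ pointwise in $s$ using the entropy dissipation densities
\[
Q(s)\vcentcolon=\int_0^k\frac{r^{2m}\snorm{\D_x e}^2}{ve^2}\,\dif z,\qquad
F(s)\vcentcolon=\int_0^k\frac{\snorm{\D_x(r^m u)}^2}{ve}\,\dif z,
\]
both of which lie in $L^1(0,T)$ with norms $\le C_0$ by \eqref{eqs:entropy}, and then to integrate in $s\in[0,T]$. Throughout I will use the density bounds $C(\ve)^{-1}\le v\le C(\ve)$ on $[\ve,k]\times[0,T]$ from Theorem~\ref{thm:priori}\ref{item:prio1} together with the radial estimates \eqref{eqs:grbd}, so that $v$, $r^{\pm 2m}$, $r^{-2m-2}$ are all bounded by $C(\ve)$ uniformly on $[\ve,k]\times[0,T]$.

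\emph{Step 1: Control of $e$.} For each $y\in[\ve,k]$, Lemma~\ref{lemma:select} provides a reference point $B(s)\in[\ve,k]$ at distance at most $2$ from $y$ (taking $B_i(s)\in(i-1,i)$ with $i\ge 2$ chosen appropriately) with $e(B(s),s)\in[C_0^{-1},C_0]$ and with the segment joining $B(s)$ to $y$ entirely in $[\ve,k]$. Writing $\sqrt{e(y,s)}-\sqrt{e(B(s),s)}=\int_{B(s)}^{y}\frac{\D_x e}{2\sqrt{e}}\,\dif z$ and applying Cauchy--Schwarz gives
\[
\snorm{\sqrt{e(y,s)}-\sqrt{e(B(s),s)}}^2
\le \frac14\Big(\int_0^k\frac{r^{2m}\snorm{\D_x e}^2}{ve^2}\,\dif z\Big)\Big(\sup_{[B(s),y]}\frac{v}{r^{2m}}\Big)\Big(\int_{B(s)}^{y}e\,\dif z\Big).
\]
The first factor is $Q(s)$, the second is $\le C(\ve)$, and the third is controlled by the elementary inequality $e\le 2\psi(e)+2$ combined with the $\int\psi(e)$-bound from \eqref{eqs:entropy} on the length-$\le 2$ segment, giving $\int_{B(s)}^{y}e\,\dif z\le C_0$. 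This yields $e(y,s)\le C_0+C(\ve)Q(s)$ uniformly in $y\in[\ve,k]$, and integrating in $s$ together with $\int_0^T Q\,\dif s\le C_0$ gives $\int_0^T\sup_{y\in[\ve,k]}e(y,s)\,\dif s\le C(\ve)$.

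\emph{Step 2: Control of $u^2$.} The bound $\int_0^k u^2\,\dif z\le 2C_0$ from \eqref{eqs:entropy} together with the mean-value theorem yields $A(s)\in(1,2)$ with $u^2(A(s),s)\le 2C_0$. For $y\in[\ve,k]$, Cauchy--Schwarz applied to $u^2(y)-u^2(A)=2\int_A^y u\,\D_x u\,\dif z$ gives
\[
\snorm{u^2(y,s)-u^2(A(s),s)}^2\le 8C_0\int_{\ve}^{k}\snorm{\D_x u}^2\,\dif z.
\]
Using $r^m\D_x u=\D_x(r^m u)-\frac{mvu}{r}$ with the uniform bounds above,
\[
\int_{\ve}^{k}\snorm{\D_x u}^2\,\dif z
\le C(\ve)\Big(\int_{\ve}^{k}\snorm{\D_x(r^m u)}^2\,\dif z+\int_0^k u^2\,\dif z\Big).
\]
Rewriting the first integral as $\int ve\cdot\frac{\snorm{\D_x(r^m u)}^2}{ve}\,\dif z\le C(\ve)\,\sup_{[\ve,k]}e\cdot F(s)$ and substituting the $e$-bound from Step~1 yields
\[
\sup_{y\in[\ve,k]}u^2(y,s)\le 2C_0+C(\ve)\big(1+(1+Q(s))F(s)\big)^{1/2}.
\]
Integrating in $s$ and using $(1+(1+Q)F)^{1/2}\le 1+\sqrt{F}+\sqrt{QF}$ together with Cauchy--Schwarz in time, namely $\int_0^T\sqrt{F}\,\dif s\le\sqrt{T\,\|F\|_{L^1}}$ and $\int_0^T\sqrt{QF}\,\dif s\le\sqrt{\|Q\|_{L^1}\|F\|_{L^1}}\le C_0$, gives $\int_0^T\sup_{y}u^2\,\dif s\le C(\ve)$. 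Combined with Step~1 this yields \eqref{temp:euLinf}.

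The main obstacle is the coupling between the two estimates: extracting an $L^2_x$-bound on $\D_x u$ from the entropy dissipation $F(s)$ forces one to absorb the factor $ve$, which in turn requires an $L^\infty$-in-space bound on $e$; this bound is only available through $Q(s)$ in Step~1. Closing the resulting product $Q(s)F(s)$ in time then relies essentially on the Cauchy--Schwarz inequality $\int_0^T\sqrt{QF}\,\dif s\le\|Q\|_{L^1(0,T)}^{1/2}\|F\|_{L^1(0,T)}^{1/2}$, so that the argument cannot use only one dissipation mechanism in isolation but must exploit both simultaneously.
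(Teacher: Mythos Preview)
Your argument is correct and uses the same core ingredients as the paper (the reference points from Lemma~\ref{lemma:select}, the entropy dissipations $Q$ and $F$, and the $v$- and $r$-bounds on $[\ve,k]$). Step~1 is essentially identical to the paper's.

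The difference is in Step~2. The paper avoids the square-root and the $\sqrt{QF}$ coupling altogether: instead of squaring $u^2(y)-u^2(A)$ and using Cauchy--Schwarz in space, it uses the $W^{1,1}\hookrightarrow C^0$ embedding and applies Young's inequality directly to $\snorm{u\,\D_x u}\le r^{-m}\snorm{u}\,\snorm{\D_x(r^m u)}+m r^{-n}v u^2$, obtaining the pointwise-in-time bound
\[
\sup_{y\in[\ve,k]}u^2(y,s)\le C(\ve)+C(\ve)\sup_{y\in[\ve,k]}e(y,s)+C\,F(s),
\]
which integrates immediately using Step~1 and $F\in L^1(0,T)$. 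So the paper's route is linear in the dissipations and needs no Cauchy--Schwarz in time; your route is quadratic (after the square) and then recovers via $\int_0^T\sqrt{QF}\le\|Q\|_{L^1}^{1/2}\|F\|_{L^1}^{1/2}$. Both are valid; the paper's is a touch shorter, while yours makes more explicit that the two dissipation mechanisms have to be combined.
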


\begin{proof}
By Lemma \ref{lemma:select}, for each $(y,s)\in[\ve,k]\times[0,T]$, there exists $B_i(s)\in[0,k]$
such that $\norm{y-B_i(s)}\le 1$ and $C_0^{-1} \le e(B_i(s),s) \le C_0$. By the same calculation in \eqref{temp:esqrt}, we have
\begin{align}
\snorm{\sqrt{e(y,s)} - \sqrt{e(B_i(s),s)}}^{2}
\le&\, \frac{1}{4}\sup_{z\in[\ve,k]} \dfrac{v}{r^{2m}} (z,s) \Bignorm{\int_{B_i(s)}^{y} e(x,s)\,\dif x }
 \Bignorm{ \int_{B_i(s)}^{y}  \dfrac{r^{2m}\,\snorm{\D_x e}^2}{ve^2}(x,s)\,\dif x }\nonumber\\
\le &\, C(\ve)\int_{0}^{k} \dfrac{r^{2m}\,\snorm{\D_x e}^2}{v e^2}(x,s)\,\dif x, \label{4.25a}
\end{align}
where
we have used Theorem \ref{thm:priori}\ref{item:prio1}, \eqref{eqs:g}--\eqref{eqs:grbd},
and $\sbnorm{\psi(e)}_{L^{1}(0,k)}\le C_0$ from Theorem \ref{thm:priori}\ref{item:prio0} and Jensen's inequality.
Taking the supremum over $y\in[\ve,k]$ on the left-hand side of \eqref{4.25a} and integrating in $t\in[0,T]$,
it follows from
Lemma \ref{lemma:select} and Theorem \ref{thm:priori}\ref{item:prio0} that
\begin{equation}\label{temp:infE}
\int_{0}^{T} \sup_{y\in [\ve,k]} e(y,s)\,\dif s
\le C_0 T + C(\ve)\int_{0}^{T}\int_{0}^{k}\dfrac{r^{2m}\,\snorm{\D_x e}^2}{ve^2}
\,\dif x \dif s
\le C(\ve).
\end{equation}
	
Next, notice that
\begin{align*}
\snorm{u\D_x u}=\snorm{r^{-m}u \D_x(r^m u)-m r^{-n}v u^2}
\le 2r^{-2m} \snorm{\D_x(r^m u)}^2 +2 m^2 r^{-2n} v^2 \snorm{u}^2.
\end{align*}
Then, by the Sobolev embedding theorem: $W^{1,1}(\ve,k)\xhookrightarrow[]{}C^0(\ve,k)$, and Theorem \ref{thm:priori}\ref{item:prio0}--\ref{item:prio1},
\begin{align}\label{eqs:uSob}
\sup_{y\in[\ve,k]}u^2(y,t)
&\le C\int_{\ve}^{k} u^2\,\dif x + C\int_{\ve}^{k}\snorm{u\D_x u}\,\dif x
   \nonumber\\
&\le C_0 + C \sup_{y\in[\ve,k]}\Big(\dfrac{v}{r^{2m}}e
 +\dfrac{v}{r^n}\Big)(y,s)\int_{\ve}^{k} u^2(x,t)\,\dif x
  + C\int_{\ve}^{k} \dfrac{\snorm{\D_x(r^m u)}^2}{ve}\,\dif x\nonumber\\
&\le C(\ve) + C(\ve) \sup_{y\in[\ve,k]}e(y,t) + C\int_{0}^{k} \dfrac{\snorm{\D_x (r^mu)}^2}{ve}\,\dif x.
\end{align}
$\eqref{temp:euLinf}_2$ follows from integrating the above in $[0,T]$ and then using \eqref{temp:infE} and Theorem \ref{thm:priori}\ref{item:prio0}.
\end{proof}

\begin{lemma}[Exterior $L^{\infty}(0,T;L^2)$--Estimate of the Total Energy]\label{lemma:eL2}
For any fixed $\ve>0$,
    \begin{align}\label{eqs:eL2}
	        \sup\limits_{t\in[0,T]}\int_{\ve}^{k}\big(\snorm{e-1}^2 + u^4\big)(x,t)\,\dif x
	        + \int_{0}^{T}\sup\limits_{x\in[\ve,k]}e^2(x,s)\,\dif s
+\int_{0}^{T}\int_{\ve}^{k}\dfrac{r^{2m}}{v}
	          \big(\snorm{\D_x e}^2 + \snorm{u\D_x u}^2\big)\,\dif x \dif s  \le  C(\ve).
	    \end{align}
\end{lemma}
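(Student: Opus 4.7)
The strategy is an energy method on the localized quantities $\int(e-1)^2 g_\ve\,\dif x$ and $\int u^4 g_\ve\,\dif x$, using the cut-off $g_\ve$ from \eqref{eqs:g} to restrict to $x\ge \ve/2$, and closing the estimate with Gr\"onwall's inequality. The $L^1_t L^\infty_x$ bounds of Lemma \ref{lemma:euLinf} supply the integrability needed to tame the transition layer $[\ve/2,\ve]$ where $e$ and $u$ enjoy no pointwise control. The final claim $\int_0^T\sup_x e^2\,\dif s\le C(\ve)$ will follow as a consequence of the preceding estimates via a Sobolev embedding argument analogous to \eqref{eqs:uSob}.

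Test $\eqref{eqs:LFNS-k}_3$ with $2(e-1)g_\ve$ and $\eqref{eqs:LFNS-k}_2$ with $4u^3 g_\ve$, integrate over $[0,k]$ and use $u|_{x=0,k}=\D_x e|_{x=0,k}=0$ to integrate by parts on the heat-conduction and viscous terms. Adding the resulting identities yields
\begin{align*}
\frac{d}{dt}\int_0^k\{(e-1)^2+u^4\}g_\ve\,\dif x+\int_0^k g_\ve\Big(2\kappa\frac{r^{2m}|\D_x e|^2}{v}+c_0\frac{r^{2m}u^2|\D_x u|^2}{v}\Big)\,\dif x=\mathcal R_{\mathrm{int}}+\mathcal R_{\mathrm{bdy}},
\end{align*}
where $\mathcal R_{\mathrm{int}}$ collects the pressure work, the viscous-heating source, and the spherical corrections (involving $\D_x(r^{m-1}u^2)$ and $r^{-1}$), while $\mathcal R_{\mathrm{bdy}}$ gathers all terms weighted by $g_\ve'$, supported in $[\ve/2,\ve]$. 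Every contribution to $\mathcal R_{\mathrm{int}}$ is bounded by Cauchy--Schwarz/Young against either the two dissipations just produced or the entropy dissipations $|\D_x(r^m u)|^2/(ve)$ from \eqref{eqs:entropy}, leaving multiplicative factors of $\sup_x e$ and $\sup_x u^2$ that are $L^1_t$-integrable with bound $C(\ve)$ by Lemma \ref{lemma:euLinf}, making them compatible with Gr\"onwall.

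The main obstacle is $\mathcal R_{\mathrm{bdy}}$. The key pointwise inequality $|g_\ve'|^2\le 32 g_\ve/\ve^2$ from \eqref{eqs:gprop}, combined with the pointwise bounds $r,r^{-1},v,v^{-1}\le C(\ve)$ on $\supp g_\ve'\subset[\ve/2,\ve]$ afforded by Theorem \ref{thm:priori}\ref{item:prio1} applied at level $\ve/2$ and by \eqref{eqs:grbd}, reduces every $g_\ve'$-term after Young's inequality to a sum of (a) a small multiple of the LHS dissipation, (b) a residual $C(\ve)\int_{\ve/2}^\ve(e-1)^2\,\dif x$, and (c) a residual $C(\ve)\int_{\ve/2}^\ve u^4\,\dif x$. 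The $u^4$-residual is controlled in $L^1_t$ by the elementary majorization $\int_{\ve/2}^\ve u^4\,\dif x\le\sup_x u^2\cdot\int_0^k u^2\,\dif x\le C_0\sup_x u^2$ together with $\int_0^T\sup_x u^2\,\dif s\le C(\ve)$ from Lemma \ref{lemma:euLinf}. For the $(e-1)^2$-residual we use the pointwise bound $\sup_{[\ve/2,\ve]}e\le C_0+C(\ve)Q(s)$, extracted from the estimate \eqref{4.25a}--\eqref{temp:infE} in the proof of Lemma \ref{lemma:euLinf}, where $Q(s)=\int r^{2m}|\D_x e|^2/(ve^2)\,\dif x\in L^1(0,T)$ by \eqref{eqs:entropy}; this turns the $(e-1)^2$-residual into a combination of $L^1_t$-quantities that Gr\"onwall can absorb. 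After summing, absorbing the dissipations into the LHS, and applying Gr\"onwall, one obtains
$$
\sup_{t\in[0,T]}\int_\ve^k\{(e-1)^2+u^4\}(x,t)\,\dif x+\int_0^T\!\!\int_\ve^k \frac{r^{2m}}{v}\big(|\D_x e|^2+|u\D_x u|^2\big)\,\dif x\,\dif s\le C(\ve).
$$

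Finally, the estimate $\int_0^T\sup_{[\ve,k]}e^2\,\dif s\le C(\ve)$ follows from the $W^{1,1}(\ve,k)\hookrightarrow L^\infty$ embedding applied to $(e-1)^2$, mirroring \eqref{eqs:uSob}: writing
\begin{align*}
\sup_{x\in[\ve,k]}(e-1)^2\le C(\ve)\int_\ve^k(e-1)^2\,\dif x+C(\ve)\Big(\int_\ve^k(e-1)^2\,\dif x\Big)^{1/2}\Big(\int_\ve^k\frac{r^{2m}|\D_x e|^2}{v}\,\dif x\Big)^{1/2},
\end{align*}
integrating in $t$, applying Cauchy--Schwarz, and invoking the $L^\infty_t L^2_x$ bound on $e-1$ together with the $L^2_{x,t}$ bound on $r^m\D_x e/\sqrt{v}$ just established finishes the proof. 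The principal difficulty throughout is the interplay between the singular prefactor $|g_\ve'|^2/g_\ve\sim\ve^{-2}$ on the transition strip and the absence of pointwise control of $e,u$ there; its resolution hinges on the absorption trick using $|g_\ve'|^2\le 32 g_\ve/\ve^2$ together with the $L^1_t L^\infty_x$ output of Lemma \ref{lemma:euLinf}.
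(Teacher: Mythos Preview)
Your overall plan---localized energy method with the cut-off $g_\ve$, Gr\"onwall closure, and treatment of the transition strip $[\ve/2,\ve]$ via Lemma \ref{lemma:euLinf}---matches the paper's architecture, but there is a genuine gap in the interior estimate. When you test $\eqref{eqs:LFNS-k}_3$ with $2(e-1)g_\ve$, the viscous-heating source $\beta\,|\D_x(r^m u)|^2/v$ produces
\[
2\beta\int_0^k g_\ve\,(e-1)\,\frac{|\D_x(r^m u)|^2}{v}\,\dif x,
\]
which has no sign. Neither of the two dissipations you produce ($r^{2m}|\D_x e|^2/v$ and $r^{2m}u^2|\D_x u|^2/v$) matches it, since it carries $(e-1)$ rather than $u^2$. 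Rewriting via the entropy dissipation $|\D_x(r^m u)|^2/(ve)$ introduces the factor $|(e-1)e|\le 2+4(e-1)^2$, so closing would require $\sup_x(e-1)^2$ as a Gr\"onwall coefficient multiplying an $L^1_t$ quantity---but $\int_0^T\sup_x e^2\,\dif s\le C(\ve)$ is precisely part of the conclusion. At this stage only the entropy estimate, the density bounds, and Lemma \ref{lemma:euLinf} are available; the bound $\int_0^T\!\int|\D_x(r^m u)|^2/v\le C(\ve)$ (Lemma \ref{lemma:L2Dxu}) is proved \emph{after} the present lemma and depends on it, so it cannot be invoked here.

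The paper avoids this obstacle by working with the \emph{total} energy $w:=\tfrac12 u^2+e-1$ instead of $e-1$. Adding $u\cdot\eqref{eqs:LFNS-k}_2$ to $\eqref{eqs:LFNS-k}_3$ yields $\D_t w=\D_x(r^m u\tilde F)-2m\mu\,\D_x(r^{m-1}u^2)+\kappa\,\D_x(r^{2m}v^{-1}\D_x e)$ with $\tilde F:=\beta v^{-1}\D_x(r^m u)-p$; the viscous heating is absorbed into the flux $r^m u\tilde F$. Testing this with $g_\ve w$ then produces, from $-\int g_\ve r^m u\tilde F\,\D_x w$ with $\D_x w=u\D_x u+\D_x e$, the good dissipation $\beta\int g_\ve r^{2m}|u\D_x u|^2/v$, and the remaining cross terms reduce to $\int g_\ve e^2u^2/v$, which is closed via the Sobolev-type bound $\sup(g_\ve e^2)\le C(\ve)\{1+Q(t)(1+\int g_\ve(e-1)^2)\}$ with $Q\in L^1_t$ (see \eqref{TEtemp17}). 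A secondary, easily repaired point: your treatment of the boundary residual $\int_{\ve/2}^\ve(e-1)^2\,\dif x$ via the pointwise bound $\sup e\le C_0+C(\ve)Q$ would, taken literally, introduce $\int_0^T Q^2\,\dif t$; the paper instead writes $(e-1)^2=|e-1|\cdot|e-1|$, bounds one factor by $\sup_{[\ve/2,k]}|e-1|\in L^1_t$ (Lemma \ref{lemma:euLinf}) and the other by $\int_{\ve/2}^\ve(e+1)\,\dif x\le C(\ve)$ via Jensen and the entropy bound.
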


\begin{proof} We divide the proof into four steps.

\smallskip
1. Denote the total energy density function $w\vcentcolon= \frac{1}{2}u^2+e-1$.
Then multiplying $\eqref{eqs:LFNS-k}_2$ with $u$ and adding it to $\eqref{eqs:LFNS-k}_3$ yield
\begin{equation}\label{TEtemp12}
\D_t w = \D_x\big(r^m u \tilde{F}\big) -2m\mu \D_x(r^{m-1}u^2)
+ \kappa \D_x\big(\dfrac{r^{2m}}{v}\D_x e\big),
\end{equation}
where $\tilde{F}\vcentcolon=\beta v^{-1}\D_x( r^m u)-p(v,e)$.
Multiplying \eqref{TEtemp12} with $g_\ve w$ and then integrating in $x$, we have
\begin{align*}
\dfrac{1}{2}\dfrac{\dif}{\dif t}\int_{0}^k g_\ve w^2\,\dif x
&= -\int_{0}^{k} g_\ve r^m u\tilde{F} \D_x w\,\dif x
  + 2m\mu \int_{0}^{k} g_\ve r^{m-1}u^2 \D_x w\,\dif x
  - \kappa \int_{0}^{k} g_\ve \dfrac{r^{2m}}{v} \D_x e \D_x w\,\dif x \\
&\quad\, +\int_{0}^{k} g_\ve^{\prime} \Big\{-r^m u w \tilde{F}
  +2m\mu r^{m-1}u^2 w
  -\kappa  \dfrac{r^{2m}}{v} w \D_x e\Big\}\,\dif x \\
&=\vcentcolon \sum_{i=1}^{4} I_i.
\end{align*}

2. The terms $I_j, j=1,2,3$, can be estimated as follows:
\begin{align*}
I_1
=& -\beta \int_{0}^{k} g_\ve \big\{\dfrac{r^{2m}}{v} u\D_x u \D_x e + m  r^{m-1} u^3 \D_x u
   + m r^{m-1} u^2 \D_x e \big\}\,\dif x  \\
&+(\gamma-1)\int_{0}^{k}g_\ve\dfrac{e}{v}r^m\big\{u^2\D_x u+u\D_xe\big\}\,\dif x
  -\beta\int_{0}^{k} g_\ve\dfrac{r^{2m}}{v}\snorm{u \D_x u}^2\,\dif x \\
\le &\, \big(\dfrac{5\beta^2}{2\kappa}-\dfrac{2\beta}{3}\big) \int_{0}^{k}g_\ve\dfrac{\snorm{r^m u\D_x u}^2}{v}\,\dif x +\dfrac{3\kappa}{10} \int_{0}^{k} g_\ve \dfrac{\snorm{r^m\D_x e}^2}{v}\,\dif x
 +C\int_{0}^k g_\ve \Big( \dfrac{v u^4}{r^2} + \dfrac{e^2u^2}{v} \Big)\,\dif x,\\
I_2
   =&\, 2m\mu \int_{0}^{k} g_\ve r^{m-1} (u^3 \D_x u + u^2 \D_x e)\,\dif x \\
\le& \,\dfrac{\beta}{6} \int_{0}^{k} g_\ve\dfrac{r^{2m}}{v}\snorm{u \D_x u}^2\,\dif x
  +\dfrac{\kappa}{10} \int_{0}^{k} g_\ve\dfrac{r^{2m}}{v}\snorm{\D_x e}^2\,\dif x
  + C \int_{0}^{k} g_\ve \dfrac{v}{r^2} u^4\,\dif x,\\
I_3
 =&  - \kappa \int_{0}^{k} g_\ve \dfrac{r^{2m}}{v} \D_x e\{\D_x e + u \D_x u\}\,\dif x\\
\le & -\dfrac{9}{10}\kappa\int_{0}^{k}g_\ve\dfrac{r^{2m}}{v}\snorm{\D_x e}^2\,\dif x
  +\dfrac{5\kappa}{2}\int_{0}^{k} g_\ve\dfrac{r^{2m}}{v}\snorm{u \D_x u}^2\,\dif x.
\end{align*}
Using \eqref{eqs:g}--\eqref{eqs:grbd} and Theorem \ref{thm:priori}\ref{item:prio1},
we have
\begin{align*}
I_4
=& \int_{\frac{\ve}{2}}^{\ve} g_\ve^{\prime} \Big\{-r^m u w \Big(\beta\dfrac{ r^m}{v} \D_x u + \dfrac{m\beta}{r} u - (\gamma-1)\dfrac{e}{v}\Big) + 2m\mu r^{m-1}u^2 w -\kappa \dfrac{r^{2m}}{v} w \D_x e \Big\}\,\dif x\\
\le& \int_{\frac{\ve}{2}}^{\ve} \dfrac{\ve^2\snorm{g_{\ve}^{\prime}}^2}{32}\dfrac{r^{2m}}{v}
  \Big\{ \dfrac{\beta}{2}\snorm{u \D_xu}^2+\dfrac{\kappa}{4}\snorm{\D_x e}^2 \Big\}\,\dif x
 + C\int_{\frac{\ve}{2}}^{\ve}\Big\{ \dfrac{\ve^2\snorm{g_{\ve}^{\prime}}^2}{32}\big(\dfrac{vu^4}{r^2}+\dfrac{e^2 u^2}{v}\big)
 + \dfrac{32}{\ve^2}\dfrac{r^{2m}}{v}w^2 \Big\}\,\dif x\\
\le& \int_{0}^{k}g_\ve \dfrac{r^{2m}}{v}
\Big\{ \dfrac{\beta}{2}\norm{u \D_xu}^2 + \dfrac{\kappa}{4}\norm{\D_x e}^2 \Big\}\,\dif x
 + C\int_{0}^{k}g_\ve\Big\{\dfrac{v}{r^2}u^4 +\dfrac{e^2 u^2}{v}\Big\}\,\dif x
 +  C(\ve)\int_{\frac{\ve}{2}}^{\ve} w^2\,\dif x.
\end{align*}
Combining all the above estimates, it follows that
\begin{equation}\label{TEtemp13}
\begin{aligned}
\dfrac{1}{2}\dfrac{\dif}{\dif t}\int_{0}^k g_\ve w^2\,\dif x
\le& -\dfrac{\kappa}{4}\int_{0}^{k} g_\ve \dfrac{r^{2m}}{v} \norm{\D_x e}^2 \dif x + \dfrac{5(\beta^2+\kappa^2)}{2\kappa}\int_{0}^{k} g_\ve \dfrac{r^{2m}}{v} \norm{u \D_x u}^2 \dif x \\
&+ C \int_{0}^{k} g_\ve \dfrac{v}{r^2} u^4\,\dif x
  + C \int_{0}^{k} g_\ve \dfrac{e^2 u^2}{v}\,\dif x
  + C(\ve)\mathcal{R}_{\ve}(t),
\end{aligned}
\end{equation}
where $\mathcal{R}_{\ve}(t)$ is the problematic boundary integral near the origin, given by
\begin{equation}\label{TEtemp20}
\mathcal{R}_\ve(t)
\vcentcolon = \int_{\frac{\ve}{2}}^{\ve}\big((e-1)^2 + u^4\big)(x,t)\,\dif x.
\end{equation}
	
3. Multiplying $g_\ve u^3$ to the momentum equation $\eqref{eqs:LFNS-k}_2$ and integrating in $x\in[0,k]$, we obtain the following $L^4$-estimate for $u$:
\begin{align*}
\dfrac{1}{4}\dfrac{\dif}{\dif t}\int_{0}^{k}g_\ve \norm{u}^4 \dif x
&= -\beta\int_{0}^{k} g_\ve\dfrac{1}{v} \D_x(r^m u)\D_x(r^m u^3)\,\dif x
 +(\gamma-1) \int_{0}^{k} g_\ve \dfrac{e}{v} \D_x(r^m u^3)\,\dif x   \\
&\quad +   \int_{0}^{k} g_\ve^{\prime} \Big\{\dfrac{(\gamma-1)e}{v}  r^m u^3
  -\dfrac{\beta}{v} \D_x(r^m u) r^m u^3\Big\}\,\dif x\\
&=\vcentcolon \,\sum_{i=1}^3 J_i.
\end{align*}
The terms $J_1$ and $J_2$ can be estimated as follows:
\begin{align*}
J_1
=& -\beta \int_{0}^{k} g_\ve \Big( 3\dfrac{\snorm{r^{m}u\D_x u}^2}{v}
   + 4m r^{m-1}u^3 \D_x u + m^2 \dfrac{u^4v}{r^2} \Big)\,\dif x\\
\le&\, -\beta \int_{0}^{k} g_\ve \dfrac{r^{2m}}{v}\snorm{u \D_x u}^2\,\dif x
     + m^2 \beta \int_{0}^{k} g_\ve \dfrac{v}{r^2} u^4 \,\dif x,\\
J_2
\le&\, \dfrac{\beta}{4}\int_{0}^{k}g_\ve \dfrac{\snorm{r^mu\D_x u}^2}{v}\,\dif x + C \int_{0}^{k} g_\ve \Big( \dfrac{e^2u^2}{v} + \dfrac{vu^4}{r^2} \Big)\,\dif x.
\end{align*}
Using \eqref{eqs:g}--\eqref{eqs:grbd}
and Theorem \ref{thm:priori}\ref{item:prio1}, we have
\begin{align*}
J_3
\le &\, \dfrac{\gamma-1}{2} \int_{\frac{\ve}{2}}^{\ve} \dfrac{\ve^2}{32}\snorm{g_\ve^{\prime}}^2 \dfrac{e^2 u^2}{v}\,\dif x
 +\dfrac{\gamma-1}{2}\int_{\frac{\ve}{2}}^{\ve}\dfrac{32}{\ve^2}\dfrac{r^{2m}}{v}u^4\,\dif x
 + C \int_{\frac{\ve}{2}}^{\ve} \dfrac{1}{\ve} r^{m-1} u^4\,\dif x\\
&\, + \dfrac{\beta}{4} \int_{\frac{\ve}{2}}^{\ve} \dfrac{\ve^2}{32}
  \snorm{g_\ve^{\prime}}^2 \dfrac{r^{2m}}{v} \snorm{u \D_x u}^2\,\dif x
 +\beta\int_{\frac{\ve}{2}}^{\ve}\dfrac{32}{\ve^2} \dfrac{r^{2m}}{v} u^4\,\dif x \\
\le &\, \dfrac{\beta}{4}\int_{0}^{k}g_\ve\dfrac{r^{2m}}{v}\snorm{u \D_x u}^2\,\dif x
 +\dfrac{\gamma-1}{2}\int_{0}^{k} g_\ve \dfrac{e^2 u^2 }{v}\,\dif x
 + C(\ve) \int_{\frac{\ve}{2}}^{\ve} u^4\,\dif x.
\end{align*}
Combining all the above estimates, it follows that
\begin{equation}\label{TEtemp14}
\begin{aligned}
\dfrac{\dif}{\dif t}\int_{0}^{k}g_\ve \dfrac{\snorm{u}^4}{4} \dif x
\le &\, C\int_{0}^kg_\ve\Big(\dfrac{e^2 u^2}{v}+\dfrac{vu^4}{r^2}\Big)\,\dif x
 -\dfrac{\beta}{2}\int_{0}^{k} g_\ve \dfrac{\snorm{r^{m}u\D_x u}^2}{v}\,\dif x
  + C(\ve) \mathcal{R}_\ve(t).
\end{aligned}
\end{equation}
	
4. Multiplying \eqref{TEtemp14} by $4\tilde{C}\equiv \frac{10(\beta^2+\kappa^2)}{\kappa\beta}$ and adding it to \eqref{TEtemp13},
it follows from \eqref{eqs:entropy} that
\begin{align}\label{TEtemp15}
&\tilde{C}\dfrac{\dif}{\dif t}\int_{0}^{k} g_\ve u^4\,\dif x
+ \dfrac{1}{2}\dfrac{\dif}{\dif t} \int_{0}^k g_\ve w^2\,\dif x
+ \int_{0}^{k} g_\ve \dfrac{r^{2m}}{v} \Big\{ \dfrac{\kappa}{4}\snorm{\D_x e}^2 + \beta\tilde{C} \snorm{u \D_x u}^2 \Big\}\,\dif x
  \nonumber\\
&\le  C(\ve) \mathcal{R}_\ve(t) + C \int_{0}^{k} g_\ve \Big(\dfrac{v}{r^2}u^4 +\dfrac{\norm{ue}^2}{v}\Big)\,\dif x.
\end{align}
Using Theorem \ref{thm:priori}\ref{item:prio0}--\ref{item:prio1} and \eqref{eqs:g}--\eqref{eqs:grbd},
we have
\begin{align}\label{TEtemp16}
\int_{0}^{k} g_\ve \Big(\dfrac{v}{r^2}u^4 +\dfrac{\norm{ue}^2}{v}\Big)\,\dif x
\le  C(\ve) \int_{0}^{k} g_\ve u^4\,\dif x + C(\ve) \sup_{y\in[\frac{\ve}{2},k]}(g_\ve\snorm{e}^2)(y,t).
\end{align}
By Lemma \ref{lemma:select}, for each $(x,t)\in[0,k]\times[0,T]$,
there exist both an integer $i\in\{0,\dotsc, k-1\}$ and a point $B_i(t)\in[0,k]$
such that $x, B_i(t) \in [i,i+1]$ and $C_0^{-1}\le e(B_i(t),t) \le C_0$.
Using Theorem \ref{thm:priori}\ref{item:prio0}, $\snorm{x-B_i(t)}\le 1$, and \eqref{eqs:g}--\eqref{eqs:grbd},
we have
\begin{align*}
&\snorm{(g_\ve e)(x,t) - (g_\ve e)(B_i(t),t)}^2
  =\Big(\int_{B_i(t)}^{x} (g_\ve \D_x e + g_\ve^{\prime} e)(y,t)\,\dif y \Big)^2  \\
&\le 2\Big(\int_{B_i(t)}^{x}g_\ve\dfrac{r^{2m}}{v}\dfrac{\snorm{\D_x e}^2}{e^2}
  \,\dif y \Big) \Big( \int_{B_i(t)}^{x}g_\ve\dfrac{v}{r^{2m}}e^2\,\dif y \Big) + C(\ve)\Big(\int_{B_i(t)}^{x}e(y,t)\,\dif y\Big)^2\\
&\le 4\Big(\int_{0}^{k} \dfrac{r^{2m}}{v}\dfrac{\norm{\D_x e}^2}{e^2}\,\dif y\Big) \Big\{ \Bignorm{\int_{B_i(t)}^{x}g_\ve\dfrac{v}{r^{2m}}\snorm{e-1}^2\,\dif y} + \Bignorm{\int_{B_i(t)}^{x}\dfrac{g_\ve v}{r^{2m}}\,\dif y}\Big\} + C(\ve)\Big(\int_{i}^{i+1}e\,\dif y\Big)^2\\
&= C(\ve)\Big\{1+ Q(t)\Big(1+\Bignorm{\int_{B_i(t)}^{x}g_\ve\snorm{e-1}^2\,\dif y}\Big)\Big\},
\end{align*}
where $Q(t)$ is defined in \eqref{temp:VJ}. 
Taking the supremum over $x\in[\frac{\ve}{2},k]$, it follows from Theorem \ref{thm:priori}\ref{item:prio1} and \eqref{eqs:g}--\eqref{eqs:grbd}
that
\begin{equation}\label{TEtemp17}
\sup_{x\in[\frac{\ve}{2},k]} (g_\ve\snorm{e}^2)(x,t)
\le  C(\ve)\Big\{1 +  Q(t) \Big(1 + \int_{0}^{k} g_\ve\snorm{e-1}^2\,\dif x\Big)\Big\}.
\end{equation}
Substituting \eqref{TEtemp17} into \eqref{TEtemp16} yields
\begin{equation}\label{TEtemp21}
\begin{aligned}
\int_{0}^{k} g_\ve \Big(\dfrac{v}{r^2}u^4 +\dfrac{\snorm{ue}^2}{v}\Big)\,\dif x
\le C(\ve) \big( 1+ Q(t)\big) \Big( 1+ \int_{0}^{k} g_\ve ( u^4 + w^2 )\,\dif x \Big),
\end{aligned}
\end{equation}
where we have used:
$\snorm{u}^4 + w^2
\ge \dfrac{3}{4}\snorm{u}^4 + \dfrac{1}{2}(e-1)^2$.
Substituting \eqref{TEtemp21} into \eqref{TEtemp15}, it follows that
\begin{align*}
&\dfrac{\dif}{\dif t}\int_{0}^{k} g_\ve (u^4 + w^2)\,\dif x
 + \int_{0}^{k} g_\ve \dfrac{r^{2m}}{v}\big(\snorm{\D_x e}^2
 \snorm{u \D_x u}^2\big)\,\dif x \\
&\le C(\ve) \big( 1+ Q(t)\big) \Big( 1+ \int_{0}^{k} g_\ve ( u^4 + w^2)\,\dif x \Big) + C(\ve) \mathcal{R}_{\ve}(t) .
\end{align*}
Therefore, applying Gr\"onwall's inequality and using $Q(t)\in L^1(0,T)$,
we have
\begin{equation}\label{TEtemp18}
\begin{aligned}
\int_{0}^{k} \big(\snorm{u}^4 + \snorm{e-1}^2\big)(x,t)\,\dif x
\le  C(\ve)\Big(1+ \int_{0}^{k} (u_0^4 + w_0^2)\,\dif x
         + \int_{0}^{t} \mathcal{R}_\ve\,\dif s\Big).
\end{aligned}
\end{equation}
	
5. Using Lemma \ref{lemma:euLinf}, we claim that $\mathcal{R}_\ve(t)$ defined in \eqref{TEtemp20} satisfies:
\begin{equation}\label{TEtemp19}
\int_{0}^t \mathcal{R}_\ve(s)\,\dif s
= \int_{0}^t \int_{\frac{\ve}{2}}^{\ve}\big(\snorm{e-1}^2 + \snorm{u}^4\big)\,\dif x \dif s \le C(\ve).
\end{equation}

To show this claim, we first use Lemma \ref{lemma:euLinf} and Theorem \ref{thm:priori}\ref{item:prio0} to obtain	
\begin{align*}
\int_{0}^{t} \mathcal{R}_\ve(s)\,\dif s
&\le  C\int_{0}^t \sup_{y\in[\frac{\ve}{2},\ve]}\snorm{e-1}(y,s)
\big(\int_{\frac{\ve}{2}}^{\ve}(e+1)(x,s)\,\dif x\big)\, \dif s
+C\int_{0}^t\sup_{y\in[\frac{\ve}{2},\ve]}u^2(y,s)\big(\int_{\frac{\ve}{2}}^{\ve}u^2(x,s)\,\dif x\big)\,\dif s\\
&\le C(\ve)\Big(1+
\int_{0}^t \sup_{y\in[\frac{\ve}{2},\ve]} e(y,s) \big(\int_{\frac{\ve}{2}}^{\ve} e(x,s)\,\dif x\big)\,\dif s\Big).
\end{align*}
It follows by Jensen's inequality and Theorem \ref{thm:priori}\ref{item:prio0} that
\begin{align*}
\psi( \dfrac{2}{\ve} \int_{\frac{\ve}{2}}^{\ve} e(x,s)\,\dif x )
\le \dfrac{2}{\ve} \int_{\frac{\ve}{2}}^{\ve} \psi(e)(x,s)\,\dif x
\le \dfrac{2C_0}{\ve},
\end{align*}
so that
$$
\int_{\frac{\ve}{2}}^{\ve} e(x,s)\, \dif x \le \dfrac{\ve}{2} \psi^{-1}_{+} (\dfrac{2C_0}{\ve}) \le C(\ve).
$$
Substituting this back into the estimate of $\mathcal{R}_\ve$
and using Lemma \ref{lemma:euLinf}, it follows that
\begin{align*}
\int_{0}^{t} \mathcal{R}_\ve(s)\,\dif s
\le C(\ve) \Big(1+ \int_{0}^t \sup_{y\in[\frac{\ve}{2},\ve]} e(y,s)\,\dif s\Big)
\le C(\ve).
\end{align*}
This prove the claim \eqref{TEtemp19}.

\medskip
6. Using Gr\"onwall's inequality and \eqref{TEtemp18}--\eqref{TEtemp19},
we have
\begin{align*}
&\int_{0}^{k} g_\ve(x)(u^4 + w^2)(x,t)\,\dif x
+ \int_{0}^{t}\int_{0}^{k} g_\ve \dfrac{r^{2m}}{v} \snorm{\D_x e}^2
\snorm{u \D_x u}^2\,\dif x\dif s
\le \int_{0}^{k} ( u_0^4 + w_0^2 )\, \dif x + C(\ve).
\end{align*}
By the inequality:
$\snorm{u}^4 + w^2
\ge \frac{3}{4}\snorm{u}^4 + \frac{1}{2}(e-1)^2$,
we see that $\sbnorm{(e-1,u^2)}_{L^{\infty}(0,T;L^2(\ve,k))}\le C(\ve)$.
		
Finally, substituting the above estimate into \eqref{TEtemp17}, we obtain
\begin{align*}
\int_{0}^{T}\sup\limits_{x\in[\ve,k]} e^2(x,t)\,\dif t
&\le \int_{0}^{T}\sup_{x\in[\frac{\ve}{2},k]} (g_\ve e^2)(x,t)\,\dif t
\le C(\ve).
\end{align*}
This completes the proof.
\end{proof}

\begin{lemma}[Exterior $L^{\infty}(0,T;L^2)$--Estimate of the Specific Volume]\label{lemma:vL2}
For each $\ve>0$,
\begin{align*}
\sup\limits_{t\in[0,T]}\int_{\ve}^{k} \snorm{v-1}^2(x,t)\,\dif x \le C(\ve).
\end{align*}
\end{lemma}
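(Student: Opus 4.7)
The proof will reduce to combining two ingredients that are already at our disposal: the entropy bound on $\int_0^k \psi(v)\,\dif x$ from Theorem~\ref{thm:priori}\ref{item:prio0}, and the uniform pointwise estimate $C(\ve)^{-1}\le v(x,t)\le C(\ve)$ on $[\ve,k]\times[0,T]$ from Theorem~\ref{thm:priori}\ref{item:prio1}. The key observation is that, on any region where $v$ is pinched between positive constants, the strictly convex function $\psi(\zeta)=\zeta-1-\log\zeta$ is quantitatively comparable to $(\zeta-1)^2$. In contrast to the preceding Lemmas~\ref{lemma:euLinf}--\ref{lemma:eL2}, no cut-off, integration by parts, or Gr\"onwall argument appears to be necessary; the estimate follows essentially by a single Taylor expansion.

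\textbf{Step 1: A pointwise quadratic lower bound on $\psi(v)$.} Since $\psi(1)=\psi'(1)=0$ and $\psi''(\zeta)=\zeta^{-2}$, Taylor's theorem with integral remainder gives, for every $v>0$,
$$\psi(v)=\int_1^v \frac{v-s}{s^2}\,\dif s.$$
On $[\ve,k]\times[0,T]$, the bound $C(\ve)^{-1}\le v(x,t)\le C(\ve)$ (with $C(\ve)\ge 1$) confines the dummy variable $s$ to the same interval; hence $s^{-2}\ge C(\ve)^{-2}$ when $v\ge 1$, and $s^{-2}\ge 1\ge C(\ve)^{-2}$ when $v\le 1$. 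Carrying out the integration in each case yields
$$\psi\big(v(x,t)\big)\ge \dfrac{1}{2C(\ve)^2}\big(v(x,t)-1\big)^2 \qquad\text{for all }(x,t)\in[\ve,k]\times[0,T].$$

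\textbf{Step 2: Conclusion via the entropy estimate.} Integrating the inequality from Step~1 over $x\in[\ve,k]$ and invoking \eqref{eqs:entropy} of Theorem~\ref{thm:priori}\ref{item:prio0}, we obtain, for every $t\in[0,T]$,
$$\int_\ve^k (v-1)^2(x,t)\,\dif x \;\le\; 2C(\ve)^2 \int_0^k \psi(v)(x,t)\,\dif x \;\le\; \dfrac{2C(\ve)^2}{\gamma-1}\,C_0,$$
which is bounded by a constant of the form $C(\ve)$. Taking the supremum in $t\in[0,T]$ then yields the desired estimate.

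\textbf{Expected difficulty.} No substantive obstacle is anticipated, since both inputs are already established in parts \ref{item:prio0} and \ref{item:prio1} of Theorem~\ref{thm:priori}. The one point requiring minor care is ensuring that the constant arising in Step~1 depends only on the upper bound $C(\ve)$ from Theorem~\ref{thm:priori}\ref{item:prio1}, so that the resulting bound on $\int_\ve^k(v-1)^2\,\dif x$ is independent of the approximation parameters $(a,k)\in(0,1)\times\mathbb{N}$; this independence is automatic from the form of the pointwise bound in \eqref{4.34.3}.
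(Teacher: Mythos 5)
Your proposal is correct and takes a genuinely different route from the paper. The paper's proof is a dynamical energy estimate: it multiplies the Lagrangian continuity equation $\D_t v = \D_x(r^m u)$ by the cut-off times $(v-1)$, integrates, applies Young's inequality (using the pointwise upper bound $v\le C(\ve)$ on the support of $g_\ve$), and then closes with Gr\"onwall after invoking Lemma~\ref{lemma:eL2} to control the resulting viscous dissipation term in $L^1(0,T)$. Your argument is entirely static: you never touch the PDE again, and instead exploit the strict convexity of $\psi$ together with the two-sided pointwise bound $C^{-1}(\ve)\le v\le C(\ve)$ on $[\ve,k]\times[0,T]$ from Theorem~\ref{thm:priori}\ref{item:prio1} to get the pointwise equivalence $\psi(v)\gtrsim_{C(\ve)}(v-1)^2$, and then read off the bound directly from the entropy estimate $\int_0^k\psi(v)\,\dif x\le C_0/(\gamma-1)$ of Theorem~\ref{thm:priori}\ref{item:prio0}. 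Both are valid, but yours is shorter and cleaner: it dispenses with the cut-off function $g_\ve$, with Gr\"onwall, and with the need to verify any $L^1_t$ integrability of the momentum dissipation, and it makes the independence in $(a,k)$ entirely transparent, since the only inputs are two estimates whose constants have already been verified to be uniform. The only sense in which the paper's route is more robust is that it would still provide useful information in a setting where the two-sided pointwise bound on $v$ were unavailable; since that bound is already established, your derivation is the more economical one. A very minor stylistic remark: the inequality $\psi(v)\ge\tfrac12(v-1)^2\min\{1,v^{-2}\}$ holds for all $v>0$ (it follows directly from the integral-remainder identity and needs no case split on $v\gtrless1$); restricting to $v\in[C(\ve)^{-1},C(\ve)]$ at the end streamlines Step~1 slightly, though what you wrote is of course correct.
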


\begin{proof}
Multiplying $\eqref{eqs:LFNS-k}_1$ with $g_\ve(v-1)$ and integrating in $[0,k]$, then, by Theorem \ref{thm:priori}\ref{item:prio1},
\begin{align*}
\dfrac{1}{2} \D_t \int_{0}^k g_\ve (v-1)^2\,\dif x
= \int_{0}^k g_\ve (v-1) \D_x ( r^m u )\,\dif x
\le C(\ve) \int_{0}^{k} g_\ve (v-1)^2\,\dif x
 + \dfrac{1}{2} \int_{0}^k g_\ve \dfrac{\snorm{\D_x(r^m u)}^2}{v}\,\dif x.
\end{align*}
Applying Lemma \ref{lemma:eL2} and Gr\"onwall's inequality, the lemma is proved.
\end{proof}

The following lemma is necessary for the high-order estimates of $(u,e)$
in \S \ref{subsec:HRue}
below.

\begin{lemma}[Exterior $L^{2}$--estimate of $\D_x u$] \label{lemma:L2Dxu}
For each $\ve>0$,
\begin{equation}\label{eqs:dissip}
\int_{0}^{T}\int_{\ve}^{k} \Big(\dfrac{\snorm{\D_x (r^m u)}^2}{v} +\dfrac{r^{2m}\snorm{\D_x u}^2}{v} \Big)\,\dif x \dif s \le C(\ve).
\end{equation}
\end{lemma}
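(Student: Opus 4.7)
The plan is to multiply the momentum equation $\eqref{eqs:LFNS-k}_2$ by the test function $g_\ve u$ and perform a standard energy-type computation, integrating by parts in $x$ twice (the boundary terms at $x = 0, k$ vanish thanks to $u(0,t) = u(k,t) = 0$). This gives the identity
\begin{equation*}
\frac{d}{dt}\int_{0}^{k}\dfrac{g_\ve u^2}{2}\,\dif x
+ \beta \int_{0}^{k} g_\ve \dfrac{\snorm{\D_x(r^m u)}^2}{v}\,\dif x
= J_1 + J_2 + J_3,
\end{equation*}
where $J_1 = -\beta\int g_\ve' r^m u\,\D_x(r^m u)/v\,\dif x$ and $J_2 = \int g_\ve' r^m u p\,\dif x$ are cut-off boundary terms supported in $[\frac{\ve}{2},\ve]$, and $J_3 = (\gamma-1)\int g_\ve (e/v) \D_x(r^m u)\,\dif x$ is the pressure-work term. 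After integration in $t$, the target is to bound $\int_0^T (J_1 + J_2 + J_3)\,\dif t$ by $\frac{\beta}{2}\int_0^T\int g_\ve \snorm{\D_x(r^m u)}^2/v\,\dif x\dif t + C(\ve)$ so that the dissipation can be absorbed.

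For $J_1$ and $J_2$, I would use Young's inequality together with $\snorm{g_\ve'}^2 \le 32 g_\ve/\ve^2$, the bounds $v, v^{-1}, r \le C(\ve)$ on $\supp(g_\ve')\subset[\frac{\ve}{2},\ve]$ from \textbf{Theorem \ref{thm:priori}\ref{item:prio1}} and \textbf{Lemma \ref{lemma:rbd}}, the entropy bound $\int u^2 \le C_0$ (for $J_1$), and for $J_2$ a Cauchy--Schwarz estimate paired with the uniform bound $\int_{\ve/2}^\ve (e-1)^2\,\dif x \le C(\ve)$ from \textbf{Lemma \ref{lemma:eL2}} (applied at level $\ve/2$). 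All of these give contributions of size $C(\ve)$ uniformly in $k$.

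The main obstacle is $J_3$, since a naive Young estimate produces $\int_0^T\int g_\ve e^2/v\,\dif x\dif t$, which is not obviously controlled uniformly in $k$ because $\int g_\ve\,\dif x$ grows with $k$. To circumvent this, I would split $e = (e-1) + 1$. The piece $(\gamma-1)\int g_\ve (e-1)/v\,\D_x(r^m u)\,\dif x$ is handled by Young plus the already uniform $\sup_t\int_{\ve/2}^k (e-1)^2\,\dif x \le C(\ve)$ from \textbf{Lemma \ref{lemma:eL2}}. The stubborn piece is $(\gamma-1)\int g_\ve \D_x(r^m u)/v\,\dif x$; using the continuity equation $\D_x(r^m u) = \D_t v$ this equals $(\gamma-1)\D_t\int g_\ve \log v\,\dif x$, so integration in $t$ gives a pure boundary term
\begin{equation*}
(\gamma-1)\int g_\ve\bigl(\log v(x,T) - \log v_0(x)\bigr)\,\dif x.
\end{equation*}
The trick to get a $k$-independent bound is the decomposition $\log v = (v-1) - \psi(v)$: the $\psi$-contribution is controlled by the entropy $\int_0^k \psi(v)\,\dif x \le C_0$ of \textbf{Theorem \ref{thm:priori}\ref{item:prio0}} and the corresponding initial bound from \textbf{Proposition \ref{prop:kapproxL}}; the $(v-1)$-contribution, $\int g_\ve(v(T) - v_0)\,\dif x$, is rewritten using $\D_x(r^m u) = \D_t v$ and integration by parts in $x$ as $-\int_0^T\int g_\ve' r^m u\,\dif x\,\dif t$, which is bounded by $C(\ve)T$ since $\supp(g_\ve')\subset[\frac{\ve}{2},\ve]$ and $\int u^2\,\dif x \le C_0$.

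Once these estimates are combined, absorbing $\frac{\beta}{2}\int_0^T\int g_\ve \snorm{\D_x(r^m u)}^2/v\,\dif x\dif t$ into the LHS and using the uniform initial bound $\int u_0^2\,\dif x \le C_0$ yields $\int_0^T\int_0^k g_\ve \snorm{\D_x(r^m u)}^2/v\,\dif x\dif t \le C(\ve)$. Since $g_\ve \equiv 1$ on $[\ve,k]$, this gives the first term in \eqref{eqs:dissip}. The second term follows from the pointwise identity $r^m \D_x u = \D_x(r^m u) - mvu/r$, which yields
\begin{equation*}
\dfrac{r^{2m}\snorm{\D_x u}^2}{v} \le 2\dfrac{\snorm{\D_x(r^m u)}^2}{v} + 2m^2 \dfrac{v u^2}{r^2},
\end{equation*}
where the residual term is controlled by $C(\ve)\int_0^T\int_\ve^k u^2\,\dif x\,\dif t \le C(\ve)$ using $v, r^{-1} \le C(\ve)$ on $[\ve,k]$ from \textbf{Theorem \ref{thm:priori}\ref{item:prio1}} and \textbf{Lemma \ref{lemma:rbd}}.
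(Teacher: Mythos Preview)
Your proposal is correct and follows essentially the same approach as the paper: multiply the momentum equation by $g_\ve u$, split the pressure term via $e = (e-1) + 1$, and convert the constant piece into $\D_t\log v$ via the continuity equation, then exploit the decomposition $\log v = (v-1) - \psi(v)$ (equivalently, the paper writes $\D_t v/v = \D_t(\log v - v + 1) + \D_t(v-1)$) so that the $\psi$-part has a favourable sign and the $(v-1)$-part becomes a harmless cut-off boundary integral. The only cosmetic differences are the grouping of the terms ($J_1,J_2,J_3$ versus the paper's $I_1,I_2,I_3$) and that the paper bounds $\int g_\ve\psi(v_0)\,\dif x$ via Taylor's theorem and $\int(v_0-1)^2\,\dif x$ rather than directly invoking the initial entropy bound.
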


\begin{proof}
Multiplying $\eqref{eqs:LFNS-k}_1$ by $g_\ve$ and integrating over $[0,k]$,
we have
\begin{equation}\label{temp17}
\int_{0}^{k} g_\ve(x) \D_t (v(x,t)-1)\,\dif x
= - \int_{0}^{k} g_{\ve}^{\prime}(x) (r^m u)(x,t)\,\dif x.
\end{equation}
Multiplying $\eqref{eqs:LFNS-k}_2$ by $g_\ve u$ and integrating over $x\in[0,k]$,
using Theorem \ref{thm:priori}\ref{item:prio0}--\ref{item:prio1}, Lemma  \ref{lemma:eL2},
and \eqref{temp17}, it follows that
\begin{align}\label{461-di1}
&\frac{1}{2}\dfrac{\dif }{\dif t}\int_{0}^{k} g_{\ve}\snorm{u}^2\,\dif x
 +\beta\int_{0}^{k} g_{\ve}\dfrac{\snorm{\D_x(r^m u)}^2}{v} \dif x \nonumber\\
&= (\gamma-1)\int_{0}^{k} g_{\ve} \dfrac{ \D_x(r^m u)}{v} (e-1)\,\dif x + (\gamma-1)\int_{0}^{k} g_{\ve} \dfrac{ \D_t v}{v} \dif x
+ \int_{0}^{k}   g_{\ve}^{\prime}(x) \dfrac{r^mu}{v}\big((\gamma-1)e
  -\beta \D_x(r^m u)\big)\,\dif x \nonumber\\
&=\vcentcolon \sum_{i=1}^3 I_i.
\end{align}
The term $I_1$ is estimated by using Theorem \ref{thm:priori}\ref{item:prio1}
and Lemma \ref{lemma:eL2}:
\begin{equation}\label{461-di2}
\begin{split}
&I_1 \vcentcolon
= (\gamma-1)\int_{0}^{k} g_{\ve} \dfrac{ \D_x( r^m u)}{v} (e-1)\,\dif x
\le \dfrac{\beta}{4} \int_{0}^k g_\ve \dfrac{\snorm{\D_x( r^m u)}^2}{v} \dif x+ C(\ve).
\end{split}
\end{equation}
$I_2$ is estimated by using $\eqref{eqs:LFNS-k}_1$, Theorem \ref{thm:priori}\ref{item:prio0},
and \eqref{eqs:g}--\eqref{eqs:grbd}:
\begin{align}\label{461-di3}
\dfrac{I_2}{\gamma-1} &\vcentcolon = \int_{0}^{k} g_{\ve} \dfrac{ \D_t v}{v} \dif x
= \int_{0}^{k} g_{\ve} \D_t (\log v-v+1)\,\dif x
  - \int_{0}^{k} g_{\ve}^{\prime} r^m u\,\dif x\nonumber\\
&\le \int_{0}^{k} g_{\ve} \D_t (\log v -v +1)\,\dif x
 + \Big(\int_{\frac{\ve}{2}}^{\ve} r^{2m} \dif x\Big)^{\frac{1}{2}}
   \Big( \int_{\frac{\ve}{2}}^{\ve} u^2 \dif x \Big)^{\frac{1}{2}}\nonumber\\
&\le \int_{0}^{k} g_{\ve} \D_t ( \log v -v +1 )\,\dif x + C(\ve).
\end{align}
$I_3$ is estimated by using Lemma \ref{lemma:eL2} and \eqref{eqs:g}--\eqref{eqs:grbd}:
\begin{align}\label{461-di4}
I_3
\le &\, \dfrac{(\gamma-1)^2}{2}\sup\limits_{y\in[\frac{\ve}{2},\ve]} (v^{-2}r^{2m})(y,t)
\int_{\frac{\ve}{2}}^{\ve} u^2 \dif x
+ (\gamma-1) \Big( \int_{\frac{\ve}{2}}^{\ve} v^{-2} r^{2m} \dif x \Big)^{\frac{1}{2}}
\Big( \int_{\frac{\ve}{2}}^{\ve} u^2 \dif x \Big)^{\frac{1}{2}}
    \nonumber\\
& + \dfrac{1}{2}\int_{\frac{\ve}{2}}^{\ve} e^2 \dif x
+ \dfrac{\beta}{4} \int_{\frac{\ve}{2}}^{\ve}\dfrac{\ve^2}{32}\snorm{g_\ve^{\prime}(x)}^2
  \dfrac{\snorm{\D_x(r^m u)}^2}{v} \dif x  +  \dfrac{32\beta}{\ve^2}\sup\limits_{y\in[\frac{\ve}{2},\ve]}\dfrac{r^{2m}}{v}(y,t)
    \int_{\frac{\ve}{2}}^{\ve} u^2\,\dif x\nonumber\\
\le&\, \dfrac{\beta}{4} \int_{0}^{k}g_\ve \dfrac{\snorm{\D_x(r^m u)}^2}{v} \dif x + C(\ve).
\end{align}
According to \eqref{461-di1}--\eqref{461-di4}, we have
\begin{align*}
\frac{1}{2}&\int_{0}^{k}g_\ve(x)u^2(x,t)\,\dif x
+\dfrac{\beta}{2}\int_{0}^{t}\int_{0}^{k}g_{\ve}\dfrac{\snorm{\D_x (r^m u)}^2}{v} \dif x \dif s\\
&\le C(\ve) + (\gamma-1) \int_{0}^{k} g_\ve (\log v - v +1)\,\dif x
  + (\gamma-1) \int_{0}^{k} g_\ve (v_0 - \log v_0 - 1)\,\dif x.
\end{align*}
Since $\log v - v +1 \le 0$ and $C_0 \le v_0(x) \le C_0^{-1}$,
it implies from Taylor's theorem that
\begin{equation}\label{eqs:disp}
\dfrac{\beta}{2}\int_{0}^{t}\int_{0}^{k}g_{\ve}\dfrac{\snorm{\D_x (r^m u)}^2}{v} \dif x \dif s \le C(\ve) + (\gamma-1) C_0 \int_{0}^{k} (v_0-1)^2 \dif x \le C(\ve).
\end{equation}
The other term in \eqref{eqs:dissip} is estimated by rewriting \eqref{eqs:disp}
and using Theorem \ref{thm:priori}\ref{item:prio0}--\ref{item:prio1}.
\end{proof}

\subsection{Exterior \texorpdfstring{$L^{\infty}\big(0,T;H^1\big)$}{LTH1}--estimates of the velocity and internal energy}\label{subsec:HRue}
We now derive the high-regularity estimates for $(u,e)$ in Theorem \ref{thm:priori}\ref{item:prio2}. For simplicity, we define
\begin{equation*}
    \Lambda(v,e) \equiv (v-1)^2 + (e-1)^2, \quad \,
    \sigma(t) \vcentcolon = \min\{1,t\}.
\end{equation*}
It follows from Lemma \ref{lemma:eL2}-\ref{lemma:vL2} that $\bnorm{\Lambda(v,e)}_{L^{\infty}(0,T;L^1(\ve,k))}\le C(\ve)$. Furthermore, we introduce the effective viscosity flux $F$, which is defined by
\begin{align*}
	F\vcentcolon= \beta\dfrac{\D_x(r^m u)}{v} - p(v,e) +(\gamma-1) = \beta\dfrac{\D_x(r^m u)}{v} - (\gamma-1)\dfrac{e-v}{v} .
\end{align*}
Using
Lemmas \ref{lemma:eL2}--\ref{lemma:L2Dxu},
we have the following observation on $F$:

\begin{lemma}[Estimates on Effective Viscosity Flux]\label{lemma:estF}
\begin{equation}\label{eqs:SL2F}
\begin{split}
&\int_{0}^{k} g_\ve(x) F^2(x,t)\,\dif x
\le C(\ve)\Big(1+\int_{0}^{k} g_\ve \dfrac{\snorm{\D_x(r^m u)}^2}{v}\dif x\Big),\\
&\int_{0}^{T}\int_{0}^{k} g_\ve(x) F^2(x,t)\,\dif x \dif t \le C(\ve),\\
&\int_{0}^T \int_{0}^{k} g_\ve(x) (r^{2m}\snorm{\D_x F}^2)(x,t)\, \dif x\dif t
\le \int_{0}^{T}\int_{0}^{k} g_\ve(x) \snorm{\D_t u}^2 (x,t)\,\dif x \dif t.
\end{split}
\end{equation}
\end{lemma}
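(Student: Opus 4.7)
\medskip

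\textbf{Proof plan for Lemma \ref{lemma:estF}.} The plan is to treat each of the three estimates essentially algebraically, exploiting the definition of $F$ and the exact form of the momentum equation $\eqref{eqs:LFNS-k}_2$, with the prior bounds (Theorem \ref{thm:priori} and Lemmas \ref{lemma:eL2}--\ref{lemma:L2Dxu}) plugged in only at the final step.

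For \eqref{eqs:SL2F}$_1$, I would apply $(a+b)^2\le 2a^2+2b^2$ to the decomposition
\begin{equation*}
F=\beta\,\dfrac{\D_x(r^mu)}{v}-(\gamma-1)\,\dfrac{e-v}{v},
\end{equation*}
which yields
\begin{equation*}
g_\ve F^2\le 2\beta^2\,g_\ve\dfrac{\snorm{\D_x(r^mu)}^2}{v^2}+2(\gamma-1)^2\,g_\ve\dfrac{(e-v)^2}{v^2}.
\end{equation*}
The lower bound $v\ge C^{-1}(\ve)$ on $\mathrm{supp}(g_\ve)\subset[\tfrac{\ve}{2},k]$ from Theorem \ref{thm:priori}\ref{item:prio1} converts the first term into $C(\ve)\,g_\ve\,v^{-1}\snorm{\D_x(r^mu)}^2$, while the same bound together with $(e-v)^2\le 2(e-1)^2+2(v-1)^2$ bounds the second term by $C(\ve)(\snorm{e-1}^2+\snorm{v-1}^2)$. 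Integrating in $x$ and invoking Lemmas \ref{lemma:eL2}--\ref{lemma:vL2} for the $\Lambda(v,e)$ contribution gives \eqref{eqs:SL2F}$_1$.

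Estimate \eqref{eqs:SL2F}$_2$ is then immediate: integrate \eqref{eqs:SL2F}$_1$ in $t\in[0,T]$ and apply Lemma \ref{lemma:L2Dxu} to absorb the $v^{-1}\snorm{\D_x(r^mu)}^2$ term, while the constant $C(\ve)T$ is harmless.

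The third estimate \eqref{eqs:SL2F}$_3$ is the one that looks the most delicate but is in fact the cleanest, because it is actually an identity rather than an inequality. Rewriting $\eqref{eqs:LFNS-k}_2$ as
\begin{equation*}
\D_t u=\beta\,r^m\,\D_x\!\left(\dfrac{\D_x(r^mu)}{v}\right)-r^m\,\D_x p(v,e)=r^m\,\D_x\!\left(\beta\,\dfrac{\D_x(r^mu)}{v}-p(v,e)+(\gamma-1)\right)=r^m\,\D_x F,
\end{equation*}
where the constant $(\gamma-1)$ can be added freely inside $\D_x$, we obtain $\snorm{\D_t u}^2=r^{2m}\snorm{\D_x F}^2$ pointwise. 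Multiplying by $g_\ve$ and integrating over $[0,k]\times[0,T]$ yields \eqref{eqs:SL2F}$_3$ with equality, and in particular the stated inequality. The main (very minor) obstacle is just bookkeeping the dependence of the constants on $\ve$ through the lower bound of $v$ in Theorem \ref{thm:priori}\ref{item:prio1}; no new estimate is required.
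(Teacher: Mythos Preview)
Your proposal is correct and follows essentially the same approach as the paper: decompose $F$ into its viscous and pressure parts, use the pointwise lower bound on $v$ from Theorem~\ref{thm:priori}\ref{item:prio1} together with Lemmas~\ref{lemma:eL2}--\ref{lemma:vL2} for \eqref{eqs:SL2F}$_1$, integrate in time with Lemma~\ref{lemma:L2Dxu} for \eqref{eqs:SL2F}$_2$, and rewrite the momentum equation as $\D_t u = r^m \D_x F$ for \eqref{eqs:SL2F}$_3$. Your remark that \eqref{eqs:SL2F}$_3$ is in fact an equality is correct and slightly sharper than the paper's statement.
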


\begin{proof}
It follows from the definition of  $F$ and  the momentum equation $\eqref{eqs:LFNS-k}_2$ that
\begin{align*}
&F
= \beta \dfrac{\D_x(r^m u)}{v} - (\gamma-1)\dfrac{e-1}{v} + (\gamma-1) \dfrac{v-1}{v},\\
&\D_t u = \beta r^m \D_x\Big(\dfrac{\D_x(r^m u)}{v}\Big) - r^m \D_x( p - (\gamma-1) ) = r^m \D_x F,
\end{align*}
which, along with Theorem \ref{thm:priori}\ref{item:prio1} and Lemmas \ref{lemma:eL2}--\ref{lemma:L2Dxu}, yields that
\begin{align*}
\int_{0}^{k}g_\ve(x) F^2 (x,t)\,\dif x
\le& \, 2\beta^2 \int_{0}^{k}g_\ve \dfrac{\snorm{\D_x(r^m u)}^2}{v^2} \dif x + 4(\gamma-1)^2 \int_{0}^{k}g_\ve \dfrac{\Lambda(v,e)}{v^2} \dif x \\
\le& \,C(\ve) \int_{0}^{k}g_\ve \dfrac{\snorm{\D_x(r^m u)}^2}{v} \dif x + C(\ve),
\end{align*}
and $\eqref{eqs:SL2F}_2$--$\eqref{eqs:SL2F}_3$ obviously.
\end{proof}

\begin{lemma}[Intermediate Step]\label{lemma:uH1int}
\begin{equation}\label{yinlis.9}
\int_{0}^{T}\int_{0}^{k} \sigma g_\ve \snorm{\D_t u}^2\,\dif x \dif s
+\sup_{0\le s\le T}\int_{0}^k\Big(\sigma g_\ve
  \dfrac{\snorm{\D_x(r^m u)}^2}{v}\Big)(x,s)\,\dif x
  \le  C(\ve)\Big\{1
+ \Big(\int_{0}^T\int_{0}^k \sigma^2 g_\ve^2 \snorm{\D_t e}^2\,\dif x \dif s\Big)^{\frac{1}{2}}\Big\}.
\end{equation}
\end{lemma}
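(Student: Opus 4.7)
The plan is to apply the standard ``multiply by $\D_t u$'' energy test, adapted to the Lagrangian system via the spatial cut-off $g_\ve$. Specifically, I multiply the momentum equation $\eqref{eqs:LFNS-k}_2$ by $\si g_\ve \D_t u$ and integrate over $[0,k]\times[0,T]$. Boundary terms from $x$-integration by parts vanish since $g_\ve(0)=0$ and $u(k,t)\equiv 0$ forces $\D_t u(k,t)=0$. The algebraic identity that drives everything is
\begin{equation*}
\D_x(r^m\D_t u)=\D_t^2 v - m\,\D_x(r^{m-1}u^2),
\end{equation*}
obtained from $\D_t r=u$ (so $r^m\D_t u=\D_t(r^m u)-m r^{m-1}u^2$), the commutativity $\D_x\D_t=\D_t\D_x$ in Lagrangian coordinates, and $\D_t v=\D_x(r^m u)$ from $\eqref{eqs:LFNS-k}_1$.

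For the viscous contribution, integration by parts in $x$ followed by the identity above produces, via $(\D_t^2 v)(\D_t v)/v=\tfrac12\D_t[(\D_t v)^2/v]+\tfrac12(\D_t v)^3/v^2$, the ``good'' supremum term $\tfrac{\beta}{2}\si(T)\int g_\ve|\D_x(r^m u)|^2/v(T)\,\dif x$ after integrating in $t$, modulo a cubic remainder and a $\si'$-supported integral controlled by {\rm Lemma \ref{lemma:L2Dxu}}. For the pressure contribution $-\int\si g_\ve\D_t u\,r^m\D_x p\,\dif x$, I integrate by parts in $x$ (using the same identity) and then in $t$ on the resulting $\D_t^2 v$ factor; this generates the crucial integrand $-\si g_\ve\,\D_t p\,\D_t v$. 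Expanding $\D_t p=(\ga-1)\D_t e/v-(\ga-1)e\,\D_t v/v^2$ and applying Cauchy-Schwarz to the $\D_t e$ summand,
\begin{equation*}
\Big|\int_0^T\!\!\int_0^k\si g_\ve\frac{\D_t e\,\D_t v}{v}\,\dif x\,\dif t\Big|
\le\Big(\int_0^T\!\!\int_0^k\si^2 g_\ve^2\snorm{\D_t e}^2\,\dif x\,\dif t\Big)^{1/2}
\Big(\int_0^T\!\!\int_{\ve/2}^k\frac{\snorm{\D_x(r^m u)}^2}{v^2}\,\dif x\,\dif t\Big)^{1/2}.
\end{equation*}
The second factor is bounded by $C(\ve)$ via $v\ge C^{-1}(\ve)$ on $\mathrm{supp}(g_\ve)\subseteq[\ve/2,k]$ from {\rm Theorem \ref{thm:priori}\ref{item:prio1}} and {\rm Lemma \ref{lemma:L2Dxu}} applied at $\ve/2$; this is precisely the mechanism by which the claimed $g_\ve^2$ weight on $\D_t e$ arises.

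All remaining terms are absorbed or estimated by $C(\ve)$: the $g_\ve'$-supported integrals on $x\in[\ve/2,\ve]$ use the pointwise bound $\snorm{g_\ve'}^2\le 32 g_\ve/\ve^2$ of \eqref{eqs:gprop}, the cut-off bounds \eqref{eqs:grbd}, and {\rm Lemmas \ref{lemma:euLinf}--\ref{lemma:L2Dxu}}; the $t$-boundary term at $T$, namely $\si(T)\int g_\ve p(T)\D_x(r^m u)(T)\,\dif x$, is Cauchy-Schwarz-split so that the $\si(T)\int g_\ve\snorm{\D_x(r^m u)}^2/v(T)\,\dif x$ piece is absorbed into the supremum on the LHS with a small coefficient; and the cubic remainder $\int\si g_\ve(\D_x(r^m u))^3/v^2\,\dif x\,\dif t$ is treated using the decomposition $\D_x(r^m u)/v=\beta^{-1}(F+p-(\ga-1))$, {\rm Lemma \ref{lemma:estF}} for $F$, together with a Sobolev-type embedding of $F$ on $[\ve/2,k]$ that relates $\sup_x\snorm{F}^2$ to $\int g_\ve\snorm{\D_t u}^2\,\dif x$, which is then absorbed into the $\int\si g_\ve\snorm{\D_t u}^2$ term on the LHS with a small constant. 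The principal obstacle is precisely this cubic-absorption step together with the bookkeeping of the $g_\ve'$-supported integrals on the transition strip $[\ve/2,\ve]$, where the uniform estimates of {\rm Theorem \ref{thm:priori}\ref{item:prio1}} and {\rm Lemmas \ref{lemma:eL2}--\ref{lemma:L2Dxu}} must be applied carefully to avoid losing control of the dependence on $a$ and on the cut-off parameter $\ve$.
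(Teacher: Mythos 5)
Your strategy is the same as the paper's: test the momentum equation against $\sigma g_\ve \D_t u$, exploit the Lagrangian identity $r^m\D_t u=\D_t(r^m u)-mr^{m-1}u^2$ to integrate by parts twice (once in $x$, once in $t$) on the pressure contribution and thereby manufacture the $\sigma g_\ve\,\D_t e\cdot\D_t v/v$ integrand, Cauchy-Schwarz that piece against the entropy-controlled factor $\int\int |\D_x(r^m u)|^2/v$ to produce the $\big(\int\int\sigma^2 g_\ve^2|\D_t e|^2\big)^{1/2}$ term, and handle the cubic remainder $\tfrac{\beta}{2}\int\int\sigma g_\ve\,\D_t(v^{-1})|\D_x(r^m u)|^2$ by rewriting $\D_x(r^m u)/v=\beta^{-1}(F+(\gamma-1)(e-v)/v)$ and using Lemma \ref{lemma:estF}. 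Up to there you match the paper.

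The gap is in the closing step. Your proposal asserts that ``all remaining terms are absorbed or estimated by $C(\ve)$,'' but this is not true: after the decomposition via $F$ one is left with integrals of the form
\begin{equation*}
\int_0^t \sup_{y\in[\ve/2,k]}\big(e+e^2+u^2\big)(y,s)\,\Big(\int_0^k \sigma g_\ve \frac{|\D_x(r^m u)|^2}{v}\,\dif x\Big)\,\dif s,
\end{equation*}
which cannot be bounded by $C(\ve)$ nor absorbed into the left side with a small constant, because the inner spatial integral is precisely the quantity being estimated. The only way to close is Gr\"onwall's inequality: one observes that $h_\ve(s)=C(\ve)\sup_{y}(e+e^2+u^2)(y,s)\in L^1(0,T)$ by Lemmas \ref{lemma:euLinf} and \ref{lemma:eL2}, and then applies Gr\"onwall in $t$ to the inequality for $\int_0^k(\sigma g_\ve|\D_x(r^m u)|^2/v)(x,t)\,\dif x$. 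This is not a cosmetic omission; without Gr\"onwall the argument does not close. Relatedly, your proposed cubic-absorption mechanism via a Sobolev bound $\sup_x|F|^2\lesssim\int g_\ve|\D_t u|^2+\cdots$ is not what the paper uses and is awkward here: the Sobolev bound necessarily also carries a term $\int|\D_x(r^m u)|^2/v$ without the $\sigma$-weight, and multiplying that by the unknown $\sigma$-weighted quantity again produces an integral that cannot be absorbed and instead feeds the Gr\"onwall kernel. The paper's direct route — integrate $\int g_\ve F^2\D_x(r^m u)$ by parts and use the pointwise identity $r^m\D_x F=\D_t u$ to get $\tfrac18\int\int\sigma g_\ve r^{2m}|\D_x F|^2\le\tfrac18\int\int\sigma g_\ve|\D_t u|^2$ plus a $\sup_y u^2$-weighted contribution to the Gr\"onwall kernel — is cleaner and keeps the $\sigma$-weights aligned.
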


\begin{proof} We divide the proof into six steps.

\smallskip
1. Multiplying equations $\eqref{eqs:LFNS-k}_2$ with $g_\ve \sigma \D_t u$ and
then integrating yield
\begin{align*}
&\int_{0}^{t}\int_{0}^{k} \sigma g_\ve \snorm{\D_t u}^2\,\dif x \dif s
+ \dfrac{\beta}{2}\int_{0}^{k} \Big(\sigma g_\ve\dfrac{\snorm{\D_x(r^m u)}^2}{v}\Big)(x,t)\,\dif x \\
&= \dfrac{\beta}{2}\int_{0}^{t}\int_{0}^k \sigma^{\prime} g_\ve \dfrac{\snorm{\D_x(r^m u)}^2}{v} \dif x \dif s + m\beta \int_{0}^{t} \int_{0}^{k} \sigma g_\ve \D_x ( r^{m-1} u^2 ) \dfrac{\D_x(r^m u)}{v} \dif x \dif s \\
&\quad -\int_{0}^t \int_{0}^{k} \sigma g_\ve r^m \D_t u \D_x p\,\dif x \dif s
 + \dfrac{\beta}{2} \int_{0}^{t} \int_{0}^{k} \sigma g_\ve \D_t (v^{-1}) \snorm{\D_x (r^m u)}^2\,\dif x \dif s\\
&\quad-\beta \int_{0}^t \int_{0}^{k} \sigma g_\ve^{\prime} r^m \D_t u \dfrac{\D_x(r^m u)}{v} \dif x \dif s \\
&=\vcentcolon \sum_{i=1}^5 I_i.
\end{align*}
$I_1$ is estimated by using $\sigma(t)=\min\{ 1,t \}$
and Lemma \ref{lemma:L2Dxu} as
\begin{align*}
I_1 \vcentcolon = \dfrac{\beta}{2}\int_{0}^{t}\int_{0}^k \sigma^{\prime} g_\ve \dfrac{\snorm{\D_x(r^m u)}^2}{v} \dif x \dif s \le \dfrac{\beta}{2}\int_{0}^{1}\int_{0}^k g_\ve \dfrac{\snorm{\D_x(r^m u)}^2}{v} \dif x \dif s \le C(\ve).
\end{align*}
$I_2$ is estimated by using Lemmas \ref{lemma:eL2}--\ref{lemma:L2Dxu}, Theorem \ref{thm:priori}\ref{item:prio1}, and \eqref{eqs:g}--\eqref{eqs:grbd} as
\begin{align*}
I_2
&= 2m\beta \int_{0}^{t} \int_{0}^{k} \sigma g_\ve r^{m-1} u \D_x u \dfrac{\D_x(r^m u)}{v} \dif x \dif s + m(m-1)\beta\int_{0}^{t}\int_{0}^{k} \sigma g_\ve \dfrac{v}{r^2} u^2 \dfrac{\D_x(r^m u)}{v} \dif x \dif s \\
&\le  m\beta \int_{0}^{t} \sup_{y\in[\frac{\ve}{2},k]} r^{-2}(y,s)
\big(\int_{0}^{k} g_\ve \dfrac{r^{2m}}{v}\snorm{ u \D_x u}^2\,\dif x\big)\,\dif s
+  m\beta \int_{0}^{t} \int_{0}^{k}g_\ve\dfrac{\snorm{\D_x(r^m u)}^2}{v} \dif x \dif s \\
&\quad + \dfrac{m(m-1)\beta}{2}\Big\{
\int_{0}^{t} \sup_{y\in[\frac{\ve}{2},k]}\dfrac{v}{r^4}(y,t)\big(\int_{0}^{k} g_\ve  u^4\,\dif x\big)\,\dif s
+ \int_{0}^{t}\int_{0}^{k} g_\ve \dfrac{\snorm{\D_x (r^m u)}^2}{v} \dif x \dif s
\Big\}\le C(\ve).
\end{align*}

2. Using $p-\gamma+1 = \frac{(\gamma-1)(e-v)}{v}$, the term $I_3$ is separated into six parts:
\begin{align*}
\dfrac{I_3}{\gamma-1}
&= \int_{0}^{k} \Big(\sigma g_\ve  \D_x(r^m  u) \dfrac{e-v}{v}\Big)(x,t)\,\dif x
  - \int_{0}^t \int_{0}^{k} \sigma^{\prime} g_\ve  \D_x(r^m  u) \dfrac{e-v}{v}
   \dif x \dif s \\
&\quad - \int_{0}^t\int_{0}^{k} \sigma g_\ve  \D_x(r^m  u) \D_t \Big(\dfrac{e-v}{v}\Big)\,\dif x \dif s -\int_{0}^t \int_{0}^{k} \sigma g_\ve \D_x (r^{m-1}  u^2) \dfrac{e-v}{v}\dif x \dif s\\
&\quad + \int_{0}^t \int_{0}^{k} \sigma g_\ve^{\prime} \D_t (r^m  u) \dfrac{e-v}{v}\dif x \dif s
- \int_{0}^t \int_{0}^{k} \sigma g_\ve^{\prime} r^{m-1}  u^2 \dfrac{e-v}{v}\dif x \dif s\\
&= \vcentcolon \sum_{i=1}^6 I_3^{(i)}.
\end{align*}
$I_3^{(1)}$ is estimated by using Theorem \ref{thm:priori}\ref{item:prio1} and Lemma \ref{lemma:eL2}--\ref{lemma:L2Dxu}: \begin{align*}
I_3^{(1)}&\vcentcolon = \int_{0}^{k} \sigma g_\ve  \D_x(r^m u) \dfrac{e-v}{v} \dif x
= \int_{0}^{k} \sigma g_\ve  \D_x(r^m u) \big(\dfrac{e-1}{v} + \dfrac{1-v}{v}\big) \,\dif x\\
&\le \dfrac{1}{\beta} \int_{0}^{k} \sigma g_\ve \dfrac{\Lambda(v,e)}{v} \dif x + \dfrac{\beta}{4}\int_{0}^{k} \sigma g_\ve \dfrac{\snorm{\D_x(r^m u)}^2}{v} \dif x \\
&\le  C(\ve)   + \dfrac{\beta}{4}\int_{0}^{k} \Big(\sigma g_\ve \dfrac{\snorm{\D_x(r^m u)}^2}{v}\Big)(x,t)\,\dif x.
\end{align*}
$I_3^{(2)}$ is estimated by using the definition of $\sigma$, Theorem \ref{thm:priori}\ref{item:prio1}, and Lemma \ref{lemma:eL2}--\ref{lemma:L2Dxu}:
\begin{align*}
I_3^{(2)}
&\vcentcolon= -\int_{0}^t \int_{0}^{k} \sigma^{\prime} g_\ve  \D_x(r^m  u) \dfrac{e-v}{v} \dif x \dif s
= -\int_{0}^t\int_{0}^{k}\sigma^{\prime} g_\ve \D_x(r^m  u)
  \Big( \dfrac{e-1}{v} + \dfrac{1-v}{v} \Big)  \dif x \dif s\\
&\le \int_{0}^{1} \int_{0}^k g_\ve \dfrac{\snorm{\D_x(r^m u)}^2}{v} \dif x \dif s + \dfrac{1}{4}\int_{0}^{1}\int_{0}^{k} g_\ve \dfrac{\Lambda(v,e)}{v} \dif x \dif s \le C(\ve).
\end{align*}
$I_3^{(3)}$ is estimated by using Theorem \ref{thm:priori}\ref{item:prio1} and Lemma \ref{lemma:L2Dxu}:
\begin{align*}
I_3^{(3)}
&= -\int_{0}^t\int_{0}^{k} \sigma g_\ve  \D_x(r^m  u) \Big\{\dfrac{\D_t e}{v} - \dfrac{e}{v^2}\D_x(r^m u)\Big\}\,\dif x \dif s\\
&\le  C(\ve)\Big(\int_{0}^t \int_{\frac{\ve}{2}}^k \dfrac{\snorm{\D_x( r^m u)}^2}{v}\dif x \dif s\Big)^{\frac{1}{2}}\Big(\int_{0}^{t}\int_{0}^k \sigma^2 g_\ve^2 \snorm{\D_t e}^2\,\dif x \dif s\Big)^{\frac{1}{2}}\\
&\quad + C(\ve)\int_{0}^t \sup_{y\in[\frac{\ve}{2},k]} e(y,s)
  \big(\int_{0}^{k}\sigma g_\ve \dfrac{\snorm{\D_x(r^m u)}^2}{v} \dif x\big)\,\dif s \\
&\le  C(\ve)\Big(\int_{0}^{t}\int_{0}^k \sigma^2 g_\ve^2 \norm{\D_t e}^2 \dif x \dif s\Big)^{\frac{1}{2}}
+ C(\ve)\int_{0}^t\Big(\sup_{y\in[\frac{\ve}{2},k]} e(y,s)
\int_{0}^{k}\sigma g_\ve \dfrac{\snorm{\D_x(r^m u)}^2}{v} \dif x\Big)\dif s.
\end{align*}
$I_3^{(4)}$ is estimated by using Theorem \ref{thm:priori}\ref{item:prio1} and Lemmas \ref{lemma:eL2}--\ref{lemma:vL2}:
\begin{align*}
I_3^{(4)}
&=-\int_{0}^t\int_{0}^{k}\sigma g_\ve \Big\{ 2  r^{m-1} u \D_x u  + (n-2) \dfrac{v}{r^2} u^2\Big\}\Big(\dfrac{e-1}{v}+\dfrac{1-v}{v}\Big)\,\dif x \dif s\\
&\le \int_{0}^{t}\int_{0}^{k} \sigma g_\ve \dfrac{r^{2m}}{v}\snorm{u\D_x u}^2\,\dif x \dif s
  + C(\ve)\int_{0}^{t}\int_{0}^{k} g_\ve
 \big\{ u^4 +  \Lambda(v,e) \big\}\,\dif x \dif s \le C(\ve).
\end{align*}
For $I_3^{(5)}$,
we use Theorem \ref{thm:priori}\ref{item:prio1},
Lemmas \ref{lemma:eL2}--\ref{lemma:vL2},
and \eqref{eqs:g}--\eqref{eqs:grbd}
to obtain
\begin{align*}
I_3^{(5)}
&=\int_{0}^t \int_{0}^{k} \sigma g_\ve^{\prime}\big(m r^{m-1} u^2 + r^m \D_t u\big)
  \Big(\dfrac{e-1}{v}+\dfrac{1-v}{v} \Big)\,\dif x \dif s\\
&\le C\Big( \int_{0}^t\int_{\frac{\ve}{2}}^{\ve} \sigma \dfrac{\ve^2}{32}\snorm{g_\ve^{\prime}}^2 \snorm{\D_t u}^2
\,\dif x \dif s \Big)^{\frac{1}{2}}
\Big( \int_{0}^t \sup_{y\in[\frac{\ve}{2},\ve]} \dfrac{r^{2m}}{v^2}(y,s)
\big(\int_{\frac{\ve}{2}}^{\ve} \dfrac{32}{\ve^2}
    \Lambda(v,e)\,\dif x\big)\,\dif s\Big)^{\frac{1}{2}}\\
&\quad + C\Big( \int_{0}^t \sup_{y\in[\frac{\ve}{2},\ve]} r^{-2}(y,s)
 \big(\int_{\frac{\ve}{2}}^{\ve} u^4 \dif x\big)\,\dif s \Big)^{\frac{1}{2}}\Big(\int_{0}^t \sup_{y\in[\frac{\ve}{2},\ve]}\dfrac{r^{2m}}{v^2}(y,s)\big(\int_{\frac{\ve}{2}}^{\ve}\Lambda(v,e)
 \,\dif x\big)\,\dif s \Big)^{\frac{1}{2}}\\
&\le \dfrac{1}{8}\int_{0}^t\int_{0}^k\sigma g_\ve\snorm{\D_t u}^2\,\dif x\dif s +C(\ve),
\end{align*}
and $I_3^{(5)}$
is estimated by using Theorem \ref{thm:priori}\ref{item:prio1}, Lemma \ref{lemma:eL2}--\ref{lemma:vL2}, and \eqref{eqs:g}--\eqref{eqs:grbd}.
\begin{align*}
I_3^{(5)}
&\le \dfrac{1}{2}\int_{0}^t \sup_{y\in[\frac{\ve}{2},\ve]} \dfrac{r^{2(m-1)}}{v^2}(y,s)
  \big(\int_{\frac{\ve}{2}}^{\ve} \snorm{g_\ve^{\prime}} u^4\,\dif x\big)\,\dif s
    + \dfrac{1}{2} \int_{0}^t \int_{\frac{\ve}{2}}^{\ve} \snorm{g_\ve^{\prime}} \Lambda(v,e)
\,\dif x \dif s \le C(\ve).
\end{align*}
Combining the estimates of $I_3^{(1)}$--$I_3^{(6)}$,
it follows that
\begin{align*}
		I_3
\le &\,  \dfrac{\beta}{4} \int_{0}^k\Big(\sigma g_\ve \dfrac{\snorm{\D_x(r^m u)}^2}{v}\Big)(x,t)\,\dif x 
+\dfrac{1}{8}\int_{0}^{t}\int_{0}^{k}\sigma g_\ve\snorm{\D_t u}^2 \dif x \dif s
+ C(\ve)\\
&+ C(\ve)\Big(\int_{0}^{t}\int_{0}^k \sigma g_\ve^2 \snorm{\D_t e}^2
		  \dif x \dif s\Big)^{\frac{1}{2}} 
  + C(\ve)\int_{0}^t \sup_{y\in[\frac{\ve}{2},k]} e(y,s)
  \big(\int_{0}^{k}\Big(\sigma g_\ve \dfrac{\snorm{\D_x(r^m u)}^2}{v}\Big)(x,s)\, \dif x\big) \dif s.
\end{align*}

4. $I_4$ is rewritten into three parts by using the continuity equation $\eqref{eqs:LFNS-k}_1$ and the definition of $F$:
	\begin{align*}
		I_4
		= & -\dfrac{1}{2\beta} \int_{0}^t \int_{0}^k \sigma g_\ve F^2  \D_x(r^m u) \,\dif x \dif s
		-\dfrac{\gamma-1}{\beta} \int_{0}^t \int_{0}^k \sigma g_\ve  F \dfrac{e-v}{v} \D_x(r^m u)\,\dif x \dif s \\
		& -\dfrac{(\gamma-1)^2}{2\beta} \int_{0}^t \int_{0}^k \sigma g_\ve
		  \Big(\dfrac{e-v}{v}\Big)^2 \D_x(r^m u)\,\dif x \dif s\\
		\vcentcolon=&\, \sum_{i=1}^3 I_4^{(i)}.
	\end{align*}
$I_4^{(1)}$ is estimated by using integration by parts, Lemma \ref{lemma:estF},
and \eqref{eqs:g}--\eqref{eqs:grbd}.
\begin{align*}
\beta I_4^{(1)}
&= \int_{0}^t \int_{0}^k \sigma g_\ve u F  r^m \D_x F\,\dif x \dif s
+\dfrac{1}{2}\int_{0}^t\int_{0}^k\sigma g_\ve^{\prime} F^2 r^m u\,\dif x\dif s\\
&\le  2 \int_{0}^{t}\int_{0}^{k} \sigma g_\ve u^2 F^2\,\dif x \dif s
 + \dfrac{1}{8} \int_{0}^{t}\int_{0}^{k} \sigma g_\ve r^{2m} \snorm{\D_x F}^2\,\dif x \dif s\\
&\quad +\dfrac{1}{4}\int_{0}^t \int_{\frac{\ve}{2}}^{\ve}\sigma \dfrac{\ve^2}{32}\snorm{g_\ve^{\prime}}^2
 \snorm{u}^2 F^2\,\dif x \dif s
 + \dfrac{1}{4}\int_{0}^t \int_{\frac{\ve}{2}}^{\ve} \dfrac{32}{\ve^2} r^{2m} F^2\,\dif x \dif s \\
&\le   \dfrac{1}{8} \int_{0}^{t}\int_{0}^{k} \sigma g_\ve \snorm{\D_t u}^2\,\dif x \dif s
  +C(\ve) + C(\ve)\int_{0}^{t} \sup_{y\in[\frac{\ve}{2},k]}\snorm{u}^2(y,s)
   \big(\int_{0}^k \sigma g_\ve \dfrac{\snorm{\D_x(r^m u)}^2}{v} \dif x\big)\dif s.
\end{align*}
$I_4^{(2)}$ is estimated by using Theorem \ref{thm:priori}\ref{item:prio1}
and
Lemmas \ref{lemma:eL2}--\ref{lemma:estF}:
\begin{align*}
\dfrac{\beta}{\gamma-1}I_4^{(2)}
&\le  \int_{0}^t \int_{0}^k \sigma g_\ve F^2\,\dif x \dif s
+ \int_{0}^{t} \sup_{y\in[\frac{\ve}{2},k]} \dfrac{(e-v)^2}{v}
  \big(\int_{0}^{k} \sigma g_\ve \dfrac{\snorm{\D_x(r^m u)}^2}{v} \dif x\big)\,\dif s\\
&\le  C(\ve) + C(\ve)\int_{0}^{t}\sup_{y\in[\frac{\ve}{2},k]} \snorm{e(y,s)}^2 \big(\int_{0}^{k} \sigma g_\ve
 \dfrac{\snorm{\D_x(r^m u)}^2}{v} \dif x\big)\,\dif s.
	\end{align*}
$I_4^{(3)}$ is estimated by using Theorem \ref{thm:priori}\ref{item:prio1}
and Lemma \ref{lemma:eL2}--\ref{lemma:L2Dxu}:
\begin{align*}
\dfrac{2\beta}{(\gamma-1)^2}I_4^{(3)}
&\le 2 \int_{0}^t \int_{0}^k \sigma g_\ve \dfrac{\Lambda(v,e)}{v^2} \norm{\D_x(r^m u)}\, \dif x \dif s \\
&\le \dfrac{(\gamma-1)^2}{2\beta} \int_{0}^t \int_{0}^k \sigma g_\ve \dfrac{\Lambda(v,e)^2}{v^3} \dif x \dif s + \dfrac{(\gamma-1)^2}{2\beta} \int_{0}^t\int_{0}^k \sigma g_\ve \dfrac{\snorm{\D_x(r^m u)}^2}{v} \dif x \dif s \\
&\le  C(\ve) + C(\ve) \int_{0}^{t}
\big(\sup_{y\in[\frac{\ve}{2},k]} \snorm{e-1}^2(y,s)+1\big)
\big(\int_{0}^{k} \sigma g_\ve \snorm{e-1}^2\,\dif x\big)\,\dif s \\
&\le  C(\ve) + C(\ve) \int_{0}^{t} \sup_{y\in[\frac{\ve}{2},k]} e^2(y,s)\,\dif s\le C(\ve).
\end{align*}
Combining the estimates for $I_4^{(1)}$--$I_4^{(3)}$, it follows that
\begin{align*}
I_4 \le  \dfrac{1}{8} \int_{0}^{t}\int_{0}^{k} \sigma g_\ve \snorm{\D_t u}^2\dif x \dif s
+ C(\ve)
+C(\ve) \int_{0}^t\sup_{y\in[\ve/2,k]} ( e^2 + u^2)(y,s)
  \big(\int_{0}^{k} \sigma g_\ve \dfrac{\snorm{\D_x(r^m u)}^2}{v} \dif x\big)\,\dif s.
\end{align*}

5. $I_5$
is estimated by using Theorem \ref{thm:priori}\ref{item:prio1}, Lemmas \ref{lemma:eL2}--\ref{lemma:L2Dxu},
and \eqref{eqs:g}--\eqref{eqs:grbd}: 
\begin{align*}
I_5
&\le \beta^2 \int_{0}^{t} \sup_{y\in[\frac{\ve}{2},\ve]} \dfrac{r^{2m}}{v}(y,s)
  \big(\int_{\frac{\ve}{2}}^{\ve} \dfrac{32}{\ve^2} \dfrac{\snorm{\D_x(r^m u)}^2}{v}\dif x\big)\,\dif s
  + \dfrac{1}{4}\int_{0}^t \int_{\frac{\ve}{2}}^{\ve}\sigma\dfrac{\ve^2}{32}\snorm{g_\ve^{\prime}}^2
  \snorm{\D_t u}^2\,\dif x\dif t \\
&\le \dfrac{1}{4}\int_{0}^t \int_{\frac{\ve}{2}}^{\ve} \sigma g_\ve \snorm{\D_t u}^2\,\dif x \dif t + C(\ve).
\end{align*}

\smallskip
6. Combining the estimates of $I_1$--$I_5$
together, it follows that
\begin{align*}
&\dfrac{1}{2}\int_{0}^{t}\int_{0}^{k} \sigma g_\ve \snorm{\D_t u}^2\,\dif x \dif s
+ \dfrac{\beta}{4} \int_{0}^k \Big(\sigma g_\ve\dfrac{\snorm{\D_x(r^m u)}^2}{v}\Big)(x,t)\,\dif x\\
&\le C(\ve) + C(\ve)\Big(\int_{0}^{t}\int_{0}^k \sigma^2 g_\ve^2 \snorm{\D_t e}^2\,\dif x \dif s \Big)^{\frac{1}{2}}\\
&\quad
+ C(\ve) \int_{0}^t \sup_{y\in[\frac{\ve}{2},k]}(e+e^2+u^2)(y,s)
  \Big(\int_{0}^{k}\big(\sigma g_\ve \dfrac{\snorm{\D_x(r^m u)}^2}{v}\big)(x,s)\,\dif x\Big)\,\dif s.
\end{align*}
Applying Gr\"onwall's inequality, we have
\begin{align*}
&\dfrac{1}{2}\int_{0}^{t}\int_{0}^{k} \sigma g_\ve \snorm{\D_t u}^2\,\dif x \dif s
  + \dfrac{\beta}{4} \int_{0}^k
  \Big(\sigma g_\ve \dfrac{\snorm{\D_x(r^m u)}^2}{v}\Big)(x,t)\,\dif x\\
&\le C(\ve)\Big(1+ \Big(\int_{0}^t\int_{0}^k \sigma^2 g_\ve^2
\snorm{\D_t e}^2\,\dif x \dif s\Big)^{\frac{1}{2}}\Big)\Big(1+ \big(\int_{0}^t h_\ve(s)\,\dif s\big)
\exp\big(\int_{0}^t h_{\ve}(s)\,\dif s\big)\Big),
\end{align*}
where $h_\ve(s)\vcentcolon=C(\ve)\sup_{y\in[\frac{\ve}{2},k]}(e+e^2+u^2)(y,s)$.
By Lemmas \ref{lemma:euLinf}--\ref{lemma:eL2}, we obtain \eqref{yinlis.9}.
\end{proof}

\begin{lemma}[$L^{\infty}(0,T;L^2(\ve,k))$--Estimates of $\D_x u$ and $\D_x e$]\label{lemma:euH1}
For each $\ve>0$,
\begin{align}
&\int_{0}^{t}\int_{\ve}^{k} \big(\sigma^2 \snorm{\D_t e}^2+\sigma g_\ve \snorm{\D_t u}^2\big)
\,\dif x \dif s
 + \sup_{ 0\le s\le t}\int_{\ve}^k \Big(\sigma^2 \dfrac{r^{2m}}{v} \snorm{\D_x e}^2+\sigma \dfrac{\snorm{\D_x(r^m u)}^2}{v}\Big)(x,s)\,\dif x
 \le  C(\ve).\label{eqs:euH1}
\end{align}
\end{lemma}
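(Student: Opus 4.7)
The plan is to obtain the missing estimate on $\sigma \D_t e$ (and on $\sigma r^m \D_x e/\sqrt{v}$) by a parabolic energy estimate on the internal energy equation $\eqref{eqs:LFNS-k}_3$, and then to couple it with Lemma \ref{lemma:uH1int} to close both the $\D_t u$--bound and the $\D_t e$--bound simultaneously via a Young's inequality absorption.

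First I would multiply $\eqref{eqs:LFNS-k}_3$ by $\sigma^2 g_\ve \D_t e$ and integrate over $[0,k]\times[0,t]$. An integration by parts on the heat-conduction term $\kappa \D_x(r^{2m}v^{-1}\D_x e)\cdot \sigma^2 g_\ve \D_t e$, using the Neumann conditions $\D_x e|_{x=0,k}=0$, transfers a time derivative onto $|\D_x e|^2$ and produces, on the left-hand side,
\begin{equation*}
\int_0^t\!\!\int_0^k \sigma^2 g_\ve |\D_t e|^2\,\dif x\,\dif s + \dfrac{\kappa}{2}\sup_{0\le s\le t}\int_0^k \Big(\sigma^2 g_\ve \dfrac{r^{2m}}{v}|\D_x e|^2\Big)(x,s)\,\dif x,
\end{equation*}
modulo time-boundary and weight-derivative contributions $\D_t(\sigma^2 g_\ve r^{2m} v^{-1})\cdot |\D_x e|^2$ and a boundary term carrying $g_\ve^{\prime}$; these auxiliary terms are controlled by the entropy dissipation $\int\!\!\int r^{2m}|\D_x e|^2/(v e^2)$ from \eqref{eqs:entropy}, by Theorem \ref{thm:priori}\ref{item:prio0}--\ref{item:prio1}, and by the $L^1_t L^\infty_x$--bound on $e$ from Lemma \ref{lemma:euLinf}.

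On the right-hand side the pressure-work term $-\sigma^2 g_\ve p(v,e)\D_x(r^m u)\D_t e$ is handled by Young's inequality, absorbing a small multiple of $\int\!\!\int\sigma^2 g_\ve |\D_t e|^2$ into the left, and bounding the residue $\sup_x e^2 \cdot \sigma \int g_\ve |\D_x(r^m u)|^2/v$ via Lemmas \ref{lemma:euLinf}--\ref{lemma:L2Dxu}. The convective term $2m\mu \D_x(r^{m-1}u^2)\D_t e\cdot \sigma^2 g_\ve$ is treated likewise. The delicate contribution is the viscous dissipation $\beta v^{-1}|\D_x(r^m u)|^2 \D_t e\cdot\sigma^2 g_\ve$: after Young it produces a quartic integral $\int\!\!\int \sigma^2 g_\ve v^{-2}|\D_x(r^m u)|^4$, which I would bound as
\begin{equation*}
\int_0^t \sigma \Big(\sup_{x\in[0,k]} \sigma g_\ve \dfrac{|\D_x(r^m u)|^2}{v}\Big) \Big(\int_0^k g_\ve \dfrac{|\D_x(r^m u)|^2}{v}\,\dif x\Big)\dif s,
\end{equation*}
where the inner $\sup_x$ factor is controlled via the embedding $W^{1,1}([0,k])\hookrightarrow L^\infty([0,k])$ after rewriting $\D_x(r^m u)/v$ in terms of the effective viscous flux $F$ and using $\D_x F = r^{-m}\D_t u$ from $\eqref{eqs:LFNS-k}_2$; this brings in a factor of $\int g_\ve |\D_t u|^2$, plus quantities already under control by Lemmas \ref{lemma:eL2}--\ref{lemma:estF}. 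Boundary contributions carrying $g_\ve^{\prime}$ on $[\ve/2,\ve]$ are handled by the same trick as in Lemmas \ref{lemma:eL2}--\ref{lemma:L2Dxu}.

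Setting $A := \int_0^T\!\!\int_0^k \sigma g_\ve |\D_t u|^2\,\dif x\,\dif s$ and $B := \int_0^T\!\!\int_0^k \sigma^2 g_\ve |\D_t e|^2\,\dif x\,\dif s$, the above produces an estimate of the shape $B + \sup_s\int\sigma^2 g_\ve r^{2m} v^{-1}|\D_x e|^2\,\dif x \le C(\ve)(1 + A)$, while Lemma \ref{lemma:uH1int} (using $g_\ve^2 \le g_\ve$) gives $A \le C(\ve)(1 + \sqrt{B})$. Composing these two inequalities yields $B \le C(\ve)(1 + \sqrt{B})$, which Young's inequality turns into $B \le C(\ve)$, and hence $A \le C(\ve)$. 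Restricting the resulting $[0,k]$-integrals to $[\ve,k]$, where $g_\ve \equiv 1$, then yields \eqref{eqs:euH1}. The principal obstacle is the asymmetric coupling between $A$ and $B$ together with the quartic integral $\int\!\!\int \sigma^2 g_\ve v^{-2}|\D_x(r^m u)|^4$, which is not a direct consequence of the $L^2$--estimates already at hand: its treatment forces one to upgrade the control of $\D_x(r^m u)/\sqrt{v}$ from $L^\infty_t L^2_x$ to pointwise via a Sobolev embedding in $x$, at the cost of involving $\D_t u$ in the $W^{1,1}$--norm through the momentum equation, and one must then track constants with enough care so that the final Young absorption genuinely closes $B$ rather than producing a circular estimate.
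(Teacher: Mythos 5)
Your overall strategy -- parabolic estimate for $e$ tested against $\sigma^2 g_\ve \D_t e$, coupled with Lemma \ref{lemma:uH1int}, then a Young absorption to close $A$ and $B$ simultaneously -- is the same scheme the paper follows. The genuine divergence, and the source of a gap, is your treatment of the viscous dissipation term.

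The paper does not confront the quartic $\int\!\!\int \sigma^2 g_\ve v^{-2}|\D_x(r^m u)|^4$ at all. Before multiplying by $\sigma^2 g_\ve^2 \D_t e$, it rewrites the energy equation so that the viscous dissipation and pressure work combine into $F\,\D_x(r^m u)$, where $F = \beta v^{-1}\D_x(r^m u) - p + (\gamma-1)$ is the effective viscous flux. The resulting term $I_5 = \int\!\!\int \sigma^2 g_\ve^2\, F\,\D_x(r^m u)\,\D_t e$ is, after Young, only quadratic in $\D_x(r^m u)$ with a prefactor $|F|^2$; the sup of $g_\ve v|F|^2$ is then controlled by the $W^{1,1}\hookrightarrow L^\infty$ embedding with Cauchy--Schwarz on the term $\int g_\ve |F\,\D_x F|$, which brings in $\D_t u$ only through a square root. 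You keep $\beta v^{-1}|\D_x(r^m u)|^2 \D_t e$ raw, producing the quartic directly, and the two paths are not equivalent in difficulty.

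Here is why your stated closure $B + \sup_s \int\sigma^2 g_\ve r^{2m}v^{-1}|\D_x e|^2 \le C(\ve)(1+A)$ does not hold. Your factorization of the quartic,
\begin{equation*}
\int_0^t\!\!\int \sigma^2 g_\ve \dfrac{|\D_x(r^m u)|^4}{v^2}\,\dif x\,\dif s
\;\lesssim\;
\Big(\sup_{0\le s\le t}\int \sigma g_\ve \dfrac{|\D_x(r^m u)|^2}{v}\,\dif x\Big)
\cdot \int_0^t \sup_{x}\Big(\sigma \dfrac{|\D_x(r^m u)|^2}{v}\Big)\dif s,
\end{equation*}
necessarily carries the factor $\sup_s\int\sigma g_\ve\frac{|\D_x(r^m u)|^2}{v}\,\dif x$. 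That quantity is controlled \emph{only} through Lemma \ref{lemma:uH1int}, which bounds it by $C(\ve)(1+\sqrt{B})$, not by $A$. Thus the quartic actually contributes at least $C(\ve)(1+\sqrt{B})\cdot(1+\text{something in }A)$, and cannot be absorbed into $C(\ve)(1+A)$ as stated. Moreover, if in the Sobolev step you estimate $\int g_\ve |F\,\D_x F|$ by Young's inequality (giving $\int g_\ve F^2 + \int g_\ve|\D_x F|^2$), the second factor $\int_0^t \sup_x(\sigma|\D_x(r^m u)|^2/v)$ comes out as $C(\ve)(1+A)$, and the quartic is $C(\ve)(1+\sqrt{B})(1+A) \le C(\ve)(1+\sqrt{B})^2 = C(\ve)(1+2\sqrt{B}+B)$: the $C(\ve)B$ term cannot be absorbed into $\frac{1}{16}B$ because $C(\ve)$ is large, and the scheme fails. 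The fix is to use Cauchy--Schwarz (not Young) at that step, which costs only $\sqrt{A}$ and gives the quartic $\lesssim C(\ve)(1+\sqrt{B})(1+\sqrt{A}) \le C(\ve)(1+\sqrt{B})^{3/2}$, a strictly subcritical power that does absorb. Your ``principal obstacle'' paragraph identifies the danger but does not resolve it: the exponent arithmetic is not a matter of ``tracking constants with enough care'' but of choosing Cauchy--Schwarz over Young at a specific point, and even then the correct intermediate inequality is $B \le C(\ve)(1+\sqrt{B})^{3/2}$, not $B\le C(\ve)(1+A)$. The paper's $F$--rewrite avoids this subtlety entirely and is the cleaner route.
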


\begin{proof}  We divide the proof into six steps.

\smallskip
1. Rewriting the energy equation $\eqref{eqs:LFNS-k}_3$ in terms of the effective
viscosity flux $F$, then multiplying both sides by $\sigma^2(t) g_\ve^2 \D_t e$, and integrating by parts, it follows that
\begin{align*}
&\int_{0}^{t}\int_{0}^{k} \sigma^2 g_\ve^2 \snorm{\D_t e}^2\,\dif x \dif s
+\dfrac{\kappa}{2}\int_{0}^k\Big(\sigma^2 g_\ve^2\dfrac{r^{2m}}{v}\snorm{\D_x e}^2\Big)(x,t)\,\dif x\\
&= \kappa\int_{0}^{t} \int_{0}^k \sigma\sigma^{\prime}(s) g_\ve^2 \dfrac{r^{2m}}{v}
 \snorm{\D_x e}^2\,\dif x \dif s
 + m\kappa\int_{0}^{t} \int_{0}^k \sigma^2 g_\ve^2 \dfrac{r^{2m-1}u}{v}
   \snorm{\D_x e}^2\,\dif x\dif s \\
&\quad + \int_{0}^{t}\int_{0}^k \sigma^2 g_\ve^2 \mathcal{A} \D_t e\,\dif x \dif s -\dfrac{\kappa}{2}\int_{0}^{t} \int_{0}^k \sigma^2 g_\ve^2 \dfrac{r^{2m}}{v^2} \D_x(r^m u) \snorm{\D_x e}^2\,\dif x \dif s \\
&\quad +\int_{0}^{t}\int_{0}^{k} \sigma^2 g_\ve^2 F \D_x(r^m u) \D_t e\,\dif x \dif s
 - \kappa \int_{0}^t \int_{0}^{k} 2\sigma^2 g_\ve g_\ve^{\prime} \dfrac{r^{2m}}{v}
 \D_t e \D_x e\,\dif x \dif s\\
&\vcentcolon= \sum_{i=1}^6 I_i,
\end{align*}
where $\mathcal{A}\vcentcolon=-(\gamma-1) \D_x(r^m u)- 4m\mu\dfrac{u}{r}\D_x(r^m u)
     + 2m n\mu \dfrac{v}{r^2} u^2$.
By Lemma \ref{lemma:eL2}, we have
\begin{equation*}
I_1 \vcentcolon = \kappa \int_{0}^{t} \int_{0}^k \sigma\sigma^{\prime}(s) g_\ve^2
\dfrac{r^{2m}}{v} \snorm{\D_x e}^2\,\dif x \dif s
\le \kappa \int_{0}^{1} \int_{\frac{\ve}{2}}^k \dfrac{r^{2m}}{v} \snorm{\D_x e}^2\,\dif x \dif s \le C(\ve).
\end{equation*}
$I_2$ is estimated by using \eqref{eqs:g}--\eqref{eqs:grbd}
and Lemma \ref{lemma:eL2}:
\begin{align*}
I_2
&\le 2m\kappa\int_{0}^{t} \sup_{y\in[\frac{\ve}{2},k]}\dfrac{u}{r}(y,s)
\big(\int_{0}^k \sigma^2 g_\ve^2 \dfrac{r^{2m}}{v}\,\snorm{\D_x e}^2\,\dif x\big) \dif s\\
&\le C(\ve) + C(\ve) \int_{0}^{t} \sup_{y\in[\frac{\ve}{2},k]}u^2(y,s)
  \big(\int_{0}^k \sigma^2 g_\ve^2 \dfrac{r^{2m}}{v} \snorm{\D_x e}^2\,\dif x\big)\,\dif s.
\end{align*}

\smallskip
2. $I_3$ is estimated by using Theorem \ref{thm:priori}\ref{item:prio1} and
Lemmas \ref{lemma:eL2}--\ref{lemma:L2Dxu}:
\begin{equation}\label{HEtemp4}
\begin{aligned}
I_3
&=\int_{0}^{t}\int_{0}^k \sigma^2 g_\ve^2
 \Big\{-(\gamma-1) \D_x(r^m u) - 4m\mu\dfrac{u}{r}\D_x(r^m u) + 2m n \mu \dfrac{v}{r^2} u^2\Big\} \D_t e\,\dif x \dif s \\
&\le \dfrac{1}{16} \int_{0}^{t}\int_{0}^{k}\sigma^2 g_\ve^2 \snorm{\D_t e}^2\,\dif x \dif s
  + C(\ve) \int_{0}^t \sup_{y\in[\frac{\ve}{2},k]}(1+u^2)(y,s)
\int_{0}^{k}\sigma^2 g_\ve^2 \dfrac{\snorm{\D_x(r^m u)}^2}{v} \dif x \dif s \\
&\quad + C(\ve)\int_{0}^{T}\int_{0}^{k}\sigma^2 g_\ve^2 u^4\,\dif x \dif t\\
&\le \dfrac{1}{16}\int_{0}^t \int_{0}^k \sigma^2 g_\ve^2 \snorm{\D_t e}^2\, \dif x \dif t
+C(\ve)\Big(1+\int_{0}^{t}\sup_{y\in[\frac{\ve}{2},k]}u^2(y,s)
 \big(\int_{0}^k \sigma^2 g_\ve \dfrac{\snorm{\D_x(r^m u)}^2}{v}\dif x\big)\,\dif s\Big).
\end{aligned}
\end{equation}
In fact, by Lemmas \ref{lemma:euLinf} and \ref{lemma:uH1int}, it follows that
\begin{align*}
\int_{0}^{t} \sup_{y\in[\frac{\ve}{2},k]} u^2(y,s)
\big(\int_{0}^k \sigma^2 g_\ve \dfrac{\snorm{\D_x(r^m u)}^2}{v}\dif x\big)\,\dif s
&\le \sup_{ 0\le s \le  t}\int_{0}^k\Big(\sigma g_\ve \dfrac{\snorm{\D_x(r^m u)}^2}{v}\Big)(x,s)\,
 \dif x\int_{0}^{t} \sup_{y\in[\frac{\ve}{2},k]} u^2(y,\tilde{t}\,)\,\dif \tilde{t}\\
&\le C(\ve) + C(\ve)\Big(\int_{0}^t\int_{0}^k\sigma^2 g_\ve^2\snorm{\D_t e}^2\,\dif x \dif s\Big)^{\frac{1}{2}}\\
&\le  \dfrac{1}{16} \int_{0}^t \int_{0}^k \sigma^2 g_\ve^2 \snorm{\D_t e}^2\, \dif x \dif s + C(\ve).
\end{align*}
Substituting the above estimate into \eqref{HEtemp4} leads to
\begin{align*}
I_3 \le \dfrac{1}{8}\int_{0}^t\int_{0}^k\sigma^2 g_\ve^2 \snorm{\D_t e}^2\,\dif x \dif s+C(\ve).
\end{align*}

\smallskip
3.
$I_4$ is first rewritten in terms of $F$ and then estimated by using Theorem \ref{thm:priori}\ref{item:prio1}:
\begin{equation*}
\begin{aligned}
I_4 \vcentcolon
&=-\dfrac{\kappa}{2\beta}\int_{0}^t \int_{0}^k \sigma^2 g_\ve^2
 \Big(F + (\gamma-1)\dfrac{e-v}{v} \Big)\dfrac{\snorm{r^m \D_x e}^2}{v}\,\dif x \dif s \\
&\le  \dfrac{\kappa}{2\beta}\int_{0}^t
 \sup_{y\in[\frac{\ve}{2},k]} \snorm{\sqrt{g_\ve} F}(y,s)
\big(\int_{0}^k \sigma^2 g_\ve^{\frac{3}{2}} \dfrac{r^{2m}}{v} \snorm{\D_x e}^2\,\dif x)\,\dif s + C(\ve) \\
&\quad + C(\ve)\int_{0}^t \sup_{y\in[\frac{\ve}{2},k]}e(y,s)
 \big(\int_{0}^k \sigma^2 g_\ve^2 \dfrac{r^{2m}}{v} \snorm{\D_x e}^2\,\dif x\big)\,\dif s.
\end{aligned}
\end{equation*}
By Lemma \ref{lemma:estF} and the Sobolev embedding theorem: $W^{1,1}(\ve,k) \xhookrightarrow[]{} C^0(\ve,k)$, it follows that
\begin{equation}\label{HEtemp2}
\begin{aligned}
\sup_{y\in[\frac{\ve}{2},k]} \snorm{\sqrt{g_\ve} F}^2(y,s)
&\le C \int_{\frac{\ve}{2}}^k\big( g_\ve F^2 + 2 g_\ve \snorm{F \D_x F} + 2 g_\ve^{\prime} F^2 \big) (x,s)\,\dif x \\
&\le \int_{\frac{\ve}{2}}^k \big( C(\ve) F^2 + g_\ve r^{2m}  \snorm{\D_x F}^2 \big)\,\dif x \\
&\le  \int_{\frac{\ve}{2}}^{k} \Big(C(\ve)\dfrac{\snorm{\D_x(r^m u)}^2}{v}
  + g_\ve \snorm{\D_t u}^2 \Big)\,\dif x + C(\ve),
\end{aligned}
\end{equation}
so that, using Lemma \ref{lemma:L2Dxu}
and the Cauchy-Schwartz inequality, we have
\begin{align*}
&\dfrac{\kappa}{\beta}\int_{0}^t \sup_{y\in[\frac{\ve}{2},k]} \snorm{g_\ve^{\frac{1}{2}} F}(y,s)
  \big(\int_{0}^k \sigma^2 g_\ve^{\frac{3}{2}} \dfrac{r^{2m}}{v} \snorm{\D_x e}^2 \dif x\big)\dif s\\
&\le C(\ve)\int_{0}^t \Big\{ \sigma^2  +\int_{\frac{\ve}{2}}^{k}
 \sigma^2 \dfrac{\snorm{\D_x(r^m u)}^2}{v}\,\dif x
 + \int_{\frac{\ve}{2}}^{k} \sigma^2 g_\ve \snorm{\D_t u}^2\,\dif x \Big\} \dif s \\
&\quad + C(\ve) \int_{0}^{t}\sigma^2
\Big( \int_{0}^k g_\ve^{\frac{3}{2}} \dfrac{r^{2m}}{v}\snorm{\D_x e}^2\,\dif x \Big)^2 \dif s\\
&\le  \int_{0}^t \int_{0}^{k} \sigma g_\ve \snorm{\D_t u}^2\,\dif x \dif s + C(\ve) 
+C(\ve)\int_{0}^t\Big(\int_{0}^k g_\ve\dfrac{r^{2m}}{v}\snorm{\D_x e}^2\,\dif x \Big)
 \Big(\int_{0}^k\sigma^2g_\ve^2 \dfrac{r^{2m}}{v}\snorm{\D_x e}^2\,\dif x\Big)\dif s.
\end{align*}
Using the above estimate, Lemma \ref{lemma:uH1int}, and the Cauchy-Schwartz inequality,
one has
\begin{align*}
I_4
&\le C(\ve) + \int_{0}^t \int_{0}^{k} \sigma g_\ve \snorm{\D_t u}^2\,\dif x \dif s
 + C(\ve)\int_{0}^t \sup_{y\in[\frac{\ve}{2},k]}e(y,s)
 \big(\int_{0}^k \sigma^2 g_\ve^2 \dfrac{r^{2m}}{v} \snorm{\D_x e}^2\,\dif x\big) \dif s\\
&\quad +C(\ve) \int_{0}^t \Big(\int_{0}^k g_\ve\dfrac{r^{2m}}{v}\snorm{\D_x e}^2\,\dif x \Big) \Big(\int_{0}^k \sigma^2 g_\ve^2 \dfrac{r^{2m}}{v}\snorm{\D_x e}^2\,\dif x\Big) \dif s,\\
&\le   \dfrac{1}{8}\int_{0}^t \int_{0}^{k} \sigma^2 g_\ve^2 \snorm{\D_t e}^2\,\dif x \dif s + C(\ve) 
+ C(\ve)\int_{0}^t \sup_{y\in[\frac{\ve}{2},k]}e(y,s)
 \big(\int_{0}^k \sigma^2 g_\ve^2 \dfrac{r^{2m}}{v} \snorm{\D_x e}^2\, \dif x\big)\dif s\\
&\quad +C(\ve) \int_{0}^t \Big(\int_{0}^k g_\ve\dfrac{r^{2m}}{v}\snorm{\D_x e}^2\,\dif x \Big)\Big(\int_{0}^k\sigma^2 g_\ve^2\dfrac{r^{2m}}{v}\snorm{\D_x e}^2\,\dif x\Big)\dif s.
\end{align*}

\smallskip
4. $I_5$ is first estimated as follows:
\begin{align}\label{HEtemp3}
\begin{aligned}
I_5
&\le 4\int_{0}^t \sup_{y\in[\frac{\ve}{2},k]} (g_\ve v \snorm{F}^2)(y,s)
 \big(\int_{0}^k \sigma^2 g_\ve \dfrac{\snorm{\D_x(r^m u)}^2}{v} \dif x\big)\dif s
  + \dfrac{1}{16}\int_{0}^t \int_{0}^k \sigma^2 g_\ve^2 \snorm{\D_t e}^2\,\dif x \dif s.
\end{aligned}
\end{align}
By the same argument as \eqref{HEtemp2}, we have
\begin{align*}
\sup_{y\in[\frac{\ve}{2},k]}(g_\ve \snorm{F}^2)(y,s)
\le C(\ve) + C(\ve)\int_{\frac{\ve}{2}}^{k} \dfrac{\snorm{\D_x(r^m u)}^2}{v} \dif x
+ C(\ve)\Big( \int_{0}^{k}g_\ve F^2\,\dif x \Big)^{\frac{1}{2}}
\Big(\int_{0}^k g_\ve \snorm{\D_t u}^2\,\dif x\Big)^{\frac{1}{2}},
\end{align*}
which, along with Theorem \ref{thm:priori}\ref{item:prio1} and Lemma \ref{lemma:L2Dxu}, implies that
\begin{align*}
&4\int_{0}^t \sup_{y\in[\frac{\ve}{2},k]}(g_\ve v \snorm{F}^2)(y,s)
\big(\int_{0}^k \sigma^2 g_\ve \dfrac{\snorm{\D_x(r^m u)}^2}{v} \dif x\big)\dif s\\
&\le  C(\ve)
 + C(\ve)\sup_{0\le s \le t}\int_{0}^k
 \Big(\sigma g_\ve \dfrac{\snorm{\D_x(r^m u)}^2}{v}\Big)(x,s)\,\dif x\\
&\quad + C(\ve)\Big( \int_{0}^t\int_{0}^{k} \sigma g_\ve F^2 \Big)^{\frac{1}{2}}
 \Big(\int_{0}^t\int_{0}^k \sigma g_\ve \snorm{\D_t u}^2\Big)^{\frac{1}{2}}
 \sup_{ 0\le s \le  t}\int_{0}^k \Big(\sigma g_\ve \dfrac{\snorm{\D_x(r^m u)}^2}{v}\Big)(x,s)\,\dif x.
\end{align*}
Then applying Lemmas \ref{lemma:estF}--\ref{lemma:uH1int} on the above
and using the Cauchy-Schwartz inequality yield
\begin{align*}
&4\int_{0}^t \sup_{y\in[\frac{\ve}{2},k]} (g_\ve v \snorm{F}^2)(y,s)
 \big(\int_{0}^k \sigma^2 g_\ve \dfrac{\snorm{\D_x(r^m u)}^2}{v} \dif x\big) \dif s\\
&\le  C(\ve)
+C(\ve)\Big\{1+\Big(\int_{0}^t\int_{0}^k\sigma^2g_\ve^2\snorm{\D_t e}^2\,\dif x\dif s\Big)^{\frac{1}{2}}\Big\}^{\frac{1}{2}}
\Big\{ 1+\Big(\int_{0}^t\int_{0}^k \sigma^2 g_\ve^2 \snorm{\D_t e}^2\,
  \dif x \dif s\Big)^{\frac{1}{2}} \Big\}\\
&\le  \dfrac{1}{16}\int_{0}^t\int_{0}^k \sigma^2 g_\ve^2 \snorm{\D_t e}^2\,\dif x \dif s + C(\ve).
\end{align*}
Substituting the above into \eqref{HEtemp3}, we obtain
\begin{align*}
I_5 \le \dfrac{1}{8}\int_{0}^t\int_{0}^k \sigma^2 g_\ve^2 \snorm{\D_t e}^2\,\dif x \dif s + C(\ve).
\end{align*}

\smallskip
5. $I_6$
is estimated by using Theorem \ref{thm:priori}\ref{item:prio1}, Lemma \ref{lemma:eL2}, and \eqref{eqs:g}--\eqref{eqs:grbd}:
\begin{align*}
I_6
&\le \dfrac{1}{8} \int_{0}^t \int_{\frac{\ve}{2}}^{\ve} \sigma^2 g_\ve^2 \snorm{\D_t e}^2\,\dif x \dif s
 + 8\kappa^2 \int_{0}^t \sup_{y\in[\frac{\ve}{2},\ve]}\dfrac{r^{2m}}{v}(y,s)
   \big(\int_{\frac{\ve}{2}}^{\ve} \sigma^2 \snorm{g_\ve^{\prime}}^2 \dfrac{r^{2m}}{v}
 \snorm{\D_x e}^2\,\dif x\big)\,\dif s\\
&\le   \dfrac{1}{8} \int_{0}^t \int_{0}^{k} \sigma^2 g_\ve^2 \snorm{\D_t e}^2\,\dif x \dif s + C(\ve).
\end{align*}

\smallskip
6. Combining
the estimates for $I_1$--$I_6$ together, we have
\begin{align}\label{HEtemp5}
&\dfrac{1}{2}\int_{0}^{t}\int_{0}^{k} \sigma^2 g_\ve^2 \snorm{\D_t e}^2\,\dif x \dif s
+\dfrac{\kappa}{2}\int_{0}^k\Big(\sigma^2 g_\ve^2\dfrac{r^{2m}}{v}\snorm{\D_x e}^2\Big)(x,t)\,\dif x \nonumber\\
&\le  C(\ve) + C(\ve) \int_{0}^t \tilde{h}_\ve(s)
 \big(\int_{0}^k\sigma^2 g_\ve^2 \dfrac{r^{2m}}{v}\snorm{\D_x e}^2\,\dif x\big)\,\dif s,
\end{align}
where $\tilde{h}_\ve(s) \vcentcolon
= \int_{0}^k \big(g_\ve\dfrac{r^{2m}}{v}\norm{\D_x e}^2\big)(x,s)\,\dif x
   +\sup_{y\in[\frac{\ve}{2},k]}(u^2+e)(y,s)$.
Then it follows from  Lemmas \ref{lemma:euLinf}--\ref{lemma:eL2}
and the Gr\"onwall inequality that
\begin{align*}
\dfrac{1}{2}\int_{0}^{t}\int_{0}^{k} \sigma^2 g_\ve^2 \snorm{\D_t e}^2\,\dif x \dif s
+\dfrac{\kappa}{2}\int_{0}^k\Big(\sigma^2 g_\ve^2\dfrac{r^{2m}}{v}\snorm{\D_x e}^2\Big)(x,t)\,
\dif x\le C(\ve),
\end{align*}
which, along  with Lemma \ref{lemma:uH1int}, yields \eqref{eqs:euH1}.
This completes the proof.
\end{proof}

\begin{corollary}\label{coro:eubd}
For all $(x,t) \in [\ve,k]\times[0,T]$,
\begin{equation}\label{eqs:ubd}
\sigma^{\frac{1}{4}}(t)\snorm{u(x,t)}+\sigma^{\frac{1}{2}}(t) e(x,t)\le C(\ve).
\end{equation}
\end{corollary}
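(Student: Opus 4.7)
The plan is to derive both pointwise bounds from the one-dimensional embedding $W^{1,1}\hookrightarrow C^0$ applied on a unit sub-interval of $[\ve,k]$, by combining the $L^\infty_tL^2_x$-type bounds on $(e-1,u^2)$ from Lemma \ref{lemma:eL2} with the $\sigma$-weighted bounds on the spatial derivatives from Lemma \ref{lemma:euH1}. Throughout, Theorem \ref{thm:priori}\ref{item:prio1} and Lemma \ref{lemma:rbd} ensure that the weights $r^{2m}$ and $v^{-1}$ are bounded above and below by positive constants depending only on $\ve$ on $[\ve,k]\times[0,T]$, so the weighted derivative bounds in Lemma \ref{lemma:euH1} translate directly into unweighted ones, in particular $\int_{\ve}^{k}(\D_x e)^2\,\dif x\le C(\ve)\sigma^{-2}(t)$ and, as explained below, an analogous $\sigma^{-1}$ bound for $\int(\D_x u)^2$.

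For the bound on $e$, writing $(e(x,t)-1)^2=(e(y,t)-1)^2+2\int_y^x (e-1)\D_x e\,\dif z$, integrating in $y$ over a unit interval $I_x\subseteq[\ve,k]$ containing $x$, and applying the Cauchy-Schwarz inequality yield
\begin{equation*}
(e(x,t)-1)^2\le \int_{I_x}(e-1)^2\,\dif y+2\big(\textstyle\int_{\ve}^{k}(e-1)^2\,\dif z\big)^{1/2}\big(\textstyle\int_{\ve}^{k}(\D_x e)^2\,\dif z\big)^{1/2}.
\end{equation*}
Lemma \ref{lemma:eL2} bounds the first and third factors by $C(\ve)$, while Lemma \ref{lemma:euH1} supplies $\int_{\ve}^{k}(\D_x e)^2\,\dif z\le C(\ve)\sigma^{-2}(t)$. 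This gives $(e-1)^2\le C(\ve)\sigma^{-1}(t)$, hence $\sigma^{1/2}(t)\,e(x,t)\le C(\ve)$.

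For the bound on $u$, the same scheme produces
\begin{equation*}
u^2(x,t)\le \int_{I_x}u^2\,\dif y + 2\big(\textstyle\int_{\ve}^{k}u^2\,\dif z\big)^{1/2}\big(\textstyle\int_{\ve}^{k}(\D_x u)^2\,\dif z\big)^{1/2}.
\end{equation*}
By the H\"older inequality and Lemma \ref{lemma:eL2}, $\int_{I_x}u^2\le\big(\int_{I_x}u^4\big)^{1/2}\le C(\ve)$ uniformly in $k$. The only non-cosmetic step is to transfer the weighted derivative bound in Lemma \ref{lemma:euH1}, which controls $\sigma(t)\int|\D_x(r^m u)|^2/v$, into a bound on $\int(\D_x u)^2$. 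Using the identity $r^m\D_x u=\D_x(r^m u)-mvu/r$ (a consequence of $\D_x r=v/r^m$), together with Theorem \ref{thm:priori}\ref{item:prio1}, Lemma \ref{lemma:rbd}, and the uniform control of $\int u^2$ just obtained, one deduces $\int_{\ve}^{k}(\D_x u)^2\,\dif x\le C(\ve)\sigma^{-1}(t)$. Substituting yields $u^2\le C(\ve)\sigma^{-1/2}(t)$, i.e. $\sigma^{1/4}(t)\,|u(x,t)|\le C(\ve)$, completing the estimate.

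The only mild obstacle is the bookkeeping needed to convert the dissipation appearing naturally in the Lagrangian form, $|\D_x(r^m u)|^2/v$, into a derivative estimate on $u$ alone; beyond that conversion, no additional cancellation is required, everything else being already encoded in Lemmas \ref{lemma:eL2}--\ref{lemma:euH1}.
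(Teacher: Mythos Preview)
Your proposal is correct and follows essentially the same route as the paper: both arguments apply the one-dimensional embedding $W^{1,1}\hookrightarrow C^0$ to $u^2$ and $(e-1)^2$, feeding in the $L^\infty_tL^2_x$ bounds from Lemma~\ref{lemma:eL2} and the $\sigma$-weighted derivative bounds from Lemma~\ref{lemma:euH1}, with the weights $r^{2m}$ and $v$ removed via Theorem~\ref{thm:priori}\ref{item:prio1} and Lemma~\ref{lemma:rbd}. The only cosmetic difference is that the paper handles the cross term $\int|u\,\D_x u|$ directly (rewriting $u\,\D_x u$ in terms of $u\,\D_x(r^m u)$ and a lower-order piece, then applying Young's inequality), whereas you first separate $u$ and $\D_x u$ by Cauchy--Schwarz and then convert $\D_x(r^m u)$ to $\D_x u$ via the same identity; the two manipulations are equivalent. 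One small point: your Cauchy--Schwarz factor $\big(\int_{\ve}^{k}u^2\big)^{1/2}$ over the full interval is not controlled by Lemma~\ref{lemma:eL2} alone (H\"older from $L^4$ would introduce a $k$-dependence), but it is bounded directly by the entropy estimate in Theorem~\ref{thm:priori}\ref{item:prio0}, so the argument goes through.
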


\begin{proof}
First, repeating the same calculation \eqref{eqs:uSob} in the proof of Lemma \ref{lemma:euLinf},  one has
\begin{align*}
\sigma^{\frac{1}{2}}(t)\sup_{y\in[\ve,k]}u^2(y,t)
\le C(\ve) \int_{\ve}^{k} u^2(x,t)\,\dif x + C \int_{\ve}^{k}\sigma(t) \dfrac{\snorm{\D_x(r^mu)}^2}{v}(x,t) \dif x \le C(\ve),
\end{align*}
where we have used Lemma \ref{lemma:euH1}.
Then, by the Sobolev embedding theorem, Theorem \ref{thm:priori}\ref{item:prio1},
Lemmas \ref{lemma:eL2} and \ref{lemma:euH1}, and \eqref{eqs:g}--\eqref{eqs:grbd},
we have
\begin{align*}
\sigma(t) \sup_{y\in[\ve,k]}(e-1)^2(y,t)
&\le C \int_{\ve}^k \sigma(t) (e-1)^2(x,t)\,\dif x
+ C \int_{\ve}^{k} \sigma(t) (\snorm{e-1} \snorm{\D_x e}) (x,t)\,\dif x\\
&\le C \sup_{y\in[\ve,k]}\Big( \sigma(t) + \dfrac{v}{r^{2m}}(y,t)\Big)
\int_{\ve}^{k}  (e-1)^2 (x,t)\,\dif x  \\
&\quad\, + \int_{\ve}^k \sigma^2(t)
\Big(\dfrac{r^{2m}}{v} \snorm{\D_x e}^2\Big)(x,t)\,\dif x \le C(\ve),
\end{align*}
which implies $\eqref{eqs:ubd}_2$.
\end{proof}

\subsection{Lower bound of the internal energy}\label{subsec:elbd}
We now estimate the lower bound of
$e$.

\begin{lemma}[Lower Bound of the Internal Energy $e$] \label{lemma:ebd}
There exists $C(a)>0$ such that
\begin{equation} \label{eqs:ebd}
e(x,t)\ge C(a)^{-1} \qquad \text{for all $(x,t)\in [0,\infty)\times[0,T]$}. 	\end{equation}
\end{lemma}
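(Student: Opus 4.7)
The plan is to establish an $L^{\infty}([0,k]\times[0,T])$ bound on $\phi \vcentcolon= 1/e$ via an $L^p$--Moser-type iteration based on a parabolic equation satisfied by $\phi$. Using $\D_x(r^m u) = \D_t v$ from $\eqref{eqs:LFNS-k}_1$ and the relation $\D_x e = -\phi^{-2}\D_x\phi$, one rewrites $\eqref{eqs:LFNS-k}_3$ as
\begin{equation*}
\D_t\phi = (\gamma-1)\phi\,\D_t(\log v) - \phi^2 \mathcal{D} + \kappa\D_x\Big(\frac{r^{2m}}{v}\D_x\phi\Big) - \frac{2\kappa r^{2m}}{v\phi}|\D_x\phi|^2,
\end{equation*}
where $\mathcal{D} \vcentcolon= \beta |\D_x(r^m u)|^2/v - 2m\mu\D_x(r^{m-1}u^2)$ is the viscous dissipation. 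As shown inside the proof of the entropy estimate \ref{item:prio0}, $\mathcal{D}$ admits the decomposition $(\lambda+\tfrac{2\mu}{n})|\D_x(r^m u)|^2/v + \tfrac{2m\mu}{nv}\big[(r^m\D_x u - vu/r)^2 + (n-2)(r^m\D_x u)^2\big]$ and is therefore non-negative; in the non-degenerate case $\lambda + \tfrac{2\mu}{n}>0$ it dominates $|\D_x(r^m u)|^2/v$. The Neumann condition $\D_x e|_{x=0,k}=0$ translates into $\D_x\phi|_{x=0,k}=0$.

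For each $p\ge 2$, I would multiply the equation above by $\phi^{p-1}$ and integrate over $[0,k]$. Integration by parts on the heat-conduction term (with vanishing boundary contributions) combines with the negative pointwise dissipation term to give
\begin{equation*}
\frac{1}{p}\frac{d}{dt}\int_0^k\phi^p\,dx + (p+1)\kappa\int_0^k\phi^{p-2}\frac{r^{2m}}{v}|\D_x\phi|^2\,dx + \int_0^k\phi^{p+1}\mathcal{D}\,dx = (\gamma-1)\int_0^k\phi^p\frac{\D_x(r^m u)}{v}\,dx.
\end{equation*}
By Cauchy--Schwarz and Young's inequality, the right-hand side is bounded by $\tfrac12\int\phi^{p+1}|\D_x(r^m u)|^2/v\,dx + C\int\phi^{p-1}/v\,dx$; the first piece is absorbed into $\int\phi^{p+1}\mathcal{D}\,dx$, while the second is controlled by $C(a)\int\phi^{p-1}\,dx$ via the density bound $v\ge C(a)^{-1}$ from \ref{item:prio1}. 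H\"older's inequality $\int\phi^{p-1}\,dx \le k^{1/p}(\int\phi^p\,dx)^{(p-1)/p}$ then reduces the estimate to
\begin{equation*}
\frac{d}{dt}\|\phi(\cdot,t)\|_{L^p([0,k])} \le C(a)\,k^{1/p},
\end{equation*}
with $C(a)$ independent of $p$.

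Using the uniform initial bound $\phi_0 = 1/e_0 \le C_0$ from Proposition \ref{prop:kapprox}, time integration gives $\|\phi(\cdot,t)\|_{L^p([0,k])} \le k^{1/p}(C_0 + C(a)T)$ for all $p\ge 2$ and $t\in[0,T]$. Sending $p\to\infty$ yields $\|\phi\|_{L^\infty([0,k]\times[0,T])} \le C_0 + C(a)T = C(a)$, which is equivalent to the asserted lower bound $e(x,t) \ge C(a)^{-1}$. The main technical obstacle lies in the borderline regime $\lambda + \tfrac{2\mu}{n} = 0$, where $\mathcal{D}$ no longer dominates $|\D_x(r^m u)|^2/v$. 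In that case one splits $\D_x(r^m u) = r^m\D_x u + mvu/r$: the $r^m\D_x u$ contribution is absorbed via the surviving quadratic form $(r^m\D_x u - vu/r)^2 + (n-2)(r^m\D_x u)^2$ in $\mathcal{D}$, while the $vu/r$ contribution is controlled by the pointwise estimate $|u(x,t)| \le \sqrt{k}\,\|\D_x u(\cdot,t)\|_{L^2([0,k])}$ (which follows from $u(0,t)=0$) together with the bound $r\ge a$ and the entropy inequality; the Moser scheme above then proceeds unchanged.
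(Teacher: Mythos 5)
Your main argument (for $\lambda+\tfrac{2\mu}{n}>0$) is essentially the paper's: derive a parabolic inequality for $e^{-1}$, multiply by a power, integrate by parts using the Neumann boundary condition, absorb the pressure-production term into the viscous dissipation via Young's inequality, and pass from $L^p$ to $L^\infty$. The endgame differs slightly. The paper dominates the surviving term by $C(a)\sigma^{-1/2}(t)\int e^{-j}\,\dif x$ using the temperature upper bound from Corollary~\ref{coro:eubd} and closes with Gr\"onwall; you instead apply H\"older on the finite interval $[0,k]$ to obtain $\frac{\dif}{\dif t}\|\phi\|_{L^p}\le C(a)k^{1/p}$ and integrate directly, which avoids both Gr\"onwall and the upper bound on $e$. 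Both routes are valid, and yours is marginally tidier since it uses less of Theorem~\ref{thm:priori}\ref{item:prio2}.

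However, the stated decomposition of $\mathcal{D}$ is incorrect. Expanding $\D_x(r^m u)=r^m\D_x u + m\,vu/r$ gives
\begin{equation*}
\mathcal{D}=\Big(\lambda+\dfrac{2\mu}{n}\Big)\dfrac{|\D_x(r^m u)|^2}{v}+\dfrac{2m\mu}{nv}\Big(r^m\D_x u-\dfrac{vu}{r}\Big)^2,
\end{equation*}
which is exactly the paper's entropy-estimate identity in different notation; there is no extra $(n-2)(r^m\D_x u)^2$ term (one may check directly that the coefficient of $(r^m\D_x u)^2$ in $\mathcal{D}$ is $\beta=2\mu+\lambda$, which would be over-counted for $n=3$ by your formula). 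Consequently the proposed fix for the borderline case $\lambda+\tfrac{2\mu}{n}=0$ does not work: in that case $\mathcal{D}=\tfrac{2m\mu}{nv}(r^m\D_x u-vu/r)^2$ is a rank-one quadratic form that vanishes along $r^m\D_x u = vu/r$ and cannot dominate $|r^m\D_x u|^2/v$. If you wish to treat that regime you would instead split $\D_x(r^m u)=(r^m\D_x u-vu/r)+n\,vu/r$, absorb the first piece into $\mathcal{D}$ by Young, and control the $vu/r$ piece using $r\ge a$ together with $|u|\le C(a)\sigma^{-1/4}$, then Gr\"onwall. Note, though, that the paper's own proof also tacitly uses $\lambda+\tfrac{2\mu}{n}>0$ in its Young's step, so this is not a shortcoming you introduced.
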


\begin{proof}
Multiplying equation $\eqref{eqs:LFNS-k}_3$ with $-e^{-2}$ to obtain
\begin{equation}\label{eqs:e-1}
\D_t e^{-1}
= (\gamma-1) \dfrac{\D_x(r^m u)}{e v}
  -\kappa e^{-2} \D_x\Big(\dfrac{r^{2m}\D_x e}{v}\Big)
   +\dfrac{2\mu m}{e^2} \D_x(r^{m-1} u^2)
   - \beta \dfrac{\snorm{\D_x(r^m u)}^2}{e^2v},
\end{equation}
where $\beta:= 2\mu + \lambda$.
To further reduce the equation, we write
\begin{equation*}
- \kappa e^{-2} \D_x\Big( \dfrac{r^{2m}}{v}\D_x e \Big) = \kappa \D_x\Big( \dfrac{r^{2m}}{v}\D_x e^{-1} \Big) - 2\kappa \dfrac{r^{2m}\snorm{\D_x e}^2}{v e^3} .
\end{equation*}
Moreover, using the relation $\D_x(r^{m-1}u^2) = \frac{2u}{r}\D_x(r^m u) - n\frac{vu^2}{r^2}$,
we have
\begin{align*}
-\beta\dfrac{\snorm{\D_x(r^m u)}^2}{e^2v} + \dfrac{2\mu m}{e^2} \D_x(r^{m-1} u^2)
&= -\Big(\dfrac{2\mu}{n} + \lambda\Big)\dfrac{\snorm{\D_x(r^m u)}^2}{e^2 v} - 2 m \mu \dfrac{v}{e^2} \Big(\dfrac{\D_x(r^m u)}{\sqrt{n} v} - \sqrt{n}\dfrac{u}{r} \Big)^2\\
&\le -\Big(\dfrac{2\mu}{n} + \lambda\Big)\dfrac{\snorm{\D_x(r^m u)}^2}{e^2 v}.
\end{align*}
Therefore, using the above two inequalities, \eqref{eqs:e-1} can be reduced to the inequality:
\begin{equation}\label{eqs:eIneq}
\D_t e^{-1} + 2\kappa \dfrac{r^{2m}\snorm{\D_x e}^2}{v e^3}+\Big(\dfrac{2\mu}{n} + \lambda\Big)\dfrac{\snorm{\D_x(r^m u)}^2}{e^2 v} \le \dfrac{(\gamma-1)\D_x(r^m u)}{e v} + \kappa \D_x\Big( \dfrac{r^{2m}}{v}\D_x e^{-1} \Big) .
\end{equation}
Multiplying \eqref{eqs:eIneq} by $j e^{-j+1}$ for $j\ge 2$ integers and integrating in $x\in[0,k]$ yield
\begin{align*}
&\dfrac{\dif }{\dif t} \int_{0}^{k} e^{-j} \dif x + j\Big(\dfrac{2\mu}{n}+\lambda\Big)\int_{0}^{k}\dfrac{\snorm{\D_x(r^m u)}^2}{e^{j+1}v} \dif x + 2j\kappa\int_{0}^{k}\dfrac{r^{2m}\snorm{\D_x e}^2}{ve^{j+2}} \dif x \\
&\le j (\gamma-1) \int_{0}^{k}  e^{-j} \dfrac{\D_x(r^m u)}{v} \dif x - j(j-1)\kappa \int_{0}^{k}  \dfrac{r^{2m}\snorm{\D_x e^{-1}}^2}{ve^{j-2}} \dif x \\
&\le C \int_{0}^k v^{-1} e^{1-j} \dif x + \dfrac{j}{2}\Big( \dfrac{2\mu}{n} + \lambda \Big)
\int_{0}^k \dfrac{\snorm{\D_x(r^m u)}^2}{e^{j+1}v} \dif x - j(j-1)\kappa \int_{0}^{k}  \dfrac{r^{2m}\snorm{\D_x e^{-1}}^2}{ve^{j-2}} \dif x.
\end{align*}
Rearranging the above and using Lemma \ref{coro:eubd} and Theorem \ref{thm:priori}\ref{item:prio1}, we have
\begin{align*}
\dfrac{\dif }{\dif t} \int_{0}^{k} e^{-j} \dif x
\le C\int_{0}^k e^{-j} \dfrac{e}{v} \dif x \le  C(a) \sigma^{-\frac{1}{2}}(t) \int_{0}^k e^{-j} \dif x.
	\end{align*}
Integrating the above inequality in time, it follows that
\begin{equation*}
\int_{0}^{k} e^{-j}(x,t)\,\dif x
\le \sbnorm{e_0^{-1}}_{L^j(0,k)}^j
+ C(a) \int_{0}^{t} \sigma^{-\frac{1}{2}}(\tau) \big(\int_{0}^{k} e^{-j} \dif x\big)\,\dif \tau.
\end{equation*}
Thus, by the Gr\"onwall inequality, we obtain that, for each integer $j\ge 2$,
\begin{align*}
\sbnorm{e^{-1}(\cdot,t)}_{L^j(0,k)}^j &\le C(a) \sbnorm{e_0^{-1}}_{L^j(0,k)}^j
+ C(a) \Big(\int_{0}^{t} \sigma^{-\frac{1}{2}}(\tau)\dif \tau \Big) \exp\Big\{C(a)\int_{0}^{t} \sigma^{-\frac{1}{2}}(\tau)\dif\tau\Big\}\le C(a).
\end{align*}
Letting $j\to \infty$ in the above inequality, we conclude that
$\sbnorm{e^{-1}(\cdot,t)}_{L^\infty} \le C(a) \sbnorm{e_0^{-1}}_{L^\infty}$.
\end{proof}

\section{Global Weak Solutions of the Exterior Problem in the Eulerian Coordinates}
\label{sec:WSCEx}
In this section, the approximate Lagrangian solutions are converted into
the approximate solutions in the Eulerian coordinates
by constructing a set of coordinate transformations.
Then these functions are extended into the entire domain
$(r,t)\in[a,\infty)\times[0,T]$ by using a set of smooth cut-off functions.
Finally, a global weak solution of problem \eqref{eqs:SFNS} and \eqref{eqs:eSFNS} is obtained
via the limit: $k\to\infty$.

\subsection{Coordinate transformation and the Jacobian}\label{subsec:coord}
Let $\{(\tilde{v}_{a,k},\tilde{u}_{a,k},\tilde{e}_{a,k},\tilde{r}_{a,k})(x,s)\}_{k\in\mathbb{N}}$
be the solutions obtained in the bounded Lagrangian domain for $(x,s)\in[0,k]\times[0,T]$.
For each fixed $(a,k)\in(0,1)\times\mathbb{N}$, the coordinate transformation $(r,t)=\mathcal{T}_{a,k}(x,s)$ is defined as
\begin{equation}\label{eqs:coordtrans}
r=\mathcal{T}_{a,k}^{(1)}(x,s)=\tilde{r}_{a,k}(x,s),\quad t=\mathcal{T}_{a,k}^{(2)}(x,s)=s \qquad \,\,\,\text{for $(x,s)\in [0,k]\times[0,T]$}.
\end{equation}
Then we see that $\mathcal{T}_{a,k}$ has the image:
$$
\mathcal{T}_{a,k}([0,k]\times [0,T] ) = R_{a,k}
\vcentcolon= \big\{(r,t)\,\vcentcolon\,t\in[0,T], r\in [a,\tilde{r}_{a,k}(k,t)] \big\}.
$$
Moreover, it follows from a direct calculation  that
\begin{equation*}
J\vcentcolon=
{\rm det}(
		\D_x\mathcal{T}_{a,k}\,\, \,\D_s\mathcal{T}_{a,k})
	= \tilde{v}_{a,k} (\tilde{r}_{a,k})^{-m},
\end{equation*}
which, along with Theorem \ref{thm:priori} and Lemma \ref{lemma:rbd}, yields that
\begin{equation}\label{eqs:Jac}
	C^{-1}(a)( a^n + C(a) x )^{-\frac{m}{n}} \le J(x,t) \le a^{-m}C(a)
	\qquad \text{ for all $(x,t)\in[0,k]\times[0,T]$}.
\end{equation}
Thus, the map $\mathcal{T}_{a,k}\vcentcolon [0,k]\times[0,T]\to R_{a,k}$ is a diffeomorphism so that the inverse map $\mathcal{T}_{a,k}^{-1}$ exists.
Define $(x_{a,k},s_{a,k})(r,t)\vcentcolon= \mathcal{T}_{a,k}^{-1}(r,t)$. Then
\begin{equation*}
 \tilde{r}_{a,k}( x_{a,k}(r,t), t) = r, \quad s_{a,k}(r,t)=t
\qquad\,\, \text{for all $(r,t) \in R_{a,k}$}.
\end{equation*}
Let $\tilde{\theta}(x,t)\vcentcolon [0,k]\times[0,T]\to \R$ be such that $\tilde{\theta}\in C^1$.
Define $\theta(r,t)\vcentcolon=\tilde{\theta}(x_{a,k}(r,t),t)$ for each $(r,t)\in R_{a,k}$.
Then, by the inverse function theorem,
its Eulerian derivative can be computed:
\begin{equation}\label{eqs:coorderiv}
	\begin{dcases}
\partial_r \theta(r,t) =  \dfrac{\big(\tilde{r}_{a,k}\big)^m(x_{a,k}(r,t),t)}{\tilde{v}_{a,k}(x_{a,k}(r,t),t)} \D_x \tilde{\theta}(x_{a,k}(r,t),t) = r^m  \dfrac{\D_x \tilde{\theta}}{\tilde{v}_{a,k}}(x_{a,k}(r,t),t), \\
\partial_t \theta(r,t) = \D_t \tilde{\theta}(x_{a,k}(r,t),t)
- r^m \big(\dfrac{ \tilde{u}_{a,k}}{\tilde{v}_{a,k}} \D_x \tilde{\theta}\big)( x_{a,k}(r,t),t ),
	\end{dcases}
\end{equation}
for each $(r,t)\in R_{a,k}$.
Now, for each $(a,k)$, solution $(\tilde{v}_{a,k}, \tilde{u}_{a,k}, \tilde{e}_{a,k})(x,s)$ in $(x,t)\in[0,k]\times[0,T]$ is pulled back to $R_{a,k}$ as:
\begin{equation}\label{eqs:transfunc}
(\overline{v}_{a,k},\overline{u}_{a,k},\overline{e}_{a,k})(r,t) \vcentcolon= ( \tilde{v}_{a,k}, \tilde{u}_{a,k}, \tilde{e}_{a,k} ) ( x_{a,k}(r,t),t )
\qquad \ \text{for $(r,t)\in R_{a,k}$}.
\end{equation}

\subsection{Extension in the Eulerian domain} \label{subsec:Eulext}
We now extend $(\overline{v}_{a,k},\overline{u}_{a,k},\overline{e}_{a,k})$  into the whole Eulerian domain $[a,\infty)\times[0,T]$.
First, let $\chi\in C^{\infty} \vcentcolon \R \to [0,1]$ be such that
$\chi(\zeta)=1$ if $\zeta\le 0$, $\chi(\zeta)=0$ if $\zeta\ge 1$, and $\snorm{\chi^{\prime}} \le 2$ in $\R$.
With this, the following cut-off functions are defined:
\begin{equation}\label{eqs:COfunc}
	\varphi_{a,k}(r,t) \vcentcolon= \chi(\dfrac{2r-\tilde{r}_{a,k}(k,t)}{\tilde{r}_{a,k}(k,t)}).
\end{equation}
Since $\tilde{u}_{a,k}(k,t)=0$ for all $t\in[0,T]$, it follows that, for all $(r,t)\in[a,\infty)\times[0,T]$, 
\begin{equation}\label{eqs:varphiDeriv}
\partial_r \varphi_{a,k}(r,t)
=\dfrac{2}{\tilde{r}_{a,k}(k,t)} \chi^{\prime}(\dfrac{2r-\tilde{r}_{a,k}(k,t)}{\tilde{r}_{a,k}(k,t)}),
\qquad \partial_t \varphi_{a,k}(r,t) = 0.
\end{equation}
Then
$\varphi_{a,k}\in C^{\infty}([0,\infty)\times[0,T])$.
Moreover, by construction,
\begin{equation*}
\varphi_{a,k}(r,t)=1 \ \text{ if $r\in[0,\frac{\tilde{r}_{a,k}(k,t)}{2}]$},
\qquad \varphi_{a,k}(r,t)=0 \ \text{ if $r\in[\tilde{r}_{a,k}(k,t),\infty)$}.
\end{equation*}
Now, by Theorem \ref{thm:priori}\ref{item:prio1},
\begin{align*}
\tilde{r}_{a,k}(k,t)&= \Big( a^n + n \int_{0}^k v_{a,k}(y,t)\,\dif y \Big)^{\frac{1}{n}}
\ge \Big( n \int_{\frac{1}{2}}^k v_{a,k}(y,t)\,\dif y \Big)^{\frac{1}{n}}
\ge \Big(\frac{n}{C(T)}\int_{\frac{1}{2}}^k \dif y \Big)^{\frac{1}{n}}
\ge \frac{1}{C(T)},
\end{align*}
where the universal constant $C(T)>0$ depends only on $T$ and $n$.
It also follows from Theorem \ref{thm:priori} that
\begin{align*}
	\tilde{r}_{a,k}(k,t) = \Big( a^n + n \int_{0}^k v_{a,k}(y,t)\,\dif y \Big)^{\frac{1}{n}}
	\ge \frac{k^{\frac{1}{n}}}{C(a)},
\end{align*}
so that, by \eqref{eqs:varphiDeriv},
\begin{equation}\label{eqs:extcut}
\snorm{\partial_r\varphi_{a,k}(r,t)} \le  \min\{C(T),C(a)k^{-\frac{1}{n}}\}, \ \
\partial_t \varphi_{a,k}(r,t)=0
\qquad \mbox{for all $(r,t)\in[a,\infty)\times[0,T]$}.
\end{equation}
Using \eqref{eqs:coordtrans}--\eqref{eqs:COfunc}, the extended approximate functions $(\rho_{a,k},u_{a,k},e_{a,k})(r,t)$ in the entire Eulerian domain $(r,t)\in[a,\infty)\times[0,T]$
are defined by \eqref{eqs:extfunc} in \S \ref{subsec:MS}.

\smallskip
The following lemmas show that the extended functions inherit the {\it a-priori} estimates derived
in \S \ref{sec:rhobd}. As before, we still denote $\sigma\vcentcolon=\min\{1,t\}$.
\begin{lemma} \label{lemma:entEstext}
There exists an integer $N(a)=N(a,T,C_0)\in\mathbb{N}$ such that, for all $k\ge N(a)$,
\begin{align}\label{eqs:entEul}
&\sup\limits_{t\in[0,T]}\int_{a}^{\infty}
 \Big\{\rho_{a,k}\big(\frac{1}{2}\snorm{u_{a,k}}^2+\psi(e_{a,k})\big)
 +G(\rho_{a,k}) \Big\}\,r^m \dif r
 + \kappa\int_{0}^{T}\int_{a}^{\infty}
   \dfrac{\snorm{\partial_r e_{a,k}}^2}{e_{a,k}^2}\,r^m\dif r\dif t\nonumber \\
&+\int_{0}^{T}\int_{a}^{\infty} \Big\{\big(\dfrac{2\mu}{n}+\lambda\big) \dfrac{\snorm{\partial_r u_{a,k}+m \frac{u_{a,k}}{r}}^2}{e_{a,k}}
+ \dfrac{2m\mu}{n} \dfrac{\snorm{\partial_r u_{a,k}-\frac{u_{a,k}}{r}}^2}{e_{a,k}}\Big\}
\,r^m\dif r\dif t \le C(T).
\end{align}
\end{lemma}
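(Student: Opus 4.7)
The plan is to derive the inequality \eqref{eqs:entEul} by pulling back the Lagrangian entropy estimate \eqref{eqs:entropy} through the coordinate transformation $\mathcal{T}_{a,k}$ and then absorbing the error produced by the cut-off $\varphi_{a,k}$. The pullback is a direct calculation: using $dx=\overline\rho\,r^m\,dr$ (from $\D_x\tilde r=\overline v/r^m$), $\D_x\tilde f=(\overline v/r^m)\partial_r\overline f$, and $\D_x(\tilde r^m\tilde u)=\overline v(\partial_r\overline u+m\overline u/r)$, the three Lagrangian dissipation densities in \eqref{eqs:entropy} transform respectively into $|\partial_r\overline e|^2/\overline e^2$, $(\partial_r\overline u+m\overline u/r)^2/\overline e$, and $(\partial_r\overline u-\overline u/r)^2/(n\overline e)$ weighted by $r^m\,dr$; the pointwise density becomes $\overline\rho(\tfrac12\overline u^2+\psi(\overline e))+(\gamma-1)G(\overline\rho)$ via $\overline\rho\,\psi(\overline v)=G(\overline\rho)$. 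Theorem \ref{thm:priori}\ref{item:prio0} then yields \eqref{eqs:entEul} for $(\overline\rho,\overline u,\overline e)$ restricted to $R_{a,k}$ with bound $C_0$ independent of $(a,k)$.

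Since $(\rho_{a,k},u_{a,k},e_{a,k})\equiv(1,0,1)$ off $R_{a,k}$ (where all integrands vanish) and agrees with the pulled-back quantities on $\{r\le\tilde r_{a,k}(k,t)/2\}$ where $\varphi_{a,k}\equiv1$, only the transition band $\{r\in[\tilde r_{a,k}(k,t)/2,\tilde r_{a,k}(k,t)]\}$ needs extra work. Writing $e_{a,k}=\varphi\overline e+(1-\varphi)$ and $\rho_{a,k}=\varphi\overline\rho+(1-\varphi)$, convexity of $\psi$ and $G$ with minima at $1$ gives $\psi(e_{a,k})\le\varphi\psi(\overline e)$ and $G(\rho_{a,k})\le\varphi G(\overline\rho)$. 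I would choose $N(a)$ so large that, for $k\ge N(a)$, $\tilde r_{a,k}(k,t)/2\ge\tilde r_{a,k}(1,t)$, which is possible because Theorem \ref{thm:priori}\ref{item:prio1} gives $\overline v\ge C^{-1}(T)$ on $[1,k]$ and hence $\tilde r_{a,k}(k,t)\to\infty$ with $k$. Then the transition band corresponds to $x\ge1$ in Lagrangian coordinates, where $\overline\rho\in[C^{-1}(T),C(T)]$ uniformly, so $\rho_{a,k}\le C(T)\overline\rho$, and the kinetic, $\psi$, and $G$ parts of \eqref{eqs:entEul} are each bounded by $C(T)\cdot C_0$.

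For the dissipation terms, expand $\partial_r u_{a,k}=\varphi\partial_r\overline u+\overline u\,\partial_r\varphi$ and $\partial_r e_{a,k}=\varphi\partial_r\overline e+(\overline e-1)\partial_r\varphi$, and split each square into a clean piece (factor $\varphi^2$) and an error piece (factor $(\partial_r\varphi)^2$). The clean pieces are absorbed using the pointwise inequalities $\varphi^2/e_{a,k}\le 1/\overline e$ and $\varphi^2/e_{a,k}^2\le 1/\overline e^2$ (both from $e_{a,k}\ge\varphi\overline e$), so they reduce to the pullback dissipations, already bounded by $C_0$. The error pieces have the forms $|\overline u|^2(\partial_r\varphi)^2/e_{a,k}$ and $(\overline e-1)^2(\partial_r\varphi)^2/e_{a,k}^2$ weighted by $r^m$; bounding $|\partial_r\varphi_{a,k}|\le C(a)k^{-1/n}$ from \eqref{eqs:extcut}, $e_{a,k}\ge C(a)^{-1}$ from Lemma \ref{lemma:ebd}, and applying the $a$-independent $L^\infty_tL^2_x$ estimates on $\overline e-1$ and $\overline u^2$ from Lemma \ref{lemma:eL2} restricted to $x\in[1,k]$, the time-integrated error is of order $C(a,T)k^{-2/n}$. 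Enlarging $N(a)$ further so that this quantity is $\le 1$ completes the proof, with overall bound $\le C(T)$.

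The principal obstacle is the temperature error $(\overline e-1)^2(\partial_r\varphi_{a,k})^2/e_{a,k}^2\cdot r^m$: although the cut-off derivative decays like $k^{-1/n}$, the transition band has Lagrangian width $\asymp k$ (since $\overline v\asymp 1$ on $[1,k]$), and the available lower bound on $e_{a,k}$ is only the $a$-dependent $C(a)^{-1}$ from Lemma \ref{lemma:ebd}. It is precisely the $a$-independent $L^2$-control on $\overline e-1$ over $[1,k]$ from Lemma \ref{lemma:eL2}, combined with the $k^{-2/n}$ decay from $(\partial_r\varphi_{a,k})^2$, that renders this term tractable; the price is that the threshold $N(a)$ in the lemma statement must depend on $a$.
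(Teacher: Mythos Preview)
Your proposal is correct and follows essentially the same route as the paper: pull back the Lagrangian entropy estimate \eqref{eqs:entropy} via $\mathcal{T}_{a,k}$, use convexity of $\psi$ and $G$ for the pointwise terms, choose $N(a)$ so that the support of $\partial_r\varphi_{a,k}$ corresponds to $x\ge 1$ in Lagrangian coordinates, and kill the cut-off errors in the dissipation terms by the decay $|\partial_r\varphi_{a,k}|^2\le C(a)k^{-2/n}$ from \eqref{eqs:extcut} together with the $L^\infty_tL^2_x$ bounds on $\tilde e-1$ and $\tilde u^2$ on $[1,k]$ (Lemma \ref{lemma:eL2} with $\ve=1$) and the $a$-dependent lower bound on $e$ (Lemma \ref{lemma:ebd}).

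The only tactical difference is in how the dissipation splitting is done. The paper uses convexity of $z\mapsto z^{-1}$ and $z\mapsto z^{-2}$ to write $e_{a,k}^{-1}\le\varphi\,\overline e^{-1}+(1-\varphi)$ and $e_{a,k}^{-2}\le\varphi\,\overline e^{-2}+(1-\varphi)$, which produces an additional cross-term $(1-\varphi)\varphi^2|\partial_r\overline e|^2$ (respectively $(1-\varphi)\varphi^2|r^{-m}\partial_r(r^m\overline u)|^2$) that must be controlled by the $L^2$ bound on $\tilde r^{2m}\tilde v^{-1}|\D_x\tilde e|^2$ (respectively $\tilde v^{-1}|\D_x(\tilde r^m\tilde u)|^2$) over $[1,k]$. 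Your inequality $e_{a,k}\ge\varphi\,\overline e$ (hence $\varphi^2/e_{a,k}\le 1/\overline e$ and $\varphi^2/e_{a,k}^2\le 1/\overline e^2$) is slightly more direct and avoids that extra term, at the harmless cost of a factor $2$ from $(a+b)^2\le 2a^2+2b^2$. Both routes land on the same threshold structure for $N(a)$ and the same final bound $C(T)$.
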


\begin{proof}
    Throughout the proof, we suppress $(a,k)$ for simplicity and denote:
    \begin{equation}\label{aksupp}
        \begin{aligned}
            &(\rho,u,e,\varphi)(r,t) \equiv (\rho_{a,k},u_{a,k},e_{a,k},\varphi_{a,k})(r,t) && \text{for $(r,t)\in[a,\infty)\times[0,T]$}, \\[1mm]
            &(\ov{v},\ov{u},\ov{e})(r,t) \equiv (\ov{v}_{a,k},\ov{u}_{a,k},\ov{e}_{a,k})(r,t)
              && \text{for $(r,t)\in R_{a,k}$}, \\[1mm]
            &(\tilde{v},\tilde{u},\tilde{e},\tilde{r})(x,t) \equiv (\tilde{v}_{a,k},\tilde{u}_{a,k},\tilde{e}_{a,k},\tilde{r}_{a,k})(x,t)&& \text{for $(x,t)\in[0,k]\times[0,T]$}.
        \end{aligned}
    \end{equation}

Since $G(\cdot)$ and $\psi(\cdot)$ are convex, and $0\le \varphi \le 1$, it follows
that $G(\rho)\le \varphi G(\ov{v}^{-1})$ and $\psi(e)\le \varphi\,\psi(\ov{e})$.
Using these and the fact that the Jacobian is bounded from \eqref{eqs:Jac},
one can convert the integral in the Eulerian coordinates into the Lagrangian coordinates:
\begin{align*}
&\int_{a}^{\infty} \Big(\rho\big(\frac{1}{2}\snorm{u}^2+\psi(e)\big) + G(\rho) \Big)(r,t)
 \,r^m \dif r\\
&\le  \int_{a}^{\tilde{r}(k,t)}
\Big(\frac{1}{2}\snorm{\ov{u}}^2+ \psi(\ov{e}) + \ov{v} G(\ov{v}^{-1}) \Big) \ov{v}^{-1}\,r^m \dif r
+ \int_{a}^{\tilde{r}(k,t)}(1-\varphi)\Big(\frac{1}{2}\snorm{\ov{u}}^2+ \psi(\ov{e}) \Big)\,r^m\dif r\\
&= \int_{0}^{k} \Big(\frac{1}{2}\snorm{\tilde{u}}^2  + \psi(\tilde{e})  + \psi(\tilde{v})
  \Big)(x,t)\,\dif x + \int_{0}^{k} \big(1-\varphi(\tilde{r}(x,t),t)\big)
\Big(\frac{1}{2}\tilde{v}\snorm{\tilde{u}}^2+ \tilde{v} \psi(\tilde{e})  \Big) (x,t)\,\dif x,
\end{align*}
where
$zG(z^{-1})=\psi(z)$ has been used.
By Theorem \ref{thm:priori}\ref{item:prio0}, the first term of the right-hand side is bounded by $C_0>0$.
By the definition of $\tilde{r}(x,t)$,
\begin{equation*}
\Big( a^n + \frac{n}{C(a)} x \Big)^{\frac{1}{n}} \le \tilde{r}(x,t)
= \Big( a^n + n \int_{0}^x \tilde{v}(y,t)\,\dif y \Big)^{\frac{1}{n}}
\le \big( a^n + n C(a) x\big)^{\frac{1}{n}}.
\end{equation*}
Thus, for fixed $a>0$, if $k\in\mathbb{N}$ is such that
$k\ge M(a)\vcentcolon= \dfrac{2^n -1}{n}a^n C(a)+ 2^n C(a)^2$, then
\begin{equation}\label{temp:kMa}
   \tilde{r}(k,t)\ge 2 \tilde{r}(1,t).
\end{equation}
Using this and the support of the cut-off function $\varphi$, it follows that
\begin{align*}
&\int_{0}^k \big(1-\varphi(\tilde{r}(x,t),t)\big)
  \Big(\frac{1}{2}\tilde{v}\snorm{\tilde{u}}^2+\tilde{v}\psi(\tilde{e}) \Big)(x,t)\,\dif x\\
&= \int_{a}^{\tilde{r}(k,t)} \big(1-\varphi(r,t)\big)
 \Big(\frac{1}{2}\snorm{\overline{u}}^2 +\psi(\overline{e})\Big)(r,t)\,r^m \dif r
\le \int_{\tilde{r}(1,t)}^{\tilde{r}(k,t)}
 \Big(\frac{1}{2}\snorm{\overline{u}}^2+\psi(\overline{e})\Big)(r,t)\,r^m \dif r\\
&= \int_{1}^{k} \tilde{v}(x,t)\Big(\frac{1}{2}\snorm{\tilde{u}}^2 +\psi(\tilde{e})\Big) (x,t)
\,\dif x \le \sup_{y\in[1,k]} \tilde{v}(y,t) \int_{1}^{k}
\Big(\frac{1}{2}\snorm{\tilde{u}}^2 +\psi(\tilde{e})\Big) (x,t)\,\dif x \le C(T) C_0,
\end{align*}
where Theorem \ref{thm:priori}\ref{item:prio0}--\ref{item:prio1} has been used.

Next, the dissipation terms in $\eqref{eqs:entEul}$ are considered.
Using \eqref{eqs:extfunc} and the fact that $z\mapsto z^{-2}$ is convex in
$z\in (0,\infty)$ yield
\begin{align*}
\dfrac{\snorm{\partial_r e}^2}{2\snorm{e}^2}
&= \dfrac{\snorm{\varphi\partial_r\overline{e} + (\overline{e}-1) \partial_r \varphi }^2}{2\snorm{\varphi\overline{e}+(1-\varphi)}^2}\le 2^{-1}\big( \varphi \overline{e}^{-2} + (1-\varphi) \big) \snorm{\varphi\partial_r\overline{e} + (\overline{e}-1) \partial_r \varphi }^2 \\
&\le \varphi^2\dfrac{\snorm{\partial_r \overline{e}}^2}{\overline{e}^2} + \dfrac{\snorm{\overline{e}-1}^2}{\overline{e}^2} \varphi \snorm{\partial_r\varphi}^2 + (1-\varphi) \varphi^2 \snorm{\partial_r \overline{e}}^2 + (1-\varphi)\snorm{\partial_r\varphi}^2 \snorm{\overline{e}-1}^2.
\end{align*}
Multiplying both sides by $r^m$ and integrating in $[a,\infty)\times[0,T]$, it follows from \eqref{eqs:extcut} and \eqref{temp:kMa} that, if $k\ge M(a)$, then
\begin{align*}
&\dfrac{1}{2}\int_{0}^T\int_{a}^{\infty} \dfrac{\snorm{\partial_re}^2}{e^2}\,r^m \dif r \dif t\\
&\le \int_{0}^T \int_{a}^{\infty} \Big\{ \varphi^2\dfrac{\snorm{\partial_r \overline{e}}^2}{\overline{e}^2} + \dfrac{\snorm{\overline{e}-1}^2}{\overline{e}^2} \varphi \snorm{\partial_r\varphi}^2 + (1-\varphi) \Big(  \varphi^2 \snorm{\partial_r \overline{e}}^2 + \snorm{\partial_r\varphi}^2 \snorm{\overline{e}-1}^2 \Big)   \Big\}\,r^m \dif r \dif t \\
&\le  \int_{0}^T \int_{a}^{\tilde{r}(k,t)}  \dfrac{\snorm{\partial_r \overline{e}}^2}{\overline{e}^2} r^m \dif r \dif t + \int_{0}^T\int_{\tilde{r}(1,t)}^{\tilde{r}(k,t)} \Big\{ \snorm{\partial_r\overline{e}}^2 + C(T) \snorm{\overline{e}-1}^2 + C(a)k^{-\frac{2}{n}} \dfrac{\snorm{\overline{e}-1}^2}{\overline{e}^2}  \Big\}\,r^m \dif r \dif t.
\end{align*}
If $k\ge \max\{C^{\frac{n}{2}}(a)C^{\frac{n}{2}}(T),M(a)\}$, then translating the above integral into
the Lagrangian coordinates and using \eqref{eqs:coorderiv} and Theorem \ref{thm:priori} yield	
\begin{align*}
		&\int_{0}^T\int_{a}^{\infty} \dfrac{\snorm{\partial_re}^2}{e^2}\,r^m \dif r \dif t\\
		&\le \int_{0}^T \int_{0}^{k} \dfrac{\tilde{r}^{2m}}{\tilde{v}} \dfrac{\snorm{\D_x \tilde{e}}^2}{\tilde{e}^2} \dif x \dif t + \int_{0}^T\int_{1}^{k} \Big\{ \dfrac{\tilde{r}^{2m}}{\tilde{v}}\snorm{\D_x \tilde{e}}^2 + C(T)\tilde{v}\snorm{\tilde{e}-1}^2 + \dfrac{C(a)}{k^{\frac{2}{n}}} \dfrac{\tilde{v}\snorm{\tilde{e}-1}^2}{\tilde{e}^2} \Big\}\dif x \dif t\\
		&\le  C(T)\Big(1+ \sup \big\{ (\tilde{v}+k^{-\frac{2}{n}}
		  \dfrac{\tilde{v}}{\tilde{e}^{2}})(y,s)\,\vcentcolon\,(y,s)\in[1,k]\times[0,T] \big\}\Big)\le C(T),
	\end{align*}

Since $z\to z^{-1}$ is convex in $z\in(0,\infty)$ and $\partial_r u + m \frac{u}{r} = r^{-m}\partial_r(r^m u)$,
then
\begin{align*}
\dfrac{\snorm{\partial_r u + m \frac{u}{r}}^2}{2e}
&\le \frac{1}{2}\big(\varphi\overline{e}^{-1} + (1-\varphi)\big)
  \big|\varphi\partial_r \overline{u} + \overline{u} \partial_r\varphi + m \varphi \frac{\overline{u}}{r}\big|^2\\
&\le  \varphi^3 \ov{e}^{-1} \snorm{ r^{-m} \partial_r(r^m \ov{u})}^2  + \varphi \snorm{\partial_r \varphi}^2 \overline{e}^{-1}\snorm{\overline{u}}^2
+ (1-\varphi)\big(\varphi^2 \snorm{r^{-m} \partial_r(r^m \ov{u})}^2
+ \snorm{\ov{u}\partial_r \varphi}^2\big).
\end{align*}

By similar arguments as above, we see that, for all $k\ge \max\{ C^{\frac{n}{2}}(a)C^{\frac{n}{2}}(T), M(a) \}$,
\begin{align*}
&\dfrac{1}{2}\int_{0}^T\int_{a}^{\infty} \dfrac{\snorm{r^{-m}\partial_r(r^m \ov{u})}^2}{e} r^m \dif r \dif t\\
&\le  \int_{0}^T\int_{a}^{\tilde{r}(k,t)}
  \dfrac{\snorm{r^{-m}\partial_r( r^m \overline{u})}^2}{\overline{e}} r^m \dif r \dif t +\dfrac{C(a)}{k^{\frac{2}{n}}}\int_{0}^T\int_{a}^{\tilde{r}(k,t)}
    \dfrac{\snorm{\overline{u}}^2}{\overline{e}}\,r^m \dif r \dif t \\
&\quad + \int_{0}^T\int_{\tilde{r}(1,t)}^{\tilde{r}(k,t)} \big\{ \snorm{r^{-m}\partial_r(r^m \ov{u})}^2
  + C(T)\overline{u}^2 \big\}\,r^m \dif r \dif t \\
&= \int_{0}^T \int_{0}^{k} \Big\{ \dfrac{\snorm{\D_x(\tilde{r}^m \tilde{u})}^2}{\tilde{v}\tilde{e}} + \dfrac{C(a)}{k^{\frac{2}{n}}}\dfrac{\tilde{v}}{\tilde{e}} \snorm{\tilde{u}}^2  \Big\}\,\dif x \dif t
+ \int_{0}^T \int_{1}^{k}\Big\{ \dfrac{\snorm{\D_x(\tilde{r}^m \tilde{u})}^2}{\tilde{v}}
  + C(T) \tilde{v} \snorm{\tilde{u}}^2 \Big\}\,\dif x \dif t\le C(T),
\end{align*}
By the same analysis, we also see that, for all $k\ge \max\{ C^{\frac{n}{2}}(a)C^{\frac{n}{2}}(T), M(a) \}$,
\begin{align*}
\int_{0}^T\int_{a}^{\infty} \dfrac{1}{e}\Bignorm{\partial_r u -  \dfrac{u}{r}}^2 r^m \dif r \dif t
\le C(T).
\end{align*}
Set $N(a)\vcentcolon=
[(\max\{ C^{\frac{n}{2}}(a)C^{\frac{n}{2}}(T), M(a) \}]\in \mathbb{N}$, where $[\cdot]$ is the ceiling function.
Then the proof is completed.
\end{proof}

\begin{lemma}\label{lemma:HREext}
For each $a\in(0,1)$,
\begin{equation}\label{temp:aHREext1}
\begin{split}
&\sup_{k\in\mathbb{N}}\sup_{ 0\le t \le  T} \int_{a}^{\infty} \Big\{ \big|(\rho_{a,k}-1,u_{a,k}^2, e_{a,k}-1,\sqrt{\sigma}\partial_r u_{a,k}, \sigma\partial_r e_{a,k})\big|^2 \Big\} (r,t)\, r^m \dif r \\
&\, + \sup_{k\in\mathbb{N}}\int_{0}^{T} \int_{a}^{\infty} \Big\{ \big|( \partial_r u_{a,k}, \partial_r e_{a,k}, u_{a,k} \partial_r u_{a,k}, \sqrt{\sigma} \partial_t u_{a,k}, \sigma \partial_t e_{a,k} )\big|^2 \Big\}\,
r^m \dif r\dif t \le C(a),
\end{split}
\end{equation}
and, for all $(k,r,t)\in\mathbb{N}\times[a,\infty)\times[0,T]$,
	\begin{equation}\label{temp:aHREext2}
	C^{-1}(a) \le \rho_{a,k}(r,t) \le C(a), \,\,  e_{a,k}(r,t) \ge C^{-1}(a),\,\,
		\sigma^{\frac{1}{4}}(t)\snorm{u_{a,k}(r,t)}+\sigma^{\frac{1}{2}}(t)e_{a,k}(r,t) \le C(a).
\end{equation}
Moreover, for fixed $\ve\in(0,1]$,
\begin{align}
\sup_{\substack{k\in\mathbb{N}\\ a\in(0,1)}}
\bigg\{&
\sup_{ 0\le t \le  T} \int_{\tilde{r}_{a,k}(\ve,t)}^{\infty} \Big\{ \big|(\rho_{a,k}-1,u_{a,k}^2, e_{a,k}-1,\sqrt{\sigma}\partial_r u_{a,k}, \sigma\partial_r e_{a,k})\big|^2 \Big\} (r,t)\,r^m \dif r\bigg.
 \nonumber\\
&+ \int_{0}^{T} \int_{\tilde{r}_{a,k}(\ve,t)}^{\infty}
 \Big\{ \big|(\partial_r u_{a,k}, \partial_r e_{a,k},
 u_{a,k} \partial_r u_{a,k},\sqrt{\sigma} \partial_t u_{a,k},
 \sigma \partial_t e_{a,k} )\big|^2\Big\}\,
r^m \dif r\dif t\bigg\} \le C(\ve),\quad\label{temp:veHREext1}
\end{align}
and, for any $(r,t)\in [\tilde{r}(\ve,t),\infty)\times [0,T]$ and $(a,k)\in(0,1)\times\mathbb{N}$,
\begin{equation}\label{temp:veHREext2}
\begin{split}
& C^{-1}(\ve) \le \rho_{a,k}(r,t) \le C(\ve), \quad \ \sigma^{\frac{1}{4}}(t)\snorm{u_{a,k}(r,t)}
+\sigma^{\frac{1}{2}}(t)e_{a,k}(r,t) \le C(\ve).
\end{split}
\end{equation}
\end{lemma}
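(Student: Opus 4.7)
The plan is to derive \eqref{temp:aHREext1}--\eqref{temp:veHREext2} by the same scheme used in Lemma \ref{lemma:entEstext}: pull every Eulerian integral back to the Lagrangian coordinates through $\mathcal{T}_{a,k}$, invoke the $\mathcal{L}_0$ and $\mathcal{L}_\varepsilon$ bounds from Theorem \ref{thm:priori}\ref{item:prio2}, and absorb the extra terms produced by the cut-off $\varphi_{a,k}$. Using \eqref{eqs:extfunc} and \eqref{eqs:coorderiv} together with the volume-element identity $\int f(r,t)\,r^m\,\dif r=\int \tilde{f}(x,t)\,\tilde{v}_{a,k}(x,t)\,\dif x$ on $R_{a,k}$, the pointwise identities
\[
\rho_{a,k}-1=\varphi_{a,k}\bigl(\tilde{v}_{a,k}^{-1}-1\bigr),\quad \partial_r u_{a,k}=\varphi_{a,k}\,\tfrac{\tilde{r}_{a,k}^m}{\tilde{v}_{a,k}}\D_x\tilde{u}_{a,k}+\tilde{u}_{a,k}\,\partial_r\varphi_{a,k},
\]
and their analogues for $\partial_r e_{a,k}, \partial_t u_{a,k}, \partial_t e_{a,k}$, reduce each term on the left of \eqref{temp:aHREext1} and \eqref{temp:veHREext1} to a \emph{bulk} contribution, dominated by the corresponding Lagrangian norm, plus a \emph{boundary} contribution proportional to $|\partial_r\varphi_{a,k}|^2$ and supported in $\bigl\{\tilde{r}_{a,k}(k,t)/2\le r\le\tilde{r}_{a,k}(k,t)\bigr\}$.

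For the $a$-dependent bounds \eqref{temp:aHREext1}--\eqref{temp:aHREext2}, the bulk part is majorised by $\mathcal{L}_0[\tilde{v}_{a,k},\tilde{u}_{a,k},\tilde{e}_{a,k}](T)\le C(a)$ and by the pointwise bounds on $\tilde{v},\tilde{u},\tilde{e}$ of Theorem \ref{thm:priori}\ref{item:prio2}. The boundary terms are handled using the fine estimate $|\partial_r\varphi_{a,k}|\le C(a)k^{-1/n}$ from \eqref{eqs:extcut}; as in the proof of Lemma \ref{lemma:entEstext}, once $k\ge N(a)$ the support of $\partial_r\varphi_{a,k}$ lies in the Eulerian image of $x\in[1,k]$, where Theorem \ref{thm:priori}\ref{item:prio1} supplies $C^{-1}(T)\le\tilde{v}\le C(T)$ and Theorem \ref{thm:priori}\ref{item:prio0} controls $\|\tilde{u}^2\|_{L^1_x},\|\psi(\tilde{e})\|_{L^1_x}$, so each boundary contribution is bounded by $C(a)$. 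The pointwise bounds \eqref{temp:aHREext2} are immediate from \eqref{eqs:extfunc} since $\rho_{a,k},u_{a,k},e_{a,k}$ are convex combinations of $(1,0,1)$ with the Lagrangian values.

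For the $a$-uniform bounds \eqref{temp:veHREext1}--\eqref{temp:veHREext2}, the truncated domain $r\ge\tilde{r}_{a,k}(\varepsilon,t)$ pulls back to $x\ge\varepsilon$, so the bulk part is controlled by $\mathcal{L}_\varepsilon[\tilde{v}_{a,k},\tilde{u}_{a,k},\tilde{e}_{a,k}](T)\le C(\varepsilon)$ and by the $(a,k)$-independent pointwise bounds of Theorem \ref{thm:priori}\ref{item:prio2}, from which \eqref{temp:veHREext2} also follows directly via the convex-combination structure of \eqref{eqs:extfunc}. For the boundary part, the $k^{-1/n}$ factor is no longer available uniformly in $a$, and one must instead use $|\partial_r\varphi_{a,k}|\le C(T)$ from \eqref{eqs:extcut}; the key point is that the support of $\partial_r\varphi_{a,k}$ pulls back into $[1,k]\subset[\varepsilon,k]$ for every $\varepsilon\in(0,1]$, where Theorem \ref{thm:priori}\ref{item:prio1} delivers the $a$-independent density bound $C^{-1}(T)\le\tilde{v}\le C(T)$ and Lemmas \ref{lemma:eL2}--\ref{lemma:euH1} then give $\int_1^k(\tilde{u}^2,\snorm{\tilde{e}-1}^2)(x,t)\,\dif x\le C(\varepsilon)$ uniformly in $(a,k)$.

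The main obstacle will be the last step: closing the $a$-uniform dissipation estimates for the terms of the form $\int\int\sigma\,\tilde{u}^2|\partial_r\varphi_{a,k}|^2\,r^m\,\dif r\,\dif t$ and their energy-analogues. Without the $k^{-1/n}$ decay, one has to rely entirely on the coincidence that the boundary window is contained in the range $x\in[1,k]$, where the density lower and upper bounds of Theorem \ref{thm:priori}\ref{item:prio1} are already $a$-independent; this is precisely what allows the pull-back of the boundary integral into $\mathcal{L}_\varepsilon$ without picking up an unbounded factor as $a\searrow0$, and thereby makes the compactness argument of \S\ref{sec:ato0} possible.
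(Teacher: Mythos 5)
Your proposal takes the same route as the paper: pull each Eulerian integral back through $\mathcal{T}_{a,k}$, use the Leibniz identities for $\partial_r u_{a,k}$, $\partial_r e_{a,k}$ (and the fact that $\partial_t\varphi_{a,k}=0$ for the time derivatives), bound the bulk pieces by $\mathcal{L}_0$ resp.\ $\mathcal{L}_\ve$ from Theorem~\ref{thm:priori}\ref{item:prio2}, and read off the pointwise bounds from the convex-combination structure of \eqref{eqs:extfunc}.

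One place where you over-complicate the argument and in fact introduce a spurious restriction: for the $a$-uniform boundary contributions you say the ``key point'' is that the support of $\partial_r\varphi_{a,k}$ pulls back into $[1,k]$. That inclusion requires $2\tilde{r}_{a,k}(1,t)\le\tilde{r}_{a,k}(k,t)$, i.e.\ $k\ge M(a)$ as in the proof of Lemma~\ref{lemma:entEstext}, so it is not available uniformly over $k\in\mathbb{N}$ and would leave small $k$ unaddressed. The paper does not need that localization. Since the whole integral is already restricted to $r\ge\tilde{r}_{a,k}(\ve,t)$, the boundary term pulls back to $[\ve,k]$, where the bound $|\partial_r\varphi_{a,k}|\le C(T)$ from \eqref{eqs:extcut} (uniform in $(a,k)$) is applied and the resulting integrals $\int_\ve^k\tilde{v}|\tilde{u}|^2\,\dif x$, $\int_\ve^k\tilde{v}|\tilde{e}-1|^2\,\dif x$ are directly controlled by the $\tilde{v}\le C(\ve)$ bound of Theorem~\ref{thm:priori}\ref{item:prio1} together with the entropy estimate \ref{thm:priori}\ref{item:prio0} and Lemma~\ref{lemma:eL2}, all of which hold for every $k\in\mathbb{N}$. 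Dropping the $[1,k]$ localization removes the apparent obstacle you describe in your last paragraph.
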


\begin{proof}
As before, we suppress $(a,k)$ for simplicity and use the notation in \eqref{aksupp}.
Only \eqref{temp:veHREext1}--\eqref{temp:veHREext2} are shown, since \eqref{temp:aHREext1}--\eqref{temp:aHREext2}
follow in the same manner.

First, using the fact that the Jacobian is bounded from \eqref{eqs:Jac}
and construction \eqref{eqs:COfunc}, the integration in the Eulerian coordinates
is converted to the following in the Lagrangian coordinates:
\begin{align*}
\int_{\tilde{r}(\ve,t)}^{\infty} \snorm{\rho-1}^2\,r^m \dif r
\le   \int_{\tilde{r}(\ve,t)}^{\tilde{r}(k,t)} \snorm{\overline{v}^{-1}-1}^2\,r^m \dif r
= \int_{\ve}^{k} \frac{\snorm{\tilde{v}-1}^2}{\tilde{v}} \dif x \le C(\ve),
\end{align*}
where  Theorem \ref{thm:priori} has been used in the last inequality.
By the same method, one can also obtain the corresponding estimates for $\snorm{e-1}^2$ and $\snorm{u}^4$ in $\eqref{temp:veHREext1}_1$.

Now, using \eqref{eqs:coorderiv}, \eqref{eqs:extcut}, and Theorem \ref{thm:priori}, it follows that
\begin{align*}
\int_{\tilde{r}(\ve,t)}^{\infty} \sigma(t) \snorm{\partial_r u}^2
\,r^m \dif r
&= \int_{\tilde{r}(\ve,t)}^{\infty}  \sigma(t) \snorm{\varphi\partial_r \overline{u}
+ \overline{u}\partial_r\varphi}^2
\,r^m \dif r\\
&\le  \int_{\tilde{r}(\ve,t)}^{\tilde{r}(k,t)}  \sigma(t) \snorm{\partial_r \overline{u}}\,r^m \dif r
+ C\int_{\tilde{r}(\ve,t)}^{\tilde{r}(k,t)}  \snorm{\overline{u}}^2\,r^m \dif r\\
&= \int_{\ve}^{k} \sigma \dfrac{\tilde{r}^{2m}}{\tilde{v}} \snorm{\D_x \tilde{u}}^2\,\dif x
+ \int_{\ve}^{k} \tilde{v} \snorm{\tilde{u}}^2\,\dif x \le C(\ve).
\end{align*}
In the same way, we  can also obtain the estimates for $\sigma^2 \snorm{\partial_r e}^2$
and $\snorm{(\partial_r u,\partial_r e, u \partial_r u)}^2$ in \eqref{temp:veHREext1}.
	
By \eqref{eqs:coorderiv} and Theorem \ref{thm:priori}, it follows that
\begin{align*}
&\int_{0}^{T}\int_{\tilde{r}(\ve,t)}^{\infty}
\sigma^2(t)\snorm{  \partial_t e}^2(r, t)\,r^m \dif r \dif t
= \int_{0}^{T}\int_{\tilde{r}(\ve,t)}^{\infty}\sigma^2(t)
\varphi^2(r,t)\snorm{\partial_t \overline{e}}^2(r, t)\,r^m \dif r \dif t\\
&\le \int_{0}^{T}\int_{\tilde{r}(\ve,t)}^{\tilde{r}(k,t)}\sigma^2(t)
\snorm{ \partial_t \overline{e}}^2(r, t)\,r^m \dif r \dif t
= \int_{0}^{T}\int_{\ve}^{k}\sigma^2(t) \tilde{v}(x,t)
\Bignorm{ \D_t \tilde{e} - \tilde{r}^m \dfrac{\tilde{u}}{\tilde{v}}\D_x \tilde{e} }^2(x, t)\,\dif x \dif t\\
&\le  \int_{0}^{T}\int_{\ve}^{k}\sigma^2 \tilde{v} \snorm{ \D_t \tilde{e} }^2 \, \dif x \dif t
+ \int_{0}^{T}\int_{\ve}^{k}\sigma^{\frac{3}{2}} \snorm{\sigma^{\frac{1}{4}} \tilde{u}}^2
\dfrac{\tilde{r}^{2m}}{\tilde{v}} \snorm{\D_x\tilde{e}}^2\, \dif x \dif t \le C(\ve).
\end{align*}
By the same argument, one can also obtain the estimate for $\sigma\snorm{\partial_t u}^2$ in \eqref{temp:veHREext1}.
	
Next, for $(r,t)$ such that $t\in[0,T]$ and $r\in [\tilde{r}(\ve,t),\infty)\cap R_{a,k}$, there exists $(x,s)\in[\ve,k]\times[0,T]$ such that $(r,t)=(\tilde{r}(x,s), s)$
and the following estimates hold for $(r,t)\in R_{a,k}$:
\begin{align*}
\rho(r,t)
&= \varphi(\tilde{r}(x,t),t)\tilde{v}^{-1}(x,t) + 1 -\varphi(\tilde{r}(x,t),t)
\le C(\ve)\varphi(\tilde{r}(x,t),t) +1 -\varphi(\tilde{r}(x,t),t) \le C(\ve),\\
\rho(r,t)
	&\ge C(\ve)^{-1}\varphi(\tilde{r}(x,t),t) +1 -\varphi(\tilde{r}(x,t),t) \ge C(\ve)^{-1}.
\end{align*}
For $(r,t)$ such that $t\in[0,T]$ and $r\in [\tilde{r}(\ve,t),\infty)\times [0,T] \backslash R_{a,k}$,
then $r> \tilde{r}(k,t)$ so that $\varphi(r,t)=0$ and
$\rho(r,t)=\overline{v}^{-1}(r,t)\varphi(r,t) - \varphi(r,t)+1 = 1$,
which leads to the upper and lower bounds of the extended density.
In a similar manner, we can also obtain the bounds for $(u,e)$
listed in \eqref{temp:veHREext2}.
\end{proof}

Throughout \S\ref{subsec:klimpath}--\S \ref{subsec:indepPath},
we will suppress parameter $a\in(0,1)$ and denote
\begin{equation}\label{asuppress}
    \begin{aligned}
        &(\rho_k,u_k,e_k,\varphi_k)\equiv (\rho_{a,k},u_{a,k},e_{a,k},\varphi_{a,k}), \ \  (\ov{v}_k,\ov{u}_k,\ov{e}_k) \equiv (\ov{v}_{a,k},\ov{u}_{a,k},\ov{e}_{a,k}),\\[1mm]
        &(\tilde{v}_{k},\tilde{u}_{k},\tilde{e}_{k},\tilde{r}_{k}) \equiv (\tilde{v}_{a,k},\tilde{u}_{a,k},\tilde{e}_{a,k},\tilde{r}_{a,k})
    \end{aligned}
\end{equation}
Note that the main aim of these subsections is to prove Theorem \ref{thm:WSCEx} via the limit: $k\to\infty$.

\subsection{Construction of the particle path function}\label{subsec:klimpath}
In this section, we construct the particle path function
$\tilde{r}(x,t)$.

\begin{lemma}\label{lemma:rkcvg}
There exist both a continuous function $\tilde{r}(x,t) \vcentcolon [0,\infty)\times[0,T] \to [a,\infty)$
and a subsequence $k\to\infty$ such that
\begin{enumerate}[label=(\roman*),ref=(\roman*),font={\normalfont\rmfamily}]
\item\label{item:rkcvg1} for any compact subset $K\Subset [0,\infty)\times[0,T]$,
\begin{align*}
\lim\limits_{k\to \infty} \sup_{(x,t)\in K} \snorm{\ov{r}_{k}(x,t)-\tilde{r}(x,t)} = 0,
\end{align*}
where $\ov{r}_k(x,t)$ is defined by
\begin{equation}\label{rrtemp0}
\ov{r}_k(x,t) =\begin{dcases*}
				\tilde{r}_k(x,t) & if $(x,t)\in[0,k]\times[0,T]$,\\
				x-k + \tilde{r}_k(k,t) & if $(x,t)\in[k,\infty)\times[0,T]$.
			\end{dcases*}
\end{equation}
\item\label{item:rkcvg2} for each $\ve>0$,
\begin{equation*}
\begin{aligned}
&\snorm{\tilde{r}(x_1,t)-\tilde{r}(x_2,t)} \le C(\ve) \snorm{x_1-x_2}
&&\quad \text{for all} \  (x_1,x_2,t)\in [\ve,\infty)^2\times[0,T],\\
&\snorm{\tilde{r}(x,t_1)-\tilde{r}(x,t_2)}
\le C(\ve) \snorm{t_1^{\frac{3}{4}}-t_2^{\frac{3}{4}}}
&&\quad \text{for all} \ (x,t_1,t_2)\in [\ve,\infty)\times[0,T]^2.
\end{aligned}
\end{equation*}
		
\smallskip
\item\label{item:rkcvg3} for each $t\in[0,T]$, the function $x\mapsto\tilde{r}(x,t)$ is strictly increasing and
\begin{equation*}
			n x \psi^{-1}_{-}(\dfrac{C_0}{x}) \le \tilde{r}^{\,n}(x,t) \le C_0(1 +x)
			\qquad\text{for all} \ \  (x,t)\in[0,\infty)\times[0,T].
		\end{equation*}
	\end{enumerate}
\end{lemma}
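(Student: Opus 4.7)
The plan is to apply Arzelà--Ascoli on each compact subset of $[0,\infty)\times[0,T]$ combined with a diagonal extraction, producing the limit $\tilde{r}$ directly, after which the three assertions follow by passage to the limit in uniform-in-$k$ estimates. The extension \eqref{rrtemp0} is present only to place the family $\{\ov{r}_k\}$ on a domain independent of $k$; for every fixed $x$, $\ov{r}_k(x,t) = \tilde{r}_k(x,t)$ once $k>x$, so the linear continuation plays no role in the limit itself. The uniform sup-bound $\ov{r}_k(x,t)\le (C_0(1+x))^{1/n}+x$ on compact subsets follows from Lemma \ref{lemma:rbd} in the interior and from definition \eqref{rrtemp0} in $[k,\infty)$.

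For spatial equicontinuity on $[\ve,\infty)\times[0,T]$ with $\ve>0$, I would combine $\D_x\tilde{r}_k = \tilde{v}_k/\tilde{r}_k^m$ with the upper bound $\tilde{v}_k \le C(\ve)$ from Theorem \ref{thm:priori}\ref{item:prio1} and the lower path bound $\tilde{r}_k^n \ge n\ve\psi_-^{-1}(C_0/\ve)$ from Lemma \ref{lemma:rbd}, yielding $|\D_x\tilde{r}_k|\le C(\ve)$ on $[\ve,k]$, while $\D_x\ov{r}_k = 1$ on $[k,\infty)$ by construction. For temporal equicontinuity, the kinematic identity $\tilde{r}_k(x,t) = \tilde{r}_k^0(x) + \int_0^t\tilde{u}_k(x,s)\,\dif s$ combined with $|\tilde{u}_k(x,s)|\le C(\ve)\sigma^{-1/4}(s)$ from Theorem \ref{thm:priori}\ref{item:prio2} gives
\begin{equation*}
|\tilde{r}_k(x,t_1)-\tilde{r}_k(x,t_2)| \le C(\ve)\int_{t_1}^{t_2}\sigma^{-1/4}(s)\,\dif s \le C(\ve)|t_1^{3/4}-t_2^{3/4}|
\end{equation*}
on $[\ve,k]\times[0,T]^2$. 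Crucially, the Lagrangian boundary condition $\tilde{u}_k(k,t)\equiv 0$ from \eqref{eqs:LFNS-kb} forces $\tilde{r}_k(k,\cdot)$ to be constant in time, so $\ov{r}_k(x,t) = x-k+\tilde{r}_k^0(k)$ is automatically $t$-independent on $[k,\infty)$ and the H\"older estimate propagates to $[\ve,\infty)$ without adjustment. Continuity up to $x=0$ will be recovered by applying Jensen's inequality to the entropy estimate \eqref{eqs:entropy} to obtain $\int_0^x\tilde{v}_k(y,t)\,\dif y \le x\max\{1,\psi_+^{-1}(C_0/x)\}$, whose right-hand side tends to $0$ as $x\searrow 0$ uniformly in $(k,t)$ since $\psi_+^{-1}$ has only logarithmic growth at infinity.

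A standard diagonal extraction from Arzel\`a--Ascoli applied on $[1/j,j]\times[0,T]$ for $j\in\mathbb{N}$ then produces a subsequence $\{\ov{r}_{k_\ell}\}$ converging uniformly on each compact subset of $(0,\infty)\times[0,T]$ to a continuous $\tilde{r}$; the integral bound above extends $\tilde{r}$ continuously to $[0,\infty)\times[0,T]$ with $\tilde{r}(0,t)=a$. The estimates in \ref{item:rkcvg2} and \ref{item:rkcvg3} then transfer to the limit, and strict monotonicity of $x\mapsto\tilde{r}(x,t)$ follows from the one-sided lower bound $\D_x\tilde{r}_k \ge C(\ve)^{-1}(C_0(1+j))^{-m/n}$ on $[\ve,j]$, which in the limit gives $\tilde{r}(x_1,t)-\tilde{r}(x_2,t)\ge c(\ve,j)(x_1-x_2)>0$ for $\ve\le x_2<x_1\le j$. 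The main subtlety is keeping all moduli uniform in $k$: the $C(\ve)$-estimates of Theorem \ref{thm:priori} are only valid on the $k$-dependent interval $[\ve,k]$, so the outer region $[k,\infty)$ must be handled separately via the piecewise-linear extension, with the \emph{exact} (rather than merely asymptotic) vanishing $\tilde{u}_k(k,\cdot)\equiv 0$ being what allows the time-H\"older bound to traverse the artificial interface $x=k$ without loss.
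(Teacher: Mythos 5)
There is a genuine gap in the argument for continuity of $\tilde{r}$ up to $x=0$. You claim that $\psi_+^{-1}$ has logarithmic growth at infinity, so that $x\psi_+^{-1}(C_0/x)\to 0$ as $x\searrow 0$. This is false: since $\psi(\zeta)=\zeta-1-\log\zeta\sim\zeta$ as $\zeta\to\infty$, the right branch inverse satisfies $\psi_+^{-1}(y)\sim y$ for large $y$ (it is the left branch $\psi_-^{-1}$ that decays, like $e^{-y}$). Consequently
\begin{equation*}
x\,\psi_+^{-1}\Big(\frac{C_0}{(\gamma-1)x}\Big)\longrightarrow \frac{C_0}{\gamma-1}\neq 0\qquad\text{as }x\searrow 0,
\end{equation*}
so the Jensen bound $\int_0^x\tilde{v}_k(y,t)\,\dif y\le x\,\psi_+^{-1}(C_0/((\gamma-1)x))$ gives only boundedness of $\tilde{r}^n(x,t)-a^n$, not smallness. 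This is precisely what Lemma \ref{lemma:rbd} records in the upper bound $\tilde{r}^n\le C_0(1+x)$; it cannot yield a modulus of continuity at $x=0$.

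The fix --- and the route the paper actually takes --- is to remember that $a$ is fixed throughout the $k\to\infty$ limit, so it is perfectly legitimate to use the $a$-dependent bounds $C^{-1}(a)\le\tilde{v}_k\le C(a)$ and $|\tilde{u}_k|\le C(a)\sigma^{-1/4}$ from Theorem \ref{thm:priori}\ref{item:prio2}, which hold on the entire strip $[0,k]\times[0,T]$ and are uniform in $k$. These give $|\D_x\ov{r}_k|\le C(a)$ and $|\ov{r}_k(x,t_1)-\ov{r}_k(x,t_2)|\le C(a)|t_1^{3/4}-t_2^{3/4}|$ on all of $[0,\infty)\times[0,T]$ (using the piecewise-linear extension and $\tilde{u}_k(k,\cdot)\equiv 0$ exactly as you describe), so Arzel\`a--Ascoli applies directly on sets $[0,l]\times[0,T]$ with no boundary issue. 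The $C(\ve)$-dependent bounds are then used only for part \ref{item:rkcvg2}, transferring to $\tilde{r}$ after the uniform convergence is established. The rest of your proposal --- the role of the linear continuation, the time-independence of $\tilde{r}_k(k,\cdot)$ via the Lagrangian boundary condition, and the strict monotonicity argument in part \ref{item:rkcvg3} --- matches the paper's reasoning and is sound.
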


\begin{proof}
For each integer $l\in\mathbb{N}$,
let $S_l\vcentcolon= [0,l]\times[0,T]\subset [0,\infty)\times[0,T]$.
Set $x_1,\, x_2,\, x \in [0,l]$ and $t_1,\,t_2,\,t \in[0,T]$. We consider two cases: $k\ge l$ or $k<l$.
	
\smallskip
Case 1: $k\ge l$. Then $\ov{r}_k(x_i,t)=\tilde{r}_k(x_i,t)$ for $i=1,\,2$.
It follows from Theorem \ref{thm:priori} and the mean value theorem that
\begin{equation*}
\snorm{\ov{r}_k(x_1,t)- \ov{r}_k(x_2,t)} \le \snorm{x_1-x_2}  \sup_{x\in[0,k]}|\D_x \tilde{r}_k(x,t)|
\le \snorm{x_1-x_2}  \sup_{x\in[0,k]}\big|\dfrac{\tilde{v}_k}{\tilde{r}_k^{\,m}}\big|
\le C(a) \snorm{x_1-x_2}.
\end{equation*}

Case 2: $k<l$. For this case, we consider three sub-cases:

If $x_1,\, x_2 \in[0,k]$, the same inequality as Case 1 can be obtained by repeating
 the same argument.

If $x_1, x_2 \in [k,l]$, then, by \eqref{rrtemp0},
$\snorm{\ov{r}_k(x_1,t)-\ov{r}_k(x_2,t)}= \snorm{x_1-x_2}$.

If $x_1\in [0,k]$ and $x_2\in[k,l]$, then, by \eqref{rrtemp0},
\begin{equation*}
\snorm{\ov{r}_k(x_1,t)-\ov{r}_{k}(x_2,t)} \le \snorm{\tilde{r}_k(x_1,t)-\tilde{r}_k(k,t)} + \snorm{x_2-k} \le C(a)(k-x_1)+x_2-k\le C(a) \snorm{x_1-x_2}.
\end{equation*}

\smallskip
In summary, for both cases,
\begin{equation}\label{rrtemp1a}
\sup_{k\in\mathbb{N}} \snorm{\ov{r}_k(x_1,t)-\ov{r}_k(x_2,t)} \le C^{\ast}(a) \snorm{x_1-x_2}.
\end{equation}
Moreover, for both cases $k\ge l$ and $k<l$, it can be shown that, if $t_*=\min\{1,t_1,t_2\}$, then
	\begin{equation}\label{rrtemp1b}
		\snorm{\ov{r}_k(x,t_1)-\ov{r}_k(x,t_2)} = \Bignorm{\int_{t_1}^{t_2} u_k(x,s)\,\dif s }
		\le C(a) \Bignorm{\int_{t_1}^{t_2} s^{-\frac{1}{4}}\,\dif s } \le C(a) t_*^{-\frac{1}{4}} \snorm{t_1-t_2}.
	\end{equation}
Similarly, by \eqref{rrtemp0}, Theorem \ref{thm:priori}, and Lemma \ref{lemma:rbd},
it can be shown that, for each $\ve\in(0,1]$,
\begin{equation}\label{rrtemp2}
\begin{aligned}
	&\sup_{k \in \mathbb{N}}\snorm{\ov{r}_k(x_1,t)- \ov{r}_k(x_2,t)} \le C(\ve) \snorm{x_1-x_2}
	 && \ \text{ for all $(x_1,x_2,t)\in [\ve,l]^2\times[0,T]$,} \\
	&\sup_{k\in \mathbb{N}}\snorm{\ov{r}_k(x,t_1)- \ov{r}_k(x,t_2)} \le C(\ve) \big|t_1^{\frac{3}{4}}-t_2^{\frac{3}{4}}\big|
	 && \ \text{ for all $(x,t_1,t_2)\in [\ve,l]\times[0,T]^2$.}
	\end{aligned}
\end{equation}
Furthermore, the following bound holds for all $k\in\mathbb{N}$:
\begin{equation}\label{rrtemp3}
\sup_{(x,t)\in S_l}\ov{r}_k(x,t)
= \sup_{(x,t)\in S_l}\Big( a^n + n\int_{0}^{x} \tilde{v}_k(y,t)\,\dif y \Big)^{\frac{1}{n}}
\le \big(a^n + n C(a) l\big)^{\frac{1}{n}}.
\end{equation}
By \eqref{rrtemp1a}--\eqref{rrtemp1b} and \eqref{rrtemp3}, one can apply Proposition \ref{prop:aaExt}
to obtain a subsequence (still denoted) $k \to\infty$
and a continuous function $\tilde{r}(x,t)\vcentcolon [0,\infty)\times[0,T]\to [a,\infty)$ such that
\begin{equation}\label{rtemp4}
\lim\limits_{k\to\infty} \sup_{(x,t)\in K} \snorm{\tilde{r}(x,t)-\ov{r}_{k}(x,t)} =0
\qquad \text{ for all}
\ K\Subset [0,\infty)\times[0,T].
\end{equation}
Applying the strong convergence \eqref{rtemp4} for \eqref{rrtemp2}, one obtains Lemma \ref{lemma:rkcvg}\ref{item:rkcvg2}.
	
\smallskip	
Next, let $0<x_1<x_2<\infty$. For all $k \ge 2^n x_2 C^2(a) + \frac{2^n-1}{n}C(a)$, one can verify $2\tilde{r}_k(x_1,t)\le2\tilde{r}_k(x_2,t)\le \tilde{r}_k(k,t)$. It follows that
	\begin{equation*}
		\int_{\tilde{r}_k(x_1,t)}^{\tilde{r}_k(x_2,t)} \rho_k(r,t)\,r^m \dif r
		= \int_{\tilde{r}_k(x_1,t)}^{\tilde{r}_k(x_2,t)} \overline{v}_k^{-1}(r,t)\,r^m \dif r
       = x_2 - x_1.
	\end{equation*}
	Since $\rho_k(r,t)\le C(x_1) $ for $r\in[\tilde{r}_k(x_1,t),\infty)$ from Lemma \ref{lemma:HREext},
	\begin{align*}
		x_2-x_1 = \int_{\tilde{r}_k(x_1,t)}^{\tilde{r}_k(x_2,t)} \rho_k(r,t)\,r^m \dif r 
		\le \dfrac{C(x_1)}{n} \big(\tilde{r}_k^{\, n}(x_1,t) -\tilde{r}_k^{\,n}(x_2,t)\big).
	\end{align*}
	Taking $k\to\infty$ on the above and using \eqref{rtemp4} yields
	\begin{equation*}
		0 < x_2 - x_1 < \dfrac{C(x_1)}{n} \big(\tilde{r}^{\,n}(x_1,t) -\tilde{r}^{\,n}(x_2,t)\big).
	\end{equation*}
	This shows that $x\mapsto \tilde{r}(x,t)$ is strictly increasing for each $t\in[0,T]$.
	
	For a fixed point $(x,t)\in[0,\infty)\times[0,T]$, it follows from Lemma \ref{lemma:rbd} that
	\begin{align*}
		nx \psi_{-}^{-1}(\frac{C_0}{x}) \le \tilde{r}_k^{\,n}(x,t) \le C_0(1 + x)
		\qquad \text{for all $k\ge x$}.
	\end{align*}
	Taking $k\to\infty$ and using \eqref{rtemp4},
	we conclude Lemma \ref{lemma:rkcvg}\ref{item:rkcvg3}.
\end{proof}

\smallskip
\subsection{Compactness results}\label{subsec:compact}
\begin{lemma}\label{lemma:klimholder}
There exist both a continuous function $(u,e)(r,t)\vcentcolon [a,\infty)\times (0,T]\to \R\times [0,\infty)$
and a subsequence $($still denoted$)$ $\{(u_k,e_k)\}_{k\in\mathbb{N}}$ such that,
	for each compact subset $K\Subset [a,\infty)\times[0,T]$,
	\begin{equation}\label{temp:klimue0}
		\lim\limits_{k\to\infty} \sup\limits_{(r,t)\in K} \snorm{(u_k-u,e_k-e)}(r,t) = 0.
	\end{equation}
	Moreover, for each fixed $\ve>0$, if $t\in[0,T]$ and $r_1,\, r_2 \ge \tilde{r}(\ve,t)$,
	then
	\begin{equation}\label{temp:klimue1}
		\sigma^{\frac{1}{2}}(t)\snorm{u(r_1,t)-u(r_2,t)} + \sigma(t)\snorm{e(r_1,t)-e(r_2,t)} \le C(\ve)  \snorm{r_1-r_2}^{\frac{1}{2}},
	\end{equation}
	and, if $0<t_1<t_2 \le T$ and $r \ge \sup_{t_1\le t \le t_2}\tilde{r}(\ve,t)$, then
	\begin{equation}\label{temp:klimue2}
		\sigma^{\frac{1}{2}}(t_1)\snorm{u(r,t_1)-u(r,t_2)}+ \sigma(t_1)\norm{e(r,t_1)-e(r,t_2)} \le C(\ve)  \snorm{t_2-t_1}^{\frac{1}{4}}.
	\end{equation}
	In addition,
	\begin{equation}\label{temp:klimue3}
		\begin{dcases*}
			\snorm{u(r,t)} \le C(a) \sigma^{-\frac{1}{4}}(t), \ C^{-1}(a) \le e(r,t) \le C(a) \sigma^{-\frac{1}{2}}(t) & if $(r,t)\in[a,\infty)\times[0,T]$,\\
			\snorm{u(r,t)} \le C(\ve) \sigma^{-\frac{1}{4}}(t),
			\ \ e(r,t) \le C(\ve) \sigma^{-\frac{1}{2}}(t) & if $t\in[0,T]$, $r\ge \tilde{r}(\ve,t)$.
		\end{dcases*}
	\end{equation}
\end{lemma}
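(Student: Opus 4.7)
The plan is to construct $(u,e)$ as the uniform-on-compacta limit of a subsequence of $(u_k,e_k)$ via the Arzel\'a-Ascoli theorem, using Lemma~\ref{lemma:HREext} to secure uniform bounds and equicontinuity, and Lemma~\ref{lemma:rkcvg} to identify the inner boundary $\tilde{r}_k(\ve,t)$ with its limit $\tilde{r}(\ve,t)$. Fix a compact interval $I \subset [a,\infty)$ and $\tau \in (0,T]$. Estimates \eqref{temp:aHREext1}--\eqref{temp:aHREext2} yield the uniform pointwise bound $|u_k|+e_k \le C(a,\tau)$ on $I\times[\tau,T]$ together with $\sbnorm{\partial_r u_k(\cdot,t)}_{L^2(I)}+\sbnorm{\partial_r e_k(\cdot,t)}_{L^2(I)} \le C(a,\tau)$, since $r^m$ is bounded above and below on $I$ and $\sqrt{\sigma}\partial_r u_k$, $\sigma\partial_r e_k \in L^\infty(0,T;L^2)$. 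The Sobolev embedding $H^1(I) \hookrightarrow C^{0,1/2}(I)$ in one spatial dimension then yields uniform $1/2$-H\"older continuity in $r$.

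For temporal equicontinuity I will use the standard averaging trick. Given $r$ in the interior of $I$ and $t_1 < t_2$ in $[\tau,T]$, choose a smooth bump $\phi_h \ge 0$ supported in $[r, r+h] \subset I$ with $\int \phi_h\, \dif r' = 1$ and $\sbnorm{\phi_h}_{L^2} \le Ch^{-1/2}$, and split
\begin{align*}
u_k(r,t_2)-u_k(r,t_1) &= \int \phi_h(r')\bigl(u_k(r,t_2)-u_k(r',t_2)\bigr)\, \dif r' + \int \phi_h(r')\bigl(u_k(r',t_1)-u_k(r,t_1)\bigr)\, \dif r' \\
&\quad + \int\!\!\int_{t_1}^{t_2}\phi_h(r')\partial_t u_k(r',s)\, \dif s\, \dif r'.
\end{align*}
The first two terms are $O(h^{1/2})$ by the spatial H\"older estimate, and the last is at most $\sbnorm{\phi_h}_{L^2}(t_2-t_1)^{1/2}\sbnorm{\partial_t u_k}_{L^2(I\times[\tau,T])} \le C(a,\tau)h^{-1/2}(t_2-t_1)^{1/2}$. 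Balancing at $h = (t_2-t_1)^{1/2}$ yields $|u_k(r,t_2)-u_k(r,t_1)| \le C(a,\tau)|t_2-t_1|^{1/4}$; the same argument for $e_k$ uses the $L^2$-bound on $\sigma\partial_t e_k$. Arzel\'a-Ascoli now extracts a subsequence converging uniformly on $I\times[\tau,T]$, and a diagonal procedure over $I \nearrow [a,\infty)$ and $\tau \searrow 0$ yields a continuous limit $(u,e) \vcentcolon [a,\infty)\times(0,T] \to \R \times [0,\infty)$ satisfying \eqref{temp:klimue0}; the pointwise bounds $\eqref{temp:klimue3}_1$ pass directly from \eqref{temp:aHREext2}.

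For the sharper estimates \eqref{temp:klimue1}--\eqref{temp:klimue2} and $\eqref{temp:klimue3}_2$, I will repeat the above arguments on the region $\{r\ge \tilde{r}_k(\ve,t),\ t\in[0,T]\}$, replacing \eqref{temp:aHREext1}--\eqref{temp:aHREext2} by the $(a,k)$-uniform bound \eqref{temp:veHREext1}--\eqref{temp:veHREext2}. Lemma~\ref{lemma:rkcvg} guarantees that every compact $K \subset \{r > \tilde{r}(\ve,t)\}$ is eventually contained in $\{r \ge \tilde{r}_k(\ve,t)\}$, so the sharper bounds are available and pass to the limit. The main obstacle will be careful bookkeeping of the singular temporal weights: to land on precisely $\sigma^{1/2}(t)$ and $\sigma(t)$ in front of $|u(r_1,t)-u(r_2,t)|$ and $|e(r_1,t)-e(r_2,t)|$ in \eqref{temp:klimue1} (and the analogous $\sigma^{1/2}(t_1), \sigma(t_1)$ in \eqref{temp:klimue2}), one must consistently extract the appropriate power of $\sigma$ from the $L^\infty(0,T;L^2)$-bound on $\sqrt{\sigma}\partial_r u_k$ (resp.\ $\sigma\partial_r e_k$) and the $L^2(0,T;L^2)$-bound on $\sqrt{\sigma}\partial_t u_k$ (resp.\ $\sigma\partial_t e_k$). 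For the time-H\"older estimate, comparability of $\sigma(t_1)$ and $\sigma(t_2)$ to $\min\{\sigma(t_1),\sigma(t_2)\}$ is immediate on $[\tau,T]$ and extends to all $t_1,t_2 \in (0,T]$ by the strict positivity of $\sigma$ there.
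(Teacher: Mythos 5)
Your proof takes essentially the same route as the paper: uniform spatial H\"older bounds from the weighted $L^2$-control of $\partial_r u_k, \partial_r e_k$, temporal equicontinuity via an averaging trick with the balance $h=\sqrt{t_2-t_1}$, Arzel\`a--Ascoli with a diagonal extraction over exhausting compacta, and the passage to the $\ve$-sharpened estimates via the uniform convergence $\tilde{r}_k(\ve,\cdot)\to\tilde{r}(\ve,\cdot)$ from Lemma~\ref{lemma:rkcvg}. The only cosmetic difference is that you average with a bump function $\phi_h$ and control the two spatial terms by the already-established H\"older estimate, while the paper uses the mean value theorem to pick a single point $\xi\in(r,r+h)$ and then bounds the comparison with the fundamental theorem of calculus in space; both are the same device. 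One point to be slightly careful about: to obtain \eqref{temp:klimue1}--\eqref{temp:klimue2} on the closed region $\{r\ge\tilde{r}(\ve,t)\}$ (not merely on compacta strictly inside $\{r>\tilde{r}(\ve,t)\}$), the paper fixes a uniform margin by showing $\tilde{r}_k(\tfrac{\ve}{2},t)<\tilde{r}(\ve,t)$ for $k\ge N_\ve$ and all $t\in[0,T]$, so that the $C(\tfrac{\ve}{2})$ bounds of Lemma~\ref{lemma:HREext} cover the whole closed set at once; your compact-$K$ formulation recovers the same conclusion after an extra continuity step, but you should spell out that uniform-in-$t$ margin explicitly.
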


\begin{proof}
We give our proof only for $u(r,t)$, since the proof is the same for $e(r,t)$.

Let $(r_1,r_2,t)\in [a,\infty)^2\times(0,T]$. By
Lemma \ref{lemma:HREext}, for all $k\in\mathbb{N}$,
\begin{align*}
\norm{u_k(r_2,t)-u_k(r_1,t)}
&\le (r_2 - r_1)^{\frac{1}{2}} r_1^{-\frac{m}{2}}
 \Big( \int_{r_1}^{r_2} \snorm{\partial_r u_k}^2 (r,t)\,r^m \dif r \Big)^{\frac{1}{2}}\\
&\le \sigma^{-\frac{1}{2}}(t) a^{-\frac{m}{2}} \Big( \int_{a}^{\infty} \sigma(t) \norm{\partial_r u_k}^2 (r,t)\,r^m
  \dif r \Big)^{\frac{1}{2}} \snorm{r_2-r_1}^{\frac{1}{2}} \\
& \le  C(a) \sigma^{-\frac{1}{2}}(t) \snorm{r_2-r_1}^{\frac{1}{2}}.
\end{align*}
Let $r\in [a,\infty)$ and $0<t_1<t_2\le T$, and set $h\vcentcolon= \sqrt{t_2-t_1}$.
By the mean value theorem,
\begin{equation}\label{aholtemp1}
\dfrac{1}{h}\int_{r}^{r+h} \snorm{ u_k(\zeta,t_1) - u_k(\zeta,t_2) }\,\dif \zeta
= \snorm{ u_k(\xi,t_1) - u_k(\xi,t_2) }
\qquad \ \text{for some $\xi\in (r,r+h)$}.
\end{equation}
By the triangle inequality and the fundamental theorem of calculus, for all $k\in\mathbb{N}$,
\begin{align*}
&\snorm{u_k(r,t_1) - u_k(r,t_2)}-\snorm{ u_k(\xi,t_1) - u_k(\xi,t_2) }
\le   \Bignorm{\int_{r}^{\xi} \big(\partial_r u_k (\zeta,t_1) - \partial_r u_k (\zeta,t_2) \big)\,\dif \zeta} \\
&\le \norm{\xi-r}^{\frac{1}{2}} r^{-\frac{m}{2}}
  \sum_{i=1}^2 \Big( \int_{r}^{\xi}\snorm{\partial_r u_k(\zeta,t_i)}^2\,\zeta^m \dif \zeta \Big)^{\frac{1}{2}} \\
&\le  a^{-\frac{m}{2}} \sigma^{-\frac{1}{2}}(t_1) \sqrt{h}
\sum_{i=1}^2 \Big(\int_{a}^{\infty}
\sigma(t_i)\snorm{\partial_r u_k(\zeta,t_i)}^2\,\zeta^m \dif \zeta \Big)^{\frac{1}{2}}
\le C(a) \sigma^{-\frac{1}{2}}(t_1) \sqrt{h}.
\end{align*}
Thus, it follows from \eqref{aholtemp1} that, for all $k\in\mathbb{N}$,
\begin{align*}
&\snorm{u_k(r,t_1) - u_k(r,t_2)}\\
&\le  C(a) \sigma^{-\frac{1}{2}}(t_1) \sqrt{h}
  + \dfrac{1}{h}\int_{r}^{r+h}  \int_{t_1}^{t_2} \snorm{ \partial_t u_k (\zeta,t)}\,\dif t \dif \zeta \\
&\le   C(a) \sigma^{-\frac{1}{2}}(t_1)\sqrt{h}
 + \dfrac{\sigma^{-\frac{1}{2}}(t_1) a^{-\frac{m}{2}}}{h}
  \Big(\int_{r}^{r+h}  \int_{t_1}^{t_2} \dif t  \dif \zeta\Big)^{\frac{1}{2}}
   \Big(\int_{t_1}^{t_2}\int_{a}^{\infty}\sigma(t)\snorm{ \partial_t u_k}^2\,\zeta^m \dif\zeta\dif t \Big)^{\frac{1}{2}}\\
&\le  C(a) \sigma^{-\frac{1}{2}}(t_1) \sqrt{h} + C(a)\sigma^{-\frac{1}{2}}(t_1) h^{-\frac{1}{2}}
  \snorm{t_2-t_1}^{\frac{1}{2}}
  \le C(a)\sigma^{-\frac{1}{2}}(t_1) \snorm{t_2-t_1}^{\frac{1}{4}}.
\end{align*}
Therefore, we have
\begin{equation}\label{aholtemp2}
		\begin{dcases*}
			\sup\limits_{k\in \mathbb{N}}\,\snorm{u_k(r_2,t)-u_k(r_1,t)} \le C(a) \sigma^{-\frac{1}{2}}(t) \snorm{r_2-r_1}^{\frac{1}{2}} &\quad if $(r_1,r_2,t)\in[a,\infty)^2\times(0,T]$,\\
			\sup\limits_{k\in \mathbb{N}}\,\snorm{u_k(r,t_2)-u_k(r,t_1)}
			\le C(a) \sigma^{-\frac{1}{2}}(t_*) \snorm{t_2 - t_1}^{\frac{1}{4}}
			 &\quad if $(r,t_1,t_2)\in[a,\infty)\times(0,T]^2$,
		\end{dcases*}
\end{equation}
where $t_*\equiv \min\{t_1,t_2\}$. Let $S_l=[0,\infty)\times[l^{-1},T]$ for each $l\in\mathbb{N}$.
Then we can apply Proposition \ref{prop:aaExt} with \eqref{aholtemp2} and $S_l$
to obtain a continuous function $u(r,t)\vcentcolon [a,\infty)\times(0,T]\to \R$ and a subsequence (still denoted) $\{u_k\}_{k\in\mathbb{N}}$ such that \eqref{temp:klimue0} holds.

\smallskip
Next, let $\ve\in(0,1]$. Since $x\to \tilde{r}(x,t)$ is strictly increasing and $(x,t)\mapsto\tilde{r}(x,t)$ is continuous,
	it follows that $ d  \vcentcolon= \inf_{t\in[0,T]}\{\tilde{r}(\ve,t) - \tilde{r}(\frac{\ve}{2},t)\}>0$.
	By Lemma \ref{lemma:rkcvg}, there exists $N_\ve\in\mathbb{N}$ such that
	$\sup_{0\le t \le T}\snorm{\tilde{r}_k(\frac{\ve}{2},t)-\tilde{r}(\frac{\ve}{2},t)} \le \frac{d}{2}$
	if $k\ge N_\ve$, which implies that
\begin{equation}\label{veholtemp1}
\tilde{r}_k(\frac{\ve}{2},t) < \tilde{r}(\ve,t) \qquad \text{for each $t\in[0,T]$ and $k\ge N_\ve$}.
\end{equation}
Let $(r_1,r_2,t)$ be a triplet such that $t\in(0,T]$ and $r_1,\, r_2\in [\tilde{r}(\ve,t),\infty)$.
Then \eqref{veholtemp1} implies that $r_1, r_2 > \tilde{r}_k(\frac{\ve}{2},t)$ if $k\ge N_\ve$.
By
Lemma \ref{lemma:HREext}, it follows that, for all $k\ge N_\ve$,
\begin{align*}
\snorm{u_k(r_2,t)-u_k(r_1,t)}
\le \dfrac{\sigma^{-\frac{1}{2}}(t)}{\tilde{r}_k^{\,\frac{m}{2}}(\frac{\ve}{2},t)}
 \Big(\int_{\tilde{r}_k(\frac{\ve}{2},t)}^{\infty} \sigma(t) \snorm{\partial_r u_k}^2(r,t)\,r^m \dif r \Big)^{\frac{1}{2}} \snorm{r_2-r_1}^{\frac{1}{2}}
 \le C(\ve) \sigma^{-\frac{1}{2}}(t) \snorm{r_2-r_1}^{\frac{1}{2}}.
\end{align*}
Moreover, let $(r,t_1,t_2)\in [a,\infty)\times(0,T]^2$ be a triplet such that
$0<t_1 < t_2\le T$ and $r \ge \sup_{t_1\le t\le t_2}\tilde{r}(\ve,t)$.
Then, by \eqref{veholtemp1}, $r > \tilde{r}_k(\frac{\ve}{2},t)\ge C^{-1}(\ve)$ for all $t\in[0,T]$ if $k\ge N_\ve$.
Set $h\vcentcolon= \sqrt{t_2-t_1}$. Then, by the mean value theorem,
\begin{equation*}
\dfrac{1}{h}\int_{r}^{r+h} \snorm{ u_k(\zeta,t_1) - u_k(\zeta,t_2) }\,\dif \zeta
= \snorm{ u_k(\xi,t_1) - u_k(\xi,t_2) } \qquad \text{for some $\xi\in (r,r+h)$}.
\end{equation*}
By the triangle inequality,
for all $k\ge N_\ve$,
\begin{align*}
&\snorm{u_k(r,t_1) - u_k(r,t_2)}-\snorm{ u_k(\xi,t_1) - u_k(\xi,t_2) }\\
&\le \sqrt{\snorm{\xi-r}}
r^{-\frac{m}{2}}
 \sum_{i=1}^2\Big(\int_{r}^{\xi}\snorm{\partial_r u_k(\zeta,t_i)}^2\,\zeta^m\dif\zeta\Big)^{\frac{1}{2}} \\
&\le  C(\ve) \sqrt{h}
\sigma^{-\frac{1}{2}}(t_1)
  \sum_{i=1}^2 \Big( \int_{\tilde{r}_k(\frac{\ve}{2},t_i)}^{\infty}
  \sigma(t_i)\snorm{\partial_r u_k(\zeta,t_i)}^2 \zeta^m \dif \zeta \Big)^{\frac{1}{2}}
  \le C(\ve) \sigma^{-\frac{1}{2}}(t_1)\sqrt{h}
\end{align*}
It follows that, for all $k\ge N_\ve$,
\begin{align*}
&\snorm{u_k(r,t_1) - u_k(r,t_2)} \\
&\le  C(\ve) \sqrt{h}
\sigma^{-\frac{1}{2}}(t_1)
+ \dfrac{1}{h}\int_{r}^{r+h}  \int_{t_1}^{t_2} \snorm{ \partial_t u_k (\zeta,t)}\,\dif t  \dif \zeta \\
&\le   C(\ve) \sqrt{h}
\sigma^{-\frac{1}{2}}(t_1) + \dfrac{\sigma^{-\frac{1}{2}}(t_1) r^{-\frac{m}{2}}}{h}\Big(\int_{r}^{r+h}  \int_{t_1}^{t_2} \dif t  \dif \zeta\Big)^{\frac{1}{2}}
\Big( \int_{t_1}^{t_2}\int_{\tilde{r}_k(\frac{\ve}{2},t)}^{\infty} \sigma \snorm{ \partial_t u_k }^2\,\zeta^m
  \dif\zeta \dif t \Big)^{\frac{1}{2}}\\
&\le  C(\ve) \sqrt{h}
\sigma^{-\frac{1}{2}}(t_1) + C(\ve) h^{-\frac{1}{2}} \snorm{t_2-t_1}^{\frac{1}{2}} \sigma^{-\frac{1}{2}}(t_1)
\le C(\ve) \sigma^{-\frac{1}{2}}(t_1) \snorm{t_2-t_1}^{\frac{1}{4}}.
\end{align*}
Then
\begin{equation*}
\begin{dcases*}
\sup\limits_{k\ge N_\ve}\snorm{u_k(r_2,t)-u_k(r_1,t)} \le C(\ve) t^{-\frac{1}{2}} \snorm{r_2-r_1}^{\frac{1}{2}} &if $t\in(0,T]$, $r_1, r_2 \ge \tilde{r}(\ve,t)$,\\
\sup\limits_{k\ge N_\ve}\snorm{u_k(r,t_2)-u_k(r,t_1)} \le C(\ve) \sigma^{-\frac{1}{2}}(t_*)
\snorm{t_2 - t_1}^{\frac{1}{4}} &if $0<t_1<t_2\le T$, $r\ge R(\ve,t_1,t_2)$,
\end{dcases*}
\end{equation*}
where $R(\ve,t_1,t_2)\vcentcolon= \sup_{t_1\le t\le t_2}\tilde{r}(\ve,t)$.
Taking $k\to\infty$ and using \eqref{temp:klimue0}, we obtain the continuity estimates for $u$ in \eqref{temp:klimue1}--\eqref{temp:klimue2}.
	
Finally, Lemma \ref{lemma:HREext} and \eqref{temp:klimue0} lead to $\eqref{temp:klimue3}_1$.
In addition, for each $\ve\in(0,1]$, \eqref{veholtemp1} implies that $r\ge \tilde{r}_k(\frac{\ve}{2},t)$
if $r\ge \tilde{r}(\ve,t)$ and $k\ge N_\ve$.
Hence, $\norm{u_k(r,t)}\le C(\ve)\sigma^{-\frac{1}{4}}(t)$ for all $(r,t,k)$ such that $k\ge N_\ve$, $t\in[0,T]$,
and $r \ge \tilde{r}(\ve,t)$.
Letting $k\to\infty$ and using \eqref{temp:klimue0}, we obtain $\eqref{temp:klimue3}_2$.
\end{proof}

\begin{proposition}\label{prop:veDomain}
Let $\ve\in(0,1]$, and let $(r,t)\in[a,\infty)\times(0,T]$ be such that $r> \tilde{r}(\ve,t)$.
Denote $\D_{\delta}(r,t)\vcentcolon= \{ (\zeta,s) \vcentcolon \sqrt{(\zeta-r)^2 + (s-t)^2} < \delta \}$.
Then there exist $N(\ve,r,t)\in\mathbb{N}$ and $\delta(\ve,r,t)>0$ such that,
for all $\delta\in (0,\delta(\ve,r,t))$ and $ k\ge N(\ve,r,t)$,
\begin{equation*}
\D_\delta(r,t) \subset \{ (\zeta,s)\in[a,\infty)\times[0,T]\,\vcentcolon\,\zeta> \tilde{r}_k(\ve,s) \}.
\end{equation*}
\end{proposition}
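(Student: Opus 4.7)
The statement is essentially a stability result: the set $\{\zeta > \tilde r(\ve, s)\}$ is open, the point $(r,t)$ lies in its interior, and $\tilde r_k(\ve,\cdot)$ converges to $\tilde r(\ve,\cdot)$ locally uniformly, so a neighbourhood of $(r,t)$ sits inside the analogous open set defined with $\tilde r_k$ for all $k$ sufficiently large. My plan is therefore to combine the Hölder continuity of $s\mapsto \tilde r(\ve,s)$ from Lemma~\ref{lemma:rkcvg}\ref{item:rkcvg2} with the uniform-on-compacts convergence $\ov r_k \to \tilde r$ from Lemma~\ref{lemma:rkcvg}\ref{item:rkcvg1}, in the standard $\tfrac{\eta}{4}$-fashion.

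I would first set $\eta := r - \tilde r(\ve,t) > 0$; this positive number quantifies the ``safety margin'' at the base point and will govern every subsequent choice. Using Lemma~\ref{lemma:rkcvg}\ref{item:rkcvg2} applied at $x=\ve$, the function $s\mapsto \tilde r(\ve,s)$ is $\tfrac{3}{4}$-Hölder on $[0,T]$ with constant $C(\ve)$, hence continuous at $t$, so there exists $\delta_1=\delta_1(\ve,r,t)>0$ with
\[
|\tilde r(\ve,s)-\tilde r(\ve,t)| < \tfrac{\eta}{4}
\qquad\text{for every } s\in[0,T] \text{ with }|s-t|<\delta_1.
\]
Next, the set $K:=\{\ve\}\times\bigl([t-\delta_1,t+\delta_1]\cap[0,T]\bigr)$ is a compact subset of $[0,\infty)\times[0,T]$; recalling that $\ov r_k(\ve,s)=\tilde r_k(\ve,s)$ as soon as $k\ge\ve$, Lemma~\ref{lemma:rkcvg}\ref{item:rkcvg1} applied on $K$ yields an integer $N=N(\ve,r,t)\in\mathbb N$ with $N\ge\lceil\ve\rceil$ such that
\[
\sup_{s\in[t-\delta_1,t+\delta_1]\cap[0,T]} |\tilde r_k(\ve,s)-\tilde r(\ve,s)| < \tfrac{\eta}{4}
\qquad\text{for every } k\ge N.
\]

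With these two bounds in hand, I would set
\[
\delta(\ve,r,t) := \min\bigl\{\delta_1,\ \tfrac{\eta}{4},\ r-a,\ t,\ T-t\bigr\}
\]
(with the last slot omitted and the inclusion interpreted as $\D_\delta(r,t)\cap([a,\infty)\times[0,T])$ in the edge case $t=T$). Note $\delta(\ve,r,t)>0$ because $r>\tilde r(\ve,t)\ge a$ by Lemma~\ref{lemma:rkcvg}\ref{item:rkcvg3}. For any $\delta\in(0,\delta(\ve,r,t))$, $k\ge N$, and $(\zeta,s)\in\D_\delta(r,t)$, the choices of $\delta$ force $(\zeta,s)\in[a,\infty)\times[0,T]$ and $|s-t|<\delta_1$, so the triangle inequality and the two displays above give
\[
\zeta > r-\delta > \tilde r(\ve,t)+\tfrac{3\eta}{4},
\qquad
\tilde r_k(\ve,s) < \tilde r(\ve,s)+\tfrac{\eta}{4} < \tilde r(\ve,t)+\tfrac{\eta}{2},
\]
hence $\zeta>\tilde r_k(\ve,s)$, which is the desired inclusion.

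I do not foresee a real obstacle: the argument is a textbook continuity-plus-uniform-convergence patchwork. The only fiddly bits are bookkeeping — making sure $k\ge\lceil\ve\rceil$ so that $\tilde r_k(\ve,\cdot)$ is literally defined (absorbed into the choice of $N$), checking that $r-a>0$ so the disk fits in $[a,\infty)$ in the spatial direction (automatic from $r>\tilde r(\ve,t)\ge a$), and handling the time-boundary case $t=T$, for which one either shrinks to a half-disk or reads the inclusion as being restricted to $[a,\infty)\times[0,T]$. None of these affect the logical core.
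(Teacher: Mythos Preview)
Your proposal is correct and follows essentially the same approach as the paper: set $\eta:=r-\tilde r(\ve,t)$, use continuity of $s\mapsto\tilde r(\ve,s)$ to keep a disk inside $\{\zeta>\tilde r(\ve,s)+\text{const}\cdot\eta\}$, and then invoke the uniform convergence $\tilde r_k(\ve,\cdot)\to\tilde r(\ve,\cdot)$ from Lemma~\ref{lemma:rkcvg} to pass to $\tilde r_k$. Your treatment is in fact slightly more careful than the paper's in handling the boundary constraints $(\zeta,s)\in[a,\infty)\times[0,T]$ and the requirement $k\ge\ve$.
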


\begin{proof}
Let $\eta\vcentcolon=r-\tilde{r}(\ve,t)>0$.
Since $(\zeta,s) \mapsto \zeta-\tilde{r}(\ve,s)$ is continuous,
there exists $\delta(\ve,r,t)>0$ such that, for all $\delta\le \delta(\ve,r,t)$,
if $(\zeta,s)\in \D_\delta(r,t)$, then
$\snorm{\zeta-\tilde{r}(\ve,s) - r + \tilde{r}(\ve,t)} < \frac{\eta}{2}$ which implies that
$\zeta> \frac{\eta}{2} + \tilde{r}(\ve,s)$.
Therefore,
\begin{equation}\label{temp:veDomain}
\D_\delta(r,t) \subset \{ (\zeta,s)\in[a,\infty)\times[0,T]\,\vcentcolon\,\zeta>\frac{\eta}{2} + \tilde{r}(\ve,s) \}.
\end{equation}
By Lemma \ref{lemma:rkcvg}, $\lim_{k\to\infty} \sup_{s\in[0,T]}\snorm{\tilde{r}_k(\ve,s)-\tilde{r}(\ve,s)}=0$.
Thus, there exists $N(\ve,r,t)\in\mathbb{N}$ such that
$\tilde{r}(\ve,s)+\frac{\eta}{4}> \tilde{r}_k(\ve,s)$ for all $s\in[0,T]$ and $k\ge N(\ve,r,t)$.
If $(\zeta,s)$ is such that $\zeta> \frac{\eta}{2} + \tilde{r}(\ve,s)$,
then $\zeta > \frac{\eta}{4} + \tilde{r}(\ve,s) > \tilde{r}_k(\ve,s)$ for all $k\ge N(\ve,r,t)$.
We combining this with \eqref{temp:veDomain} to conclude the proof.
\end{proof}

\begin{lemma}\label{lemma:rhoWeak}
There exist $\rho\ge 0$
and a subsequence $($still denoted$)$ $\{\rho_k\}_{k\in\mathbb{N}}$ such that
\begin{equation}\label{temp:rhoWeak0}
(\rho_k-1) \overset{\ast}{\rightharpoonup}  (\rho-1)
\qquad \text{in $L^{\infty}\big( 0,T ; L^2( [a,\infty), r^m \dif r ) \big)$}.
\end{equation}
Moreover, for each $\ve>0$,
\begin{equation}\label{temp:rhoWeak}
\begin{dcases*}
C^{-1}(\ve)\le \rho(r,t) \le C(\ve)
  &\,\, for a.e. $(r,t)\in\{(\zeta,s)\in[a,\infty)\times[0,T]\,\vcentcolon\,\zeta\ge \tilde{r}(\ve,t)\}$,\\
C^{-1}(a)\le \rho(r,t) \le C(a) &\,\, for a.e. $(r,t)\in[a,\infty)\times[0,T]$.
\end{dcases*}
\end{equation}
\end{lemma}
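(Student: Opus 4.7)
The plan is to apply Banach--Alaoglu twice with different pre-duals, and then use Proposition \ref{prop:veDomain} to localize the uniform pointwise bounds. By Lemma \ref{lemma:HREext}, the sequence $\{\rho_k - 1\}_{k\in\mathbb{N}}$ is bounded in $L^\infty(0,T; L^2([a,\infty), r^m\,\dif r))$ independently of $k$. Since this space is the dual of the separable space $L^1(0,T; L^2([a,\infty), r^m\,\dif r))$, Banach--Alaoglu yields a weak-$*$ convergent subsequence with limit $\tilde{\rho} \in L^\infty(0,T; L^2)$; setting $\rho := \tilde{\rho} + 1$ delivers the convergence \eqref{temp:rhoWeak0}.

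For the global bound $\eqref{temp:rhoWeak}_2$, the same Lemma \ref{lemma:HREext} gives $C^{-1}(a) \le \rho_k \le C(a)$ a.e., so $\{\rho_k\}$ is uniformly bounded in $L^\infty([a,\infty)\times[0,T])$. Passing to a further subsequence, we extract another weak-$*$ limit in $L^\infty$. The two limits must coincide as distributions because $C^\infty_{\rm c}([a,\infty) \times [0,T])$ lies in both pre-dual spaces and is dense there. Since weak-$*$ $L^\infty$ limits inherit pointwise a.e. bounds, we conclude $C^{-1}(a) \le \rho \le C(a)$ a.e.

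The refined bound $\eqref{temp:rhoWeak}_1$ on $\Omega_\ve := \{(r,t) \in [a,\infty)\times[0,T]\,\vcentcolon\, r > \tilde{r}(\ve,t)\}$ is the main step and relies crucially on Proposition \ref{prop:veDomain}. For each $(r_0,t_0)\in\Omega_\ve$, that proposition produces a disc $\D_\delta(r_0,t_0)$ and an index $N(\ve,r_0,t_0)$ such that $\D_\delta \subset \{\zeta > \tilde{r}_k(\ve,s)\}$ for all $k \ge N$. On this disc, Lemma \ref{lemma:HREext} guarantees the uniform pointwise bounds $C^{-1}(\ve) \le \rho_k \le C(\ve)$, which transfer to $\rho$ by the same weak-$*$ identification as above. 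Since $(r,t) \mapsto r - \tilde{r}(\ve,t)$ is continuous (Lemma \ref{lemma:rkcvg}), $\Omega_\ve$ is open and can be exhausted by a countable union of such discs, yielding the pointwise bound a.e. on $\Omega_\ve$. The main technical subtlety is the compatibility of the two weak-$*$ limits and the covering argument, both of which are routine once Proposition \ref{prop:veDomain} is in hand; the heavier lifting --- the uniform density bounds and the existence of the limiting particle path $\tilde{r}(\ve,t)$ --- has already been done in Lemmas \ref{lemma:HREext} and \ref{lemma:rkcvg}.
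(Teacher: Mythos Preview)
Your proposal is correct and follows essentially the same strategy as the paper: Banach--Alaoglu for weak-$*$ compactness from the uniform bounds in Lemma~\ref{lemma:HREext}, followed by Proposition~\ref{prop:veDomain} to localize the $\ve$-dependent pointwise bounds to the limit. The only stylistic difference is that the paper carries out the transfer of pointwise bounds via an explicit Lebesgue differentiation argument (testing against normalized indicators $\phi_\delta^{(r,t)}=\snorm{\D_\delta(r,t)}^{-1}\mathbbm{1}_{\D_\delta(r,t)}$ and letting $\delta\searrow 0$), whereas you invoke the abstract fact that weak-$*$ $L^\infty$ limits inherit a.e. bounds together with a covering of the open set $\Omega_\ve$ by the discs supplied by Proposition~\ref{prop:veDomain}; these are two presentations of the same idea.
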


\begin{proof}
From Lemma \ref{lemma:HREext},
\begin{equation}\label{temp:rho1}
\sup_{k\in\mathbb{N}}\sup_{ 0\le t \le  T}\int_{a}^{\infty}\snorm{\rho_{k}-1}^2(r,t)\,\dif r \dif t
\le a^{-m}\sup_{k\in\mathbb{N}}\sup_{ 0\le t \le  T}\int_{a}^{\infty}\snorm{\rho_{k}-1}^2(r,t)\,r^m\dif r \dif t\le C(a).
\end{equation}
Thus there exist $(f,\rho-1) \in L^{\infty}\big(0,T;L^2([a,\infty),\dif r)\big)$ and a subsequence (still denoted) $\{\rho_k\}_{k\in\mathbb{N}}$ such that, as $k\to\infty$,
\begin{equation}\label{temp:rhowc}
\rho_k-1\overset{\ast}{\rightharpoonup} (\rho-1), \quad
r^{\frac{m}{2}} (\rho_k-1) \overset{\ast}{\rightharpoonup} f
\qquad\,\text{ in $L^{\infty}\big(0,T;L^2( [a,\infty), \dif r)\big)$}.
\end{equation}
Let $\phi\in C^{\infty}_{\rm c}([a,\infty)\times[0,T])$.
It follows from \eqref{temp:rhowc} that, as $k\to\infty$,
\begin{align*}
\int_{0}^{T} \int_{a}^{\infty} f \phi\,\dif r \dif t
\leftarrow \int_{0}^{T} \int_{a}^{\infty} r^{\frac{m}{2}}(\rho_k -1 ) \phi\,\dif r \dif t
\to \int_{0}^{T} \int_{a}^{\infty}  (\rho -1 ) r^{\frac{m}{2}} \phi\, \dif r \dif t.
\end{align*}
By the fundamental lemma of calculus,
$f(r,t)= r^{\frac{m}{2}} (\rho(r,t)-1)$ for {\it a.e.} $(r,t)\in[a,\infty)\times[0,T]$.
By \eqref{temp:rhowc} again, the weak-star convergence \eqref{temp:rhoWeak0} is obtained.
	
Next, fix $\ve\in(0,1]$ and a point $(r,t)\in[a,\infty)\times(0,T]$ such that $r>\tilde{r}(\ve,t)$.
Let $\delta(\ve,r,t)>0$ and $N(\ve,r,t)\in\mathbb{N}$ be obtained in Proposition \ref{prop:veDomain}.
For $0<\delta \le \delta(\ve,r,t)$,  set
\begin{equation}\label{rhobdtemp3}
\phi_{\delta}^{(r,t)}(\zeta,s)\vcentcolon= \dfrac{\mathbbm{1}_{\D_{\delta}(r,t)}(\zeta,s)}{\snorm{\D_\delta(r,t)}} = \begin{dcases*}
			\snorm{\D_\delta(r,t)}^{-1} & if $(\zeta,s)\in \D_\delta(r,t)$,\\
	0 & if $(\zeta,s)\in [a,\infty)\times[0,T] \backslash \D_\delta(r,t)$,
\end{dcases*}
\end{equation}
	where $\D_\delta(r,t)$ is defined in Proposition \ref{prop:veDomain}. It follows that 
	$\phi_\delta^{(r,t)}\in L^1\big(0,T;L^2( [a,\infty),\dif r )\big)$.
	Moreover, by Proposition \ref{prop:veDomain},  for all $k\ge N(\ve,r,t)$ and $\delta\le \delta(\ve,r,t)$,
	\begin{equation}\label{rhobdtemp1}
		\supp(\phi_\delta^{(r,t)}) \subset \{ (\zeta,s)\in[a,\infty)\times(0,T]\,\vcentcolon\,\zeta > \tilde{r}_k(\ve,s) \}.
	\end{equation}
	For $0<\delta\le \delta(\ve,r,t)$ and $\eta>0$, 
	there exists $N_{\eta,\delta}\in\mathbb{N}$ by \eqref{temp:rhowc} such that, for all $k\ge N_{\eta,\delta}$,
	\begin{equation}\label{rhobdtemp2}
		-\eta + \int_{0}^{T}\int_{a}^{\infty} \rho_{k} \phi_\delta^{(r,t)} \dif \zeta \dif s \le \int_{0}^{T}\int_{a}^{\infty}  \rho\phi_\delta^{(r,t)} \dif \zeta \dif s \le \eta + \int_{0}^{T}\int_{a}^{\infty} \rho_{k}\phi_\delta^{(r,t)} \dif \zeta \dif t.
	\end{equation}
Let $N_1\vcentcolon= \max\{ N_{\eta,\delta}, N(\ve,r,t) \}$. Then, by \eqref{rhobdtemp1} and Lemma \ref{lemma:HREext},
it follows that, for all $k\ge N_1$, $C^{-1}(\ve) \le  \rho_k(\zeta,s)\le C(\ve)$ if $(\zeta,s)\in \supp(\phi_\delta^{(r,t)})$.
Using this in \eqref{rhobdtemp2} yields
\begin{align*}
C^{-1}(\ve) - \eta \le \dfrac{1}{\snorm{\D_{\delta}(r,t)}} \iint_{\D_{\delta}(r,t)} \rho(\zeta,s)\,\dif \zeta \dif s
\le C(\ve) + \eta
\qquad \text{for $\eta>0$ and $\delta\in (0,\delta(\ve,r,t)]$}.
\end{align*}
By the Lebesgue differentiation theorem over $\delta$ and the fact that $\eta>0$ is arbitrarily chosen,
$\eqref{temp:rhoWeak}_1$ is proved.
	
Finally, we consider the proof for $\eqref{temp:rhoWeak}_2$. For $(r,t)\in[a,\infty)\times[0,T]$ and $\delta>0$,
we use $\phi_\delta^{(r,t)}$ defined in \eqref{rhobdtemp3}.
Then $\eqref{temp:rhoWeak}_2$ can be proved by repeating the same argument as before
and using $C^{-1}(a) \le \rho_k(r,t) \le C(a)$ for all $(k,r,t)\in\mathbb{N}\times[a,\infty)\times[0,T]$
from Lemma \ref{lemma:HREext}.
\end{proof}

We now introduce the following function spaces: for each connected interval $I\subseteq [0,\infty)$, define the space $\tilde{H}_0^1(I,r^m\dif r)$ to be the closure of
\begin{equation}\label{eqs:tildeH1}
	\mathcal{D}_0(I)\vcentcolon=\{ \phi\in C^{\infty}(I)\,\vcentcolon\,\exists N>0 \ \text{such that} \ [0,N]\subset I \ \text{and} \ \phi(r)=0 \ \text{for} \ r\in I\cap[N,\infty)  \}
\end{equation}
via the $H^1(I,r^m\dif r)$--norm. We also denote $\tilde{H}^{-1}( I, r^m \dif r)$ as its dual space.

\begin{lemma}\label{lemma:rhoScvg}
Let $(\rho,u,e)$ be the limit function obtained in {\rm Lemma \ref{lemma:klimholder}}--{\rm \ref{lemma:rhoWeak}}.
Then
\begin{enumerate}[label=(\roman*),ref=(\roman*),font={\normalfont\rmfamily}]
\item\label{item:rhoScvg1}
For all $L\in\mathbb{N}$, $\rho\in C^0\big( [0,T]; \tilde{H}^{-1}([a,L],r^m \dif r) \big)$ and
 there exists a further subsequence $($still denoted$)$ $\rho_{k}(r,t)$  such that
\begin{equation*}
\lim\limits_{k\to\infty} \sup_{t\in[0,T]}\sbnorm{\rho_{k}(\cdot,t)- \rho (\cdot,t)}_{\tilde{H}^{-1}([a,L], r^m \dif r)} = 0.
\end{equation*}

 \item\label{item:rhoScvg2}
 For each $\ve\in(0,1]$, define $\rho^{(\ve)} (r,t) \vcentcolon = \rho(r,t) \chi_\ve(r,t)$ with $\chi_\ve(r,t)$
 given by
 \begin{equation}\label{temp:chive}
 \chi_\ve(r,t) \vcentcolon = \begin{dcases*}
    			1 & if $t\in[0,T]$ and $r\in [\tilde{r}(\ve,t),\infty)$,\\
    			0 & otherwise.
 \end{dcases*}
 \end{equation}
 Then, for all $L\in\mathbb{N}$,
\begin{align*}
\begin{dcases}
 \rho^{(\ve)} \in C^0\big( [0,T] ; \tilde{H}^{-1}([0,L], r^m \dif r) \big) ,\\
\sbnorm{\rho^{(\ve)} (\cdot,t_1)- \rho^{(\ve)}(\cdot,t_2)}_{\tilde{H}^{-1}([0,L],r^m\dif r)} \le C(\ve)\snorm{t_1-t_2}
 \qquad \text{for $t_1, t_2 \in [0,T]$}.
\end{dcases}
\end{align*}
\end{enumerate}
\end{lemma}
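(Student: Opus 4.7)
\textbf{Proof plan for Lemma \ref{lemma:rhoScvg}.}

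For part \ref{item:rhoScvg1}, my plan is to use the continuity equation $\eqref{eqs:SFNS}_1$ satisfied by $\rho_k$ to produce a uniform time-modulus in $\tilde{H}^{-1}([a,L],r^m\,\dif r)$, and then combine this equicontinuity with the uniform $L^2$--bound to invoke an Arzel\`a-Ascoli argument. Specifically, for $\phi\in\mathcal{D}_0([a,L])$, I would integrate by parts against the continuity equation to write
\begin{equation*}
\int_a^L \partial_t \rho_k\,\phi\,r^m\,\dif r
= \int_a^L r^m \rho_k u_k\,\partial_r\phi\,\dif r - \big[ r^m \rho_k u_k\,\phi\big]_a^L,
\end{equation*}
where the boundary term at $r=L$ vanishes because $\phi\in\tilde{H}_0^1$ vanishes near the right endpoint, and the boundary term at $r=a$ vanishes because $u_k(a,t)=0$ by construction in \eqref{eqs:extfunc}. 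Using Lemma \ref{lemma:HREext} to bound $\rho_k u_k$ in $L^\infty\bigl(0,T;L^2([a,L],r^m\,\dif r)\bigr)$ by $C(a)$ (the bound depends on $L$ through $r^m\le L^m$), one obtains $\sbnorm{\rho_k(\cdot,t_1)-\rho_k(\cdot,t_2)}_{\tilde{H}^{-1}([a,L],r^m\,\dif r)}\le C(a,L)\,|t_1-t_2|$ uniformly in $k$. Since $\rho_k(\cdot,t)$ is also uniformly bounded in $L^2([a,L],r^m\,\dif r)$, which embeds compactly into $\tilde{H}^{-1}([a,L],r^m\,\dif r)$, the family $\{t\mapsto\rho_k(\cdot,t)\}_{k\in\mathbb{N}}$ is equicontinuous and pointwise relatively compact in $\tilde{H}^{-1}$; Arzel\`a-Ascoli then extracts a subsequence converging in $C^0([0,T];\tilde{H}^{-1}([a,L],r^m\,\dif r))$, and the limit coincides with $\rho$ from Lemma \ref{lemma:rhoWeak}.

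For part \ref{item:rhoScvg2}, the plan is first to prove the Lipschitz estimate at the approximate level $\rho_k^{(\ve)}(r,t)\vcentcolon=\rho_k(r,t)\chi_\ve^{a,k}(r,t)$ and then pass to the limit $k\to\infty$. The key identity comes from differentiating with respect to $t$: for $\phi\in\mathcal{D}_0([0,L])$ (extended by $0$ beyond $L$),
\begin{equation*}
\dfrac{\dif}{\dif t}\int_0^L \rho_k^{(\ve)}\,\phi\,r^m\,\dif r
= \dfrac{\dif}{\dif t}\int_{\tilde{r}_k(\ve,t)}^L \rho_k\,\phi\,r^m\,\dif r
= -\rho_k\phi\,r^m\big\vert_{\tilde{r}_k(\ve,t)}\,\partial_t\tilde{r}_k(\ve,t)+\int_{\tilde{r}_k(\ve,t)}^L \partial_t\rho_k\,\phi\,r^m\,\dif r.
\end{equation*}
Applying the continuity equation to the bulk term and integrating by parts creates a boundary term $\rho_k u_k\phi\, r^m\big\vert_{\tilde{r}_k(\ve,t)}$ that exactly cancels the moving-boundary contribution, since $\partial_t\tilde{r}_k(\ve,t)=u_k(\tilde{r}_k(\ve,t),t)$ by Theorem \ref{thm:WSCEx}\ref{item:WSCEx3}. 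This leaves
\begin{equation*}
\dfrac{\dif}{\dif t}\int_0^L \rho_k^{(\ve)}\,\phi\,r^m\,\dif r = \int_{\tilde{r}_k(\ve,t)}^L r^m \rho_k u_k\,\partial_r\phi\,\dif r,
\end{equation*}
which, via the $\ve$--dependent uniform bounds on $\rho_k$ and $u_k$ in the region $r\ge\tilde{r}_k(\ve,t)$ supplied by Lemma \ref{lemma:HREext}, is bounded by $C(\ve)\sbnorm{\partial_r\phi}_{L^2([0,L],r^m\,\dif r)}$ uniformly in $k$. Integrating in $t$ yields the Lipschitz estimate for $\rho_k^{(\ve)}$ in $\tilde{H}^{-1}([0,L],r^m\,\dif r)$ with constant $C(\ve)$ independent of $k$.

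The final step is to send $k\to\infty$. Passing to the limit in the time-integrated identity rather than the differential one, I would show that $\rho_k^{(\ve)}\to\rho^{(\ve)}$ in the weak-$\ast$ sense in $L^\infty(0,T;L^2)$: this uses the weak-$\ast$ convergence of $\rho_k$ from Lemma \ref{lemma:rhoWeak}, combined with the uniform convergence $\tilde{r}_k(\ve,\cdot)\to\tilde{r}(\ve,\cdot)$ from Lemma \ref{lemma:rkcvg}\ref{item:rkcvg1} and dominated convergence to handle the indicator functions $\chi_\ve^{a,k}\to\chi_\ve$. The Lipschitz bound then descends to $\rho^{(\ve)}$ by lower semicontinuity of the $\tilde{H}^{-1}$--norm under weak convergence, and the continuity in $t$ follows. \emph{The main obstacle} is the care needed at the moving boundary $\tilde{r}_k(\ve,t)$: producing the exact cancellation demands the kinematic identity $\partial_t\tilde{r}_k=u_k(\tilde{r}_k,\cdot)$ pointwise in $t$, and controlling the limit $k\to\infty$ of terms evaluated at $\tilde{r}_k(\ve,t)$ requires the uniform-in-$t$ convergence of the particle paths established in Lemma \ref{lemma:rkcvg} rather than mere almost-everywhere convergence.
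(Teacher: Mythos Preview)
Your strategy is the same as the paper's: derive a uniform Lipschitz-in-time bound in $\tilde H^{-1}$ from the continuity equation, exploit the boundary cancellation at the moving interface via the kinematic relation, and extract a limit by an Arzel\`a--Ascoli argument. The paper executes this by passing to Lagrangian coordinates (where the equation holds exactly for all $k$) and invoking Proposition~\ref{prop:scHm1}; your Eulerian version is equivalent. Two points need tightening.

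First, the extended pair $(\rho_k,u_k)$ from \eqref{eqs:extfunc} does \emph{not} satisfy the continuity equation where the far-field cutoff $\varphi_k$ is active, i.e.\ on $[\tilde r_k(k,t)/2,\tilde r_k(k,t)]$. Your integration-by-parts identity on $[a,L]$ (and on $[\tilde r_k(\ve,t),L]$) is valid only once $k$ is large enough that $\tfrac12\tilde r_k(k,t)>L$ for all $t\in[0,T]$, which follows from the lower bound $\tilde r_k(k,t)\ge (nk/C(a))^{1/n}$. This is harmless for the limit but must be stated; the paper sidesteps the issue by computing in Lagrangian variables where the cutoff merely produces a controllable $\partial_r\varphi_k$ term. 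Also, the kinematic identity $\partial_t\tilde r_k(\ve,t)=u_k(\tilde r_k(\ve,t),t)$ you need is a property of the $k$-level construction (equations \eqref{eqs:kinetic} and \eqref{eqs:transfunc}--\eqref{eqs:extfunc}), not of Theorem~\ref{thm:WSCEx}\ref{item:WSCEx3}, which concerns the $k\to\infty$ limit $\tilde r_a$.

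Second, in part~\ref{item:rhoScvg2} your final passage to the limit has a gap: weak-$*$ convergence of $\rho_k^{(\ve)}$ in $L^\infty(0,T;L^2)$ gives no information at individual times, so you cannot invoke lower semicontinuity of the $\tilde H^{-1}$ norm at fixed $t_1,t_2$ directly. The fix is the same mechanism you already used in part~\ref{item:rhoScvg1}: the equicontinuity in $\tilde H^{-1}$ together with the compact embedding $L^2([r_\ve,L],r^m\,\dif r)\hookrightarrow\tilde H^{-1}([r_\ve,L],r^m\,\dif r)$ yields, via Arzel\`a--Ascoli (or the paper's Proposition~\ref{prop:scHm1}), strong convergence in $C^0([0,T];\tilde H^{-1})$ along a subsequence, after which the Lipschitz bound passes trivially and the limit is identified with $\rho^{(\ve)}$ using Lemma~\ref{lemma:rkcvg} and Lemma~\ref{lemma:rhoWeak}.
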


\begin{proof}
Let $\phi\in \tilde{H}^1_0([a,\infty),r^m \dif r)$.
Then, by  \eqref{eqs:extfunc} and \eqref{eqs:coorderiv}--\eqref{eqs:extcut},
\begin{align*}
&\int_{a}^{\infty} \big\{\rho_k(r,t_2) - \rho_k(r,t_1)\big\} \phi(r)\,r^m \dif r
= \int_{t_1}^{t_2} \int_{a}^{\infty} \partial_t \rho_k (r,t)  \phi(r)\,r^m \dif r \dif t\\
&= \int_{t_1}^{t_2} \int_{a}^{\infty} (\varphi_k \partial_t \overline{v}_k^{-1})(r,t)  \phi(r)\,r^m \dif r \dif t = \int_{t_1}^{t_2} \int_{a}^{\tilde{r}_k(k,t)} (\varphi_k \partial_t \overline{v}_k^{-1})(r,t)  \phi(r)\,r^m \dif r \dif t\\
&= - \int_{t_1}^{t_2} \int_{0}^{k} (\varphi_k \,\phi)(\tilde{r}_k(x,t),t) \big(\tilde{v}_k^{-1} \D_t \tilde{v}_k
 +\tilde{r}_k^m \tilde{u}_k\D_x\tilde{v}_k^{-1}\big)(x,t)\, \dif x \dif t.
\end{align*}
Integrating by parts and applying the continuity equation $\eqref{eqs:LFNS-k}_1$, the boundary condition
$\tilde{u}_k(0,t)=\tilde{u}_k(k,t)$, and Theorem \ref{thm:priori}\ref{item:prio0}--\ref{item:prio1}, we have
	\begin{align*}
		&\int_{a}^{\infty} \big(\rho_k(r,t_2) - \rho_k(r,t_1)\big) \phi(r)\, r^m \dif r\\
	&= \int_{t_1}^{t_2} \int_{0}^{k} \tilde{u}_k (x,t) \big(\partial_r\varphi_k\, \phi
	+ \partial_r\phi\,\varphi_k \big)(\tilde{r}_k(x,t),t)\,\dif x \dif t \\
	&\le  C\int_{t_1}^{t_2} \Big( \int_{0}^{k} \dfrac{\snorm{\tilde{u}_k}^2}{\tilde{v}_k}\dif x\Big)^{\frac{1}{2}}
	\Big(\int_{0}^{k}\tilde{v}_k (x,t)\big(\snorm{\phi}^2 +\snorm{\partial_r\phi}^2\big)(\tilde{r}_k(x,t))\,\dif x  \Big)^{\frac{1}{2}} \dif t\\
		&\le C(a) \bnorm{\phi(\cdot)}_{H^1([a,\infty),r^m\dif r)} \snorm{t_2-t_1}.
	\end{align*}
	Thus, the following equicontinuity inequality holds:
	\begin{equation*}
		\sup_{k\in\mathbb{N}} \sbnorm{\rho_k(\cdot,t_2)-\rho_k(\cdot,t_1)}_{\tilde{H}^{-1}([a,\infty),r^m \dif r)} \le C(a) \snorm{t_2-t_1}.
	\end{equation*}
	By the same calculation, $\rho_k \in L^{\infty}(0,T;\tilde{H}^{-1}([a,L],r^m \dif r))$ for all $(k,L)\in\mathbb{N}^2$.
	In addition, it follows from Lemma \ref{lemma:HREext} that
	\begin{equation*}
		\sup_{k\in\mathbb{N}} \sup_{t\in[0,T]} \int_{a}^{\infty} \snorm{\rho_k -1 }^2 (r,t)\, r^m \dif r\le C(a).
	\end{equation*}
Then we can apply Proposition \ref{prop:scHm1} 
to obtain $f$ such that, for all $L\in \mathbb{N}$,
	\begin{equation*}
		\begin{dcases}
		f\in C^0\big( [0,T] ; \tilde{H}^{-1}([a,L], r^m \dif r) \big), \ \ f-1\in L^{\infty}( 0,T; L^2( [a,\infty), r^m \dif r ) ),\\
		\lim\limits_{k\to \infty}\sup\limits_{t\in[0,T]}\sbnorm{\rho_{k}(\cdot,t)-f(\cdot,t)}_{\tilde{H}^{-1}( [a,L], r^m \dif r)} = 0,\\
		\sbnorm{f(\cdot,t_1)-f(\cdot,t_2)}_{\tilde{H}^{-1}( [a,L], r^m \dif r)} \le C(a) \snorm{t_1-t_2}
		 \qquad \text{for all $t_1,\,t_2 \in[0,T]$.}
		\end{dcases}
	\end{equation*}
	By Lemma \ref{lemma:rhoWeak} and the fundamental lemma of calculus, we see that $\rho=f$ for {\it a.e.} $(r,t)\in[a,\infty)\times[0,T]$.
This completes the proof of \ref{item:rhoScvg1}.

For each $\ve>0$ and $k\in\mathbb{N}$, define $\rho_k^{(\ve)} (r,t) \vcentcolon = \rho_k(r,t) \chi_\ve^k(r,t)$
with
\begin{equation}\label{temp:chivek}
		\chi_\ve^k(r,t) \vcentcolon = \begin{dcases*}
			1 & if $t\in[0,T]$ and $r\in [\tilde{r}_k(\ve,t),\infty)$,\\
			0 & otherwise.
		\end{dcases*}
\end{equation}
If $k\ge M(\ve,a)\vcentcolon= \ve C^2(a) + \frac{2^n -1}{n}a^n C(a)>0$,
then it follows from construction \eqref{eqs:COfunc} that $\varphi_k(\tilde{r}_k(\ve,t),t) = 1$ for all $t\in[0,T]$.
Thus, by \eqref{eqs:extfunc} and \eqref{eqs:transfunc},
\begin{equation}\label{temp:vekMa}
(\rho_k,u_k,e_k)(\tilde{r}_k(\ve,t),t) = (\overline{v}_k^{-1},\overline{u}_k,\overline{e}_k)(\tilde{r}_k(\ve,t),t) = (\tilde{v}_k^{-1},\tilde{u}_k,\tilde{e}_k)(\ve,t).
\end{equation}
Let $\phi\in \tilde{H}^1_0([0,\infty),r^m \dif r)$. Then, by \eqref{temp:vekMa} and the Leibniz rule,
for all $k\ge M(\ve,a)$,
\begin{align}\label{veSctemp0}
&\int_{0}^{\infty} \big(\rho_k^{(\ve)}(r,t_2) - \rho_k^{(\ve)}(r,t_1)\big) \phi(r)\,r^m \dif r \nonumber\\
&= \int_{t_1}^{t_2}\int_{\tilde{r}_k(\ve,t)}^{\infty} \phi \partial_t \rho_k\,r^m \dif r \dif t
- \int_{t_1}^{t_2} \tilde{r}_k^{\,m}(\ve,t) \phi(\tilde{r}_k(\ve,t)) \tilde{u}_k(\ve,t) \tilde{v}_k^{-1}(\ve,t)\,\dif t\nonumber\\
&=\vcentcolon I_1 + I_2.
\end{align}
Using \eqref{eqs:extfunc} and \eqref{eqs:coorderiv}--\eqref{eqs:transfunc}, $I_1$ can be written as
\begin{align*}
I_1
&= \int_{t_1}^{t_2} \int_{\tilde{r}_k(\ve,t)}^{\tilde{r}_k(k,t)} \varphi_k \partial_t \overline{v}_k^{-1} (r,t)  \phi(r)\, r^m \dif r \dif t\\
&= \int_{t_1}^{t_2} \int_{\ve}^{k} \big(\varphi_k \, \phi\big)(\tilde{r}_k(x,t),t) \big\{ \tilde{v}_k \D_t \tilde{v}_k^{-1} - \tilde{r}_k^m \tilde{u}_k \D_x \tilde{v}_k^{-1}  \big\}(x,t)\,\dif x \dif t.
\end{align*}
Integrating by parts and using the boundary condition $\tilde{u}_k(k,t)=0$, the continuity equation $\eqref{eqs:LFNS-k}_1$, and Theorem \ref{thm:priori}\ref{item:prio0}--\ref{item:prio1},
it follows that
\begin{align*}
I_1 &= \int_{t_1}^{t_2} \int_{\ve}^{k} (\varphi_k\cdot \phi)(\tilde{r}_k(x,t),t) \big\{ - \tilde{v}_k^{-1}\D_x(\tilde{r}_k^m\tilde{u}_k)-\tilde{r}_k^m\tilde{u}_k\D_x\tilde{v}_k^{-1} \big\}\,\dif x \dif t\\
&=\int_{t_1}^{t_2}  \phi(\tilde{r}_k(\ve,t)) (\tilde{r}_k^m\tilde{u}_k \tilde{v}_k^{-1})(\ve,t)\,\dif t
		+ \int_{t_1}^{t_2} \int_{\ve}^{k} \tilde{u}_k (x,t)
		\big(\partial_r\varphi_k\, \phi + \partial_r\phi\,\varphi_k \big)(\tilde{r}_k(x,t),t)
		\,\dif x \dif t\\
&\le -I_2 + C\int_{t_1}^{t_2}\Big(\int_{\ve}^{k}\dfrac{\snorm{\tilde{u}_k}^2}{\tilde{v}_k}\dif x\Big)^{\frac{1}{2}} \Big(\int_{\ve}^{k} \tilde{v}_k (x,t) \big(\snorm{\phi}^2 + \snorm{\partial\phi}^2 \big)(\tilde{r}_k(x,t))\,
		 \dif x  \Big)^{\frac{1}{2}} \dif t\\
		&\le - I_2 + C(\ve) \sbnorm{\phi}_{H^1([0,\infty),r^m\dif r)} \snorm{t_2-t_1}.
	\end{align*}
	Substituting this back into \eqref{veSctemp0}, it follows that
	the following equicontinuity inequality holds:
	\begin{equation}\label{veSctemp1}
		\sup_{k\in\mathbb{N}} \sbnorm{\rho_k^{(\ve)}(\cdot,t_2)-\rho_k^{(\ve)}(\cdot,t_1)}_{\tilde{H}^{-1}([0,\infty),r^m \dif r)} \le C(\ve) \snorm{t_2-t_1}.
	\end{equation}
Let $L\in\mathbb{N}$ be such that $\tilde{r}_k(1,t)< L$ for all $(k,t)\in\mathbb{N}\times[0,T]$.
By the same derivation, $\rho_k^{(\ve)} \in L^{\infty}( 0,T; \tilde{H}^{-1}([0,L],r^m \dif r) )$ for all $(k,L)\in\mathbb{N}$. Moreover, set $r_{\ve}=\frac{1}{2}\inf_{t\in[0,T]}\tilde{r}(\ve,t)>0$.
Then, by Lemma \ref{lemma:rkcvg}, there exists $N_\ve\in\mathbb{N}$ such that $	r_{\ve}<\tilde{r}_k(\ve,t)$
for all $k\ge N_\ve$ and $t\in[0,T]$. From \eqref{veSctemp1} and Lemma \ref{lemma:HREext}, it follows that
\begin{equation*}
\begin{dcases}
\sup_{k\ge N_\ve}\sup\limits_{t\in[0,T]} \int_{r_\ve}^{\infty} \snorm{\rho^{(\ve)}_k -1}^2(r,t)\,r^m \dif r \le C(\ve),\\
\sup_{k\ge N_\ve} \sbnorm{\rho_k^{(\ve)}(\cdot,t_2)-\rho_k^{(\ve)}(\cdot,t_1)}_{\tilde{H}^{-1}([r_\ve,L],r^m \dif r)}
\le C(\ve) \snorm{t_2-t_1}.
\end{dcases}
\end{equation*}
Thus, we can apply Proposition \ref{prop:scHm1}
to obtain $f_\ve$
such that, for all $L\in \mathbb{N}$, 	
\begin{equation}\label{veSctemp2}
\begin{dcases}
f_\ve\in C^0\big( [0,T] ; \tilde{H}^{-1}( [r_\ve,L], r^m \dif r) \big), \ \ f_\ve-1\in L^{\infty}\big( 0,T; L^2( [r_\ve,\infty), r^m \dif r) \big),\\
\lim\limits_{k\to \infty}\sup\limits_{t\in[0,T]}\sbnorm{\rho_{k}^{(\ve)}(\cdot,t)-f_\ve(\cdot,t)}_{\tilde{H}^{-1}( [r_\ve,L], r^m \dif r)} = 0,\\
\sbnorm{f_\ve(\cdot,t_1)-f_\ve(\cdot,t_2)}_{\tilde{H}^{-1}( [r_\ve,L], r^m \dif r )} \le C(\ve) \snorm{t_1-t_2}
\qquad \text{for all $t_1,\,t_2 \in[0,T]$.}
\end{dcases}
\end{equation}
Let $\phi\in C^{\infty}_c([a,\infty)\times[0,T])$. Then
\begin{equation*}
\begin{aligned}
&\int_{0}^{T}\int_{r_\ve}^{\infty} \phi\big( \rho^{(\ve)} - f_\ve \big)\,\dif r \dif t\\
&= \int_{0}^{T}\int_{r_\ve}^{\infty} \phi \chi_\ve ( \rho  - \rho_k )\,\dif r \dif t
+ \int_{0}^{T}\int_{r_\ve}^{\infty} \phi \rho_k ( \chi_\ve - \chi_\ve^k )\,\dif r \dif t
+ \int_{0}^{T}\int_{r_\ve}^{\infty} \phi ( \rho_k^{(\ve)} - f_\ve )\,\dif r \dif t \\
&=: \sum_{i=1}^{3} \text{II}_i^k.
\end{aligned}
\end{equation*}

By Lemma \ref{lemma:rhoWeak}, $\text{II}_1^k \to 0$ as $k\to\infty$.
By \eqref{temp:chive}--\eqref{temp:chivek} and Lemma \ref{lemma:rkcvg},
$\chi_{\ve}^{k}\to\chi_\ve$ {\it a.e.} as $k\to\infty$.
Since $\rho_k \le C(a)$ from Lemma \ref{lemma:HREext},
by the dominated convergence theorem,
$\text{II}_2^k \to 0$ as $k\to\infty$.
Finally, $\text{II}_3^k \to 0$ as $k\to\infty$ by \eqref{veSctemp2}.
Therefore,
$\rho^{(\ve)}=f_\ve$ for {\it a.e.} $(r,t)\in[r_\ve,\infty)\times[0,T]$.
Since $\rho^{(\ve)}=\rho^{(\ve)}_k=0$ for $r\in[0,r_\ve]$ and $k\ge N_\ve$,
Lemma \ref{lemma:rhoScvg}\ref{item:rhoScvg2} is proved.
\end{proof}

\begin{lemma}\label{lemma:Relation}
The following relations hold{\rm :}
\begin{equation}\label{temp:relation}
\begin{aligned}
&x = \int_{a}^{\tilde{r}(x,t)} \rho(r,t)\,r^m \dif r &&  \text{ for a.e. $t\in [0,T]$ and for each $x>0$,}\\
&\tilde{r}(x,t) = \tilde{r}_a^0(x) + \int_{0}^{t} u( \tilde{r}(x,s), s)\,\dif s
 && \text{ for all $(x,t)\in[0,\infty)\times[0,T]$,}
\end{aligned}
\end{equation}
where the initial data $\tilde{r}_a^0(x)$ is the function constructed in {\rm \S \ref{subsec:ffaprox}},
which satisfies
\begin{equation*}
x = \int_{a}^{\tilde{r}_a^0(x)} \rho_a^0(r)\,r^m \dif r \qquad \text{for all $x\in[0,\infty)$}.
\end{equation*}
\end{lemma}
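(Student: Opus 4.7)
The plan is to establish both identities by deriving their exact analogues at the approximate level, which hold by construction of the Lagrangian coordinate, and then passing to the limit $k \to \infty$ using the convergence results of Lemmas \ref{lemma:rkcvg}--\ref{lemma:rhoScvg} and \ref{lemma:klimholder}. Throughout, we suppress the parameter $a$ as in \eqref{asuppress}.

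I first fix $x > 0$ and observe that, by Lemma \ref{lemma:rbd} and the bound $\tilde{r}_k(k,t) \ge C^{-1}(a) k^{1/n}$, there exists $N(x,a) \in \mathbb{N}$ such that $\tilde{r}_k(x,t) < \tfrac{1}{2} \tilde{r}_k(k,t)$ for all $k \ge N(x,a)$ and $t \in [0,T]$. On this range the cut-off $\varphi_k$ in \eqref{eqs:COfunc} is identically $1$, so by \eqref{eqs:extfunc} and \eqref{eqs:transfunc} we have $\rho_k(r,t) = \ov{v}_k^{-1}(r,t)$ for $r \in [a, \tilde{r}_k(x,t)]$, and likewise $\tilde{r}_k^0(x) = \tilde{r}_a^0(x)$ exactly. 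The mass conservation $\tilde{r}_k^n(x,t) = a^n + n\int_0^x \tilde{v}_k(y,t)\dif y$ combined with the change of variables $r = \tilde{r}_k(y,t)$ yields the identity
\begin{equation*}
x = \int_a^{\tilde{r}_k(x,t)} \rho_k(r,t)\, r^m \dif r,
\end{equation*}
while \eqref{eqs:kinetic} together with $\tilde{u}_k(y,s) = \ov{u}_k(\tilde{r}_k(y,s),s) = u_k(\tilde{r}_k(y,s),s)$ on this range gives
\begin{equation*}
\tilde{r}_k(x,t) = \tilde{r}_a^0(x) + \int_0^t u_k(\tilde{r}_k(x,s), s)\, \dif s.
\end{equation*}

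For the mass identity, I would fix $t \in [0,T]$ and split
\begin{equation*}
\int_a^{\tilde{r}_k(x,t)}\!\!\! \rho_k r^m \dif r - \int_a^{\tilde{r}(x,t)}\!\!\! \rho\, r^m \dif r = \int_{\tilde{r}(x,t)}^{\tilde{r}_k(x,t)}\!\!\! \rho_k r^m \dif r + \int_a^{\tilde{r}(x,t)}\!\!\! (\rho_k-\rho) r^m \dif r.
\end{equation*}
The first integral vanishes as $k \to \infty$ by the uniform convergence $\tilde{r}_k \to \tilde{r}$ (Lemma \ref{lemma:rkcvg}) and the uniform bound $\rho_k \le C(a)$ (Lemma \ref{lemma:HREext}). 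For the second, I approximate the indicator $\chi_{[a,\tilde{r}(x,t)]}$ by $\phi_\delta \in \tilde{H}^1_0([a,L], r^m \dif r)$ with $\|\phi_\delta - \chi\|_{L^2(r^m\dif r)} \le \delta$ (here $L$ is chosen so that $\sup_k \tilde{r}_k(x,t) < L$) and write the second integral as $\langle \rho_k - \rho,\, \phi_\delta \rangle_{\tilde{H}^{-1},\tilde{H}^1_0} + \int_a^L (\rho_k-\rho)(\chi - \phi_\delta) r^m \dif r$. By the strong convergence $\rho_k \to \rho$ in $C^0([0,T]; \tilde{H}^{-1}([a,L], r^m \dif r))$ from Lemma \ref{lemma:rhoScvg}\ref{item:rhoScvg1}, the first pairing tends to zero as $k \to \infty$ uniformly in $t$; the remainder is bounded by $C(a) \delta$ by the uniform $L^\infty$-bounds on $\rho_k$ and $\rho$. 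Sending $k \to \infty$ then $\delta \to 0$ establishes $\eqref{temp:relation}_1$ for every $t$ at which the $L^\infty$-representative of $\rho(\cdot,t)$ is defined, hence for a.e. $t \in [0,T]$.

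For the trajectory identity, I would use that $\tilde{r}_k(x,s) \to \tilde{r}(x,s)$ uniformly on $[0,T]$ (Lemma \ref{lemma:rkcvg}) and that $u_k \to u$ uniformly on compact subsets of $[a,\infty) \times (0,T]$ (Lemma \ref{lemma:klimholder}). Since the trajectory $\{\tilde{r}(x,s) : s \in [s_0, T]\}$ lies in a compact subset of $[a,\infty)$ for every $s_0 > 0$, a standard argument combining these two convergences gives $u_k(\tilde{r}_k(x,s), s) \to u(\tilde{r}(x,s), s)$ pointwise for each $s \in (0, T]$. The uniform estimate $|u_k(r,s)| \le C(a)\, \sigma^{-1/4}(s)$ from Lemma \ref{lemma:HREext} provides the $L^1([0,T])$-integrable dominant, and dominated convergence yields $\eqref{temp:relation}_2$ for every $(x,t) \in (0,\infty) \times [0,T]$; the case $x = 0$ is immediate from $\tilde{r}_k(0,t) = a = \tilde{r}_a^0(0)$ and $u_k(a,s) = 0$.

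The principal obstacle is the first identity: neither the weak-star convergence $\rho_k \overset{\ast}{\rightharpoonup} \rho$ in $L^\infty(0,T;L^2)$ nor the $C^0([0,T];\tilde{H}^{-1})$ strong convergence alone suffices, since the relevant test function is the discontinuous indicator $\chi_{[a,\tilde{r}(x,t)]}$ and the claim is required pointwise in $t$. The synthesis of the strong $C^0(\tilde{H}^{-1})$ convergence with the $L^\infty$-bound on $\rho_k$ is precisely what enables the smooth approximation argument to go through; the secondary subtlety is the integrable time-singularity of $u$ at $t=0$, which is controlled by the $\sigma^{-1/4}(s)$ bound.
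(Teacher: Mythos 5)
Your proposal is correct, and the trajectory identity is handled essentially as in the paper (uniform convergence of the particle paths, Hölder/Lipschitz continuity of $u_k$ in $r$, locally uniform convergence of $u_k$, and the $C(a)\sigma^{-1/4}$-dominant). For the mass identity, however, you take a genuinely different route from the paper. The paper does \emph{not} attempt to pass to the limit pointwise in $t$: it multiplies the difference $x - \int_a^{\tilde r(x,t)}\rho\,r^m\dif r$ by an arbitrary $\chi\in C_c^\infty([0,T])$, integrates in $t$, and exploits only the weak-star convergence $\rho_k - 1 \overset{\ast}{\rightharpoonup} \rho - 1$ in $L^\infty(0,T;L^2)$ from Lemma \ref{lemma:rhoWeak}, observing that $\chi(t)\,r^m\mathbbm{1}_{[a,\tilde r(x,t)]}(r)$ is an admissible test function in $L^1(0,T;L^2)$. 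The a.e.-$t$ identity then follows from the fundamental lemma of the calculus of variations. Your argument instead fixes $t$ and invokes the strictly stronger uniform $C^0([0,T];\tilde H^{-1}([a,L],r^m\dif r))$ convergence from Lemma \ref{lemma:rhoScvg}\ref{item:rhoScvg1}, together with an $\tilde H^1_0$-approximation of the spatial indicator, using the uniform $L^\infty$ bounds on $\rho_k$ and $\rho$ to control the approximation error. Both are valid; the paper's approach uses lighter machinery (weak-star convergence plus a duality/localization step) at the cost of being formulated in the sense of distributions in $t$, whereas yours buys a more direct pointwise-in-$t$ argument at the cost of leaning on a stronger compactness lemma. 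One small imprecision in your write-up: the identity $\tilde r_k(x,0)=\tilde r_a^0(x)$ is governed by the \emph{initial-data} cut-off $\varphi_{a,k}^0$, with threshold $N(x)$ depending on $C_0$ (see Proposition \ref{prop:r0diff}), not by the time-dependent cut-off $\varphi_k$ whose threshold $M(a,x)$ depends on $C(a)$; the paper is careful to take $k$ beyond both, and you should too, though the fix is trivial.
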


\begin{proof}
Let $(x,t)\in [0,\infty)\times[0,T]$. Then, for $k \ge M(a,x) \vcentcolon=\frac{2^n-1}{n}C(a)a^n + 2^n C(a)^2 x$,
$2\tilde{r}_k(x,t) \le \tilde{r}_k(k,t)$.
Thus, by \eqref{eqs:extfunc}, if $k\ge M(a,x)$ and $r\in[a,\tilde{r}_k(x,t)]$, then $\rho_k (r,t) = \varphi_k \overline{v}_k^{-1} (r,t) + ( 1- \varphi_k(r,t) ) = \overline{v}_k^{-1} (r,t)$.
Using this and \eqref{eqs:Jac}--\eqref{eqs:transfunc}, we have
\begin{equation*}
\int_{a}^{\tilde{r}_k(x,t)} \rho_k(r,t)\,r^m \dif r
= \int_{a}^{\tilde{r}_k(x,t)} \overline{v}_k^{-1} (r,t)\,r^m \dif r
= \int_{0}^{x} \tilde{v}_k^{-1}(y,t) \tilde{v}_k (y,t)\,\dif y = x.
\end{equation*}
Using the above identity leads to
\begin{align*}
x-\int_{a}^{\tilde{r}(x,t)} \rho(r,t)\,r^m \dif r
= \int_{\tilde{r}(x,t)}^{\tilde{r}_k(x,t)} \rho_k (r,t)\,r^m \dif r
 +\int_{a}^{\tilde{r}(x,t)} ( \rho_k - \rho)(r,t)\, r^m \dif r.
\end{align*}
Let $\chi(t)\in C^{\infty}_c([0,T])$. Multiplying it to the above and integrating in time,
it follows that
\begin{align*}
&\int_{0}^{T} \Big( x-\int_{a}^{\tilde{r}(x,t)} \rho(r,t)\,r^m \dif r \Big) \chi(t) \dif t \\
&= \int_{0}^{T}\int_{\tilde{r}(x,t)}^{\tilde{r}_k(x,t)} \rho_k (r,t) \chi(t)\,r^m \dif r \dif t
   + \int_{0}^{T}\int_{a}^{\tilde{r}(x,t)} (\rho_k -\rho)(r,t) \chi(t)\,r^m \dif r \dif t\\
&=: I_k^{(1)} + I_k^{(2)}.
\end{align*}
$I_k^{(1)}$ is estimated by using Lemma \ref{lemma:rkcvg} and the upper bound $\rho_k \le C(a)$
from Lemma \ref{lemma:HREext}:
\begin{align*}
\snorm{I_k^{(1)}}
\le \dfrac{C(a)T}{n}\sbnorm{\chi(\cdot)}_{L^\infty}
 \sup_{ 0\le t \le  T}\snorm{\tilde{r}_k^n(x,t)-\tilde{r}^n(x,t)} \to 0 \qquad\text{as $k\to\infty$}.
\end{align*}
For $I_k^{(2)}$, observe that
$\chi(t) r^m \mathbbm{1}_{[a,\tilde{r}(x,t)]}(r) \in L^{1}(0,T;L^2( [a,\infty), \dif r))$, where $\mathbbm{1}_E(r)$ denotes the indicator function for spatial set $E\subset[a,\infty)$.
By Lemma \ref{lemma:rhoWeak}, it follows that
\begin{align*}
\lim\limits_{k\to\infty} I_k^{(2)}
=\lim\limits_{k\to\infty}\int_{0}^{T}\int_{a}^{\infty}(\rho_k-\rho)(r,t) \chi(t)
\mathbbm{1}_{[a,\tilde{r}(x,t)]}(r)\,r^m \dif r \dif t = 0.
\end{align*}
Since $|I_k^{(1)}|+|I_k^{(2)}|\to 0$ as $k\to\infty$, then
\begin{align*}
\int_{0}^{T} \Big( x-\int_{a}^{\tilde{r}(x,t)} \rho(r,t) r^m \dif r \Big) \chi(t)\,\dif t=0
\qquad \text{for each $\chi(t) \in C_c^{\infty}([0,T])$}
\end{align*}
so that $\eqref{temp:relation}_1$ follows.

Next, by $\eqref{eqs:LFNS-k}_1$ and $\eqref{eqs:LFNS-k}_4$, $\tilde{r}_k(x,t)$ satisfies
\begin{equation*}
\tilde{r}_k(x,t) = \tilde{r}_k(x,0) + \int_{0}^{t} \tilde{u}_k(x,s)\,\dif s
\qquad\text{ for $(x,t)\in[0,k]\times[0,T]$},
\end{equation*}
where, from $\eqref{eqs:LFNS-k}_4$ and \S \ref{subsec:ffaprox}, $\tilde{r}_k(x,0)$ is given by
	\begin{equation*}
		\tilde{r}_k(x,0) = \Big( a^n + n \int_{0}^{x} \tilde{v}_{a,k}^0(y)\,\dif y \Big)^{\frac{1}{n}}
		= \Big( a^n + n \int_{0}^{x} \dfrac{1}{\rho_{a,k}^0(\tilde{r}_a^0(y))} \dif y \Big)^{\frac{1}{n}}.
	\end{equation*}
Fix $(x,t)\in[0,\infty)\times[0,T]$. If $k \ge M(a,x)$, then, by \eqref{eqs:extfunc},
$u_k(\tilde{r}_k(x,t),t) = \tilde{u}_k(x,t)$ and
\begin{equation}\label{temp:rela1}
\tilde{r}_k(x,t) = \tilde{r}_k(x,0) + \int_{0}^{t} u_k(\tilde{r}_k(x,s),s)\,\dif s.
\end{equation}
Moreover, by Proposition \ref{prop:r0diff} in \S \ref{subsec:ffaprox},
if $k \ge N(x) \vcentcolon=\frac{2^n-1}{n}C_0a^n+ 2^n C_0^2 x$, then
\begin{align*}
\dfrac{1}{2}\tilde{r}_a^0(k)
=& \dfrac{1}{2} \Big( a^n + n \int_{0}^{k} \dfrac{1}{\rho_{a,k}^0(\tilde{r}_a^0(y))} \dif y \Big)^{\frac{1}{n}}
\ge \dfrac{1}{2}\big( a^n + n C_0^{-1} k \big)^{\frac{1}{n}} \\
&\ge ( a^n + n C_0 x )^{\frac{1}{n}} \ge \Big( a^n + n \int_{0}^{x} \dfrac{1}{\rho_a^0(\tilde{r}_a^0(y))} \dif y\Big)^{\frac{1}{n}} =\tilde{r}_a^0(x).
\end{align*}
By the construction of $\rho_k^0(r)$ in \eqref{def:appInitr1},
it follows that, if $k \ge N(x)$, then
\begin{align*}
\tilde{v}_{a,k}^0(x) = \dfrac{1}{\rho_{a,k}^0(\tilde{r}_a^0(x))}
= \Big( \rho_a^0(\tilde{r}_a^0(x)) \varphi_{a,k}^0(\tilde{r}_a^0(x))
      + 1 - \varphi_{a,k}^0(\tilde{r}_a^0(x)) \Big)^{-1}
= \dfrac{1}{\rho_a^0(\tilde{r}_a^0(x))}.
\end{align*}
Thus, by Proposition \ref{prop:r0diff}, for all $k\ge N(x)$,
\begin{align*}
\tilde{r}_k(x,0)
= \Big( a^n + n \int_{0}^{x} \dfrac{1}{\rho_{a,k}^0(\tilde{r}_a^0(y))} \dif y \Big)^{\frac{1}{n}}
= \Big( a^n + n \int_{0}^{x} \dfrac{1}{\rho_a^0(\tilde{r}_a^0(y))} \dif y \Big)^{\frac{1}{n}}
= \tilde{r}_a^0(x).
\end{align*}
This shows that, for each $x\in[0,\infty)$, $\tilde{r}_k(x,0)\to \tilde{r}_a^0(x)$ as $k\to\infty$.
Moreover, by Lemma \ref{lemma:rkcvg} and inequality \eqref{aholtemp2} obtained in Lemma \ref{lemma:klimholder},
it follows that, for all $(x,s)\in[0,\infty)\times[0,T]$,
\begin{equation*}
\snorm{u_k(\tilde{r}_k(x,s),s) - u_k(\tilde{r}(x,s),s)} \le C(a) \snorm{\tilde{r}_k(x,s)-\tilde{r}(x,s)} \to 0
\qquad\text{as $k\to\infty$.}
 \end{equation*}
By Lemma \ref{lemma:klimholder}, $\snorm{u_k(\tilde{r}(x,s),s)-u(\tilde{r}(x,s),s)} \to 0 $ for all $(x,s)\in[0,\infty)\times[0,T]$ as $k\to \infty$.
Combining these limits in \eqref{temp:rela1},
$\eqref{temp:relation}_2$ is shown by the dominated convergence theorem.
\end{proof}

\begin{lemma}\label{lemma:weak}
There exists a subsequence $($still denoted$)$ $(\rho_k,u_k, e_k )_{k\in\mathbb{N}}$ such that, as $k\to \infty$,
\begin{equation*}
\begin{dcases*}
(\sqrt{\sigma}\partial_r u_k, \sigma\partial_r e_k ) \overset{\ast}{\rightharpoonup} ( \sqrt{\sigma}\partial_r u, \sigma\partial_r e ) & in $L^{\infty}\big(0,T;L^2([a,\infty),r^m \dif r)\big)$,\\
( \partial_r u_k, \partial_r e_k, \sqrt{\sigma}\partial_t u_k, \sigma \partial_t e_k )
\rightharpoonup ( \partial_r u, \partial_r e, \sqrt{\sigma}\partial_t u, \sigma \partial_t e )
& in $L^{2}\big(0,T;L^2([a,\infty), r^m \dif r)\big)$,
\end{dcases*}
\end{equation*}
where $\sigma=\min\{1,t\}$. In addition,
\begin{align}\label{weakstmt1}
&\sup_{t\in[0,T]}\int_{a}^{\infty}
\big((\rho-1)^2 + u^4 + (e-1)^2 + \sigma\snorm{\partial_r u}^2 +\sigma^2 \snorm{\partial_r e}^2 \big)(r,t)\,r^m \dif r\nonumber\\
& +\int_{0}^{T}\int_{a}^{\infty} \Big\{ \snorm{\partial_r u}^2 + \snorm{u\partial_r u}^2 + \snorm{\partial_r e}^2  + \sigma\snorm{\partial_t u}^2 + \sigma^2 \snorm{\partial_t e}^2 \Big\}\,r^m \dif r \dif t \le C(a).
\end{align}
Furthermore, for each $\ve>0$,
\begin{align}\label{weakstmt2}
&\sup_{ 0\le t \le  T} \int_{\tilde{r}(\ve,t)}^{\infty}
\big((\rho-1)^2 + u^4 + (e-1)^2 + \sigma\snorm{\partial_r u}^2
  + \sigma^2 \snorm{\partial_r e}^2 \big)(r,t)\,r^m \dif r \nonumber \\
& +\int_{0}^{T} \int_{\tilde{r}(\ve,t)}^{\infty} \Big\{ \snorm{\partial_r u}^2 + \snorm{u\partial_r u}^2
 + \snorm{\partial_r e}^2  + \sigma\snorm{\partial_t u}^2 + \sigma^2 \snorm{\partial_t e}^2 \Big\}
 \,r^m \dif r\dif t \le C(\ve).
\end{align}
\end{lemma}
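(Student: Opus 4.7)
The plan is to extract weakly(-star) convergent subsequences via Banach--Alaoglu from the uniform estimates in Lemma~\ref{lemma:HREext}, to identify each weak limit with the appropriate distributional derivative of the limit functions $(\rho,u,e)$ built in Lemmas~\ref{lemma:klimholder}--\ref{lemma:rhoWeak}, and finally to obtain the claimed bounds \eqref{weakstmt1}--\eqref{weakstmt2} by weak(-star) lower semicontinuity of norms. Passage between the fixed inner boundary $r=a$ and the moving inner boundary $r=\tilde{r}(\ve,t)$ will rely on the uniform convergence $\tilde{r}_k(\ve,\cdot)\to \tilde{r}(\ve,\cdot)$ from Lemma~\ref{lemma:rkcvg} together with the gap argument already used in the proof of Lemma~\ref{lemma:klimholder}.

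First, Lemma~\ref{lemma:HREext} yields that each of the sequences $\sqrt{\sigma}\partial_r u_k$, $\sigma\partial_r e_k$ is uniformly bounded in $L^\infty(0,T;L^2([a,\infty),r^m\dif r))$, while $\partial_r u_k$, $\partial_r e_k$, $\sqrt{\sigma}\partial_t u_k$, $\sigma\partial_t e_k$ are uniformly bounded in $L^2(0,T;L^2([a,\infty),r^m\dif r))$. A standard diagonal extraction (combined with the separability of these spaces) produces a subsequence, still denoted by $k$, along which all six sequences converge weakly(-starly) to functions $P,Q,U,E,V,W$ in the corresponding spaces. To identify $U=\partial_r u$ in the sense of distributions, I would test against $\phi\in C^1_{\rm c}([a,\infty)\times(0,T))$ and write
\begin{equation*}
\int_0^T\!\int_a^\infty (\partial_r u_k)\phi\,r^m\dif r\dif t
=-\int_0^T\!\int_a^\infty u_k \big(\partial_r\phi+m\,r^{-1}\phi\big)\,r^m\dif r\dif t.
\end{equation*}
The left-hand side converges to $\int\!\!\int U\phi\,r^m\dif r\dif t$ by weak convergence (note $\phi\in L^2_{\rm loc}$), while the right-hand side converges to the corresponding integral with $u$ in place of $u_k$, by the uniform convergence on compact sets supplied by Lemma~\ref{lemma:klimholder}. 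The same argument applied with the weights $1$, $\sqrt{\sigma}$, $\sigma$ on the test function identifies $E=\partial_r e$, $P=\sqrt{\sigma}\partial_r u$, $Q=\sigma\partial_r e$, $V=\sqrt{\sigma}\partial_t u$, and $W=\sigma\partial_t e$ in the distributional sense, and then in the almost-everywhere sense since each is an $L^2_{\rm loc}$ function.

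Once the weak(-star) limits are so identified, the bounds listed in \eqref{weakstmt1} follow from weak(-star) lower semicontinuity of the $L^\infty(0,T;L^2)$-- and $L^2(0,T;L^2)$--norms applied to the uniform estimates of Lemma~\ref{lemma:HREext}. For the quadratic quantity $\snorm{u\partial_r u}^2$, I would combine the uniform convergence $u_k\to u$ on compact subsets of $[a,\infty)\times(0,T]$ with the weak $L^2$--convergence of $\partial_r u_k$ to conclude $u_k\partial_r u_k\rightharpoonup u\,\partial_r u$ in $L^2_{\rm loc}(r^m\dif r\dif t)$, so that $\bnorm{u\,\partial_r u}_{L^2}\le\liminf_k\bnorm{u_k\partial_r u_k}_{L^2}\le C(a)$, as desired. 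For the $L^\infty$--in--time part of the bounds (the suprema over $t\in[0,T]$), I would use the fact that weak-$\ast$ convergence in $L^\infty(0,T;L^2)$ gives only $\esssup\le\liminf$, so the supremum on the left of \eqref{weakstmt1} is naturally interpreted as an essential supremum on the set of Lebesgue points; for the specific combinations $(\rho-1,u^2,e-1)$ the required representative is recovered via the $C^0_t$ weak estimates already at our disposal (Lemma~\ref{lemma:rhoScvg} for $\rho$ and continuity from Lemma~\ref{lemma:klimholder} for $u,e$).

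For the exterior version \eqref{weakstmt2}, the key geometric input is that, exactly as in the proof of Lemma~\ref{lemma:klimholder}, strict monotonicity and continuity of $x\mapsto \tilde{r}(x,t)$ give $d\vcentcolon=\inf_{t\in[0,T]}\big\{\tilde{r}(\ve,t)-\tilde{r}(\ve/2,t)\big\}>0$, whence Lemma~\ref{lemma:rkcvg} yields $N_\ve\in\mathbb{N}$ with $\tilde{r}_k(\ve/2,t)<\tilde{r}(\ve,t)$ for every $k\ge N_\ve$ and $t\in[0,T]$. Consequently the half-space $\{r\ge\tilde{r}(\ve,t)\}$ is contained in $\{r\ge\tilde{r}_k(\ve/2,t)\}$ for $k\ge N_\ve$, and the uniform (in both $a$ and $k$) bounds of Lemma~\ref{lemma:HREext} (with $\ve/2$ in place of $\ve$) apply on that larger set with a constant $C(\ve/2)\le C(\ve)$. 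Multiplying by the indicator $\chi_{\ve}(r,t)$ of $\{r\ge\tilde{r}(\ve,t)\}$ before passing to the weak(-star) limit, and invoking lower semicontinuity of the relevant norms, yields \eqref{weakstmt2}. The main obstacle is precisely this identification step together with the handling of the moving interface: one must be careful that the domain of integration on which the bound is asserted is contained in the sets where the approximate bound holds uniformly in $k$, and that the weak limits commute with multiplication by the indicator of $\{r\ge\tilde{r}(\ve,t)\}$; the latter follows since $\chi_\ve\in L^\infty\cap L^2_{\rm loc}$ has compact $t$-support $[0,T]$ and bounded spatial cross-sections, and since $\tilde{r}_k(\ve,\cdot)\to\tilde{r}(\ve,\cdot)$ uniformly ensures the indicator sequences converge almost everywhere, so the dominated convergence theorem applies to the test integrals.
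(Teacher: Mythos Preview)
Your proposal is correct and follows essentially the same strategy as the paper: extract weak(-star) limits via Banach--Alaoglu from the uniform bounds of Lemma~\ref{lemma:HREext}, identify them as the distributional derivatives of $(u,e)$ through the pointwise convergence of Lemma~\ref{lemma:klimholder}, and deduce \eqref{weakstmt1}--\eqref{weakstmt2} by lower semicontinuity. The only noteworthy difference is in the treatment of \eqref{weakstmt2}: the paper multiplies by the $k$-dependent indicator $\chi_\ve^k$ of $\{r\ge\tilde r_k(\ve,t)\}$, proves $\chi_\ve^k\sqrt\sigma\,\partial_r u_k\overset{\ast}{\rightharpoonup}\chi_\ve\sqrt\sigma\,\partial_r u$ by showing the symmetric-difference indicators $\chi^{(k,\ve)}\to 0$ a.e., and then reads off the bound directly from Lemma~\ref{lemma:HREext} with constant $C(\ve)$; you instead multiply by the fixed indicator $\chi_\ve$ (so the weak convergence is immediate) and use the gap inclusion $\{r\ge\tilde r(\ve,t)\}\subset\{r\ge\tilde r_k(\ve/2,t)\}$ to bound the right-hand side by $C(\ve/2)$, which is then absorbed into a generic $C(\ve)$ (your inequality ``$C(\ve/2)\le C(\ve)$'' should be read in this generic-constant sense, since the constants in fact grow as $\ve\searrow 0$).
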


\begin{proof}
The proof only for $\sqrt{\sigma} \partial_r u_k \overset{\ast}{\rightharpoonup} \sqrt{\sigma} \partial_r u$
is presented, since the other limits can be shown via the same argument.

By Lemma \ref{lemma:HREext},
\begin{equation*}
\sup_{k\in\mathbb{N}} \sup_{t\in[0,T]} \int_{a}^{\infty} (\sigma \snorm{\partial_r u_k}^2)(r,t)\,\dif r
\le a^{-m} \sup_{k\in\mathbb{N}} \sup_{t\in[0,T]} \int_{a}^{\infty}
(\sigma\snorm{\partial_r u_k}^2)(r,t)\,r^m \dif r \le a^{-m}C(a).
\end{equation*}
Thus, there exists a subsequence (still denoted) $\{ \partial_r u_k \}_{k\in\mathbb{N}}$ such that
\begin{equation}\label{temp:weak1}
(\sqrt{\sigma}\partial_r u_k,\, r^{\frac{m}{2}}\sqrt{\sigma}\partial_r u_k)\,
\overset{\ast}{\rightharpoonup}\, (w_1,\, w_2) \qquad \text{ in $L^{\infty}\big(0,T; L^2( [a,\infty), \dif r )\big)$}.
\end{equation}
By \eqref{temp:weak1}, Lemma \ref{lemma:klimholder},
and the dominated convergence theorem,
we see that	the spatial weak derivative of $\sqrt{\sigma} u$ exists and $w_1 = \sqrt{\sigma} \partial_r u$.
As a result,
by \eqref{temp:weak1} and the fundamental lemma of calculus,
$w_2=r^{\frac{m}{2}}\sqrt{\sigma}\partial_r u$ for {\it a.e.} $(r,t)\in[a,\infty)\times[0,T]$.
Then \eqref{temp:weak1} implies that
$\sqrt{\sigma} \partial_r u_k \overset{\ast}{\rightharpoonup} \sqrt{\sigma} \partial_r u$
in $L^{\infty}(0,T;L^2([a,\infty),r^m \dif r))$.
By the weak-star lower semi-continuity of the Sobolev norm,
\begin{equation*}
\sbnorm{\sqrt{\sigma}\partial_r u}_{L^{\infty}(0,T;L^2([a,\infty),r^m \dif r))}
\le \liminf_{k\to\infty} \sbnorm{\sqrt{\sigma}\partial_r u_k}_{L^{\infty}(0,T;L^2([a,\infty),r^m \dif r))} \le C(a).
\end{equation*}
The other terms in \eqref{weakstmt1} can be proved by the same argument.

\smallskip
Next, we show \eqref{weakstmt2}. Only the proof for $\sqrt{\sigma}\partial_r u$ is provided, since the other terms
in \eqref{weakstmt2} can be obtained in the same way.
Let $\phi \in L^{1}(0,T;L^2([a,\infty),r^m \dif r))$. Then, by Lemma \ref{lemma:HREext},
\begin{align*}
&\Bignorm{\int_{0}^{T}\int_{\tilde{r}_k(\ve,t)}^{\infty} \sqrt{\sigma} \partial_r u_k \phi\,r^m \dif r \dif t
- \int_{0}^{T}\int_{\tilde{r}(\ve,t)}^{\infty} \sqrt{\sigma} \partial_r u \phi\,r^m \dif r \dif t }\\
&\le \Bignorm{\int_{0}^T \int_{\tilde{r}_k(\ve,t)}^{\tilde{r}(\ve,t)} \sqrt{\sigma} \partial_r u_k \phi\,r^m \dif r \dif t}
+ \Bignorm{\int_{0}^T \int_{\tilde{r}(\ve,t)}^{\infty} \sqrt{\sigma} (\partial_r u_k-\partial_r u) \phi\,r^m \dif r\dif t}\\
&\le  C(a)\int_{0}^{T}\Big(\int_{\tilde{r}_k(\ve,t)}^{\tilde{r}(\ve,t)}\snorm{\phi}^2\,r^m\dif r\Big)^{\frac{1}{2}}
\dif t + \Bignorm{\int_{0}^T \int_{\tilde{r}(\ve,t)}^{\infty} \sqrt{\sigma} (\partial_r u_k-\partial_r u) \phi\, r^m  \dif r\dif t}\\
&=: I_k^{(1)} + I_k^{(2)}.
\end{align*}
$I_k^{(2)}\to 0$ as $k\to\infty$ by \eqref{temp:weak1}.
For $I_k^{(1)}$, define
\begin{equation*}
\chi^{(k,\ve)}(r,t) \vcentcolon = \begin{dcases*}
1 & if $r\in[\tilde{r}_k(\ve,t),\tilde{r}(\ve,t)]$ or $r\in[\tilde{r}(\ve,t),\tilde{r}_k(\ve,t)]$,\\
0 & otherwise.
\end{dcases*}
\end{equation*}
By
Lemma \ref{lemma:rkcvg}, for each fixed $\ve>0$, $\chi^{(k,\ve)}(r,t) \to 0$ as $k\to\infty$
for {\it a.e.} $(r,t)\in[a,\infty)\times[0,T]$.
Moreover, since $\snorm{\chi^{(k,\ve)} \phi} \le \snorm{\phi}$,
by the dominated convergence theorem, as $k\to\infty$,
\begin{equation*}
I_k^{(1)} \vcentcolon
= C(a)\int_{0}^{T} \Big(\int_{\tilde{r}_k(\ve,t)}^{\tilde{r}(\ve,t)} \snorm{\phi}^2\,r^m \dif r\Big)^{\frac{1}{2}}
\dif t
=  C(a) \int_{0}^{T} \Big(\int_{a}^{\infty} \chi^{(\ve,k)}\snorm{\phi}^2\,r^m \dif r\Big)^{\frac{1}{2}} \dif t \to 0.
\end{equation*}
Let $\chi_\ve$ and $\chi_\ve^k$ be defined in \eqref{temp:chive}--\eqref{temp:chivek}.
Then the limits: $I_k^{(1)}$, $I_k^{(2)}\to 0$ imply the weak-star convergence:
$\chi_{\ve}^k\sqrt{\sigma} \partial_r u_k  \overset{\ast}{\rightharpoonup} \chi_\ve \sqrt{\sigma} \partial_r u$
as $k\to\infty$ in $L^{\infty}(0,T;L^2( [a,\infty), r^m \dif r ))$.
By the weak-star lower semi-continuity of the Sobolev norm,
it follows that
\begin{align*}
\sup_{t\in[0,T]}\int_{\tilde{r}(\ve,t)}^{\infty} (\sigma \snorm{\partial_r u}^2)(r,t) r^m \dif r
&= \sbnorm{\chi_\ve \sqrt{\sigma} \partial_r u}_{L^{\infty}(0,T;L^2([a,\infty),r^m \dif r))}\\
&\le  \liminf_{k\to\infty}\sbnorm{\chi_\ve^k \sqrt{\sigma} \partial_r u_k}_{L^{\infty}(0,T;L^2([a,\infty),r^m \dif r))} \\
&= \sup_{k\in\mathbb{N}}\sup_{t\in[0,T]}\int_{\tilde{r}_k(\ve,t)}^{\infty} \sigma \snorm{\partial_r u_k}^2(r,t) r^m \dif r  \le C(\ve).
\end{align*}
\end{proof}

In \S \ref{subsec:klimEnt}--\S \ref{subsec:indepPath} below,
we denote $(\rho,u,e)(r,t)$ in $[a,\infty)\times[0,T]$ as the limit functions obtained in Lemma \ref{lemma:klimholder}--\ref{lemma:rhoWeak} and keep the simplified notation \eqref{asuppress}
as in \S 5.4.

\subsection{Entropy estimate for the limit solution}\label{subsec:klimEnt}
\begin{lemma}\label{lemma:Ent}
There exists $C(T)>0$ such that
\begin{align}\label{temp:Ent0}
&\esssup\limits_{t\in[0,T]}\int_{a}^{\infty}
\Big(\rho\big(\frac{1}{2}\snorm{u}^2+ \psi(e)\big)+ G(\rho) \Big)(r,t)\, r^m \dif r
 + \kappa\int_{0}^{T}\int_{a}^{\infty}\dfrac{\snorm{\partial_r e}^2}{ e^2}\,r^m \dif r \dif t\nonumber \\
&+\int_{0}^{T}\int_{a}^{\infty} \Big\{ \big( \dfrac{2\mu}{n} + \lambda \big)
\dfrac{1}{e}\Bignorm{\partial_r u+m \dfrac{u}{r}}^2 + \dfrac{2m\mu}{n} \dfrac{1}{e}
\Bignorm{\partial_r u-\dfrac{u}{r}}^2\Big\}\,r^m\dif r\dif t \le C(T).
\end{align}
\end{lemma}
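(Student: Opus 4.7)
The plan is to derive \eqref{temp:Ent0} by passing to the limit $k\to\infty$ in the uniform entropy inequality \eqref{eqs:entEul} of Lemma \ref{lemma:entEstext}, exploiting the joint convexity of each integrand together with the convergences of the approximate sequence $(\rho_k,u_k,e_k)$ collected in Lemmas \ref{lemma:klimholder}--\ref{lemma:weak}. The structural observation that drives the argument is that $(u_k,e_k)$ converge pointwise a.e.\ and locally uniformly on $[a,\infty)\times(0,T]$ with uniform upper bounds and a uniform positive lower bound on $e_k$ (Lemmas \ref{lemma:HREext}--\ref{lemma:klimholder}), whereas $\rho_k$ converges only weakly-$*$; hence the products and quotients in the integrands can be treated by weak--strong convergence, or equivalently via Mazur's lemma applied to the convex integrand.

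For the dissipation integrals on $[0,T]\times[a,\infty)$, I would argue as follows. The uniform lower bound $e_k\ge C^{-1}(a)$ from Lemma \ref{lemma:HREext} combined with the pointwise convergence $e_k\to e$ from Lemma \ref{lemma:klimholder} gives, via dominated convergence, that $e_k^{-1}\to e^{-1}$ strongly in $L^p_{\mathrm{loc}}$ for every $p<\infty$. Coupling this with the weak $L^2$--convergence $\partial_r e_k\rightharpoonup \partial_r e$ from Lemma \ref{lemma:weak} yields $\partial_r e_k/e_k \rightharpoonup \partial_r e/e$ weakly in $L^2\big([0,T]\times[a,\infty);\,r^m\dif r\dif t\big)$, and the weak lower semicontinuity of the $L^2$--norm then bounds the $\kappa$--dissipation term by the $\liminf_k$ of its approximate counterpart, which is $\le C(T)$ by \eqref{eqs:entEul}. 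The two viscous dissipation terms $(\partial_r u \pm m u/r)^2/e$ are handled by an identical argument, with $1/\sqrt{e_k}$ playing the role of the strongly-converging factor and $\partial_r u_k$, $u_k/r$ supplying the weak $L^2$--convergences (the latter being obtained from the locally uniform convergence $u_k\to u$ on $[a,\infty)\times(0,T]$ and the fact that $r^{-1}\le a^{-1}$ on the relevant domain).

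For the $\mathrm{ess\,sup}$--in--$t$ term, it suffices to prove, for a.e.\ $t\in(0,T]$, the pointwise bound $\int_a^\infty[\rho(\tfrac12 u^2+\psi(e))+G(\rho)](r,t)\,r^m\dif r\le C(T)$. For such $t$, the $C^0([0,T];\tilde H^{-1}_{\mathrm{loc}})$--continuity of $\rho_k$ in Lemma \ref{lemma:rhoScvg}\ref{item:rhoScvg1} together with the uniform $L^\infty_tL^2_r$--bound on $\rho_k-1$ gives the pointwise-in-$t$ weak convergence $\rho_k(\cdot,t)\rightharpoonup\rho(\cdot,t)$ in $L^2([a,L],r^m\dif r)$ for each $L<\infty$. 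Since $\psi(e_k(\cdot,t))\to\psi(e(\cdot,t))$ and $u_k^2(\cdot,t)\to u^2(\cdot,t)$ uniformly on $[a,L]$ with uniform $L^\infty$--bounds, the products $\rho_k(\tfrac12 u_k^2+\psi(e_k))(\cdot,t)$ converge to $\rho(\tfrac12 u^2+\psi(e))(\cdot,t)$ weakly in $L^1([a,L],r^m\dif r)$, while convexity of $G$ together with weak--$L^2$ lower semicontinuity gives $\int_a^L G(\rho)\,r^m\dif r\le\liminf_k\int_a^L G(\rho_k)\,r^m\dif r$. Summing and applying \eqref{eqs:entEul} yields the bound on $[a,L]$; sending $L\to\infty$, with the uniform tail integrability supplied by \eqref{eqs:entEul}, completes the proof. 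The main technical obstacle is precisely this far-field control---all the convergences of $(\rho_k,u_k,e_k)$ being only local in $r$---which is resolved by the truncation-and-tail procedure above, the uniform entropy bound of Lemma \ref{lemma:entEstext} supplying the required uniform integrability as $r\to\infty$, and the failure of pointwise-in-$t$ weak convergence on a set of measure zero accounting for the appearance of $\mathrm{ess\,sup}$ in place of $\sup$ in \eqref{temp:Ent0}.
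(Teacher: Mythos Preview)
Your proposal is correct and follows essentially the same approach as the paper: weak--strong product convergence for the terms $\rho u^2$ and $\rho\psi(e)$ (the paper pairs strong $\tilde H^{-1}$--convergence of $\rho_k$ against $\psi(e_k)\phi_L,u_k^2\phi_L\in\tilde H^1_0$, while you phrase the same thing via weak-$L^2$ convergence of $\rho_k(\cdot,t)$ against strongly-$L^2$ convergent $\psi(e_k),u_k^2$), identical treatment of the dissipation terms via $e_k^{-1}\to e^{-1}$ strongly plus weak $L^2$--convergence of the gradient terms and norm lower semicontinuity, and convexity of $G$ for the remaining term. The only packaging difference is that the paper invokes its Proposition~\ref{prop:mazur} (Mazur's lemma applied to the convex set $\{w:\esssup_t\int G(w)\le \tilde C_1,\ \tilde\delta\le w\le \tilde C_2\}$ in space--time $L^2$) to pass the $G(\rho)$--bound through the weak limit, whereas you work pointwise in $t$ and invoke weak lower semicontinuity of the convex functional directly; both rest on the same convexity/Mazur principle.
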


\begin{proof}  We divide the proof into four steps.

\smallskip
1. For each integer $L\ge 2$, let $\phi_L\in C^{\infty}([a,\infty))$ be such that $\phi_L(r)=1$ if $r\in[a,L-1]$ and $\phi_L(r)=0$ if $r\in[L,\infty)$.
Then $\phi_L\in \tilde{H}^1_0( [a,L], r^m \dif r )$, where $\tilde{H}_0^1$ is the Sobolev space defined in \eqref{eqs:tildeH1}. Moreover,
$\partial_r \psi(e_k) = \frac{e_k-1}{e_k} \partial_r e_k$.
Thus, for each fixed $t\in(0,T]$,
$(\psi(e_k)\phi_L)(\cdot,t) \in \tilde{H}^1_0( [a,L], r^m \dif r)$
for all $(k,t)\in\mathbb{N}\times(0,T]$ and
\begin{align*}
\int_{a}^{\infty} \rho_k  \psi(e_k)\phi_L\,r^m \dif r - \int_{a}^{\infty} \rho \psi(e) \phi_L\,r^m \dif r
&\le  \int_{a}^{L}(\rho_k-\rho)\psi(e_k)\phi_L\,r^m \dif r
 + \int_{a}^{\infty}\rho  ( \psi(e_k) - \psi(e) )\phi_L\,r^m \dif r \\
&=\vcentcolon I_k^{(1)} + I_k^{(2)}.
\end{align*}
For all $t\in(0,T]$, $I_k^{(1)}$ is estimated by using Lemmas \ref{lemma:HREext} and \ref{lemma:rhoScvg} as
\begin{align*}
I_k^{(1)}
&\le  \sbnorm{\rho_k-\rho}_{\tilde{H}^{-1}([a,L];r^m \dif r)} \sbnorm{ \psi(e_k)\phi_L}_{H^{1}([a,L];r^m \dif r)}\\
&= \sbnorm{\rho_k-\rho}_{\tilde{H}^{-1}}  \Big\{ \sbnorm{\phi_L}_{H^1}\sbnorm{\psi(e_k)}_{L^\infty}
   + \sigma^{-1}(t)\Bigbnorm{\dfrac{e_k-1}{e_k}\phi_L}_{\infty} \sbnorm{\sigma\partial_r e_k}_{L^2}  \Big\}\\
	&\le  \sbnorm{\rho_k-\rho}_{\tilde{H}^{-1}} \big\{ C(a)\sbnorm{\phi_L}_{H^1} + C(a) \sigma^{-1}(t)\big\} \to 0
	\qquad\ \text{as $k\to\infty$}.
\end{align*}
From Lemma \ref{lemma:klimholder},
$$
e_k(r,t)\to e(r,t) \qquad  \mbox{as $k\to\infty$ for $(r,t)\in[a,\infty)\times[0,T]$},
$$
and, from Lemmas \ref{lemma:klimholder}--\ref{lemma:rhoWeak},
$$
C^{-1}(a)\le \rho(r,t) \le C(a), \,\, C^{-1}(a) \le e_k(r,t),\, e(r,t) \le C(a) \sigma^{-\frac{1}{2}}(t)
$$
for {\it a.e.} $(r,t)\in[a,\infty)\times(0,T]$.
Using these and the dominated convergence theorem, it follows that
\begin{align*}
\lim\limits_{k\to\infty} I_k^{(2)}
\vcentcolon= \lim\limits_{k\to\infty} \int_{a}^{\infty}  \rho \phi_n ( \psi(e_k) - \psi(e) )(r,t)\,r^m \dif r
= 0 \qquad \text{ for each $t\in(0,T]$}.
\end{align*}
Using Lemma \ref{lemma:entEstext}, the limits: $I_k^{(1)}, I_k^{(2)}\to 0$ then imply that, for each $t>0$,
\begin{align*}
\int_{a}^{\infty} \phi_L(r,t)(\rho \psi(e))(r,t)\,r^m \dif r
= \lim\limits_{k\to\infty}\int_{a}^{\infty}\phi_L(r,t)(\rho_k \psi(e_k))(r,t)\,r^m \dif r \le C(T).
\end{align*}
Now, by construction, $\phi_L(r)\to 1$ for all $r\in[a,\infty)$ as $L\to \infty$.
Since $\rho \psi(e) >0$, we apply the monotone convergence theorem to obtain
\begin{align*}
\int_{a}^{\infty} (\rho \psi(e))(r,t)\,r^m \dif r
=\lim\limits_{L\to\infty} \int_{a}^{\infty} \phi_L(r,t) (\rho \psi(e))(r,t)\,r^m \dif r \le C(T)
\qquad \text{ for each $t\in(0,T]$}.
\end{align*}

\smallskip	
2. From Lemma \ref{lemma:HREext}, $u_k^2 \in L^2( [a,\infty), r^m \dif r )$ and, for all $(k,t)\in\mathbb{N}\times(0,T]$,
\begin{equation*}
\sbnorm{\partial_r u_k^2(\cdot,t)}_{L^2([a,\infty), r^m \dif r)} = \sbnorm{u_k}_{L^\infty} \sbnorm{\partial_r u_k(\cdot,t)}_{L^2( [a,\infty), r^m \dif r)} \le \sigma^{-\frac{3}{4}}(t)C(a).
\end{equation*}
From these, $ u_k\phi_L\in \tilde{H}_0^1([a,L],r^m \dif r)$ and
\begin{align*}
&\Bignorm{\int_{a}^{\infty} \rho_k u_k^2\phi_L\, r^m \dif r - \int_{a}^{\infty} \phi_L \rho u^2\, r^m \dif r}\\
&\le \bnorm{\rho_k-\rho}_{\tilde{H}^{-1}([a,L], r^m\dif r)} \sbnorm{u_k^2\phi_L}_{H^1([a,L],r^m \dif r)} + \int_{a}^{\infty} \rho ( u_k^2 - u^2 ) \phi_L\, r^m \dif r\\
&\le  \int_{a}^{\infty} \phi_L \rho ( u_k^2 - u^2)\, r^m \dif r 
     +  C(a)\big(1+\sigma^{-\frac{3}{4}}(t)\big)
\sbnorm{\rho_k-\rho}_{\tilde{H}^{-1}([a,L],r^m\dif r)}.
\end{align*}
Since $u_k(r,t)\to u(r,t)$ from Lemma \ref{lemma:klimholder},
$\snorm{u_k(r,t)}+\snorm{u(r,t)}\le C(a) \sigma^{-\frac{1}{4}}(t)$
from Lemmas \ref{lemma:HREext} and \ref{lemma:klimholder},
and $\rho_k\to \rho$ strongly in $\tilde{H}^{-1}$ from Lemma \ref{lemma:rhoScvg},
then, by the dominated convergence theorem,
\begin{align*}
\int_{a}^{\infty}  \rho u^2\phi_L\,r^m \dif r
= \lim\limits_{k\to\infty} \int_{a}^{\infty} \rho_k u_k^2\phi_L\,r^m \dif r
\le \limsup\limits_{k\to\infty} \int_{a}^{\infty} \rho_k u_k^2\,r^m \dif r \le  C(T).
\end{align*}
Then $\sbnorm{(\rho u^2)(\cdot,t)}_{L^1([a,\infty),r^m\dif r)}\le C(T)$ follows
from the monotone convergence theorem for $L\to\infty$.
	
\smallskip
3. Let $\phi\in L^2(0,T;L^2([a,\infty),r^m \dif r))$. By Lemma \ref{lemma:HREext},
\begin{align*}
&\Bignorm{\int_{0}^{T}\int_{a}^{\infty} \dfrac{\partial_r e_k}{e_k} \phi\,r^m \dif r \dif t
- \int_{0}^{T}\int_{a}^{\infty} \dfrac{\partial_r e}{e} \phi\,r^m \dif r \dif t}\\
&\le  C(a) \Big( \int_{0}^{T}\int_{a}^{\infty} \big(e_k^{-1}-e^{-1}\big)^2 \snorm{\phi}^2\,r^m \dif r \dif t
   \Big)^{\frac{1}{2}}
   + \Bignorm{\int_{0}^{T}\int_{a}^{\infty} (\partial_r e_k-\partial_r e)\dfrac{\phi}{e}\,r^m\dif r\dif t}\\
& =\vcentcolon \textrm{II}_k^{(1)} + \textrm{II}_k^{(2)}.
\end{align*}
Using $e_k\to e$
from Lemma \ref{lemma:klimholder},
and the estimates: $C(a)\le e_k(r,t), e(r,t) \le C(a)\sigma^{-\frac{1}{2}}(t)$ from Lemmas \ref{lemma:HREext} and \ref{lemma:klimholder},
we use the dominated convergence theorem to obtain
\begin{align*}
\lim\limits_{k\to\infty} \textrm{II}_k^{(1)}
\vcentcolon= \lim\limits_{k\to\infty}  C(a) \Big( \int_{0}^{T}\int_{a}^{\infty} \big(e_k^{-1}-e^{-1}\big)^2 \snorm{\phi}^2\,r^m \dif r \dif t \Big)^{\frac{1}{2}} =0.
\end{align*}
For $\textrm{II}_k^{(2)}$, since $e(r,t)\ge C^{-1}(a)$
and $\partial_r e_k \rightharpoonup \partial_r e$ in Lemma \ref{lemma:weak}, we have
\begin{align*}
\lim\limits_{k\to\infty} \textrm{II}_k^{(2)} \vcentcolon
= \lim\limits_{k\to\infty}\Bignorm{\int_{0}^{T}\int_{a}^{\infty} (\partial_r e_k - \partial_r e)
\dfrac{\phi}{e} (r,t)\,r^m \dif r \dif t} = 0.
\end{align*}
The above limits imply that $\frac{\partial_r e_k}{e_k} \rightharpoonup \frac{\partial_r e}{e}$ as $k\to\infty$
weakly in $L^2(0,T;L^2([a,\infty),r^m \dif r))$.
By Lemma \ref{lemma:entEstext} and the weak-star lower semi-continuity of the Sobolev norm,
it follows that
\begin{align*}
\int_{0}^{T}\int_{a}^{\infty} \dfrac{\snorm{\partial_r e}^2}{e^2}\,r^m \dif r \dif t
\le \liminf_{k\to\infty} \int_{0}^{T}\int_{a}^{\infty} \dfrac{\snorm{\partial_r e_k}^2}{e_k^2}\,r^m \dif r \dif t
\le C(T).
\end{align*}
By the same argument,
we can also obtain the estimates
for $\sbnorm{(\partial_r u + m \frac{u}{r})e^{-1}}_{L^2}$ and $\sbnorm{(\partial_r u - \frac{u}{r})e^{-1}}_{L^2}$
in \eqref{temp:Ent0}.
	
\smallskip
4. From Lemmas \ref{lemma:entEstext}--\ref{lemma:HREext} and \ref{lemma:rhoWeak}, we have
\begin{equation}
\begin{dcases}
\rho_k \rightharpoonup \rho \ \text{ weakly in } \ L^2\big(0,T;L^2([a,\infty),r^m \dif r)\big)
\qquad \text{ as $k\to\infty$}.\\
\sup\limits_{k\in\mathbb{N}}\esssup_{t\in[0,T]}\int_{a}^{\infty} G(\rho_k)(r,t)\,r^m \dif r \le C(T), \\
C^{-1}(a)\le \rho_k(r,t)\le C(a)  \qquad \text{for {\it a.e.} \ $(r,t)\in[a,\infty)\times[0,T]$ and for each
$k\in\mathbb{N}$}.
\end{dcases}
\end{equation}
By Proposition \ref{prop:mazur},
$\sbnorm{G(\rho)(\cdot,t)}_{L^1([a,\infty),r^m \dif r)}\le C(T)$ for {\it a.e.} $t\in[0,T]$.
\end{proof}

\subsection{Weak forms of the exterior problem}\label{subsec:klimWF}
We now show that $(\rho,u,e)$ is indeed a weak solution of problem \eqref{eqs:SFNS} and \eqref{eqs:eSFNS}.

\begin{lemma}\label{lemma:klimWF}
Let $(\rho,u,e)(r,t)$ be the limit function obtained
in {\rm Lemmas \ref{lemma:klimholder}}--{\rm \ref{lemma:rhoWeak}}.
Then $(\rho,u,e)(r,t)$ is a weak solution of problem \eqref{eqs:SFNS} and \eqref{eqs:eSFNS} as stated in {\rm Definition \ref{def:SWeak}}.
\end{lemma}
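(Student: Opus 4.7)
The plan is to pass to the limit $k \to \infty$ in the weak forms already satisfied by the approximate sequence $(\rho_k, u_k, e_k)$. For any test function $\phi$ with $\supp \phi \subseteq [a,N] \times [0,T]$, I first observe that $\tilde{r}_{a,k}(k,t) \geq C(a)^{-1} k^{1/n} \to \infty$, so for $k$ large enough (depending on $N$) the cut-off $\varphi_{a,k}$ equals $1$ throughout $\supp \phi$. By construction \eqref{eqs:extfunc}, this forces $(\rho_k, u_k, e_k)$ to coincide on $\supp \phi$ with the coordinate-transformed classical Lagrangian solution of \S \ref{subsec:coord}, which solves the spherically symmetric Eulerian system \eqref{eqs:SFNS} pointwise. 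Multiplying each equation by the appropriate form of $\phi$, integrating, and using the boundary conditions $u_k(a,t) = \partial_r e_k(a,t) = 0$ (which eliminate the boundary terms at $r = a$, while the terms at $r = N$ vanish by compact support), one obtains that $(\rho_k, u_k, e_k)$ satisfy identities (i)--(iii) of Definition \ref{def:SWeak} with the truncated initial data $(\rho_{a,k}^0, u_{a,k}^0, e_{a,k}^0)$. For $k$ large, this initial data agrees with $(\rho_a^0, u_a^0, e_a^0)$ on $\supp \phi(\cdot,0)$ by the construction in \S \ref{subsec:ffaprox}, so the initial integrals are already in their final form.

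For the linear density term $\int \rho_k \partial_t \phi$ appearing in each weak form, I would invoke the weak-$*$ convergence $\rho_k - 1 \overset{*}{\rightharpoonup} \rho - 1$ in $L^\infty(0,T; L^2([a,\infty), r^m \dif r))$ from Lemma~\ref{lemma:rhoWeak}, paired against $\partial_t \phi \in L^1(0,T; L^2)$. For the density--velocity product $\rho_k u_k$ appearing in the convective fluxes, I would write, with $\psi$ the relevant smooth multiplier,
\[
\int \rho_k u_k\, \psi\, r^m \dif r \dif t = \int \rho_k\, u\, \psi\, r^m \dif r \dif t + \int \rho_k (u_k - u)\psi\, r^m \dif r \dif t.
\]
The first term passes to the limit by weak-$*$ convergence of $\rho_k$, since $u \psi \in L^1(0,T; L^2)$ by virtue of $|u| \le C(a)\sigma^{-1/4}$ (Lemma \ref{lemma:klimholder}), which is $L^p$ in time for $p < 4$. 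The second vanishes by the uniform convergence $u_k \to u$ on compact subsets of $[a,\infty) \times (0,T]$ combined with $\rho_k \in L^\infty$, after splitting off an initial layer $\{t < \delta\}$ controlled by $|u_k - u| \le 2 C(a)\sigma^{-1/4}$ and sending $\delta \to 0$. The same scheme treats $\rho_k e_k$, $\rho_k u_k^2$, $\rho_k E_k$, and $(\rho_k E_k + P_k) u_k$, invoking the uniform envelopes $e_k \le C(a)\sigma^{-1/2}$ and $|u_k| \le C(a)\sigma^{-1/4}$ from Lemmas~\ref{lemma:HREext} and \ref{lemma:klimholder} to obtain domination by an integrable power of $\sigma$.

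The viscous and thermal fluxes $(\partial_r u_k + m u_k/r)(\partial_r \phi + m \phi/r)$, $\partial_r e_k\, \partial_r \phi$, and $\lambda u_k (u_k/r)\partial_r \phi$ are handled using the weak convergences $\partial_r u_k \rightharpoonup \partial_r u$ and $\partial_r e_k \rightharpoonup \partial_r e$ in $L^2(0,T; L^2([a,\infty), r^m \dif r))$ from Lemma \ref{lemma:weak}, paired against the smooth $L^2$ multipliers. The main obstacle will be the nonlinear term $u_k \partial_r u_k$ in the energy weak form, where $u_k$ is only uniformly convergent on compact subsets of $(0,T]$ while $\partial_r u_k$ converges weakly with the time-singular bound $\|\partial_r u_k(\cdot,t)\|_{L^2} \lesssim \sigma^{-1/2}(t)$. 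To deal with this I would again split at a small $\delta > 0$: on $[0,\delta]$, the pointwise bound $|u_k \partial_r u_k\, \partial_r \phi| \le C(a,\phi)\sigma^{-3/4}$ gives a contribution of order $\delta^{1/4}$ by Cauchy--Schwarz in $r$; on $[\delta,T]$, the strong-weak product argument with $u_k \partial_r \phi \to u\, \partial_r \phi$ uniformly and $\partial_r u_k \rightharpoonup \partial_r u$ weakly in $L^2$ delivers the convergence, and sending first $k \to \infty$ and then $\delta \to 0$ closes the argument. Collecting all three identities completes the verification that $(\rho, u, e)$ is a weak solution in the sense of Definition~\ref{def:SWeak}.
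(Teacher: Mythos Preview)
Your strategy is essentially the paper's: for $k$ large enough the cut-off $\varphi_{a,k}$ is identically $1$ on $\supp\phi$, the extended triple equals the coordinate-transformed classical solution there, one obtains the weak identities for $(\rho_k,u_k,e_k)$, and then one passes to the limit. The paper carries out the first step by explicitly transporting the test function back to the Lagrangian domain and integrating by parts against the exact Lagrangian equations \eqref{eqs:LFNS-k}; your shortcut of invoking that the transformed solution satisfies \eqref{eqs:SFNS} pointwise in the Eulerian domain is equally valid and somewhat cleaner. One small correction: for the momentum identity the boundary contributions at $r=a$ from the pressure and viscous terms vanish because $\phi\in\mathcal{D}_0^a$ forces $\phi(a,t)=0$, not because $u_k(a,t)=0$ (the latter is what handles the convective flux and the energy equation boundary terms).

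There is, however, one genuine omission in your limit argument. The weak forms contain the evaluated-at-fixed-$t$ terms
\[
\int_a^\infty \rho_k(r,t)\phi(r,t)\,r^m\dif r,\qquad
\int_a^\infty (\rho_k u_k)(r,t)\phi(r,t)\,r^m\dif r,\qquad
\int_a^\infty (\rho_k E_k)(r,t)\phi(r,t)\,r^m\dif r,
\]
and your tool, the weak-$*$ convergence of $\rho_k-1$ in $L^\infty(0,T;L^2)$, gives no information at an individual time $t$. Your splitting $\rho_k f_k = \rho_k f + \rho_k(f_k-f)$ handles the second piece by DCT, but the first still requires pointwise-in-$t$ convergence of $\rho_k$ against the (fixed-$t$) function $f\phi$. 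The paper closes this with Lemma~\ref{lemma:rhoScvg}, namely the strong convergence $\rho_k\to\rho$ in $C\big([0,T];\tilde H^{-1}([a,L],r^m\dif r)\big)$, applied with $f(\cdot,t)\phi(\cdot,t)\in \tilde H_0^1$ (which holds for $t>0$ since $\partial_r u,\partial_r e\in L^2_r$ with the $\sigma$-weighted bounds). You should invoke this lemma explicitly.

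A minor stylistic remark: your initial-layer splitting $\{t<\delta\}$ is not actually needed. For the products $\rho_k(u_k-u)$ and $(u_k-u)\partial_r u_k$, the envelope $|u_k-u|\le 2C(a)\sigma^{-1/4}$ gives an $L^2_{r,t}$-integrable dominant on the compact support of $\phi$, so DCT applies directly over the full interval $[0,T]$; combined with the $L^2$-boundedness of $\partial_r u_k$, the strong-weak product argument goes through without any cut in time.
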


\begin{proof} We divide the proof into three steps.

\smallskip
1. We start with the {\it continuity equation}.
For any $\phi\in \mathcal{D}^a$, then there exists an integer $N\in\mathbb{N}$
such that $\phi(r,t) = 0$ for $r\ge N$ and $t\in[0,T]$.
Take $k \in \mathbb{N}$ to be $k\ge M(\phi,a) \vcentcolon= 2^n N^n\max\{C_0,C(a)\}$.
Then, for all $t\in[0,T]$,
\begin{align*}
&\tilde{r}_k(k,t)= \Big( a^n + \int_{0}^{k} \tilde{v}_k(x,t) \dif x \Big)^{\frac{1}{n}}
\ge \Big( a^n + \frac{k}{C(a)}\Big)^{\frac{1}{n}}  \ge ( a^n + 2^n N^n )^{\frac{1}{n}} > 2N,\\
& \tilde{r}_a^0(k) = \Big( a^n + \int_{0}^{k}\dfrac{1}{\rho_a^0(\tilde{r}_a^0(x))} \dif x \Big)^{\frac{1}{n}}
\ge \Big( a^n + \frac{k}{C_0}\Big)^{\frac{1}{n}} \ge ( a^n + 2^n N^n )^{\frac{1}{n}} > 2N.
\end{align*}
Thus, it follows by construction that, if $(r,t)\in \supp(\phi)$ and $k\ge M(\phi,a)$,
\begin{equation}\label{wtemp1}
\begin{split}
\rho_k(r,t) =&\, \overline{v}_k^{-1}\varphi_k(r,t) + 1 - \varphi_k(r,t) =  \overline{v}_k^{-1}(r,t), \ \ u_k(r,t) = \overline{u}_k\varphi_k(r,t) = \overline{u}_k(r,t),\\
e_k(r,t) =&\, \overline{e}_k\varphi_k(r,t) + 1 - \varphi_k(r,t) =  \overline{e}_k(r,t).
\end{split}
\end{equation}
Moreover, using that $\tilde{r}_a^0(k)> 2N$
for $k \ge M(\phi,a) \vcentcolon= 2^n N^n \max\{C_0,C(a)\}$,
we see from construction \eqref{def:appInitr1} in \S\ref{subsec:ffaprox} that,
if $r\in \supp(\phi(\cdot,0))$, then
\begin{equation}\label{wtemp6}
\begin{split}
\rho_{a,k}^0(r)
=&\, \rho_a^0(r) \varphi_{a,k}^0(r) + 1 - \varphi_{a,k}^0(r)
= \rho_a^0(r),\ \ u_{a,k}^0(r) = u_a^0(r)\varphi_{a,k}^0(r) = u_a^0(r),\\
e_{a,k}^0(r) =&\, e_a^0(r)\varphi_{a,k}^0(r) + 1 - \varphi_{a,k}^0(r) =  e_a^0(r).
\end{split}
\end{equation}
With this, if $x\in[0,\infty)$ is such that $\tilde{r}_a^0(x)\in \supp( \phi(\cdot,0) )$, then
it follows from Proposition \ref{prop:r0diff} that,
for all $k\ge M(\phi,a)$,
\begin{equation}\label{wtemp7}
\tilde{r}_a^0(x) = \Big( a^n + n\int_{0}^{x} \dfrac{1}{\rho_a^0(\tilde{r}_a^0(y))} \dif y \Big)^{\frac{1}{n}}
= \Big( a^n + n\int_{0}^{x} \tilde{v}_{a,k}^0(y) \dif y \Big)^{\frac{1}{n}} = \tilde{r}_{k}(x,0).
\end{equation}
Thus, from the coordinate transformation \eqref{eqs:coorderiv} and the continuity equation for which the approximate Lagrangian functions satisfy, we obtain that, for all $k\ge M(\phi,a)$,
\begin{align*}
&\int_{a}^{\infty} \Big\{ \rho_k(r,t) \phi(r,t) - \rho_a^0(r)\phi(r,0) \Big\}\, r^m \dif r\\
&= \int_{a}^{\tilde{r}_k(k,t)} \overline{v}_k^{-1} \phi(r,t)\, r^m \dif r
 - \int_{a}^{\tilde{r}_a^0(k)} \phi(r,0) \rho_a^0(r)\,r^m \dif r \\
&= \int_{0}^{k} \phi(\tilde{r}_k(x,t),t)\,\dif x - \int_{0}^{k} \phi(\tilde{r}_k(x,0),0)\,\dif x \\
&= \int_{0}^{t} \int_{0}^{k} \big( \tilde{u}_k \partial_r \phi+ \partial_t \phi \big)(\tilde{r}_k(x,s),s)\,\dif x \dif s\\
&= \int_{0}^{t}\int_{a}^{\tilde{r}_k(k,t)} \big(\overline{v}_k^{-1}\overline{u}_k \partial_r \phi
 + \tilde{v}_k^{-1} \partial_t \phi \big)(r,t)\,r^m \dif r \dif s\\
&= \int_{0}^{t}\int_{a}^{\infty} \big(\rho_k u_k \partial_r \phi  + \rho_k \partial_t \phi \big)(r,t)\,r^m \dif r \dif s,
\end{align*}
where  we have used $\tilde{r}_k(k,t)\ge 2N$ and \eqref{wtemp1} in the last equality.
Also, by Lemma \ref{lemma:klimholder}--\ref{lemma:rhoScvg}, we have
\begin{equation}\label{temp:klimWF1}
\begin{aligned}
&\lim\limits_{k\to\infty} \sup_{t\in[0,T]}\sbnorm{\rho_k(\cdot, t)
  - \rho(\cdot,t)}_{\tilde{H}^{-1}([a,L], r^m \dif r)} = 0 && \text{ for all $L\in\mathbb{N}$,}\\
&\rho_k-1 \overset{\ast}{\rightharpoonup}  \rho-1
&& \text{ in $L^{\infty}\big(0,T;L^2([a,\infty), r^m \dif r)\big)$,}\\
&u_k(r,t) \to u(r,t) \ \text{ and } \  \rho_k(r,t),\,\rho(r,t) \le C(a)  && \text{ for {\it a.e.} $(r,t) \in[a,\infty)\times[0,T]$.}
\end{aligned}
\end{equation}
Using these, we can apply the dominated convergence theorem to obtain
\begin{align*}
\int_{a}^{\infty} \rho(r,t) \phi(r,t)\,r^m \dif r
- \int_{a}^{\infty} \rho_a^0(r) \phi(r,0)\,r^m \dif r
= \int_{0}^{t}\int_{a}^{\infty} \big(\rho u \partial_r \phi + \rho \partial_t \phi\big)\,r^m \dif r \dif s.
\end{align*}
	
\smallskip	
2. We show the weak form for the {\it momentum equation}.
Let $\varphi\in \mathcal{D}_0^a$.
Then there exists $N\in\mathbb{N}$ such that $\varphi(r,t) = 0$ for $r\ge N$ and $t\in[0,T]$.
If $k \in \mathbb{N}$ is such that $k \ge M(\varphi,a)$, then, as previously shown,
$\tilde{r}_k(k,t) > 2N$ for all $t\in[0,T]$, and $\tilde{r}_a^0(k)>2N$.
By construction, if $(r,t)\in \supp (\varphi)$ and $r\in \supp (\varphi(\cdot,0))$,
then \eqref{wtemp1}--\eqref{wtemp6} hold so that
\begin{align*}
&\int_{a}^{\infty} (\rho_k u_k)(r,t)\varphi(r,t)\,r^m \dif r
  - \int_{a}^{\infty} (\rho_a^0 u_a^0)(r) \varphi(r,0)\,r^m \dif r\\
&=\int_{a}^{\tilde{r}_k(k,t)} (\overline{v}_k^{-1} \overline{u}_k) (r,t)\varphi(r,t)\,r^m \dif r
- \int_{a}^{\tilde{r}_a^0(k)}  u_{a,k}^0 (r) \varphi(r,0) \rho_{a}^0(r)\,r^m \dif r\\
&= \int_{0}^{k} \tilde{u}_k (x,t)\varphi(\tilde{r}_k(x,t),t)\,\dif x
  - \int_{0}^{k} \tilde{u}_{a,k}^0 (x) \varphi(\tilde{r}_{k}(x,0),0)\,\dif x\\
&= \int_{0}^{t}\int_{0}^{k} \big(\varphi(\tilde{r}_k(x,s),s) \D_t \tilde{u}_k
  +  \partial_t \varphi (\tilde{r}_k(x,s),s) \tilde{u}_k
  +  \partial_r \varphi(\tilde{r}_k(x,s),s) \tilde{u}_k^2 \big)\,\dif x \dif s.
\end{align*}
By $\eqref{eqs:LFNS-k}_2$
and the boundary condition: $\varphi(a,t)=0$, $\tilde{u}_k(k,t)=0$ for $t\in[0,T]$,
it follows that
\begin{align*}
&\int_{a}^{\infty} (\rho_k u_k)(r,t)\varphi(r,t)\,r^m \dif r
  - \int_{a}^{\infty} (\rho_a^0 u_a^0)(r) \varphi(r,0)\,r^m \dif r \\
&= \int_{0}^{t}\int_{0}^{k} \big(\varphi(\tilde{r}_k(x,s),s) \D_t \tilde{u}_k
  + \partial_t \varphi(\tilde{r}_k(x,s),s) \tilde{u}_k
  +  \partial_r \varphi(\tilde{r}_k(x,s),s) \tilde{u}_k^2  \big)\,\dif x \dif s\\
&= \int_{0}^{t}\int_{0}^{k} \big(\tilde{u}_k(x,s)\,\partial_t \varphi(\tilde{r}_k(x,s),s)
  + \tilde{u}_k^2(x,s)\,\partial_r \varphi(\tilde{r}_k(x,s),s)  \big)\,\dif x \dif s\\
&\quad + \int_{0}^{t}\int_{0}^{k} \tilde{r}_k^m(x,s)\varphi(\tilde{r}_k(x,s),s) \D_x\Big( \beta \dfrac{\D_x(\tilde{r}_k^m \tilde{u}_k)}{\tilde{v}_k} - (\gamma-1) \dfrac{\tilde{e}_k}{\tilde{v}_k} \Big)\,\dif x \dif s\\
&= \int_{0}^{t}\int_{0}^{k}\big(  \tilde{u}_k^2(x,s)\,\partial_r \varphi(\tilde{r}_k(x,s),s)
  + \tilde{u}_k(x,s)\,\partial_t \varphi(\tilde{r}_k(x,s),s)\big)\,\dif x \dif s \\
&\quad+ \int_{0}^{t} \int_{0}^{k} \Big( (\gamma-1)\tilde{e}_k - \beta \tilde{r}_k^m \D_x\tilde{u}_k
 - m\beta \dfrac{\tilde{v}_k\tilde{u}_k}{\tilde{r}_k}  \Big)
  \Big( m \dfrac{\varphi(\tilde{r}_k(x,s),s)}{\tilde{r}_k(x,s)}
    + \partial_r \varphi(\tilde{r}_k(x,s),s) \Big)\,\dif x \dif s\\
&= \int_{0}^{t}\int_{a}^{\tilde{r}_{k}(k,t)} \big(\overline{v}_k^{-1} \overline{u}_k^2 \partial_r\varphi + \overline{v}_k^{-1}\overline{u}_k \partial_t \varphi \big)\,r^m \dif r \dif s \\
&\quad + \int_{0}^{t}\int_{a}^{\tilde{r}_k(k,t)} \Big((\gamma-1)\overline{v}_k^{-1}\overline{e}_k
  -\beta \partial_r \overline{u}_k - m\beta \dfrac{\overline{u}_k}{r} \Big)
  \Big( \partial_r \varphi + m \dfrac{\varphi}{r} \Big)\,r^m \dif r \dif s\\
&= \int_{0}^{t}\int_{a}^{\infty} \big(\rho_k u_k^2 \partial_r\varphi
  + \rho_k u_k \partial_t \varphi \big)\, r^m \dif r \dif s \\
&\quad + \int_{0}^{t}\int_{a}^{\infty} \Big((\gamma-1)\rho_k e_k
 -\beta \partial_r u_k - m\beta \dfrac{u_k}{r} \Big)
  \Big( \partial_r \varphi + m \dfrac{\varphi}{r} \Big)\,r^m \dif r \dif s.
\end{align*}
In addition, from Lemmas \ref{lemma:HREext} and \ref{lemma:klimholder}--\ref{lemma:weak}, we have
\begin{equation}\label{wtemp4}
\begin{aligned}
&\partial_r u_k \rightharpoonup \partial_r u &&  \text{ in $L^{2}\big(0,T;L^2([a,\infty), r^m \dif r)\big)$,}\\
&e_k(r,t) \to e(r,t) &&  \text{ for $(r,t) \in[a,\infty)\times[0,T]$,}\\
&\sigma^{-\frac{1}{4}}(t) \snorm{u(r,t)} + \sigma^{-\frac{1}{2}}(t) e(r,t)  \le  C(a)
&& \text{ for $(r,t)\in[a,\infty)\times[0,T]$,} \\
&\sigma^{-\frac{1}{4}}(t) \snorm{u_k(r,t)} + \sigma^{-\frac{1}{2}}(t) e_k(r,t)  \le  C(a)
&& \text{ for $(k,r,t)\in \mathbb{N}\times[a,\infty)\times[0,T]$.}
\end{aligned}
\end{equation}
Using \eqref{temp:klimWF1} and \eqref{wtemp4},
we conclude  Definition \ref{def:SWeak}\ref{item:SWeak2}.

\smallskip	
3. We now show the {\it energy equation}.
Let $\phi\in \mathcal{D}^a$. Then, for $k\ge M(\phi,a)$,
\begin{align}\label{wtemp2}
&\int_{a}^{\infty} (\rho_k e_k)(r,t) \phi (r,t)\,r^m \dif r
  - \int_{a}^{\infty} (\rho_a^0 e_a^0) (r) \phi(r,0)\,r^m \dif r \nonumber\\
&= \int_{0}^{k} \tilde{e}_k(x,t) \phi(\tilde{r}_k(x,t),t)\,\dif x
  - \int_{0}^{k} \tilde{e}_{a,k}^0(x) \phi(\tilde{r}_{k}(x,0),0)\,\dif x \nonumber\\
&= \int_{0}^{t}\int_{0}^{k} \Big\{ \phi(\tilde{r}_k(x,s),s) \D_t \tilde{e}_k(x,s)
  + \tilde{e}_k(x,s)\big( \partial_t \phi + \tilde{u}_k(x,s) \partial_r \phi\big)
   (\tilde{r}_k(x,s),s)\Big\}\,\dif x \dif s \nonumber\\
&= \int_{0}^{t} \int_{0}^{k} \phi(\tilde{r}_k(x,s),s) \D_t \tilde{e}_k\, \dif x \dif s
  + \int_{0}^{t} \int_{a}^{\infty} \rho_k e_k ( \partial_t \phi +  u_k \partial_r \phi )\,r^m \dif r \dif s\nonumber \\
& =\vcentcolon I_1+I_2.
\end{align}
Denote $\tilde{F}_k\vcentcolon= \beta \tilde{v}_k^{-1}\D_x(\tilde{r}_k^m \tilde{u}_k)
- (\gamma-1)\tilde{v}_k^{-1}\tilde{e}_k$.
It follows from the momentum equation that $\D_t \tilde{u}_k = \tilde{r}_k^{m} \D_x \tilde{F}_k$.
Then substituting this into $\eqref{eqs:LFNS-k}_3$ yields
	\begin{align*}
		\D_t \tilde{e}_k
		= \D_x\big( \tilde{r}_k^m \tilde{u}_k \tilde{F}_k  \big) - \dfrac{1}{2}\D_t \snorm{\tilde{u}_k}^2  - 2 m \mu \D_x \big(\tilde{r}_k^{m-1} \tilde{u}_k^2\big) + \kappa \D_x\Big( \dfrac{\tilde{r}_k^{2m}}{\tilde{v}_k} \D_x \tilde{e}_k \Big)
	\end{align*}
for $I_1$ which, along with  $\D_x \tilde{e}_k(0,t)=\D_x \tilde{e}_k (k,t) = 0$ for all $t\in[0,T]$, leads to
\begin{align*}
I_1
=&\, \int_{0}^{t} \int_{0}^{k} \phi(\tilde{r}_k(x,s),s) \Big\{ \D_x( \tilde{r}_k^m \tilde{u}_k \tilde{F}_k )
  - \dfrac{1}{2}\D_t \snorm{\tilde{u}_k}^2  \Big\}\,\dif x \dif s \\
&\,+ \int_{0}^{t} \int_{0}^{k} \phi(\tilde{r}_k(x,s),s) \Big\{ - 2 m \mu \D_x (\tilde{r}_k^{m-1} \tilde{u}_k^2)
 + \kappa \D_x\Big( \dfrac{\tilde{r}_k^{2m}}{\tilde{v}_k} \D_x \tilde{e}_k \Big) \Big\}\,\dif x \dif s\\
=&\, \int_{0}^{t} \int_{0}^{k} \big\{ -m \beta \tilde{r}_k^{-1} \tilde{v}_k \snorm{\tilde{u}_k}^2
  - \beta \tilde{r}_k^m \tilde{u}_k  \D_x\tilde{u}_k + (\gamma-1) \tilde{u}_k \tilde{e}_k \big\} \partial_r\phi(\tilde{r}_k(x,s),s)\,\dif x \dif s \\
&\, + \frac{1}{2}\int_{0}^{t}\int_{0}^{k} \snorm{\tilde{u}_k}^2\big\{ \partial_t \phi(\tilde{r}_k(x,s),s)
    + \tilde{u}_k(x,s) \partial_r \phi(\tilde{r}_k(x,s),s)  \big\}\,\dif x \dif s \\
&\,- \frac{1}{2}\int_{0}^{k} \snorm{\tilde{u}_k}^2(x,t) \phi(\tilde{r}_k(x,t),t)\,\dif x
    + \frac{1}{2}\int_{0}^{k} \snorm{\tilde{u}_{a,k}^0(x)}^2 \phi(\tilde{r}_k(x,0),0)\,\dif x \\
&\,+ \int_{0}^{t} \int_{0}^{k} \Big\{ 2 m \mu \tilde{r}_k^{-1} \tilde{v}_k \snorm{\tilde{u}_k}^2\,\partial_r \phi(\tilde{r}_k(x,s),s)
-\kappa\tilde{r}_k^{m} \D_x\tilde{e}_k\,\partial_r \phi(\tilde{r}_k(x,s),s) \Big\}
  \,\dif x \dif s.
\end{align*}
From \eqref{wtemp7}, $\tilde{r}_k(x,0)=\tilde{r}_a^0(x)$
if $\tilde{r}_a^0(x)\in \supp(\phi(\cdot,0))$ and $k\ge M(\phi,a)$.
By the coordinate transformation \eqref{eqs:coordtrans}--\eqref{eqs:coorderiv},
and \eqref{wtemp1}--\eqref{wtemp6}, it follows that
\begin{align*}
I_1
&= \int_{0}^{t} \int_{a}^{\tilde{r}_k(k,t)} \Big\{ m(2\mu-\beta) \dfrac{\snorm{\overline{u}_k}^2}{r}
  - \beta \overline{u}_k  \partial_r\overline{u}_k + (\gamma-1) \overline{u}_k \overline{v}_k^{-1} \overline{e}_k
   - \kappa \partial_r \overline{e}_k \Big\} \partial_r\phi\,r^m \dif r \dif s \\
&\quad + \frac{1}{2}\int_{0}^{t}\int_{a}^{\tilde{r}_k(k,t)}
   \overline{v}_k^{-1} \snorm{\overline{u}_k}^2
    \big\{\partial_t \phi  + \overline{u}_k \partial_r\phi\big\}\,r^m  \dif r \dif s \\
&\quad - \frac{1}{2}\int_{a}^{\tilde{r}_k(k,t)}
  (\overline{v}_k^{-1}\snorm{\overline{u}_k}^2)(r,t)\phi(r,t)\,r^m \dif r
  + \frac{1}{2}\int_{a}^{\tilde{r}_a^0(k)} (\rho_a^0 \snorm{u_{a}^0}^2)(r) \phi(r,0)\,r^m \dif r \\
&= \int_{0}^{t} \int_{a}^{\infty} \Big\{ -m \lambda \dfrac{\snorm{u_k}^2}{r}
 - \beta u_k  \partial_r u_k + (\gamma-1) \rho_k u_k e_k - \kappa  \partial_r e_k  \Big\}
   \partial_r\phi\,r^m \dif r \dif s \\
&\quad + \frac{1}{2}\int_{0}^{t}\int_{a}^{\infty} \rho_k \snorm{u_k}^2
  \big\{\partial_t \phi + u_k \partial_r \phi\big\}\,r^m  \dif r \dif s \\
&\quad - \frac{1}{2}\int_{a}^{\infty} (\rho_k \snorm{u_k}^2)(r,t) \phi(r,t)\, r^m \dif r
  + \frac{1}{2}\int_{a}^{\infty} (\rho_a^0 \snorm{u_a^0}^2)(r) \phi(r,0)\, r^m \dif r.
\end{align*}
Define $E_k(r,t) \vcentcolon= (\frac{1}{2}\snorm{u_k}^2 + e_k)(r,t)$. Then the above identity becomes:
\begin{align*}
I_1
=&\,\int_{0}^{t} \int_{a}^{\infty} \Big\{ -m \lambda \dfrac{\snorm{u_k}^2}{r}
  - \beta u_k  \partial_r u_k + (\gamma-1) \rho_k u_k e_k - \kappa  \partial_r e_k \Big\}
  \partial_r\phi\,r^m \dif r \dif s \\
&\, + \int_{0}^{t}\int_{a}^{\infty} \big\{ \rho_k E_k ( \partial_t \phi + u_k \partial_r \phi)
 - \rho_k e_k (\partial_t \phi + u_k \partial_r \phi) \big\} \,r^m  \dif r \dif s \\
&\,- \frac{1}{2}\int_{a}^{\infty} (\rho_k \snorm{u_k}^2)(r,t) \phi(r,t)\,r^m \dif r
  + \frac{1}{2}\int_{a}^{\infty} (\rho_a^0\snorm{u_a^0}^2)(r) \phi(r,0)\,r^m \dif r.
\end{align*}
Denote $P_k\vcentcolon= (\gamma-1)\rho_k e_k$ and $E_a^0\vcentcolon=\frac{1}{2}\snorm{u_a^0}^2+e_a^0$.
Substituting $I_1$ into \eqref{wtemp2}, we have
\begin{equation}\label{wtemp3}
\begin{aligned}
&\int_{a}^{\infty} (\rho_k E_k)(r,t) \phi (r,t)\,r^m \dif r
  - \int_{a}^{\infty} (\rho_a^0 E_a^0) (r) \phi(r,0) r^m \dif r \\
&= \int_{0}^{t} \int_{a}^{\infty} \Big\{ \Big( (\rho_kE_k + P_k) u_k -m \lambda \dfrac{\snorm{u_k}^2}{r}
- \beta u_k  \partial_r u_k - \kappa  \partial_r e_k \Big)\partial_r\phi
+ \rho_k E_k \partial_t \phi \Big\}\,r^m \dif r \dif s.
\end{aligned}
\end{equation}
	
In addition to \eqref{wtemp4}, we also uses the weak convergence from Lemma \ref{lemma:weak}
that $\partial_r e_k \rightharpoonup  \partial_r e$ in $L^2(0,T;L^2([a,\infty),r^m \dif r))$.
Then \eqref{wtemp3} becomes:
\begin{align*}
&\int_{a}^{\infty} (\rho E)(r,t) \phi (r,t)\,r^m \dif r
 - \int_{a}^{\infty} (\rho_a^0 E_a^0)(r) \phi(r,0)\,r^m \dif r \\
&= \int_{0}^{t} \int_{a}^{\infty} \Big\{ \Big( (\rho E + P ) u -m \lambda \dfrac{\snorm{u}^2}{r}
  - \beta u  \partial_r u - \kappa  \partial_r e \Big) \partial_r\phi
  + \rho E \partial_t \phi \Big\}\,r^m \dif r \dif s,
\end{align*}
where $P\vcentcolon= (\gamma-1) \rho e$ and $E\vcentcolon= \frac{1}{2}\norm{u}^2 + e$.
\end{proof}

\subsection{Estimates away from the origin, independent of the particle path function}\label{subsec:indepPath} 
The main aim of this subsection is to obtain
the uniform  estimates, independent of $a\in(0,1)$, which do not involve the particle path
function $\tilde{r}(x,t)$.

\begin{lemma}\label{lemma:dissip}
For each $\eta\in (a,1)$,
\begin{equation}\label{temp:dissip0}
\begin{dcases*}
\int_{0}^{T} \sup_{r\ge\eta}\,\dfrac{\snorm{u}}{\sqrt{e}}(r,t)\,\dif t
\le C(T) \big(\eta^{\frac{2-n}{2}} +  \eta^{2-n}\big) \ \ & if $n=2$, $3$,\\
\int_{0}^{T} \sup_{r\ge \eta} \log( \max\{ 1, e^{\pm 1}(r,t)\})\,\dif t \le C(T) \eta^{2-n} & if $n=3$,\\
\int_{0}^{T} \sup_{r\ge\eta} \log( \max\{1, e(r,t)\})\,\dif t
\le C(T)\big( 1 + \sqrt{|\log\eta|}\big) & if $n=2$.
\end{dcases*}
\end{equation}
\end{lemma}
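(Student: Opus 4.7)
The plan is to combine the entropy estimates of Lemma \ref{lemma:Ent} with a fundamental-theorem-of-calculus argument issued from an $a$-independent reference point $B(t)\in[1,2]$ rather than from the inner boundary $r=a$ where $u$ vanishes. Starting from $r=a$ is tempting because of the boundary condition, but produces integrals of the form $\int_a^r s^{-m}\,\dif s$ that diverge as $a\searrow 0$ when $m\le 1$; it is precisely this divergence that an $a$-independent reference point bypasses.

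The first task is to exhibit, for each $t\in[0,T]$, a point $B(t)\in[1,2]$ at which $\snorm{\log e(B(t),t)}$ and $\snorm{u(B(t),t)}$ are both bounded by a constant independent of $a\in(0,1)$. Its existence will follow from a joint pigeon-hole argument applied to the three $a$-uniform integral bounds
\begin{equation*}
\int_{1}^{2}\big(G(\rho)+\rho\psi(e)+\rho\snorm{u}^2\big)(s,t)\,s^m\,\dif s \le C(T)
\end{equation*}
from Lemma \ref{lemma:Ent}: restricting first to a positive-measure subset of $[1,2]$ on which $\rho(\cdot,t)$ is bounded above and below, and then selecting from that subset a point at which both $\rho\psi(e)$ and $\rho\snorm{u}^2$ are also pointwise controlled, one obtains $e(B(t),t)\in[C^{-1},C]$ and $\snorm{u(B(t),t)}\le C$ simultaneously. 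Crucially $B(t)\in[1,2]$ is at a scale independent of $a$ and of $\eta\in(a,1)$.

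For the logarithm estimate I would then apply Cauchy--Schwarz with weight $s^m$ to the identity $\log e(r,t)-\log e(B(t),t)=\int_{B(t)}^{r}\partial_s\log e\,\dif s$ to obtain
\begin{equation*}
\snorm{\log e(r,t)-\log e(B(t),t)}^2 \le \Big(\int_{I(r,t)}\dfrac{\snorm{\partial_s e}^2}{e^2}\,s^m\,\dif s\Big)\Big(\int_{I(r,t)}s^{-m}\,\dif s\Big),
\end{equation*}
where $I(r,t)$ is the interval between $B(t)$ and $r$. For $n=3$ ($m=2$) one has $\int_\eta^\infty s^{-2}\,\dif s=\eta^{-1}$, and after taking $\sup_{r\ge\eta}$, Cauchy--Schwarz in time, and using $\int_0^T\int_a^\infty \frac{\snorm{\partial_r e}^2}{e^2}r^m\,\dif r\,\dif t\le C(T)$ from Lemma \ref{lemma:Ent}, one obtains the desired $C(T)\eta^{-1}$. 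For $n=2$ ($m=1$) the weight integral grows only logarithmically, $\int_\eta^{R_0}s^{-1}\,\dif s=\log(R_0/\eta)$, which yields the $\sqrt{\snorm{\log\eta}}$ bound on $r\in[\eta,R_0]$ for fixed $R_0$; the range $r\ge R_0$ is then treated separately using a second reference point at scale $R_0$. The $\snorm{u}/\sqrt e$ bound is attacked analogously from $B(t)$ via
\begin{equation*}
\partial_s\Big(\dfrac{u}{\sqrt e}\Big)=\dfrac{\partial_s u}{\sqrt e}-\dfrac{u\,\partial_s e}{2e^{3/2}},
\end{equation*}
together with the splitting $\partial_s u=(\partial_s u-u/s)+u/s$ to activate the entropy dissipation $\int\frac{\snorm{\partial_r u-u/r}^2}{e}r^m$ from Lemma \ref{lemma:Ent}; Cauchy--Schwarz with weight $s^m$ then produces the integrals $\int_\eta^\infty s^{-m}\,\dif s$ and $\int_\eta^\infty s^{-m-2}\,\dif s$, whose powers account for the two summands $\eta^{(2-n)/2}$ and $\eta^{2-n}$.

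The main anticipated obstacles are two-fold. First, the lower bound on $\rho$ at $B(t)$ in the pigeon-hole step is subtle, since $G(\rho)$-integrability alone does not exclude $\rho(B(t),t)$ being close to zero; extracting the pointwise lower bound requires combining all three integral bounds. Second, the $n=2$ case of the $\log e$ estimate, where the borderline non-integrability of $s^{-1}$ at infinity forces the split into inner and outer regions and the $\sqrt{\snorm{\log\eta}}$ rate must be matched exactly. Finally, the self-referential term $u\partial_s e/e^{3/2}$ in the $\snorm{u}/\sqrt e$ derivation contains $\snorm{u}/\sqrt e$ itself and has to be absorbed by a Gr\"onwall-type argument in $s$ with driver $\snorm{\partial_s e}/e$.
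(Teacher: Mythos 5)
Your anchor-point strategy diverges from the paper's, and the pigeon-hole step is where it breaks down. The entropy bound $\int_1^2 G(\rho)(r,t)\,r^m\,\dif r\le C(T)$ does control $\rho$ from above (since $G(\rho)\sim\rho\log\rho$ as $\rho\to\infty$), but it gives \emph{no} lower bound: $G$ is bounded on $[0,1]$ with $G(0)=1$, so the density could be arbitrarily small on all of $[1,2]$ without violating the entropy. Combining with $\int_1^2\rho\,\psi(e)\,r^m\,\dif r\le C(T)$ and $\int_1^2\rho\,\snorm{u}^2\,r^m\,\dif r\le C(T)$ cannot repair this: when $\rho$ is tiny those integrands are automatically small regardless of how large $e$ or $\snorm{u}$ become, so you cannot deduce $e(B(t),t)\in[C^{-1},C]$ or $\snorm{u(B(t),t)}\le C$ from them. (Contrast the Lagrangian pigeon-hole in Lemma~\ref{lemma:select}, which succeeds because there one has $\psi(v)$, and $\psi(v)\to\infty$ both as $v\to 0$ and as $v\to\infty$; the Eulerian weight $G(\rho)$ lacks this two-sidedness near zero.)

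The paper sidesteps the reference-point problem by anchoring at infinity. The extended Eulerian approximants $(\rho_{a,k},u_{a,k},e_{a,k})$ from~\eqref{eqs:extfunc} equal $(1,0,1)$ identically for $r\ge\tilde r_{a,k}(k,t)$, so $u_{a,k}\to 0$ and $e_{a,k}\to 1$ as $r\to\infty$ hold \emph{exactly}; one runs the fundamental-theorem-of-calculus argument for $(u_{a,k},e_{a,k})$ from $r$ out to $\infty$, estimates against the dissipation of Lemma~\ref{lemma:entEstext}, and then passes to the limit in $k$ via Fatou's lemma together with the locally uniform convergence of Lemma~\ref{lemma:klimholder}. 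To keep the weight integral $\int_r^\infty\zeta^{-2\delta-m}\,\dif\zeta$ convergent at infinity when $m=1$, the paper also inserts an auxiliary factor $r^{-\delta}$ with small $\delta\in(0,1)$ before differentiating — a device absent from your sketch — which is stripped off after taking the supremum. The one case where anchoring at infinity genuinely fails, $\log e$ for $n=2$, is anchored instead at $2M$ with $M:=\sup_{k}\sup_{t}\tilde r_{a,k}(1,t)\le(2C_0)^{1/n}$; but the required control $e_{a,k}(2M,t)\le C(T)\,\sigma^{-1/2}(t)$ there comes from the pointwise bounds of Lemma~\ref{lemma:HREext}, which are in turn rooted in the Lagrangian estimates of Theorem~\ref{thm:priori}, not from any entropy pigeon-hole — and since that mechanism yields only an upper bound on $e$, it explains precisely why the $n=2$ statement controls $\log(\max\{1,e\})$ but omits $\log(\max\{1,e^{-1}\})$.
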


\begin{proof}
We divide the proof into three steps.

\smallskip
1. Fix a point $r \in [\eta, \infty)$ and a small constant $0 < \delta < 1$. Since $\lim_{r\to\infty} u_k(r,t)=0$
and $\lim_{r\to\infty} e_k(r,t)=1$ for each $t\in[0,T]$ and $k\in\mathbb{N}$, we have
\begin{align*}
\dfrac{u_k}{r^{\delta}\sqrt{e_k}}(r,t)
&= \lim\limits_{r\to\infty}\dfrac{u_k}{r^{\delta}\sqrt{e_k}}(r,t)
- \int_{r}^{\infty} \Big\{ \zeta^{-\delta}\dfrac{\partial_r u_k}{\sqrt{e_k}} -\delta \zeta^{-\delta-1} \dfrac{u_k}{\sqrt{e_k}}
 -\dfrac{1}{2}\zeta^{-\delta} u_k e_k^{-\frac{3}{2}}\partial_r e_k \Big\} (\zeta,t)\,\dif \zeta\\
&\le  \Big( \int_{r}^{\infty} \zeta^{-2\delta-m}\dif \zeta \Big)^{\frac{1}{2}}\Big\{ \delta\Big( \int_{r}^{\infty} \dfrac{\snorm{u_k}^2}{\zeta^2 e_k}\,\zeta^m \dif \zeta \Big)^{\frac{1}{2}}
+ \Big( \int_{r}^{\infty} \dfrac{\snorm{\partial_r u_k}^2}{e_k}\,\zeta^m \dif\zeta  \Big)^{\frac{1}{2}} \Big\}\\
&\quad + \dfrac{1}{2}\Big( \int_{r}^{\infty} \dfrac{\snorm{u_k}^2}{\zeta^2 e_k}  \zeta^{-\delta+1} \dif \zeta  \Big)^{\frac{1}{2}} \Big( \int_{r}^{\infty} \dfrac{\snorm{\partial_r e_k}}{e_k^2} \zeta^{-\delta+1} \dif\zeta\Big)^{\frac{1}{2}}\\
&\le \dfrac{r^{-\delta} \eta^{\frac{1-m}{2}}}{\sqrt{2\delta+m-1}}\Big\{ \delta\Big( \int_{\eta}^{\infty} \dfrac{\snorm{u}_k^2}{\zeta^2 e_k}(\zeta,t)\,\zeta^m \dif \zeta \Big)^{\frac{1}{2}}
+ \Big( \int_{\eta}^{\infty} \dfrac{\snorm{\partial_r u_k}^2}{e_k}\,\zeta^m \dif\zeta\Big)^{\frac{1}{2}}\Big\} \\
&\quad +\dfrac{1}{2} r^{-\delta} \eta^{1-m}\Big(\int_{\eta}^{\infty}
 \dfrac{\snorm{u_k}^2}{\zeta^2 e_k}\,\zeta^{m} \dif \zeta  \Big)^{\frac{1}{2}}
 \Big( \int_{\eta}^{\infty} \dfrac{\snorm{\partial_r e_k}}{e_k^2}\,\zeta^{m} \dif \zeta \Big)^{\frac{1}{2}}
 =\vcentcolon r^{-\delta}g_k(\eta,t),
\end{align*}
where we have used the fact that $-2\delta-m+1<0$ and $-\delta+1-m<0$ for $n=2, 3$ since $0<\delta<1$.
Multiplying both sides of the above inequality by $r^{\delta}$ and taking the limit inferior over $k\to\infty$,
it follows by Lemma \ref{lemma:klimholder} that
\begin{equation*}
\dfrac{|u|}{\sqrt{e}}(r,t) \le \liminf\limits_{k\to\infty} g_k(\eta,t)
\qquad\ \text{for all $(r,t)\in[\eta,\infty)\times[0,T]$.}
\end{equation*}
Taking the supremum in $r\in[\eta,\infty)$ and integrating in $t\in[0,T]$, we obtain
\begin{equation}\label{temp:dissip1}
\int_{0}^{T} \sup_{r\ge\eta}\,\dfrac{|u|}{\sqrt{e}}(r,t)\,\dif t \le \int_{0}^{T} \liminf_{k\to\infty}g_{k}(\eta,t)\,\dif t
\le \liminf_{k\to\infty} \int_{0}^{T} g_{k}(\eta,t)\,\dif t,
\end{equation}
where we have used Fatou's lemma since $g_k(\eta,t)>0$.
By definition of $g_k(\eta,t)$
and Lemma \ref{lemma:entEstext}, we obtain that, for all $k\in\mathbb{N}$,
\begin{align*}
\int_{0}^{T} g_{k}(\eta,t) \dif t
&\le C\eta^{\frac{1-m}{2}}\Big\{\int_{0}^{T}\Big( \int_{\eta}^{\infty}
\dfrac{\snorm{u}_k^2}{\zeta^2 e_k}\,\zeta^m\dif \zeta \Big)^{\frac{1}{2}} \dif t
+ \int_{0}^{T}\Big( \int_{\eta}^{\infty}
\dfrac{\snorm{\partial_r u_k}^2}{e_k} \,\zeta^m \dif \zeta  \Big)^{\frac{1}{2}}\dif t  \Big\}   \\
&\quad + C \eta^{1-m} \int_{0}^{T} \Big( \int_{\eta}^{\infty} \dfrac{\snorm{u_k}^2}{\zeta^2 e_k}\,\zeta^{m} \dif \zeta  \Big)^{\frac{1}{2}} \Big( \int_{\eta}^{\infty} \dfrac{\snorm{\partial_r e_k}}{e_k^2}\,\zeta^{m}\dif\zeta\Big)^{\frac{1}{2}} \dif t \\
&\le C\eta^{\frac{1-m}{2}} T^{\frac{1}{2}} \Big\{ \Big( \int_{0}^{T} \int_{\eta}^{\infty}
 \dfrac{\norm{u}_k^2}{\zeta^2 e_k}\,\zeta^m \dif \zeta \dif t \Big)^{\frac{1}{2}}
 + \Big( \int_{0}^{T}\int_{\eta}^{\infty}
 \dfrac{\snorm{\partial_r u_k}^2}{e_k}\,\zeta^m \dif \zeta \dif t   \Big)^{\frac{1}{2}} \Big\}\\
&\quad + C \eta^{1-m} \Big( \int_{0}^{T}\int_{\eta}^{\infty}
 \dfrac{\snorm{u_k}^2}{\zeta^2 e_k}\,\zeta^{m} \dif \zeta  \dif t \Big)^{\frac{1}{2}}
 \Big( \int_{0}^{T} \int_{\eta}^{\infty} \dfrac{\snorm{\partial_r e_k}}{e_k^2}\,
 \zeta^{m} \dif \zeta \dif t \Big)^{\frac{1}{2}}\\
&\le C(T) ( \eta^{\frac{1-m}{2}} + \eta^{1-m}),
\end{align*}
which, together with \eqref{temp:dissip1}, leads to $\eqref{temp:dissip0}_1$.
	
\smallskip	
2.  Consider the case: $n=3$. Let $(r,t)\in[\eta,\infty)\times[0,T]$. By $\lim_{r\to\infty} e_k(r,t) =1$, we have
\begin{align*}
\log e_k(r,t)
&= -\int_{r}^{\infty} \dfrac{\partial_r e_k}{e_k}(\zeta,t)\,\dif r
\le \Big(\int_{r}^{\infty} \zeta^{-m} \dif \zeta\Big)^{\frac{1}{2}}
\Big( \int_{r}^{\infty} \dfrac{\snorm{\partial_r e_k}^2}{e_k^2}(\zeta,t)\,\zeta^m \dif \zeta \Big)^{\frac{1}{2}}\\
&\le \dfrac{\eta^{2-n}}{\sqrt{n-2}}\Big( \int_{\eta}^{\infty}
\dfrac{\snorm{\partial_r e_k}^2}{e_k^2}(\zeta,t) \,\zeta^m \dif \zeta \Big)^{\frac{1}{2}}
=\vcentcolon h_k(\eta,t).
\end{align*}
Thus, $0 \le \log ( \max\{ 1, e_k \} ) (r,t) \le h_k(\eta,t)$ for all $(r,t)\in[\eta,\infty)\times[0,T]$.
Replacing the term $e_k(r,t)$ with $e_k^{-1}(r,t)$ in the above derivation,
then $0 \le \log ( \max\{ 1, e_k^{-1}\} ) (r,t) \le h_k(\eta,t)$ for all $(r,t)\in[\eta,\infty)\times[0,T]$.
Taking the limit inferior over $k\to\infty$ on both sides of these inequalities,
it follows from Lemma \ref{lemma:klimholder} that
\begin{align*}
\log ( \max\{ 1, e^{\pm 1} \} ) (r,t) \le \liminf_{k\to\infty}h_{k}(\eta,t) \qquad\ \text{for all $(r,t)\in[\eta,\infty)\times[0,T]$.}
\end{align*}
Taking the supremum in $r\ge\eta$ on the above inequality and integrating in $t\in [0,T]$, we have	\begin{equation}\label{temp:dissip2}
\int_{0}^{T} \sup_{r\ge\eta} \log ( \max\{ 1, e^{\pm 1}(r,t) \} )\,\dif t
\le \int_{0}^{T} \liminf_{k\to\infty}h_{k}(\eta,t)\,\dif t
\le \liminf_{k\to\infty} \int_{0}^{T} h_{k}(\eta,t)\,\dif t,
\end{equation}
where we have used Fatou's lemma since $h_k(\eta,t)>0$.
By Lemma \ref{lemma:entEstext}, we obtain that, for all $k\in\mathbb{N}$,
\begin{align*}
\int_{0}^{T} h_{k}(\eta,t)\,\dif t 
\le C \eta^{2-n} \sqrt{T} \Big( \int_{0}^{T}\int_{\eta}^{\infty}
\dfrac{\snorm{\partial_r e_k}^2}{e_k^2}\,\zeta^m \dif \zeta \dif t\Big)^{\frac{1}{2}}
\le C(T) \eta^{2-n},
\end{align*}
which, together with \eqref{temp:dissip2}, proves $\eqref{temp:dissip0}_2$.
	
\smallskip	
3. We consider the case: $n=2$.
Set $M \vcentcolon= \sup_{k\in\mathbb{N}} \sup_{t\in[0,T]} \tilde{r}_k(1,t)$
so that, by Lemma \ref{lemma:rbd}, $M\le (2C_0)^{\frac{1}{n}}$.
If $r\ge M$, then
$r > \sup_{t\in[0,T]} \tilde{r}_k(1,t)$ for all $k\in\mathbb{N}$.
By Lemma \ref{lemma:HREext}, $e_k(r,t)\le C(\ve) \sigma^{-\frac{1}{2}}(t)$ for all $t\in[0,T]$ and $r\in[\tilde{r}_k(\ve,t),\infty)$.
It follows that there exists a constant $C(T)>1$
such that
\begin{equation}\label{temp:logM1}
\log (\max \{ 1,e_k(r,t) \}) \le \log (C(T) \sigma^{-\frac{1}{2}}(t)) \qquad\,
\text{for all $(r,t)\in[M,\infty)\times[0,T]$.}
\end{equation}
By
\eqref{temp:logM1}, for all $r\in[\eta,M)$,
\begin{equation}\label{temp:logM2}
\begin{aligned}
\log e_k(r,t) & = \log e_k(2M,t) - \int_{r}^{2M}\dfrac{\partial_r e_k}{e_k}(\zeta,t)\,\dif \zeta\\
&\le \log (\max\{1,e_k(2M,t)\}) + \Big(\int_{r}^{2M} \zeta^{-1} \dif \zeta \Big)^{\frac{1}{2}} \Big(\int_{r}^{2M} \dfrac{\snorm{\partial_r e_k}^2}{e_k^2}(\zeta,t)\,\zeta \dif \zeta\Big)^{\frac{1}{2}}\\
&\le \log (C(T) \sigma^{-\frac{1}{2}}(t))
+ \sqrt{\log (C(T)/\eta)} \Big(\int_{\eta}^{\infty}
   \dfrac{\snorm{\partial_r e_k}^2}{e_k^2}(\zeta,t)\,\zeta \dif \zeta\Big)^{\frac{1}{2}}.
\end{aligned}
\end{equation}
Finally, in a similar way to that for the case $n=3$,
\begin{align*}
\int_{0}^{T} \sup_{r\ge\eta}\log (\max\{ 1, e(r,t) \}) \dif t
&\le \dfrac{C^2(T)}{2} + \int_{0}^{T} \sup_{r\in[\eta,M)}\log (\max\{ 1, e(r,t)\})\,\dif t \\
&\le C(T) + \sqrt{T\log(C(T)/\eta)} \sup_{k\in\mathbb{N}}\Big( \int_{0}^{T}\int_{\eta}^{\infty} \dfrac{\snorm{\partial_r e_k}^2}{e_k^2}\,\zeta^m \dif \zeta \dif t\Big)^{\frac{1}{2}}\\
&\le  C(T)\big(1+ \sqrt{|\log\eta|}\big).
\end{align*}
\end{proof}

\section{Global Weak Solutions including the Origin}\label{sec:ato0}

We denote $(\rho_a,u_a,e_a)(r,t)$ in $[a,\infty)\times[0,T]$ as the solution
of problem \eqref{eqs:SFNS} and \eqref{eqs:eSFNS}, which is guaranteed by Theorem \ref{thm:WSCEx},
with the modified initial data $(\rho_a^0,u_a^0,e_a^0)(r)$ constructed in \S \ref{subsec:mollify}
for each $a\in(0,1)$. We also denote $\tilde{r}_a(x,t)$ as the particle path function associated
with $\rho_a$, which satisfies
\begin{equation*}
	x= \int_{a}^{\tilde{r}_a(x,t)} \rho_a(y,t)\,r^m \dif r \qquad
	\text{for {\it a.e.} $(x,t)\in [0,\infty)\times[0,T]$}.
\end{equation*}
The main aim of this section is to take the limit: $a\searrow 0$ of the solution sequence
$(\rho_a,u_a,e_a)(r,t)$ of problems \eqref{eqs:SFNS} and \eqref{eqs:eSFNS}
to obtain a global spherically symmetric weak solution of the original Cauchy
problem \eqref{eqs:FNS}--\eqref{eqs:CauchyInit}.

\subsection{Uniform inequalities from the entropy estimates} \label{subsec:meas}
We obtain a set of inequalities involving measurable subsets of $[a,\infty)$, which
are uniform in $a\in(0,1)$.
In this section, we denote
\begin{equation}\label{eqs:GpsiH}
    G(\zeta) \vcentcolon= 1-\zeta+\zeta \log \zeta,
    \quad \psi(\zeta) \vcentcolon= \zeta -1 - \log \zeta,
    \quad H(\zeta) \vcentcolon= \zeta \log \zeta,
\end{equation}
which are all convex functions.
Let $G^{-1}_{+}$, $\psi_{+}^{-1}$, and $H_{+}^{-1}$ be the right branch inverses of $G$, $\psi$,
and $H$, respectively.
Then, by Proposition \ref{prop:omega},
the following functions are well defined for $(y,z)\in(0,\infty)^2\vcentcolon$
\begin{equation}\label{eqs:fis}
f_1(y;z)\vcentcolon= y G_{+}^{-1}( \dfrac{z}{y} ), \quad
f_2( y; z) \vcentcolon= y \psi_{+}^{-1}(\dfrac{z}{y}) - z,
\quad f_3(y;z) \vcentcolon= y H_{+}^{-1}(\dfrac{z}{y}).
\end{equation}
Some further properties of $(y,z)\mapsto f_i(y;z)$ for $i=1,2,3$ can be found
in Proposition \ref{prop:omega}.

\begin{lemma}\label{lemma:rhoM}
For any measurable set $E\subset [a,\infty)$ and each $t\in[0,T]$,
\begin{align}
&\sup_{a\in(0,1)}\int_{E} \rho_a(r,t)\,r^m \dif r \le \omega_1(E; C(T)),\label{6.3a} \\
&\sup_{a\in(0,1)}\int_{E} (\rho_a e_a)(r,t)\,r^m \dif r \le C(T) + \omega_2(E;C(T)),\label{6.3b}
\end{align}
where $\omega_1(E;z) \vcentcolon = f_1 (\int_{E} r^m \dif r ; z)$
and
$\omega_2(E;z) \vcentcolon = f_2(\omega_1(E;z);z)
= f_2( f_1(\int_{E}r^m \dif r ; z);z)$ for $z>0$.
\end{lemma}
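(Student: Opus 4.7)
The plan is to combine the uniform entropy estimates from Theorem~\ref{thm:WSCEx}\ref{item:WSCEx1}, which in particular yield
\begin{equation*}
\int_a^\infty G(\rho_a)(r,t)\,r^m \dif r \le C(T),
\qquad
\int_a^\infty (\rho_a\psi(e_a))(r,t)\,r^m \dif r \le C(T),
\end{equation*}
with Jensen's inequality applied to the convex functions $G$ and $\psi$, and then to invert via their right branches $G_+^{-1}$ and $\psi_+^{-1}$.

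For \eqref{6.3a}, I would set $\snorm{E}_m := \int_E r^m \dif r$ (the conclusion being trivial when $\snorm{E}_m=0$) and apply Jensen's inequality against the probability measure $\snorm{E}_m^{-1} r^m \dif r$ restricted to $E$. Since $G$ is convex with $G(1)=0$,
\begin{equation*}
G\Big(\snorm{E}_m^{-1}\int_E \rho_a(r,t)\,r^m \dif r\Big)
\le \snorm{E}_m^{-1}\int_E G(\rho_a)(r,t)\,r^m \dif r\le \snorm{E}_m^{-1} C(T).
\end{equation*}
Whenever the Jensen mean $\bar\rho_a := \snorm{E}_m^{-1}\int_E \rho_a\,r^m \dif r$ is at least $1$, inverting through the right branch yields $\bar\rho_a\le G_+^{-1}(C(T)/\snorm{E}_m)$; when $\bar\rho_a<1$ the same bound is automatic since $G_+^{-1}\ge 1$. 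Multiplying by $\snorm{E}_m$ produces \eqref{6.3a}.

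For \eqref{6.3b}, I would set $M_a:=\int_E \rho_a\,r^m\dif r$ (the degenerate case $M_a=0$ being trivial) and apply Jensen's inequality this time against the probability measure $M_a^{-1}\rho_a\,r^m\dif r$ on $E$ with the convex function $\psi$, obtaining
\begin{equation*}
\psi\Big(M_a^{-1}\int_E \rho_a e_a\,r^m\dif r\Big)\le M_a^{-1}\int_E \rho_a\psi(e_a)\,r^m\dif r \le M_a^{-1} C(T).
\end{equation*}
Inverting through $\psi_+^{-1}$ (handling the $\bar e_a<1$ case by the same automatic bound as above) and rearranging by the definition of $f_2$ gives $\int_E \rho_a e_a\,r^m\dif r\le C(T)+f_2(M_a;C(T))$. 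Combining this with the bound $M_a\le \omega_1(E;C(T))$ obtained in the previous step and invoking the monotonicity of $y\mapsto f_2(y;z)$ then completes the proof.

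The key technical point to verify is this monotonicity $\partial_y f_2(y;z)\ge 0$, which I expect to follow from the inverse function theorem: setting $\zeta:=\psi_+^{-1}(z/y)\ge 1$ leads to the identity $\partial_y f_2(y;z)=\zeta\log\zeta/(\zeta-1)\ge 0$. The analogous monotonicity of $y\mapsto yG_+^{-1}(z/y)$ (needed only if one wishes to weaken the assumption $\snorm{E}_m<\infty$) is obtained by the same computation with $\log$ in place of $\psi$. These properties of $f_1$, $f_2$, together with the invertibility of $G$, $\psi$ on their right branches, are precisely what Proposition~\ref{prop:omega} is designed to record; once that proposition is in hand, the lemma reduces to the two Jensen arguments above.
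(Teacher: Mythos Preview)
Your proposal is correct and follows essentially the same approach as the paper: Jensen's inequality for $G$ against the measure $\snorm{E}_m^{-1}r^m\dif r$ to obtain \eqref{6.3a}, then Jensen's inequality for $\psi$ against the $\rho_a$-weighted probability measure to obtain \eqref{6.3b}, combined with the monotonicity of $y\mapsto f_2(y;z)$ from Proposition~\ref{prop:omega}. Your explicit handling of the case where the Jensen mean is below $1$ is a small refinement over the paper, which tacitly absorbs this into the right-branch inversion.
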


\begin{proof}
Denote $\mathcal{L}_n(E) \vcentcolon= \int_{E} r^m \dif r$ for each $E\subseteq [a,\infty)$. 
By Jensen's inequality and Theorem \ref{thm:WSCEx}\ref{item:WSCEx1}, it follows that
	\begin{align*}
		G( \dfrac{1}{\mathcal{L}_n(E)} \int_{E} \rho_a(r,t)\,r^m \dif r)
		\le \dfrac{1}{\mathcal{L}_n(E)} \int_{E} G(\rho_a(r,t))\,r^m \dif r \le \dfrac{C(T)}{\mathcal{L}_n(E)}.
	\end{align*}
Taking the right branch inverse $G_{+}^{-1}(z)$ on both sides of the above
and using the fact that $z\mapsto G_{+}^{-1}(z)$ is monotone increasing, it follows that, for all $a\in(0,1)$,
\begin{align*}
		\int_{E} \rho_a(r,t)\,r^m \dif r
		\le \mathcal{L}_n(E) G_{+}^{-1}(\dfrac{C(T)}{\mathcal{L}_n(E)})
		= f_1( \mathcal{L}_n(E) ; C(T)) = \vcentcolon \omega_1(E;C(T)).
\end{align*}

Now, fix $t\in[0,T]$ and set $m_{a,t}(E) \vcentcolon= \int_{E} \rho_a(r,t)\,r^m \dif r$.
By Jensen's inequality and Theorem \ref{thm:WSCEx}\ref{item:WSCEx1}, it follows that
\begin{align*}
\psi( \dfrac{1}{m_{a,t}(E)} \int_E (\rho_a e_a) (r,t)\,r^m \dif r)
\le \dfrac{1}{m_{a,t}(E)} \int_{E} (\rho_a \psi(e_a)) (r,t)\,r^m \dif r \le \dfrac{C(T)}{m_{a,t}(E)}.
\end{align*}
Taking $\psi_{+}^{-1}(\cdot)$ on both sides of the above, it follows that
\begin{align*}
\int_E (\rho_a e_a) (r,t)\,r^m \dif r \le m_{a,t}(E) \psi_{+}^{-1}(\dfrac{C(T)}{m_{a,t}(E)})
= f_2(m_{a,t}(E); C(T)) + C(T).
\end{align*}
Using \eqref{6.3a}, $m_{a,t}(E) \le \omega_1(E ;C(T))$.
Since $y\mapsto f_2(y;z)$ is increasing in $y$ for fixed $z>0$ from Proposition \ref{prop:omega},
it follows that, for all $a\in(0,1)$,
\begin{align*}
\int_E (\rho_a e_a) (r,t)\,r^m \dif r
\le C(T) + f_2(\omega_1(E;C(T)); C(T)) = \vcentcolon C(T) + \omega_2(E;C(T)).
\end{align*}
\end{proof}

\subsection{Existence of particle paths and vacuum interfaces}\label{subsec:alimpath}

\begin{lemma}\label{lemma:alimpath}
There exist both a subsequence $\{a_j\}_{j\in\mathbb{N}}$ with $a_j \searrow 0$ as $j\to \infty$
and a continuous function $\tilde{r}(x,t)\vcentcolon [0,\infty)\times[0,T]\to [0,\infty)$ such that
\begin{enumerate}[label=(\roman*),ref=(\roman*),font={\normalfont\rmfamily}]
\item\label{item:apath1} for each compact subset $K\Subset (0,\infty)\times[0,T]$,
\begin{align*}
\lim\limits_{j\to \infty} \sup_{(x,t)\in K} \snorm{\tilde{r}_{a_j}(x,t)-\tilde{r}(x,t)} = 0.
\end{align*}

\item\label{item:apath2} for each fixed $\ve>0$,
\begin{equation*}
\begin{aligned}
&\snorm{\tilde{r}(x_1,t)-\tilde{r}(x_2,t)} \le C(\ve) \snorm{x_1-x_2}
  &&\quad \text{for all $(x_1,x_2,t)\in [\ve,\infty)^2\times[0,T]$},\\
&\snorm{\tilde{r}(x,t_1)-\tilde{r}(x,t_2)} \le C(\ve) \snorm{t_1^{\frac{3}{4}}-t_2^{\frac{3}{4}}}
  &&\quad \text{for all $(x,t_1,t_2)\in [\ve,\infty)\times[0,T]^2$}.
\end{aligned}
\end{equation*}
		
\item\label{item:apath3} for all $t\in[0,T]$, the function{\rm :} $x\mapsto \tilde{r}(x,t)$ is strictly increasing
so that $\ul{r}(t) \vcentcolon= \lim_{x\to 0^+} \tilde{r}(x,t)$ exists for all $t\in[0,T]$
and
\begin{equation*}
\lim_{t\searrow 0} \ul{r}(t) = 0, \qquad  t\mapsto \ul{r}(t) \ \text{is upper semi-continuous}.
\end{equation*}
In addition, $\mathring{\mathrm{F}}\vcentcolon= \{(r,t)\in(0,\infty)\times(0,T)\,\vcentcolon\, \ul{r}(t)<r\}$
is an open set.

\item\label{item:apath4} $\tilde{r}(x,t)$ satisfies the pointwise bound{\rm :}
\begin{equation*}
n x \psi^{-1}_{-}(\dfrac{C_0}{x}) \le  r^n(x,t) \le C_0( 1 +x )
\qquad  \text{for all $(x,t)\in[0,\infty)\times[0,T]$}.
\end{equation*}

\item\label{item:apath5} if $\ul{r}(t)>0$ for some $t\in(0,T]$, then
\begin{equation*}
\lim\limits_{j\to \infty} \int_{a_j}^{\ul{r}(t)} \rho_{a_j}(r,t)\,r^m \dif r =0.
\end{equation*}
In particular, for such $t\in(0,T]$,
\begin{equation*}
\liminf\limits_{j\to \infty} \rho_{a_j} (r,t) = 0 \qquad  \text{for a.e. $r\in(0,\ul{r}(t))$}.
\end{equation*}
\end{enumerate}
\end{lemma}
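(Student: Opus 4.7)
The plan is to extract the limit $\tilde{r}$ via a diagonal Arzel\`a--Ascoli argument and then deduce the structural properties of $\ul{r}$ and the mass control from the uniform estimates of Theorem~\ref{thm:WSCEx}. For fixed $\ve>0$ and $N>\ve$, Theorem~\ref{thm:WSCEx}\ref{item:WSCEx3}--\ref{item:WSCEx4} give equi-Lipschitz continuity in $x$ and equi-H\"older continuity in $t^{3/4}$ of $\{\tilde{r}_a\}_{a\in(0,1)}$ on $[\ve,N]\times[0,T]$, together with the uniform upper bound $\tilde{r}_a^{\,n}(x,t)\le C_0(1+x)$. Applying Proposition~\ref{prop:aaExt} and diagonalizing over $\ve_k=\tfrac1k$, $N_k=k$, I would obtain a subsequence $a_j\searrow 0$ and a continuous $\tilde{r}:(0,\infty)\times[0,T]\to[0,\infty)$ satisfying \ref{item:apath1}, \ref{item:apath2}, and, after passing to the limit in the pointwise inequality of Theorem~\ref{thm:WSCEx}\ref{item:WSCEx3}, the bound in \ref{item:apath4}. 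Strict monotonicity of $x\mapsto\tilde{r}(x,t)$ is preserved because, by Theorem~\ref{thm:WSCEx}\ref{item:WSCEx3} and Lemma~\ref{lemma:rhoM}, the identity $x_2-x_1=\int_{\tilde{r}_{a_j}(x_1,t)}^{\tilde{r}_{a_j}(x_2,t)}\rho_{a_j}\,r^m\,\dif r$ together with the uniform density bound near any strictly interior radius forces a positive gap in $r$ that persists in the limit.

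Next I define $\ul{r}(t):=\lim_{x\searrow 0}\tilde{r}(x,t)$, well defined by the monotonicity and non-negativity, and bounded by $C_0^{1/n}$. To verify $\lim_{t\searrow 0}\ul{r}(t)=0$, I observe that $\ul{r}(t)\le\tilde{r}(x,t)$ for every $x>0$, and that $\tilde{r}(x,0)=\lim_j\tilde{r}_{a_j}^{\,0}(x)$ satisfies $\tilde{r}(x,0)\to 0$ as $x\searrow 0$ by Proposition~\ref{prop:r0diff}; joint continuity of $\tilde{r}$ then yields $\limsup_{t\to 0}\ul{r}(t)\le\lim_{x\to 0}\tilde{r}(x,0)=0$. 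For upper semi-continuity at $t_0\in[0,T]$, given $\eta>0$ I choose $x>0$ with $\tilde{r}(x,t_0)<\ul{r}(t_0)+\tfrac\eta2$ and use continuity of $\tilde{r}(x,\cdot)$ to find $\delta>0$ with $\tilde{r}(x,s)<\ul{r}(t_0)+\eta$ for $|s-t_0|<\delta$; since $\ul{r}(s)\le\tilde{r}(x,s)$, this gives $\ul{r}(s)<\ul{r}(t_0)+\eta$. Openness of $\mathring{\mathcal F}$ then follows routinely: if $\ul{r}(t_0)<r_0$, choose $\eta<(r_0-\ul{r}(t_0))/2$ and obtain a product neighbourhood on which $\ul{r}(t)<\ul{r}(t_0)+\eta<r_0-\eta<r$.

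The main obstacle is item \ref{item:apath5}, where I must exclude the possibility that a positive mass is trapped between $a_j$ and $\ul{r}(t)$ in the limit. Fix $t\in(0,T]$ with $\ul{r}(t)>0$ and let $\eta\in(0,1)$. I would choose $x\in(0,\eta)$ so small that $\tilde{r}(x,t)<\ul{r}(t)+\eta$; by \ref{item:apath1} there is $J=J(x,\eta)$ with $|\tilde{r}_{a_j}(x,t)-\tilde{r}(x,t)|<\eta$ for $j\ge J$, which forces $\tilde{r}_{a_j}(x,t)\in[\ul{r}(t)-\eta,\ul{r}(t)+2\eta]$. Setting $E_j:=[\min\{\tilde{r}_{a_j}(x,t),\ul{r}(t)\},\ul{r}(t)]$, which has Lebesgue measure at most $\eta$ and lies in a bounded subset of $(0,\infty)$, Theorem~\ref{thm:WSCEx}\ref{item:WSCEx3} gives
\begin{equation*}
\int_{a_j}^{\ul{r}(t)}\rho_{a_j}(r,t)\,r^m\,\dif r
\le\int_{a_j}^{\tilde{r}_{a_j}(x,t)}\rho_{a_j}(r,t)\,r^m\,\dif r+\int_{E_j}\rho_{a_j}(r,t)\,r^m\,\dif r
=x+\int_{E_j}\rho_{a_j}(r,t)\,r^m\,\dif r.
\end{equation*}
Lemma~\ref{lemma:rhoM} bounds the last integral by $\omega_1(E_j;C(T))$, and Proposition~\ref{prop:omega} guarantees $\omega_1(E;C(T))\to 0$ as $|E|\to 0$ (since $f_1(y;z)=y\,G_+^{-1}(z/y)\to 0$ as $y\to 0$). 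Sending $j\to\infty$ and then $\eta\to 0$ yields \ref{item:apath5}. The almost-everywhere vacuum assertion is then an immediate consequence of Fatou's lemma applied to
\begin{equation*}
0\le\int_0^{\ul{r}(t)}\liminf_{j\to\infty}\rho_{a_j}(r,t)\,r^m\,\dif r\le\liminf_{j\to\infty}\int_{a_j}^{\ul{r}(t)}\rho_{a_j}(r,t)\,r^m\,\dif r=0,
\end{equation*}
which forces $\liminf_{j\to\infty}\rho_{a_j}(r,t)=0$ for a.e.\ $r\in(0,\ul{r}(t))$.
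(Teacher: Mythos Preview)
Your proposal is correct and follows essentially the same route as the paper: both extract $\tilde r$ via Proposition~\ref{prop:aaExt} on the compacts $[\ve,N]\times[0,T]$ using the equicontinuity from Theorem~\ref{thm:WSCEx}\ref{item:WSCEx3}--\ref{item:WSCEx4}, deduce strict monotonicity from the mass identity and the uniform density upper bound, obtain upper semi-continuity of $\ul{r}$ from $\ul{r}(t)=\inf_{x>0}\tilde r(x,t)$ and continuity of $\tilde r(x,\cdot)$, and handle \ref{item:apath5} by splitting $\int_{a_j}^{\ul{r}(t)}\rho_{a_j}r^m\,\dif r$ at $\tilde r_{a_j}(x,t)$, bounding the first piece by $x$ and the remainder via Lemma~\ref{lemma:rhoM} and Proposition~\ref{prop:omega}. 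The only cosmetic differences are that the paper writes upper semi-continuity via openness of sublevel sets and splits the remainder in \ref{item:apath5} into two $\omega_1$-controlled pieces rather than your single $E_j$; both amount to the same estimate.
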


\begin{proof}  We divide the proof into six steps.

\smallskip
1. For each integer $i\in\mathbb{N}$, let $S_{i}\vcentcolon= [i^{-1},i]\times[0,T]$.
From Theorem \ref{thm:WSCEx}\ref{item:WSCEx3}--\ref{item:WSCEx4}, it follows that, for all $(x,t)\in S_i$,
\begin{equation}\label{temp:apath1}
\tilde{r}_a(x,t) = \tilde{r}_a^0(x) + \int_{0}^{t} u_a( \tilde{r}(x,s), s)\,\dif s
\le \tilde{r}_a^0(x) + C(i) \int_{0}^{T} \sigma^{-\frac{1}{4}}(s)\,\dif s \le C(i)(1+T),
\end{equation}
with $C(i)>0$ a constant independent of $a\in(0,1)$, and
\begin{equation}\label{temp:apath2}
\begin{split}
&\snorm{\tilde{r}_a(x_1,t)-\tilde{r}_a(x_2,t)} \le C(i) \snorm{x_1-x_2} \qquad\text{for all $(x_1,x_2,t)\in [i^{-1},i]^2\times[0,T]$},\\
&\snorm{\tilde{r}_a(x,t_1)-\tilde{r}_a(x,t_2)} \le C(i) \snorm{t_1^{\frac{3}{4}}-t_2^{\frac{3}{4}}} \qquad\,
  \text{for all $(x,t_1,t_2)\in [i^{-1},i]\times[0,T]^2$}.
\end{split}
\end{equation}
In addition, $S_i\subset S_{i+1}$, $\text{Int}(S_i)\neq \emptyset$ for each $i\in\mathbb{N}$, and $(0,\infty)\times[0,T]=\cup_{i\in\mathbb{N}}S_i$.
Thus, by Proposition \ref{prop:aaExt}, it follows that there exists a continuous function $\tilde{r}(x,t)\vcentcolon(0,\infty)\times[0,T]\to [0,\infty)$ such that,
 for all compact subset $K\Subset (0,\infty)\times[0,T]$,
\begin{equation}\label{temp:apath3}
\lim\limits_{j\to\infty} \sup\limits_{(x,t)\in K} \snorm{\tilde{r}_{a_j}(x,t)-\tilde{r}(x,t)} = 0,
\end{equation}
which, along with \eqref{temp:apath2} and Theorem \ref{thm:WSCEx}\ref{item:WSCEx4}, implies Lemma  \ref{lemma:alimpath}\ref{item:apath2}.

\smallskip
2. Let $0<x_1<x_2<\infty$. It follows from Theorem \ref{thm:WSCEx}\ref{item:WSCEx3}--\ref{item:WSCEx4} that,
for {\it a.e.} $t\in[0,T]$,
\begin{align*}
x_2 - x_1 =\int_{\tilde{r}_a(x_1,t)}^{\tilde{r}_a(x_2,t)} \rho_a(r,t)\,r^m \dif r
\le C(x_1)\int_{\tilde{r}_a(x_1,t)}^{\tilde{r}_a(x_2,t)} r^m \dif r
= \dfrac{C(x_1)}{n} \big(\tilde{r}_a^{\,n}(x_2,t) - \tilde{r}_a^{\,n}(x_1,t)\big),
\end{align*}
where
we have used that $\rho_a(r,t)\le C(x_1)$ for {\it a.e.} $(r,t)$
such that $r \ge \tilde{r}_a(x_1,t)$.
Taking $j\to\infty$ and using \eqref{temp:apath3} yield
\begin{equation*}
0 < x_2 - x_1 \le \dfrac{C(x_1)}{n} \big(\tilde{r}^{\,n}(x_2,t)-\tilde{r}^{\,n}(x_1,t)\big).
\end{equation*}
This shows that $x\mapsto\tilde{r}(x,t)$ is strictly increasing for {\it a.e.} $t\in[0,T]$.
Since $(x,t)\mapsto \tilde{r}(x,t)$ is continuous, then $x\mapsto\tilde{r}(x,t)$ is strictly increasing
for all $t\in[0,T]$. Since $\tilde{r}(x,t)\ge 0$,
\begin{equation}\label{temp:apath4}
\ul{r}(t) \vcentcolon= \lim\limits_{x\to 0^+} \tilde{r}(x,t) \quad \text{ exists for all $t\in[0,T]$}.
\end{equation}
From Theorem \ref{thm:WSCEx}\ref{item:WSCEx3}, it follows that $\tilde{r}_a(x,0)=\tilde{r}_a^0(x)$
for each $a\in(0,1)$ and $x\in[0,\infty)$, where $\tilde{r}_a^{0}(x)$ is the function constructed
in \S \ref{subsec:ffaprox}, satisfying
\begin{equation*}
x=\int_{a}^{\tilde{r}_a^0(x)} \rho_a^{0}(r)\,r^m \dif r\qquad \text{for all $x\in[0,\infty]$}.
\end{equation*}
Then, by the uniform convergence \eqref{temp:apath3} and Proposition \ref{prop:mollify} in \S \ref{subsec:mollify},
we have
\begin{equation*}
x=\int_{0}^{\tilde{r}(x,0)}\rho_0(r)\,r^m \dif r \ge \dfrac{\tilde{r}^{\,n}(x,0)}{C_0 n}\qquad
\text{for all $x>0$}.
\end{equation*}
This implies the inequality: $\tilde{r}(x,0) \le (n C_0 x)^{\frac{1}{n}}$.
Given $\delta>0$, let $x_\delta>0$ be such that $\tilde{r}(x_\delta,0)\le ( n C_0 x_\delta )^{\frac{1}{n}}
\le \frac{\delta}{2}$.
Since $t\mapsto \tilde{r}(x_\delta,t)$ is continuous in $[0,T]$, there exists $t_\delta>0$ such that,
if $t\in(0,t_\delta)$, then $\tilde{r}(x_\delta,t) < \tilde{r}(x_\delta,0) + \frac{\delta}{2}\le
\delta$. By \eqref{temp:apath4},  $\ul{r}(t)=\inf\{ \tilde{r}(x,t) \vcentcolon x>0 \}$ for each $t\in[0,T]$.
It follows that $\ul{r}(t)\le \tilde{r}(x_\delta,t) < \delta$ if $t\in(0,t_\delta)$.
This shows that $\lim_{t\searrow 0}\ul{r}(t)=0$.
	
\smallskip	
3. We now show that $t\mapsto \ul{r}(t)$ is upper semi-continuous.
To start with, a function $f\vcentcolon \R \to \R$ is upper semi-continuous if and only if
the set $\{x\in\R\vcentcolon f(x)<y \}$ is open for all $y\in\R$.
Define the sets:
\begin{equation*}
\begin{aligned}
&U_{\zeta} \vcentcolon = \{ s\in(0,T) \vcentcolon \ul{r}(s) < \zeta \} \  &&\text{ for each $\zeta\in(0,\infty)$,} \\
&U_{\zeta}^x \vcentcolon =  \{ s\in(0,T) \vcentcolon \tilde{r}(x,s) < \zeta \} \
&&\text{ for each $(x,\zeta)\in(0,\infty)^2$.}
\end{aligned}
\end{equation*}
Suppose that $t\in U_{\zeta}$.
Since $\ul{r}(t)=\inf_{x>0} \tilde{r}(x,t)$, there exists $\delta>0$
such that $\zeta > \tilde{r}(\delta,t) \ge \ul{r}(t)$.
Thus, $t\in U_{\zeta}^{\delta} \subseteq \cup_{x>0} U_{\zeta}^{x}$, which implies that
$U_{\zeta}\subseteq \cup_{x>0} U_{\zeta}^{x}$.
On the other hand, if $t\in U_{\zeta}^{\delta}$ for some $\delta>0$,
then $\zeta>\tilde{r}(\delta,t)\ge \ul{r}(t)$ so that $t\in U_{\zeta}$,
which implies that $U_{\zeta}= \cup_{x>0}U_{\zeta}^{x}$.
Since $t\mapsto \tilde{r}(x,t)$ is continuous,
$U_{\zeta}^{x}$ is an open set for each $x>0$.
Therefore, $U_{\zeta}$ is also an open set as it is the union of open sets.
Since this is true for arbitrary $\zeta>0$,
and $\{s\in[0,T]\vcentcolon \ul{r}(s)<\zeta \}=\emptyset$ for $\zeta\le 0$,
$t\mapsto \ul{r}(t)$ is upper semi-continuous.
	
\smallskip	
4. For each $\ve\in(0,1]$, define
$\mathring{\mathrm{F}}_{\ve} \vcentcolon= \{(r,t)\in(0,\infty)\times(0,T)\,\vcentcolon\,\tilde{r}(\ve,t)< r\}$.
Then $\mathring{\mathrm{F}}_{\ve}$ is an open set, since $t\mapsto \tilde{r}(\ve,t)$ is continuous in $t\in[0,T]$.
Since $\ul{r}(t)=\inf_{x>0}\tilde{r}(x,t)$ for all $t\in[0,T]$,
it follows that $\mathring{\mathrm{F}}_{\ve}\subset \mathring{\mathrm{F}}$ for each $\ve>0$.
Furthermore, if $(r,t)\in\mathring{\mathrm{F}}$, then there exists $\ve>0$ such
that $\ul{r}(t) \le \tilde{r}(\ve,t)<r$, which implies $(r,t)\in\mathring{\mathrm{F}}_{\ve}$
so that $\mathring{\mathrm{F}}=\cup_{\ve>0}\mathring{\mathrm{F}}_{\ve}$.
Thus, $\mathring{\mathrm{F}}$ is open as it is a union of open sets. This concludes the proof for Lemma \ref{lemma:alimpath}\ref{item:apath3}.

\smallskip	
5. We show the upper and lower bounds of $\tilde{r}(x,t)$.
From Theorem \ref{thm:WSCEx}\ref{item:WSCEx3}, we have
	\begin{equation*}
		nx \psi_{-}^{-1}(\frac{C_0}{x}) \le \tilde{r}_a^{\,n}(x,t) \le C_0(1 + x) \qquad\ \text{for all $(a,x,t)\in(0,1)\times[0,\infty)\times[0,T]$,}
	\end{equation*}
    which, along with the uniform convergence in  \eqref{temp:apath3}, implies Lemma \ref{lemma:alimpath}\ref{item:apath4}.

\smallskip	
6. We now prove Lemma \ref{lemma:alimpath}\ref{item:apath5}.
If $\ul{r}(t)>0$ for some $t\in(0,T]$, then, for each $a\in (0,\ul{r}(t))$,
Theorem \ref{thm:WSCEx}\ref{item:WSCEx3} and \eqref{6.3a} imply that
	\begin{align*}
		\Bignorm{\int_{a}^{\ul{r}(t)} \rho_a(r,t)\,r^m \dif r}
		&\le \Bignorm{\int_{a}^{\tilde{r}_a(x,t)} \rho_a(r,t)\,r^m \dif r}
		 + \Bignorm{\int_{\tilde{r}_a(x,t)}^{\tilde{r}(x,t)} \rho_a(r,t) r^m \dif r} + \Bignorm{\int_{\tilde{r}(x,t)}^{\tilde{r}(t)}\rho_a(r,t)\,r^m \dif r}\\
		&\le x + \omega_1\big( [\tilde{r}(x,t),\tilde{r}_a(x,t)]; C(T) \big)
		+ \omega_1\big( [\ul{r}(t),\tilde{r}(x,t)]; C(T) \big),
	\end{align*}
where $\omega_1(E;C) \vcentcolon= f_1( \int_{E}r^m \dif r ; C)$  with
$f_1(y;C) \vcentcolon= y G_+^{-1}(\frac{C}{y})$.
By Proposition \ref{prop:omega}, for each fixed $C>0$, $y\mapsto f_1(y;C)$ is a positive, monotone increasing,
continuous function with $\lim_{y\searrow 0} f_1(y;C) = 0$.
Hence, given $\ve>0$, there exists $\delta^{\prime}>0$ such that
$$
f_1( \frac{\snorm{\tilde{r}^{\,n}(x,t)-\ul{r}^{\,n}(t)}}{n};C)\le \frac{\ve}{2}   \qquad\mbox{for $x\in(0,\delta^{\prime})$}.
$$

Let $\delta\vcentcolon= \min\{ \delta^{\prime}, \frac{\ve}{2}\}$. We have
\begin{equation*}
\int_{a}^{\ul{r}(t)} \rho_a(r,t)\,r^m \dif r
\le \ve + f_1( \dfrac{\tilde{r}_a^{\,n}(x,t)-\tilde{r}^{\,n}(x,t)}{n}; C(T))
\qquad \mbox{if $x\in (0,\delta)$.}
\end{equation*}
Now, fix $x\in(0,\delta)$. It follows from \eqref{temp:apath3} and $\lim_{y\searrow 0} f_1(y;C) = 0$ that
\begin{equation*}
\limsup\limits_{a_j\searrow 0} \int_{a_j}^{\ul{r}(t)} \rho_{a_j}(r,t)\,r^m \dif r
= \ve + \lim\limits_{a_j\searrow 0}f_1(\dfrac{(\tilde{r}_{a_j}(x,t))^n-(\tilde{r}(x,t))^n}{n}; C(T)) = \ve.
\end{equation*}
Since this is true for arbitrarily small $\ve>0$, taking $\ve\searrow 0$ yields
\begin{equation}\label{temp:alimpath1}
\limsup\limits_{j\to\infty} \int_{a_j}^{\ul{r}(t)} \rho_{a_j}(r,t)\,r^m \dif r =0.
\end{equation}
Since $\rho_{a}(r,t)\ge C^{-1}(a)$ for {\it a.e.} $(r,t)\in[a,\infty)\times[0,T]$ from
Theorem \ref{thm:WSCEx}\ref{item:WSCEx2},
\begin{align*}
\int_{a}^{\ul{r}(t)} \rho_a(r,t)\,r^m \dif r
\ge \frac{1}{C(a)}\int_{a}^{\ul{r}(t)} r^m \dif r >0 \qquad
\text{for all $a\in(0,\ul{r}(t))$},
\end{align*}
which, together with \eqref{temp:alimpath1},
implies the first statement of Lemma \ref{lemma:alimpath}\ref{item:apath5}.
Since $\rho_a(r,t)>0$ for each $a\in(0,1)$, by Fatou's lemma,
\begin{align*}
\int_{0}^{\infty} \liminf\limits_{a_j\searrow 0}\rho_{a_j}(r,t) \mathbbm{1}_{(a_j,\ul{r}(t))}(r) r^m \dif r \le \liminf\limits_{a_j\searrow 0} \int_{a_j}^{\ul{r}(t)} \rho_{a_j}(r,t)\,r^m \dif r =0.
\end{align*}
Thus, $\liminf_{j\to\infty}\rho_{a_j}(r,t) \mathbbm{1}_{(a_j,\ul{r}(t))}(r) = 0$ for {\it a.e.} $r\in(0,\ul{r}(t))$.
Now fix such $r\in(0,\ul{r}(t))$.
It follows that $\rho_{a_j}(r,t) = \rho_{a_j}(r,t) \mathbbm{1}_{(a_j,\ul{r}(t))}(r)$ for all $a_j\in(0,r)$.
Taking the limit inferior as $j\to\infty$ on both sides, we obtain the second statement
of Lemma \ref{lemma:alimpath}\ref{item:apath5}.
\end{proof}

In what follows, we denote the fluid region $\mathrm{F}$
as the set:
\begin{align*}
	\mathrm{F}\vcentcolon= \{ (r,t)\in(0,\infty)\times[0,T]\,\vcentcolon\, r> \ul{r}(t) \}.
\end{align*}

\subsection{Compactness results in the exterior domain} \label{subsec:alim}
In this subsection, we show the convergence of the approximate solutions in the exterior domain
and obtain several properties of the limit solutions.

\begin{lemma} \label{lemma:alimue}
There exist both a subsequence $a_j\searrow 0$ as $j\to\infty$
and H\"older continuous functions
$(u,e) \vcentcolon \mathrm{F}\backslash\{t=0\}\to \mathbb{R}\times [0,\infty)$
such that, for any compact subset $K\Subset \mathrm{F}\backslash\{t=0\}$,
\begin{equation*}
\lim\limits_{j\to \infty} \sup\limits_{(r,t)\in K}
\snorm{ ( u_{a_j} - u , e_{a_j} - e )(r,t) } = 0.
\end{equation*}
Moreover, for each $\ve>0$, there exists $C(\ve)>0$ such that
\begin{enumerate}[label=(\roman*),ref=(\roman*),font={\normalfont\rmfamily}]
\item If $t\in[0,T]$ and $r\in[\tilde{r}(\ve,t),\infty)$,
\begin{equation}\label{temp:alimue1}
\sigma^{\frac{1}{4}}(t) \snorm{u(r,t)}
+ \sigma^{\frac{1}{2}}(t) e(r,t)\le C(\ve);
\end{equation}

\item If $t\in(0,T]$ and $r_1,\, r_2 \ge \tilde{r}(\ve,t)$,
\begin{equation}\label{temp:alimue2}
\sigma^{\frac{1}{2}}(t)\snorm{u(r_1,t)-u(r_2,t)} + \sigma(t)\snorm{e(r_1,t)-e(r_2,t)}
\le C(\ve) \snorm{r_1-r_2}^{\frac{1}{2}};
\end{equation}

\item If $0<t_1<t_2\le T$ and $r\ge \sup_{t_1\le t\le t_2}\tilde{r}(\ve,t)$,
\begin{equation}\label{temp:alimue3}
\sigma^{\frac{1}{2}}(t_1)\snorm{u(r,t_1)-u(r,t_2)} + \sigma(t_1)\snorm{e(r,t_1)-e(r,t_2)} \le C(\ve) \snorm{t_2-t_1}^{\frac{1}{4}}.
\end{equation}

\item $(u,e)(r,t)$ satisfy
\begin{align}
&\sup_{ 0\le t \le  T} \int_{\tilde{r}(\ve,t)}^{\infty}
\big(\snorm{(u,e-1)}^2 + \snorm{(\sqrt{\sigma}\partial_r u, \sigma\partial_r e)}^2 \big)(r,t) r^m \dif r\nonumber\\
&\,  +\int_{0}^{T}\int_{\tilde{r}(\ve,t)}^{\infty}\big(\snorm{( \partial_r u, \partial_r e, u \partial_r u )}^2
+\snorm{(\sqrt{\sigma} \partial_t u, \sigma \partial_t e)}^2\big) r^m \dif r \dif t\le C(\ve).\label{6.13b}
\end{align}
\end{enumerate}
\end{lemma}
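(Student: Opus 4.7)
The plan is to combine the uniform–in–$a$ Hölder and $L^\infty$ estimates of Theorem \ref{thm:WSCEx}\ref{item:WSCEx4} with the uniform convergence of the particle path functions from Lemma \ref{lemma:alimpath}\ref{item:apath1}, and then apply the Arzelà–Ascoli type argument of Proposition \ref{prop:aaExt} on a countable exhaustion of $\mathrm{F}\setminus\{t=0\}$. Concretely, for each $i\in\mathbb{N}$ I would introduce the compact sets
\begin{equation*}
K_{i,L}\vcentcolon=\Big\{(r,t)\,\vcentcolon\,\tfrac{1}{i}\le t\le T,\ \tilde{r}(\tfrac{1}{i},t)+\tfrac{1}{i}\le r\le L\Big\}\qquad (L\in\mathbb{N}),
\end{equation*}
which exhaust $\mathrm{F}\setminus\{t=0\}=\bigcup_{\ve>0}\{(r,t)\,\vcentcolon\,r>\tilde{r}(\ve,t),\,t>0\}$ thanks to Lemma \ref{lemma:alimpath}\ref{item:apath3} and the continuity of $\tilde{r}(\cdot,\cdot)$.

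The key observation is that, by Lemma \ref{lemma:alimpath}\ref{item:apath1}, $\sup_{t\in[0,T]}|\tilde{r}_{a_j}(\frac{1}{i},t)-\tilde{r}(\frac{1}{i},t)|\to 0$, so for each fixed $(i,L)$ and all sufficiently large $j$ one has $r\ge\tilde{r}_{a_j}(\frac{1}{i},t)+\frac{1}{2i}$ for every $(r,t)\in K_{i,L}$. On this enlarged domain Theorem \ref{thm:WSCEx}\ref{item:WSCEx4} supplies uniform Hölder continuity in space and time with constant $C(\frac{1}{i})$, together with the $L^\infty$ bound $\sigma^{1/4}(t)|u_{a_j}|+\sigma^{1/2}(t)e_{a_j}\le C(\frac{1}{i})$, all independent of $j$. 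Proposition \ref{prop:aaExt} then extracts a subsequence along which $(u_{a_j},e_{a_j})$ converges uniformly on $K_{i,L}$; a standard diagonal extraction across $(i,L)\in\mathbb{N}^2$ yields the limit $(u,e)$ which is locally Hölder on $\mathrm{F}\setminus\{t=0\}$. Bounds \eqref{temp:alimue1}--\eqref{temp:alimue3} follow by sending $a_j\searrow 0$ in the corresponding inequalities of Theorem \ref{thm:WSCEx}\ref{item:WSCEx4}, using that for fixed $\ve>0$ and each $t$, points with $r\ge\tilde{r}(\ve,t)+\eta$ eventually satisfy $r\ge\tilde{r}_{a_j}(\ve,t)$, and then letting $\eta\searrow 0$ by continuity of $(u,e)$.

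For the high–regularity bound \eqref{6.13b}, the strategy mirrors the proof of Lemma \ref{lemma:weak}: for each fixed $\ve>0$ the indicator of $\{r\ge\tilde{r}_{a_j}(\ve,t)\}$ converges a.e.\ to the indicator of $\{r\ge\tilde{r}(\ve,t)\}$ by Lemma \ref{lemma:alimpath}\ref{item:apath1}. Using this together with Theorem \ref{thm:WSCEx}\ref{item:WSCEx4}, I would test $\chi_{\{r\ge\tilde{r}_{a_j}(\ve,t)\}}(\sqrt{\sigma}\partial_r u_{a_j},\sigma\partial_r e_{a_j})$ and the corresponding second-family against $L^1_t L^2_r$ (resp.\ $L^2_{r,t}$) functions, apply dominated convergence to transfer the moving domain onto the limit domain, and extract weak–star (resp.\ weak) $L^2$ limits. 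The already-proven uniform convergence of $(u_{a_j},e_{a_j})$ on compact subsets of $\mathrm{F}\setminus\{t=0\}$ identifies these weak limits with the distributional derivatives of $(u,e)$, and weak–star lower semicontinuity of the $L^2$ norm then delivers \eqref{6.13b}.

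The main obstacle is precisely this moving-domain problem: the quantitative estimates of Theorem \ref{thm:WSCEx}\ref{item:WSCEx4} are valid only on $\{r\ge\tilde{r}_{a_j}(\ve,t)\}$, which varies with $a_j$, whereas the target estimates live on $\{r\ge\tilde{r}(\ve,t)\}$. Everything hinges on the uniform convergence $\tilde{r}_{a_j}(\ve,\cdot)\to\tilde{r}(\ve,\cdot)$ from Lemma \ref{lemma:alimpath}\ref{item:apath1}, which lets one absorb the $O(|\tilde{r}_{a_j}(\ve,t)-\tilde{r}(\ve,t)|)$ discrepancy into the safety margin $\frac{1}{i}$ in the definition of $K_{i,L}$ (for the strong-convergence part) and into dominated–convergence arguments on moving–indicator functions (for the weak-convergence part).
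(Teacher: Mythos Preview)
Your proposal is correct and follows essentially the same route as the paper: exhaust $\mathrm{F}\setminus\{t=0\}$ by compact sets, use the uniform convergence of $\tilde{r}_{a_j}(\ve,\cdot)\to\tilde{r}(\ve,\cdot)$ from Lemma~\ref{lemma:alimpath}\ref{item:apath1} to place these sets inside the region where Theorem~\ref{thm:WSCEx}\ref{item:WSCEx4} applies uniformly in $j$, apply Proposition~\ref{prop:aaExt}, and for \eqref{6.13b} repeat the moving-indicator argument of Lemma~\ref{lemma:weak}. The only cosmetic difference is how the safety margin is produced: the paper exploits the strict monotonicity $x\mapsto\tilde{r}(x,t)$ to obtain $\tilde{r}_{a_j}(\tfrac{\ve}{2},t)<\tilde{r}(\ve,t)$ for large $j$ (so $C(\ve)$ becomes $C(\tfrac{\ve}{2})$), whereas you insert an additive $+\tfrac{1}{i}$ and then let it vanish by continuity; both devices serve the same purpose and neither is materially simpler.
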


\begin{proof}
We present the proof only for $u(r,t)$, since the case for $e(r,t)$ follows in the same way.

First, fix $\ve\in(0,1)$ and define the domain:
\begin{equation*}
\mathrm{F}_\ve \vcentcolon
=\big\{(r,t)\in (0,\infty)\times[0,T]\, \vcentcolon\, t\in[\ve,T], \,\tilde{r}(\ve,t) \le r \le \ve^{-1}\big\}.
\end{equation*}
By Lemma \ref{lemma:alimpath}\ref{item:apath4}, there exists $\ve_0>0$ such that
$\text{Int}(\mathrm{F}_\ve)\neq \emptyset$ for each $\ve\in(0,\ve_0)$.
Since $x\mapsto\tilde{r}(x,t)$ is strictly increasing, then $\mathrm{F}_{\ve_1} \subset \mathrm{F}_{\ve_2}$ if $\ve_1 > \ve_2$.

\smallskip
We claim:
\begin{equation} \label{fltemp}
\mathrm{F}\backslash \{t=0\} = \bigcup\limits_{0<\ve<1} \mathrm{F}_\ve.
\end{equation}
This can be seen as follows:
If $(r,t)\in \mathrm{F}_\ve$ for some $\ve\in(0,1)$, then $r> \ul{r}(t)$ since $ \tilde{r}(x,t) \searrow \ul{r}(t)$
as $x\searrow 0$ for each $t\in[0,T]$, which implies that $(r,t)\in \mathrm{F}$.
On the other hand, if $(r,t)\in \mathrm{F}\backslash\{t=0\}$, then $\ul{r}(t)< r$ and
there exists $\ve_1>0$ such that $t>\ve_1$.
Since $\lim_{x\searrow 0}\tilde{r}(x,t) = \ul{r}(t)$ for all $t\in[0,T]$, there exists $\ve_2\in(0,1)$
such that $\ul{r}(t) \le \tilde{r}(\ve_2,t) \le r$.
Thus, $(r,t)\in \mathrm{F}_\ve$ for $\ve\vcentcolon=\min\{\ve_1,\ve_2\}$.
Then \eqref{fltemp} follows.
	
\smallskip
Next, since $x\mapsto \tilde{r}(x,t)$ is strictly increasing for each $t\in[0,T]$,
and $(x,t)\mapsto \tilde{r}(x,t)$ is continuous, then
$\D_{\ve}\vcentcolon=\inf_{t\in[0,T]}\{ \tilde{r}(\ve,t) - \tilde{r}(\frac{\ve}{2},t) \}>0$ by the compactness of $[0,T]$.
By Lemma \ref{lemma:alimpath}, there exists $N_{\ve}\in\mathbb{N}$ such that
$\sup_{t\in[0,T]} \snorm{\tilde{r}_{a_j}(\frac{\ve}{2},t)-\tilde{r}(\frac{\ve}{2},t)}\le \frac{\D_{\ve}}{2}$ for all $j \ge N_\ve$.
Hence, $\tilde{r}_{a_j}(\frac{\ve}{2},t) < \tilde{r}(\ve,t)$
for all $j\ge N_\ve$ and $t\in[0,T]$, which implies that,
if $(r,t)\in\mathrm{F}_\ve$, then $\tilde{r}_{a_j}(\frac{\ve}{2},t)< r$ for all $j\ge N_\ve$.
Combining this with Theorem \ref{thm:WSCEx}\ref{item:WSCEx4}, we have
	\begin{equation}\label{fltemp2}
		\begin{dcases*}
			\sup\limits_{j\ge N_\ve}\snorm{u_{a_j}(r_1,t)-u_{a_j}(r_2,t)}
			\le C(\frac{\ve}{2}) \sigma^{-\frac{1}{2}}(t) \snorm{r_1-r_2}^{\frac{1}{2}} & for all  $(r_1,t), (r_2,t)\in \mathrm{F}_\ve$,\\
			\sup\limits_{j\ge N_\ve}\snorm{u_{a_j}(r,t_1)-u_{a_j}(r,t_2)} \le C(\frac{\ve}{2}) \sigma^{-\frac{1}{2}}(t_1) \snorm{t_2-t_1}^{\frac{1}{4}}
			& for all  $(r,t_1), (r,t_2)\in \mathrm{F}_\ve$.
		\end{dcases*}
	\end{equation}
In addition, it also follows from Theorem \ref{thm:WSCEx}\ref{item:WSCEx4} that
\begin{equation}\label{fltemp3}
\sup_{j\ge N_\ve}\snorm{u_{a_j}(r,t)} \le C(\ve) \sigma^{-\frac{1}{4}}(t)
\le \ve^{-\frac{1}{4}}C(\ve)
\qquad \text{ for all $(r,t)\in \mathrm{F}_{\ve}$}.
\end{equation}
In light of \eqref{fltemp}--\eqref{fltemp3}, we can apply Proposition \ref{prop:aaExt} to obtain a further
subsequence (still denoted) $a_j$ and a continuous function
$u(r,t)\vcentcolon \mathrm{F}\backslash\{t=0\}\to \mathbb{R}$
such that
\begin{equation}\label{fltemp4}
\lim\limits_{j\to\infty} \sup\limits_{(r,t)\in K} \snorm{u_{a_j}(r,t)-u(r,t)}=0 \qquad
\text{for all compact subset $K \Subset \mathrm{F}\backslash\{t=0\}$},
\end{equation}
which together with Theorem \ref{thm:WSCEx}\ref{item:WSCEx4} implies \eqref{temp:alimue1}--\eqref{temp:alimue3}.

Similarly, we can establish similar estimates for $e(r,t)$ in the same way.

The proof for
\eqref{6.13b} follows the same as that of Lemma \ref{lemma:weak}.
\end{proof}

\begin{lemma}\label{lemma:alimrho}
For each $\ve>0$, define
$$
\rho_{a}^{(\ve)}(r,t) \vcentcolon= \rho_a(r,t) \chi_\ve^a(r,t), \quad
\rho^{(\ve)}(r,t) \vcentcolon= \rho(r,t) \chi_\ve(r,t),
$$
where $\chi_\ve^a$ and $\chi_\ve$ are the indicator functions defined as
\begin{equation}\label{6.6indicator}
		\chi_\ve(r,t) \vcentcolon = \begin{dcases*}
			1 & if $r\in [\tilde{r}(\ve,t),\infty),$\\
			0 & otherwise,
		\end{dcases*} \qquad \  \chi_\ve^a(r,t)
		\vcentcolon = \begin{dcases*}
			1 & if $r\in [\tilde{r}_a(\ve,t),\infty)$,\\
			0 & otherwise.
		\end{dcases*}
\end{equation}
Then there exists $C(\ve)>0$ such that the following statements hold{\rm :}
\begin{enumerate}[label=(\roman*), ref=(\roman*),font=\normalfont\rmfamily]
\item\label{item:alimrho1}
There exist both a nonnegative function $\rho(r,t) \in L^2_{\text{\rm loc}}(\mathrm{F},r^m\dif r \dif t)$
and a subsequence $a_j\searrow 0$ as $j\to\infty$ such that, for each $\varepsilon>0$,
\begin{equation*}
\begin{cases}
\rho_{a_j}^{(\ve)} - 1 \overset{\ast}{\rightharpoonup}  \rho^{(\ve)} - 1 \qquad
 \text{in $L^{\infty}\big(0,T; L^{2}([0,\infty), r^m\dif r)\big)$ \,\, as $j\to\infty$},\\
\sup_{t\in[0,T]}\int_{\tilde{r}(\ve,t)}^{\infty} \snorm{\rho^{(\ve)}-1}^2(r,t)\,r^m \dif r \le C(\ve).
\end{cases}
\end{equation*}

\smallskip
\item\label{item:alimrho2}
For all $L\in\mathbb{N}$ with $X_L\vcentcolon= \tilde{H}^{-1}( [0,L], r^m \dif r)$,
\begin{equation*}
\begin{cases}
 \rho^{(\ve)} \in C^{0}([0,T];X_L), \qquad \lim\limits_{j\to\infty}\sup\limits_{t\in[0,T]}\sbnorm{\rho_{a_j}^{(\ve)}(\cdot,t)-\rho^{(\ve)}(\cdot,t)}_{X_L} = 0,\\
\sbnorm{\rho^{(\ve)}(\cdot,t_1)-\rho^{(\ve)}(\cdot,t_2)}_{X_L} \le C(\ve) \snorm{t_1-t_2} \qquad
   \text{for all $t_1,t_2 \in[0,T]$}.
\end{cases}
\end{equation*}
		
\smallskip
\item\label{item:alimrho3}
For all $x>0$,
$x=\int_{\ul{r}(t)}^{\tilde{r}(x,t)} \rho(r,t)\,r^m \dif r$
for $t\in[0,T]$ almost everywhere,
where $\tilde{r}(x,t)$ and $\ul{r}(t)$ are the functions obtained in {\rm Lemma \ref{lemma:alimpath}}.

\smallskip
\item\label{item:alimrho4}
$C^{-1}(\ve) \le \rho(r,t) \le C(\ve)$ for almost every $t\in[0,T]$ and $r\in[\tilde{r}(\ve,t),\infty)$.
\end{enumerate}
\end{lemma}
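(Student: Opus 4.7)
The plan is to parallel the arguments in Lemmas \ref{lemma:rhoWeak}--\ref{lemma:rhoScvg} and Lemma \ref{lemma:Relation} from the $k\to\infty$ stage, but now with the approximation parameter $a\in(0,1)$, and to exploit the uniform-in-$a$ bounds on the region $\{r\ge \tilde{r}_a(\ve,t)\}$ from Theorem \ref{thm:WSCEx}\ref{item:WSCEx4} together with the information about the vacuum interface from Lemma \ref{lemma:alimpath}. Throughout, $\rho_a^{(\ve)}$ will be viewed as a function on the whole half-line $[0,\infty)$ by extending by zero, which allows us to integrate by parts against test functions supported near the origin.

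First, for statement \ref{item:alimrho1}, Theorem \ref{thm:WSCEx}\ref{item:WSCEx4} gives
$\sup_{a,t}\|\rho_a^{(\ve)}-1\|_{L^2([0,\infty),r^m\dif r)}\le C(\ve)$, so a diagonal/weak-star extraction along a subsequence $a_j\searrow0$ produces a limit $f_{\ve}\in L^{\infty}(0,T;L^2)$. Identifying $f_\ve = \rho^{(\ve)} - 1$ is done by the argument already used in Lemma \ref{lemma:rhoWeak}: on any compact set in $\mathring{\mathrm{F}}_\ve$, the indicator $\chi_\ve^{a_j}\to\chi_\ve$ a.e.\ by the continuity of $t\mapsto\tilde{r}_{a_j}(\ve,t)$ and the uniform convergence of Lemma \ref{lemma:alimpath}\ref{item:apath1}, and one then uses a standard averaged-ball test function $\phi_\delta^{(r,t)}=|\D_\delta(r,t)|^{-1}\mathbbm{1}_{\D_\delta(r,t)}$ together with the Lebesgue differentiation theorem to read off \ref{item:alimrho4} simultaneously. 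This disposes of (i) and (iv) together.

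For statement \ref{item:alimrho2}, the main step is a uniform-in-$a$ equicontinuity in $\tilde{H}^{-1}([0,L],r^m\dif r)$. Given $\phi\in\tilde{H}^1_0([0,\infty),r^m\dif r)$, I will compute
\begin{equation*}
\int_{0}^{\infty}\big(\rho_a^{(\ve)}(r,t_2)-\rho_a^{(\ve)}(r,t_1)\big)\phi(r)\,r^m\dif r = \int_{t_1}^{t_2}\int_{\tilde{r}_a(\ve,t)}^{\infty}\phi\,\partial_t\rho_a\,r^m\dif r\,\dif t - \int_{t_1}^{t_2}\tilde{r}_a^m(\ve,t)\phi(\tilde{r}_a(\ve,t))u_a(\tilde{r}_a(\ve,t),t)\rho_a(\tilde{r}_a(\ve,t),t)\,\dif t,
\end{equation*}
and, using the continuity equation $\partial_t\rho_a+\partial_r(\rho_a u_a)+m\rho_a u_a/r=0$ weakly (valid on the support away from the vacuum interface, by Theorem \ref{thm:WSCEx}\ref{item:WSCEx4} and Definition \ref{def:SWeak}\ref{item:SWeak1}), integrate by parts in $r$. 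The boundary contribution at $r=\tilde{r}_a(\ve,t)$ cancels exactly the second term, leaving a bulk term bounded by $C(\ve)\|\phi\|_{H^1}|t_2-t_1|$ via Cauchy--Schwartz and Theorem \ref{thm:WSCEx}\ref{item:WSCEx4}. Combining with (i), Proposition \ref{prop:scHm1} in the appendix gives a continuous representative and the strong $C^0([0,T];X_L)$ convergence; the Lipschitz-in-time bound is then inherited by the limit by weak-star lower semicontinuity.

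For statement \ref{item:alimrho3}, fix $x>0$. By Theorem \ref{thm:WSCEx}\ref{item:WSCEx3},
\begin{equation*}
x=\int_{a_j}^{\tilde{r}_{a_j}(x,t)}\rho_{a_j}(r,t)\,r^m\dif r = \int_{a_j}^{\ul{r}(t)}\rho_{a_j}r^m\dif r + \int_{\ul{r}(t)}^{\tilde{r}(x,t)}\rho_{a_j}r^m\dif r + \int_{\tilde{r}(x,t)}^{\tilde{r}_{a_j}(x,t)}\rho_{a_j}r^m\dif r.
\end{equation*}
The first integral tends to $0$ along $a_j\searrow 0$ by Lemma \ref{lemma:alimpath}\ref{item:apath5}; the third tends to $0$ because $\tilde{r}_{a_j}(x,t)\to\tilde{r}(x,t)$ uniformly in $t$ on compact $x$-sets (Lemma \ref{lemma:alimpath}\ref{item:apath1}) and $\rho_{a_j}r^m$ has the uniform integrability bound $\omega_1$ of Lemma \ref{lemma:rhoM}. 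For the middle integral, after multiplying by an arbitrary $\chi\in C_c^\infty((0,T))$ and integrating in $t$, for any $\eta>0$ one can choose $\ve>0$ small so that $\mathbbm{1}_{[\ul{r}(t),\tilde{r}(\ve,t)]}(r)r^m\chi(t)$ is uniformly small in $L^1(0,T;L^2)$ (again by Lemma \ref{lemma:rhoM} and the continuity of $\ul{r}$ from below via Lemma \ref{lemma:alimpath}\ref{item:apath3}); the complementary piece $\mathbbm{1}_{[\tilde{r}(\ve,t),\tilde{r}(x,t)]}\chi r^m$ is a legitimate test for the weak-star convergence of $\rho_{a_j}^{(\ve)}-1$ in (i), so it converges to the corresponding integral of $\rho$. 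Passing $\eta\searrow 0$ and then removing $\chi$ by the fundamental lemma of calculus yields the identity.

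The main obstacle will be the cancellation of the moving-boundary term in part (ii): one must verify that the pointwise trace of $\rho_a u_a$ at $r=\tilde{r}_a(\ve,t)$ exists in a strong enough sense for the integration by parts, and that the resulting bulk estimate is uniform in $a\in(0,1)$. This is precisely where the Lagrangian identity $\tilde{r}_a(\ve,t)=\tilde{r}_a^0(\ve)+\int_0^t u_a(\tilde{r}_a(\ve,s),s)\dif s$ from Theorem \ref{thm:WSCEx}\ref{item:WSCEx3} is used to rewrite the boundary term as the $t$-derivative of $\tilde{r}_a^{n}(\ve,t)/n$, which is absorbed against the bulk transport term without any residue, so that only the uniform bound $\sup_a\|\phi\|_{L^\infty}\|\partial_r\phi\|_{L^2}$ survives. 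A secondary subtlety in part (iii) is the exchange of limits $a_j\to 0$ and $\ve\to 0$, which is controlled entirely by the uniform modulus $\omega_1$ from Lemma \ref{lemma:rhoM} together with the fact that $\lim_{y\searrow 0}f_1(y;C)=0$ established in Proposition \ref{prop:omega}.
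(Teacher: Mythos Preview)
Your overall roadmap is correct and matches the paper's strategy, but there is one genuine gap in part \ref{item:alimrho1} and one unnecessary detour in part \ref{item:alimrho2}.

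\medskip
\textbf{The gap in (i).} You extract, for each fixed $\ve>0$, a weak-star limit $f_\ve$ of $\rho_{a_j}^{(\ve)}-1$, and then write ``identifying $f_\ve=\rho^{(\ve)}-1$''. But at this point no function $\rho$ has been defined, so the identification is circular. What is actually needed is to construct a \emph{single} function $\rho\in L^2_{\textnormal{loc}}(\mathrm{F})$, independent of $\ve$, and then verify $f_\ve=\rho\chi_\ve-1$ for every $\ve>0$. The paper does this by first taking a diagonal subsequence along $\ve_l=l^{-1}$, then proving the consistency relation $\tilde{w}^{(q)}=\tilde{w}^{(l)}$ on $\{r\ge\tilde{r}(\ve_l,t)\}$ whenever $q\ge l$ (this uses $\chi_{\ve_l}^{a_j}\to\chi_{\ve_l}$ a.e.\ together with the uniform bound $\rho_{a_j}^{(\ve)}\le C(\ve)$ from Theorem \ref{thm:WSCEx}\ref{item:WSCEx4}), and finally gluing the $\tilde{w}^{(l)}$ together via the telescoping sum $h_N=w^{(1)}+\sum_{i=1}^{N}(w^{(i+1)}-w^{(i)})$. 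Without this consistency step, you have a family $\{f_\ve\}_{\ve>0}$ but no guarantee that they patch into one object; the reference to Lemma \ref{lemma:rhoWeak} does not help, since there a single $\rho$ already exists and only the pointwise bounds are being extracted.

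\medskip
\textbf{The detour in (ii).} Your plan is to re-derive the equicontinuity $\|\rho_a^{(\ve)}(\cdot,t_2)-\rho_a^{(\ve)}(\cdot,t_1)\|_{\tilde{H}^{-1}}\le C(\ve)|t_2-t_1|$ by a Leibniz/integration-by-parts computation in the Eulerian variables for the weak solution $\rho_a$. This is how the paper argued at the $k\to\infty$ stage (Lemma \ref{lemma:rhoScvg}), but there the approximants $(\tilde v_k,\tilde u_k,\tilde e_k)$ were classical and the Lagrangian picture supplied the pointwise trace $\rho_k(\tilde r_k(\ve,t),t)=\tilde v_k^{-1}(\ve,t)$. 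At the present stage $\rho_a$ is only a weak solution, and justifying the boundary cancellation you sketch would require either going back through the $k$-approximation or establishing trace regularity for $\rho_a$ directly. The paper avoids all of this: the needed equicontinuity, \emph{with constant $C(\ve)$ independent of $a$}, is exactly Theorem \ref{thm:WSCEx}\ref{item:WSCEx5}, so one simply cites it and feeds it into Proposition \ref{prop:scHm1}. Your argument for parts \ref{item:alimrho3}--\ref{item:alimrho4} is essentially the paper's.
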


\begin{proof}
 We divide the proof into four steps, showing Lemma \ref{lemma:alimrho}\ref{item:alimrho1}--\ref{item:alimrho4}
 in chronological order.

 \smallskip
1. Proof of Lemma \ref{lemma:alimrho}\ref{item:alimrho1}.
From Theorem \ref{thm:WSCEx}\ref{item:WSCEx4}, we obtain that, for all $j\in\mathbb{N}$,
\begin{align*}
\sup_{t\in[0,T]} \int_{0}^{\infty} \snorm{\rho_{a_j}^{(\ve)}-1}^2\, r^m \dif r
\le \dfrac{\tilde{r}_{a_j}^{\,n}(\ve,t)}{n}
+ \sup_{t\in[0,T]} \int_{\tilde{r}_{a_j}(\ve,t)}^{\infty} \snorm{\rho_{a_j}-1}^2\,r^m \dif r  \le C(\ve).
\end{align*}
Then there exists $\tilde{w}_\ve-1 \in L^{\infty}(0,T; L^2([0,\infty),r^m \dif r) )$ such that
\begin{equation}\label{Ftemp0}
\rho_{a_j}^{(\ve)}-1 \overset{\ast}{\rightharpoonup} \tilde{w}_\ve-1 \qquad
\text{weakly star in $L^{\infty}\big(0,T; L^{2}([0,\infty), r^m\dif r)\big)\,\,$ as $j\to\infty$}.
\end{equation}
For each $l \in \mathbb{N}$, denote $\ve_l \vcentcolon= l^{-1}$ and $\tilde{w}^{(l)} \vcentcolon= \tilde{w}_{\ve_l}$.
Fix $l,\,q\in\mathbb{N}$ such that $l< q$.
Then $\tilde{r}_a(\ve_q,t)< \tilde{r}_a(\ve_l,t)$ for each $(a,t)\in(0,1)\times[0,T]$.
Thus, by construction, it follows that, for all $j\in\mathbb{N}$,
$\chi_{\ve_l}^{a_j}(r,t)=1=\chi_{\ve_q}^{a_j}(r,t)$ if $r\in [\tilde{r}_{a_j}(\ve_l,t),\infty)$ so that
\begin{equation}\label{Ftemp1}
\rho_{a_j}^{(\ve_q)}(r,t) = \rho_{a_j}^{(\ve_l)}(r,t) \ \ \text{for {\it a.e.} $t\in[0,T]$}, \qquad\,\,\, r\in[\tilde{r}_{a_j}(\ve_l,t),\infty) \ \ \text{for each $j\in\mathbb{N}$}.
\end{equation}
Let $\phi \in C^{\infty}_{\rm c}(\mathrm{F}^{(l)})$ for
$\mathrm{F}^{(l)}\vcentcolon= \{ (r,t) \vcentcolon t\in[0,T], r\in [\tilde{r}(\ve_l,t),\infty) \}$.
Then it follows that $\phi \chi_{\ve_l} \in L^{1}(0,T;L^2([0,\infty),r^m\dif r))$ and
\begin{align*}
& \int_{0}^{T} \int_{\tilde{r}(\ve_l,t)}^{\infty}  \{\tilde{w}^{(q)}-\tilde{w}^{(l)}\} \phi\, r^m \dif r \dif t\\
&=\int_{0}^{T} \int_{0}^{\infty} \{\tilde{w}^{(q)}-\rho_{a_j}^{(\ve_q)}\} \chi_{\ve_l} \phi\, r^m \dif r \dif t + \int_{0}^{T}\int_{\tilde{r}(\ve_l,t)}^{\tilde{r}_{a_j}(\ve_l,t)} \{\rho_{a_j}^{(\ve_q)}-\rho_{a_j}^{(\ve_l)}\}
   \phi\, r^m \dif r \dif t \\
&\quad+ \int_{0}^{T}\int_{\tilde{r}_{a_j}(\ve_l,t)}^{\infty} \{\rho_{a_j}^{(\ve_q)}-\rho_{a_j}^{(\ve_l)}\}
\phi\,r^m \dif r\dif t
 +\int_{0}^{T} \int_{0}^{\infty}  \{\rho_{a_j}^{(\ve_l)}-\tilde{w}^{(l)}\} \chi_{\ve_l} \phi\, r^m \dif r \dif t \\
&= \sum_{i=1}^4 I_i^{(j)} .
\end{align*}
The limits of $I_1^{(j)}$ and $I_4^{(j)}$ are evaluated by the weak-star convergence \eqref{Ftemp0}.
As $j\to\infty$,
\begin{align*}
I_1^{(j)} + I_4^{(j)}\vcentcolon
= \int_{0}^{T} \int_{0}^{\infty}  \{\tilde{w}^{(q)}-\rho_{a_j}^{(\ve_q)}\} \chi_{\ve_l} \phi\, r^m \dif r \dif t
+ \int_{0}^{T} \int_{0}^{\infty}  \{\rho_{a_j}^{(\ve_l)}-\tilde{w}^{(l)}\}
\chi_{\ve_l} \phi\, r^m \dif r \dif t \to 0.
\end{align*}
Using \eqref{Ftemp1}, we also see that
\begin{align*}
I_3^{(j)} \vcentcolon= \int_{0}^{T}\int_{\tilde{r}_{a_j}(\ve_l,t)}^{\infty} \{\rho_{a_j}^{(\ve_q)}-\rho_{a_j}^{(\ve_l)}\}
\phi\, r^m \dif r \dif t = 0.
\end{align*}
For $I_2^{(j)}$, observe that
\begin{equation*}
\snorm{\chi_{\ve_l}^{a_j}-\chi_{\ve_l}}(r,t) = \begin{dcases*}
			1 & if $r\in [\tilde{r}_{a_j}(\ve_l,t),\tilde{r}(\ve_l,t)]$ or $r\in[\tilde{r}(\ve_l,t),\tilde{r}_{a_j}(\ve_l,t)]$,\\
			0 & otherwise.
\end{dcases*}
\end{equation*}
By the uniform convergence from Lemma \ref{lemma:alimpath},
it follows that $\snorm{\chi_{\ve_l}^{a_j}-\chi_{\ve_l}}(r,t)\to 0$ as $j\to\infty$ for {\it a.e.} $(r,t)\in[0,\infty)\times[0,T]$.
Using the estimates from Theorem \ref{thm:WSCEx}\ref{item:WSCEx4} and the dominated convergence theorem yield
that, as $j\to\infty$,
\begin{align*}
\snorm{I_2^{(j)}}
&\le \int_{0}^{T}\int_{0}^{\infty} \snorm{\chi_{\ve_l}^{a_j}-\chi_{\ve_l}}\snorm{\rho_{a_j}^{(\ve_q)}-\rho_{a_j}^{(\ve_l)}} \snorm{\phi}\,r^m \dif r \dif t \\
&\le \int_{0}^{T} \Big( \int_{0}^{\infty} \snorm{\chi_{\ve_l}^{a_j}-\chi_{\ve_l}}^2 \snorm{\phi}^2\, r^m \dif r \Big)^{\frac{1}{2}}
\big(\sbnorm{\rho_{a_j}^{(\ve_q)}(\cdot,t) -1}_{L^2} + \sbnorm{\rho_{a_j}^{(\ve_l)}(\cdot,t) -1}_{L^2}\big)\,\dif t\\
&\le \big( C(\ve_l) + C(\ve_q)\big)\int_{0}^{T} \Big( \int_{0}^{\infty} \snorm{\chi_{\ve_l}^{a_j}-\chi_{\ve_l}}^2
\snorm{\phi}^2\, r^m \dif r \Big)^{\frac{1}{2}} \dif t \to 0 .
\end{align*}
Combining the convergence for $I_i^{(j)}$ as $j\to\infty$ for $i=1,2,3,4$, it follows that
\begin{align*}
\int_{0}^{T} \int_{\tilde{r}(\ve_l,t)}^{\infty}  \{\tilde{w}^{(q)}-\tilde{w}^{(l)}\} \phi\, r^m \dif r \dif t = 0
\qquad \text{for all $\phi \in C^{\infty}_{\rm c}(\mathrm{F}^{(l)})$.}
\end{align*}
It follows that, if $l\le q \in \mathbb{N}$, then
\begin{equation}\label{Ftemp6}
\tilde{w}^{(q)}(r,t) = \tilde{w}^{(l)}(r,t)  \qquad
\text{for almost every $t\in[0,T]$ and $r\in [\tilde{r}(\ve_l,t),\infty)$.}
\end{equation}
In light of above, $\tilde{w}^{(q)}(r,t)$ is truncated in domain $\mathrm{F}$ for each $q\in\mathbb{N}$ as
\begin{equation*}
w^{(q)}(r,t) \vcentcolon=
\begin{dcases*}
\tilde{w}^{(q)}(r,t) & if $t\in[0,T]$ and $r\in[\tilde{r}(\ve_q,t),\infty)$,\\
0 & otherwise.
\end{dcases*}
\end{equation*}
Using this, for each $N\in\mathbb{N}$, define
\begin{equation*}
h_N(r,t) \vcentcolon= w^{(1)}(r,t) + \sum_{i=1}^{N} \{ w^{(i+1)}(r,t) - w^{(i)}(r,t) \} \qquad
\text{for $(r,t)\in \mathrm{F}$.}
\end{equation*}
Fix $(r,t)\in \mathrm{F}$.
Lemma \ref{lemma:alimpath}\ref{item:apath3}--\ref{item:apath4} imply that
$\lim_{x\searrow 0}\tilde{r}(x,t)=\ul{r}(t)$ and $\lim_{x\to \infty}\tilde{r}(x,t)=\infty$.
Since $\ul{r}(t)< r$, there exists $q\in\mathbb{N}$ such that $\tilde{r}(\ve_q,t)\le r < \tilde{r}(\ve_{q-1},t)$.
Then \eqref{Ftemp6} implies
\begin{equation*}
h_{N}(r,t) = w^{(1)}(r,t) + \sum_{i=1}^N \{ w^{(i+1)}(r,t) - w^{(i)}(r,t) \} = w^{(q)}(r,t) = \tilde{w}^{(q)}(r,t)
\quad \text{ if $N\ge q+1$.}
\end{equation*}
Thus, for {\it a.e.} $(r,t)\in \mathrm{F}$,
$\lim_{N\to\infty} h_N(r,t) = \tilde{w}^{(q)}(r,t)$ exists for some $q\in\mathbb{N}$,
and the following function is well-defined almost everywhere in $(0,\infty)\times[0,T]$:
\begin{equation*}
\rho(r,t) \vcentcolon=\begin{dcases*}
\lim\limits_{N\to\infty} h_N(r,t) & for {\it a.e.} $(r,t)\in \mathrm{F}$,\\
0 & for $(r,t)\in (0,\infty)\times[0,T] \backslash \mathrm{F}$.
\end{dcases*}
\end{equation*}
	
Now let $\ve>0$. Then there exists some integer $q\in\mathbb{N}$ such that $\ve_q \le \ve$ and
\begin{equation*}
\rho^{(\ve)}(r,t) = \chi_\ve(r,t) \rho(r,t)  = \chi_\ve(r,t) \tilde{w}^{(q)}(r,t) \quad \
\text{for {\it a.e.} $(r,t)\in (0,\infty)\times[0,T]$},
\end{equation*}
from the above construction.
	
Let $\phi \in L^{1}(0,T;L^2([0,\infty);r^m \dif r))$. Using $\tilde{r}_{a_j}(\ve_q,t)< \tilde{r}_{a_j}(\ve,t)$ for each $j\in\mathbb{N}$, one has
\begin{equation}\label{Ftemp2}
\chi_\ve^{(a_j)}= \chi_{\ve_q}^{(a_j)}\chi_{\ve}^{(a_j)} \qquad
\text{for all $(r,t)\in[0,\infty)\times[0,T]$ and $j\in\mathbb{N}$.}
\end{equation}
Using this, it then follows that
\begin{align*}
\int_{0}^{T}\int_{0}^{\infty} \{ \rho^{(\ve)} - \rho_{a_j}^{(\ve)} \} \phi\, r^m \dif r \dif t
&= \int_{0}^{T} \int_{0}^{\infty} \chi_\ve \phi \{ \tilde{w}^{(q)} - \rho_{a_j}^{(\ve_q)} \}\, r^m \dif r \dif t
+\int_{0}^{T} \int_{0}^{\infty}  \{ \chi_{\ve} - \chi_{\ve}^{a_j} \} \rho_{a_j}^{(\ve_q)} \phi\, r^m \dif r \dif t\\
&= \vcentcolon J_1^{(j)} + J_2^{(j)}.
\end{align*}
By the weak-star convergence for each $q\in\mathbb{N}$ in \eqref{Ftemp0},
$J_1^{(j)}\to 0$ as $j\to\infty$.
For $J_2^{(j)}$, one uses the pointwise bound of $\rho_a$ from Theorem \ref{thm:WSCEx}\ref{item:WSCEx4} to obtain
	\begin{equation*}
		\snorm{(\chi_\ve - \chi_\ve^{a_j})\rho_{a_j}^{(\ve_q)} \varphi } \le C(\ve_q) \snorm{\varphi} \in L^{1}\big(0,T;L^2( [0,\infty), r^m \dif r)\big).
	\end{equation*}
Moreover, due to the uniform convergence from Lemma \ref{lemma:alimpath},
$(\chi_\ve - \chi_\ve^{(a_j)})(r,t)\to 0$ as $j\to \infty$ for {\it a.e.} $(r,t)\in [0,\infty)\times[0,T]$.
Hence, by the dominated convergence theorem,
\begin{align*}
J_2^{(j)} \vcentcolon
= \int_{0}^{T} \int_{0}^{\infty}  ( \chi_{\ve} - \chi_{\ve}^{a_j} ) \rho_{a_j}^{(\ve_q)} \varphi\, r^m \dif r \dif t \to 0
\qquad \text{as $j\to\infty$.}
\end{align*}
Combining the limits of $J_1^{(j)}$ and $J_2^{(j)}$ as $j\to\infty$, it follows that there exists
$\rho \in L^2_{\text{loc}}(\mathrm{F})$ such that,  for each $\ve>0$,
\begin{equation}\label{Ftemp7}
\rho^{(\ve)}_{a_j} \overset{\ast}{\rightharpoonup} \rho^{(\ve)} \qquad
\text{in $L^{\infty}\big(0,T; L^2([0,\infty),r^m \dif r)\big)\,\,$  as $j\to \infty$}.
\end{equation}
This completes the proof
of Lemma \ref{lemma:alimrho}\ref{item:alimrho1}.

\smallskip	
2. Proof of Lemma \ref{lemma:alimrho}\ref{item:alimrho2}.
Define $r_\ve\vcentcolon= \frac{1}{2}\inf_{t\in[0,T]}\tilde{r}(\ve,t)$.
By Lemma \ref{lemma:alimpath}, there exists $N_\ve\in\mathbb{N}$ such that
$r_\ve < \tilde{r}_{a_j}(\ve,t)$ for all $j\ge N_\ve$ and $t\in[0,T]$.
By Theorem \ref{thm:WSCEx}\ref{item:WSCEx5}, we see that, for each $\ve>0$ and $t_1, t_2 \in [0,T]$,
\begin{align*}
\sup\limits_{j \ge N_{\ve}}\sbnorm{\rho_{a_j}^{(\ve)}(\cdot, t_1)- \rho_{a_j}^{(\ve)}(\cdot,t_2)}_{\tilde{H}^{-1}([r_\ve,L], r^m \dif r)} \le C(\ve)\snorm{t_1-t_2} \qquad \text{for all $L\in\mathbb{N}$.}
\end{align*}
Moreover, it follows that, for all $j\ge N_\ve$ and $L\in\mathbb{N}$,
\begin{align*}
\rho_{a_j}^{(\ve)} \in C^0\big([0,T]; \tilde{H}^{-1}([r_\ve,L],r^m \dif r) \big), \quad \sup_{j \ge N_\ve } \sbnorm{\rho_{a_j}^{(\ve)}-1}_{L^{\infty}(0,T; L^{2}( [r_\ve,\infty), r^m \dif r))} \le C(\ve).
\end{align*}
Therefore, by Proposition \ref{prop:scHm1},
there exists $\tilde{h}_\ve$ such that, for all $L\in\mathbb{N}$,
\begin{equation}\label{Ftemp3}
\begin{cases}
\tilde{h}_\ve-1 \in L^{\infty}\big(0,T;L^2([r_\ve,\infty),r^m\dif r)\big), \quad \tilde{h}_\ve \in C^0\big([0,T];\tilde{H}^{-1}([r_\ve,L], r^m \dif r)\big),\\
\lim\limits_{j\to \infty} \sup_{t\in[0,T]}\sbnorm{\rho_{a_j}^{(\ve)}(\cdot,t)-\tilde{h}_\ve(\cdot,t)}_{H^{-1}([r_\ve,L], r^m \dif r)} = 0,\\
\sbnorm{\tilde{h}_\ve(\cdot,t_1)-\tilde{h}_\ve(\cdot,t_2)}_{H^{-1}([r_\ve,L],r^m\dif r)} \le C(\ve) \snorm{t_1-t_2}
\quad \text{ for each $t_1, t_2 \in[0,T]$.}
\end{cases}
\end{equation}

\smallskip
We claim that $\rho^{(\ve)} = \tilde{h}_\ve$ for {\it a.e.} $(r,t)\in[r_\ve,\infty)\times[0,T]$.
Let $\phi \in C_c^{\infty}( [r_\ve,\infty) \times [0,T])$. Then there exists $L\in\mathbb{N}$ such that $\supp(\phi)\subseteq [r_\ve,L]\times[0,T]$. Using \eqref{Ftemp7}--\eqref{Ftemp3}, we obtain that, as $j\to\infty$,
\begin{align*}
&\int_{0}^{T}\int_{r_\ve}^{\infty} (\rho^{(\ve)} - \tilde{h}_\ve) \phi\, r^m \dif r \dif t
= \int_{0}^{T}\int_{r_\ve}^{\infty} (\rho^{(\ve)} - \rho^{(\ve)}_{a_j}) \phi\, r^m \dif r \dif t
+ \int_{0}^{T}\int_{r_\ve}^{L} (\rho_{a_j}^{(\ve)} - \tilde{h}_\ve) \phi\, r^m \dif r \dif t  \\
&\le  \int_{0}^{T}\int_{r_\ve}^{\infty} \{\rho^{(\ve)} - \rho^{(\ve)}_{a_j}\} \phi\, r^m \dif r \dif t + \int_{0}^{T} \sbnorm{\rho_{a_j}^{(\ve)}(\cdot,t)-\tilde{h}_{\ve}(\cdot,t)}_{\tilde{H}^{-1}}\sbnorm{\phi}_{H^1} \dif t\to 0.
\end{align*}
This implies that
$\rho^{(\ve)}=\tilde{h}_\ve$ for {\it a.e.} $(r,t)\in[r_\ve,\infty)\times[0,T]$.
Lemma \ref{lemma:alimrho}\ref{item:alimrho2} then follows since $\supp(\rho^{(\ve)}(\cdot,t))=[\tilde{r}(\ve,t),\infty)$,
and $r_\ve < \tilde{r}(\ve,t)$ for all $t\in[0,T]$.
	
\medskip
3. Proof of Lemma \ref{lemma:alimrho}\ref{item:alimrho3}.
Let $\xi\in C^{\infty}_{\rm c}(0,T)$ and $0<y< x$. By \eqref{Ftemp7}, we have
\begin{align*}
&\int_{0}^{T} \xi(t) \Big(\int_{\tilde{r}_{a_j}(y,t)}^{\tilde{r}(x,t)} \rho_{a_j}(r,t)\,r^m \dif r\Big) \dif t
 -\int_{0}^{T} \xi(t) \Big(\int_{\tilde{r}(y,t)}^{\tilde{r}(x,t)} \rho(r,t)\, r^m \dif r\Big)\dif t\\
&= \int_{0}^{T}  \int_{0}^{\tilde{r}(x,t)} \xi (\chi_y^{a_j}\rho_{a_j} - \chi_{y}\rho)\, r^m \dif r \dif t
= \int_{0}^{T}  \int_{0}^{\infty} (1-\chi_{x})\xi \{\rho_{a_j}^{(y)} - \rho^{(y)}\}\,r^m \dif r \dif t \to 0
\end{align*}
as $j\to\infty$, where we have used the fact that $(1-\chi_x) \xi \in L^{1}(0,T; L^2([0,\infty),r^m \dif r))$.
Then
\begin{equation}\label{Ftemp4}
\lim\limits_{j\to\infty}\int_{0}^{T}  \int_{\tilde{r}_{a_j}(y,t)}^{\tilde{r}(x,t)} \xi(t) \rho_{a_j}(r,t)\,r^m \dif r \dif t
= \int_{0}^{T} \xi(t) \Big(\int_{\tilde{r}(y,t)}^{\tilde{r}(x,t)}\rho(r,t)\,r^m\dif r\Big)\dif t.
\end{equation}
Moreover, using \eqref{6.3a}, $\lim_{y\searrow 0}\tilde{r}(y,t)=\ul{r}(t)$, and the dominated convergence theorem,
\begin{equation*}
\lim\limits_{y\searrow 0}\sup\limits_{j\in\mathbb{N}}\Bignorm{\int_{0}^{T}\int_{\ul{r}(t)}^{\tilde{r}(y,t)} \xi \rho_{a_j}
\,r^m \dif r \dif t}
\le \lim\limits_{y\searrow 0} \int_{0}^{T} \omega_{1}( [\ul{r}(t),\tilde{r}(y,t)]; C(T) ) \snorm{\xi(t)}\,\dif t = 0.
\end{equation*}
For a given $\delta>0$, the above limit implies that there exists $z_{\delta}>0$ such that, if $y\in(0,z_{\delta})$, then
\begin{equation}\label{Ftemp8}
\sup_{j\in\mathbb{N}}\Bignorm{\int_{0}^{T}\int_{\ul{r}(t)}^{\tilde{r}(y,t)} \xi(t) \rho_{a_j}(r,t)\, r^m \dif r \dif t}
\le \delta.
\end{equation}
Now fix $y\in(0,z_\delta)$. By Theorem \ref{thm:WSCEx}\ref{item:WSCEx3}, for {\it a.e.} $t\in[0,T]$,
\begin{align*}
x = \int_{a_j}^{\tilde{r}_{a_j}(x,t)} \rho_{a_j}(r,t)r^m \dif r
= \Big(\int_{a_j}^{\ul{r}(t)} + \int_{\ul{r}(t)}^{\tilde{r}(y,t)} + \int_{\tilde{r}(y,t)}^{\tilde{r}_{a_j}(y,t)} + \int_{\tilde{r}_{a_j}(y,t)}^{\tilde{r}(x,t)} +  \int_{\tilde{r}(x,t)}^{\tilde{r}_{a_j}(x,t)} \Big) \rho_{a_j}\,r^m \dif r.
\end{align*}
Multiplying the above equality with $\xi(t)$, then integrating in $t\in[0,T]$ and letting $j\to\infty$,
it follows from \eqref{Ftemp4}--\eqref{Ftemp8} that
\begin{equation}\label{Ftemp9}
-\delta \le \lim\limits_{j\to\infty}\sum_{i=1}^{3} \mathcal{I}_i^{(j)} +\int_{0}^{T} \xi(t) \Big\{ -x + \int_{\tilde{r}(y,t)}^{\tilde{r}(x,t)} \rho(r,t) \,r^m \dif r \Big\}\,\dif t \le \delta.
\end{equation}
Repeating the same proof for Lemma \ref{lemma:alimpath}\ref{item:apath5}, we have
\begin{align*}
\lim\limits_{j\to\infty}\snorm{\mathcal{I}_1^{(j)}} = \lim\limits_{j\to\infty}\Bignorm{\int_{0}^T \xi(t)\big(\int_{a_j}^{\ul{r}(t)} \rho_{a_j}(r,t)\,r^m \dif r\big)\dif t} = 0.
\end{align*}
Moreover, using \eqref{6.3a}, Lemma \ref{lemma:alimpath}, and the dominated convergence theorem,
we see that
\begin{align*}
\lim\limits_{j\to\infty}\snorm{\mathcal{I}_2^{(j)}}+\snorm{\mathcal{I}_3^{(j)}}
&\le\lim\limits_{j\to\infty} \int_{0}^{T} \snorm{\xi(t)} \Big(\int_{\tilde{r}(y,t)}^{\tilde{r}_{a_j}(y,t)}+\int_{\tilde{r}(x,t)}^{\tilde{r}_{a_j}(x,t)}\Big) \rho_{a_j}(r,t)\,r^m \dif r \dif t \\
&\le \lim\limits_{j\to\infty} \int_{0}^{T} \snorm{\xi(t)}
\Big\{ \omega_1\big([\tilde{r}(y,t),\tilde{r}_{a_j}(y,t)];C(T)\big) + \omega_1\big([\tilde{r}(x,t),\tilde{r}_{a_j}(x,t)];C(T)\big) \Big\}\,\dif t  = 0.
\end{align*}
It follows from \eqref{Ftemp9} that, for each $\delta>0$, there exists $z_{\delta}>0$ such that, if $y\in(0,z_{\delta})$, then
\begin{equation*}
-\delta \le  \int_{0}^{T} \xi(t) \Big\{ -x+ \int_{\tilde{r}(y,t)}^{\tilde{r}(x,t)} \rho(r,t) r^m \dif r \Big\}\,\dif t
\le \delta.
\end{equation*}
Let $\{\xi_k\}_{k\in\mathbb{N}} \in C^{\infty}_{\rm c}(0,T)$ be such that $\xi_k\ge 0$,
$\sbnorm{\xi_k}_{L^1([0,T])}=1$, and $\{\xi_k\}_{k\in\mathbb{N}}$ is a sequence of kernels approximating
the Dirac measure.
Then, for all $(k,y)\in\mathbb{N}\times(0,z_{\delta})$,
\begin{align*}
-\delta \le - x + \int_{0}^{T}\xi_k(t)\Big(\int_{\tilde{r}(y,t)}^{\tilde{r}(x,t)} \rho(r,t)\,r^m \dif r\Big) \dif t
\le \delta.
\end{align*}
Since $\rho(r,t)\ge 0$, we apply the monotone convergence theorem for $y\searrow 0$ to obtain
\begin{align*}
-\delta \le - x
+ \int_{0}^{T}\xi_k(t)\Big(\int_{\ul{r}(t)}^{\tilde{r}(x,t)}\rho(r,t)\,r^m \dif r\Big)\,\dif t \le \delta.
\end{align*}
Since this is true for arbitrarily small $\delta>0$, it follows that
\begin{align*}
x=\int_{0}^{T}\xi_k(t)\Big(\int_{\ul{r}(t)}^{\tilde{r}(x,t)} \rho(r,t)\,r^m \dif r\Big)\,\dif t \qquad \
\text{for each $i\in\mathbb{N}$}.
\end{align*}
Finally,  since $\xi_k$ converges to the Dirac measure in the sense of distributions, we take $k\to \infty$ to
conclude that
\begin{align*}
x=\int_{\ul{r}(t)}^{\tilde{r}(x,t)} \rho(r,t)\,r^m \dif r  \qquad \ \text{for $t\in[0,T]$  almost everywhere.}
\end{align*}

\medskip
4. The proof for \ref{item:alimrho4}
is  the same as that of Proposition \ref{prop:veDomain} and Lemma \ref{lemma:rhoWeak}.
\end{proof}

In light of Lemmas \ref{lemma:alimpath}\ref{item:apath5} and \ref{lemma:alimrho}\ref{item:alimrho3},
we extend $\rho\in L^2_{\text{loc}}(\mathrm{F})$ obtained above by setting $\rho(r,t)=0$
for $(r,t)\in \R^n\times[0,T] \backslash \mathrm{F}$.

\begin{lemma}\label{lemma:alimEnt}
Let $(\rho,u,e)(r,t)$ be the limit function obtained in {\rm Lemmas \ref{lemma:alimue}}--{\rm \ref{lemma:alimrho}}.
Then
\begin{align*}
&\esssup_{t\in(0,T)}\int_{0}^{\infty}\big(\rho \snorm{u}^2 +G(\rho)\big)(r,t)\,r^m \dif r \le C(T),\\
&\esssup_{t\in[0,T]}\int_{E} (\rho e)(r,t)\,r^m \dif r \le C(T) + \omega_2(E;C(T)) \quad
  \text{for bounded measurable $E\subset [0,\infty)$,}
\end{align*}
where $\omega_2(E,z)$ for $z>0$ is the set function given in {\rm Lemma \ref{lemma:rhoM}}.
\end{lemma}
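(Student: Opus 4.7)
The plan is to pass to the limit $a_j\searrow 0$ in the uniform bounds already established for the exterior solutions, namely the entropy estimate of Theorem \ref{thm:WSCEx}\ref{item:WSCEx1} and the integrability bound of Lemma \ref{lemma:rhoM}. The key ingredients will be the weak-star convergence $\rho_{a_j}^{(\ve)}\overset{\ast}{\rightharpoonup}\rho^{(\ve)}$ in $L^\infty(0,T;L^2([0,\infty),r^m\dif r))$ from Lemma \ref{lemma:alimrho}\ref{item:alimrho1}, the uniform convergence $(u_{a_j},e_{a_j})\to (u,e)$ on compact subsets of $\mathrm{F}\setminus\{t=0\}$ from Lemma \ref{lemma:alimue}, the vacuum identification $\rho(r,t)=0$ for $r<\ul{r}(t)$, and an application of Mazur's lemma (Proposition \ref{prop:mazur}) to handle the convex functional $G(\rho)$.

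For the second estimate, we start from Lemma \ref{lemma:rhoM}, which yields $\int_E(\rho_{a_j}e_{a_j})(r,t)\,r^m\dif r\le C(T)+\omega_2(E;C(T))$ uniformly in $j$, for each bounded measurable $E\subset[0,\infty)$ and each $t\in[0,T]$. Given $\ve>0$, I introduce the truncation $E_\ve(t)\vcentcolon=E\cap[\tilde{r}(\ve,t),\infty)$; on $E_\ve(t)$, Lemma \ref{lemma:alimue} gives the pointwise bound $e_{a_j}\le C(\ve)\sigma^{-\frac{1}{2}}(t)$ together with uniform convergence of $e_{a_j}\to e$ away from $t=0$, while Lemma \ref{lemma:alimrho}\ref{item:alimrho1} yields $\rho_{a_j}^{(\ve)}\overset{\ast}{\rightharpoonup}\rho^{(\ve)}$. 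Testing with $\chi_{E_\ve(\cdot)}\phi(t)$ for nonnegative $\phi\in L^1(0,T)$ and using the decomposition $\rho_{a_j}e_{a_j}-\rho e=(\rho_{a_j}-\rho)e_{a_j}+\rho(e_{a_j}-e)$, where the second term vanishes by dominated convergence and the first by weak-strong pairing, I can pass to the $\liminf$ on $E_\ve(t)$ for a.e.\ $t$. Since $\rho(\cdot,t)\ge 0$ is supported in $[\ul{r}(t),\infty)$, monotone convergence as $\ve\searrow 0$ covers $E\cap\mathrm{F}$, and the contribution on $E\setminus\mathrm{F}$ vanishes because $\rho=0$ there.

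For the first estimate, I treat the two terms separately. The $G(\rho)$ contribution is handled by Mazur's lemma: the uniform bound $\sup_j\esssup_t\int_0^\infty G(\rho_{a_j})r^m\dif r\le C(T)$ from Theorem \ref{thm:WSCEx}\ref{item:WSCEx1}, together with the weak-star convergence of $\rho_{a_j}^{(\ve)}$ on each $\mathrm{F}_\ve$ and the convexity of $G$ with $G(1)=0$, lets me apply Proposition \ref{prop:mazur} exactly as in the proof of Lemma \ref{lemma:Ent} to conclude $\int_0^\infty G(\rho)(r,t)\,r^m\dif r\le C(T)$ for a.e.\ $t$. For $\rho|u|^2$, I use a spatial cutoff $\psi_{\ve,L}$ supported in $[\tilde{r}(\ve,t),L]$, exploiting that $u_{a_j}^2\le C(\ve)\sigma^{-\frac{1}{2}}(t)$ and $u_{a_j}\to u$ uniformly on compacts of $\mathrm{F}\setminus\{t=0\}$, so that $u_{a_j}^2\psi_{\ve,L}\to u^2\psi_{\ve,L}$ strongly in $L^1(0,T;L^2)$, while $\rho_{a_j}^{(\ve)}$ converges weakly-star; this gives
\begin{equation*}
\int_{\tilde{r}(\ve,t)}^{L}(\rho u^2)(r,t)\,r^m\dif r\le\liminf_j\int_0^\infty (\rho_{a_j}u_{a_j}^2)(r,t)\,r^m\dif r\le C(T)
\end{equation*}
for a.e.\ $t$, and the conclusion follows from monotone convergence as $\ve\searrow 0$ and $L\to\infty$, again using $\rho=0$ on $[0,\ul{r}(t)]$.

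The main obstacle will be the precise justification of the weak-strong pairing used when passing to the limit in $\int\rho_{a_j}e_{a_j}\phi\,r^m\dif r\dif t$ and $\int\rho_{a_j}u_{a_j}^2\phi\,r^m\dif r\dif t$ for test functions $\phi$ concentrated near a generic time slice. The subtlety is that the weak-star convergence of $\rho_{a_j}^{(\ve)}$ is only in $L^\infty(0,T;L^2)$ and not pointwise in $t$, so to extract an a.e.-$t$ inequality one must either use the $C^0([0,T];\tilde{H}^{-1})$ regularity from Lemma \ref{lemma:alimrho}\ref{item:alimrho2} (pairing with a spatially $H^1$ time-frozen test function $e(\cdot,t)\psi_\ve$, which is permissible because $e$ inherits sufficient spatial regularity from Lemma \ref{lemma:alimue} on $\mathrm{F}_\ve$), or equivalently argue that the uniform bound gives a weak-star limit in $L^\infty(0,T)$ and identify it pointwise a.e.\ via Lebesgue differentiation. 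I expect the cleanest route to be the former, as it directly exploits the continuity-in-time estimate for $\rho^{(\ve)}$ already recorded.
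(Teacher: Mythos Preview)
Your plan is correct and matches the paper's proof almost exactly: integrate in time over $[t_1,t_2]$, pair the weak-star convergence $\rho_{a_j}^{(\ve)}\overset{\ast}{\rightharpoonup}\rho^{(\ve)}$ in $L^\infty(0,T;L^2)$ against the strongly convergent $u_{a_j}^2$ (resp.\ $e_{a_j}$) cut off to $\mathrm{F}_\ve$, use Lebesgue differentiation to recover the a.e.-$t$ bound, then send $\ve\searrow0$ and $L\to\infty$ by monotone convergence; the $G(\rho)$ term is handled by Proposition~\ref{prop:mazur} just as you say. One remark on the ``obstacle'' paragraph: the paper chooses the Lebesgue-differentiation route rather than the $\tilde{H}^{-1}$ route, and in fact the latter would not work for the $\rho e$ estimate with a general measurable set $E$, since the test function $\mathbbm{1}_E\,e(\cdot,t)$ fails to lie in $\tilde{H}^1_0$; the weak-star $L^\infty(0,T;L^2)$ pairing against $\mathbbm{1}_E\in L^2$ (bounded $E$) followed by Lebesgue differentiation is the cleaner and more general option here. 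Also, the decomposition $\rho_{a_j}^{(\ve)}(e_{a_j}-e)+(\rho_{a_j}^{(\ve)}-\rho^{(\ve)})e$ (used in the paper) is slightly more direct than your $(\rho_{a_j}-\rho)e_{a_j}+\rho(e_{a_j}-e)$, since the first factor in your first term still varies with $j$.
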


\begin{proof}
Let $\ve\in (0,1)$. Denote $\rho^{(\ve)}$ and $\rho^{(\ve)}_{a_j}$ as in Lemma \ref{lemma:alimrho}.
Let $E\subset [0,\infty)$ be a bounded measurable set, and let $\mathbbm{1}_E(r)$ be the indicator function.
Since $x\to \tilde{r}(x,t)$ is strictly increasing and $(x,t)\mapsto\tilde{r}(x,t)$ is a continuous function from Lemma \ref{lemma:alimpath}, combining with the fact that $[0,T]$ is a compact set yields that
$\D_\ve  =\vcentcolon 2\inf_{t\in[0,T]}\{\tilde{r}(\ve,t) - \tilde{r}(\frac{\ve}{2},t)\}>0$.
By Lemma \ref{lemma:alimpath}, there exists $N_{\ve}\in\mathbb{N}$ such that,
if $j\ge N_\ve$, $\sup_{ 0\le t \le  T} \snorm{\tilde{r}_{a_j}(\ve,t)-\tilde{r}(\ve,t)} \le \frac{d_\ve}{2}$.
Then
\begin{equation}\label{temp:alimEnt2}
\tilde{r}_{a_j}(\ve,t) > \tilde{r}(\frac{\ve}{2},t)   \qquad\ \text{for all $t\in[0,T]$ and $j\ge N_\ve$}.
\end{equation}
Let $[t_1,t_2]\subset (0,T]$. Then it follows from \eqref{temp:alimEnt2} that, as $j\to\infty$,
\begin{align*}
&\int_{t_1}^{t_2}\int_{\tilde{r}_{a_j}(\ve,t)}^{\infty}
 \rho_{a_j} \snorm{u_{a_j}}^2\mathbbm{1}_E(r)\,r^m \dif r \dif t
 - \int_{t_1}^{t_2}\int_{\tilde{r}(\ve,t)}^{\infty} \rho \snorm{u}^2\mathbbm{1}_E(r)\,r^m \dif r \dif t\\
&= \int_{t_1}^{t_2}\int_{\tilde{r}(\frac{\ve}{2},t)}^{\infty} \rho_{a_j}^{(\ve)}
 \snorm{u_{a_j}}^2\mathbbm{1}_E(r)\,r^m \dif r \dif t
 - \int_{t_1}^{t_2}\int_{\tilde{r}(\frac{\ve}{2},t)}^{\infty}\rho^{(\ve)} \snorm{u}^2\mathbbm{1}_E(r)\,r^m \dif r \dif t\\
&= \int_{t_1}^{t_2}\int_{\tilde{r}(\frac{\ve}{2},t)}^{\infty} \rho_{a_j}^{(\ve)}
  \big\{ \snorm{u_{a_j}}^2 - \snorm{u}^2\big\}\mathbbm{1}_E(r)\,r^m \dif r \dif t
  + \int_{t_1}^{t_2} \int_{\tilde{r}(\frac{\ve}{2},t)}^{\infty}
 \mathbbm{1}_E \big\{\rho_{a_j}^{(\ve)}-\rho^{(\ve)}\big\}\snorm{u}^2\mathbbm{1}_E(r)\,r^m \dif r \dif t\to 0,
\end{align*}
where the first-term convergence follows from the dominated convergence theorem,
Lemma \ref{lemma:alimue}, and the pointwise bound for $\snorm{u_{a_j}}$ and $\rho_{a_j}^{(\ve)}$
from Theorem \ref{thm:WSCEx}\ref{item:WSCEx4}, while the second-term convergence follows from
Lemma \ref{lemma:alimrho}.

Let $L\in\mathbb{N}$ and set $E\equiv [0,L]$. By Theorem \ref{thm:WSCEx}\ref{item:WSCEx1},
	\begin{align*}
		 \int_{t_1}^{t_2}\int_{\tilde{r}(\ve,t)}^{L} \rho \snorm{u}^2\,r^m \dif r \dif t
		 =&\, \lim_{j\to \infty}\int_{t_1}^{t_2}\int_{\tilde{r}_{a_j}(\ve,t)}^{L}
		   \rho_{a_j} \snorm{u_{a_j}}^2\,r^m \dif r \dif t\\
		 \le&\, \snorm{t_2-t_1} \esssup_{t\in(0,T)}\int_{a_j}^{\infty}
		   \rho_{a_j} \snorm{u_{a_j}}^2(r,t) r^m \dif r = C(T) \snorm{t_2-t_1},
	\end{align*}
	where $C(T)=C(T,C_0)$ is independent of $a\in(0,1)$, $\ve>0$, and $L\in\mathbb{N}$. Thus,
	by the Lebesgue differentiation theorem, it follows that
	\begin{align*}
		\esssup_{t\in(0,T)} \int_{\tilde{r}(\ve,t)}^{L} (\rho \snorm{u}^2)(r,t)\,r^m \dif r \le C(T).
	\end{align*}
	Using the monotone convergence theorem with the consecutive limits $L\to\infty$ and then $\ve\searrow 0$,
	\begin{align*}
		\esssup_{t\in(0,T)} \int_{\ul{r}(t)}^{\infty} (\rho \snorm{u}^2)(r,t)\,r^m \dif r
		=\lim\limits_{\substack{L\to\infty\\ \ve\to 0^+}}\esssup_{t\in(0,T)}
		\int_{\tilde{r}(\ve,t)}^{L} (\rho \snorm{u}^2)(r,t)\,r^m \dif r \le C(T).
	\end{align*}
	
Next, let $[t_1,t_2]\subseteq(0,T]$, and let $E\subset [0,\infty)$ be a bounded measurable set.
Using \eqref{temp:alimEnt2}, the convergence results from Lemma \ref{lemma:alimue}--\ref{lemma:alimrho},
and the bound: $e(r,t)\le C(\ve) \sigma^{-\frac{1}{2}}(t)$ for $r\ge \tilde{r}(\frac{\ve}{2},t)$ from Lemma \ref{lemma:alimue},
we see that, as $j\to\infty$,
\begin{align*}
&\int_{t_1}^{t_2}\int_{\tilde{r}_{a_j}(\ve,t)}^{\infty}  \rho_{a_j} e_{a_j}\mathbbm{1}_E(r)\,r^m \dif r \dif t
 -\int_{t_1}^{t_2}\int_{\tilde{r}(\ve,t)}^{\infty}  \rho e\,\mathbbm{1}_E(r)\,r^m \dif r \dif t \\
&=\int_{t_1}^{t_2}\int_{\tilde{r}(\frac{\ve}{2},t)}^{\infty}\rho_{a_j}^{(\ve)}e_{a_j}\mathbbm{1}_E(r)\,r^m\dif r\dif t -\int_{t_1}^{t_2}\int_{\tilde{r}(\frac{\ve}{2},t)}^{\infty} \rho^{(\ve)} e\,\mathbbm{1}_E(r)\,r^m \dif r \dif t\\
&=\int_{t_1}^{t_2}\int_{\tilde{r}(\frac{\ve}{2},t)}^{\infty} \rho_{a_j}^{(\ve)}
  (e_{a_j}-e)\mathbbm{1}_E(r)\,r^m \dif r \dif t  + \int_{t_1}^{t_2}\int_{\tilde{r}(\frac{\ve}{2},t)}^{\infty}
\big(\rho_{a_j}^{(\ve)}-\rho^{(\ve)}\big) e\,\mathbbm{1}_E(r)\,r^m \dif r \dif t \to 0.
\end{align*}
Let $E_j^{\ve}(t)\vcentcolon= E\cap [\tilde{r}_{a_j}(\ve,t),\infty)$. Then, by \eqref{6.3b},
\begin{equation}\label{aEtemp1}
    \begin{aligned}
	        \int_{t_1}^{t_2}\int_{\tilde{r}(\ve,t)}^{\infty}\rho e\,\mathbbm{1}_E(r)\, r^m \dif r \dif t
	         &= \lim\limits_{j\to\infty} \int_{t_1}^{t_2}\int_{\tilde{r}_{a_j}(\ve,t)}^{\infty}
	           \rho_{a_j} e_{a_j} \,\mathbbm{1}_E(r)\,r^m \dif r \dif t \nonumber\\
		    &\le \sup\limits_{j\in\mathbb{N}} \int_{t_1}^{t_2}\int_{E_j^\ve(t)}\rho_{a_j} e_{a_j}\,r^m \dif r\dif t\\
		    &\le \int_{t_1}^{t_2} \big(C(T) + \omega_2(E_{j}^{\ve}(t);C(T)) \big)\dif t,
	    \end{aligned}
\end{equation}
where $\omega_2\big(E_{j}^{\ve}(t);C(T)\big) = f_2(f_1(\int_{E_{j}^{\ve}(t)}r^m \dif r;C(T));C(T))$.

Since $E_{j}^{\ve}(t) \subseteq E$ for all $(j,\ve,t)\in\mathbb{N}\times(0,1)\times[0,T]$,
and $f_1(y;C)$ and $f_2(y;C)$ are monotone increasing functions in $y$ (see Proposition \ref{prop:omega}),
then, for all $(j,\ve,t)\in\mathbb{N}\times(0,1)\times[0,T]$,
\begin{align*}
\omega_2(E_{j}^{\ve}(t);C(T))
\le  f_2(f_1(\int_{E}r^m \dif r;C(T));C(T)) = \omega_2(E;C(T)).
\end{align*}
Using this, \eqref{aEtemp1} becomes
	\begin{align*}
		\int_{t_1}^{t_2}\int_{\tilde{r}(\ve,t)}^{\infty} (\rho e)(r,t)\,\mathbbm{1}_E(r)\, r^m \dif r \dif t
		\le  \snorm{t_2-t_1} \big(C(T)+ \omega_2(E;C(T))\big).
	\end{align*}
Thus, by the Lebesgue differentiation theorem, it follows that
\begin{align*}
\esssup\limits_{t\in[0,T]}\int_{\tilde{r}(\ve,t)}^{\infty}(\rho e) (r,t)\mathbbm{1}_E(r)\,r^m \dif r
\le C(T)+ \omega_2(E;C(T)).
\end{align*}
Since $\rho=0$ in $\{r< \ul{r}(t)\}$ and $\lim_{\ve\searrow 0}\tilde{r}(\ve,t)= \ul{r}(t)$,
by the monotone convergence theorem,
\begin{align*}
		\esssup\limits_{t\in[0,T]}\int_{E}(\rho e)(r,t)\,r^m \dif r \le C(T)+ \omega_2(E;C(T)).
\end{align*}
	
Let $\ve\in(0,1]$ and $r_\ve \vcentcolon=\frac{1}{2} \inf_{t\in[0,T]}\tilde{r}(\ve,t)$.
By Lemma \ref{lemma:alimpath}, there exists $M_\ve\in\mathbb{N}$ such that
\begin{equation}\label{temp:alimEnt1}
	    \inf_{t\in[0,T]}\tilde{r}_{a_j}(\ve,t)> r_\ve \qquad\ \text{for all $j\ge M_\ve$}.
\end{equation}
Moreover, by Theorem \ref{thm:WSCEx}\ref{item:WSCEx3}, $\tilde{r}_{a_j}(\ve,t)\le \tilde{r}_{a_j}(1,t)\le C_0$
for all $(j,t)\in\mathbb{N}\times[0,T]$.
By \eqref{temp:alimEnt1}, Theorem \ref{thm:WSCEx}\ref{item:WSCEx1}, Lemma \ref{lemma:alimrho},
and the fact that $G(0)=1$, we have
\begin{equation*}
\begin{dcases}
\rho_{a_j}^{(\ve)} - 1 \rightharpoonup \rho^{(\ve)} - 1
\quad\text{ weakly in $L^{2}\big(0,T; L^2([r_{\ve},\infty),r^m \dif r) \big)$  as $j\to\infty$},\\
\sup_{j\ge M_\ve} \esssup_{t\in[0,T]} \int_{r_\ve}^{\infty} G(\rho_{a_j}^{(\ve)})(r,t)\,r^m \dif r \le C(T), \\
0 \le \rho_{a_j}^{(\ve)}(r,t) \le C(\ve) \quad \text{ for {\it a.e.}\, $(r,t)\in[r_\ve,\infty)\times[0,T]$
   and for each $j\in\mathbb{N}$}.
\end{dcases}
\end{equation*}
Thus, applying Proposition \ref{prop:mazur},
it follows that
\begin{equation*}
	\esssup_{t\in(0,T)}\int_{r_\ve}^{\infty} G(\rho^{(\ve)})(r,t)\, r^m \dif r \le C(T).
\end{equation*}
By Lemma \ref{lemma:alimpath}\ref{item:apath4},
$r_\ve < \tilde{r}(\ve,t)\le C_0(1+\ve)^{\frac{1}{n}}\le 2^{\frac{1}{n}}C_0$ for $t\in[0,T]$.
Since $G(\rho^{(\ve)})=G(0)=1$ for $r\in[r_\ve,\tilde{r}(\ve,t)]$ and $\rho^{(\ve)}=\rho$
for {\it a.e.} $r\in[\tilde{r}(\ve,t),\infty)$, then,  for {\it a.e.} $t\in[0,T]$,
	\begin{equation*}
	    \int_{\tilde{r}(\ve,t)}^{\infty} G(\rho)(r,t)\,r^m \dif r
	    \le \int_{r_\ve}^{\tilde{r}(\ve,t)}\,r^m \dif r
	    + \int_{\tilde{r}(\ve,t)}^{\infty} G(\rho)(r,t)\,r^m \dif r \le C(T).
	\end{equation*}
Since the above is true for all $\ve\in(0,1]$,
using $\lim_{\ve\searrow 0} \tilde{r}(\ve,t)= \ul{r}(t)$, $\rho \equiv 0$ for $r<\ul{r}(t)$,
and the monotone convergence Theorem, it follows that
\begin{equation*}
\esssup_{t\in[0,T]}\int_{0}^{\infty} G(\rho)(r,t)\,r^m \dif r
=\lim\limits_{\ve\searrow 0} \esssup_{t\in[0,T]}\int_{\tilde{r}(\ve,t)}^{\infty} G(\rho)(r,t)\,r^m \dif r
\le C(T).
\end{equation*}
\end{proof}

\begin{lemma}\label{lemma:entdis}
Let $(\rho,u,e)(r,t)$ be the limit function obtained in {\rm Lemmas \ref{lemma:alimue}}--{\rm \ref{lemma:alimrho}}. Then for $\eta>0$,
\begin{align*}
&\int_{0}^{T} \sup_{r\ge\ul{r}(t)+\eta}\dfrac{\snorm{u}}{\sqrt{e}}(r,t)\,\dif t
\le C(T)\big(\eta^{\frac{2-n}{2}} + \eta^{2-n} \big) && \text{if } \ n=2,\, 3,\\
&\int_{0}^{T} \sup_{r\ge \ul{r}(t)+\eta} \log \big( \max\big\{ 1, e(r,t)\big\} \big)\, \dif t
\le C(T)\big(1+\sqrt{|\log\eta|}\big)&& \text{if } \ n=2,\\
&\int_{0}^{T} \sup_{r\ge\ul{r}(t)+\eta} \log \big( \max \big\{ 1, e^{\pm 1}(r,t) \big\} \big)\,\dif t
\le C(T) \eta^{2-n} && \text{if } \ n=3.
\end{align*}
\end{lemma}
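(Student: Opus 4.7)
The plan is to transfer the exterior-domain estimates of Theorem \ref{thm:WSCEx}\ref{item:WSCEx6} for the approximate solutions $(\rho_{a_j},u_{a_j},e_{a_j})$ to the limit $(\rho,u,e)$ by a Fatou-type argument, using the pointwise convergence established in Lemma \ref{lemma:alimue} on the fluid region $\mathrm{F}\setminus\{t=0\}$. Fix $\eta\in(0,1)$. Since $a_j\searrow 0$, we have $a_j<\eta$ for all $j$ sufficiently large, so Theorem \ref{thm:WSCEx}\ref{item:WSCEx6} applied with the parameter $\eta$ itself yields the uniform-in-$j$ bounds
\begin{equation*}
\int_0^T F_j(t)\,\dif t\le C(T)\big(\eta^{(2-n)/2}+\eta^{2-n}\big), \qquad
F_j(t)\vcentcolon=\sup_{r\ge\eta}\dfrac{|u_{a_j}|}{\sqrt{e_{a_j}}}(r,t),
\end{equation*}
together with the analogous bounds for the logarithmic quantities.

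Next I would argue pointwise in $t$. Fix $t\in(0,T]$ and any point $r_0\ge\ul{r}(t)+\eta$. Since $r_0>\ul{r}(t)$, we have $(r_0,t)\in\mathrm{F}\setminus\{t=0\}$, so by Lemma \ref{lemma:alimue}, $u_{a_j}(r_0,t)\to u(r_0,t)$ and $e_{a_j}(r_0,t)\to e(r_0,t)$. Since $r_0\ge\eta>a_j$ eventually, the defining inequality $|u_{a_j}(r_0,t)|\le F_j(t)\sqrt{e_{a_j}(r_0,t)}$ passes to the limit in $j$ to give
\begin{equation*}
|u(r_0,t)|\le \big(\liminf_{j\to\infty}F_j(t)\big)\sqrt{e(r_0,t)}.
\end{equation*}
Taking the supremum over $r_0\ge\ul{r}(t)+\eta$ (interpreting $0/0=0$; when $e(r_0,t)=0$ the displayed inequality forces $u(r_0,t)=0$ whenever $\liminf_j F_j(t)<\infty$), we conclude
\begin{equation*}
\sup_{r\ge\ul{r}(t)+\eta}\dfrac{|u|}{\sqrt{e}}(r,t)\le \liminf_{j\to\infty}F_j(t) \qquad \text{for a.e.\ }t\in(0,T].
\end{equation*}
Integrating in $t$ and invoking Fatou's lemma then yields the first assertion. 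The estimates involving $\log\max\{1,e\}$ and $\log\max\{1,e^{-1}\}$ are handled in the same manner, relying on the continuity of $\zeta\mapsto\log\max\{1,\zeta^{\pm 1}\}$ at positive values and on Fatou's lemma; the $e^{-1}$ case (only relevant for $n=3$) even yields as a byproduct that $e(r,t)>0$ on $\{r\ge\ul{r}(t)+\eta\}$ for a.e.\ $t$.

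The principal obstacle I anticipate is the delicate interplay between the pointwise convergence of $(u_{a_j},e_{a_j})$, which is guaranteed only on $\mathrm{F}\setminus\{t=0\}$, and the fact that no uniform lower bound on $e$ is available (the bound $e_a\ge C^{-1}(a)$ in Theorem \ref{thm:WSCEx}\ref{item:WSCEx2} degenerates as $a\searrow 0$). This is why I pass to the limit in the multiplicative form $|u_{a_j}|\le F_j(t)\sqrt{e_{a_j}}$ rather than in the ratio $|u|/\sqrt{e}$ directly: this keeps everything finite along the approximation even at points where $e$ could in principle vanish in the limit. A subsidiary technical point is ensuring that the supremum over $r_0$ can be taken in a measurable way so that Fatou applies; this follows from the continuity of $(u,e)$ on $\mathrm{F}\setminus\{t=0\}$ together with the continuity of $t\mapsto\tilde r(\ve,t)$ and the definition $\ul{r}(t)=\lim_{\ve\searrow 0}\tilde r(\ve,t)$, which allow the sup to be reduced to a countable dense set of $r_0$'s.
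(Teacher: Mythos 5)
Your proof is correct and reaches the same conclusion with the same essential ingredients — Theorem~\ref{thm:WSCEx}\ref{item:WSCEx6} for the approximate sequence, pointwise convergence from Lemma~\ref{lemma:alimue} on $\mathrm{F}\setminus\{t=0\}$, and Fatou's lemma — but the organization is a bit more direct than the paper's. The paper first restricts the supremum to the smaller set $\{r\ge\tilde r(2\ve,t)+\eta\}$, invokes $\tilde r(2\ve,t)\ge\tilde r_{a_j}(\ve,t)$ for large $j$, passes $j\to\infty$ by Fatou, and only afterwards sends $\ve\searrow 0$ by monotone convergence using $\tilde r(2\ve,t)\searrow\ul r(t)$. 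You bypass the $\ve$-intermediary entirely by observing that $\ul r(t)\ge 0$ forces $r_0\ge\eta$ for any $r_0\ge\ul r(t)+\eta$, so the fixed-set quantity $F_j(t)=\sup_{r\ge\eta}|u_{a_j}|/\sqrt{e_{a_j}}$ already dominates the pointwise ratio at every such $r_0$. One thing the paper's $\ve$-parametrization does quietly buy is an easy route to measurability of $t\mapsto\sup_{r\ge\ul r(t)+\eta}(\cdot)$ (since $\tilde r(\ve,\cdot)$ is continuous while $\ul r(\cdot)$ is only upper semi-continuous, and the sup is then the monotone limit in $\ve$ of measurable functions); your own remark about reducing the sup to a countable dense set of $r_0$'s, together with the continuity of $(u,e)$, serves the same purpose. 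Two refinements in your version are worth noting as genuine improvements in exposition: passing to the limit in the multiplicative form $|u_{a_j}|\le F_j\sqrt{e_{a_j}}$ cleanly handles the possible degeneracy $e(r_0,t)=0$ without dividing by zero, and your observation that the $n=3$ bound on $\log\max\{1,e^{-1}\}$ yields $e>0$ a.e.\ on the exterior region makes explicit a fact the paper leaves implicit. Finally, your opening restriction to $\eta\in(0,1)$ is the right reading: Theorem~\ref{thm:WSCEx}\ref{item:WSCEx6} is stated only for $\eta\in(a,1)$, and the lemma's ``for $\eta>0$'' should be understood with the same tacit restriction.
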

\begin{proof}
We will only prove the first estimate, as the other two statements follow the exact same proof. Fix $\eta>0$. Since $a_j\searrow 0$ as $j\to\infty$, there exists $N_1\in\mathbb{N}$ such that $\eta > a_j$ for $j\ge N_1$. Let $\ve\in(0,1]$. It follows from Theorem \ref{thm:WSCEx}\ref{item:WSCEx6} that,
\begin{equation*}
\int_{0}^{T} \sup\limits_{r\ge \tilde{r}(2\ve,t)+\eta} \dfrac{|u_{a_j}|}{\sqrt{e_{a_j}}}(r,t)\, \dif t \le \int_{0}^{T} \sup\limits_{r\ge \eta} \dfrac{|u_{a_j}|}{\sqrt{e_{a_j}}}(r,t)\, \dif t \le C(T) \big(\eta^{\frac{2-n}{2}} + \eta^{2-n} \big) \qquad \text{for all } \ j\ge N_1.
\end{equation*}
By Lemma \ref{lemma:alimpath}\ref{item:apath1} and the fact that $\ve\mapsto\tilde{r}(\ve,t)$ is increasing, 
there exists $N_2\in\mathbb{N}$ such that $\tilde{r}(2\ve,t) \ge \tilde{r}_{a_j}(\ve,t) $ for all $j\ge N_2$.
Thus, by the uniform convergence in Lemma \ref{lemma:alimue} and Fatou's lemma, we have
\begin{equation*}
	\int_{0}^{T} \sup\limits_{r\ge \tilde{r}(2\ve,t)+\eta} \dfrac{|u|}{\sqrt{e}}(r,t)\, \dif t
	\le C(T) \big( \eta^{\frac{2-n}{2}} + \eta^{2-n} \big).
\end{equation*}
Finally, since $\tilde{r}(\ve,t) \searrow \ul{r}(t)$ as $\ve \searrow 0$, the desired estimate is obtained by an application of Monotone Convergence theorem with the limit $\ve\searrow 0$.
\end{proof}

\subsection{Weak forms away from the vacuum}\label{subsec:alimWF}
In this subsection, we denote $(\rho,u,e)(r,t)$ on $\mathrm{F}$ and $\tilde{r}(x,t)$ on $[0,\infty)\times[0,T]$
as the limit functions obtained in Lemmas \ref{lemma:alimpath}--\ref{lemma:alimrho}.
The main aim is to show that they satisfy the weak forms described in Definition \ref{def:WFAV}.
Moreover, we also define a class of test functions $\mathcal{D}_T$ as
\begin{equation*}
\mathcal{D}_T\vcentcolon
=\{ \phi\in C^{1}([0,\infty)\times[0,T])\,\vcentcolon\,\exists L\in\mathbb{N} \text{ such that }  \phi(r,t)=0
\text{ for $(r,t)\in[L,\infty)\times[0,T]$} \}.
\end{equation*}

\begin{lemma}\label{lemma:alimID}
Let $(\rho_0,u_0,e_0)(r)$ be the spherically symmetric initial data in {\rm Theorem \ref{thm:WSAV}},
and let $(\rho_a^0,u_a^0 ,e_a^0)(r)$ be the modified initial data constructed in {\rm \S \ref{subsec:mollify}}.
Then, for all $\phi\in \mathcal{D}_T$,
\begin{equation*}
\begin{split}
&\lim\limits_{a\searrow 0} \int_{a}^{\infty} \rho_a^0(r) \phi(r,0)\,r^m \dif r
 = \int_{0}^{\infty} \rho_0(r) \phi (r,0)\,r^m \dif r,\\
&\lim\limits_{a\searrow 0} \int_{a}^{\infty} (\rho_a^0 u_a^0) (r) \phi(r,0)\,r^m \dif r
 = \int_{0}^{\infty} (\rho_0 u_0)(r) \phi(r,0)\,r^m \dif r,\\
&\lim\limits_{a\searrow 0} \int_{a}^{\infty} \Big(\frac{1}{2}\rho_a^0 \norm{u_a^0}^2 +\rho_a^0 e_a^0\Big)(r) \phi(r,0)\, r^m \dif r
= \int_{0}^{\infty} \Big(\dfrac{1}{2}\rho_0\snorm{u_0}^2 + \rho_0 e_0 \Big)(r) \phi(r,0)\,r^m \dif r.
\end{split}
\end{equation*}
\end{lemma}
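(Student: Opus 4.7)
The plan is to combine the a.e.\ convergence $(\rho_a^0, u_a^0, e_a^0) \to (\rho_0, u_0, e_0)$ with the uniform integrability estimates, both furnished by Proposition \ref{prop:mollify}. Since each $\phi \in \mathcal{D}_T$ satisfies $\phi(r,0) = 0$ for $r \ge L$ for some $L > 0$ and $\phi(\cdot,0) \in L^\infty$, every integrand is supported in $[0,L]$ and we may work on this compact interval throughout. The contribution from $[0,a]$ will in each case vanish as $a \searrow 0$ by H\"older's inequality against the integrable initial data, leaving the main task of showing convergence on $[a,L]$.

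For the first identity I would write $\int_a^\infty \rho_a^0\,\phi\,r^m\,dr - \int_0^\infty \rho_0\,\phi\,r^m\,dr$ as $\int_a^L (\rho_a^0 - \rho_0)\phi\,r^m\,dr - \int_0^a \rho_0\,\phi\,r^m\,dr$ and bound the first piece via Cauchy-Schwarz by $\|\phi\|_\infty\,\big(\int_0^L r^m\,dr\big)^{1/2}\,\|\rho_a^0 - \rho_0\|_{L^2([0,\infty),r^m dr)}$, which tends to $0$ by Proposition \ref{prop:mollify}\ref{item:mollify2}. For the second identity I would use the decomposition
\begin{equation*}
\rho_a^0 u_a^0 - \rho_0 u_0 = (\rho_a^0 - \rho_0)\,u_0 + \rho_a^0\,(u_a^0 - u_0).
\end{equation*}
The first summand is handled by Cauchy-Schwarz using the $L^2$ convergence of $\rho_a^0 - \rho_0$ and the fact that $u_0 \in L^2([0,L],r^m dr)$ (inherited from $\int_0^\infty |u_0|^4 r^m\,dr \le C_*$ and compactness). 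The second summand needs $u_a^0 \to u_0$ in $L^2_{\mathrm{loc}}(r^m dr)$; this follows from the uniform $L^4([0,\infty),r^m dr)$ bound on $u_a^0$ (coming from $\| |u_a^0|^2 \|_{L^2}^2 \le C_0$ in Proposition \ref{prop:mollify}\ref{item:mollify3}), the a.e.\ convergence $u_a^0 \to u_0$, and Vitali's convergence theorem on $[0,L]$; together with $\rho_a^0 \le C_0$ and Cauchy-Schwarz, this closes the estimate.

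For the third identity I would decompose
\begin{equation*}
\rho_a^0 |u_a^0|^2 - \rho_0 |u_0|^2 = (\rho_a^0 - \rho_0)|u_a^0|^2 + \rho_0\,(|u_a^0|^2 - |u_0|^2),\qquad \rho_a^0 e_a^0 - \rho_0 e_0 = (\rho_a^0 - \rho_0)e_0 + \rho_a^0\,(e_a^0 - e_0),
\end{equation*}
so that each summand is the product of one factor converging to zero in $L^2([0,L],r^m dr)$ (by Proposition \ref{prop:mollify}\ref{item:mollify2}) against a factor bounded uniformly in $L^2([0,L],r^m dr)$ (either by Proposition \ref{prop:mollify}\ref{item:mollify3} for $|u_a^0|^2$, or by $\rho_a^0 \le C_0$, or by $e_0 \in L^2_{\mathrm{loc}}$ via $\int_0^\infty (e_0 - 1)^2 r^m\,dr \le C_*$). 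Cauchy-Schwarz and the $\|\phi\|_\infty$ bound then yield the three limits.

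The only mildly delicate point, and thus the main obstacle, is the middle summand $\rho_a^0(u_a^0 - u_0)$ in the momentum identity: Proposition \ref{prop:mollify} gives $L^2$ convergence only for $|u_a^0|^2$ rather than for $u_a^0$ itself, so the linear convergence $u_a^0 \to u_0$ in $L^2_{\mathrm{loc}}(r^m dr)$ has to be extracted from the uniform $L^4$ control plus a.e.\ convergence via Vitali. Once this is secured, all remaining estimates are routine Cauchy-Schwarz arguments on the compact set $[0,L]$.
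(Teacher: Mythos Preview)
Your proposal is correct and follows essentially the same route as the paper: split each product difference and apply Cauchy--Schwarz using the $L^2$ convergences and uniform bounds from Proposition~\ref{prop:mollify}, with the test function's compact support localizing everything to $[0,L]$. The only minor deviation is your handling of $u_a^0\to u_0$ in $L^2_{\mathrm{loc}}$: the paper simply invokes $\|u_a^0-u_0\|_{L^2}\to 0$ by reference to Proposition~\ref{prop:mollify} (whose \emph{proof} in fact establishes $\|u_a^0-u_0\|_{L^4([0,\infty),r^m\,\mathrm{d}r)}\to 0$, stronger than what its statement records), whereas your Vitali argument extracts the same local $L^2$ convergence from the stated a.e.\ convergence plus uniform $L^4$ bound---both justifications are valid and equally short.
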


\begin{proof}
By construction \eqref{eqs:amolint} in \S \ref{subsec:mollify},
$(\rho_a^0,u_a^0,e_a^0)(r)=(1,0,1)$ for $r\in[0,a]$. Then
we use Proposition \ref{prop:mollify} to obtain
\begin{align*}
&\Bignorm{\int_{a}^{\infty}\rho_a^0(r)\phi(r,0)\,r^m \dif r
  - \int_{0}^{\infty}\rho_0(r)\phi(r,0)\,r^m \dif r}\\
	&\quad\le \int_{0}^a \snorm{\phi(r,0)}\,r^m \dif r
	 +\sbnorm{\phi(\cdot,0)}_{L^2([0,\infty),r^m \dif r)}
	 \sbnorm{\rho_a^0 - \rho_0}_{L^2([0,\infty),r^m \dif r)} \to 0 \qquad\text{ as $a\searrow 0$},\\[2mm]
&\Bignorm{\int_{a}^{\infty}(\rho_a^0 u_a^0)(r) \phi(r,0)\,r^m \dif r
 - \int_{0}^{\infty}(\rho_0 u_0)(r) \phi(r,0)\,r^m \dif r}\\
&\quad\le \sbnorm{\rho_a^0}_{L^\infty} \sbnorm{\phi(\cdot,0)}_{L^2([0,\infty),r^m\dif r)} \sbnorm{u_a^0-u_0}_{L^2([0,\infty),r^m\dif r)} \\[1mm]
&\quad\quad\, + \sbnorm{\phi(\cdot,0)}_{L^\infty} \sbnorm{u_0}_{L^2([0,\infty),r^m\dif r)} \sbnorm{\rho_a^0-\rho_0}_{L^2([0,\infty),r^m\dif r)} \to 0 \qquad\text{ as $a\searrow 0$}.
\end{align*}
Furthermore, it follows from \eqref{eqs:amolint} and Proposition \ref{prop:mollify} that,
as $a\searrow 0$,
\begin{align*}
&\Bignorm{\int_{a}^{\infty} \rho_a^0 \snorm{u_a^0}^2(r) \phi(r,0)\,r^m \dif r
  - \int_{0}^{\infty} \rho_0 \snorm{u_0}^2(r) \phi(r,0)\,r^m \dif r }\\
&\quad \le \sbnorm{\rho_a^0}_{L^{\infty}} \sbnorm{\phi(\cdot,t)}_{L^2([0,\infty),r^m \dif r)} \sbnorm{\snorm{u_a^0}^2 - \snorm{u_0}^2}_{L^2([0,\infty),r^m \dif r)}\\[1mm]
&\quad\quad + \sbnorm{\phi(\cdot,0)}_{L^{\infty}}\sbnorm{u_0}_{L^4([0,\infty),r^m\dif r)}^2
  \sbnorm{ \rho_a^0 - \rho_0}_{L^2([0,\infty),r^m \dif r)}\to 0 \qquad\text{ as $a\searrow 0$}.\\[2mm]
&\Bignorm{\int_{a}^{\infty}\rho_a^0 e_a^0(r) \phi(r,0)\,r^m \dif r
  - \int_{0}^{\infty} \rho_0 e_0(r) \phi(r,0)\,r^m \dif r}\\
&\quad\le \Bignorm{\int_{0}^{a}\phi(r,0)\,r^m\dif r}
  + \Bignorm{\int_{0}^{\infty} \phi(r,0) \big(\rho_a^0 (e_a^0 - e_0) + e_0 (\rho_a^0-\rho_0)\big)\,r^m \dif r }\\
&\quad\le \sbnorm{\phi(\cdot,0)}_{L^{\infty}}\frac{a^n}{n} + \sbnorm{\rho_a^0}_{L^{\infty}} \sbnorm{\phi(\cdot,0)}_{L^2([0,\infty),r^m \dif r)} \sbnorm{e_a^0-e_0}_{L^2([0,\infty),r^m \dif r)} \\[1mm]
&\quad\quad\, + \big(\sbnorm{\phi(\cdot,0)}_{L^{\infty}}\sbnorm{e_0-1}_{L^2([0,\infty),r^m\dif r)} + \sbnorm{\phi(\cdot,0)}_{L^2([0,\infty),r^m\dif r)}\big)
\sbnorm{\rho_a^0-\rho_0}_{L^2([0,\infty),r^m \dif r)} \to 0.
\end{align*}
This completes the proof.
\end{proof}

\begin{lemma}\label{lemma:alimRhoWF}
For any $\phi\in \mathcal{D}_T$,
\begin{align*}
\int_{0}^{\infty} \rho(r,t) \phi(r,t)\,r^m \dif r - \int_{0}^{\infty} \rho_0(r)\phi(r,0)\,r^m \dif r
= \int_{0}^{t}\int_{0}^{\infty} \big(\rho \partial_t \phi + \rho u \partial_r\phi\big)(r,s)\,r^m \dif r \dif s.
\end{align*}
\end{lemma}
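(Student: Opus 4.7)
The plan is to start from the weak continuity identity satisfied by each exterior weak solution $(\rho_{a_j},u_{a_j},e_{a_j})$ from {\rm Lemma \ref{lemma:klimWF}} (i.e., {\rm Definition \ref{def:SWeak}}\ref{item:SWeak1}):
\begin{equation*}
\int_{a_j}^\infty \rho_{a_j}\phi(\cdot,t)\,r^m \dif r - \int_{a_j}^\infty \rho_{a_j}^0\phi(\cdot,0)\,r^m \dif r = \int_0^t\!\int_{a_j}^\infty \bigl(\rho_{a_j}\partial_t\phi + \rho_{a_j} u_{a_j} \partial_r\phi\bigr)\,r^m \dif r \dif s,
\end{equation*}
and pass to the limit $a_j\searrow 0$ along the subsequence extracted in {\rm Lemmas \ref{lemma:alimpath}--\ref{lemma:alimrho}}. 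Fix $\phi\in\mathcal{D}_T$ with $\supp\phi\subset[0,L]\times[0,T]$, so every spatial integral is confined to $[a_j,L]$ once $a_j<L$; the initial-time term then converges to $\int_0^\infty\rho_0\phi(\cdot,0)\,r^m\dif r$ by {\rm Lemma \ref{lemma:alimID}}.

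For the linear terms $\int\rho_{a_j}\phi(\cdot,t)\,r^m\dif r$ and $\iint\rho_{a_j}\partial_t\phi\,r^m\dif r\dif s$, I would decompose $\rho_{a_j}=\rho_{a_j}^{(\ve)}+(\rho_{a_j}-\rho_{a_j}^{(\ve)})$ via the cutoff of {\rm Lemma \ref{lemma:alimrho}\ref{item:alimrho1}}. The first summand converges weak-star in $L^\infty(0,T;L^2([0,\infty),r^m\dif r))$ against $\phi,\,\partial_t\phi\in L^1(0,T;L^2)$. The residual is supported in $[a_j,\tilde r_{a_j}(\ve,\cdot)]$ with exact $L^1$-mass $\ve$ by {\rm Theorem \ref{thm:WSCEx}\ref{item:WSCEx3}}, hence its contribution is bounded in absolute value by $(T\|\partial_t\phi\|_\infty+\|\phi\|_\infty)\ve$ uniformly in $j$. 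Sending $\ve\searrow 0$ and invoking the identity $x=\int_{\ul r(t)}^{\tilde r(x,t)}\rho\,r^m\dif r$ of {\rm Lemma \ref{lemma:alimrho}\ref{item:alimrho3}} together with {\rm Lemma \ref{lemma:rhoM}} (which forces $\int_{0}^{\tilde r(\ve,t)}\rho\,r^m\dif r\to 0$ as $\ve\searrow 0$) produces the desired limits on both sides.

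The delicate term is the flux $I_j\vcentcolon=\int_0^t\!\int_{a_j}^\infty\rho_{a_j} u_{a_j}\partial_r\phi\,r^m\dif r\dif s$, which I would split along $\tilde r_{a_j}(\ve,s)$. For the near-origin piece, Cauchy--Schwarz combined with the kinetic-energy bound of {\rm Theorem \ref{thm:WSCEx}\ref{item:WSCEx1}} and the particle-path identity $\int_{a_j}^{\tilde r_{a_j}(\ve,s)}\rho_{a_j}\,r^m\dif r=\ve$ from {\rm Theorem \ref{thm:WSCEx}\ref{item:WSCEx3}} yields an $O(\sqrt{\ve})$ bound uniform in $j$. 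For the exterior piece, I would write
\begin{equation*}
\int_0^t\!\int_{\tilde r_{a_j}(\ve,s)}^\infty\rho_{a_j} u_{a_j}\partial_r\phi\,r^m \dif r\dif s = \iint\rho_{a_j}^{(\ve)}(u_{a_j}-u)\partial_r\phi\,r^m\dif r\dif s + \iint\rho_{a_j}^{(\ve)} u\,\partial_r\phi\,r^m\dif r\dif s.
\end{equation*}
The second integral tends to $\iint\rho^{(\ve)} u\,\partial_r\phi\,r^m\dif r\dif s$ by the weak-star convergence of $\rho_{a_j}^{(\ve)}$, since $|u|\le C(\ve)\sigma^{-1/4}$ on $\supp\chi_\ve$ by {\rm Lemma \ref{lemma:alimue}} guarantees $u\partial_r\phi\in L^1(0,T;L^2([0,\infty),r^m\dif r))$. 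For the first integral, the uniform-on-compacts convergence $u_{a_j}\to u$ from {\rm Lemma \ref{lemma:alimue}} together with the uniform convergence $\tilde r_{a_j}(\ve,\cdot)\to\tilde r(\ve,\cdot)$ of {\rm Lemma \ref{lemma:alimpath}\ref{item:apath1}} confines the integrand on $[\delta,T]$ to a compact subset of $\mathrm{F}\setminus\{t=0\}$ for $j$ large, forcing it to zero; the slab $[0,\delta]$ is controlled by $C(\ve,\phi)\delta^{3/4}$ via $|u_{a_j}|\le C(\ve)\sigma^{-1/4}$ and the uniform $L^\infty_tL^2_r$ bound on $\rho_{a_j}^{(\ve)}$. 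Taking first $j\to\infty$, then $\delta\searrow 0$, and finally $\ve\searrow 0$ (again using {\rm Lemma \ref{lemma:alimrho}\ref{item:alimrho3}}) completes the passage to the limit.

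The principal obstacle is that $\rho_{a_j}$ converges only weakly and $u_{a_j}$ only locally uniformly on the fluid region, and near the vacuum interface $\ul r(t)$ neither quantity is individually controlled. The enabling observation is that only the combination $\rho u$ enters the flux, and the entropy inequality of {\rm Theorem \ref{thm:WSCEx}\ref{item:WSCEx1}} furnishes an $a$-independent bound on $\int\rho|u|^2\,r^m\dif r$; paired with the exact mass identity on the particle-path tube, Cauchy--Schwarz then yields the interior $O(\sqrt\ve)$ estimate uniformly in $a_j$, which is what allows the two-parameter limit $(j\to\infty,\,\ve\searrow 0)$ to close.
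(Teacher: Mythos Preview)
Your proposal is correct and follows essentially the same route as the paper: start from the exterior weak continuity identity, split all terms along the particle path $\tilde r_{a_j}(\ve,\cdot)$, control the near-origin piece of the flux by Cauchy--Schwarz against the uniform kinetic-energy bound and the exact mass identity $\int_{a_j}^{\tilde r_{a_j}(\ve,s)}\rho_{a_j}\,r^m\dif r=\ve$, and pass to the limit on the exterior piece via the weak-$\ast$ convergence of $\rho_{a_j}^{(\ve)}$ combined with the locally uniform convergence of $u_{a_j}$. The paper organizes the flux term as a three-way split (through both $\tilde r_{a_j}(x,\cdot)$ and $\tilde r(x,\cdot)$) and handles the $(u_{a_j}-u)$ integral by dominated convergence directly rather than your explicit $[0,\delta]$-slab estimate, but these are cosmetic differences; the governing mechanism---that the product $\rho_{a_j}u_{a_j}$, though not its factors individually, is uniformly controlled near the interface by the entropy inequality---is identical. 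One small remark: your appeal to Lemma~\ref{lemma:rhoM} for the limit density $\rho$ is misplaced (that lemma concerns $\rho_a$); the bound $\int_{\ul r(t)}^{\tilde r(\ve,t)}\rho\,r^m\dif r=\ve$ you need comes directly from Lemma~\ref{lemma:alimrho}\ref{item:alimrho3}, which you also cite.
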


\begin{proof}
First, from the continuity equation in Theorem \ref{thm:WSCEx},
we obtain that, for all $j\in\mathbb{N}$ and $a_j\in(0,1)$,
\begin{equation}\label{temp:alimRhoWF1}
\int_{a_j}^{\infty} \rho_{a_j}(r,s) \phi(r,s)\,r^m \dif r \Big\vert_{s=0}^{s=t}
= \int_{0}^{t}\int_{a_j}^{\infty} \big( \rho_{a_j} \partial_t \phi
+ \rho_{a_j} u_{a_j} \partial_r \phi\big)(r,t)\,r^m \dif r \dif t.
\end{equation}

Since $\rho\equiv 0$ in $\{ r < \ul{r}(t) \}$ by construction, it follows from Lemmas \ref{lemma:rhoM} and \ref{lemma:alimrho} that
\begin{align*}
\mathcal{I}_1^{j}\vcentcolon
=&\,\Bignorm{\int_{a_j}^{\infty} \rho_{a_j}(r,t) \phi(r,t)\,r^m \dif r
  - \int_{\ul{r}(t)}^{\infty} \rho(r,t) \phi(r,t)\,r^m \dif r}\\
\le\,&\, \Bignorm{\int_{\tilde{r}_{a_j}(x,t)}^{\infty}\rho_{a_j}(r,t)\phi(r,t)\,r^m\dif  r
    -\int_{\tilde{r}(x,t)}^{\infty}\rho(r,t)\phi(r,t)\,r^m\dif r}
  + \sbnorm{\phi}_{\infty}\int_{\ul{r}(t)}^{\tilde{r}(x,t)}\rho(r,t)\,r^m \dif r  \\
\,& +\sbnorm{\phi}_{\infty} \bigg( \int_{a_j}^{\ul{r}(t)} + \int_{\ul{r}(t)}^{\tilde{r}(x,t)} + \int_{\tilde{r}(x,t)}^{\tilde{r}_{a_j}(x,t)} \bigg)\rho_{a_j}(r,t)\,r^m \dif r\\
\le &\, \Bignorm{\int_{0}^{\infty}\big(\rho_{a_j}^{(x)}-\rho^{(x)}\big)(r,t) \phi(r,t)\, r^m\dif r}
   + \sbnorm{\phi}_{\infty} x + \sbnorm{\phi}_{\infty} \int_{a_j}^{\ul{r}(t)}\rho_{a_j}(r,t)\,r^m \dif r\\
\,&+ \sbnorm{\phi}_{\infty}\omega_1( [\ul{r}(t),\tilde{r}(x,t)];C ) + \sbnorm{\phi}_{\infty}\omega_1([\tilde{r}(x,t),\tilde{r}_{a_j}(x,t)];C).
\end{align*}
Since $\tilde{r}(x,t)\searrow \ul{r}(t)$ as $x\searrow 0$, for any $\delta>0$,
there exists $y\in(0,1)$ such that
\begin{equation*}
		\sbnorm{\phi}_{\infty} x + \sbnorm{\phi}_{\infty} \omega_1( [\ul{r}(t),\tilde{r}(x,t)];C )
		\le \delta 		\qquad\ \text{ for all $0<x<y$.}
\end{equation*}
Then it follows that, for all $x\in (0,y)$,
\begin{equation*}
\mathcal{I}_1^{j}
		\le \delta + \Bignorm{\int_{0}^{\infty}\!\!\phi(r,t)\big(\rho_{a_j}^{(x)}-\rho^{(x)}\big)(r,t)\,r^m\dif r}
		+ \sbnorm{\phi}_{\infty}\int_{a_j}^{\ul{r}(t)}\!\!\rho_{a_j}(r,t)\,r^m \dif r +\sbnorm{\phi}_{\infty}\omega_1([\tilde{r}(x,t),\tilde{r}_{a_j}(x,t)];C).
\end{equation*}
Fix a point $x\in(0,y)$. Taking the limit $j\to\infty$ on the above inequality, then using
Lemmas \ref{lemma:alimrho}\ref{item:alimrho1} and \ref{lemma:alimpath}\ref{item:apath5},
we have
\begin{align*}
\lim\limits_{j\to\infty}\Bignorm{\int_{a_j}^{\infty} \rho_{a_j}(r,t) \phi(r,t)\,r^m \dif r - \int_{\ul{r}(t)}^{\infty} \rho(r,t) \phi(r,t)\,r^m \dif r} \le \delta,
\end{align*}
which implies that $\lim_{j\to\infty}\mathcal{I}_1^{j}=0$.
	
Next, for each $x>0$, we rewrite the term:
\begin{align*}
\mathcal{I}_{2}^{j}
&\vcentcolon=\Bignorm{\int_{0}^{t}\int_{a_j}^{\infty} \rho_{a_j} u_{a_j} \partial_r \phi(r,s)\,r^m \dif r \dif s
 - \int_{0}^{t}\int_{\ul{r}(s)}^{\infty} \rho u\,\partial_r \phi(r,s)\,r^m \dif r \dif s}\\
&\,\le  \sbnorm{\partial_r\phi}_{\infty} \int_{0}^{t}\int_{a_j}^{\tilde{r}_{a_j}(x,s)}
  \snorm{\rho_{a_j}u_{a_j}} \,r^m \dif r \dif s
  + \sbnorm{\partial_r\phi}_{\infty} \int_{0}^{t} \int_{\ul{r}(s)}^{\tilde{r}(x,s)}
    \snorm{\rho u}\,r^m \dif r \dif s \\
&\,\quad +\Bignorm{\int_{0}^{t}\int_{\tilde{r}_{a_j}(x,s)}^{\infty}\rho_{a_j}u_{a_j}
  \partial_r\phi\,r^m\dif r\dif s
  - \int_{0}^{t}\int_{\tilde{r}(x,s)}^{\infty}\rho u\partial_r \phi\,r^m\dif r \dif s}\\
& =\vcentcolon \mathcal{I}_{2,1}^{j}(x)+\mathcal{I}_{2,2}^{j}(x)+\mathcal{I}_{2,3}^{j}(x).
\end{align*}
Using Lemma \ref{lemma:rhoM}, the Cauchy-Schwartz inequality, and
Theorem \ref{thm:WSCEx}\ref{item:WSCEx3}, we have
\begin{align*}
\mathcal{I}_{2,1}^j(x)
\le \sbnorm{\partial_r\phi}_{\infty}\sqrt{x} \int_{0}^{t}\Big(\int_{a_j}^{\tilde{r}_{a_j}(x,t)} \rho_{a_j}u_{a_j}^2\,r^m \dif r\Big)^{\frac{1}{2}} \dif s
\le \sbnorm{\partial_r\phi}_{\infty} C(T) t \sqrt{x}.
\end{align*}
Moreover, using Lemmas \ref{lemma:alimrho}\ref{item:alimrho3} and \ref{lemma:alimEnt} leads to
\begin{align*}
\mathcal{I}_{2,2}^{j}(x)
 \le \sbnorm{\partial_r\phi}_{\infty}\int_{0}^{t} \Big( \int_{\ul{r}(s)}^{\tilde{r}(x,s)}
  \rho \snorm{u}^2\,r^m \dif r \Big)^{\frac{1}{2}}
  \Big( \int_{\ul{r}(s)}^{\tilde{r}(x,s)} \rho\,r^m \dif r \Big)^{\frac{1}{2}} \dif s
\le\sbnorm{\partial_r\phi}_{\infty} C(T) t \sqrt{x}.
\end{align*}
Thus, for any $\delta>0$, there exists $y_\delta\in(0,1)$ such that, if $x\in (0, y_{\delta})$,
then $\mathcal{I}_{2}^{j}(x)\le \delta + \mathcal{I}_{2,3}^{j}(x)$.
By Lemma \ref{lemma:alimpath}, $x\to \tilde{r}(x,t)$ is increasing,
and $(x,t)\mapsto\tilde{r}(x,t)$ is continuous. Since $[0,T]$ is compact,
$d:=\inf_{t\in[0,T]}\{\tilde{r}(x,t) - \tilde{r}(\frac{x}{2},t)\}>0$.
By Lemma \ref{lemma:alimpath}\ref{item:apath1}, there exists $N_x\in\mathbb{N}$
such that $\sup_{ 0\le t \le  T} \snorm{\tilde{r}_{a_j}(x,t)-\tilde{r}(x,t)}\le \frac{d}{2}$ if $j\ge N_x$.
Combining with the previous assertion, we have
	\begin{equation}\label{temp:alimRhoWF2}
	    \tilde{r}(\frac{x}{2},t) < \tilde{r}_{a_j}(x,t) \,\qquad \text{ for each $t\in[0,T]$ and $j\ge N_x$.}
	\end{equation}
Then $\chi_x^{a_j}=\chi_{\frac{x}{2}}\chi_x^{a_j}$ for all $j\ge N_x$,
where $\chi_\ve$ and $\chi_\ve^{a}$ are defined in \eqref{6.6indicator}.
Thus, for all $j\ge N_x$,
	\begin{align*}
		\mathcal{I}_{2,3}^{j}(x)
		\le \Bignorm{\int_{0}^{t}\int_{\tilde{r}(\frac{x}{2},s)}^{\infty}\rho_{a_j}^{(x)}(u_{a_j} - u)\partial_r
		  \phi\,r^m\dif r \dif s }
		  + \Bignorm{\int_{0}^{t}\int_{\tilde{r}(\frac{x}{2},s)}^{\infty}\big(\rho_{a_j}^{(x)} - \rho^{(x)}\big) u
		   \partial_r \phi\,r^m\dif r \dif s }.
	\end{align*}
	By the pointwise bound: $ \rho_a^{(x)}(r,t)\le C(x)$ from Theorem \ref{thm:WSCEx}\ref{item:WSCEx4},
	and the uniform convergence from Lemma \ref{lemma:alimue}, it follows that, as $j\to \infty$,
	\begin{align*}
		\Bignorm{\int_{0}^{t}\int_{\tilde{r}(\frac{x}{2},s)}^{\infty}\rho_{a_j}^{(x)}(u_{a_j} - u)
		 \partial_r \phi\, r^m\dif r \dif s }
		 \le C(x)\int_{0}^{t}\int_{\tilde{r}(\frac{x}{2},s)}^{\infty}\snorm{u_{a_j} - u}
		  \partial_r \phi\,r^m\dif r \dif s \to 0.
	\end{align*}
	By Lemma \ref{lemma:alimue}, $\chi_{\frac{x}{2}} u\partial_r \phi \in L^{1}(0,T;L^{2}([0,\infty),r^m \dif r))$.
Thus, by Lemma \ref{lemma:alimrho}\ref{item:alimrho1},
\begin{align*}
\Bignorm{\int_{0}^{t}\int_{\tilde{r}(\frac{x}{2},s)}^{\infty}\big(\rho_{a_j}^{(x)} - \rho^{(x)}\big)u
\partial_r \phi\,r^m\dif r \dif s } \to 0 \qquad\ \text{as $j\to \infty$},
\end{align*}
which implies that $\lim_{j\to\infty}\mathcal{I}_{2,3}^{j}(x)=0$ for each $x>0$.
Now, for any given $\delta>0$, we fix $x\in(0,y_\delta)$. Then $\mathcal{I}_{2}^j=\mathcal{I}_{2,1}^j(x)+\mathcal{I}_{2,2}^j(x)+\mathcal{I}_{2,3}^j(x)
\le \delta + \mathcal{I}_{2,3}^j(x)$.
Taking limit $j\to\infty$ on both sides of this,
we see that $\lim_{j\to\infty} \mathcal{I}_2^{j}\le \delta$.
Since $\delta>0$ is arbitrarily small, we conclude that $\lim_{j\to\infty} \mathcal{I}_2^{j} = 0$.
By the
same argument, we can also show that
\begin{align*}
\mathcal{I}_{3}^{j}
\vcentcolon= \Bignorm{\int_{0}^{t}\int_{a_j}^{\infty}\rho_{a_j}\partial_t \phi\, r^m \dif r \dif s - \int_{0}^{t}\int_{\ul{r}(s)}^{\infty}\rho \partial_t \phi\, r^m \dif r \dif s} \to 0 \qquad\ \text{ as $j\to\infty$.}
\end{align*}
Lemma \ref{lemma:alimRhoWF} is proved by substituting these limits in \eqref{temp:alimRhoWF1}
and using Lemma \ref{lemma:alimID}.
\end{proof}

\begin{lemma}\label{lemma:eWF}
 Let $\phi\in \mathcal{D}_T\cap C^{1}$ be such that $\supp(\phi)\Subset \mathrm{F}$.
 Then
 \begin{align}
&\int_{0}^{\infty} (\rho u)(r,t) \varphi(r,t)\,r^m \dif r
   - \int_{0}^{\infty} \rho_0(r) u_0(r) \varphi(r,0)\,r^m \dif r\nonumber\\
	        &\quad = \int_{0}^{t}\int_{0}^{\infty}\Big\{ \rho u \partial_t \varphi + \rho u^2 \partial_r \varphi + \big( P- \beta (\partial_r u + m \dfrac{u}{r})\big)
	        \big(\partial_r \varphi + m \dfrac{\varphi}{r}\big) \Big\}\,r^m \dif r \dif s,\label{eqs:WFAVmom}\\[2mm]
&\int_{0}^{\infty} (\rho E)(r,t) \phi (r,t)\, r^m \dif r
  - \int_{0}^{\infty} (\rho_0 E_0) (r) \phi(r,0)\,r^m \dif r
  - \int_{0}^{t}\int_{0}^{\infty} \rho E \partial_t \phi\, r^m \dif r \dif s\nonumber\\
&\quad =\int_{0}^{t} \int_{0}^{\infty} \Big\{ (\rho E + P ) u  - 2\mu u \partial_r u
    - \lambda u \big(\partial_r u + m \dfrac{u}{r}\big) - \kappa \partial_r e \Big\}
      \partial_r \phi\, r^m \dif r \dif s. \label{eqs:WFAVener}
 \end{align}
\end{lemma}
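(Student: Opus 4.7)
\textbf{Proof plan for Lemma \ref{lemma:eWF}.} The strategy is to pass to the limit $a_j \searrow 0$ in the exterior weak forms from Definition~\ref{def:SWeak}\ref{item:SWeak2}--\ref{item:SWeak3}, which are satisfied by each $(\rho_{a_j}, u_{a_j}, e_{a_j})$ by Theorem~\ref{thm:WSCEx}. The compact-support hypothesis is essential: since $\supp(\phi) \Subset \mathrm{F}$ and $(r,t) \mapsto r - \ul{r}(t)$ is lower semi-continuous by Lemma~\ref{lemma:alimpath}\ref{item:apath3}, there exist $\eta, r_0 > 0$ and $L \in \mathbb{N}$ with $\ul{r}(t) + \eta \leq r \leq L$ and $r \geq r_0$ on $\supp(\phi)$. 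Since $\tilde{r}(\ve,t) \searrow \ul{r}(t)$ as $\ve \searrow 0$, an open-cover argument analogous to Step~4 in the proof of Lemma~\ref{lemma:alimpath} furnishes $\ve_0 \in (0,1)$ with $\supp(\phi) \subset \{(r,t) : r > \tilde{r}(\ve_0, t)\}$. By Lemma~\ref{lemma:alimpath}\ref{item:apath1}, there exists $N_0\in\mathbb{N}$ such that, for all $j\geq N_0$, $a_j < r_0$ and $\tilde{r}_{a_j}(\ve_0/2, t) < \tilde{r}(\ve_0, t)$ uniformly in $t\in[0,T]$; hence $\phi\in\mathcal{D}_0^{a_j}\cap\mathcal{D}^{a_j}$ and its support lies in the exterior region where the uniform-in-$a$ estimates of Theorem~\ref{thm:WSCEx}\ref{item:WSCEx4}--\ref{item:WSCEx5} and Lemma~\ref{lemma:alimrho}\ref{item:alimrho4} all apply.

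The low-order terms pass to the limit by combining the already-established convergence results. The initial-data integrals are handled by Lemma~\ref{lemma:alimID}. For $\int \rho_{a_j} u_{a_j} \phi \, r^m \dif r$, $\int \rho_{a_j} u_{a_j} \partial_t\phi \, r^m \dif r\dif s$, and the pressure contribution $(\gamma-1)\int \rho_{a_j} e_{a_j}(\partial_r\phi + m\phi/r)\,r^m \dif r\dif s$, the weak-star convergence $\rho_{a_j}^{(\ve_0/2)} \overset{\ast}{\rightharpoonup} \rho^{(\ve_0/2)}$ in $L^\infty(0,T; L^2)$ from Lemma~\ref{lemma:alimrho}\ref{item:alimrho1} combines with the locally uniform convergence $(u_{a_j}, e_{a_j}) \to (u, e)$ on $\supp(\phi)$ from Lemma~\ref{lemma:alimue} to yield the passage. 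For the viscous and heat-flux terms, an extraction argument of the same form as in Lemma~\ref{lemma:weak} (now applied to the $a_j$-sequence, using the uniform-in-$a$ bounds from Theorem~\ref{thm:WSCEx}\ref{item:WSCEx4}) produces, after passing to a further subsequence,
\begin{equation*}
\chi_{\ve_0/2}^{a_j}(\partial_r u_{a_j}, \partial_r e_{a_j}) \rightharpoonup \chi_{\ve_0/2}(\partial_r u, \partial_r e) \quad \text{weakly in } L^2\bigl(0,T; L^2([0,\infty), r^m\dif r)\bigr),
\end{equation*}
with $\chi_\ve$ and $\chi_\ve^a$ the indicators of \eqref{6.6indicator}. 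Since both indicators equal $1$ on $\supp(\phi)$ for $j \geq N_0$, testing against $\partial_r\phi + m\phi/r$ yields convergence of the viscous and heat-conduction integrals.

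The main difficulty lies in the quadratic and cubic nonlinearities: $\rho_{a_j} u_{a_j}^2 \partial_r\phi$ and $\rho_{a_j} E_{a_j} u_{a_j} \partial_r \phi$ in the convective fluxes, together with $u_{a_j}\partial_r u_{a_j} \partial_r\phi$ and $u_{a_j}\partial_r e_{a_j} \partial_r\phi$ in the energy equation. I would treat each as a product of a strongly convergent factor (a power of $u_{a_j}$ or $e_{a_j}$, uniformly convergent on $\supp(\phi)$ by Lemma~\ref{lemma:alimue}) against a weakly convergent factor ($\rho_{a_j}^{(\ve_0/2)}$, $\chi_{\ve_0/2}^{a_j}\partial_r u_{a_j}$, or $\chi_{\ve_0/2}^{a_j}\partial_r e_{a_j}$). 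The delicate point is the behaviour near $t=0$, where $|u_{a_j}| \leq C(\ve_0)\sigma^{-1/4}$ and $\partial_r u_{a_j}$ is only uniformly $L^2$ with weight $\sqrt{\sigma}$; nevertheless, the pointwise bounds of Theorem~\ref{thm:WSCEx}\ref{item:WSCEx4} combined with the $\sqrt{\sigma}$-weighted integrability give uniform $L^1$ control on a small neighbourhood of $t=0$. A standard truncation argument splitting $[0,t]$ into $[0,\delta]$ (controlled by these uniform integral bounds) and $[\delta, t]$ (where uniform convergence applies) then closes the passage to the limit, delivering both identities \eqref{eqs:WFAVmom} and \eqref{eqs:WFAVener}.
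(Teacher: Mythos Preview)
Your proposal is correct and follows the same overall strategy as the paper: localise the support of $\phi$ inside some $\{r>\tilde r(\ve_0,t)\}$ via the open-cover argument, then pass to the limit in the exterior weak forms term by term using the convergences of Lemmas~\ref{lemma:alimue}--\ref{lemma:alimrho} and Lemma~\ref{lemma:alimID}.

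The one methodological difference is in the nonlinear viscous term $\int u_{a_j}\partial_r u_{a_j}\,\partial_r\phi\,r^m\dif r\dif s$. You handle it by a direct strong-times-weak product argument (uniform convergence of $u_{a_j}$ on $\{t\ge\delta\}$ against weak $L^2$ convergence of $\partial_r u_{a_j}$), with a time-truncation near $t=0$. The paper instead integrates by parts first, rewriting $u_{a_j}\partial_r u_{a_j}=\tfrac12\partial_r(u_{a_j}^2)$ so that only the convergence of $u_{a_j}^2$ against $\partial_r(r^m\partial_r\phi)$ is needed; this is immediate by dominated convergence since $|u_{a_j}|^2\le C(\ve)\sigma^{-1/2}\in L^1_t$, and the $t=0$ issue disappears automatically. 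The paper then integrates back by parts using $\partial_r u\in L^2_{\text{loc}}(\mathrm F)$ from \eqref{6.13b}. Your route works, but note that the unweighted bound $\partial_r u_{a_j}\in L^2(0,T;L^2([\tilde r_{a_j}(\ve,t),\infty),r^m\dif r))$ from the second line of $\mathcal{E}$ in Theorem~\ref{thm:WSCEx}\ref{item:WSCEx4} is already available (no $\sqrt\sigma$ weight), so the truncation is less delicate than you suggest; the paper's integration-by-parts trick simply sidesteps the issue altogether.
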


\begin{proof}
We give the proof only for \eqref{eqs:WFAVener}, since \eqref{eqs:WFAVmom} can be derived in the same way.

First, from the energy equation in Theorem \ref{thm:WSCEx}, we have the weak form:
\begin{equation}\label{temp:eWF1}
\begin{aligned}
&\int_{a_j}^{\infty} (\rho_{a_j} E_{a_j})(r,t) \phi (r,t)\, r^m \dif r
- \int_{a_j}^{\infty} \Big(\dfrac{1}{2}\rho_{a_j}^0\snorm{u_{a_j}^0}^2 +\rho_{a_j}^0 e_{a_j}^0 \Big)(r)
   \phi(r,0) \,r^m \dif r\\
&= -\int_{0}^{t} \int_{a_j}^{\infty} \Big\{  2\mu u_{a_j} \partial_r u_{a_j} +\lambda u_{a_j} \big( \partial_r u_{a_j} + m \dfrac{u_{a_j}}{r} \big) + \kappa  \partial_r e_{a_j} \Big\} \partial_r\phi\, r^m \dif r \dif s\\
&\quad\, + \int_{0}^{t}\int_{a_{j}}^{\infty}  \big\{ \rho_{a_j} E_{a_j} \partial_t \phi + (\rho_{a_j} E_{a_j} + P_{a_j}) u_{a_j} \partial_r \phi \big\}\, r^m  \dif r \dif s,
	    \end{aligned}
\end{equation}
where $P_{a_j}\vcentcolon= (\gamma-1) \rho_{a_j} e_{a_j}$ and
$E_{a_j}\vcentcolon= \frac{1}{2}\snorm{u_{a_j}}^2 + e_{a_j}$.
Now, for each $x\in(0,1)$, we define
$$
\tilde{\mathrm{F}}_{x}\vcentcolon= \{ (r,t)\,\vcentcolon\, t\in[0,T], \ \tilde{r}(x,t) < r <x^{-1} \}.
$$
Then $\tilde{\mathrm{F}}_{x}\cap\{0<t<T\}$ is open for each $x\in(0,1)$,
since $(x,t)\mapsto\tilde{r}(x,t)$ is continuous.
Moreover, since $x\mapsto \tilde{r}(x,t)$ is strictly increasing for each $t\in[0,T]$,
it follows that $\tilde{\mathrm{F}}_{x}\subsetneq \tilde{\mathrm{F}}_{y}$ if $y < x \in (0,1)$.
Using
$\tilde{r}(x,t)\searrow \ul{r}(t)$ as $x\searrow 0$ for all $t\in[0,T]$,
we see that $\mathrm{F} = \cup_{x\in(0,1)} \tilde{\mathrm{F}}_{x}$, so that  $\big\{\tilde{\mathrm{F}}_x\cap\{0<t<T\}\big\}_{x\in(0,1)}$ is an open
covering of $\mathrm{F}\cap\{0<t<T\}$.
Since $\supp(\phi) \Subset
\mathrm{F}$ is a compact subset,
there exists $\ve>0$ such that $\supp(\phi) \subset \tilde{\mathrm{F}}_{2\ve}$.
Defining $d:=\inf_{t\in[0,T]}\tilde{r}(2\ve,t)$, then $d>0$
due to Lemma \ref{lemma:alimpath}\ref{item:apath4},
and there exists $N_1\in\mathbb{N}$ such that $a_j < d$ for $j\ge N_1$.
This implies that $a_j < \tilde{r}(2\ve,t)$ if $t\in[0,T]$ and $j\ge N_1$.
In addition, by the same argument for \eqref{temp:alimRhoWF2} as in the proof of Lemma \ref{lemma:alimRhoWF},
there exists $N_2\in\mathbb{N}$ such that $\tilde{r}_{a_j}(\ve,t)< \tilde{r}(2\ve,t)$ if $j\ge N_2$
and $t\in[0,T]$.
Let $N\vcentcolon= \max\{N_1,N_2\}$. Then, by Lemmas \ref{lemma:alimue}--\ref{lemma:alimrho}
and the dominated convergence theorem, we have
\begin{align*}
&\Bignorm{\int_{a_j}^{\infty}(\rho_{a_j} E_{a_j})(r,t) \phi(r,t)\, r^m \dif r
 - \int_{\ul{r}(t)}^{\infty} (\rho E)(r,t) \phi(r,t)\,r^m \dif r }\\
&\le  \int_{\tilde{r}(2\ve,t)}^{\infty} \rho_{a_j}^{(\ve)}
 \Big(\frac{1}{2}\snorm{u_{a_j}^2 - u^2} + \snorm{e_{a_j}-e} \Big)\phi\, r^m \dif r + \Bignorm{\int_{\tilde{r}(2\ve,t)}^{\infty} \big(\rho_{a_j}^{(\ve)}-\rho^{(\ve)}\big)
 \big( \dfrac{1}{2}u^2 + e \big) \phi\, r^m \dif r} \\[2mm]
&\longrightarrow 0 \qquad\,\, \mbox{as $j\to\infty$}.
\end{align*}
By the same argument, we also obtain
	\begin{align*}
	\lim\limits_{j\to\infty} \int_{0}^{t}\int_{a_{j}}^{\infty}  \big\{ \rho_{a_j} E_{a_j} \partial_t \phi +(\rho_{a_j} E_{a_j} + P_{a_j} ) u_{a_j} \partial_r \phi \big\}\,r^m  \dif r \dif s
	= \int_{0}^{t}\int_{\ul{r}(s)}^{\infty}
		 \big\{ \rho E \partial_t \phi +(\rho E + P ) u \partial_r \phi\big\}\, r^m \dif r \dif s.
	\end{align*}
	
Next, integrating by parts and using Lemma \ref{lemma:alimue}, we obtain that, for all $j\ge N$,
\begin{align*}
\int_{0}^{t}\int_{a_j}^{\infty} u_{a_j} \partial_r u_{a_j} \partial_r \phi\, r^m \dif r \dif s
= -\int_{0}^{t}\int_{\tilde{r}(2\ve,s)}^{\infty}\dfrac{1}{2}\snorm{u_{a_j}}^2\partial_r^2\phi\, r^m\dif r\dif s
\to -\int_{0}^{t}\int_{\tilde{r}(2\ve,s)}^{\infty}
\dfrac{1}{2} \snorm{u}^2 \partial_r^2 \phi\, r^m \dif r \dif s,
\end{align*}
as $j\to\infty$.
From
\eqref{6.13b}, the spatial weak derivative of $u$ exists with
$\partial_r u \in L^2_{\text{loc}}(\mathrm{F})$ and $\partial_r u \in L^2(\tilde{\mathrm{F}}_\ve)$ for each $\ve\in(0,1)$.
It then follows that, as $j\to\infty$,
\begin{align*}
\int_{0}^{t}\int_{a_j}^{\infty} u_{a_j} \partial_r u_{a_j} \partial_r \phi\, r^m \dif r \dif s
\to
-\int_{0}^{t}\int_{\tilde{r}(2\ve,s)}^{\infty} \dfrac{1}{2} \norm{u}^2 \partial_r^2 \phi\,r^m \dif r \dif s
= \int_{0}^{t}\int_{\ul{r}(s)}^{\infty}  u \partial_r u \partial_r \phi\, r^m \dif r \dif s,
\end{align*}
where we have used $\supp(\phi) \Subset \mathrm{F}$.
By the same argument, we also have
\begin{align*}
\lim\limits_{j\to\infty}\int_{0}^{t} \int_{a_j}^{\infty}
\Big\{m\lambda \dfrac{\snorm{u_{a_j}}^2}{r} +\kappa\partial_r e_{a_j}\Big\}\partial_r\phi\, r^m \dif r \dif s
= \int_{0}^{t} \int_{\ul{r}(s)}^{\infty} \Big\{ m\lambda \dfrac{\snorm{u}^2}{r}  + \kappa  \partial_r e \Big\} \partial_r\phi\,r^m \dif r \dif s.
\end{align*}
Combining all the limits established above with \eqref{temp:eWF1}, we obtain Lemma \ref{lemma:eWF}.
\end{proof}

\smallskip
Finally, the weak forms obtained in Theorem \ref{lemma:alimRhoWF}--\ref{lemma:eWF}
are translated into the Eulerian coordinates $(\x,t)\in \R^n \times[0,T]$.
Let $\Phi \in C^1([0,T]; C_c^1 (\R^n))$.
Define
\begin{align*}
\rho(\x,t)\vcentcolon= \rho(\snorm{\x},t), \quad \u(\x,t)\vcentcolon= u(\snorm{\x},t)\frac{\x}{r},
\qquad\, \phi(r,t) \vcentcolon= \int_{S^{n-1}} \Phi( r \y, t ) \dif S_{\y},
\end{align*}
where $S^{n-1}$ is the $(n-1)$-dimensional sphere,
and $\dif S_{\y}$ is the surface measure on $S^{n-1}$.
Using this, we have
\begin{align*}
\int_{0}^{t}\int_{0}^{\infty} (\rho u)(r,s)\partial_r \phi (r,s)\, r^m \dif r \dif s
&= \int_{0}^{t}\int_{0}^{\infty} \int_{S^{n-1}} (\rho u)(r,s)\y\cdot\nabla\Phi(r\y,s)\,r^m\dif S_{\y}\dif r\dif s \\
&= \int_{0}^{t}\int_{\R^n}  (\rho u)(\snorm{\x},s) \dfrac{\x}{\snorm{\x}}  \cdot\nabla\Phi( \x ,s)\, \dif \x \dif s\\
&= \int_{0}^{t} \int_{\R^n} \rho(\x,s) \u(\x,s)\cdot \nabla \Phi(\x,s)\,\dif \x \dif s.
\end{align*}

By the same way, we also have
\begin{align*}
&\int_{0}^{\infty} \rho(r,s)\phi(r,s)\,r^m \dif r \Big\vert_{s=0}^{s=t}
= \int_{\R^n} \rho(\x,s) \Phi(\x,s)\,\dif \x - \int_{\mathbb{R}^n} \rho_0(\x) \Phi(\x,0)\,\dif \x,\\[2mm]
	& \int_{0}^{t}\int_{0}^{\infty}\rho(r,s) \partial_t \phi(r,s)\,r^m \dif r \dif s = \int_{0}^{t}\int_{\R^n} \rho(\x,s) \partial_t \Phi(\x,s)\, \dif \x \dif s.
\end{align*}
Since $\phi(r,t)$ satisfies the assumption in Lemma \ref{lemma:alimRhoWF},
using the above identities in Lemma \ref{lemma:alimRhoWF},
we obtain the weak form of the continuity equation.

Next, we show the momentum equations.
Let $\Psi\vcentcolon \R^n\times[0,T] \to \mathbb{R}$ be such that
$\Psi\in C^2([0,T];C^2_c(\R^n))$ and
$\supp(\Psi)\Subset\mathcal{F}\vcentcolon
= \{(\x,t)\in\R^n\times[0,T]\,\vcentcolon\,\ul{r}(t)<\snorm{\x}\}$. We define
\begin{equation}\label{temp:S2N1}
\varphi^i(r,t) \vcentcolon= \int_{S^{n-1}} \Psi(r\y,t) y^i\, \dif S_{\y} \quad\,\,\,\text{for $i=1,\dotsc,n$.}
\end{equation}
It can be verified that $\varphi^i$ satisfies the assumption for Lemma \ref{lemma:eWF},
hence \eqref{eqs:WFAVmom} holds with $\varphi^i$.
The derivations for the weak forms in Theorem \ref{thm:WSAV} are similar to that occurring
in the continuity equation, except for the term:
$\iint\{P-\beta(\partial_r u + m \frac{u}{r})\}(\partial_r\varphi^i + m \frac{\varphi^i}{r})\,r^m \dif r \dif s$.
To deal with this, we use the following identity that can be verified with few lines of calculation
on the spherical coordinate transformation:
\begin{equation*}
	\int_{\snorm{\x}\le r} \partial_{x^i} f (\x)\,\dif \x
	= \int_{\snorm{\x}=r} f (\x) \dfrac{x^i}{\snorm{\x}}\,\dif S_{\x}
	\qquad \ \text{for $i=1,\dotsc, n$, for $r>0$.}
\end{equation*}
Using this identity, we have
\begin{align*}
\partial_r ( r^m \varphi^i )
&=\partial_r \Big( \int_{S^{n-1}} y^i \Psi(r \y ,t)\,  r^m \dif S_{\y} \Big)
=\partial_r \Big( \int_{\snorm{\x}=r} \dfrac{x^i}{\snorm{\x}} \Psi( \x ,t)\,\dif S_{\x} \Big)\\
&= \partial_r \Big( \int_{0}^{r} \int_{S^{n-1}} \partial_{x^i} \Psi (\zeta\y,t)\,\zeta^m \dif S_{\y}\dif\zeta \Big)
= r^m \int_{S^{n-1}} \partial_{x^i} \Psi ( r \y ,t)\,\dif S_{\y}.
\end{align*}
Take the derivative on \eqref{temp:S2N1} to obtain that $\partial_r \varphi^i(r,t) = \int_{S^{n-1}} y^i y^j\partial_{\x^j}\Psi(r\y,t) \dif S_{\y}$ so that
\begin{align*}
	\int_{0}^{t}\int_{0}^{\infty} P \big(\partial_r \varphi^i + \dfrac{m}{r} \varphi^i  \big)\,r^m\dif r\dif s
	= \int_{0}^{t}\int_{0}^{\infty}   \int_{S^{n-1}} P \partial_{x^i} \Psi (r\y,s)\,r^m \dif S_{\y}\dif r\dif s
	= \int_{0}^{t}\int_{\R^n} P \partial_{x^i} \Psi(\x,s)\, \dif \x \dif s.
\end{align*}
Moreover, integrating by parts yields
\begin{equation*}
    \begin{aligned}
        &\int_{0}^T\int_{0}^{\infty} \big( \partial_r u + m \dfrac{u}{r} \big)\big(\partial_r \varphi^i+ m \dfrac{\varphi^i}{r}\big)\,r^m \dif r \dif t
        = \int_{0}^T\int_{0}^{\infty}
        \big(\partial_r u \partial_r \varphi^i + m \dfrac{\varphi^i u}{r^2} \big)\,r^m \dif r \dif t.
    \end{aligned}
\end{equation*}
Using the identity $\partial_{x^j} \u^i = \partial_r(\frac{u}{\snorm{\x}})\frac{x^i x^j}{\snorm{\x}} + \frac{u}{\snorm{\x}}\delta^{ij}$,
we have
\begin{equation*}
\begin{aligned}
\int_{0}^{T}\int_{\R^n} \partial_{x^j} \u^i \partial_{x^j} \Psi \dif \x \dif t
&= \int_{0}^{T}\int_{\R^n} \Big( \partial_r\big(\dfrac{u}{\snorm{\x}}\big)\dfrac{x^i x^j}{\snorm{\x}} + \dfrac{u}{\snorm{\x}}\delta^{ij} \Big) \partial_{x^j} \Psi \dif \x \dif t\\
&= \int_{0}^{T}\int_{0}^{\infty}\int_{\mathbb{S}^{n-1}} \Big(r\partial_r\big(\dfrac{u}{r}\big)y^i y^j \partial_{x^j}\Psi(r\y,t) + \dfrac{u}{r}\partial_{x^i}\Psi(r\y,t)  \Big)\,\dif S_{\y}\,r^m \dif r \dif t\\
&= \int_{0}^{T}\int_{0}^{\infty} \Big(r^{m+1} \partial_r\big(\dfrac{u}{r}\big) \partial_r \varphi^i + \dfrac{u}{r}\partial_r(r^m \varphi^i) \Big)\,\dif r \dif t\\
&=\int_{0}^T\int_{0}^{\infty} \big( \partial_r u + m \dfrac{u}{r} \big)\big(\partial_r \varphi^i+ m \dfrac{\varphi^i}{r}\big)\,r^m \dif r \dif t.
\end{aligned}
\end{equation*}
Similarly, since $\dd\,\u = \partial_r u + m \frac{u}{r}$, we also have
\begin{equation*}
    \begin{aligned}
        \int_{0}^{T}\int_{\R^n} \dd \u \partial_{x^i} \Psi\,\dif \x \dif t
        = \int_{0}^T\int_{0}^{\infty} \big( \partial_r u + m \dfrac{u}{r} \big)\big(\partial_r \varphi^i+ m \dfrac{\varphi^i}{r}\big)\,r^m \dif r \dif t.
    \end{aligned}
\end{equation*}

Finally, we show the weak form of energy equation. Let $\varPhi\in C^{2}([0,T];C^2_{\rm c}(\R^n))$
be a test function such that
$\supp(\Phi)\Subset \mathcal{F}=\{ (\x,t)\in \R^n\times[0,T]\,\vcentcolon\,\snorm{\x}>\ul{r}(t) \}$.
Define
\begin{align*}
	\phi(r,t) \vcentcolon= \int_{S^{n-1}} \Phi(r\y,t)\,\dif S_{\y}.
\end{align*}
It can be verified that $\phi$ satisfies the assumption of Lemma \ref{lemma:eWF}, so that
\eqref{eqs:WFAVener} holds with $\phi$.
Most of the terms can be derived by the same procedure presented previously,
except for the terms: $u \partial_r u$, $\frac{u^2}{r}$, and $\partial_r e$.
Since $\supp(\phi) \Subset \{(r,t)\,\vcentcolon\, r>\ul{r}(t)\}$, it follows that
\begin{align*}
\int_{0}^{t}\int_{\ul{r}(s)}^{\infty} u \partial_r u \partial_r \phi (r,s)\, r^m \dif r \dif s
&= \int_{0}^{t}\int_{\ul{r}(s)}^{\infty} u \partial_r u
\Big( \int_{S^{n-1}} y^i \partial_{x^i} \Phi(r\y,s) \dif S_{\y} \Big)\,r^m \dif r \dif s\\
&= \int_{0}^{t}\int_{\snorm{\x}>\ul{r}(s)}  \partial_r \big(\dfrac{u^2}{2}\big) (\snorm{\x},s)  \dfrac{x^i}{\snorm{\x}} \partial_{x^i} \Phi(\x,s)\, \dif \x \dif s\\
&=\int_{0}^{t}\int_{\snorm{\x}>\ul{r}(s)}  \partial_{x^i} \big(\dfrac{\snorm{\u}^2}{2}\big)
\partial_{x^i} \Phi(\x,s)\,\dif \x \dif s\\
&= \dfrac{1}{2} \int_{0}^{t}\int_{\snorm{\x}>\ul{r}(s)} \nabla \Phi(\x,s) \cdot \nabla \snorm{\u}^2\,
\dif \x \dif s,
\end{align*}
and with several lines of derivations, we also have
\begin{align*}
	&\int_{0}^{t}\int_{\ul{r}(s)}^{\infty}\big\{ \mu u \partial_r u + \lambda u\big( \partial_r u + m \dfrac{u}{r} \big) \big\} \partial_r \phi (r,s)\,r^m \dif r \dif s\\
	&= \int_{0}^{t}\int_{\ul{r}(s)}^{\infty} \int_{S^{n-1}} \big\{ \mu u \partial_r u + \lambda u\big( \partial_r u + m \dfrac{u}{r} \big) \big\} y^i \partial_{x^i} \Phi(r\y,s)\,r^m \dif S_{\y} \dif r \dif s\\
	&= \int_{0}^{t} \int_{\snorm{\x}>\ul{r}(s)}
	\{ \lambda (\dd \u) \u + \mu (\nabla \u) \cdot \u \} \nabla \Phi (\x,s)\,\dif \x \dif s,
\end{align*}
\begin{align*}
	\int_{0}^{t}\int_{\ul{r}(s)}^{\infty} \partial_r e\, \partial_r \phi (r,s)\,r^m \dif r \dif s
	&= \int_{0}^{t}\int_{\ul{r}(s)}^{\infty}\int_{S^{n-1}}
	\partial_r e (r,s) y^i \partial_{x^i} \Phi (r\y,s)\, r^m \dif S_{\y} \dif r \dif s \\
	&= \int_{0}^{t}\int_{\snorm{\x}>\ul{r}(s)} \nabla e \cdot \nabla \Phi\, \dif x \dif s.
\end{align*}
With these, we complete the proof of the weak form of the energy equation.

\appendix
\section{Some Variants of the Arzel\`a-Ascoli Theorem}\label{subsec:AAV}
This appendix is devoted to the proof of two variants of the Arzel\`a-Ascoli theorem,
which are used in \S\ref{subsec:klimpath}--\S\ref{subsec:compact} for the limit process
as $k\to\infty$
and in \S\ref{subsec:alimpath}--\S\ref{subsec:alim} for the limit process as $a\searrow 0$.

\begin{proposition} \label{prop:aaExt}
Let $D\subseteq \mathbb{R}^2$ be a
domain with Lipschitz boundary
such that $\text{Int}(D) \neq \emptyset$,
where $Int(\cdot)$ denotes the set of interior points.
Assume that $\{ f_{i}\vcentcolon D \to \mathbb{R}\}_{i\in\mathbb{N}}$ is a collection of functions,
and $\{(D_\alpha,C_\alpha)\}_{\alpha\in\mathbb{N}}$ is a collection of compact subsets
$D_\alpha\Subset D$ and constants $C_\alpha\in (0,\infty)$ such that,
for each $\alpha \in \mathbb{N}$,
\begin{align}
&D_\alpha \subseteq D_{\alpha+1}, \quad \text{Int}(D_\alpha)\neq \emptyset, \quad D = \cup_{\alpha\in\mathbb{N}} D_\alpha,\nonumber\\
&\sup\limits_{i\in\mathbb{N}}\sup\limits_{x\in D_{\alpha}} f_i(x) \le C_{\alpha},\qquad\, \sup_{i\in\mathbb{N}} \snorm{f_i(x)-f_i(y)} \le C_{\alpha} \snorm{x-y} \quad \text{for all $x, y \in D_\alpha$}.\label{lianxuxing}
\end{align}
Then there exist both a continuous function $f\vcentcolon D \to \mathbb{R}$
and a subsequence $\{ i_j \}_{j\in \mathbb{N}}$ with $i_j\to \infty$ as $j\to\infty$ such that
\begin{equation*}
\begin{aligned}
&\lim\limits_{j\to\infty} \sup_{x\in K} \snorm{ f_{i_j}(x) - f(x)} = 0 \ \  && \text{for all compact subset $K\Subset D$,}\\
&\snorm{f(x)-f(y)} \le C_{\alpha} \snorm{x-y} \ \  && \text{for all $x, y \in D_\alpha$, for each $\alpha\in\mathbb{N}$.}
\end{aligned}
\end{equation*}
\end{proposition}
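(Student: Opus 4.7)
The plan is to execute a diagonal extraction built on the classical Arzelà--Ascoli theorem applied to each of the compact ``layers'' $D_\alpha$, then glue the limit together.

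First, fix $\alpha \in \mathbb{N}$. On the compact set $D_\alpha$, the family $\{f_i|_{D_\alpha}\}_{i\in\mathbb{N}}$ is uniformly bounded by $C_\alpha$ and uniformly Lipschitz with constant $C_\alpha$ by \eqref{lianxuxing}; in particular it is equicontinuous. The classical Arzelà--Ascoli theorem then provides a subsequence converging uniformly on $D_\alpha$ to some continuous function. I will extract such a subsequence recursively: let $\{f_{i^{(1)}_j}\}_{j\in\mathbb{N}}$ converge uniformly on $D_1$, and having defined $\{f_{i^{(\alpha)}_j}\}_{j\in\mathbb{N}}$ converging uniformly on $D_\alpha$, choose a further subsequence $\{f_{i^{(\alpha+1)}_j}\}_{j\in\mathbb{N}} \subseteq \{f_{i^{(\alpha)}_j}\}_{j\in\mathbb{N}}$ converging uniformly on $D_{\alpha+1}$ (which contains $D_\alpha$, so uniform convergence on $D_\alpha$ is preserved). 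Setting $i_j := i^{(j)}_j$, the diagonal subsequence $\{f_{i_j}\}_{j\in\mathbb{N}}$ converges uniformly on every $D_\alpha$, since for $j \ge \alpha$ it is a subsequence of $\{f_{i^{(\alpha)}_j}\}_{j\in\mathbb{N}}$.

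Next, I define $f \vcentcolon D \to \mathbb{R}$ by $f(x) := \lim_{j\to\infty} f_{i_j}(x)$; this is well defined because every $x \in D = \cup_\alpha D_\alpha$ lies in some $D_{\alpha(x)}$, on which the convergence is uniform. Passing to the limit $j\to\infty$ in the inequality $|f_{i_j}(x)-f_{i_j}(y)| \le C_\alpha |x-y|$ valid for $x,y \in D_\alpha$, I obtain the same Lipschitz estimate for $f$. In particular $f$ is continuous on each $D_\alpha$, hence on $D$.

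The remaining point, which I expect to be the main technical obstacle, is to upgrade the uniform convergence on each $D_\alpha$ to uniform convergence on an arbitrary compact subset $K \Subset D$. The natural idea is to show that any compact $K \subset D$ is absorbed by some $D_\alpha$. Using the nestedness $D_\alpha \subseteq D_{\alpha+1}$ together with the fact that $D$ is a Lipschitz domain exhausted by $\{D_\alpha\}_{\alpha}$, I will argue that, up to possibly thickening each $D_\alpha$ slightly inside $D_{\alpha+1}$, the interiors $\text{Int}(D_\alpha)$ form an open cover of $D$; then compactness of $K$ yields $K \subseteq \text{Int}(D_{\alpha_1}) \cup \cdots \cup \text{Int}(D_{\alpha_N}) \subseteq D_{\max_i \alpha_i}$ by nestedness, after which uniform convergence on $K$ follows. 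In the applications of the proposition in the paper, each $D_\alpha$ is explicitly a rectangle of the form $[\varepsilon_\alpha^{-1},\varepsilon_\alpha]\times[0,T]$ or similar, and the absorption of compacts is immediate; so this last step causes no difficulty in practice, and the general case reduces to verifying this topological absorption lemma once and for all.
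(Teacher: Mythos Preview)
Your proposal is correct and follows essentially the same strategy as the paper: apply Arzel\`a--Ascoli on each compact layer $D_\alpha$, extract a diagonal subsequence, and then verify that every compact $K\Subset D$ is absorbed by some $D_\alpha$. Your construction of the limit $f$ as the direct pointwise limit $f(x):=\lim_j f_{i_j}(x)$ is in fact cleaner than the paper's, which builds $f$ via a telescoping sum of extensions $h^{(\alpha)}$ (a device that ultimately just encodes the same pointwise limit); and you are appropriately more cautious than the paper about the covering step, which the paper simply asserts by writing ``$\{\mathrm{Int}(D_\alpha)\}$ is an open covering for $K$'' without further justification, whereas you correctly note that in the concrete applications (rectangles exhausting a half-strip) the absorption is immediate.
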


\begin{proof}
We split the proof into three steps: the desired subsequence, the limit function,
and the uniform convergence and continuity.

\smallskip
1. {\it The desired  subsequence}.
By the Arzel\`a-Ascoli theorem, it follows that, for each $\alpha\in \mathbb{N}$,
there exist both a continuous function $f^{(\alpha)}(x,t) \vcentcolon D_\alpha \to \mathbb{R}$ and a subsequence $\{i_j^{(\alpha)}\}_{j\in\mathbb{N}}$ such that $i_j^{(\alpha)}\to \infty$ as $j\to\infty$, and
\begin{equation}\label{aatemp0}
\lim\limits_{j\to\infty} \sup_{x\in D_\alpha} \snorm{f^{(\alpha)}(x)-f_{i_j^{(\alpha)}}(x)} =0.
\end{equation}
Based on \eqref{aatemp0}, one can inductively construct a chain of subsequences
$\{ i_j^{(\alpha+1)} \}_{j\in\mathbb{N}}  \subseteq \{ i_j^{(\alpha)} \}_{j\in\mathbb{N}}$
for each $\alpha\in\mathbb{N}$.
Then taking the diagonal subsequence $i_j \vcentcolon= i_j^{(j)}$ yields that, for all $\alpha\in\mathbb{N}$,
\begin{equation}\label{aatemp1}
\lim\limits_{j\to\infty} \sup_{x\in D_\alpha} \snorm{f^{(\alpha)}(x)-f_{i_j}(x)} = 0.
\end{equation}
If $x\in D_\alpha$ for some integer $\alpha\in\mathbb{N}$, then, for any other integer $ q \ge \alpha+1$,
we see that, for all $x\in D_\alpha$,
\begin{align*}
\snorm{f^{(\alpha)}(x)-f^{(q)}(x)} \le \snorm{f^{(\alpha)}(x)-f_{i_j}(x)} + \snorm{f_{i_j}(x)-f^{(q)}(x)} \to 0  \qquad \text{as $j\to\infty$}.
\end{align*}
Thus, if $q\ge \alpha$ are two integers, then
\begin{equation}\label{aatemp2}
f^{(q)}(x) = f^{(\alpha)}(x) \qquad \text{ for all $x\in D_\alpha$}.
\end{equation}

\smallskip
2. {\it The limit function}. For each $\alpha\in\mathbb{N}$,
$f^{(\alpha)}(x)$ is extended to
$D$ by
\begin{equation*}
h^{(\alpha)}(x) \vcentcolon= \begin{dcases*}
			f^{(\alpha)}(x) & if $x\in D_\alpha$,\\
			0 & if $x\in D \backslash D_\alpha$.
\end{dcases*}
\end{equation*}
With this, we can define the following function $g_N(x)$ in
$D$:
\begin{align*}
g_N(x)\vcentcolon= h^{(1)}(x) + \sum_{k=1}^{N-1} \{ h^{(k+1)}(x) - h^{(k)}(x) \}
\qquad \
\text{for each $N \in \mathbb{N}$ and $N\ge 2$}.
\end{align*}
Then, for a fixed point $y\in D$, there exists $\alpha\in\mathbb{N}$ such that $y\in D_\alpha$ and $y\notin D_{\alpha-1}$,
with $D_0\vcentcolon= \emptyset$.
By \eqref{aatemp2} and the assumption that $D_q \subseteq D_{q+1}$ for each $q\in\mathbb{N}$,
it follows that $h^{(q)}(y) = 0$ if $q\le \alpha-1$, and $h^{(q)}(y)=h^{(\alpha)}(y)$ if $q\ge \alpha$.
Hence, it follows that, for all $N \ge \alpha+1$,
\begin{align*}
g_N(y) =  h^{(1)}(y) + \sum_{k=1}^{N-1} \{ h^{(k+1)}(y) - h^{(k)}(y) \} = h^{(N)}(y) = h^{(\alpha)}(y)=f^{(\alpha)}(y).
\end{align*}
Thus,   $\lim_{N\to\infty} g_N(y) = f^{(\alpha)}(y)$ for this point $y\in D$.
Since this is true for any arbitrary point $x \in D$, we obtain that
$f(x)\vcentcolon D \to \mathbb{R}$, given by
\begin{equation*}
f(x) \vcentcolon= \lim\limits_{N\to\infty} g_N (x),
\end{equation*}
is well-defined and, in particular, $f(x) = f^{(\alpha)}(x)$ if $x\in D_\alpha$ for some $\alpha\in\mathbb{N}$.
By the uniform convergence \eqref{aatemp1}, it follows that, for each $\alpha\in\mathbb{N}$,
\begin{equation}\label{aatemp3}
\lim\limits_{j\to\infty} \sup_{x\in D_\alpha} \snorm{f(x)-f_{i_j}(x)} = \lim\limits_{j\to\infty} \sup_{x\in D_\alpha} \snorm{f^{(\alpha)}(x)-f_{i_j}(x)} =0.
\end{equation}
	
\smallskip	
3. {\it The uniform convergence and continuity}.
Let $K \Subset D$ be any compact subset.
Then $\{\text{Int}(D_\alpha)\}_{\alpha\in\mathbb{N}}$ is an open covering for $K$, so that
there exists a finite subcovering $\{ D_{\alpha_k} \}_{k=1}^N$ of $K$ for some $N\in \mathbb{N}$.
Notice that $D_q \subseteq D_{q+1}$ for each $q\in\mathbb{N}$. Therefore,
there exists $\alpha\in \mathbb{N}$ such that $K \subseteq D_\alpha$.
Combining with \eqref{aatemp3}, it follows that
\begin{equation*}
\lim\limits_{j\to\infty} \sup_{x\in K} \snorm{ f_{i_j}(x) - f(x)} = 0.
\end{equation*}
For the continuity of $f$, fixing a point $x\in D$, then there exists $\alpha\in \mathbb{N}$ such that $x\in D_\alpha$.
Since $\text{Int}(D_\alpha)\neq \emptyset$,
there exists $\ve_1>0$ such that $B_{\ve_1}(x)\subset D_\alpha$.
According to  \eqref{lianxuxing}, 
\begin{equation*}
\sup\limits_{i\in \mathbb{N}} \snorm{f_i(x)- f_i(y)} \le C_{\alpha} \snorm{x-y}
\qquad \text{for all $y \in D_\alpha$}.
\end{equation*}
Applying the uniform convergence \eqref{aatemp3}, it follows that $\snorm{f(x)- f(y)} \le C_{\alpha} \snorm{x-y}$
for all $y \in D_\alpha$.
Given $\ve>0$, define $\delta\vcentcolon= \min\{ \ve_1, \frac{\ve}{C_\alpha}\}>0$.
Then, for $y\in B_{\delta}(x) \subseteq B_{\ve_1}(x) \subset D_{\alpha}$,
\begin{equation*}
\snorm{f(x)-f(y)} \le C_\alpha \snorm{x-y} \le \ve.
\end{equation*}
The continuity of $x\mapsto f(x)$ has been proved.
\end{proof}

Before stating the next proposition,
 we define
the functional space $\tilde{H}_0^1(I,r^m\dif r)$ for each connected interval $I\subseteq [0,\infty)$
as the closure of
\begin{align*}
 \mathcal{D}_0(I)\vcentcolon=\big\{ \phi\in C^{\infty}(I)\,\vcentcolon\,\exists N>a \text{ such that } [a,N]\subset I \text{ and } \phi(r)=0 \text{ for } r\in I\cap[N,\infty) \big\}
\end{align*}
via the $H^1(I,r^m\dif r)$--norm.
We also denote its dual space as $\tilde{H}^{-1}( I, r^m \dif r)$.
Then it follows that $\tilde{H}^{-1}( I, r^m \dif r)$ endowed with the norm:
\begin{equation*}
    \sbnorm{f}_{\tilde{H}^{-1}( I, r^m \dif r)}\vcentcolon
     =\sup\big\{ \langle f,g \rangle_{\tilde{H}^{-1},\tilde{H}_0^{1}}\,\vcentcolon\,
      \sbnorm{g}_{\tilde{H}_0^{1}(I,r^{m}\dif r)} \le 1  \big\}
\end{equation*}
is a Banach space.

\begin{proposition}\label{prop:scHm1}
Let $I=[a,b]$ be an interval for some $a, b\in \mathbb{R}$,
and let $D = [d,\infty)$ be an unbounded interval with some fixed $d\in(0,1)$.
Suppose that $\{f_i(r,t)\,\vcentcolon\, D\times I \to \mathbb{R}\}_{i\in\mathbb{N}}$
is a collection of functions satisfying $f_i(r,t) \in L^{\infty}( I ; \tilde{H}^{-1}(D_L,r^m \dif r) )$
for each $i, L\in\mathbb{N}$, where $D_{L}\vcentcolon= D\cap [0,L] = [d,L]$.
In addition, suppose that there exists a constant $C>0$ for which the following bounds
are satisfied{\rm :} for all $t_1, t_2 \in I$,
\begin{equation}\label{jiashe}
\begin{split}
&\sup\limits_{i\in\mathbb{N}} \sbnorm{f_i(\cdot,t_2)-f_i(\cdot,t_1)}_{\tilde{H}^{-1}(D_L,r^m \dif r)} \le C \snorm{t_2 - t_1} \qquad \text{for all $L\in \mathbb{N}$},\\
&\sup\limits_{i\in\mathbb{N}} \sup\limits_{t\in I} \int_{D} \snorm{f_i-1}^2(r,t)\,r^m \dif r \le C.
\end{split}
\end{equation}
Then there exist both a subsequence $\{i_k\}_{k\in\mathbb{N}}$ and a function $f$ such that, for all $L\in\mathbb{N}$,
\begin{equation}\label{temp:scHm0}
\begin{aligned}
&f-1\in L^{\infty}(I;L^2(D,r^m\dif r)), \quad\ f\in C^0\big(I;\tilde{H}^{-1}(D_L,r^m \dif r)\big),\\
&\lim\limits_{k\to\infty} \sup\limits_{t\in I}\sbnorm{f_{i_k}(\cdot,t)-f(\cdot,t)}_{\tilde{H}^{-1}(D_L,r^m \dif r)} = 0,\\
&\sbnorm{f(\cdot,t_1)-f(\cdot,t_2)}_{\tilde{H}^{-1}(D_L,r^m \dif r)} \le C \snorm{t_1-t_2}
\qquad\text{ for all $t_1$, $t_2\in I$},
\end{aligned}
\end{equation}
where
$C>0$ is independent of $L\in\mathbb{N}$.
\end{proposition}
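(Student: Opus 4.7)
The plan is to apply a standard compactness argument that combines the Arzelà--Ascoli theorem with a diagonal extraction over the scale parameter $L\in\mathbb{N}$. The structural observation at the heart of the proof is that, on each bounded interval $D_L=[d,L]$, the embedding $L^2(D_L,r^m\dif r)\hookrightarrow \tilde{H}^{-1}(D_L,r^m\dif r)$ is compact. Since $d>0$ and $L<\infty$, the weight $r^m$ is bounded above and below on $D_L$, reducing this statement to the unweighted case: it is the adjoint of the classical compact Rellich--Kondrachov embedding $\tilde{H}_0^1(D_L,r^m\dif r)\hookrightarrow L^2(D_L,r^m\dif r)$.

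First, fix $L\in\mathbb{N}$. The uniform bound $\eqref{jiashe}_2$ shows that the family $\{f_i(\cdot,t)-1\}_{(i,t)\in\mathbb{N}\times I}$ is bounded in $L^2(D_L,r^m\dif r)$, so by the compact embedding above, the set $\{f_i(\cdot,t):i\in\mathbb{N},\,t\in I\}$ is relatively compact in $\tilde{H}^{-1}(D_L,r^m\dif r)$. Combined with the uniform Lipschitz equicontinuity in $t$ from $\eqref{jiashe}_1$, the Arzelà--Ascoli theorem applied in $C^0(I;\tilde{H}^{-1}(D_L,r^m\dif r))$ yields a subsequence $\{f_{i_k^{(L)}}\}_{k\in\mathbb{N}}$ converging uniformly on $I$ in the norm of $\tilde{H}^{-1}(D_L,r^m\dif r)$ to some limit $f^{(L)}$.

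Second, I would repeat this extraction successively for $L=1,2,\dots$, building nested subsequences $\{i_k^{(L+1)}\}\subset\{i_k^{(L)}\}$, and then take the diagonal $i_k:=i_k^{(k)}$. Because $D_L\subset D_{L+1}$, the $\tilde{H}^{-1}$--limits are compatible under restriction, so they glue to a single function $f$ with $f_{i_k}\to f$ uniformly on $I$ in every $\tilde{H}^{-1}(D_L,r^m\dif r)$. The Lipschitz bound $\snorm{f(\cdot,t_1)-f(\cdot,t_2)}_{\tilde{H}^{-1}(D_L,r^m\dif r)}\le C\snorm{t_1-t_2}$ is then inherited from the uniform bound along the approximating sequence, and $f\in C^0(I;\tilde{H}^{-1}(D_L,r^m\dif r))$ is automatic since it is the uniform limit of continuous functions.

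The remaining point---that $f-1\in L^\infty(I;L^2(D,r^m\dif r))$ with the same constant $C$---is handled by a weak-compactness argument. For each fixed $t\in I$, $\{f_{i_k}(\cdot,t)-1\}$ is bounded in $L^2(D,r^m\dif r)$, hence admits a weakly convergent subsequence with some limit $g(\cdot,t)$; this weak $L^2$ convergence implies convergence in $\tilde{H}^{-1}(D_L,r^m\dif r)$, which together with the strong $\tilde{H}^{-1}$--convergence already established forces $g=f$. Weak lower semicontinuity of the $L^2(D,r^m\dif r)$ norm then gives the bound uniformly in $t$. The main delicate point I foresee is checking the compactness of the weighted embedding on $D_L$ and the compatibility of the weak $L^2$ limit with the strong $\tilde{H}^{-1}$ limit across different $L$; both reduce, by the positive lower bound on the weight on $D_L$, to standard facts on bounded intervals.
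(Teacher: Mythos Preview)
Your proposal is correct and takes a more conceptual route than the paper. The main difference lies in how strong $\tilde H^{-1}$--convergence is obtained at fixed times. You invoke the compact embedding $L^2(D_L,r^m\dif r)\hookrightarrow \tilde H^{-1}(D_L,r^m\dif r)$ (as the Schauder adjoint of the Rellich embedding $\tilde H^1_0\hookrightarrow L^2$) and then apply the vector-valued Arzel\`a--Ascoli theorem directly. The paper instead works by hand: it extracts weak $L^2(D_L)$--limits of $f_{i_k}(\cdot,t_j)$ along a countable dense set $\{t_j\}\subset I$, and upgrades this weak convergence to strong $\tilde H^{-1}$--convergence via the primitives $M^i(r,t)=\int_d^r f_i(\zeta,t)\,\dif\zeta$ together with an integration by parts against test functions in $\tilde H^1_0(D_L)$; only then does it use equicontinuity and density of $\{t_j\}$ to get uniform convergence on $I$. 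Your route is shorter and more transparent; the paper's is more self-contained, avoiding any appeal to Rellich or Schauder. For the final claim $f-1\in L^\infty(I;L^2(D,r^m\dif r))$, the paper extracts a weak-$\ast$ limit in $L^\infty(I;L^2(D,r^m\dif r))$ of the full subsequence and identifies it with $f$ by testing against space--time functions, whereas you argue pointwise in $t$; both work, though the paper's version sidesteps the mild measurability check your argument implicitly needs.
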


\begin{proof} We divide the proof in four steps:
the desired subsequence, the desired Cauchy sequence, the uniform convergence,
and the independence of domain size.

\smallskip
1. {\it The desired subsequence}.
Fix a large integer $L\in\mathbb{N}$. It follows that, for all $i\in\mathbb{N}$,
\begin{equation*}
\begin{aligned}
\int_{D_L} \snorm{f_i}^2 \dif r
&\le 2d^{-m} \int_{D_L} \snorm{f_i-1}^2 r^{m} \dif r + 2L\le 2d^{-m}C + 2L <\infty.
\end{aligned}
\end{equation*}
Hence, the following uniform estimate hold:
\begin{equation}\label{stemp0}
\sup\limits_{i\in\mathbb{N}} \sup\limits_{t\in I} \int_{D_L} \snorm{f_i}^2(r,t)\,r^m \dif r
\le 2d^{-m}C+2L<\infty.
\end{equation}
Let $\{t_j\}_{j\in\mathbb{N}}\subset I$ be a countable dense subset.
Then, from \eqref{stemp0}, for each $j\in\mathbb{N}$,
there exist both a function $g_{j,\,L} \in L^{2}( D_L, \dif r )$ and
a subsequence $\{i^{(j)}_k\}_{k\in\mathbb{N}}$ such that
$f_{i_k^{(j)}}( \cdot , t_j ) \rightharpoonup g_{j,\,L}(\cdot)$
weakly as $k\to\infty$ in $L^2(D_L,\dif r)$.
Thus, one can inductively obtain a chain of subsequence
$\{i_k^{(j+1)}\}_{k\in\mathbb{N}} \subseteq \{i_k^{(j)}\}_{k\in\mathbb{N}}$
for each $j\in\mathbb{N}$.
Taking the diagonal sequence $i_k \vcentcolon= i_k^{(k)}$,
then, for all $j\in\mathbb{N}$,
\begin{equation}\label{stemp3}
f_{i_k}( \cdot , t_j ) \rightharpoonup g_{j,L}(\cdot) \qquad
\text{weakly as $k\to\infty$ in $L^2(D_L, \dif r)$}.
\end{equation}
	
\smallskip	
2. {\it The desired Cauchy sequence}. Define
\begin{equation*}
M^{i}(r,t) \vcentcolon= \int_{D\cap [0,r]} f_i (\zeta,t)\, \dif \zeta
\qquad\ \text{for each $r\le L$}.
\end{equation*}
Then it follows from \eqref{stemp3} that, for each $(r,j)\in [0,L]\times\mathbb{N}$,
\begin{equation}\label{stemp4}
\lim\limits_{k\to\infty}M^{i_k}(r,t_j)
= \lim\limits_{k\to\infty}\int_{D\cap[0,r]} f_{i_k}( \zeta,t_j)\,\dif\zeta
\longrightarrow
\int_{D\cap[0,r]} g_{j,\,L}(\zeta)\,\dif\zeta=\vcentcolon M_{j,\,L}(r).
\end{equation}
For all $\phi\in \tilde{H}_0^1(D_L,r^m \dif r)$, we have
\begin{align*}
&\int_{D_L} ( f_{i_k}(r,t_j) - g_j(r) ) \phi(r)\,r^m \dif r\\
&= \int_{D_L} \partial_r \big( M^{i_k}(r,t_j) - M_{j,\,L}(r) \big) \phi(r)\,r^m \dif r\\
&= -\int_{D_L} \big( M^{i_k}(r,t_j) - M_{j,L}(r)\big)\big(\partial_r \phi
  + m \dfrac{\phi}{r}\big)\,r^m \dif r\\
&\le \dfrac{C}{d} \sbnorm{\phi}_{H^1(D_L,r^m \dif r)} \Big(\int_{D_L}
  \snorm{M^{i_k}(r,t_j)-M_{j,\,L}(r)}^2\,r^m \dif r \Big)^{\frac{1}{2}}.
\end{align*}
Therefore, by \eqref{stemp4} and the dominated convergence theorem,
we see that, for $j\in\mathbb{N}$,
\begin{equation*}
\sbnorm{f_{i_k}(\cdot,t_j)-g_j(\cdot)}_{\tilde{H}^{-1}(D_L,r^m \dif r)} \le \dfrac{C}{d} \Big(\int_{D_L} \snorm{M^{i_k}(r,t_j)-M_{j,\,L}(r)}^2\,r^m \dif r \Big)^{\frac{1}{2}} \to 0
\qquad \mbox{as $k\to\infty$}.
\end{equation*}
In particular, since $L\in\mathbb{N}$ can be arbitrarily chosen, the following statement holds:
\begin{equation}\label{stempCauchy}
\{ f_{i_k}(\cdot,t_j) \}_{k\in\mathbb{N}} \,\,\, \text{ is a Cauchy sequence in }
\tilde{H}^{-1}( D_L, r^m \dif r) \text{ for each } (j,L)\in\mathbb{N}^2.
\end{equation}
	
\smallskip
3. {\it The uniform convergence}.
Given $\delta>0$, define $I_j\vcentcolon=(t_j-\frac{\delta}{3C},t_j+\frac{\delta}{3C})$ for each $j\in\mathbb{N}$.
Then $t_j\in I_j$ and
\begin{equation}\label{stempAA1}
\sup\limits_{k\in\mathbb{N}}\bnorm{f_{i_k}(\cdot,t)-f_{i_k}(\cdot,t_{j})}_{\tilde{H}^{-1}( D_L, r^m \dif r )} \le C \snorm{t-t_{j}} \le \dfrac{\delta}{3} \qquad\ \text{for all $t\in I_j$}.
\end{equation}
Since $\{t_j\}_{j\in\mathbb{N}}\subset I$ is dense,
$\{I_j\}_{j\in\mathbb{N}}$ is an open covering of $I=[a,b]$.
It follows that there exists a finite subcovering $\{I_j\}_{j=1}^{M}$ for some $M\in\mathbb{N}$.
Now, for each $j=1,\dotsc,M$, it follows from \eqref{stempCauchy} that
there exists $N_j\in\mathbb{N}$ such that
\begin{equation}\label{stempAA2}
\sbnorm{f_{i_p}(\cdot,t_j)-f_{i_q}(\cdot,t_{j})}_{\tilde{H}^{-1}(D_L, r^m \dif r)}
\le \dfrac{\delta}{3} \qquad\ \text{for all $p, q \ge N_j$}.
\end{equation}
Set $N\vcentcolon=\max\{N_j \vcentcolon j=1,\dotsc,M\}$. Then, for all $t\in I$,
there exists $j\in\{1,\dotsc,M\}$ such that $t\in I_j$ so that,
by \eqref{stempAA1}--\eqref{stempAA2}, for all $p, q \ge N$,
\begin{align*}
\sbnorm{f_{i_p}(\cdot,t)-f_{i_q}(\cdot,t)}_{\tilde{H}^{-1}(D_L,r^m \dif r )}
&\le \sbnorm{f_{i_p}(\cdot,t)-f_{i_p}(\cdot,t_j)}_{\tilde{H}^{-1}(D_L,r^m \dif r)} + \sbnorm{f_{i_p}(\cdot,t_j)-f_{i_q}(\cdot,t_j)}_{\tilde{H}^{-1}(D_L,r^m \dif r)}\\
&\quad +\sbnorm{f_{i_q}(\cdot,t_j)-f_{i_q}(\cdot,t)}_{\tilde{H}^{-1}(D_L,r^m \dif r)}
\le
\delta.
\end{align*}
Since this is true for any $t\in I$,
\begin{equation*}
\sup\limits_{t\in I} \sbnorm{f_{i_p}(\cdot,t)-f_{i_q}(\cdot,t)}_{\tilde{H}^{-1}(D_L,r^m \dif r )}
\le \delta \qquad\ \text{for all $p, q\ge N$}.
\end{equation*}
Therefore, $\{ f_{i_k} \}_{k\in\mathbb{N}}$ is a Cauchy sequence
in $L^{\infty}(I;\tilde{H}^{-1}(D_L,r^m \dif r))$, which is a Banach space.
Then there exists $g_L \in L^{\infty}(I;\tilde{H}^{-1}(D_L,r^m \dif r ))$ such that
\begin{equation}\label{stemp2}
\lim\limits_{k\to\infty} \sup\limits_{t\in I}
\sbnorm{f_{i_k}(\cdot,t)-g_L(\cdot,t)}_{\tilde{H}^{-1}(D_L,r^m \dif r)} = 0.
\end{equation}
It also follows from the Lipschitz continuity assumption
and \eqref{stemp2} that
\begin{equation*}
\sbnorm{g_L(\cdot,t_1)-g_L(\cdot,t_2)}_{\tilde{H}^{-1}(D_L,r^m \dif r)}
\le C \snorm{t_1-t_2} \qquad \text{for all $t_1, t_2 \in I$},
\end{equation*}
which implies that $g_L\in C^{0}(I; \tilde{H}^{-1}( D_L, r^m \dif r ))$.

\smallskip
4. {\it The independence of domain size}.
We claim there exists $f-1\in L^{\infty}(0,T;L^2(D,r^m \dif r))$, independent of $L\in\mathbb{N}$,
such that $f=g_L$ in $L^{\infty}(I,\tilde{H}^{-1}(D_L,r^m \dif r))$.
	
It follows from \eqref{jiashe} that there exist both a function $f-1\in L^{\infty}(I ; L^2( D, r^m \dif r))$
and a subsequence (still denoted) $\{f_i\}_{i\in\mathbb{N}}$ such that
\begin{equation}\label{stemp1}
f_i-1 \overset{\ast}{\rightharpoonup}  f-1
\qquad \text{in $L^{\infty}\big(I;L^2(D,r^m\dif r)\big)$ \, as $i\to\infty$}.
\end{equation}
Let $\varphi\in L^1(I,\tilde{H}^{1}_0(D_L,r^m \dif r))$,
and let $\{i_k\}_{k\in\mathbb{N}}$ be the subsequence in \eqref{stemp2}.
Then, using \eqref{stemp2}--\eqref{stemp1}, it follows that
\begin{align*}
&\Bignorm{\int_{I}\langle f-g_L,\varphi\rangle_{\tilde{H}^{-1},\,\tilde{H}_{0}^1} \dif t} \le \Bignorm{\int_{I}\langle f-f_{i_k},\varphi\rangle_{\tilde{H}^{-1},\,\tilde{H}_{0}^1} \dif t} + \Bignorm{\int_{I}\langle f_{i_k}-g_L,\varphi\rangle_{\tilde{H}^{-1},\,\tilde{H}_{0}^1} \dif t}\\
&\le  \Bignorm{\int_{I}\int_{D_L} (f-f_{i_k})(r,t) \varphi(r,t)\,r^m \dif r \dif t}
+ \int_{I}\sbnorm{f_{i_k} - g_L }_{\tilde{H}^{-1}}
\sbnorm{\varphi}_{\tilde{H}^{1}_0} \dif t \to  0 \qquad\mbox{as $k\to\infty$},
\end{align*}
which implies that $f=g_L$ in $L^{\infty}(I,\tilde{H}^{-1}(D_L,r^m \dif r))$.
Since $f$ is independent of $L\in\mathbb{N}$, we conclude from \eqref{stemp2}
that \eqref{temp:scHm0} holds for all $L\in\mathbb{N}$.
\end{proof}

\section{Some Properties of Convex Functions}\label{append:convex}
This appendix is devoted to disclosing some properties of convex functions,
which  have been used in \S\ref{subsec:klimEnt}, \S\ref{subsec:meas}, and \S\ref{subsec:alim}.

\begin{proposition}\label{prop:omega}
Let $G(\cdot)$, $\psi(\cdot)$, and $H(\cdot)$ be the convex functions defined in \eqref{eqs:GpsiH}.
If $G^{-1}_{+}$, $\psi_{+}^{-1}$, and $H_{+}^{-1}$ are their corresponding right branch inverses,
then they are concave, strictly increasing continuous functions defined on the domain{\rm :}
\begin{equation*}
G_{+}^{-1} \vcentcolon [0,\infty) \to [1,\infty),\quad
\ \psi_{+}^{-1} \vcentcolon [0,\infty) \to [1,\infty),
\quad \ H_{+}^{-1} \vcentcolon [-e^{-1},\infty) \to [e^{-1},\infty).
\end{equation*}
Moreover, $(f_1,f_2,f_3)(y,z)$ given in \eqref{eqs:fis} are well defined for $(y,z)\in(0,\infty)^2$
such that, for each fixed $z>0$, $y\mapsto f_i(y;z)$ is positive monotone increasing
for $i=1,2,3$, and
\begin{align*}
	\lim\limits_{y\searrow 0} f_i(y;z) = 0 \qquad\,\, \text{for $i=1,2,3$}.
\end{align*}
\end{proposition}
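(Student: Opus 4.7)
The plan is to handle the three assertions of Proposition \ref{prop:omega} by elementary convex calculus, splitting the work by (i) the properties of the right-branch inverses, (ii) the well-posedness and positivity of the $f_i$, and (iii) the monotonicity and limit statements via direct computation.

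For (i), I would first differentiate: $G'(\zeta)=\log\zeta$, $\psi'(\zeta)=1-\zeta^{-1}$, and $H'(\zeta)=\log\zeta+1$, each of which vanishes at the respective minimiser $\zeta=1$, $1$, $e^{-1}$. Since $G''(\zeta)=\zeta^{-1}$, $\psi''(\zeta)=\zeta^{-2}$, and $H''(\zeta)=\zeta^{-1}$ are all strictly positive, each of $G$, $\psi$, $H$ is strictly convex on $(0,\infty)$, strictly increasing on the closed right branch, and surjective onto the claimed codomain of each inverse (noting $G(1)=\psi(1)=0$ and $H(e^{-1})=-e^{-1}$). Hence $G_+^{-1}$, $\psi_+^{-1}$, $H_+^{-1}$ are well-defined strictly increasing continuous bijections on the stated domains. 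For concavity of $\phi_+^{-1}$ when $\phi$ is strictly convex and increasing, the usual one-line argument applies: for $t\in(0,1)$ and $u_i=\phi(v_i)$, convexity gives $\phi(tv_1+(1-t)v_2)\le t u_1+(1-t)u_2$, and applying the increasing $\phi_+^{-1}$ to both sides yields $t\phi_+^{-1}(u_1)+(1-t)\phi_+^{-1}(u_2)\le \phi_+^{-1}(tu_1+(1-t)u_2)$.

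For (ii), since $z/y>0$ lies in the interior of each inverse's domain, the $f_i(y;z)$ are well-defined. Positivity follows because $G_+^{-1}(z/y)\ge 1$ and $H_+^{-1}(z/y)>1$ (as $H(1)=0<z/y$), giving $f_1,f_3>0$ trivially; for $f_2$, writing $v:=\psi_+^{-1}(z/y)$ so that $v-1-\log v=z/y$ yields $yv-z=y(1+\log v)\ge y>0$, thus $f_2>0$. For the monotonicity in $y$, I would compute $\partial_y f_i$ using $(\phi_+^{-1})'(u)=1/\phi'(\phi_+^{-1}(u))$. With $v=G_+^{-1}(z/y)$ one obtains
\begin{equation*}
\partial_y f_1=v-\tfrac{z/y}{\log v}=v-\tfrac{G(v)}{\log v}=\tfrac{v-1}{\log v}>0 \quad \text{for } v>1;
\end{equation*}
with $v=\psi_+^{-1}(z/y)$, a parallel calculation using $\psi'(v)=1-v^{-1}$ simplifies to $\partial_y f_2=v\log v/(v-1)>0$; with $v=H_+^{-1}(z/y)$, one gets $\partial_y f_3=v/(\log v+1)>0$ because $v>1$ makes the denominator positive.

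Finally, for the limits as $y\searrow 0$ (so $u:=z/y\to\infty$), I would extract the asymptotic behaviour of each inverse from the defining equation. For $G_+^{-1}$ and $H_+^{-1}$ the leading order is $v\log v\sim u$, whence $v\sim u/\log u$, giving $f_1(y;z)=yG_+^{-1}(z/y)\sim z/\log(z/y)\to 0$ and analogously $f_3(y;z)\to 0$. For $\psi_+^{-1}$, $v-1-\log v=u$ forces $v=u+1+\log v$, hence $v/u\to 1$ and $\log v=\log u+o(1)$, so $yv-z=y(1+\log v)=y+y\log(z/y)\to 0$, giving $f_2(y;z)\to 0$. Since the whole proposition is an exercise in one-variable convex calculus, I do not expect a genuine obstacle; the only point requiring attention is making the large-$u$ asymptotics of $\psi_+^{-1}$ and $G_+^{-1}$ rigorous enough to control the limits uniformly, which is straightforward from the implicit defining relations.
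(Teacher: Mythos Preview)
Your proposal is correct and closely matches the paper's proof: both compute the same expressions for $\partial_y f_i$ via the inverse function theorem and simplify using the defining relations $G(v)=z/y$, $\psi(v)=z/y$, $H(v)=z/y$ to obtain $(v-1)/\log v$, $v\log v/(v-1)$, and $v/(1+\log v)$, respectively. The one noteworthy difference is in the limits $y\searrow 0$: the paper uses L'H\^opital's rule (observing, e.g., that $(G_+^{-1})'(\xi)=1/\log G_+^{-1}(\xi)\to 0$ as $\xi\to\infty$, so $yG_+^{-1}(z/y)\to 0$ directly), whereas you extract the leading asymptotics $v\sim u/\log u$ from the implicit relations. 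Both routes work; the L'H\^opital approach is slightly cleaner to make rigorous since it avoids justifying the asymptotic expansions, while your approach gives more explicit rates. Your treatment of concavity of the inverses and positivity of $f_2$ is more explicit than the paper's, which leaves these essentially implicit.
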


\begin{proof}
First, $G(\zeta)$ and $\psi(\zeta)$ are uniformly convex and achieve their minimum
at $\zeta=1$ with $G(1)=0$.
Hence, $G(\zeta)$ and $\psi(\zeta)$ are strictly increasing in  $[1,\infty)$.
Thus, the inverse functions $\psi_{+}^{-1}, G_{+}^{-1}\vcentcolon [0,\infty)\to [1,\infty)$
exist and satisfy $G(G_{+}^{-1}(\xi))=\psi(\psi_{+}^{-1}(\xi))=\xi$
for all $\xi\in[0,\infty)$.
In addition, $ H(\zeta)$ is uniformly convex and  achieves its minimum
at $\zeta=e^{-1}$ with $H(e^{-1})=-e^{-1}$.
Hence, $H(\zeta)$ is strictly increasing in  $[e^{-1},\infty)$.
Then the inverse function $H_{+}^{-1}\vcentcolon [e^{-1},\infty)\to [-e^{-1},\infty)$
exists and satisfies $H(H_{+}^{-1}(\xi))=\xi$ for all $\xi\in[-e^{-1},\infty)$.
Since $H(1)=0$, the restricted function $H_{+}^{-1} \vcentcolon [0,\infty) \to [1,\infty)$
is monotone increasing with $H_{+}^{-1}(0)=1$.
	
Next, by the inverse function theorem,
\begin{align*}
\dfrac{\dif G_{+}^{-1}}{\dif \xi}(\xi)
= \Big( \dfrac{\dif G}{\dif \zeta}\big( G_{+}^{-1}(\xi) \big) \Big)^{-1}
= \dfrac{1}{\log\big( G_{+}^{-1}(\xi) \big)} \qquad \ \text{for $\xi\in (0,\infty)$}.
\end{align*}
Since $\xi\mapsto G_{+}^{-1}(\xi)$ is a monotone increasing bijective map
and $G_+^{-1}(\xi)\to \infty$ as $\xi\to \infty$,
\begin{align*}
\lim\limits_{\xi\to\infty}  \dfrac{\dif G_{+}^{-1}}{\dif \xi}(\xi)= \lim\limits_{\xi\to\infty} \dfrac{1}{\log \big( G_{+}^{-1}(\xi) \big)} = 0,
\end{align*}
which, along with the L'H\^opital rule, yields that
\begin{align*}
\lim\limits_{y\searrow 0}  f_1(y;z)\vcentcolon
=\lim\limits_{y\searrow 0} y G_{+}^{-1}(\dfrac{z}{y})
\overset{\text{H}}{=} z \lim\limits_{y\searrow 0} \dfrac{\dif G_{+}^{-1}}{\dif \xi}
(\dfrac{z}{y})=0.
\end{align*}
In addition,
\begin{align*}
\dfrac{\partial f_1}{\partial y} (y;z)
= G_{+}^{-1}(zy^{-1}) - \dfrac{zy^{-1}}{\log \big( G_{+}^{-1}(zy^{-1})\big)}
\qquad\ \text{for $(y,z)\in(0,\infty)^2$},
\end{align*}
where $G_{+}^{-1}(z y^{-1})\in (1,\infty)$ for each $(y,z)\in (0,\infty)^2$.
It follows from  $G(\zeta)=1-\zeta+\zeta\log \zeta$ and the identity $G_{+}^{-1}(G(\zeta))=\zeta$
that
\begin{align*}
\dfrac{\partial f_1}{\partial y}(y;z) = \dfrac{G_{+}^{-1}(zy^{-1})-1}{\log\big(G_{+}^{-1}(zy^{-1})\big)}>0
\qquad\ \text{for all $(y,z)\in(0,\infty)^2$},
\end{align*}
which implies that, for fixing $z>0$, $f_1 (y;z)$ is monotone increasing for $y\in(0,\infty)$.
	
\smallskip
Next, by the inverse function theorem, it follows that
\begin{align*}
\dfrac{\dif \psi_+^{-1}}{\dif \xi} (\xi) = \Big( \dfrac{\dif \psi}{\dif \zeta}\big( \psi_{+}^{-1}(\xi) \big) \Big)^{-1} = \Big(1-\dfrac{1}{\psi_{+}^{-1}(zy^{-1})}\Big)^{-1}
\qquad\ \text{for $\xi\in(0,\infty)$}.
\end{align*}
Since $\psi_{+}^{-1}(\xi)\to \infty$ as $\xi\to \infty$, for each fixed $z>0$, by the L'H\^opital rule,
we have
\begin{align*}
\lim\limits_{y\searrow 0} y \psi_{+}^{-1}(\dfrac{z}{y}) \overset{\text{H}}{=}
z \lim\limits_{y\searrow 0} \Big( 1 - \dfrac{1}{\psi_{+}^{-1}(zy^{-1})} \Big)^{-1} =z
\quad \ \text{for $z>0$}.
\end{align*}
Thus, it follows that
\begin{align*}
\lim\limits_{y\searrow 0} f_2(y;z) = -z + \lim\limits_{y\searrow 0}  y \psi_{+}^{-1}(\dfrac{z}{y})
=0.
\end{align*}
Moreover, fixing $z>0$ and taking the derivative of $y\mapsto f_2(y;z)$, one has
\begin{align*}
\dfrac{\partial f_2}{\partial y}(y;z)
= \psi_{+}^{-1}(\dfrac{z}{y})
- \dfrac{z}{y^2} \dfrac{\dif \psi_{+}^{-1}}{\dif \xi}(\dfrac{z}{y})
= \dfrac{\psi_{+}^{-1}(zy^{-1}) \log ( \psi_{+}^{-1}(zy^{-1}) )}{\psi_{+}^{-1}(zy^{-1})-1}>0
\quad \text{ for $y\in (0,\infty)$},
\end{align*}
which implies that $y\to f_2(y;z)$ is increasing for each fixed $z>0$.
	
Finally, by the inverse function theorem, it follows that
\begin{align*}
\dfrac{\dif H_{+}^{-1}}{\dif \xi}(\xi) = \Big( \dfrac{\dif H}{\dif \zeta}( H_{+}^{-1}(\xi)) \Big)^{-1}
= \dfrac{1}{1 + \log ( H_{+}^{-1}(\xi))}   \qquad \ \text{for $\xi\in[0,\infty)$}.
\end{align*}
Since $H_+^{-1}(\xi)\to \infty$ as $\xi\to \infty$,
\begin{equation*}
\lim\limits_{\xi\to\infty}  \dfrac{\dif H_{+}^{-1}}{\dif \xi} (\xi)
= \lim\limits_{\xi\to\infty}\dfrac{1}{1 + \log ( H_{+}^{-1}(\xi))} = 0,
\end{equation*}
which, along with the  L'H\^opital rule, implies that
\begin{align*}
\lim\limits_{y\searrow 0} f_3(y;z) \overset{\text{H}}{=}
\lim\limits_{y\searrow 0} \dfrac{-z y^{-2}}{-y^{-2}} \dfrac{\dif H_{+}^{-1}}{\dif \xi}(z y^{-1})
= \lim\limits_{y\searrow 0}\dfrac{\dif H_{+}^{-1}}{\dif \xi}(z y^{-1}) = 0.
\end{align*}
Furthermore, for fixing $z>0$, we see that, for all $y\in(0,\infty)$,
\begin{align*}
\dfrac{\partial f_3}{\partial y}(y;z)
= H_{+}^{-1}(zy^{-1}) -\dfrac{z}{y} \dfrac{\dif H_{+}^{-1}}{\dif \xi}(z y^{-1})
= \dfrac{H_{+}^{-1}(z y^{-1})}{1+ \log H_{+}^{-1}( zy^{-1} )}>0.
\end{align*}
Therefore, $y\mapsto f_3(y;z)$ is strictly increasing and positive for each fixed $z>0$.
\end{proof}

Before proving the next proposition, we state the so-called Mazur's Lemma:
\begin{theorem}[Mazur's Lemma]\label{thm:mazur}
Let $(B,\bnorm{\cdot}_{B})$ be a Banach space, and let $S\subseteq B$ be a convex subset.
Then $S$ is closed in the strong topology if and only if it is closed in the weak topology.
\end{theorem}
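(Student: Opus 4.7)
The plan is to prove both implications of the equivalence separately. The forward direction (weakly closed implies strongly closed) is immediate from the fact that the weak topology is coarser than the norm topology. The reverse direction (strongly closed plus convex implies weakly closed) is the substantive content and is an application of the geometric Hahn--Banach separation theorem.

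First I would handle the easy direction. Since the norm topology on $B$ is finer than the weak topology, every weakly open set is strongly open; dually, every weakly closed set is strongly closed. Hence, if $S$ is weakly closed, it is automatically strongly closed, and convexity is not needed here.

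The main content is the reverse direction. Suppose $S\subseteq B$ is convex and strongly closed, and let $x_0\in B\setminus S$. I would apply the strict form of the Hahn--Banach separation theorem to the two disjoint convex sets $\{x_0\}$ and $S$: the singleton is norm compact, $S$ is norm closed, both are convex, and they are disjoint. This yields a bounded linear functional $\Lambda\in B^{\ast}$ and constants $\alpha<\beta$ with
\begin{equation*}
\Lambda(x_0)<\alpha<\beta\le \Lambda(y) \qquad\text{for all $y\in S$.}
\end{equation*}
The set $U\vcentcolon=\{z\in B : \Lambda(z)<\alpha\}$ is then a weakly open neighbourhood of $x_0$ disjoint from $S$. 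Since $x_0\in B\setminus S$ was arbitrary, $B\setminus S$ is weakly open, which is exactly the statement that $S$ is weakly closed.

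The main obstacle is simply invoking the correct form of Hahn--Banach; crucially, one needs the \emph{strict} separation version that requires one of the two convex sets to be compact, not merely the weaker closed--closed non-strict separation. No further technical ingredient is required, and no countability or reflexivity hypothesis on $B$ enters. This abstract equivalence is precisely what underlies the convex--combination arguments used earlier in the paper (cf.\ Proposition~\ref{prop:mazur} and the entropy passages to the limit in Lemmas~\ref{lemma:Ent} and \ref{lemma:alimEnt}), where the convex functionals $G(\rho)$, $\psi(e)$, and $\rho|u|^2$ are shown to satisfy the relevant bounds in the weak-star limit.
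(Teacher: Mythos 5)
Your proof is correct and is the standard argument for this statement. The paper itself does not prove Theorem~\ref{thm:mazur} (it only cites Brezis, Theorem 3.7, Chapter 3.3), and that reference establishes the nontrivial implication exactly as you do: strictly separate the norm-compact singleton $\{x_0\}$ from the norm-closed convex set $S$ by a continuous linear functional, thereby producing a weakly open half-space containing $x_0$ and disjoint from $S$.
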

For further detail, we refer the reader to Theorem 3.7 in Chapter 3.3 in \cite{brezis}.

\begin{proposition}\label{prop:mazur}
Let $d>0$, $T>0$, and $G(\zeta)\vcentcolon= 1-\zeta + \zeta\log \zeta$.
Assume there exist constants $\tilde{C}_1,\tilde{C}_2>0$, $\tilde{\delta}\ge0$,
and a sequence $\{ \theta_i \}_{i\in\mathbb{N}}\subset L^2(0,T;L^2([d,\infty),r^m \dif r))$
satisfying{\rm :}
\begin{equation}\label{eqs:B1}
\begin{dcases}
\theta_i -1 \rightharpoonup \theta - 1 \quad
   \text{ weakly in $L^2\big(0,T;L^2([d,\infty),r^m \dif r)\big)$}, \\
\sup_{i\in\mathbb{N}}\esssup_{t\in[0,T]}\int_d^{\infty} G(\theta_i)(r,t)\,r^m \dif r \le \tilde{C}_1,\\
  \tilde{\delta} \le \theta_i \le \tilde{C}_2 \quad \,\,
  \text{for {\it a.e.} $(r,t)\in [d,\infty)\times[0,T]$ and for each $i\in\mathbb{N}$}.
\end{dcases}
\end{equation}
Then the limit function $\theta$ satisfies
\begin{equation*}
\esssup_{t\in[0,T]} \int_{d}^{\infty} G(\theta)(r,t)\, r^m \dif r \le \tilde{C}_1.
\end{equation*}
\end{proposition}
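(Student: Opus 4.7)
\medskip

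\noindent\textbf{Proof proposal.} The plan is to use the convexity of $G$ together with Mazur's Lemma (Theorem \ref{thm:mazur}) to upgrade the weak convergence $\theta_i - 1 \rightharpoonup \theta - 1$ to a strong form on which the uniform bound can be passed to the limit. The pointwise $L^\infty$ bound $\tilde{\delta}\le \theta_i\le \tilde{C}_2$ is used to guarantee that the limit function enjoys the same bound (the set $\{u:\tilde{\delta}\le u\le \tilde{C}_2\}$ is convex and strongly closed in $L^2$, hence weakly closed), and to make $G$ bounded and uniformly continuous on the relevant range of values.

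First, I would apply Mazur's Lemma: since $\theta_i-1 \rightharpoonup \theta-1$ in $L^2(0,T;L^2([d,\infty),r^m\dif r))$, for each $N\in\mathbb{N}$ there exist $M_N\ge N$ and coefficients $\alpha_{N,j}\ge 0$ with $\sum_{j=N}^{M_N} \alpha_{N,j} = 1$ so that the convex combinations
\begin{equation*}
\tilde{\theta}_N \vcentcolon= \sum_{j=N}^{M_N} \alpha_{N,j}\, \theta_j
\end{equation*}
satisfy $\tilde{\theta}_N \to \theta$ strongly in $L^2(0,T;L^2([d,\infty),r^m\dif r))$. Passing to a further subsequence (not relabelled), one has $\tilde{\theta}_N(r,t) \to \theta(r,t)$ for almost every $(r,t)\in[d,\infty)\times[0,T]$. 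Note that $\tilde{\delta}\le \tilde{\theta}_N\le \tilde{C}_2$ almost everywhere, so the same bound holds for $\theta$ almost everywhere, and $G$ restricted to $[\tilde{\delta},\tilde{C}_2]$ is continuous.

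Next, fix an arbitrary measurable set $E\subseteq[0,T]$. By the convexity of $G$ (Jensen's inequality applied pointwise) and the hypothesis \eqref{eqs:B1}$_2$, I compute
\begin{equation*}
\int_{E}\int_{d}^{\infty} G(\tilde{\theta}_N)(r,t)\, r^m\dif r\dif t
\le \sum_{j=N}^{M_N}\alpha_{N,j}\int_{E}\int_{d}^{\infty} G(\theta_j)(r,t)\,r^m\dif r\dif t
\le \tilde{C}_1\,|E|.
\end{equation*}
Since $G\ge 0$ on $[0,\infty)$ and $G(\tilde{\theta}_N)\to G(\theta)$ a.e.\ by continuity of $G$, Fatou's lemma yields
\begin{equation*}
\int_{E}\int_{d}^{\infty} G(\theta)(r,t)\, r^m\dif r\dif t
\le \liminf_{N\to\infty}\int_{E}\int_{d}^{\infty} G(\tilde{\theta}_N)(r,t)\, r^m\dif r\dif t
\le \tilde{C}_1\,|E|.
\end{equation*}

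Finally, define the nonnegative measurable function $h(t)\vcentcolon=\int_{d}^{\infty}G(\theta)(r,t)\,r^m\dif r$ (measurability follows from Fubini). The preceding step shows $\int_E h(t)\,\dif t\le \tilde{C}_1|E|$ for every measurable $E\subseteq[0,T]$, and Lebesgue's differentiation theorem then gives $h(t)\le \tilde{C}_1$ for a.e.\ $t\in[0,T]$, which is precisely the desired conclusion.

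The main subtlety I anticipate is the interplay between the three convergence modes: weak $L^2$ convergence is too soft to pass to $G(\theta)$ directly, while the pointwise $L^\infty$ bound alone does not yield convergence of $\theta_i$. Mazur's Lemma is exactly what bridges this gap, and the convexity of $G$ is what makes the Jensen step cost nothing. The role of the $L^\infty$ bound is twofold: to place the limit in the domain where $G$ is continuous (ensuring the a.e.\ convergence of $G(\tilde{\theta}_N)$), and to eliminate any concern about integrability when applying Jensen and Fatou.
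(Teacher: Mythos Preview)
Your proof is correct and complete. Both your argument and the paper's rest on Mazur's lemma and the convexity of $G$, but the implementations differ in a meaningful way. The paper works on truncated domains $[d,N]$: it defines the convex set $S_N$ of functions obeying the pointwise bounds and the $G$-integral bound, shows $S_N$ is strongly closed in $L^2(0,T;L^2([d,N],r^m\dif r))$ (using dominated convergence, which is where the upper bound $\tilde{C}_2$ enters essentially), invokes Mazur's lemma in the form ``convex and strongly closed $\Rightarrow$ weakly closed'' to conclude $\theta\in S_N$, and finally sends $N\to\infty$ by monotone convergence. You instead use the equivalent formulation of Mazur's lemma that produces strongly convergent convex combinations $\tilde{\theta}_N$, apply Jensen pointwise to get $G(\tilde{\theta}_N)\le\sum\alpha_{N,j}G(\theta_j)$, and pass to the limit with Fatou (exploiting only $G\ge 0$) directly on the unbounded domain. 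Your route is shorter: it avoids the truncation-and-extension step and relies on Fatou rather than dominated convergence, so the $L^\infty$ bound is used only to keep $\theta$ in the domain of $G$ and to make the a.e.\ convergence of $G(\tilde{\theta}_N)$ immediate. The paper's route, on the other hand, packages the argument into a reusable ``weakly closed set'' statement, which can be conceptually cleaner when the same set is used repeatedly.
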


\begin{proof}
Fix an integer $N\in \mathbb{N}$.
Denote $B_N \vcentcolon= L^2\big(0,T;L^2([d,N],r^m\dif r)\big)$.
Then $B_N$ is a Banach space.
Set $S_N$ to be the subset:
{\small
\begin{align*}
S_N\vcentcolon= \Big\{ w \in B_N \vcentcolon \
\esssup_{t\in[0,T]} \int_{d}^{N} G(w)(r,t)\,r^m \dif r \le \tilde{C}_1,
\,\,\tilde{\delta}\le w(r,t) \le \tilde{C}_2 \ \text{{\it a.e.} $(r,t)\in[d,N]\times[0,T]$} \Big\}.
\end{align*}
}
	
First, we claim that $S_N$ is a convex subset of $B_N$.
Since $G(z)=1-z+z\log z$ is convex, it follows that, for each $w_1, w_2\in S_N$ and $0<\eta<1$,
\begin{align*}
&\esssup_{t\in[0,T]} \int_{d}^{N} G(\eta w_1 + (1-\eta) w_2)(r,t)\,r^m \dif r\\
&\quad \le \eta \esssup_{t\in[0,T]} \int_{d}^{N} G(w_1)(r,t)\,r^m \dif r
 + (1-\eta) \esssup_{t\in[0,T]} \int_{d}^{N} G(w_2)(r,t)\,r^m \dif r \le \tilde{C}_1,\\
& \tilde{\delta} \le \big(\eta w_1 + (1-\eta) w_2\big)(r,t) \le \tilde{C}_2 \qquad \
 \text{for {\it a.e.} $ (r,t)\in[d,N]\times[0,T]$},
\end{align*}
which implies that $S_N$ is indeed a convex subset of $B_N$.
	
We next show that, if $\{w_i\}_{i\in\mathbb{N}} \subset S_N$ is a sequence such that,
for some $w\in B_N$,
\begin{align*}
\lim\limits_{i\to\infty}\sbnorm{w_i-w}_{B_N}
=\lim\limits_{i\to\infty}\int_{0}^{T}\int_{d}^{N} \snorm{w_i-w}^2(r,t)\,r^m \dif r \dif t =0,
\end{align*}
then $w\in S_N$.

Since $w_i\to w$ strongly in $L^2(0,T;L^2([d,N],r^m\dif r))$,
there exists a subsequence (still denoted) $\{w_i\}_{i\in\mathbb{N}}$ such that
\begin{equation}\label{tempB1}
w_i(r,t) \to w(r,t) \qquad \text{for {\it a.e.} $(r,t)\in[d,N]\times[0,T]$}.
\end{equation}
Since $\{w_i\}_i\subset S_N$, the convergence in \eqref{tempB1} implies that
\begin{equation}\label{tempB2}
\tilde{\delta} \le w(r,t) \le \tilde{C}_2 \qquad \text{for {\it a.e.} $(r,t)\in[d,N]\times[0,T]$}.
\end{equation}
Since $G(\cdot)$ is convex and achieves its minimum at $G(\zeta=1)=0$,
and $G(\zeta=0)=1$,
it follows that, for all $i\in\mathbb{N}$ with $\tilde{\delta} \le w_i\le \tilde{C}_2$,
$G(w_i(r,t)) \le \max\{1,G(\tilde{\delta}),\,G(\tilde{C}_2)\}$.
Moreover, since $G(\cdot)$ is continuous, it follows from \eqref{tempB1} that
$G(w_i(r,t)) \to G(w(r,t))$ as $i\to\infty$ for {\it a.e.} $(r,t)\in[d,N]\times[0,T]$.
By the dominated convergence theorem, we see that, for each $0\le t_1<t_2\le T$,
\begin{align*}
\dfrac{1}{t_2-t_1}\int_{t_1}^{t_2} \int_{d}^{N} G(w)(r,t)\,r^m \dif r \dif t
= \lim\limits_{i\to \infty} \dfrac{1}{t_2-t_1}\int_{t_1}^{t_2}
\int_{d}^{N} G(w_i)(r,t)\,r^m \dif r \dif t \le \tilde{C}_1.
\end{align*}
Since this is true for any  interval $[t_1,t_2]\subseteq[0,T]$,
then, by the Lebesgue differentiation theorem,
\begin{equation}\label{tempB3}
\esssup_{t\in[0,T]}\int_{d}^{N} G(w)(r,t)\,r^m \dif r \le \tilde{C}_1.
\end{equation}
It follows from \eqref{tempB2}--\eqref{tempB3}
that $w\in S_N$, which, along with Theorem \ref{thm:mazur}, implies that
\begin{equation}\label{tempB4}
S_N \ \text{ is closed in the weak topology of $B_N\vcentcolon=L^2\big(0,T;L^2([d,N],r^m \dif r)\big)$}.
\end{equation}
	
By $\eqref{tempB1}$, it follows that $\{\theta_i\}_{i\in\mathbb{N}}\subset S_N$
and $\theta_i \rightharpoonup \theta$ in $B_N=L^2(0,T;L^2([d,N],r^m \dif r))$.
Assertion \eqref{tempB4} then implies that $\theta \in S_N$ and, in particular,
\begin{equation*}
\esssup_{t\in[0,T]}\int_{d}^{N} G(\theta)(r,t)\,r^m \dif r \le \tilde{C}_1.
\end{equation*}
Now, since this is true for all $N\in\mathbb{N}$, and $G(\cdot)$ is non-negative,
it follows from the monotone convergence theorem that
\begin{align*}
\esssup_{t\in[0,T]}\int_{d}^{\infty} G(\theta)(r,t) r^m \dif r=\lim\limits_{N\to\infty}\esssup_{t\in[0,T]}\int_{d}^{N} G(\theta)(r,t) r^m \dif r \le \tilde{C}_1.
\end{align*}
\end{proof}

\bigskip
\noindent{\bf Acknowledgements:}
The research of Gui-Qiang G. Chen was supported in part by the UK Engineering and Physical Sciences Research
Council Award EP/L015811/1, EP/V008854, and EP/V051121/1.
The research of Yucong Huang was supported in part by the UK Engineering and Physical Sciences Research
Council Award EP/L015811/1.
The research of Shengguo Zhu was also supported in part by
the National Natural Science Foundation of China under the Grant 12101395,
the Royal Society–Newton International
Fellowships Alumni AL/201021 and AL/211005.

\bigskip
\noindent{\bf Conflict of Interest:} The authors declare  that they have no conflict of
interest.
The authors also  declare that this manuscript has not been previously  published,
and will not be submitted elsewhere before your decision.

\bigskip
\noindent{\bf Data availability:} Data sharing is not applicable to this article as no datasets were generated or analysed during the current study

\bigskip


\begin{thebibliography}{99}

	
\bibitem{BVY}
D. Bresch, A. Vasseur, and C. Yu, Global existence of entropy-weak solutions to the compressible Navier–Stokes equations with non-linear density dependent viscosities,
\textit{J. Eur. Math. Soc.}  \textbf{24} (2022), 1791--1837.
	
\bibitem{BJ}
D. Bresch and P. Jabin, Global existence of weak solutions for compressible Navier-Stokes equations: thermodynamically unstable pressure and anisotropic viscous stress tensor,
\textit{Ann. of Math.} \textbf{188} (2018), 77--684.
	
\bibitem{brezis}
H. Brezis, \textit{Functional Analysis, {S}obolev Spaces and Partial Differential Equations},
Springer: New York, 2011.
	
\bibitem{CHT} G.-Q. Chen, D. Hoff, and K. Trivisa, Global solutions of the compressible {N}avier-{S}tokes equations with large discontinuous initial data, \textit{Commun. Partial. Differ. Equ.} \textbf{25} (2000), 2233--2257.
	
\bibitem{CGQK} G.-Q. Chen and M. Kratka, Global solutions to the {N}avier-{S}tokes equations for compressible heat-conducting flow with symmetry and free boundary, \textit{Commun. Partial. Differ. Equ.} \textbf{27} (2002), 907--943.
	
	
\bibitem{CK}
Y. Cho and H. Kim, Existence results for viscous polytropic fluids with vacuum, \textit{J. Differ. Equ.} \textbf{228} (2006), 377--411.
	

\bibitem{danchin}
R. Danchin, Global existence in critical spaces for compressible Navier-Stokes equations, \textit{Invent. Math.} \textbf{141} (2000), 579--614.
	
\bibitem{danchin2}
R. Danchin, Global existence in critical spaces for compressible viscous and heat conductive gases, \textit{Arch. Ration. Mech. Anal.} \textbf{160} (2001), 1--39.
	
	
	\bibitem{fu1} E. Feireisl, A. Novotn\'{y} and  H. Petzeltov\'{a}, On the existence of globally defined weak solutions to the Navier-Stokes equations, \textit{J. Math. Fluid Mech.}  \textbf{3} (2001), 358--392.
	
	\bibitem{fu2} E. Feireisl, On the motion of a viscous, compressible, and heat conducting fluid, \textit{Indiana Univ. Math. J.}  \textbf{ 53} (2004), 1705--1738.
	
	\bibitem{fu3} E. Feireisl, \textit{Dynamics of Viscous Compressible Fluids},
	Oxford University Press: Oxford, 2004.
	
	\bibitem{FS} H. Frid and V. Shelukhin, Vanishing shear viscosity in the equations of compressible fluids for the flows with the cylinder symmetry, \textit{SIAM J. Math. Anal.} \textbf{31} (2000), 1144--1156.
	
	
	\bibitem{Has} B. Haspot, Global $bmo^{-1}(\mathbb{R}^N)$  radially symmetric solution for
	compressible Navier-Stokes equations
	with initial density in $L^\infty(\mathbb{R}^N)$, arXiv: 1901.03143v1, 2019.
	
	\bibitem{H1} D. Hoff, Discontinuous solutions of the {N}avier-{S}tokes equations for compressible flow, \textit{Arch. Rational Mech. Anal.} \textbf{114} (1991), 15--46.
	
	\bibitem{H2} D. Hoff, Spherically symmetric solutions of the {N}avier-{S}tokes equations for compressible, isothermal flow with large, discontinuous initial data, \textit{ Indiana Univ. Math. J.} \textbf{41} (1992), 1225--1302.
	
	\bibitem{H4} D. Hoff, Global solutions of the {N}avier-{S}tokes equations for multidimensional compressible flow with discontinuous initial data, \textit{ J. Diff. Equ.} \textbf{120} (1995), 215--254.
	
	\bibitem{H3} D. Hoff, Discontinuous solutions of the Navier-Stokes equations for multi-dimensional flows of heat-conducting fluids,  \textit{Arch. Ration. Mech. Anal.}  \textbf{139} (1997), 303--354.
	
	\bibitem{HJ}
	D. Hoff and H.~K. Jenssen, Symmetric nonbarotropic flows with large data and forces, \textit{Arch. Ration. Mech. Anal.} \textbf{173} (2004), 297--343.
	
	\bibitem{HS} D. Hoff and J. Smoller, Non-formation of vacuum states for compressible {N}avier-{S}tokes equations, \textit{Commun. Math. Phys.} \textbf{216} (2001), 255--276.
	
	\bibitem{HL} X. Huang and J. Li, Global classical and weak solutions to the three-dimensional full compressible {N}avier-{S}tokes system with vacuum and large oscillations, \textit{Arch. Ration. Mech. Anal.} \textbf{227} (2018), 995--1059.
	
	\bibitem{I2} N. Itaya, The existence and uniqueness of the solution of the equations describing compressible viscous fluid flow, \textit{Proc. Japan Acad.} \textbf{46} (1970), 379--382.
	
	\bibitem{I1}
	N. Itaya, On the Cauchy problem for the system of fundamental equations describing the movement of compressible viscous fluid,
	\textit{K$\bar{o}$dai Math. Sem. Rep.} \textbf{23} (1971), 60--120.
	
	\bibitem{J}
	S. Jiang, Global spherically symmetric solutions to the equations of a viscous polytropic ideal gas in an exterior domain, \textit{Commun. Math. Phys.} \textbf{178} (1996), 339--374.
	
	\bibitem{JZ} S. Jiang and P. Zhang, On spherically symmetric solutions of the compressible isentropic Navier-Stokes equations, \textit{Commun. Math. Phys.} \textbf{215} (2001), 559--581.
	
	\bibitem{JZ1} S. Jiang and P. Zhang, Remarks on weak solutions to the {N}avier-{S}tokes equations for 2-{D} compressible isothermal fluids with spherically symmetric initial data. \textit{ Indiana Univ. Math. J.} \textbf{51} (2002), 345--355.
	
	\bibitem{JZ2} S. Jiang and P. Zhang, Axisymmetric solutions of the 3{D} {N}avier-{S}tokes equations for compressible isentropic fluids, \textit{ J. Math. Pures Appl.} \textbf{82} (2003), 949--973.
	
	\bibitem{JZlotnik} S. Jiang and A. Zlotnik, Global well-posedness of the {C}auchy problem for the equations of a one-dimensional viscous heat-conducting gas with {L}ebesgue initial data,
	\textit{ Proc. Roy. Soc. Edinburgh Sect. A} \textbf{134} (2004), 939--960.
	
	\bibitem{KA} S. Kawashima, Systems of A hyperbolic-parabolic composite type, with applications to the equations of magnetohydrodynamics,
	Ph.D Thesis, Kyoto University, DOI: 10.14989/doctor.k3193, 1983.
	
	
	\bibitem{KN} S. Kawashima and T. Nishida, Global solutions to the initial value problem for the equations of one-dimensional motion of viscous polytropic gases,
	\textit{J. Math. Kyoto Univ.} \textbf{21} (1981), 825--837.
	
	\bibitem{KS} A.~V. Kazhikhov and V.~V. Shelukhin, Unique global solution with respect to time of initial-boundary value problems for one-dimensional equations of a viscous gas,
	\textit{Prikl. Mat. Meh.} \textbf{41} (1977), 282--291.
	
	\bibitem{LX1} J. Li and Z. Xin, Global existence of weak solutions to the barotropic compressible Navier-Stokes flows with degenerate viscosities, arXiv:1504.06826, 2016.
	
	\bibitem{L1} P.~L. Lions, Existence globale de solutions pour les \'{e}quations de {N}avier-{S}tokes compressibles isentropiques, \textit{C. R. Acad. Sci. Paris S\'{e}r. I Math.}
	\textbf{316} (1993), 1335--1340.
	
	\bibitem {L2} P.~L. Lions, \textit{Mathematical Topics in Fluid Mechanics. Vol. 2. Compressible Models}, Oxford University Press: New York, 1998.
	
	
	\bibitem{MN} A. Matsumura and T. Nishida, The initial value problem for the equations of motion of viscous and heat-conductive gases, \textit{\em J. Math. Kyoto Univ.} \textbf{20} (1980), 67--104.
	
	
\bibitem{MRS} F. Merle, P. Raphael, I. Rodnianski, and J. Szeftel,
On the implosion of a compressible fluid I: Smooth self-similar inviscid profiles,
\textit{Ann. Math.} \textbf{196} (2022), 567--778.
	
	
	\bibitem{Nash} J. Nash, Le probl\`eme de {C}auchy pour les \'{e}quations diff\'{e}rentielles d'un fluide g\'{e}n\'{e}ral, \textit{Bull. Soc. Math. Fr.} \textbf{90} (1962), 487--497.
	
	\bibitem{serrin} J. Serrin, On the uniqueness of compressible fluid motion,  \textit{Arch. Ration. Mech. Anal.}
	\textbf{3} (1959), 271--288.
	
	
	\bibitem{Tani} A. Tani, On the first initial-boundary value problem of compressible viscous fluid motion, \textit{ Publ.   RIMS, Kyoto Univ.} \textbf{13} (1977), 193--253.
	
	\bibitem{VY} A.~F. Vasseur and C. Yu, Existence of global weak solutions for 3{D} degenerate compressible {N}avier-{S}tokes equations, \textit{Invent. Math.} \textbf{206} (2016), 935--974.
	
	\bibitem{WZ} H. Wen and C. Zhu, Global solutions to the three-dimensional full compressible {N}avier-{S}tokes equations with vacuum at infinity in some classes of large data, \textit{SIAM J. Math. Anal.} \textbf{49} (2017), 162--221.
	
	\bibitem{XY} Z. Xin and H. Yuan, Vacuum state for spherically symmetric solutions of the compressible {N}avier-{S}tokes equations, \textit{J. Hyperbolic Differ. Equ.} \textbf{3} (2006), 403--442.
	
	\bibitem{YB2} H.~F. Yashima and R. Benabidallah, Unicit\'e de la solution de l'\'equation monodimensionnelle ou \`a sym\'etrie sph\'erique d'un gaz visqueux er calorif\`ere, \textit{Rendiconti del Circolo Mat. di Palermo} \textbf{42} (1993), 195--218.
	
	\bibitem{YB} H.~F. Yashima and R. Benabidallah, Equation \`a sym\'etrie sph\'erique d'un gaz visqueux et calorif\`ere avec la surface libre, \textit{Ann. di Mat. Pura ed Appl.} \textbf{168} (1995), 75-117.
	
	\bibitem{ZA1} A.~A. Zlotnik and A.~A. Amosov, On the stability of generalized solutions of equations of one-dimensional motion of a viscous heat-conducting gas, \textit{ Sibirsk. Mat. Zh.} \textbf{38} (1997), 767--789.

    \bibitem{ZA2} A.~A. Zlotnik and A.~A. Amosov, Stability of generalized solutions of the one-dimensional motion of a viscous heat-conducting gas, \textit{Mat. Zametki} \textbf{63} (1998), 835--846.
	
\end{thebibliography}
\end{document}